\documentclass[11pt,a4paper]{article}
\usepackage[T1]{fontenc}
\usepackage{lmodern}
\usepackage[margin=1in]{geometry}
\usepackage{amsmath,amssymb,amsthm,mathtools,bm,mathrsfs}
\usepackage{slashed}
\usepackage{microtype,booktabs,array,longtable,needspace}
\usepackage{etoolbox}
\makeatletter
\patchcmd{\LT@output}{\vss}{\vfil}{}{}
\patchcmd{\LT@output}{\vss}{\vfil}{}{}
\makeatother
\usepackage{pdflscape}
\usepackage[hidelinks]{hyperref}
\numberwithin{equation}{section}
\newtheorem{theorem}{Theorem}[section]
\newtheorem{proposition}[theorem]{Proposition}
\newtheorem{lemma}[theorem]{Lemma}
\newtheorem{corollary}[theorem]{Corollary}

\theoremstyle{definition}
\newtheorem{definition}[theorem]{Definition}
\theoremstyle{remark}

\newtheorem{example}[theorem]{Example}
\newcommand{\dd}{\mathrm d}
\newcommand{\eps}{\epsilon}
\newcommand{\R}{\mathbb R}
\newcommand{\id}{\operatorname{id}}
\newcommand{\im}{\operatorname{im}}
\newcommand{\tr}{\operatorname{tr}}
\newcommand{\End}{\operatorname{End}}
\newcommand{\ad}{\operatorname{ad}}

\newcommand{\Hom}{\operatorname{Hom}}
\newcommand{\Hol}{\operatorname{Hol}}
\newcommand{\Ric}{\operatorname{Ric}}
\newcommand{\CS}{\operatorname{CS}}
\newcommand{\sg}{\mathfrak s}
\newcommand{\Bsp}{\beta_\Delta}
\newcommand{\red}[1]{{{#1}^{[0]}}}
\newcommand{\cA}{\mathcal A}
\newcommand{\cS}{\mathcal S}
\newcommand{\cT}{\mathcal T}
\newcommand{\KHull}{K_{\mathrm{Hull}}}
\newcommand{\cL}{\mathcal L}

\newcommand{\cH}{\mathcal H}

\newcommand{\ph}{\mathrm{ph}}
\newcommand{\LC}{\mathrm{LC}}
\newcommand{\hor}{\mathrm{hor}}
\DeclareMathOperator{\vol}{vol}
\DeclareMathOperator{\Spin}{Spin}
\DeclareMathOperator{\SO}{SO}
\DeclareMathOperator{\Sym}{Sym}
\DeclareMathOperator{\Def}{Def}
\DeclareMathOperator{\Crit}{Crit}
\DeclareMathOperator{\diag}{diag}
\DeclareMathOperator{\diver}{div}
\allowdisplaybreaks[1]
\hypersetup{pdftitle={Derived deformation theory of heterotic G2 systems near the standard embedding},pdfauthor={Bram Brongers}}
\title{Derived deformation theory of heterotic $G_2$ systems near the standard embedding}
\author{Bram Brongers\\
\href{mailto:brambrongers98@gmail.com}{\texttt{brambrongers98@gmail.com}}}
\date{}
\begin{document}
\maketitle
\begin{abstract}
We study heterotic $G_2$ structures on a fixed string Courant algebroid
on a spin seven-manifold $Y$, near a torsion-free standard embedding, to first order in $\alpha'$
and to all formal orders in the deformations. The admissible metric,
spinor line and density determine a canonical generalized Dirac
functional whose expression in a physical splitting is the heterotic
superpotential. We work in the small-flux sector $H^{[0]}=0$.
We adjoin this condition and its compatibility identities to obtain a
resolved physical theory, and construct a corresponding compatibility
extension of the variational critical theory. The two resolutions are
locally formally isomorphic as real local formal \(Q\)-theories.

When \(Y\) is closed and the gerbe and anomaly sector is fixed, 
the critical fields of the superpotential coincide with the BPS fields over every local real Artin algebra concentrated in degree zero. 
The shifted-cotangent Hamiltonian construction makes the variational
critical theory cyclic. Mixed-order ellipticity and homotopy transfer give finite-dimensional minimal models, 
and cyclic transfer yields an effective potential \(W_{\mathrm{eff}}\) for the ordinary formal physical solution locus over these
degree-zero Artin algebras, after a formal coordinate change.
We compute the relative cohomology of this compatibility comparison, 
which is detected on dg-Artin algebras and forgotten by the variational critical theory, and prove that the
pointed relative fibre at the standard embedding is homotopy abelian.
\end{abstract}

\clearpage
\setcounter{tocdepth}{1}
\begingroup\small
\makeatletter
\renewcommand*\l@section{\@dottedtocline{1}{0em}{2.2em}}
\makeatother
\tableofcontents
\endgroup
\clearpage

\section{Introduction and main results}\label{sec:introduction}

The deformation problem for heterotic \(G_2\) systems asks which deformations of a given background continue to satisfy the heterotic equations. 
Infinitesimal deformation theory identifies candidate moduli, but does not determine which infinitesimal deformations extend formally or how the resulting obstructions are organized. 
In the \(SU(3)\) setting, the heterotic superpotential admits holomorphic off-shell coordinates in which finite deformations satisfy a cubic Maurer--Cartan equation governed by an \(L_3\)-algebra \cite{FiniteDeformations}. 
Subsequent work formulated the physical infinitesimal theory with the independent tangent-connection deformation eliminated and established ellipticity and equality of the deformation and obstruction dimensions \cite{HeteroticQuantumCohomology,LocalDescriptions}. 
A full Maurer--Cartan description for that physical operator remains open.

For heterotic \(G_2\) systems, the superpotential has previously been analysed through quadratic order, yielding the infinitesimal deformation complex and its symplectic pairing \cite{QuantumG2}, while the physical induced-connection formulation was developed in \cite{MSS}. 
Passing from these infinitesimal and quadratic descriptions to a nonlinear formal deformation theory raises an additional issue. 
Critical points of the superpotential describe the physical equations under suitable hypotheses, but agreement of ordinary solution sets does not determine the corresponding derived deformation problem. 
Differential identities and higher compatibility data may carry information that is invisible on the ordinary critical locus. 
A comparison with the variational theory must therefore keep track not only of the equations, but also of their gauge symmetries and compatibility identities.

We study this problem near a torsion-free standard embedding, to first order in \(\alpha'\) and to all formal orders in the deformation variables.
The underlying string Courant algebroid on the spin seven-manifold \(Y\) is fixed, together with the background string, anomaly and gerbe data and the tangent and gauge topological data. 
We restrict the gauge sector to the rank-seven \(SO(7)\) bundle identified with \(TY\) at the standard embedding. 
The varying structure consists of an admissible metric, a \(G_2\) spinor line and a positive density. 
The tangent factor is identified with the orthonormal frame bundle determined by the varying metric, and its connection is constrained by Hull compatibility. 
After choosing a splitting of the fixed Courant extension, these data are represented by a positive three-form \(\varphi\), a two-form potential \(B\), a dilaton \(\Phi\) and a gauge connection \(A\). 
Section~\ref{sec:geometry} fixes this dictionary using the string Courant geometry of \cite{StringAlgebroids,GFDirac,CoupledG2}.

We work in the small-flux sector, specified by the leading-flux equation
\[
H^{[0]}=0.
\]
With the splitting chosen relative to the fixed background, this equation is
\(d_YB^{[0]}=0\). To keep the leading-flux condition as a separate equation
on the ambient field space, we adjoin \(H^{[0]}\) in equation degree
together with its compatibility identities. We complete these with the
\(G_2\)-torsion, Bianchi, dilaton and flux identities. The resulting theory
is the compatibility-resolved physical theory.

Although Theorem~\ref{thm:automatic-admissibility} shows that $H^{[0]}=0$
follows from criticality of $W_{\rm ext}$ under the compactness and
fixed-sector hypotheses, we include it as a separate equation in the
first-order physical presentation. Its derivation from the Euler--Lagrange equations
uses the fixed gerbe sector, which makes $H^{[0]}$ exact, together with
weighted coclosedness and compactness. The relative complex below measures the
equation-and-identity data associated with this constraint that is absent
from the variational critical theory.

The admissible metric and density determine a canonical generalized Dirac
operator. Together with the spinor line, it defines a variational functional
whose expression in the physical variables above is the heterotic superpotential
\(W_{\rm ext}\). On \(H^{[0]}=0\), its Euler--Lagrange equations and the BPS
equations are related by mutually inverse local differential operators. From the
BPS residuals and their identities we construct a resolved physical
\(Q\)-theory. We then construct a variational resolution on the same graded
equation and identity bundles and prove that the two resolutions are locally
formally isomorphic. Forgetting the additional equation and identity data gives
the variational critical theory and loses the higher relative cohomology.

The infinitesimal problem builds on the canonical \(G_2\) complexes of
\cite{DLS}, their heterotic extension \cite{DLSHeterotic}, and the physical
induced-connection formulation of \cite{MSS}. The latter eliminates the
independent tangent-connection deformation at the stated first-order precision.
The moduli metric is studied in \cite{MSSMetric}. The superpotential Hessian and
one-loop theory of \cite{QuantumG2} and the organization of equations and
identities by BPS complexes in \cite{BPS} supply related variational
descriptions. At the standard embedding, the independent three-row heterotic
\(G_2\) BPS complex contains a strict differential subcomplex obtained by
prolonging Hull compatibility through its field, equation and identity rows.
Proposition~\ref{prop:derived-hull-graph} identifies this Hull-graph subcomplex
with the physical BPS--Noether presentation of
Proposition~\ref{prop:linear-bps-noether-comparison}, and hence with the unary
variational complex. The present paper passes from these infinitesimal
descriptions to the nonlinear formal problem and determines the additional derived
information carried by the leading-flux equation.

\paragraph{The physical derived comparison.}
For each \(Q\)-theory, \(\mathcal M\) denotes its formal graded field space and \(Q\) its homological vector field.
Write $(\mathcal M_{\rm BPS}^{\rm res},Q_{\rm BPS}^{\rm res})$ and
$(\mathcal M_{\rm var}^{\rm res},Q_{\rm var}^{\rm res})$ for the resolved
physical and variational theories, and $(\mathcal M_{\rm var},Q_{\rm var})$
for the variational critical theory. Theorems~\ref{thm:resolved-physical},
\ref{thm:resolved-variational} and~\ref{thm:resolved-q-equivalence}
construct the diagram
\begin{equation}\label{eq:intro-primary-diagram}
(\mathcal M_{\rm BPS}^{\rm res},Q_{\rm BPS}^{\rm res})
\xrightarrow[\cong]{\ U_{\rm res}\ }
(\mathcal M_{\rm var}^{\rm res},Q_{\rm var}^{\rm res})
\xrightarrow{\ \mathscr F\ }
(\mathcal M_{\rm var},Q_{\rm var}).
\end{equation}
The first arrow is a real local formal \(Q\)-isomorphism. Every Taylor
coefficient of \(U_{\rm res}\) and its inverse is a finite-order
differential operator. Consequently the two resolved presentations
define naturally isomorphic simplicial deformation functors, denoted
by $\Def_{\rm BPS}^{\rm res}$ and $\Def_{\rm var}^{\rm res}$, on real
differential graded Artin algebras,

$$
 \Def_{\rm BPS}^{\rm res}\cong\Def_{\rm var}^{\rm res}.
$$

Thus the variational resolution describes the resolved physical
deformation problem considered here.

In \eqref{eq:intro-primary-diagram}, $U_{\rm res}$ identifies the resolved
physical and resolved variational presentations as real local formal
$Q$-theories, while $\mathscr F$ forgets the additional equation-and-identity
data of the compatibility resolution. Under the compactness and fixed-sector
hypotheses, these data do not change the formal solution locus over local
Artin algebras concentrated in degree zero. On dg-Artin algebras the relative
cohomology is detected, and the two deformation theories need not agree.
The later sections quantify this difference by computing
$H^\bullet(K_{\mathcal J})$ and identifying the pointed relative fibre.

\paragraph{Critical fields over degree-zero Artin algebras.}
On closed $Y$ in the fixed gerbe and anomaly sector,
write $\mathcal Z_{\rm BPS}$ for the BPS fields satisfying the leading-flux
constraint. Theorem~\ref{thm:automatic-admissibility} gives
\begin{equation}\label{eq:intro-automatic-admissibility}
 \Crit(W_{\rm ext})(R)=\mathcal Z_{\rm BPS}(R)
\end{equation}
for every local Artin $\R$-algebra $R$ concentrated in degree zero.
The fixed gerbe data make \(H^{[0]}\) exact, while the Euler--Lagrange equations make it weighted coclosed. Stokes' theorem then gives \(H^{[0]}=0\).

\paragraph{Complete relative cohomology.}
Let $C_{\rm compat}$ be the linear compatibility complex and
$C_{\rm var}$ the unary variational complex, with cohomology
$H^\bullet_{\rm res}$ and $H^\bullet_{\rm var}$, respectively. The
second arrow in \eqref{eq:intro-primary-diagram} induces the comparison
$C_{\rm compat}\to C_{\rm var}$. Write
$K_{\mathcal J}$ for its kernel. Theorem~\ref{thm:complete-relative-cohomology}
computes every $H^p(K_{\mathcal J})$, through degree eight, and proves
that all connecting maps vanish. Thus
\begin{equation}\label{eq:intro-relative-sequences}
 0\longrightarrow H^p(K_{\mathcal J})\longrightarrow H^p_{\rm res}
 \longrightarrow H^p_{\rm var}\longrightarrow0,
 \qquad p=2,3,4,
\end{equation}
are canonical short exact sequences.
In degree two this reads
\begin{equation}\label{eq:intro-cohomology}
 0\longrightarrow H^3_{\rm dR}(Y)_{7\oplus27}
 \longrightarrow H^2_{\rm res}
 \longrightarrow H^2_{\rm var}\longrightarrow0.
\end{equation}
These relative groups record equation-and-identity data of the compatibility
enhancement, detected on dg-Artin algebras. For the ordinary formal lifting
problems considered below, the leading-flux residual arises from an exact
three-form, and the corresponding relative obstruction vanishes.
Here the subscripts on de Rham cohomology denote the $G_2$-types of
harmonic representatives for the fixed background metric. The cohomology
groups are isomorphic in degrees $p\leq 1$, while
$H^p_{\rm res}\simeq H^p(K_{\mathcal J})$ for $5\le p\le8$.

\paragraph{Pointed relative fibre.}
For the physical comparison $\mathscr F_{\rm phys}=\mathscr F\circ U_{\rm res}$,
write $\Def_{\mathscr F_{\rm phys}}$ for the induced map of deformation
functors. Theorem~\ref{thm:pointed-relative-fibre} identifies its homotopy
fibre at the marked standard embedding:
\begin{equation}\label{eq:intro-pointed-fibre}
 \operatorname{hofib}_0(\Def_{\mathscr F_{\rm phys}})(R)
 \simeq\operatorname{MC}_\bullet
 \bigl((H^\bullet(K_{\mathcal J}),0)\otimes\mathfrak m_R\bigr)
\end{equation}
where $R$ is an augmented local real dg-Artin algebra, $\mathfrak m_R$
is its nilpotent augmentation ideal, and $\operatorname{MC}_\bullet$ is
the simplicial Maurer--Cartan construction.

\paragraph{Finite-dimensional moduli and the effective potential.}
The shifted-cotangent Hamiltonian construction gives the variational
critical theory its cyclic pairing. On closed \(Y\), the linearized
complexes are mixed-order elliptic. Their cohomology is therefore finite-dimensional,
and homotopy transfer gives minimal \(L_\infty\)-models on these
cohomology groups. Denote their operations by $\mu^{\rm res}$ and
$\mu^{\rm var}$. They are related by a comparison

$$
(H_{\rm res},\mu^{\rm res})
\xrightarrow{\ \Phi_\infty\ }
(H_{\rm var},\mu^{\rm var}).
$$

The first model governs the resolved physical deformation problem. The
cyclic transfer of the variational theory determines a formal effective
potential \(W_{\rm eff}\). The degree-one part of the transferred map is
an isomorphism, and its nonlinear completion \(\Phi_{\rm MC}\) gives a formal
change of deformation coordinates.

Theorem~\ref{thm:ordinary-bps-effective-potential} proves that, for
every local Artin algebra \(R\) concentrated in degree zero,
\begin{equation}\label{eq:intro-ordinary-objects}
\operatorname{MC}(H_{\rm res}\otimes\mathfrak m_R)
\xrightarrow[\cong]{\ \Phi_{\rm MC}\ }
\operatorname{Crit}(W_{\rm eff})(R).
\end{equation}
Thus, in the transferred finite-dimensional coordinates, the formal
physical BPS solutions are precisely the critical points of
\(W_{\rm eff}\).

The converse direction in
\eqref{eq:intro-ordinary-objects} is not a formal consequence of the
degree-one comparison, since the degree-two map has the kernel displayed
in \eqref{eq:intro-cohomology}. For an actual physical lifting problem,
however, the corresponding three-form representative is exact, so the
weighted exact-and-coclosed vanishing argument removes the possible
relative obstruction. Induction over Artin small extensions then gives the
converse on the formal critical locus of \(W_{\rm eff}\).

A square-zero dg-Artin algebra already detects the degree-two relative
cohomology, and the higher relative cohomology groups give further
derived information. Thus \(W_{\rm eff}\) determines the formal solution
locus of the resolved minimal model over Artin algebras concentrated in
degree zero, while the full derived theory contains the additional
relative cohomology.

\paragraph{Organization.}
Section~\ref{sec:geometry} defines the fixed Courant background,
varying geometric data, BPS equations and higher symmetry.
Sections~\ref{sec:deformation-algebra} and~\ref{sec:equivalence} develop
the local Hamiltonian and variational theory.
Section~\ref{sec:derived-admissibility} constructs the linear
compatibility complex. Section~\ref{sec:resolved-physical} constructs the common nonlinear
compatibility data and the physical theory.
Section~\ref{sec:resolved-variational} constructs its variational
resolution and proves the comparison. Section~\ref{sec:physical-minimal} treats formal moduli and
the relative fibre. 

\section{Heterotic \texorpdfstring{$G_2$}{G2} geometry on a fixed string Courant algebroid}
\label{sec:geometry}

\subsection{The fixed string Courant background}
\label{subsec:string-background}
Fix a smooth real string Courant algebroid on a spin seven-manifold \(Y\), 
together with its tangent and gauge principal bundles and the background string, 
anomaly and gerbe data. We study heterotic \(G_2\) structures on this fixed Courant algebroid.

Let $\varphi_0$ be the fixed torsion-free background form, with
$g_0=g_{\varphi_0}$ and $\psi_0=*_{g_0}\varphi_0$.
Let $E\to Y$ be a real vector bundle of rank seven with structure group
$SO(7)$ and its associated fibre metric. Fix the standard-embedding
identification with $(TY,g_0)$ and the chosen spin structure. We write
$\mathfrak{so}(E)$ for its bundle of skew-adjoint endomorphisms. The identification lets us express gauge and tangent connections
on the same bundle.

Under the
identification \(E\simeq TY\), let \(\Theta_0\) be the Levi--Civita
principal connection of \(g_0\) on \(E\), let \(R_0\) be its curvature, and write
\(\nabla\) for the induced covariant derivative on forms. For a
connection $\Theta$, $D_\Theta$ denotes its covariant exterior derivative
on adjoint-valued forms. At this
standard-embedding background,
\begin{equation}\label{eq:background-identities}
 \nabla\varphi_0=0,\quad \dd\psi_0=0,\quad
 D_{\Theta_0}R_0=0,\quad R_0\wedge\psi_0=0,\quad \Ric(g_0)=0.
\end{equation}

The invariant form on the two copies of $\mathfrak{so}(E)$ is
\begin{equation}\label{eq:invariant-form}
 P(a,b)=-\tr_7(ab).
\end{equation}
We use the same symbol $P$ for the induced pairing on
$\mathfrak{so}(E)$-valued forms, with exterior factors multiplied in the
displayed order.

Write \(P_T\) and \(P_G\) for the tangent and gauge principal \(\mathrm{SO}(7)\)-bundles, 
with their associated vector bundles identified with \(E\), and equip \(P_T\) with the compatible spin lift. For a
fixed real coupling $z\ne0$, put
\begin{equation}\label{eq:courant-extension}
\begin{gathered}
0\longrightarrow T^*Y\xrightarrow{j}\mathbb E
 \longrightarrow\operatorname{At}(P_T\times_Y P_G)\longrightarrow0,\\
\mathfrak k=\ad P_T\oplus\ad P_G,\qquad
\mathfrak c_z((u_T,u_G),(v_T,v_G))
 =\frac z4\bigl(P(u_T,v_T)-P(u_G,v_G)\bigr).
\end{gathered}
\end{equation}
Here $\operatorname{At}$ is the Atiyah algebroid of invariant vector
fields on the principal bundle. The anchor $a_{\mathbb E}$ is its
projection to $TY$. In a connection and flux splitting,
\begin{equation}\label{eq:courant-pairing}
\begin{gathered}
\mathbb E\simeq TY\oplus\mathfrak k\oplus T^*Y,\qquad
a_{\mathbb E}(X+u+\eta)=X,\\
\langle X+u+\eta,Y+v+\zeta\rangle
=\tfrac12\bigl(\zeta(X)+\eta(Y)\bigr)+\mathfrak c_z(u,v),\\
dH=\mathfrak c_z(F_C,F_C),\qquad C=(\Theta,A).
\end{gathered}
\end{equation}
Here $F_C=(R_\Theta,F_A)$ is the curvature of the principal connection $C$, and $H$ is
the splitting three-form. The last identity is the Courant anomaly
identity in our convention. We denote the Dorfman bracket by $\circ$.
Its local formula is given in
Appendix~\ref{app:courant-action}. These are the string
extensions of \cite[Appendix A]{StringAlgebroids} and
\cite[Section 2.1]{CoupledG2}, with the signs fixed by
\eqref{eq:courant-pairing}.

\begin{definition}\label{def:courant-heterotic-data}
An admissible metric is a positive rank-seven subbundle
$V_+\subset\mathbb E$ such that
$a_{\mathbb E}|_{V_+}:V_+\to TY$ is an isomorphism.
Set $V_-=V_+^\perp$, and let $\pi_-$ be the orthogonal projection onto
$V_-$.
\end{definition}
This is the admissible-metric convention of
\cite[Definition 2.5]{CoupledG2}. The complement $V_-$ is
nondegenerate and can have mixed signature.

\begin{proposition}\label{prop:adapted-courant-data}
Let $v_+=(a_{\mathbb E}|_{V_+})^{-1}$. The admissible metric determines
an ordinary metric, an isotropic splitting, a principal connection
$C=(C_T,A)$ and a three-form by
\begin{equation}\label{eq:adapted-courant-data}
\begin{aligned}
g(X,Y)&=\langle v_+(X),v_+(Y)\rangle,&
s(X)&=v_+(X)-j(gX),\\
H(X,Y,Z)&=2\langle s(X)\circ s(Y),s(Z)\rangle.&
\end{aligned}
\end{equation}
Here $gX=g(X,\cdot)$, and $\nabla^g$ denotes the Levi--Civita
connection. The connection $C$ is the projection of $s$ to the marked
Atiyah algebroid. Put $\sigma_\pm(X)=s(X)\pm j(gX)$. Then
\begin{equation}\label{eq:canonical-hull-tangent}
a_{\mathbb E}\pi_-\bigl(\sigma_+(X)\circ\sigma_-(Y)\bigr)
=\nabla^g_XY-\tfrac12g^{-1}H(X,Y,\cdot).
\end{equation}
\end{proposition}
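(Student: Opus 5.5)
We will verify the well-definedness statements first and then reduce \eqref{eq:canonical-hull-tangent} to a computation in the splitting determined by $s$ itself.

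\textbf{Step 1: the adapted data are well defined.} Positivity of $V_+$ makes $g$ a Riemannian metric, since $v_+$ is an isomorphism onto $V_+$.

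Next, $s$ is isotropic. From $\langle v_+X,v_+Y\rangle=g(X,Y)$, $\langle v_+X,j\eta\rangle=\tfrac12\eta(X)$ and $\langle j\cdot,j\cdot\rangle=0$ we get
\[
\langle s(X),s(Y)\rangle=g(X,Y)-\tfrac12 g(X,Y)-\tfrac12 g(X,Y)=0 .
\]
Also $a_{\mathbb E}\circ s=\id$, because $a_{\mathbb E}\circ j=0$. Hence $s$ is an isotropic lift of $TY$. Its projection to $\operatorname{At}(P_T\times_YP_G)$ is a splitting of the Atiyah sequence, that is, a principal connection $C=(C_T,A)$.

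The complement of $s(TY)\oplus j(T^*Y)$ is $(s(TY)\oplus j(T^*Y))^\perp\cong\mathfrak k$. This identifies $\mathbb E\simeq TY\oplus\mathfrak k\oplus T^*Y$ with the pairing \eqref{eq:courant-pairing}. By the standard result for string algebroids \cite{StringAlgebroids,CoupledG2}, $H(X,Y,Z)=2\langle s(X)\circ s(Y),s(Z)\rangle$ is a three-form, with $dH=\mathfrak c_z(F_C,F_C)$. Tensoriality and total skew-symmetry follow from isotropy of $s$ together with the Courant axioms $\langle e\circ e,e'\rangle=\tfrac12 a(e')\langle e,e\rangle$ and the Leibniz rule. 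The factor $2$ compensates the $\tfrac12$ in the pairing.

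\textbf{Step 2: a frame for $V_-$.} In this splitting $V_+=\{X+gX\}$ and $\sigma_+=v_+$. Since $\langle\sigma_-(X),\sigma_+(Y)\rangle=-g(X,Y)+\tfrac12g+\tfrac12g=0$, the image of $\sigma_-$ lies in $V_-$. Moreover $V_-=\sigma_-(TY)\oplus\mathfrak k$, an orthogonal sum, as $\mathfrak k$ is orthogonal to both $s$ and $j$.

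Thus for $e\in\mathbb E$ we have $a_{\mathbb E}\pi_-(e)=a_{\mathbb E}\bigl(\sigma_-(Z)\bigr)=Z$, where $Z$ is determined by
\[
\langle e,\sigma_-(W)\rangle=\langle\sigma_-Z,\sigma_-W\rangle=-g(Z,W)\quad\text{for all }W.
\]
So it suffices to compute $\langle\sigma_+(X)\circ\sigma_-(Y),\sigma_-(W)\rangle$.

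\textbf{Step 3: the Dorfman bracket in the splitting.} Use the local formula of Appendix~\ref{app:courant-action} for the Dorfman bracket, with vanishing $\mathfrak k$-components:
\[
(X+\xi)\circ(Y+\eta)=[X,Y]+\mathcal L_X\eta-\iota_Yd\xi+\iota_Y\iota_XH-F_C(X,Y)\text{-term}.
\]
Only the $TY\oplus T^*Y$ part contributes when pairing with $\sigma_-(W)$, since $\sigma_-(W)$ has no $\mathfrak k$-component. Pairing with $W-gW$ gives
\[
\tfrac12\bigl(\mathcal L_X(-gY)+\iota_Y d(gX)+H(X,Y,\cdot)\bigr)(W)-\tfrac12 g([X,Y],W).
\]
Expand $\mathcal L_X$ and $d$ using torsion-freeness and metric-compatibility of $\nabla^g$. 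For example, $\iota_Yd(gX)(W)=g(\nabla_YX,W)-g(\nabla_WX,Y)$ and $(\mathcal L_X gY)(W)=g(\nabla_XY,W)+g(Y,\nabla_WX)$. The terms then collapse to $-g(\nabla^g_XY,W)+\tfrac12H(X,Y,W)$.

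Comparing with $-g(Z,W)$ gives $Z=\nabla^g_XY-\tfrac12 g^{-1}H(X,Y,\cdot)$.

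\textbf{Main obstacle.} The main difficulty is bookkeeping of conventions: the factor $\tfrac12$ in the pairing, the ordering and sign conventions in the Dorfman bracket, and the normalization of $H$. Any slip there changes the coefficient of $H$ from $-\tfrac12$ to $+\tfrac12$ or $\pm1$. I would first fix these conventions by checking $\langle s(X)\circ s(Y),s(Z)\rangle$ against the appendix formula, so that it reproduces exactly $\tfrac12 H(X,Y,Z)$. I would then carry out Step 3 with those fixed conventions.
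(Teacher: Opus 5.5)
Your proposal follows the paper's own route: isotropy from the half-evaluation pairing, the Atiyah projection of $s$ as the connection, adapted coordinates with $V_+=\{X+gX\}$ and $V_-=\sigma_-(TY)\oplus\mathfrak k$, and the tangent projection of the mixed Dorfman bracket as the Koszul expression plus $-\tfrac12 g^{-1}H(X,Y,\cdot)$. The paper cites that last step as the weak Koszul formula, while you derive it directly by pairing with $\sigma_-(W)$.

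There is one sign slip to fix. The bracket contains $-\iota_Y d\xi$ with $\xi=gX$, so the term in your Step~3 display should be $-\iota_Y d(gX)$, not $+\iota_Y d(gX)$. With the corrected sign, your two sample expansions together with $g([X,Y],W)=g(\nabla_XY-\nabla_YX,W)$ do collapse to $-g(\nabla^g_XY,W)+\tfrac12H(X,Y,W)$; with the sign as written they do not.
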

\begin{proof}
The half-evaluation pairing makes $s$ isotropic: the two cross terms
each subtract $g(X,Y)/2$. Its Atiyah projection is a splitting of
the Atiyah sequence and therefore a principal connection. The bracket
recovers $H$ by \eqref{eq:adapted-courant-data}, and its Courant identity
gives \eqref{eq:courant-pairing}. In adapted coordinates
$V_+=\{X+gX\}$ and $V_-=\{X+u-gX\}$.
The tangent projection of the mixed Dorfman bracket is the usual
Koszul expression with the additional term
$-g^{-1}H(X,Y,\cdot)/2$. This is the weak Koszul formula
\cite[Section 2.2]{CoupledG2} in the bracket convention of
Appendix~\ref{app:courant-action}.
\end{proof}

Represent the identification \(TY\simeq E\) by an oriented, spin-compatible
isometry \(e:(TY,g)\longrightarrow E\),
modulo changes of tangent frame. The
Hull compatibility condition is
\begin{equation}\label{eq:intrinsic-hull}
e^{-1}\nabla^{C_T}_X(eY)
=a_{\mathbb E}\pi_-\bigl(\sigma_+(X)\circ\sigma_-(Y)\bigr)
=\nabla^g_XY-\tfrac12g^{-1}H(X,Y,\cdot).
\end{equation}
It is a differential restriction on admissible metrics, which identifies
the induced tangent connection with the metric connection of torsion
$-H$.

A heterotic $G_2$ configuration consists of an admissible metric,
a compatible unit spinor line $[\chi]$ in its real spinor bundle
$S(V_+)$, a positive density $\varrho$, and a tangent-factor
identification $e$ satisfying \eqref{eq:intrinsic-hull}.

\subsection{The \texorpdfstring{$G_2$}{G2} spinor and density}
The anchor identifies $S(V_+)$ with the ordinary metric spinor bundle.
Thus the spinor line of a heterotic configuration and a positive
three-form describe the same $G_2$ datum. We use the
positive-form coordinates and Clifford conventions as follows.

A $G_2$ structure on a seven-manifold is a positive three-form
$\varphi$: at each point it belongs to the open $GL^+(7,\R)$ orbit of the
standard three-form with stabilizer $G_2$. It determines a metric,
orientation and four-form $\psi=*_\varphi\varphi$. We normalize these by
\begin{equation}\label{eq:stable-metric}
 \frac16(\iota_v\varphi)\wedge(\iota_w\varphi)\wedge\varphi
       =g_\varphi(v,w)\vol_\varphi,\qquad
 \varphi\wedge\psi=7\vol_\varphi.
\end{equation}
The associated decompositions into $G_2$ representations are
\[
 \Lambda^2=\Lambda^2_7\oplus\Lambda^2_{14},\qquad
 \Lambda^3=\Lambda^3_1\oplus\Lambda^3_7\oplus\Lambda^3_{27},
\]
where subscripts denote fibre dimensions, $\Lambda^2_7$ consists of
contractions $\iota_v\varphi$, and $\Lambda^3_1=\R\varphi$. We write
$\pi_r$ for the orthogonal projection onto the summand of dimension $r$.

We use the convention of \cite[Section 2.1.2]{DLSHeterotic}:
\begin{equation}\label{eq:torsion-classes}
 \dd\varphi=\tau_0\psi+3\tau_1\wedge\varphi+*_g\tau_3,\qquad
 \dd\psi=4\tau_1\wedge\psi+*_g\tau_2.
\end{equation}
Here $\tau_0$ is a function, $\tau_1$ a one-form,
$\tau_2\in\Omega^2_{14}$ and $\tau_3\in\Omega^3_{27}$.
The structure is torsion-free when all four vanish, equivalently when
$\dd\varphi=\dd\psi=0$ or $\nabla^{g_\varphi}\varphi=0$.
A connection is a $G_2$ instanton when its curvature satisfies
$F\wedge\psi=0$, equivalently $F\in\Omega^2_{14}(\ad P)$.

The stabilizer of a unit spinor in the real spin representation of
$\Spin(7)$ is $G_2$.  For a fixed metric and orientation, a unit spinor
therefore determines a compatible $G_2$-structure.  The two choices
$\chi$ and $-\chi$ determine the same positive three-form, so the
corresponding geometric datum is the line $\mathbb R\chi$.
Choose a local unit representative $\chi_0$ of the background spinor line.

Let $\Delta_g$ be the real spinor bundle with symmetric invariant pairing
$\Bsp$. The Clifford representation, composed with the canonical
inclusion of differential forms into the Clifford algebra, gives
\[
 c_g:\Lambda^\bullet T^*Y\longrightarrow\End(\Delta_g).
\]
We write $\alpha\cdot\chi:=c_g(\alpha)\chi$ for its action on a spinor,
and abbreviate $c_g(X^\flat)$ by $c_g(X)$ for a vector field $X$.
Thus
\[
 c_g(X)c_g(Y)+c_g(Y)c_g(X)=-2g(X,Y)\id_{\Delta_g}.
\]
The extension to forms is linear and is characterized by
\[
 c_g(X_1^\flat\wedge\cdots\wedge X_p^\flat)
 =c_g(X_1)\cdots c_g(X_p)
\]
when $X_1,\ldots,X_p$ are pairwise orthogonal. A unit spinor $\chi$ gives
\begin{equation}\label{eq:spinor-bilinears}
\begin{aligned}
 \varphi(X,Y,Z)
 &=\Bsp\!\left(\chi,
 (X^\flat\wedge Y^\flat\wedge Z^\flat)\cdot\chi\right),\\
 \psi(X,Y,Z,W)
 &=-\Bsp\!\left(\chi,
 (X^\flat\wedge Y^\flat\wedge Z^\flat\wedge W^\flat)\cdot\chi\right).
\end{aligned}
\end{equation}
The density is an independent field:
\begin{equation}\label{eq:native-density}
 \varrho=e^{-2\Phi}\vol_g,\qquad
 \zeta_\ph=e^{-\Phi}\chi.
\end{equation}
Put $\varrho_0=e^{-2\Phi_0}\vol_0$ for the background density.
We call $\zeta_\ph=e^{-\Phi}\chi$ the dilaton-weighted spinor.
Conversely, for $\zeta_\ph\neq0$,
\[
 e^{-2\Phi}=\Bsp(\zeta_\ph,\zeta_\ph),
 \qquad
 \chi=\frac{\zeta_\ph}
 {\sqrt{\Bsp(\zeta_\ph,\zeta_\ph)}}.
\]

Let $G_g$ be defined by
\[
 g(v,w)=g_0(G_gv,w),
\]
and set $I_g=G_g^{-1/2}$.  The isometry
\[
 I_g:(TY,g_0)\longrightarrow(TY,g)
\]
lifts locally to the corresponding spinor bundles.  We use this
identification to express all nearby metric connections on the fixed
background $SO(7)$-bundle.
Write $\Delta$ for the spinor bundle associated to the fixed bundle $E$.
Denote its Clifford representation by $c_0$, using the background metric
$g_0$ to identify vectors and covectors. Let
\[
 \sg:\mathfrak{so}(E)\longrightarrow\End(\Delta)
\]
be the differential of the spin representation, using the convention
\begin{equation}\label{eq:spin-action}
 [\sg(Z),c_0(X)]=c_0(ZX),\qquad
 \sg(Z)^*=-\sg(Z),
\end{equation}
for $Z\in\mathfrak{so}(E)$ and $X\in E$, where the adjoint is taken with
respect to $\Bsp$.
For a metric connection $\Theta$ on $(TY,g)$, write
$\nabla^{\Theta,\Delta_g}$ for its
induced spinor connection and
\[
 \slashed D_\Theta=\mathrm{cl}_g\circ\nabla^{\Theta,\Delta_g},\qquad
 \mathrm{cl}_g:T^*Y\otimes\Delta_g\longrightarrow\Delta_g,
 \quad \xi\otimes\eta\longmapsto c_g(\xi)\eta.
\]
We set $D_g=\slashed D_{\Theta_{\LC}(g)}$.

The density defines the divergence
\begin{equation}\label{eq:density-divergence}
\operatorname{div}_{\varrho}(q)
=\varrho^{-1}\mathcal L_{a_{\mathbb E}(q)}\varrho
=\operatorname{div}_g(a_{\mathbb E}(q))-2d\Phi(a_{\mathbb E}(q)).
\end{equation}
We keep the density itself, since its divergence alone forgets a
positive constant on each connected component and hence the constant
dilaton. The pair $(V_+,\operatorname{div}_{\varrho})$ determines the
canonical generalized Dirac operator used in
Section~\ref{subsec:courant-functional}, independently of the choice
of compatible generalized connection \cite[Lemma 3.4]{GFDirac}.

\subsection{Physical splitting and Green--Schwarz data}
\label{subsec:physical-splitting}
Choose a local splitting in the fixed gerbe sector. Its coordinates are
$(g,[\chi],\varrho,B,A)$, equivalently $(\varphi,B,\Phi,A)$.
The two-form $B$ is a gerbe and splitting
coordinate. The connection $A$ is the gauge component recovered from
the admissible metric. The tangent component is constrained by
\eqref{eq:intrinsic-hull}, so it is induced by $g$ and $H$.
A passive change of splitting changes the written coordinates of a
fixed geometric configuration. An active automorphism acts on the
configuration and its higher gauge data.

The relative Chern--Simons representative compares the tangent and
gauge connections using the fixed standard-embedding identification.
For a connection $\Theta$ and an adjoint-valued one-form $a$, define
\begin{equation}\label{eq:relative-transgression}
 \cT_3(\Theta,a)
   =2P(a,R_\Theta)+P(a,D_\Theta a)-\frac23\tr_7(a\wedge a\wedge a),
\end{equation}
where $R_\Theta$ is the curvature of $\Theta$. For the real coupling $z$,
the relative flux is $H_z=dB-z\cT_3(C_T,A-C_T)/4$.
Its exterior derivative is the anomaly identity in
\eqref{eq:courant-pairing}. We use the following contorsion convention.
Define
\[
 C_g:\Omega^3(Y)\longrightarrow
 \Omega^1\bigl(\mathfrak{so}(TY,g)\bigr)
\]
by
\begin{equation}\label{eq:contorsion}
 g\bigl(Y,C_g(H)_X Z\bigr)=\frac12H(X,Y,Z)
\end{equation}
for all vector fields $X,Y,Z$.
Under the fixed identification $E\simeq(TY,g_0)$, we regard this as an
$\mathfrak{so}(E)$-valued one-form by
\[
 C_g(H)_X\longmapsto I_g^{-1}\circ C_g(H)_X\circ I_g.
\]
Tangent connections are transported by
$\nabla_X\longmapsto I_g^{-1}\circ\nabla_X\circ I_g$.
With the spin convention above, the connection
$\Theta_{\LC}(g)+C_g(H)$ induces the Dirac operator
\[
 \slashed D_{\Theta_{\LC}(g)+C_g(H)}=D_g-\frac34\,c_g(H).
\]
Appendix~\ref{app:clifford} gives the Clifford algebra calculation.

\subsection{First-order truncation and formal neighbourhood}\label{subsec:eft}

We work throughout to first order in \(\alpha'\), but to all formal orders
in deformations of the fields. We encode the \(\alpha'\)-truncation using the dual-number ring
\[
\mathbb D:=\mathbb R[\epsilon]/(\epsilon^2),
\qquad
\epsilon=\alpha',
\]
and write
\[
\mathbb D_0:=\mathbb D/(\epsilon)\cong\mathbb R
\]
for its residue module. The string Courant algebroid remains the fixed
real object of Section~\ref{subsec:string-background}. The mixed
$\mathbb D/\mathbb D_0$ modules encode the first-order coefficients of
its field theory. The geometric fields are $\mathbb D$-valued,
whereas the gauge connection is $\mathbb D_0$-valued. This is the
physical first-order precision of \cite[Section 7]{MSS}: the gauge
instanton equation is imposed at leading order, and gauge feedback into
the geometric equations carries one factor of the coupling.

For a \(\mathbb D\)-valued field $q$ we write
\[
q=q^{[0]}+\epsilon q^{[1]}.
\]

Gauge feedback into the geometric variational equations occurs through
the \(\alpha'\)-weighted Green--Schwarz terms and therefore takes values
in the ideal \(\epsilon\mathbb D\).  Algebraically this is encoded by the
canonical \(\mathbb D\)-linear isomorphism
\begin{equation}\label{eq:epsilon-ideal-inclusion}
  \jmath_\epsilon:\mathbb D_0\longrightarrow\epsilon\mathbb D,
  \qquad [x]\longmapsto\epsilon x .
\end{equation}
It extends coefficientwise to bundle-valued forms.  Accordingly, if
\(X\) is \(\mathbb D_0\)-valued, the notation \(\epsilon X\) means
\(\jmath_\epsilon(X)\). Whenever such a quantity is multiplied by a
\(\mathbb D\)-valued field, the latter is first reduced modulo
\(\epsilon\).

The $\alpha'$-expansion and the expansion in field deformations are
independent.  The relation $\epsilon^2=0$ truncates only the former.
For the latter we work in the formal neighbourhood of the fixed
background, so local expressions are expanded to arbitrary order in the
field deviations.

Smooth fibrewise maps extend canonically to $\mathbb D$-valued fields.
If $F:U\to W$ is smooth, with $U$ open in a finite-dimensional vector
space and $W$ a vector space, and
\[
 z=z^{[0]}+\epsilon z^{[1]},
\]
then
\[
 F(z)=F(z^{[0]})
      +\epsilon\,dF_{z^{[0]}}(z^{[1]}).
\]
Inverses, determinants and fractional powers are therefore understood
using the smooth local branch containing the background value.

Define the preliminary connection, difference of connections and flux by
\begin{align}
 \Theta^u&=\Theta_{\LC}(g)+C_g(\dd B),&
 a&=A-\red{\Theta^u},\label{eq:preliminary-connection}\\
 H&=\dd B-\frac14\jmath_\eps\bigl(\cT_3(\red{\Theta^u},a)\bigr),&
 \Theta_H&=\Theta_{\LC}(g)+C_g(H).
 \label{eq:determined-graph}
\end{align}

\begin{lemma}\label{lem:nilpotent-flux}
By nilpotency of $\epsilon$, equations
\eqref{eq:preliminary-connection}--\eqref{eq:determined-graph} uniquely
determine $H$ and $\Theta_H$ for arbitrary nearby $g,B,A$ in the spaces
fixed above.  Equivalently,
\[
 H=\dd B-\frac14\jmath_\eps
   \bigl(\cT_3(\red{\Theta_H},a)\bigr),
 \qquad
 \Theta_H=\Theta_{\LC}(g)+C_g(H).
\]
They satisfy the anomaly identity
$\dd H=-\jmath_\eps(P(F_A,F_A)-P(R_{\red{\Theta_H}},R_{\red{\Theta_H}}))/4$.
\end{lemma}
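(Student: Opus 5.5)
The plan has three parts: show that the formulas are explicit, prove the fixed-point reformulation, and compute the anomaly identity.

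\emph{Explicitness.} Equations \eqref{eq:preliminary-connection}--\eqref{eq:determined-graph} are not implicit at all. The connection $\Theta^u$ depends only on $g$ and $\dd B$. The difference $a=A-\red{\Theta^u}$ is then $\mathbb D_0$-valued, and $H$ is given explicitly from $\dd B$ and $\jmath_\eps(\cT_3(\red{\Theta^u},a))$. Finally $\Theta_H$ is given explicitly from $g$ and $H$. Each step applies smooth fibrewise maps, extended to $\mathbb D$-valued fields by the first-order Taylor rule, so existence and uniqueness follow at once.

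\emph{The fixed-point form.} The key observation is
\[
\red{\Theta_H}=\Theta_{\LC}(\red g)+C_{\red g}(\red H),
\qquad
\red H=\dd\red B=\red{(\dd B)}.
\]
This holds because $\jmath_\eps$ lands in $\eps\mathbb D$, so the Green--Schwarz term dies modulo $\eps$. Hence $\red{\Theta_H}=\red{\Theta^u}$. Substituting this into the definition of $H$ gives the displayed implicit equation.

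Conversely, take any pair $(H',\Theta')$ satisfying the implicit system. Reducing mod $\eps$ forces $\red{H'}=\dd\red B$, hence $\red{\Theta'}=\red{\Theta^u}$. Then $H'$ is determined by the right-hand side, so $H'=H$ and $\Theta'=\Theta_H$. This is the sense in which nilpotency gives uniqueness: a one-step fixed-point iteration stabilises.

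\emph{The anomaly identity.} Apply $\dd$ to $H=\dd B-\tfrac14\jmath_\eps(\cT_3(\red{\Theta_H},a))$. Since $\jmath_\eps$ commutes with $\dd$ (it acts coefficientwise),
\[
\dd H=-\tfrac14\jmath_\eps\bigl(\dd\cT_3(\Theta,a)\bigr),
\qquad \Theta=\red{\Theta_H}.
\]
It remains to show the Chern--Simons transgression formula
\[
\dd\cT_3(\Theta,a)=P(F_A,F_A)-P(R_\Theta,R_\Theta),
\qquad A=\Theta+a.
\]
This is the main computational step, and the one where sign conventions could bite.

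I would work with $P=-\tr_7$ and write
\[
F_A=R_\Theta+D_\Theta a+a\wedge a .
\]
Expanding gives
\[
P(F_A,F_A)-P(R,R)=2P(R,D_\Theta a)+2P(R,a\wedge a)+P(D_\Theta a,D_\Theta a)+2P(D_\Theta a,a\wedge a)+P(a^2,a^2).
\]
The last term vanishes by cyclicity of the trace of an even power of a one-form.

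Next, differentiate $\cT_3$ using $\dd P(x,y)=P(D_\Theta x,y)\pm P(x,D_\Theta y)$, the Bianchi identity $D_\Theta R=0$, and $D_\Theta^2a=[R,a]$:
\begin{itemize}
\item $\dd\,2P(a,R)=2P(D_\Theta a,R)$.
\item $\dd P(a,D_\Theta a)=P(D_\Theta a,D_\Theta a)-P(a,[R,a])$, and the last piece equals $2P(R,a\wedge a)$ up to the trace sign.
\item $\dd\bigl(-\tfrac23\tr a^3\bigr)=-2\tr(D_\Theta a\,a^2)=2P(D_\Theta a,a^2)$.
\end{itemize}
Matching these term by term gives the identity. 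I expect the only real obstacle to be keeping the graded signs and the normalisation of $P$ and the cubic term consistent. For that I would check against the standard formula $\tr F^2=\dd\,\tr(A\dd A+\tfrac23A^3)$ in the flat case $\Theta=0$, which fixes the coefficient $-\tfrac23$ with $P=-\tr_7$. Finally, $R_\Theta$ in the statement is $R_{\red{\Theta_H}}$ and $F_A$ is $\mathbb D_0$-valued, consistent with $\jmath_\eps$.
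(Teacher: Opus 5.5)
Your proof is correct. The existence and uniqueness part follows the paper: reduction modulo $\eps$ kills the Green--Schwarz term, so $\red H=\dd\red B$ and $\red{\Theta_H}=\red{\Theta^u}$. One substitution then settles both the explicit and the implicit form. Your treatment of the converse, that any solution $(H',\Theta')$ of the implicit system has $\red{\Theta'}=\red{\Theta^u}$, spells out what the paper compresses into ``by one substitution.''

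For the anomaly identity you take a different route. The paper rewrites the relative transgression as $\cT_3(\Theta,a)=\CS_P(\Theta+a)-\CS_P(\Theta)+\dd P(\Theta,a)$, equation \eqref{eq:transgression-identity}, so after applying $\dd$ it only needs $\dd\CS_P(C)=P(F_C,F_C)$. You instead differentiate $\cT_3$ directly, using $D_\Theta R_\Theta=0$, $D_\Theta^2a=[R_\Theta,a]$ and graded cyclicity of the trace. You then match the result against the expansion of $F_A=R_\Theta+D_\Theta a+a\wedge a$. Your term-by-term matching, including $-P(a,[R_\Theta,a])=2P(R_\Theta,a\wedge a)$ and $\tr(a^4)=0$, closes with exactly the paper's normalisations $P=-\tr_7$ and coefficient $-\tfrac23$.

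The paper's route is shorter. It also reuses the Chern--Simons identity that drives the relative gerbe descent in Appendix~\ref{app:finite-gerbe}. However, $\CS_P(\Theta)$ is only defined in a local trivialisation. Your computation involves only the tensorial quantities $a$, $R_\Theta$ and $D_\Theta a$, so it is manifestly global and self-contained. The cost is the graded-sign bookkeeping you already flagged.
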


\begin{proof}
The flux satisfies $\red H=\dd\red B$, hence
$\red{\Theta_H}=\red{\Theta^u}$. Thus the reduced connection argument
of the transgression is fixed before $H$ is substituted into the Hull
connection. This proves existence and uniqueness by one substitution.

Write $\CS_P(C)=P(C,\dd C)-\frac23\tr_7(C\wedge C\wedge C)$.
The relative transgression identity is
\begin{equation}\label{eq:transgression-identity}
 \cT_3(\Theta,a)=\CS_P(\Theta+a)-\CS_P(\Theta)+\dd P(\Theta,a).
\end{equation}
Applying $\dd$ proves
\begin{equation}\label{eq:anomaly}
 \dd H=-\frac14\jmath_\eps
       \bigl(P(F_A,F_A)-P(R_{\red{\Theta_H}},R_{\red{\Theta_H}})\bigr).
\end{equation}
All connection arguments on the right are in $\mathbb{D}_0$, and the right side
takes values in $\eps \mathbb{D}$.
\end{proof}

Equations \eqref{eq:preliminary-connection}--\eqref{eq:anomaly} use the
relative Chern--Simons representative. On overlaps the local forms $B$
are related by the relative Green--Schwarz descent of
Appendix~\ref{app:finite-gerbe}, and the three-form $H$ is globally
defined.

\subsection{Physical BPS section and the leading-flux constraint}
Let $\mathscr V$ denote the formal neighbourhood of
$(\varphi_0,B_0,\Phi_0,A_0)$ in the space of fields $(\varphi,B,\Phi,A)$,
where $\varphi,B,\Phi$ are $\mathbb D$-valued and $A$ is
$\mathbb D_0$-valued. At the background, $A_0=\Theta_0$, $\Phi_0$ is
constant and $B_0$ consists of the fixed local potentials with
$d_YB_0^{[0]}=0$.

After fixing the background connections, their differences are
bundle-valued one-forms.  In particular,
\[
 A-A_0\in\Omega^1(\mathfrak{so}(E))\otimes\mathbb D_0.
\]

The flux and the tangent connection on this space
are the functions in \eqref{eq:determined-graph}. In particular,
$\Theta_H=\Theta_H(g_\varphi,H)$ is not an additional coordinate.
Let
\[
\mathcal R_{\rm BPS}
:=
(\Omega^5\oplus\Omega^7\oplus\Omega^3)\otimes\mathbb D
\oplus
\Omega^6(\mathfrak{so}(E))\otimes\mathbb D_0,
\]
and write its elements as
\[
(C,\Sigma,T,I)\in\mathcal R_{\rm BPS}.
\]
Define
\[
\mathcal E_{\rm BPS}:\mathscr V\longrightarrow\mathcal R_{\rm BPS}
\]
by
\begin{equation}\label{eq:bps-residuals}
\mathcal E_{\rm BPS}(\varphi,B,\Phi,A)
=
\left(
\dd(e^{-2\Phi}\psi),\,
\dd\varphi\wedge\varphi,\,
H+*_g\dd\varphi-2*_g(\dd\Phi\wedge\varphi),\,
F_A\wedge\red\psi
\right).
\end{equation}

Alongside \(\mathcal E_{\rm BPS}\), define the admissibility map
\[
  \mathcal A:\mathscr V\longrightarrow\Omega^3(Y),
  \qquad
  \mathcal A(q)=H^{[0]}=d_YB^{[0]}.
\]
The constraint $\mathcal A(q)=0$ is an additional differential equation
on the Hull-compatible configuration space. The Courant anomaly
identity holds before this constraint or any BPS equation is imposed.
The physical equations are
\[
  \mathcal E_{\rm BPS}(q)=0,
  \qquad
  \mathcal A(q)=0.
\]
We write
\begin{equation}\label{eq:physical-configurations}
  \mathscr A_{\rm adm}
  =
  \mathcal A^{-1}(0)
  \subset\mathscr V,
  \qquad
  i:\mathscr A_{\rm adm}\hookrightarrow\mathscr V,
\end{equation}
and
\[
  \mathcal Z_{\rm BPS}
  =
  \mathcal E_{\rm BPS}^{-1}(0)\cap\mathscr A_{\rm adm}.
\]

\paragraph{Fixed gerbe sector.}\label{par:fixed-gerbe-sector}
The underlying gerbe/anomaly class and its background trivialization
are fixed throughout the formal deformation. The potential \(B\) may
be represented locally, with the relative Green--Schwarz descent of
Appendix~\ref{app:finite-gerbe}. Since the Green--Schwarz corrections are \(\epsilon\mathbb D\)-valued,
\[
  \beta=(B-B_0)^{[0]}
\]
is a globally defined two-form. The marked background satisfies
\(d_YB_0^{[0]}=0\), and hence
\[
  \mathcal A(q)=d_YB^{[0]}=d_Y\beta.
\]
Thus the admissibility residual of every field is an exact three-form. This is the exact-form property used in
Lemma~\ref{lem:resolved-exact-form}.

The coefficient modules of the fields and equations are:
\begin{center}\small
\begin{tabular}{p{.37\textwidth}p{.22\textwidth}p{.29\textwidth}}
\toprule
Object & Coefficient module & Equation\\
\midrule
$\varphi,B,\Phi,H,\Theta_H$
  &$\mathbb D$&equality in $\mathbb D$\\

Gauge connection $A$, relative connection $a$
  &$\mathbb D_0$&equality in $\mathbb D_0$\\

$C,\Sigma,T$
  &$\mathbb D$&$C=\Sigma=T=0$ in $\mathbb D$\\

$I=F_A\wedge\red\psi$
  &$\mathbb D_0$&$I=0$ in $\mathbb D_0$\\

Pontryagin term in $\dd H$
  &$\epsilon\mathbb D$&image under $\jmath_\epsilon$\\

\bottomrule
\end{tabular}
\end{center}
On $\mathscr A_{\rm adm}$,
\[
\red H=\dd\red B=0.
\]
This is the leading-flux constraint, called the small-flux sector in the first-order moduli problem
of \cite{MSS}.
The supersymmetry and gauge-instanton equations considered here are
\begin{equation}\label{eq:physical-bps}
 \begin{aligned}
 \dd(e^{-2\Phi}\psi)&=0,&
 \dd\varphi\wedge\varphi&=0,\\
 H+*_g\dd\varphi-2*_g(\dd\Phi\wedge\varphi)&=0,&
 F_A\wedge\red\psi&=0\quad\text{in }\mathbb{D}_0.
 \end{aligned}
\end{equation}
The tangent connection is the function \eqref{eq:determined-graph}.

The geometric supersymmetry equations use the conventions of
\cite{DLSHeterotic}.
In the torsion decomposition they impose
$\tau_0=\tau_2=0$ and $2\tau_1=\dd\Phi$, with
$H=-\tau_1\lrcorner\psi-\tau_3$.

For a gauge transformation $u$ of $E$, Appendix~\ref{app:finite-gerbe}
gives

\begin{equation}\label{eq:physical-compact-covariance}
 A^u=u^{-1}Au+u^{-1}\dd u,\qquad
 F_{A^u}=u^{-1}F_Au,\qquad I(A^u,\varphi)=u^{-1}I(A,\varphi)u.
\end{equation}
When only the gauge field is transformed, the accompanying
Green--Schwarz shift of $B$ leaves $H$ unchanged.  Hence
$C,\Sigma,T$ are unchanged, while $I$ transforms by conjugation as in
\eqref{eq:physical-compact-covariance}.  The shift of $B$ is
$\epsilon\mathbb D$-valued, so $\red B$ and the condition
$\dd\red B=0$ are unchanged.  Thus $\mathcal Z_{\rm BPS}$ is
gauge-invariant.

\subsection{The higher gauge symmetry}
\label{subsec:courant-symmetry}
The string Courant bracket acts on connection and gerbe splitting
data through its canonical Lie-2 algebra
\(C^\infty(Y)\xrightarrow{jd}\Gamma(\mathbb E)\)
\cite{CourantLie2}. Vector fields give diffeomorphisms, the gauge
adjoint factor gives gauge transformations, and one-forms give the
\(B\)-field symmetry. Functions supply its gauge-for-gauge reducibility.
The invariant form \(\mathfrak c_z\) supplies the Green--Schwarz correction.
This Lie-2 action provides the gauge and reducibility hierarchy used
in the local \(Q\)-theories below.

\section{Local \texorpdfstring{\(Q\)}{Q}-theories and mixed Hamiltonian structures}
\label{sec:deformation-algebra}

Section~\ref{sec:geometry} defines the geometric BPS problem and its
higher gauge symmetry. We now pass to a local formal model of its
derived deformation theory. The mixed coefficient category records
the first-order precision. The shifted-cotangent construction
will supply the variational model of Section~\ref{sec:equivalence}.
The conventions below separate coupling order, deformation order,
horizontal degree and ghost number.

\subsection{Gradings and totalization}

Let \(\mathcal F\to Y\) be a graded vector bundle and
\(\pi_\infty:J^\infty\mathcal F\to Y\) its infinite jet bundle. Write
\[
\mathcal O_{\rm loc}
=
C^\infty_{\rm loc}(J^\infty\mathcal F)
=
\bigcup_{r\geq 0}C^\infty(J^r\mathcal F)
\]
for functions depending on finitely many jets, where functions on
\(J^r\mathcal F\) are understood as pulled back to \(J^\infty\mathcal F\).
The space of horizontal $k$-forms is
\[
\Omega_{\rm hor}^k
=
\mathcal O_{\rm loc}
\otimes_{C^\infty(Y)}
\Omega^k(Y).
\]
More generally, let
\[
\Omega_{\rm loc}^{k,v}(J^\infty\mathcal F)
\]
denote the local variational bicomplex, with horizontal degree \(k\) and
variational degree \(v\). In local jet coordinates
\((x^i,u^a_I)\), its variational one-forms are generated by
\[
\delta u^a_I
=
du^a_I-u^a_{Ii}\,dx^i.
\]
The horizontal and variational differentials are characterized on local
functions by
\[
d_Y f
=
D_i f\,dx^i,
\qquad
\delta f
=
\sum_{a,I}
\frac{\partial f}{\partial u^a_I}\,
\delta u^a_I,
\]
where \(D_i\) is the total derivative. They extend as graded derivations
and satisfy
\[
d_Y^2=\delta^2=0,
\qquad
d_Y\delta+\delta d_Y=0.
\]
Thus
\[
d_Y:
\Omega_{\rm loc}^{k,v}
\longrightarrow
\Omega_{\rm loc}^{k+1,v},
\qquad
\delta:
\Omega_{\rm loc}^{k,v}
\longrightarrow
\Omega_{\rm loc}^{k,v+1}.
\]
Local functionals are represented by top horizontal forms modulo total
derivatives,
\[
\mathcal H_{\rm loc}
=
\Omega_{\rm hor}^7/
d_Y\Omega_{\rm hor}^6.
\]
An evolutionary vector field is a graded derivation of
\(\mathcal O_{\rm loc}\) commuting with the total derivatives on \(Y\).
Equivalently, it is determined by its action on the fields and extended
to their jets by prolongation.

We use ghost number for coordinates on the graded field space and the
corresponding cochain degree for the deformation complex. They are
related by
\[
p=1-g,
\]
where \(g\) is the ghost number and \(p\) the cochain degree. Thus
\[
\begin{array}{c|l|c}
\text{cochain degree} & \text{role} & \text{ghost number}\\ \hline
-1 & \text{reducibility parameters} & 2\\
0  & \text{gauge parameters} & 1\\
1  & \text{fields} & 0\\
2  & \text{Euler--Lagrange equations} & -1\\
3  & \text{Noether identities} & -2\\
4  & \text{relations among Noether identities} & -3
\end{array}
\]
and similarly in higher degrees. The coefficient \(\epsilon\), the rings
\(\mathbb D,\mathbb D_0\), and the density line have ghost number zero.

The variational exterior derivative and contraction with a vector field
\(X\) have bidegrees
\begin{equation}\label{eq:vertical-bigrading}
\deg(\delta)=(0,1),
\qquad
\deg(\iota_X)=(|X|,-1),
\end{equation}
where the first entry is ghost number and \(|X|\) is the ghost number of
\(X\). Exchanging variational expressions of bidegrees
\((g,v)\) and \((g',v')\) contributes
\[
(-1)^{gg'+vv'}.
\]
Horizontal form factors are ordered separately and contribute the usual
exterior sign.

Let \(q_{\rm ev}\) be an evolutionary vector field of ghost degree one.
For a horizontal form \(\alpha\) of fixed degree and a local coefficient
\(a\), define
\begin{equation}\label{eq:horizontal-vertical-interface}
\widehat q_{\rm ev}(\alpha a)
=
(-1)^{\deg_{\rm hor}\alpha}\alpha\,q_{\rm ev}a.
\end{equation}
Then
\[
D_{\rm hor}=d_Y+\widehat q_{\rm ev}
\]
satisfies \(D_{\rm hor}^2=0\) whenever \(q_{\rm ev}^2=0\). Component
formulas for \(Q\) or \(f\) below always specify the evolutionary vector
field itself. The sign in
\eqref{eq:horizontal-vertical-interface} is inserted only when it acts
on horizontal-form-valued expressions. When horizontal and variational
forms occur simultaneously, we order the horizontal factors first and
then apply the variational sign rule above.

We now fix the suspension convention used to pass from a homological
vector field to its \(L_\infty\)-algebra. Let $L$ be the cochain space
at the background, and let
\begin{equation}\label{eq:grading-suspension}
  \mathbf s:L\longrightarrow L[1],
  \qquad
  |\mathbf s x|=|x|-1,
\end{equation}
where \(|x|\) denotes cochain degree. If \(x\in L^p\), then
\(\mathbf s x\) has degree \(p-1\), and the coordinate dual to
\(\mathbf s x\) has ghost number \(1-p\).

Let \(Q\) be a homological vector field vanishing at the chosen
background. Its Taylor coefficients determine graded antisymmetric
operations
\[
  \ell_n:\Lambda^nL\longrightarrow L,
  \qquad
  |\ell_n|=2-n,
\]
satisfying the \(L_\infty\)-identities. For an uncurved
\(L_\infty\)-algebra, \(\ell_0=0\), and the Maurer--Cartan equation for
\(x\in L^1\) is
\[
  \ell_1(x)
  +\frac12\ell_2(x,x)
  +\frac1{3!}\ell_3(x,x,x)
  +\cdots
  =0.
\]
We write \(\operatorname{MC}(L)\) for its solution set.

\subsection{Real local formal \(Q\)-theories}
Later constructions will use local formal coordinate changes
that need not preserve the \(\mathbb D\)-module structure. We therefore
first define the underlying real category.

Let \(E\to Y\) be a graded real vector bundle and
\(M=\Gamma(Y,E)\). For \(q_0\in M\), write
\(\widehat M_{q_0}\) for the formal neighbourhood obtained by completing
in powers of the displacement \(q-q_0\). Define $N=\Gamma(Y,E')$
and $\widehat N_{r_0}$ similarly for a graded bundle $E'$ and a
background $r_0$.

\begin{definition}[Real local formal maps]
\label{def:real-local-formal}
A real local formal map

$$
 F:\widehat M_{q_0}\longrightarrow\widehat N_{r_0}
$$

is a formal power series in the field displacement whose \(n\)-th
Taylor coefficient is a real \(n\)-linear differential operator of
finite order in each argument. The differential order may depend on
\(n\). A real local formal isomorphism is such a map whose inverse has
the same property.

A \emph{local formal \(Q\)-theory} is a formal graded field space
equipped with a ghost-degree-one local evolutionary vector field \(Q\)
such that

$$
 Q^2=0.
$$

Its marked background is required to be a zero of \(Q\).
A \(Q\)-map intertwines the two homological vector fields, and a
\(Q\)-isomorphism is an invertible \(Q\)-map in this category.
\end{definition}

\subsection{Mixed \texorpdfstring{\(\mathbb D\)}{D}-linear field theories and variational duals}
\label{subsec:mixed-variational-duals}

\begin{definition}[Mixed local formal field space]
\label{def:mixed-hamiltonian-category}
Let \(E_{\rm fr}\) and \(E_0\) be finite-rank graded real vector bundles
on \(Y\). Define

$$
 \mathcal E
 =
 E_{\rm fr}\otimes_{\mathbb R}\mathbb D
 \oplus
 E_0\otimes_{\mathbb R}\mathbb D_0,
 \qquad
 M=\Gamma(Y,\mathcal E).
$$

The first summand consists of the fields with free
\(\mathbb D\)-coefficients and the second of the
\(\mathbb D_0\)-valued fields.

For \(q_0\in M\), write \(\widehat M_{q_0}\) for its formal
neighbourhood. After translating \(q_0\) to the origin we write
\(\widehat M_0\).
\end{definition}

\begin{definition}[Mixed local functions and maps]
\label{def:mixed-local-functions}
On \(\widehat M_0\), write

$$
 x=x^{[0]}+\epsilon x^{[1]}
$$

for the displacement in the free \(\mathbb D\)-summand, and write
\(b\) for the \(\mathbb D_0\)-valued displacement coordinates.
A \(\mathbb D\)-valued local formal function has the form
\begin{equation}\label{eq:mixed-local-function}
F
=
f_0(x^{[0]})
+
\epsilon\Bigl(
Df_0(x^{[0]})[x^{[1]}]
+f_1(x^{[0]},b)
\Bigr),
\end{equation}
where \(f_0\) and \(f_1\) are formal functions of the fields and their
jets, involving only finitely many derivatives at each homogeneous
field degree.

We denote the resulting algebra by \(\mathcal O_{\rm loc}(M)\).
A local functional is represented by a local density of this type,
with two representatives identified if they differ by a total
derivative on \(Y\). Their space is denoted
\(\mathcal H_{\rm loc}(M)\).

A \emph{mixed-coefficient local formal map} is a real local formal map
whose Taylor coefficients are \(\mathbb D\)-multilinear and preserve the
filtration by the ideal \((\epsilon)\). Its free outputs have the form
\eqref{eq:mixed-local-function}, and its quotient outputs are formal
functions of $(x^{[0]},b)$. A
mixed-coefficient local formal isomorphism is such a map with an inverse
of the same type.

A local evolutionary vector field in this category has components of
the form

$$
 \begin{aligned}
 Vx
 &=
 v_0(x^{[0]})
 +
 \epsilon\Bigl(
 Dv_0(x^{[0]})[x^{[1]}]
 +v_1(x^{[0]},b)
 \Bigr),\\
 Vb&=w(x^{[0]},b),
 \end{aligned}
$$

with the same locality conditions. In particular, differentiation of
the free component with respect to a \(\mathbb D_0\)-valued coordinate takes values
in \(\epsilon\mathbb D\).
\end{definition}

\begin{definition}[Variational dual]
\label{def:mixed-variational-dual}
The density-valued \(\mathbb D\)-linear dual of \(\mathcal E\) is

$$
 \mathcal E^\vee
 =
 \operatorname{Hom}_{\mathbb D}(\mathcal E,\mathbb D)
 \otimes\operatorname{Dens}(Y),
 \qquad
 M^\vee=\Gamma(Y,\mathcal E^\vee).
$$

A local covector field has local formal coefficients satisfying the
same mixed-coefficient conditions as above.

For \(F\in\mathcal H_{\rm loc}(M)\), finite integration by parts defines
its Euler--Lagrange derivative by

$$
 \delta F
 =
 \int_Y
 \left\langle
 \frac{\delta F}{\delta q},
 \delta q
 \right\rangle,
 \qquad
 \frac{\delta F}{\delta q}\in M^\vee.
$$

For a quotient coefficient,

$$
 \operatorname{Hom}_{\mathbb D}(\mathbb D_0,\mathbb D)
 =
 \epsilon\mathbb D.
$$

Thus a \(\mathbb D_0\)-valued field is paired with an
\(\epsilon\mathbb D\)-valued variational covector.
\end{definition}

The real coefficient trace is $\tau_{\mathbb D}(a+\eps b)=b$.
On a free coefficient block,
\[
 \tau_{\mathbb D}\bigl((a_0+\eps a_1)(b_0+\eps b_1)\bigr)
 =a_0b_1+a_1b_0,
\]
while a $\mathbb D_0$-valued block pairs with its $\epsilon\mathbb D$-valued
dual by ordinary real evaluation. We use this real trace pairing in the
cyclic transfer of Section~\ref{subsec:cyclic-kuranishi}.

\begin{lemma}[First-order perturbations]
\label{lem:nilpotent-mixed-perturbation}
Let
\[
E=E_{\mathrm{fr}}\otimes_{\mathbb R}\mathbb D
  \oplus E_0\otimes_{\mathbb R}\mathbb D_0
\]
be a mixed coefficient module, and let
\[
U:E\longrightarrow E
\]
be a linear local differential operator such that its image lies in
\(\epsilon E_{\mathrm{fr}}\) and such that \(U\) depends only on the
reduction modulo \(\epsilon\) of its free input and on the
\(\mathbb D_0\)-valued input.  Then
\[
U^2=0.
\]
Consequently
\[
(1+U)^{-1}=1-U.
\]

More generally, if \(D\) and \(S\) are locally invertible differential
operators and
\[
P=S^{-1}D(1+U),
\]
then
\[
P^{-1}=(1-U)D^{-1}S.
\]
\end{lemma}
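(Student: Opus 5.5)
The plan is to compute $U^2$ by composition, using only the two hypotheses on $U$. The rest of the lemma then follows by elementary algebra.

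For $U^2=0$, write an element of $E$ as $(x,b)$ with $x=x^{[0]}+\epsilon x^{[1]}$ in the free block and $b$ in the $\mathbb D_0$ block. Since $U$ sees only $x^{[0]}$ and $b$, we have $U(x,b)=(\epsilon\,u(x^{[0]},b),0)$ for some real linear differential operator $u$ with values in $E_{\rm fr}$.

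Apply $U$ a second time. The free input is now $\epsilon\,u(x^{[0]},b)$, whose reduction modulo $\epsilon$ is zero, and the $\mathbb D_0$ input is zero. Linearity of $U$ then gives
\[
U^2(x,b)=U(\epsilon u,0)=(\epsilon\,u(0,0),0)=0.
\]
The only point needing care is that "depends only on the reduction" is used as a linear factorisation $U=\iota_\epsilon\circ u\circ r$. Here $r(x,b)=(x^{[0]},b)$, and $\iota_\epsilon$ is the map $\jmath_\epsilon$ of \eqref{eq:epsilon-ideal-inclusion}, applied coefficientwise. With this factorisation, $r\circ\iota_\epsilon=0$ because $\epsilon^2=0$, and hence $U^2=\iota_\epsilon\, u\,(r\,\iota_\epsilon)\, u\, r=0$. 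This bookkeeping, that $\epsilon$ times anything reduces to zero in $\mathbb D_0$, is the only step I expect to be subtle. It is exactly the nilpotency already used in Lemma~\ref{lem:nilpotent-flux}.

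Next, $(1+U)(1-U)=1-U^2=1$, and the same holds in the other order. Therefore $1-U$ is a two-sided inverse of $1+U$, and it is again a local differential operator.

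For the general statement, $P=S^{-1}D(1+U)$ is a composite of locally invertible differential operators, so it is invertible with inverse $(1+U)^{-1}D^{-1}S=(1-U)D^{-1}S$. To confirm this, compute both composites directly: $P\,(1-U)D^{-1}S=S^{-1}D(1+U)(1-U)D^{-1}S=1$, and $(1-U)D^{-1}S\,P=(1-U)(1+U)=1$. Each inverse is again of finite differential order, because $D^{-1}$ and $S$ are by hypothesis and $1-U$ is.
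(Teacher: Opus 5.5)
Your proposal is correct and follows the paper's argument. The image of $U$ lies in $\epsilon E_{\mathrm{fr}}$, so it has zero reduction and no $\mathbb D_0$-component, a second application of $U$ therefore vanishes, and the inverse formulas follow from $(1-U)(1+U)=1-U^2=1$; your factorisation $U=\jmath_\epsilon\circ u\circ r$ and the explicit two-sided check for $P$ simply spell out the paper's one-line reasoning.
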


\begin{proof}
The image of \(U\) lies in the ideal \(\epsilon E_{\mathrm{fr}}\).
Its reduction modulo \(\epsilon\) is therefore zero, and it has no
\(\mathbb D_0\)-valued component.  By the stated dependence of \(U\),
a second application of \(U\) vanishes.  Hence \(U^2=0\), and the
inverse formulas follow from
\[
(1-U)(1+U)=1-U^2=1.
\]
\end{proof}

\subsection{Shifted Hamiltonian structures}

\begin{definition}[Mixed symplectic form]
\label{def:mixed-symplectic}
A mixed symplectic form on \(\widehat M_0\) is a closed variational
two-form \(\omega\) of ghost number \(-1\) such that

$$
 \omega^\flat:
 TM\longrightarrow T^\vee M[-1],
 \qquad
 X\longmapsto\iota_X\omega,
$$

is an isomorphism in the mixed-coefficient local formal category.
Both \(\omega^\flat\) and its inverse are required to satisfy the
locality conditions of Definition~\ref{def:mixed-local-functions}.

For \(F\in\mathcal H_{\rm loc}(M)\), its Hamiltonian vector field is
defined by

$$
 \iota_{X_F}\omega=-\delta F,
$$

and the Hamiltonian bracket is

$$
 \{F,G\}=X_FG.
$$

\end{definition}

If a base coordinate has ghost number \(g\), its shifted variational
covector has ghost number \(-1-g\).

On a shifted cotangent chart we use the convention
\begin{equation}\label{eq:canonical-cotangent-convention}
\vartheta
=
\int_Y\sum_a\langle p_a,\delta q^a\rangle,
\qquad
\omega=\delta\vartheta,
\qquad
\iota_{X_F}\omega=-\delta F,
\qquad
\{F,G\}=X_FG.
\end{equation}
For free Darboux coordinates, this convention gives
\begin{equation}\label{eq:canonical-darboux-signs}
\{q^a,p_b\}=-\delta^a_b,
\qquad
\{p_b,q^a\}=\delta^a_b.
\end{equation}
If \(f\) is a ghost-degree-one evolutionary vector field on the base,
its cotangent Hamiltonian is
\begin{equation}\label{eq:cotangent-hamiltonian}
 H_f=\iota_f\vartheta
 =
 \int_Y\sum_a
 (-1)^{|p_a|}
 \langle p_a,f(q^a)\rangle.
\end{equation}

For a \(\mathbb D_0\)-valued coordinate, the corresponding covector takes values in
\[
\operatorname{Hom}_{\mathbb D}(\mathbb D_0,\mathbb D)
=
\epsilon\mathbb D.
\]
We therefore write
\[
 p=\jmath_\epsilon(\bar p),
 \qquad
 \bar p\in\mathbb D_0,
\]

and perform the normalized calculation through the isomorphism
\(\jmath_\epsilon:\mathbb D_0\to\epsilon\mathbb D\).
Appendix~\ref{app:mixed} derives
\eqref{eq:canonical-darboux-signs} and verifies

$$
 X_{H_f}|_{\rm base}=f.
$$

\begin{proposition}[Mixed cotangent bracket]
\label{prop:mixed-hamiltonian-domain}
On the canonical shifted cotangent chart, every
\(F\in\mathcal H_{\rm loc}(M)\) has a unique local Hamiltonian vector
field \(X_F\). The bracket on \(\mathcal H_{\rm loc}(M)\) is
\(\mathbb D\)-bilinear, graded antisymmetric and satisfies the graded
Jacobi identity.

For functions without jet dependence, the same statements hold in
\(\mathcal O_{\rm loc}(M)\) with the fibrewise cotangent bracket.
\end{proposition}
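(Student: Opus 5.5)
Our approach is to reduce everything to the canonical shifted cotangent chart and the nondegeneracy of \(\omega^\flat\) in the mixed-coefficient local category. First, for \(F\in\mathcal H_{\rm loc}(M)\), we compute \(\delta F\) by finite integration by parts, as in Definition~\ref{def:mixed-variational-dual}. This gives Euler--Lagrange components \(\delta F/\delta q^a\) and \(\delta F/\delta p_a\). Each is a local formal function, and a total derivative in the density contributes nothing. The Euler--Lagrange derivative therefore depends only on the class of \(F\) modulo \(d_Y\Omega^6_{\rm hor}\). On the chart \eqref{eq:canonical-cotangent-convention}, \(\omega=\delta\vartheta\) is constant in Darboux coordinates. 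The equation \(\iota_{X_F}\omega=-\delta F\) then has the explicit solution
\[
X_F(q^a)=\pm\frac{\delta F}{\delta p_a},\qquad X_F(p_a)=\pm\frac{\delta F}{\delta q^a},
\]
with signs fixed by \eqref{eq:canonical-darboux-signs}. Uniqueness follows because \(\omega^\flat\) is an isomorphism. The components of \(X_F\) are local, so \(X_F\) prolongs to an evolutionary vector field.

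The one place needing care is the mixed coefficients. We must check that \(X_F\) has the form required by Definition~\ref{def:mixed-local-functions}. The steps are as follows.

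\begin{enumerate}
\item Write \(F=f_0+\epsilon(Df_0[x^{[1]}]+f_1)\) as in \eqref{eq:mixed-local-function}.
\item For a free coordinate, the trace pairing \(\tau_{\mathbb D}\) shows that the variation along \(x^{[1]}\) reproduces the variation of \(f_0\) along \(x^{[0]}\). This gives precisely the ``\(Dv_0[x^{[1]}]\)'' structure.
\item For a \(\mathbb D_0\)-valued coordinate \(b\), the covector lies in \(\epsilon\mathbb D\). Through \(\jmath_\epsilon\), the component \(X_F(\bar p)\) is \(\delta f_1/\delta b\), a \(\mathbb D_0\)-valued function of \((x^{[0]},b)\).
\item Dually, derivatives of \(F\) in the direction \(\bar p\) land in \(\epsilon\mathbb D\), as the definition requires.
\end{enumerate}

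The appendix computation \(X_{H_f}|_{\rm base}=f\) is the prototype of this check. We expect this mixed-coefficient bookkeeping to be the main obstacle, since one must verify the filtration by \((\epsilon)\) is preserved in every block.

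For the bracket \(\{F,G\}=X_FG=\int\langle \delta G,X_F\rangle\), bilinearity over \(\mathbb D\) follows from \(\mathbb D\)-linearity of \(\delta\) and of \(\omega^\flat\). The expression is well defined on \(\mathcal H_{\rm loc}\) because evolutionary vector fields commute with total derivatives. We get graded antisymmetry by writing \(\{F,G\}=\pm\omega(X_F,X_G)\) and using the graded symmetry of an odd symplectic form with the sign rule \((-1)^{gg'+vv'}\). For Jacobi, we would show \(X_{\{F,G\}}=[X_F,X_G]\). This uses \(\mathcal L_{X_F}\omega=\delta\iota_{X_F}\omega=-\delta\delta F=0\) and the identity \(\iota_{[X,Y]}=[\mathcal L_X,\iota_Y]\) in the variational bicomplex, where \(\delta\) and \(d_Y\) anticommute so that the Cartan calculus descends to \(\mathcal H_{\rm loc}\). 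Applying \([X_F,X_G]H=X_F X_G H\mp X_G X_F H\) then yields Jacobi. Alternatively, in constant Darboux coordinates one may verify Jacobi directly, as for the classical antibracket.

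Finally, for functions without jet dependence, no integration by parts is needed. The same formulas hold pointwise with partial derivatives replacing Euler--Lagrange derivatives, giving the fibrewise cotangent bracket on \(\mathcal O_{\rm loc}(M)\). Antisymmetry and Jacobi follow by the same argument applied fibrewise.
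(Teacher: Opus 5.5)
Your proposal is correct and follows essentially the paper's route: the standard shifted cotangent calculation on the free $\mathbb D$-summand, the $\jmath_\epsilon$ identification of the ideal-valued covector to fix the $\mathbb D_0$-components of $X_F$ uniquely, and the Jacobi identity via $[X_F,X_G]=X_{\{F,G\}}$, which is one of the two routes named in Appendix~\ref{app:mixed-hamiltonian-domain}. The paper shows closure explicitly with the block formula $\{F,G\}=\{f_0,g_0\}_{\rm fr}^{\mathbb D}+\jmath_\epsilon(\{f_0,g_1\}_{\rm fr}+\{f_1,g_0\}_{\rm fr}+\{f_1,g_1\}_b)$, in which the free bracket of the two ideal corrections vanishes by $\epsilon^2=0$; your check that $X_F$ has the mixed normal form, combined with the chain rule, gives the same conclusion.
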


\begin{proof}

On the free \(\mathbb D\)-summand, the result is the usual shifted
cotangent calculation.

For a \(\mathbb D_0\)-valued coordinate, its variational dual has
coefficients in \(\epsilon\mathbb D\). The isomorphism \(\jmath_\epsilon\)
therefore identifies its variational covector uniquely with
\(\jmath_\epsilon(\bar p)\) for a \(\mathbb D_0\)-valued covector
\(\bar p\). Consequently the Hamiltonian equation \(\iota_{X_F}\omega=-\delta F\)
determines the corresponding \(\mathbb D_0\)-valued component of
\(X_F\) uniquely.

Terms coupling the free and \(\mathbb D_0\)-valued summands take values
in \(\epsilon\mathbb D\). Terms quadratic in the resulting
\(\epsilon\mathbb D\)-valued corrections to the free summand vanish
because \(\epsilon^2=0\). The usual cotangent identities on the two
summands therefore give closure, graded antisymmetry, and the Jacobi
identity. For local functionals, the same calculation is performed
after finite integration by parts. See Appendix~\ref{app:mixed-hamiltonian-domain}.

\end{proof}

We call \(\mathcal H_{\rm loc}(M)\), equipped with this bracket, the
local Hamiltonian algebra. A quotient coordinate \(b\) is
\(\mathbb D_0\)-valued. Its composition \(\jmath_\epsilon\circ b\)
is a \(\mathbb D\)-valued local function.

\begin{lemma}[Cotangent master equation]
\label{lem:cotangent-master}
Let \(f\) be a ghost-degree-one evolutionary vector field on a graded
base \(B\) satisfying

$$
 f^2=\frac12[f,f]=0.
$$

Let \(W\in\mathcal H_{\rm loc}(B)\) have ghost number zero and satisfy

$$
 fW=0.
$$

On the shifted variational cotangent bundle of \(B\), let
\(\vartheta\) be the canonical one-form and set

$$
 \omega=\delta\vartheta,
 \qquad
 H_f=\iota_f\vartheta,
 \qquad
 \mathcal S=W+H_f,
 \qquad
 \iota_Q\omega=-\delta\mathcal S.
$$

Then

$$
 \{\mathcal S,\mathcal S\}=0,
 \qquad
 Q^2=0.
$$

\end{lemma}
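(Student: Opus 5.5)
The plan is to reduce both claims to the graded Jacobi identity of Proposition~\ref{prop:mixed-hamiltonian-domain} and to the standard correspondence between Hamiltonian functionals and Hamiltonian vector fields. We expand
\[
\{\mathcal S,\mathcal S\}=\{W,W\}+2\{W,H_f\}+\{H_f,H_f\}
\]
and treat the three terms separately. The factor of two uses graded symmetry of the bracket on ghost-number-zero elements: \(W\) and \(H_f\) both have ghost number zero, and the shifted bracket has degree one, so it is symmetric in this degree.

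\textbf{The term \(\{W,W\}\).} Since \(W\) is a functional on the base \(B\) only, \(\delta W\) has no fibre (antifield) components. Hence \(X_W\) has only fibre components: it is the vertical field \(p\mapsto\pm\delta W/\delta q\), by \eqref{eq:canonical-darboux-signs}. Applying this vertical field to \(W\) gives \(\{W,W\}=X_WW=0\).

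\textbf{The cross term \(\{H_f,W\}\).} The cited appendix shows \(X_{H_f}|_{\rm base}=f\). Therefore \(\{H_f,W\}=X_{H_f}W=fW=0\) modulo total derivatives, by hypothesis.

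\textbf{The term \(\{H_f,H_f\}\).} We identify \(\{H_f,H_f\}\) with \(H_{[f,f]}\), the cotangent lift of the commutator. This is the standard fact that \(f\mapsto H_f=\iota_f\vartheta\) is a Lie morphism up to sign. To verify it, compute \(X_{H_f}\): its base component is \(f\), and its fibre component is the transpose action \(p\mapsto\mp(\delta f)^{\!*}p\), obtained by integrating by parts the linearization of \(f\). Then
\[
\{H_f,H_f\}=X_{H_f}H_f=\int\langle p,(f\cdot f)(q)\rangle\pm\int\langle (\delta f)^*p,f(q)\rangle .
\]
The two terms combine, after finite integration by parts, to \(\int\langle p,[f,f](q)\rangle\) up to sign. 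This vanishes since \(f^2=\tfrac12[f,f]=0\). On the \(\mathbb D_0\) summand we carry this out through \(p=\jmath_\epsilon(\bar p)\), as in Proposition~\ref{prop:mixed-hamiltonian-domain}. The cross terms between free and quotient summands land in \(\epsilon\mathbb D\), and products of such terms die by \(\epsilon^2=0\).

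\textbf{From the master equation to \(Q^2=0\).} Since \(\iota_Q\omega=-\delta\mathcal S\), we have \(Q=X_{\mathcal S}\). Graded Jacobi gives \([X_F,X_G]=\pm X_{\{F,G\}}\), which follows from closedness of \(\omega\) and uniqueness of Hamiltonian fields. Hence
\[
Q^2=\tfrac12[X_{\mathcal S},X_{\mathcal S}]=\pm\tfrac12X_{\{\mathcal S,\mathcal S\}}=0 .
\]

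The main obstacle is bookkeeping rather than concept. The signs in \(\{H_f,H_f\}=\pm H_{[f,f]}\), under the bigraded convention \eqref{eq:vertical-bigrading}, must make the two contributions combine rather than cancel incorrectly. The integration by parts must be legitimate in \(\mathcal H_{\rm loc}\), which holds because all operators are finite-order differential operators, so boundary terms are total derivatives. Finally, the \(\jmath_\epsilon\)-normalized quotient covectors must pair consistently with the trace \(\tau_{\mathbb D}\); we check this summand by summand, relying on Appendix~\ref{app:mixed-hamiltonian-domain} for the mixed terms.
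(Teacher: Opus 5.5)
Your proposal is correct and follows the paper's proof essentially step by step. Both arguments use the symmetric expansion of $\{\mathcal S,\mathcal S\}$ on ghost-number-zero Hamiltonians, the identities $\{W,W\}=0$ and $\{H_f,W\}=fW$ (since $W$ is a base functional and $X_{H_f}|_{\rm base}=f$), the identity $\{H_f,H_f\}=H_{[f,f]}$, and graded Jacobi to pass to $Q^2=0$. The paper obtains $\{H_f,H_f\}=H_{[f,f]}$ from commutator-preservation of cotangent lifts plus fibre-linearity, where you compute it directly. The sign bookkeeping you flag as the main obstacle is harmless: each of your two terms in $X_{H_f}H_f$ is separately a pairing of $p$ with $f(f(q))=f^2q=0$, so the total vanishes whether the terms add or cancel.
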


\begin{proof}
The Hamiltonian vector field of \(H_f\) is the cotangent lift of \(f\).
Its base component is \(f\), while its covector component is the
corresponding signed variational adjoint. Cotangent lifts preserve
commutators, so

$$
 X_{\frac12\{H_f,H_f\}}
 =
 \frac12[X_{H_f},X_{H_f}]
 =
 X_{H_{f^2}}.
$$

Both Hamiltonians are linear in the cotangent variables and vanish on
the zero section. Hence

$$
 \frac12\{H_f,H_f\}=H_{f^2}.
$$

Since \(W\) depends only on the base variables,

$$
 \{H_f,W\}=fW,
 \qquad
 \{W,W\}=0.
$$

The bracket has ghost degree one and is therefore symmetric on
ghost-number-zero Hamiltonians. Thus

$$
 \frac12\{\mathcal S,\mathcal S\}
 =
 H_{f^2}+fW
 =
 0.
$$

The graded Jacobi identity then gives

$$
 Q^2G
 =
 \frac12\{\{\mathcal S,\mathcal S\},G\}
 =
 0.
$$

\end{proof}

\begin{lemma}[Affine cotangent lift]
\label{lem:affine-cotangent-q}
Let \(B\) be a graded local field space with a homological vector field
\(f\), and let \(Q_{\mathrm{cot}}\) be its shifted cotangent lift to
\(T^*[-1]B\).  Let \(T_\alpha\) be a ghost-degree-one vertical translation of the
cotangent variables by a local covector-valued map \(\alpha\) on \(B\).

Assume that
\[
[Q_{\mathrm{cot}},T_\alpha]=0.
\]
Then
\[
Q=Q_{\mathrm{cot}}+T_\alpha
\]
is homological.

Suppose the degree-zero fields carry a gauge map \(G\), their lower
differential is the gauge action, and \(\alpha\) depends only on those
fields and translates only their cotangent variables. Then the
commutator condition is equivalent to covariance of \(\alpha\) together with
\[
\alpha(Gv)=0
\]
for every infinitesimal gauge parameter \(v\).
\end{lemma}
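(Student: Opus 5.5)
The plan is to compute \(Q^2\) directly, using that \(Q_{\mathrm{cot}}\) and \(T_\alpha\) are both odd (ghost-degree-one) derivations. For odd vector fields the square is half the graded commutator, so
\[
Q^2=\tfrac12[Q,Q]=Q_{\mathrm{cot}}^2+[Q_{\mathrm{cot}},T_\alpha]+T_\alpha^2 .
\]
The first term vanishes by Lemma~\ref{lem:cotangent-master} applied with \(W=0\), since the cotangent lift of the homological field \(f\) is homological. The middle term vanishes by hypothesis.

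For \(T_\alpha^2\), I note that \(T_\alpha\) only moves the cotangent coordinates \(p\), and its coefficients \(\alpha\) depend only on the base coordinates. Applying \(T_\alpha\) twice therefore differentiates a base function along a \(p\)-direction, which gives zero. Equivalently, \(T_\alpha\) is a vertical field on a vector bundle with coefficients constant along the fibres, so \(T_\alpha^2=\tfrac12[T_\alpha,T_\alpha]=0\). This settles the first assertion.

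For the second assertion I evaluate \([Q_{\mathrm{cot}},T_\alpha]\) on generators, since a derivation is determined by its values there. There are two cases.
\begin{itemize}
\item On base coordinates \(q\): \(T_\alpha q=0\) and \(Q_{\mathrm{cot}}q=f(q)\) depends only on the base, so \(T_\alpha f(q)=0\). The commutator vanishes automatically.
\item On cotangent coordinates \(p\): \(Q_{\mathrm{cot}}p\) is the signed variational adjoint of the linearisation of \(f\), applied to \(p\). This gives
\[
[Q_{\mathrm{cot}},T_\alpha]p=f(\alpha)\pm(\mathrm{ad}^\vee f)(\alpha).
\]
Here \(f(\alpha)\) is the derivative of \(\alpha\) along the base flow.
\end{itemize}

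Next I split the base flow by ghost number. Because \(\alpha\) depends only on the degree-zero fields, only the components of \(f\) acting on degree-zero fields contribute to \(f(\alpha)\). Those components are the gauge action \(G\), with the ghost \(v\) inserted. So \(f(\alpha)\) is the infinitesimal gauge variation \(\delta_v\alpha\).

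The adjoint term splits according to which cotangent variable is being translated.
\begin{itemize}
\item In the component along the degree-zero covectors, the adjoint of the part of \(f\) mapping fields to fields gives the coadjoint action of \(v\) on \(\alpha\). Vanishing of this component, together with \(\delta_v\alpha\), says \(\delta_v\alpha+\mathrm{ad}^\vee_v\alpha=0\), which is covariance of \(\alpha\).
\item In the component along the ghost covectors \(p_v\), the term comes from the adjoint of \(v\mapsto Gv\), and gives \(\langle\alpha,Gv\rangle\), i.e. \(G^\vee\alpha\). Its vanishing for all \(v\), after the local pairing, is \(\alpha(Gv)=0\).
\end{itemize}
Since \(f(\alpha)\) and the adjoint terms are linear in \(\alpha\) and land in the distinct covector components listed above, the commutator vanishes if and only if both conditions hold. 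Higher-ghost components do not contribute, because \(\alpha\) has no legs in those directions.

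The main obstacle is the sign bookkeeping in the adjoint term. One has to check, with \eqref{eq:cotangent-hamiltonian} and the bigrading rule \eqref{eq:vertical-bigrading}, that the \(f(\alpha)\) term and the coadjoint term combine to exactly the covariance condition rather than its twisted form. I would verify this using the Darboux signs \eqref{eq:canonical-darboux-signs} and the identity \(X_{H_f}|_{\rm base}=f\) of Appendix~\ref{app:mixed}, writing \(T_\alpha\) as the Hamiltonian-type field of \(\int\langle\alpha,\cdot\rangle\) and using Jacobi.
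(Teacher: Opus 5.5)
Your proposal is correct and follows the paper's proof. Since \(Q_{\mathrm{cot}}^2=0\) and \(T_\alpha^2=0\), one has \(Q^2=\tfrac12[Q,Q]=[Q_{\mathrm{cot}},T_\alpha]\), and the second assertion is the component form of this commutator on the field and ghost covectors, which you write out more explicitly than the paper does.

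Two small corrections. First, the \(p_c\) component vanishes because the field differential (the gauge action) contains no \(c\); the fact that \(\alpha\) has no legs there is not by itself enough. Second, on your sign-check remark: \(T_\alpha\) is Hamiltonian only when \(\alpha\) is variationally closed. For general \(\alpha\), do the check by your direct generator computation rather than via Jacobi.
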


\begin{proof}
The cotangent lift satisfies \(Q_{\mathrm{cot}}^2=0\).
A vertical translation by a base-dependent covector has
\(T_\alpha^2=0\).  Hence
\[
Q^2=\tfrac12[Q,Q]=[Q_{\mathrm{cot}},T_\alpha].
\]
The last statement is the component form of this commutator on the
cotangent variables.
\end{proof}

\begin{lemma}[Invariant graph restriction]
\label{lem:invariant-graph}
Let \(M_1,M_2\) be local formal graded field spaces with homological
vector fields \(Q_1,Q_2\).  Let
\[
F:M_1\longrightarrow M_2
\]
be a local formal map, and let
\[
\Gamma_F=\{(x,F(x))\}\subset M_1\times M_2.
\]
If \(Q_1\oplus Q_2\) is tangent to \(\Gamma_F\), equivalently
\[
Q_2(F(x))=D F_x(Q_1x),
\]
then its restriction to \(\Gamma_F\) is a homological vector field.

The same statement holds for an invariant differential graph defined
by finitely many local differential relations and solved in local field
coordinates. It also holds over common lower fields when the two
vector fields induce the same differential on those fields.
\end{lemma}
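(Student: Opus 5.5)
The plan is to identify $\Gamma_F$ with $M_1$ through the graph map $\gamma:x\mapsto(x,F(x))$ and to transport the restricted vector field back to $M_1$. Since $\gamma$ is a local formal isomorphism onto its image, whose inverse is the restriction of the projection $\mathrm{pr}_1$, a vector field $V$ on $M_1\times M_2$ that is tangent to $\Gamma_F$ restricts to a well-defined vector field $V|_{\Gamma_F}$. Its pullback $\gamma^*V$ is the unique vector field on $M_1$ with $D\gamma(\gamma^*V)=V\circ\gamma$. For $V=Q_1\oplus Q_2$ the tangency condition $Q_2(F(x))=DF_x(Q_1x)$ says exactly that
\[
(Q_1\oplus Q_2)(x,F(x))=(Q_1x,\,DF_x(Q_1x))=D\gamma_x(Q_1x).
\]
So the pullback is $Q_1$ itself, and in particular it is again a ghost-degree-one local evolutionary vector field.

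The square then passes through the isomorphism. The restriction map $V\mapsto V|_{\Gamma_F}$, on vector fields tangent to $\Gamma_F$, is a Lie algebra homomorphism for the graded commutator. The reason is that tangent vector fields form a graded Lie subalgebra: if $V$ and $W$ preserve the vanishing ideal $\mathcal I$ generated by the components of $y-F(x)$, then so does $[V,W]$. Hence
\[
(Q|_{\Gamma})^2=\tfrac12[Q|_\Gamma,Q|_\Gamma]=\bigl(\tfrac12[Q,Q]\bigr)|_\Gamma=(Q_1^2\oplus Q_2^2)|_\Gamma=0.
\]
Concretely this says $Q|_\Gamma$ corresponds to $Q_1$ under $\gamma$, and $Q_1^2=0$. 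The only point to check is that $\mathcal I$ is preserved. For this, apply $Q=Q_1\oplus Q_2$ to $y-F(x)$: the result is $Q_2(y)-DF_x(Q_1x)$, which lies in $\mathcal I$ because its restriction to $y=F(x)$ vanishes by the tangency hypothesis. Since $Q$ is a derivation, it then preserves the whole ideal, and by prolongation it also preserves the jets of $y-F(x)$. Locality is preserved throughout, because every operation involved is a composition of finite-order differential operators at each Taylor order.

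For the invariant differential graph, the relations take the form $\mathcal R(x,y)=0$, a finite system of local differential equations solved in local coordinates as $y=F(x)$ (or, more generally, splitting the coordinates into solved and free ones). The argument is the same. The vanishing ideal is now the differential ideal generated by $\mathcal R$ and its total derivatives, and invariance means $Q\mathcal R\in\mathcal I$. The solved form then gives a local formal chart on the solution space in which the restricted field is computed, and the commutator argument shows its square is $(Q^2)|_\Gamma=0$.

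For the version over common lower fields, the ambient space is the fibre product $M_1\times_L M_2$ over the shared lower-field space $L$. Because both vector fields induce the same differential on $L$, the field $Q_1\oplus_L Q_2$ is well defined on the fibre product and squares to zero, since its components are $Q_1^2$, $Q_2^2$, and the common square on $L$. The graph argument above then applies verbatim inside the fibre product.

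The main point requiring care, rather than a genuine obstacle, is the differential-graph case. There one needs the solved form to ensure that the differential ideal is radical in the formal-local sense, so that "vanishes on $\Gamma$" implies "lies in $\mathcal I$". I would handle this by working entirely in the solved coordinates, where $\mathcal I$ is literally generated by $y-F(x)$ together with its jets.
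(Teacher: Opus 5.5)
Your proposal is correct and follows the paper's argument: tangency makes the restriction to the invariant graph well defined. Because restriction commutes with the graded commutator, $(Q|_{\Gamma})^2=\bigl(\tfrac12[Q,Q]\bigr)|_{\Gamma}=0$ for $Q=Q_1\oplus Q_2$. Your extra remark that, for an honest graph, the restricted field is simply $Q_1$ transported by $x\mapsto(x,F(x))$ is a valid shortcut. The commutator argument is what covers the mixed differential-graph and common-lower-field versions.
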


\begin{proof}
Tangency makes the restriction well defined.  Since
\[
(Q_1\oplus Q_2)^2=0
\]
on the product and the graph is invariant, the restricted vector field
also squares to zero.
\end{proof}

If \(Q(q_0)=0\), the Taylor expansion of \(Q\) at \(q_0\) defines an
uncurved \(L_\infty\)-algebra. The assumption \(Q(q_0)=0\) removes the
constant Taylor term and hence gives \(\ell_0=0\). Expanding \(Q^2=0\)
by homogeneous field degree gives the \(L_\infty\)-identities after
applying the suspension convention of Appendix~\ref{app:cyclic-conventions}.

For a constant symplectic form, desuspension gives the
cochain-degree-$-3$ pairing
\[
 \langle x,y\rangle_B=(-1)^{|x|}\omega(\mathbf s x,\mathbf s y).
\]
Proposition~\ref{prop:cyclic-conventions} proves cyclicity of the Taylor
operations with respect to this pairing, in the convention
\eqref{eq:cyclic-rotation}. The shifted covectors lie in cochain
degrees \(2,3,4\), and the corresponding components of \(Q\) are the
Euler--Lagrange equations, their Noether identities, and the relations
among those identities.

\subsection{Formal deformation functors}
\label{subsec:formal-deformation-functors}

We use real differential graded Artin algebras as deformation
coefficients. Their internal differential is independent of the
horizontal differential \(d_Y\), which is already contained in the
local differential operators defining the brackets.

\begin{definition}[Simplicial Maurer--Cartan construction]
\label{def:simplicial-mc}
Let \(R\) be an augmented commutative dg-Artin algebra over
\(\mathbb R\), with nilpotent augmentation ideal
\(\mathfrak m_R\). Let
\(\Omega^\bullet_{\rm poly}(\Delta^n)\) be the algebra of polynomial
differential forms on the standard \(n\)-simplex.

Let \(L\) be an uncurved \(L_\infty\)-algebra such that
\(L\otimes_{\mathbb R}\mathfrak m_R\) is nilpotent. Define

$$
 \operatorname{MC}_n(L\otimes_{\mathbb R}\mathfrak m_R)
 =
 \operatorname{MC}\!\left(
 L\otimes_{\mathbb R}\mathfrak m_R
 \otimes_{\mathbb R}\Omega^\bullet_{\rm poly}(\Delta^n)
 \right).
$$

Pullback of polynomial forms along the face and degeneracy maps gives a
simplicial set. Its unary differential is the sum of \(\ell_1\), the
differential of \(R\), and the polynomial de Rham differential, with
the usual graded tensor-product signs
\cite[Section~4]{Getzler}.

\end{definition}

\begin{definition}[Formal deformation functor]
\label{def:formal-deformation-functor}
For an uncurved \(L_\infty\)-algebra \(L\), define

$$
 \Def_L(R)
 =
 \operatorname{MC}_\bullet
 \bigl(L\otimes_{\mathbb R}\mathfrak m_R\bigr).
$$
\end{definition}

Suppose first that

$$
 R=\mathbb R\oplus I,
 \qquad
 I^2=0,
$$

with \(I\) concentrated in cochain degree zero. All nonlinear
Maurer--Cartan terms then vanish and the equation reduces to

$$
 \ell_1x=0.
$$

Infinitesimal equivalence changes \(x\) by an
\(\ell_1\)-boundary. Hence, in the cochain convention used here,

$$
 H^1(L,\ell_1)
$$

is the infinitesimal deformation space. For a square-zero dg ideal,
the same statement uses the total differential on
\(L\otimes_{\mathbb R}I\). The corresponding simplicial linearization
is the abelian Maurer--Cartan, or Dold--Kan, construction
\cite[Proposition~5.1]{Getzler}.

For \(x\in L^1\) with nilpotent coefficients, define its
Maurer--Cartan curvature by

$$
 \mathcal F(x)
 =
 \sum_{n\geq1}\frac1{n!}\ell_n(x,\ldots,x).
$$

Let

$$
 R'\longrightarrow R
$$

be a surjection of cochain-degree-zero Artin algebras whose kernel
\(I\) satisfies

$$
 \mathfrak m_{R'}I=0.
$$

For a lift \(\widetilde x\) of a Maurer--Cartan element over \(R\),
the element

$$
 \mathcal F(\widetilde x)\in L^2\otimes_{\mathbb R}I
$$

is an \(\ell_1\)-cocycle. Indeed, the \(L_\infty\) Bianchi identity
\cite[Lemma~4.5]{Getzler} reduces to

$$
 \ell_1\mathcal F(\widetilde x)=0,
$$

because every other term contains both a factor from
\(\mathfrak m_{R'}\) and a factor from \(I\).

Changing the lift by
\(a\in L^1\otimes_{\mathbb R}I\) changes the curvature by
\(\ell_1a\). The resulting class

$$
 [\mathcal F(\widetilde x)]
 \in
 H^2(L,\ell_1)\otimes_{\mathbb R}I
$$

therefore depends only on the original Maurer--Cartan element. It
vanishes exactly when that element lifts across \(R'\to R\).
For a dg kernel \(I\), the same construction uses the total
differential on \(L\otimes_{\mathbb R}I\).

\section{The generalized Dirac functional and the variational model}\label{sec:equivalence}

The geometry of Section~\ref{sec:geometry} supplies a functional on the
ambient Hull-compatible configuration space. We first identify its
physical-coordinate expression, then construct its Hamiltonian theory
and compare its Euler equations with the constrained BPS equations.

\subsection{The canonical generalized Dirac functional}
\label{subsec:courant-functional}
Let $\slashed{D}^+$ be the canonical positive generalized Dirac operator
associated with $(V_+,\operatorname{div}_{\varrho})$. Its independence of the choice of compatible generalized connection is
established in
\cite[Lemma 3.4]{GFDirac}. Translating the splitting formula of
\cite[Section 2.3]{CoupledG2} to the Clifford convention
\eqref{eq:spin-action} gives
\begin{equation}\label{eq:canonical-generalized-dirac}
\slashed{D}^+=D_g+\tfrac14c_g(H)-\tfrac12c_g(\vartheta),
\qquad \vartheta=2d\Phi.
\end{equation}
Define
\begin{equation}\label{eq:courant-functional}
W_{\mathrm{Courant},G_2}
=\int_Y\varrho\,\Bsp(\chi,\slashed{D}^+\chi).
\end{equation}
In the physical variables $(g,\zeta_\ph,B,A)$, define

\begin{equation}\label{eq:native-source}
 W_{\rm ext}=\int_Y\left[
 \Bsp(\zeta_\ph,D_g\zeta_\ph)
       +\frac14\Bsp(\zeta_\ph,H\cdot_g\zeta_\ph)\right]\vol_g.
\end{equation}

\begin{proposition}\label{prop:courant-functional}
On the Hull-compatible configuration space,
$W_{\mathrm{Courant},G_2}=W_{\rm ext}$.
\end{proposition}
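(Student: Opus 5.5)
Substitute the splitting formula \eqref{eq:canonical-generalized-dirac} into \eqref{eq:courant-functional}, rewrite everything in terms of the dilaton-weighted spinor $\zeta_\ph=e^{-\Phi}\chi$, and show that the dilaton term is cancelled by the commutator of $D_g$ with $e^{-\Phi}$. The two functionals then agree pointwise up to an exact term, and hence as local functionals on the Hull-compatible configuration space.

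\textbf{Step 1: expand the integrand.} By \eqref{eq:native-density}, $\varrho=e^{-2\Phi}\vol_g$. The integrand is therefore
\[
e^{-2\Phi}\bigl[\Bsp(\chi,D_g\chi)+\tfrac14\Bsp(\chi,c_g(H)\chi)-\Bsp(\chi,c_g(d\Phi)\chi)\bigr]\vol_g .
\]
On the Hull-compatible space, the flux $H$ and the admissible metric are those of Proposition~\ref{prop:adapted-courant-data} and Lemma~\ref{lem:nilpotent-flux}. I would check that this is the $H$ entering \eqref{eq:canonical-generalized-dirac}, which is where Hull compatibility \eqref{eq:intrinsic-hull} is used.

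\textbf{Step 2: the dilaton term vanishes.} For a one-form $\xi$, $c_g(\xi)$ is skew-adjoint for $\Bsp$, because $c_g(\xi)^2=-|\xi|^2$ and $\Bsp$ is invariant. Hence $\Bsp(\chi,c_g(d\Phi)\chi)=0$.

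\textbf{Step 3: absorb the weight into the spinor.} Use the Leibniz rule
\[
D_g(e^{-\Phi}\chi)=e^{-\Phi}D_g\chi-e^{-\Phi}c_g(d\Phi)\chi .
\]
Together with Step 2 this gives
\[
\Bsp(\zeta_\ph,D_g\zeta_\ph)=e^{-2\Phi}\Bsp(\chi,D_g\chi).
\]
The $H$-term matches directly, since $e^{-2\Phi}\Bsp(\chi,H\cdot\chi)=\Bsp(\zeta_\ph,H\cdot\zeta_\ph)$. Substituting back yields $W_{\rm ext}$ exactly, with no integration by parts needed.

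\textbf{Step 4: the expected obstacle is conventions.} Steps 1--3 assume the coefficients in \eqref{eq:canonical-generalized-dirac} are consistent with the rescaling, i.e.\ that the $-\tfrac12 c_g(\vartheta)$ term really drops out after pairing with $\chi$. This relies on the skew-adjointness of $c_g$ on one-forms under the real symmetric pairing $\Bsp$, and on Clifford sign conventions that depend on the signature and on \eqref{eq:spin-action}. I would verify this via Appendix~\ref{app:clifford}. That appendix also gives the symmetric/skew behaviour of $c_g$ on three-forms, so the $H$-term survives.

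If instead $D_g$ is only formally self-adjoint rather than pointwise, Step 3 still holds up to a divergence. A divergence integrates to zero on closed $Y$, or is discarded as a total derivative in $\mathcal H_{\rm loc}$.

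Finally, all quantities are $\mathbb D$-valued, and the identities used are algebraic or first-order Leibniz rules. They therefore extend coefficientwise by the rule for smooth fibrewise maps in Section~\ref{subsec:eft}.
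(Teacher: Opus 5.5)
Your proposal is correct and is essentially the paper's proof: skew-adjointness of one-form Clifford multiplication for $\Bsp$ kills both the $-\tfrac12 c_g(\vartheta)=-c_g(d\Phi)$ term of $\slashed D^+$ and the Leibniz term $-e^{-\Phi}c_g(d\Phi)\chi$ in $D_g\zeta_\ph$, and substituting $\varrho=e^{-2\Phi}\vol_g$ then gives \eqref{eq:native-source} pointwise. The hedges in your Step 4 about divergences are unnecessary, because the identity is pointwise; in fact it already holds at the operator level, since $e^{-\Phi}\slashed D^+e^{\Phi}=D_g+\tfrac14 c_g(H)$.
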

\begin{proof}
One-form Clifford multiplication is skew-adjoint for the symmetric
pairing $\Bsp$. Hence $\Bsp(\chi,c_g(\alpha)\chi)=0$ for every
one-form $\alpha$. The divergence term in
\eqref{eq:canonical-generalized-dirac} therefore has zero self-pairing.
For $\zeta_\ph=e^{-\Phi}\chi$, the Leibniz formula gives
$D_g\zeta_\ph=e^{-\Phi}(D_g\chi-c_g(d\Phi)\chi)$.
Its second term also has zero self-pairing. Substituting
$\varrho=e^{-2\Phi}\vol_g$ in \eqref{eq:courant-functional} now gives
\eqref{eq:native-source} pointwise.
\end{proof}

\begin{proposition}\label{prop:spinorial-source}
For every nearby $\mathbb D$-valued three-form $\varphi$ whose reduction
$\red\varphi$ is positive,
\begin{equation}\label{eq:form-source}
 W_{\rm ext}=\frac14\int_Y e^{-2\Phi}
       \left(H\wedge\psi-\frac12\dd\varphi\wedge\varphi\right).
\end{equation}
\end{proposition}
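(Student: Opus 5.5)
The plan is to evaluate the integrand of \eqref{eq:native-source} pointwise using the spinor bilinears \eqref{eq:spinor-bilinears}, first for a genuine (real, positive) three-form and then extend to $\mathbb D$-valued $\varphi$ by the canonical extension of smooth fibrewise maps. Write $\zeta_\ph=e^{-\Phi}\chi$ with $\chi$ a unit spinor for $g=g_\varphi$. Since $\Bsp(\chi,c_g(\alpha)\chi)=0$ for one-forms (as in the proof of Proposition~\ref{prop:courant-functional}), the Leibniz rule gives
\[
\Bsp(\zeta_\ph,D_g\zeta_\ph)=e^{-2\Phi}\Bsp(\chi,D_g\chi),\qquad
\Bsp(\zeta_\ph,H\cdot_g\zeta_\ph)=e^{-2\Phi}\Bsp(\chi,H\cdot\chi).
\]
So it suffices to prove the two pointwise identities
\[
\Bsp(\chi,H\cdot\chi)\vol_g=H\wedge\psi,\qquad
\Bsp(\chi,D_g\chi)\vol_g=-\tfrac18\,\dd\varphi\wedge\varphi .
\]

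The flux term is algebraic. By \eqref{eq:spinor-bilinears}, a three-form acts on $\chi$ with $\Bsp(\chi,\eta\cdot\chi)=\langle\eta,\varphi\rangle_g$ up to the normalization of the form–Clifford identification. Because $\chi$ is $G_2$-invariant, only $\pi_1\eta$ contributes. Then $\langle H,\varphi\rangle\vol_g=H\wedge*\varphi=H\wedge\psi$. I would fix the normalization by checking $\eta=\varphi$ in an adapted orthonormal frame, where $\varphi\wedge\psi=7\vol$ by \eqref{eq:stable-metric}.

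The Dirac term uses the standard intrinsic-torsion description of $\nabla^{g}\chi$. For a $G_2$ structure one has $\nabla_X\chi=T(X)\cdot\chi$, with $T\in\Omega^1\otimes\Lambda^2_7$ the intrinsic torsion. Hence $D_g\chi=\sum_i e^i\cdot\nabla_{e_i}\chi$ decomposes into a component along $\chi$ and a component in $\Lambda^1\cdot\chi$. The coefficient along $\chi$ is a universal multiple of $\tau_0$, since only the singlet torsion survives after pairing with $\chi$. The key steps are:
\begin{itemize}
\item Compute $\Bsp(\chi,D_g\chi)=\kappa\,\tau_0$ for a universal constant $\kappa$, either via the standard formula $D\chi=-\tfrac72\tau_0\chi-3\tau_1\cdot\chi$ (Agricola–Chiossi–Friedrich–Höll), converted to the conventions \eqref{eq:spin-action}, \eqref{eq:torsion-classes}, or by a direct frame computation with the bilinears.
\item From \eqref{eq:torsion-classes}, $\dd\varphi\wedge\varphi=\tau_0\,\psi\wedge\varphi=7\tau_0\vol_g$, because $\tau_1\wedge\varphi\wedge\varphi=0$ and $*\tau_3\wedge\varphi=\langle\tau_3,\varphi\rangle\vol=0$ as $\tau_3\in\Omega^3_{27}$.
\item Matching the constants gives the claimed coefficient $-\tfrac12\cdot\tfrac14$ relative to the $H\wedge\psi$ term.
\end{itemize}

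The extension to $\mathbb D$-valued $\varphi$ with positive reduction is formal. Both sides are smooth local functionals of $(\varphi,\Phi,B,A)$ in the sense of §\ref{subsec:eft}, built from $g_\varphi$, $\psi$, $\chi$ and the spinor identification via $I_g$. The pointwise identity holds on the open set of positive real forms, and both sides extend by $F(z^{[0]})+\epsilon\,dF_{z^{[0]}}(z^{[1]})$. Differentiating the real identity then gives equality of the $\epsilon$-coefficients. $H$ enters linearly and is already $\mathbb D$-valued, so nothing more is needed.

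The main obstacle will be the sign and normalization bookkeeping in the Dirac-term constant $\kappa$. This has to be computed in exactly the paper's conventions: Clifford relation $c^2=-g$, spin action \eqref{eq:spin-action}, the bilinear signs \eqref{eq:spinor-bilinears}, and the torsion normalization \eqref{eq:torsion-classes}. I would settle it with an explicit check, perturbing $\varphi$ conformally or by a nearly-parallel model where $\dd\varphi=\tau_0\psi$. There one can compute $D\chi$ from $\nabla_X\chi=\lambda X\cdot\chi$ and fix $\lambda$ in terms of $\tau_0$ via $\nabla\varphi$. That model calibrates both the flux normalization and $\kappa$ simultaneously.
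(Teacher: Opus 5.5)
Your reduction is the paper's. You strip the weight $e^{-\Phi}$ using skew-adjointness of one-form Clifford multiplication, then prove the two pointwise identities of Lemma~\ref{lem:spinor-source-identities}. The flux identity needs no calibration: $H\cdot=H_{abc}\gamma^{abc}/6$ and \eqref{eq:spinor-bilinears} give exactly $\Bsp(\chi,H\cdot\chi)=\tfrac16H_{abc}\varphi^{abc}=\langle H,\varphi\rangle$.

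The Dirac constant $\kappa$ is the real content of the proposition, and you leave it open. One of your suggested sources is unreliable here. Read in the normalization \eqref{eq:torsion-classes}, the formula $D\chi=-\tfrac72\tau_0\chi-3\tau_1\cdot\chi$ gives $\kappa=-\tfrac72$, which is four times the required value. A conformal perturbation of the torsion-free background produces only $\tau_1$, so it cannot detect $\kappa$ at all.

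The paper fixes the constant directly, with no Schur argument or torsion classes. Since $\chi$ has unit length, $\nabla_a\chi=T_a{}^b\gamma_b\chi$ for an arbitrary tensor $T$. Hence $\Bsp(\chi,D_g\chi)=-\tr T$, because the two-form part pairs to zero. Symmetry of three-form Clifford multiplication and the four-form bilinear give $\nabla_i\varphi_{jkl}=2T_i{}^a\psi_{ajkl}$. Then $\psi_{ajkl}\psi_i{}^{jkl}=24\delta_{ai}$ yields $\langle\dd\varphi,\psi\rangle=8\tr T$.

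Your nearly-parallel calibration is exactly this computation at the algebraic torsion $T=\lambda\,\id$ at a single point; no global model is needed, since both sides are pointwise linear in $T$. There $D_g\chi=-7\lambda\chi$ and $\dd\varphi=8\lambda\psi$, so $\tau_0=8\lambda$ and $\kappa=-\tfrac78$, as \eqref{eq:form-source} requires. Carried out this way, your argument is complete. Your remark on the extension to $\mathbb D$-valued fields is correct and makes explicit what the paper leaves implicit.
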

\begin{proof}
By Appendix~\ref{app:clifford},
\[
 \dd\varphi\wedge\varphi
 =-8\,\Bsp(\chi,D_g\chi)\vol_g,\qquad
 H\wedge\psi=\Bsp(\chi,H\cdot\chi)\vol_g.
\]
The term obtained by differentiating $e^{-\Phi}$ vanishes by
skew-adjointness of one-form Clifford multiplication.
Substitution gives \eqref{eq:form-source} pointwise.
\end{proof}
Equation \eqref{eq:form-source} fixes our normalization of the heterotic
$G_2$ superpotential of \cite{G2Superpotential}.

\subsection{The variational equation convention}
We now construct the Hamiltonian \(Q\)-theory associated with
\(W_{\rm ext}\). In the shifted cotangent convention of Section~\ref{sec:deformation-algebra},
the component of \(Q\) on the covector dual to a physical field is,
at vanishing ghosts and shifted covectors, the negative
Euler--Lagrange derivative of \(W_{\rm ext}\). We therefore write
\begin{equation}\label{eq:euler-sign-convention}
  \mathcal E_{\rm EL}:=dW_{\rm ext},
  \qquad
  \mathcal E_{\rm var}:=-\mathcal E_{\rm EL}.
\end{equation}
  
Here \(dW_{\rm ext}\) is the variational covector obtained after
integration by parts.

\subsection{Physical variables and gauge hierarchy}
We eliminate the tangent connection as an independent coordinate and use
the induced Hull connection $\Theta_H(g,H)$.
Let $\mathscr V_{\rm sp}$ denote the formal space of geometric
fields
\[
 (g,\chi,\varrho,B,A)
\]
of Section~\ref{sec:geometry}, with $\chi$ unit and $A$ a
$\mathbb D_0$-valued connection. Define the graded base space by its
ghost-number pieces:
\[
\begin{aligned}
 (\mathscr B_{\rm sp})^0&=\mathscr V_{\rm sp},\\
 (\mathscr B_{\rm sp})^1
 &=\bigl(\Gamma(TY)\otimes\mathbb D\bigr)
 \oplus\bigl(\Gamma(\mathfrak{so}(E))\otimes\mathbb D_0\bigr)
 \oplus\bigl(\Omega^1(Y)\otimes\mathbb D\bigr),\\
 (\mathscr B_{\rm sp})^2&=\Omega^0(Y)\otimes\mathbb D.
\end{aligned}
\]
The ghost-number-one coordinates are $(\xi,s,\Lambda)$ and the
ghost-number-two coordinate is $c$. Here $\xi$ generates
diffeomorphisms, $s$ gauge transformations of $A$, and $\Lambda$ gerbe
transformations. $c$ is the scalar reducibility ghost for $\Lambda$.
Appendix~\ref{app:momenta} constructs a local field chart on
$\mathscr B_{\rm sp}$ together with a two-sided local inverse.

The diffeomorphism action on the unit spinor is lifted using the
isometry $I_g=G_g^{-1/2}$ defined in Section~\ref{sec:geometry}. For a
local diffeomorphism $F$, define the frame transport
\[
 J(g,F)_x=I_g(Fx)^{-1}dF_x I_{F^*g}(x).
\]
Let $j_g(\xi)$ be its derivative at the identity along the flow of
$\xi$. Appendix~\ref{app:native-closure} computes this derivative
and verifies the cocycle identity. In the Courant presentation, the tangent and gauge adjoint parameters
are absolute components of a generalized section. After fixing the physical
metric frame, their relative component is represented by the following
field-dependent gauge ghost:

\begin{equation}\label{eq:native-ghost-shift}
  \lambda
  =
  s-j_g(\xi)^{[0]}
  -\iota_{\xi^{[0]}}(A-\Theta_0).
\end{equation}
Let $f_{\rm sp}$ be the Chevalley--Eilenberg differential of the
Courant Lie-2 action in these physical coordinates. We use the BRST sign convention in which the field action is
\[
  f_{\rm sp}g=-\mathcal L_\xi g,
  \qquad
  f_{\rm sp}\varrho=-\mathcal L_\xi\varrho,
\]
\[
  f_{\rm sp}A=-\iota_\xi F_A-D_As,
  \qquad
  f_{\rm sp}B=-\mathcal L_\xi B-d\Lambda-\epsilon b_\lambda,
\]
where
\[
  b_\lambda
  =
  \frac12P(\lambda,F_A)
  -\frac14P(a,D_A\lambda).
\]
The spinor action is the lift determined by \(J(g,F)\), and the
corresponding ghost transformations are given in Appendix~\ref{app:native-closure}.

\begin{lemma}\label{lem:native-invariance}
The symmetry differential satisfies
\[
 f_{\rm sp}^2=0,\qquad f_{\rm sp} W_{\rm ext}=0.
\]
\end{lemma}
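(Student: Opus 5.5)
We split the statement into the nilpotency $f_{\rm sp}^2=0$ and the invariance $f_{\rm sp}W_{\rm ext}=0$, and in both cases reduce to the Courant Lie-2 action of Section~\ref{subsec:courant-symmetry}.

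\textbf{Nilpotency.} Since $f_{\rm sp}$ is the Chevalley--Eilenberg differential of a Lie-2 algebra action, $f_{\rm sp}^2=0$ amounts to two facts: the action closes on fields up to the reducibility ghost $c$, and the ghost transformations encode the Lie-2 brackets. We check these component by component on $(g,\varrho,\chi,B,A)$ and on the ghosts $(\xi,s,\Lambda,c)$.

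The component checks are as follows.
\begin{itemize}
\item On $g$ and $\varrho$ only $\xi$ acts. With $f_{\rm sp}\xi=-\frac12[\xi,\xi]$ (sign as in Appendix~\ref{app:native-closure}), nilpotency is the standard identity $[\mathcal L_\xi,\mathcal L_\xi]=\mathcal L_{[\xi,\xi]}$.
\item On $A$ this is the usual BRST closure for diffeomorphisms combined with gauge transformations. The transformation of $s$ contains $\frac12[s,s]$ and $\iota_\xi$ terms.
\item On $\chi$ we use the cocycle identity for the frame transport $J(g,F)$, which Appendix~\ref{app:native-closure} verifies. Differentiating it twice shows that $j_g$ intertwines the commutator of vector fields with the commutator of lifted actions, up to the gauge term that $f_{\rm sp}s$ absorbs.
\item The $B$ component is the nontrivial one. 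Applying $f_{\rm sp}$ twice to $-\mathcal L_\xi B-d\Lambda-\epsilon b_\lambda$ produces $-d(f_{\rm sp}\Lambda)$ and the variation of $\epsilon b_\lambda$.
\end{itemize}
We fix $f_{\rm sp}\Lambda$ so that it contains the Green--Schwarz two-cocycle $\frac{\epsilon}{4}P(\lambda,d\lambda)$-type term, together with $dc$-compensation and Lie-derivative terms. We also fix $f_{\rm sp}c$ so that the scalar component closes.

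By nilpotency of $\epsilon$, every correction is evaluated on reduced fields. So only the leading-order identity $f_{\rm sp}^2\lambda=0$ and the gauge covariance of $F_A$ and $a$ are needed, with $\lambda$ from \eqref{eq:native-ghost-shift}.

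\textbf{Invariance.} We rewrite $W_{\rm ext}$ through Proposition~\ref{prop:courant-functional} as $W_{\mathrm{Courant},G_2}$. That functional is built only from the Courant structure $(V_+,\operatorname{div}_\varrho)$, the spinor line, and the canonical Dirac operator of \cite[Lemma 3.4]{GFDirac}. All of these are natural under Courant automorphisms.

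On the physical side we check three things.
\begin{itemize}
\item $H$ is invariant under the gauge and gerbe parts. Lemma~\ref{lem:nilpotent-flux} and the discussion after \eqref{eq:physical-compact-covariance} give this: the $-\epsilon b_\lambda$ shift of $B$ exactly cancels the change of $\frac14\jmath_\epsilon\cT_3$ under the relative gauge transformation $\lambda$. The $d\Lambda$ shift drops out of $H$ trivially.
\item The $\xi$-part acts on every field by the Lie derivative (transported spinorially via $j_g$). The integrand of \eqref{eq:native-source} is therefore a natural density, and its variation is $-\mathcal L_\xi$ of a top form, which is exact.
\item $g$, $\chi$ and $\varrho$ do not see $s$ or $\Lambda$.
\end{itemize}
Hence $f_{\rm sp}W_{\rm ext}$ is a total derivative, and so it vanishes in $\mathcal H_{\rm loc}$.

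\textbf{Main obstacle.} We expect the main obstacle to be the $B$ and $\Lambda$ closure at order $\epsilon$. This requires the correct identity for the variation of $\cT_3(\red{\Theta_H},a)$ under $\lambda$, namely $\delta_\lambda\cT_3=d(\ldots)$ with $b_\lambda$ the primitive, and we must also track that the field-dependent ghost $\lambda$ mixes $\xi$-terms.

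We would first verify the infinitesimal transgression formula $\delta_\lambda \cT_3(\Theta,a)=-4\,d b_\lambda$ from \eqref{eq:transgression-identity}, using that $\Theta$ is $\lambda$-inert at reduced order. We would then derive $f_{\rm sp}\Lambda$ by demanding closure.
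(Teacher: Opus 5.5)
\textbf{Overall plan.} Your structure is the paper's. For closure you check componentwise: Lie derivatives on $g,\varrho$, standard BRST closure on $A$, the frame cocycle on the spinor, and Green--Schwarz descent on $B,\Lambda,c$. For invariance you use $f_{\rm sp}H=-\mathcal L_\xi H$ plus naturality, so that $f_{\rm sp}\mathcal I_{\rm ext}=-d\iota_\xi\mathcal I_{\rm ext}$. The invariance half is essentially the paper's argument. Note that on $A$ and $\theta$ the $\xi$-part is the common-frame pullback rather than a bare Lie derivative. This is harmless because $\cT_3$ is built from the tensorial quantities $a$, $R_\theta$ and $D_\theta a$.

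\textbf{The gap: closure on $B$, $\Lambda$, $c$.} This is where $f_{\rm sp}^2=0$ has real content, and you only promise to fix the ghost rules by demanding closure. The identity you single out, $\delta_\lambda\cT_3=d(\cdots)$ with primitive $b_\lambda$, is only the first descent step. It gives $f_{\rm sp}H=-\mathcal L_\xi H$, but it does not close $f_{\rm sp}^2B$. The chain of identities actually needed is as follows.
\begin{itemize}
\item Applying $f_{\rm sp}$ to $-\epsilon b_\lambda$ produces $-\epsilon\,qb_\lambda$, where $q$ is the pure relative-gauge part. This is cancelled by $-d(f_{\rm sp}\Lambda)$ only through the second descent identity $qb_\lambda=\tfrac14dP(\lambda,D_A\lambda)$. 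That identity forces the term $-\tfrac\epsilon4P(\lambda,D_A\lambda)$ in $f_{\rm sp}\Lambda$.
\item Closure on $\Lambda$ then needs $qP(\lambda,D_A\lambda)=\tfrac16dP(\lambda,[\lambda,\lambda])$. This forces the term $\tfrac\epsilon{24}P(\lambda,[\lambda,\lambda])$ in $f_{\rm sp}c$.
\item At that stage nothing is left to choose. Since $c$ is the top ghost, $f_{\rm sp}^2c=0$ is a check, not something you can ``fix''. The paper verifies it by showing that $T=P(\lambda,[\lambda,\lambda])$ satisfies $f_{\rm sp}T=-\xi(T)$, using ad-invariance of $P$ and Jacobi. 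Hence $f_{\rm sp}^2c=\tfrac12[\xi,\xi](c)-\xi(\xi(c))=0$.
\end{itemize}

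\textbf{What these computations use.} They need more than ``$f_{\rm sp}^2\lambda=0$ and covariance of $F_A$, $a$''; the former is automatic once $f_{\rm sp}^2$ vanishes on $\xi,s,g,A$. What is actually used is:
\begin{itemize}
\item the explicit rule $f_{\rm sp}\lambda=\tfrac12[\lambda,\lambda]-\nabla^0_\xi\lambda+[\lambda,j_g(\xi)]$, which rests on $f_{\rm sp}k_g=k_g^2-\xi^i\partial_ik_g$, obtained by differentiating the frame cocycle including its metric input;
\item $q(D_A\lambda)=0$ and $qF_A=-[F_A,\lambda]$;
\item the identity $dP(\lambda,D_A\lambda)=P(D_A\lambda,D_A\lambda)-2P(\lambda^2,F_A)$.
\end{itemize}
The diffeomorphism and common-frame terms are then restored by naturality.

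\textbf{Making ``demand closure'' rigorous.} If you prefer your approach, it can be completed. From $f_{\rm sp}H=-\mathcal L_\xi H$ you get $d(f_{\rm sp}^2B)=0$. At each stage the algebraic Poincar\'e lemma supplies local primitives in horizontal degree below seven at nonzero ghost number. Finally, $d(f_{\rm sp}^2c)=0$ forces $f_{\rm sp}^2c$ to be a constant of ghost number four, hence zero. This only proves nilpotency of some completion, so you would still have to check that the rules in Appendix~\ref{app:native-closure} are that solution.

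\textbf{Spinor step.} Your spinor step is misdescribed. On $\eta=I_g^*\chi$ the differential is $-\nabla^0_\xi\eta+\sg(j_g(\xi))\eta$, which involves no $s$. The cocycle $J(g,F\circ G)=G^*J(g,F)J(F^*g,G)$ makes the joint metric--spinor action an honest action, so it closes with no gauge term to absorb. The frame term meets the gauge sector only through $\lambda$, as above.
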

\begin{proof}
Appendix~\ref{app:native-closure} proves
\[
 f_{\rm sp}^2=0.
\]
Write
\[
 W_{\rm ext}=\int_Y\mathcal I_{\rm ext},\qquad
 \mathcal I_{\rm ext}\in\Omega^7(Y)\otimes\mathbb D.
\]
The relative Green--Schwarz descent in Appendix~\ref{app:finite-gerbe},
together with the diffeomorphism action of Appendix~\ref{app:native-closure}, gives
\[
 f_{\rm sp}H=-\cL_\xi H.
\]
Naturality of the metric, spinor and connection terms then gives
\[
 f_{\rm sp}\mathcal I_{\rm ext}=-\cL_\xi\mathcal I_{\rm ext}.
\]
Since $\mathcal I_{\rm ext}$ has horizontal form degree seven on $Y$,
\[
 \dd\mathcal I_{\rm ext}=0,\qquad
 -\cL_\xi\mathcal I_{\rm ext}
 =-\dd\iota_\xi\mathcal I_{\rm ext}-\iota_\xi\dd\mathcal I_{\rm ext}
 =-\dd\iota_\xi\mathcal I_{\rm ext}.
\]
Thus
\[
 f_{\rm sp}W_{\rm ext}=0
\]
as a local functional.
\end{proof}

\subsection{The Hamiltonian \texorpdfstring{$Q$}{Q}-theory}

Use the mixed variational cotangent construction of
Section~\ref{sec:deformation-algebra} to define
\[
 \mathcal M_{\rm sp}:=T^*[-1]\mathscr B_{\rm sp}.
\]
Let $\vartheta_{\rm sp}$ be its canonical mixed variational cotangent potential and set
$\omega_{\rm sp}=\delta\vartheta_{\rm sp}$. The gauge connection and ghost use
the ideal-valued dual
\[
 \Hom_{\mathbb D}(\mathbb D_0,\mathbb D)=\eps\mathbb D
\]
of Definition~\ref{def:mixed-variational-dual}.
The variational theory is locally modelled on sections of a mixed
coefficient bundle
\[
 (E_{\rm fr}\otimes_{\R}\mathbb D)
 \oplus(E_0\otimes_{\R}\mathbb D_0),
\]
where $E_{\rm fr}$ and $E_0$ are finite-rank graded real bundles.
Equation~\eqref{eq:native-full-carrier} gives its graded decomposition.

\begin{theorem}[The spinorial Hamiltonian theory]
\label{thm:main-a}
The fields and symmetry hierarchy define a formal local
Hamiltonian $Q$-theory
\[
 (\mathcal M_{\rm sp},Q_{\rm sp},\omega_{\rm sp},\cS_{\rm sp}),\qquad
 \cS_{\rm sp}=W_{\rm ext}+\iota_{f_{\rm sp}}\vartheta_{\rm sp}.
\]
It satisfies
\[
 \iota_{Q_{\rm sp}}\omega_{\rm sp}=-\delta\cS_{\rm sp},\qquad
 \{\cS_{\rm sp},\cS_{\rm sp}\}=0,\qquad Q_{\rm sp}^2=0.
\]
\end{theorem}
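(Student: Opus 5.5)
The plan is to obtain the theorem as a direct instance of Lemma~\ref{lem:cotangent-master}, applied with base $B=\mathscr B_{\rm sp}$, homological vector field $f=f_{\rm sp}$ and ghost-number-zero functional $W=W_{\rm ext}$. The hypotheses of that lemma are exactly the two identities of Lemma~\ref{lem:native-invariance}, namely $f_{\rm sp}^2=0$ and $f_{\rm sp}W_{\rm ext}=0$. What remains is to check that the ambient structures are of the type the lemma needs, so that $Q_{\rm sp}$ is well defined by $\iota_{Q_{\rm sp}}\omega_{\rm sp}=-\delta\cS_{\rm sp}$.

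First I will set up $\mathcal M_{\rm sp}=T^*[-1]\mathscr B_{\rm sp}$ as a mixed local formal field space in the sense of Definition~\ref{def:mixed-hamiltonian-category}. The free $\mathbb D$-summand consists of $(g,\chi,\varrho,B)$ together with $\xi$, $\Lambda$ and $c$. The $\mathbb D_0$-summand consists of $A$ and $s$. Their shifted covectors are $\mathbb D$-valued for the free summand and $\epsilon\mathbb D$-valued for the $\mathbb D_0$-summand, by Definition~\ref{def:mixed-variational-dual}, and the local field chart of Appendix~\ref{app:momenta} makes this graded decomposition concrete.

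Next I will observe that $\omega_{\rm sp}=\delta\vartheta_{\rm sp}$ is closed by construction and nondegenerate in the mixed category. On the $\mathbb D_0$-block this uses $\jmath_\epsilon$, exactly as in Proposition~\ref{prop:mixed-hamiltonian-domain}. That proposition then supplies a unique local Hamiltonian vector field for $\cS_{\rm sp}$, together with a graded Jacobi identity for the bracket.

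Two locality points need checking. The functional $W_{\rm ext}$ has the mixed form \eqref{eq:mixed-local-function}: the gauge field enters only through $H$, via the $\jmath_\epsilon$-weighted transgression of Lemma~\ref{lem:nilpotent-flux}, so its $A$-dependence is $\epsilon\mathbb D$-valued. The vector field $f_{\rm sp}$ is a mixed local evolutionary vector field: the Green--Schwarz term $\epsilon b_\lambda$ lies in the ideal, and the field-dependent ghost $\lambda$ from \eqref{eq:native-ghost-shift} is reduced modulo $\epsilon$. With these in place, $H_{f_{\rm sp}}=\iota_{f_{\rm sp}}\vartheta_{\rm sp}$ is given by \eqref{eq:cotangent-hamiltonian}, and Appendix~\ref{app:mixed} gives $X_{H_{f_{\rm sp}}}|_{\rm base}=f_{\rm sp}$.

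Then Lemma~\ref{lem:cotangent-master} yields $\{\cS_{\rm sp},\cS_{\rm sp}\}=2H_{f_{\rm sp}^2}+2f_{\rm sp}W_{\rm ext}=0$, and the Jacobi identity gives $Q_{\rm sp}^2=\tfrac12[X_{\cS},X_{\cS}]=X_{\frac12\{\cS,\cS\}}=0$. The marked background is a zero of $Q_{\rm sp}$ for two reasons. The ghosts and covectors vanish there, so the base component $f_{\rm sp}$ and the cotangent-lift terms vanish. The covector components reduce to $-\delta W_{\rm ext}$, which vanishes at the torsion-free standard embedding by \eqref{eq:background-identities} together with $H=0$ and constant $\Phi_0$.

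The main obstacle I expect is the mixed-coefficient bookkeeping in the step identifying $\{H_f,H_f\}/2$ with $H_{f^2}$ and $\{H_f,W\}$ with $fW$. This step needs the cotangent lift to be a Lie algebra homomorphism in the mixed category. The difficulty is that $f_{\rm sp}$ couples the $\mathbb D_0$-valued $A$ and $s$ into the free $B$-component through $\epsilon b_\lambda$, and its adjoint pairs $\epsilon\mathbb D$ covectors with $\mathbb D_0$ fields. My approach is to split every bracket into free–free, free–quotient and quotient–quotient contributions. All cross terms land in $\epsilon\mathbb D$, and any product of two of them vanishes because $\epsilon^2=0$. The remaining contributions are the ordinary cotangent identities, as in Proposition~\ref{prop:mixed-hamiltonian-domain}. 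Integration by parts in the local functional algebra $\mathcal H_{\rm loc}$ only contributes total derivatives, so the identities hold as local functionals.
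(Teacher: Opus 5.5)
Your proposal is correct and follows the paper's proof: locality of $W_{\rm ext}$ and $f_{\rm sp}$, then the identities $f_{\rm sp}^2=0$ and $f_{\rm sp}W_{\rm ext}=0$ from Lemma~\ref{lem:native-invariance}, then Lemma~\ref{lem:cotangent-master}. The mixed-coefficient bookkeeping you flag as the main obstacle is what the paper delegates to Proposition~\ref{prop:mixed-hamiltonian-domain}, and the paper verifies that the background is a zero of $Q_{\rm sp}$ separately, in \eqref{eq:reduced-stationary-points}, rather than inside this proof.
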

\begin{proof}
Locality of $W_{\rm ext}$ follows from the connection and flux
construction of Lemma~\ref{lem:nilpotent-flux}, and locality of
$f_{\rm sp}$ follows from the finite-jet formulas of
Appendix~\ref{app:native-closure}. Hence $\cS_{\rm sp}$ is a local
Hamiltonian. By Lemma~\ref{lem:native-invariance},
\[
 f_{\rm sp}^2=0,\qquad f_{\rm sp}W_{\rm ext}=0.
\]
Lemma~\ref{lem:cotangent-master} therefore gives
\[
 \{\cS_{\rm sp},\cS_{\rm sp}\}=0,\qquad Q_{\rm sp}^2=0.
\]
\end{proof}

\subsection{Positive-form and spinorial presentations}

Let $\mathscr B_\varphi$ denote the positive-form presentation of
Section~\ref{sec:geometry}, with geometric variables
\[
 (\varphi,B,\Phi,a)
\]
and its gauge and reducibility hierarchy. Write $W_\varphi$ for
\eqref{eq:form-source}, with the connection and flux given by
\eqref{eq:determined-graph}. The change from the spinorial variables
is $g=g_\varphi$, $\varrho=e^{-2\Phi}\vol_g$, with the spinor line
determined by $\varphi$ and $a=A-\Theta_H^{[0]}$. Use
\eqref{eq:native-ghost-shift} for the parameter change. Denote this map
by $\Psi$ and its pushforward of $f_{\rm sp}$ by $f_\varphi$.
Define
\[
 \mathcal M_\varphi=T^*[-1]\mathscr B_\varphi,\qquad
 \omega_\varphi=\delta\vartheta_\varphi,\qquad
 \cS_\varphi=W_\varphi+\iota_{f_\varphi}\vartheta_\varphi,
\]
where $\vartheta_\varphi$ is the canonical mixed variational cotangent
potential.

We use the following notion for the comparison of the two Hamiltonian presentations.

\begin{definition}[Hamiltonian \(Q\)-isomorphism]
\label{def:local-hamiltonian-equivalence}
A Hamiltonian \(Q\)-isomorphism is a mixed-coefficient local formal
isomorphism
\[
  U:\mathcal M_1\longrightarrow\mathcal M_2
\]
such that
\[
  U^*\omega_2=\omega_1,
  \qquad
  U^*\mathcal S_2=\mathcal S_1,
  \qquad
  U_*Q_1=Q_2.
\]
If both theories carry distinguished inclusions
\[
  i_j:\mathscr A_{\rm adm}\longrightarrow\mathcal M_j,
\]
we also require
\[
  U\circ i_1=i_2.
\]
\end{definition}

The third identity follows from the first two. The cotangent maps
constructed below also satisfy
\[
  [U^*\vartheta_2]=[\vartheta_1]
\]
as variational one-forms modulo total derivatives on \(Y\).

\begin{proposition}\label{prop:native-form-equivalence}
There is a local Hamiltonian $Q$-isomorphism
\[
 \widehat\Psi:\mathcal M_{\rm sp}\longrightarrow\mathcal M_\varphi
\]
such that
\[
 \widehat\Psi^*\omega_\varphi=\omega_{\rm sp},\qquad
 \widehat\Psi^*\cS_\varphi=\cS_{\rm sp},\qquad
 \widehat\Psi_*Q_{\rm sp}=Q_\varphi.
\]
\end{proposition}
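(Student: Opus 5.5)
The plan is to build \(\widehat\Psi\) as the cotangent lift of the base change of variables \(\Psi:\mathscr B_{\rm sp}\to\mathscr B_\varphi\), and then to check the three identities in turn.

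\textbf{Step 1: \(\Psi\) is a mixed-coefficient local formal isomorphism of graded bases.}
In ghost number zero, \(\Psi\) sends \((g,\chi,\varrho,B,A)\) to \((\varphi,B,\Phi,a)\). Here \(\varphi\) is the bilinear \eqref{eq:spinor-bilinears}, \(\Phi=-\tfrac12\log(\varrho/\vol_g)\), and \(a=A-\Theta_H^{[0]}\).

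\begin{itemize}
\item The pair \((g,[\chi])\) and the positive form \(\varphi\) determine each other pointwise, through the \(G_2\) stabilizer of a unit spinor. This correspondence is fibrewise smooth with smooth inverse, so it extends to \(\mathbb D\)-valued fields by the rule of Section~\ref{subsec:eft}.
\item The shift \(A\mapsto a\) is affine in \(A\). Its connection part \(\Theta_H^{[0]}=\Theta^{u,[0]}\) depends only on reduced free fields, by Lemma~\ref{lem:nilpotent-flux}. It is therefore a differential operator of finite order with explicit inverse \(a\mapsto a+\Theta_H^{[0]}\).
\item In ghost number one, the parameter change \eqref{eq:native-ghost-shift} is triangular: \(s\) is shifted by terms depending on \((\xi,g,A)\). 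It is inverted by solving for \(s\). The remaining ghosts \(\xi,\Lambda,c\) are unchanged.
\item Wherever \(\mathbb D_0\)-valued inputs feed into free outputs, the correction lies in \(\epsilon\mathbb D\). Lemma~\ref{lem:nilpotent-mixed-perturbation} then shows that the inverse has the same mixed type.
\end{itemize}

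The local chart of Appendix~\ref{app:momenta} supplies the two-sided local inverse on the spinorial side. So \(\Psi\) is a mixed-coefficient local formal isomorphism.

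\textbf{Step 2: cotangent lift and the pullback of \(\omega\).}
Define \(\widehat\Psi\) on base coordinates by \(\Psi\), and on shifted covectors by the inverse transpose \(p\mapsto (D\Psi^{-1})^{\dagger}p\). This adjoint is computed by finite integration by parts. For the \(\mathbb D_0\)-valued blocks \((A,s)\), the covectors are handled through \(\jmath_\epsilon\), as in Proposition~\ref{prop:mixed-hamiltonian-domain}.

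By construction, \(\widehat\Psi^*\vartheta_\varphi-\vartheta_{\rm sp}\) is a total derivative on \(Y\). This gives \([\widehat\Psi^*\vartheta_\varphi]=[\vartheta_{\rm sp}]\), and applying \(\delta\) gives \(\widehat\Psi^*\omega_\varphi=\omega_{\rm sp}\).

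\textbf{Step 3: the master functions and the \(Q\)-structures.}
The master function splits as \(\cS=W+\iota_f\vartheta\), and I would treat the two pieces separately.

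\begin{itemize}
\item \emph{The \(W\)-part.} Propositions~\ref{prop:courant-functional} and~\ref{prop:spinorial-source} give \(W_\varphi\circ\Psi=W_{\rm ext}\) on the Hull-compatible space, because \(H\) and \(\Theta_H\) are the same functions \eqref{eq:determined-graph} on both sides.
\item \emph{The symmetry part.} Since \(f_\varphi=\Psi_*f_{\rm sp}\) by definition, naturality of the cotangent Hamiltonian \eqref{eq:cotangent-hamiltonian} under cotangent lifts gives \(\widehat\Psi^*\iota_{f_\varphi}\vartheta_\varphi=\iota_{f_{\rm sp}}\vartheta_{\rm sp}\). This holds modulo total derivatives, which vanish in \(\mathcal H_{\rm loc}\).
\item \emph{The \(Q\)-structures.} Combining the two parts gives \(\widehat\Psi^*\cS_\varphi=\cS_{\rm sp}\). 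Both \(Q\)'s are then defined by \(\iota_Q\omega=-\delta\cS\), with uniqueness from Proposition~\ref{prop:mixed-hamiltonian-domain}. Together with Step 2, this yields \(\widehat\Psi_*Q_{\rm sp}=Q_\varphi\).
\end{itemize}

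\textbf{Main obstacle.}
I expect the hardest step to be checking that the cotangent lift stays inside the mixed-coefficient category. The map \(\Psi\) mixes a \(\mathbb D_0\)-valued coordinate (\(A\), and the ghost \(s\)) with free \(\mathbb D\)-valued ones through \(\Theta_H^{[0]}\) and \(j_g(\xi)^{[0]}\). As a result, the inverse transpose sends covectors of free fields into \(\epsilon\mathbb D\)-valued components dual to \(A\) and \(s\).

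To handle this, I would write \(D\Psi\) as the block-triangular form \(D_0(1+U)\). Here \(D_0\) is the block-diagonal free/quotient part, and \(U\) collects the terms through which the \(\mathbb D_0\)-valued blocks \(A\) and \(s\) are shifted by reduced free fields. I would then apply Lemma~\ref{lem:nilpotent-mixed-perturbation} to the transposed operator, obtaining an explicit inverse \((1-U^\dagger)\) composed with \(D_0^{-\dagger}\), with all entries local. The resulting components should have the form \eqref{eq:mixed-local-function}.
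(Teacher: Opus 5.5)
Your proposal is correct and follows essentially the same route as the paper. You take the cotangent lift of the base change \(\Psi\) defined by \((D\Psi)^\dagger\Pi=\pi\) modulo horizontal divergences, use \(\Psi^*W_\varphi=W_{\rm ext}\) together with \(\Psi_*f_{\rm sp}=f_\varphi\) for \(\cS\), and conclude for \(Q\) by uniqueness of Hamiltonian vector fields; the paper does the mixed-coefficient bookkeeping through the triangular Jacobian \eqref{eq:full-triangular-jacobian}--\eqref{eq:equivalence-momenta} of Appendix~\ref{app:cotangent-lift}.

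One correction to your obstacle paragraph: the mixing runs the other way. The adjoint of the free-to-\(\mathbb D_0\) blocks sends the ideal-valued covectors of \(A\) and \(s\) into \(\epsilon\)-weighted free covectors, as in \eqref{eq:native-explicit-momenta}. That is exactly the direction in which your transposed application of Lemma~\ref{lem:nilpotent-mixed-perturbation} works.
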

\begin{proof}
The field map and its local inverse are given in
Appendix~\ref{app:momenta}, with parameter inverse
\eqref{eq:native-parameter-jacobian}.
Proposition~\ref{prop:spinorial-source} and the equivariance calculation
of Appendix~\ref{app:native-closure} give
\[
 \Psi^*W_\varphi=W_{\rm ext},\qquad
 \Psi_*f_{\rm sp}=f_\varphi.
\]

For $b\in\mathscr B_{\rm sp}$ and a source covector $\pi$, define
$\widehat\Psi(b,\pi)=(\Psi(b),\Pi)$ by requiring, for every variation
$\delta b$,
\[
 [\langle\Pi,D\Psi_b(\delta b)\rangle]
 =[\langle\pi,\delta b\rangle],
 \qquad (D\Psi_b)^\dagger\Pi=\pi.
\]
Brackets denote classes in the variational quotient by horizontal exact
terms, and $\dagger$ is the variational adjoint with the mixed coefficient
pairings. The inverse
base map determines the inverse adjoint. Appendix~\ref{app:cotangent-lift}
gives both covector maps explicitly, including the derivatives of the
parameter map. By definition,
\[
 [\widehat\Psi^*\vartheta_\varphi]
 =[\vartheta_{\rm sp}].
\]
Applying $\delta$ gives
\[
 \widehat\Psi^*\omega_\varphi=\omega_{\rm sp}.
\]
Using $\Psi_*f_{\rm sp}=f_\varphi$, we obtain the local-functional identity
\[
\begin{aligned}
 \widehat\Psi^*\cS_\varphi
 &=\widehat\Psi^*W_\varphi
       +\widehat\Psi^*\iota_{f_\varphi}\vartheta_\varphi\\
 &=W_{\rm ext}+\iota_{f_{\rm sp}}\vartheta_{\rm sp}
 =\cS_{\rm sp}.
\end{aligned}
\]
Nondegeneracy and the convention $\iota_Q\omega=-\delta\cS$ therefore give
\[
 \widehat\Psi_*Q_{\rm sp}=Q_\varphi.
\]
The map and its inverse preserve the coefficient filtration, as
verified in Appendix~\ref{app:cotangent-lift}.
\end{proof}

Let
\[
  \mathfrak b_{0,\rm sp}\in\mathcal M_{\rm sp},
  \qquad
  \mathfrak b_{0,\varphi}\in\mathcal M_\varphi
\]
denote the points over the standard embedding with all ghosts and
shifted covectors equal to zero. At these points the base symmetry
components vanish. The components of $Q$ on the shifted covectors dual
to the physical fields are the negatives of the Euler--Lagrange
derivatives of $W_{\rm ext}$ and $W_\varphi$, by
\eqref{eq:canonical-euler-rows}. The first variation vanishes at the
background because $H=\dd\varphi=\dd\psi=\dd\Phi=0$ and both instanton
equations hold. Every higher covector component contains a covector.
Thus
\begin{equation}\label{eq:reduced-stationary-points}
 Q_{\rm sp}(\mathfrak b_{0,\rm sp})=0,\qquad
 Q_\varphi(\mathfrak b_{0,\varphi})=0,\qquad \ell_0=0.
\end{equation}
The two points correspond under $\widehat\Psi$.
\begin{definition}[Formal variational deformation functor]
\label{def:variational-deformation-functor}
Write \((\mathcal M_{\rm var},Q_{\rm var},\omega_{\rm var},S_{\rm var})\)
for this Hamiltonian theory, in either of the equivalent field
charts, and \(L_{\rm var}\) for its Taylor algebra at the stationary
background. Thus
\[
 S_{\rm var}=W_{\rm ext}+\iota_{f_{\rm var}}\vartheta_{\rm var},
 \qquad \Def_{\rm var}=\Def_{L_{\rm var}}.
\]
The spinorial subscripts \(\mathrm{sp}\) and positive-form subscripts
\(\varphi\) specify charts on this same variational theory. We write
$\mathfrak b_0\in\mathcal M_{\rm var}$ for the common stationary point
represented by \eqref{eq:reduced-stationary-points}.
\end{definition}
For Artin algebras concentrated in cochain degree zero, the
Maurer--Cartan equation is the critical-point equation for $W_{\rm ext}$
on $\mathscr V$. The admissibility residual is not imposed at this
stage. It is added as an equation in the resolution constructed below.
Theorem~\ref{thm:automatic-admissibility} shows that, on a compact
boundaryless manifold with the gerbe and anomaly class fixed, every such
critical field satisfies the admissibility equation.

\subsection{First variation and the BPS/Euler--Lagrange equation map}
\label{subsec:bps-euler-equation-map}
We compare the constrained BPS problem with the critical equations of
$W_{\mathrm{Courant},G_2}$. The following local differential operators
identify their equation modules on $H^{[0]}=0$. This is not yet an
equivalence of derived theories. 
Theorem~\ref{thm:automatic-admissibility} identifies their degree-zero
Artin solution loci under the compactness and fixed-sector hypotheses.

\begin{theorem}[BPS/Euler--Lagrange equation comparison]
\label{thm:bps-euler-equation-equivalence}
Let \(Z=(\varphi,B,\Phi,A)\) be an admissible formal field,
so that
\[
 d_YB^{[0]}=0.
\]
Let
\[
 \mathcal R_{\rm BPS}
 =
 \bigl(
   \Omega^5
   \oplus
   \Omega^7
   \oplus
   \Omega^3
 \bigr)\otimes\mathbb D
 \oplus
 \Omega^6(\mathfrak{so}(E))\otimes\mathbb D_0
\]
be the BPS equation module, with coordinates
\[
 E=(C,\Sigma,T,I).
\]
The first variation of $W_{\rm ext}$ with respect to arbitrary
variations defines a finite-order local differential operator
\[
 \mathcal P_Z:
 \mathcal R_{\rm BPS}
 \longrightarrow
 T_Z^\vee\mathscr V
\]
such that
\[
 \mathcal E_{\rm EL}(Z)
 =
 \mathcal P_Z\mathcal E_{\rm BPS}(Z).
\]
There is a finite-order local differential operator
\[
 \mathcal Q_Z:
 T_Z^\vee\mathscr V
 \longrightarrow
 \mathcal R_{\rm BPS}
\]
satisfying
\[
 \mathcal Q_Z\mathcal P_Z=1,
 \qquad
 \mathcal P_Z\mathcal Q_Z=1
\]
on arbitrary local sections.

Consequently, after expansion into real coefficient coordinates, the
BPS equations and the Euler--Lagrange equations of $W_{\rm ext}$ generate
the same local differential ideal on $\mathscr A_{\rm adm}$.
\end{theorem}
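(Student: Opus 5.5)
The plan is to compute the first variation of $W_{\rm ext}$ in the positive-form chart, using Proposition~\ref{prop:spinorial-source}:
\[
 W_{\rm ext}=\frac14\int_Y e^{-2\Phi}\Bigl(H\wedge\psi-\frac12\dd\varphi\wedge\varphi\Bigr),
\]
and then to rewrite each Euler--Lagrange component as a linear combination of the BPS residuals $(C,\Sigma,T,I)$ with coefficients that are finite-order differential operators depending on $Z$. The variations will be organized by coefficient filtration. At order $\epsilon^0$ I vary $(\varphi,B,\Phi)$ with $H^{[0]}=\dd B^{[0]}$. The $\epsilon$-linear Green--Schwarz terms are then handled through Lemma~\ref{lem:nilpotent-flux}: the $A$-variation enters only through $\jmath_\epsilon\cT_3$, and the induced variation of $\Theta_H^{[0]}$ enters only through the tangent Pontryagin term.

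\textbf{Leading-order components.} Each leading component is handled by a direct computation.
\begin{itemize}
\item The $B$-variation gives $\delta B\wedge \dd(e^{-2\Phi}\psi)$, so that component is exactly $\pm\frac14 C$.
\item The $\Phi$-variation gives $-2\times$ the integrand. On $H^{[0]}=0$ I substitute $H=T-*\dd\varphi+2*(\dd\Phi\wedge\varphi)$ and use the torsion decomposition \eqref{eq:torsion-classes}. This expresses $H\wedge\psi$ and $\dd\varphi\wedge\varphi$ through $T\wedge\psi$, $\Sigma$ and $\tau_0$-type terms, so the density equation becomes a combination of $\Sigma$ and $\pi_1 T$.
\item The $\varphi$-variation uses $\delta\psi=*\bigl(\tfrac43\pi_1+\pi_7-\pi_{27}\bigr)\delta\varphi$ together with the metric variation of $*$. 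After integrating $\dd\varphi\wedge\delta\varphi$ by parts, the $\Lambda^3$-covector splits into types $1$, $7$ and $27$. In each type it equals an algebraic multiple of the corresponding projection of $T$, plus $\dd$-derivative terms in $C$ and $\Sigma$.
\end{itemize}
The key structural point is that the type-$7$ and type-$27$ projections of the Euler--Lagrange covector are $G_2$-equivariant linear isomorphisms applied to $\pi_7T,\pi_{27}T$, modulo terms in $C,\Sigma$. I will fix the precise constants with the Clifford identities of Appendix~\ref{app:clifford}.

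\textbf{First-order components.} The $\epsilon$-component of the $A$-variation is $\frac14 e^{-2\Phi}\,\delta\cT_3\wedge\psi$, restricted to its reduction. Since $\delta_a\cT_3=2P(\delta a,F_A)+\dd(\cdots)$, after integrating by parts with $\dd\psi^{[0]}$ this gives $P(\delta a, F_A\wedge\psi^{[0]})$ plus terms proportional to $C^{[0]}$ and $\dd\Phi$. The gauge covector is therefore an invertible multiple of $I$ plus $C$-terms. The tangent-connection contribution through $\Theta_H^{[0]}$ lands in $\epsilon\mathbb D$ and depends only on reduced data. Hence it is a nilpotent correction $U$ of exactly the type in Lemma~\ref{lem:nilpotent-mixed-perturbation}.

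\textbf{Assembly.} Altogether I obtain $\mathcal P_Z=S^{-1}D(1+U)$, where $D$ is block-triangular, with invertible algebraic diagonal blocks and differential off-diagonal blocks, and $S$ is the type-decomposition isomorphism $T^\vee_Z\mathscr V\cong$ a sum of type components. Block-triangularity gives a differential-operator inverse of $D$ by back-substitution, and Lemma~\ref{lem:nilpotent-mixed-perturbation} then produces $\mathcal Q_Z=(1-U)D^{-1}S$ as a two-sided inverse. The ideal statement follows immediately, since each generating set is a differential-operator image of the other.

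\textbf{Main obstacle.} I expect the hard step to be the counting and triangularity. The covector space and $\mathcal R_{\rm BPS}$ must match fibrewise: the $\Lambda^3$-covector from $\varphi$ has to absorb $T$ together with part of $\Sigma$, while $\Phi$ absorbs the remainder. Making the diagonal blocks genuinely invertible, rather than merely injective, will require care with the $\Lambda^2$-covector from $B$ modulo exact terms against the $\Omega^5$ residual $C$, and with how the Hull-induced tangent variation is placed strictly in the nilpotent part. If the plain type decomposition does not give invertibility, I would first enlarge the bookkeeping to treat $C$ and $\delta B$ as a dual pair via Hodge star.
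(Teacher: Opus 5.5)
There is a genuine gap, in your treatment of the first-order terms. Your leading-order bookkeeping is sound, and in fact easier than you fear. After substituting $H=T-*\dd\varphi+2*(\dd\Phi\wedge\varphi)$ and the torsion decomposition, the $B$-, $\varphi$- and $\Phi$-rows of $4\,\dd W_{\rm ext}$ are, up to $\epsilon\mathbb D$-valued corrections, exactly:
\begin{itemize}
\item $-C$ for the $B$-row;
\item $w*\bigl(JT-\tfrac13(*\Sigma)\varphi+\tfrac32\iota_{\mathsf B(w^{-1}C)^\sharp}\psi\bigr)$ for the $\varphi$-row;
\item $w(3\Sigma-2T\wedge\psi)$ for the $\Phi$-row.
\end{itemize}
This map is pointwise algebraic, with no derivatives of $C$ or $\Sigma$ and no ``modulo exact terms'' issue for the $B$-row. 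Its singlet block has nonzero determinant, and it uses no admissibility at all.

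Now consider the variation of the induced connection $\theta=\Theta_H^{[0]}$ inside $\jmath_\epsilon\cT_3$, which contributes $2P(\kappa_V,F_A-R_\theta)-2\,\dd P(a,\kappa_V)$. Besides $P(\kappa_V,I)$ and $a\wedge C^{[0]}$ terms, the Euler--Lagrange covector therefore contains a term proportional to $\jmath_\epsilon w_0P(\kappa_V,R_\theta\wedge\psi)$. After integrating the derivatives in $\kappa_V$ by parts, this enters the $\varphi$- and $B$-rows through $\jmath_\epsilon\mathsf K_\varphi^\top$ and $\jmath_\epsilon\mathsf K_B^\top$. The curvature $R_\theta\wedge\psi$ is not a BPS residual, and your plan never writes it as a linear local operator applied to $\mathcal E_{\rm BPS}(Z)$.

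Lemma~\ref{lem:nilpotent-mixed-perturbation} applies only to an operator already acting on the equation module. Calling this contribution ``a nilpotent correction $U$'' therefore presupposes the factorization $\mathcal E_{\rm EL}=\mathcal P_Z\mathcal E_{\rm BPS}$ you are trying to prove. As it stands you only get $\mathcal P_Z\mathcal E_{\rm BPS}(Z)$ plus an inhomogeneous $\epsilon$-term. Neither the two-sided inverse nor the equality of differential ideals follows from that.

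The missing idea is the curvature reconstruction of Lemma~\ref{lem:geometric-residual-curvature}. It is the only place where $H^{[0]}=0$ is needed; your appeal to admissibility in the $\Phi$-row is unnecessary, since that row is an identity. The argument runs as follows.
\begin{itemize}
\item On an admissible field, $\theta=\Theta_{\LC}(g)^{[0]}$, and the reduced residuals $(C,\Sigma,T)$ determine $(\dd\phi,\dd\psi)$ pointwise.
\item The injective intrinsic-torsion map $\mathcal D_\varphi:T^*Y\otimes TY\to\Lambda^4\oplus\Lambda^5$ then recovers $\nabla^{\LC}\phi$ as an algebraic linear expression $p_E$ in arbitrary equation arguments $E$.
\item The identity $\dd_{\nabla}(\nabla\phi)=R\cdot\phi$ detects exactly the $\Lambda^2_7$-part of the curvature, because the stabilizer of $\phi$ is $\mathfrak g_2$. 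By pair symmetry, $R\wedge\psi$ sees only that part.
\item Hence the first-order operator $\mathscr R_\varphi(E)=\operatorname{swap}\mathcal C_\varphi(\dd_{\nabla^{\LC}}p_E)$ satisfies $\mathscr R_\varphi(\mathcal E_{\rm BPS})\wedge\psi=R_{\LC}\wedge\psi$. At the reduced level this is the required $R_\theta\wedge\psi$.
\end{itemize}
This is a genuine integrability identity: $R_{\LC}\wedge\psi$ does not vanish on non-BPS admissible fields. It is what lets you define $\mathcal N_Z(E)$ linear in $(E_g^{[0]},I)$. From it you get $\mathcal M_Z=(\jmath_\epsilon\mathsf K_B^\top\mathcal N_Z,\jmath_\epsilon\mathsf K_\varphi^\top\mathcal N_Z,0,0)$ and $\mathcal U_Z=\mathcal D_Z^{-1}\mathcal M_Z$ with $\mathcal U_Z^2=0$. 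Once this is inserted, your assembly goes through: remove the $a\wedge C^{[0]}$ term from the gauge row with a zero-order map $\mathcal S_Z$, note that $\mathcal D_Z$ is pointwise invertible, and conclude with Lemma~\ref{lem:nilpotent-mixed-perturbation}.
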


\begin{proof}
Equation~\eqref{eq:physical-P-explicit} gives
\[
\mathcal E_{\rm EL}(Z)=\mathcal P_Z\mathcal E_{\rm BPS}(Z).
\]
The calculation in
Appendix~\ref{subsec:bps-euler-local-equivalence} puts the operator
\(\mathcal P_Z\) into the form
\[
\mathcal S_Z(4\mathcal P_Z)
=
\mathcal D_Z(1+\mathcal U_Z),
\]
where \(\mathcal S_Z\) is an invertible zero-order transformation and
\(\mathcal D_Z\) is pointwise invertible. The geometric components of
\(\mathcal U_Z\) take values in the first-order ideal
$\epsilon\mathbb D$, and its gauge component is zero. It depends only
on the leading geometric and gauge components of its argument.
Lemma~\ref{lem:nilpotent-mixed-perturbation} therefore gives
\(\mathcal U_Z^2=0\), so the finite differential operator
\[
\mathcal Q_Z
=
4(1-\mathcal U_Z)\mathcal D_Z^{-1}\mathcal S_Z
\]
is a two-sided inverse:
\[
\mathcal Q_Z\mathcal P_Z=1,
\qquad
\mathcal P_Z\mathcal Q_Z=1.
\]
All operators in this factorization have finite differential order.
Therefore each of the two equation systems is a finite differential
consequence of the other, and their real local differential ideals
agree.
\end{proof}
In particular, every physical BPS field is stationary under admissible variations.

\begin{corollary}[BPS locus and the critical locus]
\label{thm:bps-critical-locus}
In the formal neighbourhood $\mathscr V$,
\begin{equation}\label{eq:solution-loci-comparison}
 \mathcal Z_{\rm BPS}
 =
 \Crit(W_{\rm ext})
 \cap
 \mathscr A_{\rm adm}.
\end{equation}
Here $\Crit(W_{\rm ext})$ is the set-theoretic formal critical locus of
$W_{\rm ext}$ on $\mathscr V$.
\end{corollary}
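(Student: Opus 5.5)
The corollary follows from Theorem~\ref{thm:bps-euler-equation-equivalence} by proving two set-theoretic inclusions inside $\mathscr V$. Set-theoretic formal loci are read over local Artin algebras $R$ concentrated in degree zero. A formal field is therefore an $R$-point of $\mathscr V$. Both sides are subsets of $\mathscr A_{\rm adm}$: the left side by the definition $\mathcal Z_{\rm BPS}=\mathcal E_{\rm BPS}^{-1}(0)\cap\mathscr A_{\rm adm}$, and the right side trivially. It thus suffices to fix an admissible formal field $Z$, meaning $d_YB^{[0]}=0$, and to show that $\mathcal E_{\rm BPS}(Z)=0$ if and only if $\mathcal E_{\rm EL}(Z)=dW_{\rm ext}(Z)=0$ as a variational covector on $\mathscr V$.

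For the inclusion $\subseteq$, apply the factorization $\mathcal E_{\rm EL}(Z)=\mathcal P_Z\mathcal E_{\rm BPS}(Z)$ of Theorem~\ref{thm:bps-euler-equation-equivalence}. Since $\mathcal P_Z$ is a linear local differential operator in its argument, vanishing of $\mathcal E_{\rm BPS}(Z)$ gives vanishing of $\mathcal E_{\rm EL}(Z)$. This is the first variation with respect to arbitrary variations in $T_Z\mathscr V$, not only admissible ones. Hence $Z\in\Crit(W_{\rm ext})$.

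For the inclusion $\supseteq$, apply $\mathcal Q_Z$ and use $\mathcal Q_Z\mathcal P_Z=1$. This gives $\mathcal E_{\rm BPS}(Z)=\mathcal Q_Z\mathcal E_{\rm EL}(Z)=0$, so $Z\in\mathcal Z_{\rm BPS}$.

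The main point to check is that these operator identities remain valid over formal coefficients. Over a formal neighbourhood, $\mathcal P_Z$ and $\mathcal Q_Z$ have Taylor coefficients in the field displacement, each of which is a finite-order differential operator. The identities $\mathcal Q_Z\mathcal P_Z=1$ and $\mathcal E_{\rm EL}=\mathcal P_Z\mathcal E_{\rm BPS}$ hold as identities of formal power series, so they specialize to every $R$-point. We must also confirm that the factorization through $\mathcal D_Z$, $\mathcal S_Z$ and $1+\mathcal U_Z$ is invertible over $R$. Here $\mathcal D_Z$ and $\mathcal S_Z$ are pointwise invertible at the background, and invertibility persists after tensoring with $R$ because $\mathfrak m_R$ is nilpotent. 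The inverse of $1+\mathcal U_Z$ comes from Lemma~\ref{lem:nilpotent-mixed-perturbation}, using $\epsilon^2=0$, and this argument is unaffected by coefficients in $R$.

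With these two inclusions, \eqref{eq:solution-loci-comparison} holds pointwise on $R$-points, which is the claimed equality of formal loci.
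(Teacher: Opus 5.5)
Your proposal is correct and follows the paper's own argument: for an admissible $Z$, the factorization $\mathcal E_{\rm EL}(Z)=\mathcal P_Z\mathcal E_{\rm BPS}(Z)$ and the two-sided inverse $\mathcal Q_Z$ from Theorem~\ref{thm:bps-euler-equation-equivalence} give the two inclusions. Your discussion of Artin-algebra points is a harmless elaboration, with one precision: the factorization is not a formal identity on all of $\mathscr V$. Off the admissible locus it acquires the extra $\mathcal J_q\bar b$ term of \eqref{eq:rp-P-20}, which vanishes precisely because you only apply it at admissible fields.
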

\begin{proof}
The equation-module isomorphism of Theorem~\ref{thm:bps-euler-equation-equivalence}
identifies the Euler--Lagrange equation of $W_{\rm ext}$ with the BPS
residual at every admissible field. Its invertibility gives both inclusions in
\eqref{eq:solution-loci-comparison}.
\end{proof}

\begin{lemma}[Weighted exact and coclosed forms]
\label{lem:weighted-exact-coclosed}
Let \(Y\) be an oriented closed Riemannian manifold,
let \(*\) be its Hodge star, let \(w>0\) be smooth, and let
\[
b=d\beta
\]
be an exact \(k\)-form.  If
\[
d(w* b)=0,
\]
then
\[
b=0.
\]
The same conclusion holds coefficientwise for forms with values in a
finite-dimensional real vector space.
\end{lemma}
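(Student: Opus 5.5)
The plan is a single integration-by-parts argument: pair \(b\) with its weighted Hodge dual \(w*b\) and use Stokes' theorem on the closed manifold \(Y\). The weighted \(L^2\)-norm of \(b\) is
\[
\int_Y w\, b\wedge * b=\int_Y w\,|b|^2\vol .
\]
Since \(w>0\), this quantity vanishes only if \(b=0\) pointwise. So it suffices to show that the integral is zero.

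To do this, substitute \(b=d\beta\) and write
\[
w\, b\wedge *b = d\beta\wedge (w*b).
\]
The Leibniz rule gives
\[
d\bigl(\beta\wedge w*b\bigr)=d\beta\wedge w*b+(-1)^{k-1}\beta\wedge d(w*b).
\]
The hypothesis \(d(w*b)=0\) kills the second term. Hence \(w\,b\wedge *b\) is exact, and Stokes' theorem on the closed manifold \(Y\) gives
\[
\int_Y w|b|^2\vol=\int_Y d\bigl(\beta\wedge w*b\bigr)=0.
\]
Continuity and positivity of \(w|b|^2\) then force \(b\equiv 0\).

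For forms with values in a finite-dimensional real vector space \(V\), choose a basis of \(V\) and expand \(b=\sum_i b_i e_i\). Then \(b=d\beta\) and \(d(w*b)=0\) hold componentwise, since \(d\) and \(*\) act on the coefficients and the basis is constant. The scalar statement therefore applies to each \(b_i\).

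There is no real obstacle here. The only points to check are that \(\beta\) is globally defined, so that Stokes applies with no boundary term (this is the exactness hypothesis, supplied in the application by the fixed gerbe sector), and the routine sign in the Leibniz rule, which does not matter because that term vanishes.
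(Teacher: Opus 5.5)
Your proof is correct and is the paper's argument: Stokes' theorem applied to $d(\beta\wedge w*b)$ gives $\int_Y w\,b\wedge *b=0$, positivity of $w$ forces $b=0$, and the vector-valued case follows by choosing a real basis. Your explicit Leibniz-rule step and the remark that $\beta$ must be globally defined only spell out what the paper leaves implicit.
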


\begin{proof}
By Stokes' theorem,
\[
\int_Y w\,b\wedge *b
=
\int_Y d(\beta\wedge w*b)
=0.
\]
Since \(w\) is positive, \(b=0\).  The vector-valued statement follows
after choosing a real basis.
\end{proof}

\begin{theorem}[Automatic admissibility of critical fields]
\label{thm:automatic-admissibility}
Assume that $Y$ is closed and that the gerbe/anomaly
class and its background trivialization are fixed. Let $R$ be an
augmented local Artin $\R$-algebra concentrated in cochain degree zero.
Every $R$-valued critical field of $W_{\rm ext}$ in the marked formal
neighbourhood is admissible. Consequently
\begin{equation}\label{eq:automatic-admissibility}
 \Crit(W_{\rm ext})(R)=\mathcal Z_{\rm BPS}(R).
\end{equation}
\end{theorem}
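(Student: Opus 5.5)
Once admissibility of every critical field is known, the identity \eqref{eq:automatic-admissibility} follows at once from Corollary~\ref{thm:bps-critical-locus} applied over $R$, which gives $\mathcal Z_{\rm BPS}(R)=\Crit(W_{\rm ext})(R)\cap\mathscr A_{\rm adm}(R)$. So the plan is to show that an $R$-valued critical field $Z=(\varphi,B,\Phi,A)$ satisfies $d_YB^{[0]}=0$, by isolating the $B$-component of the Euler--Lagrange equation at leading $\alpha'$ order.

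\textbf{Step 1: the leading-order $B$-variation.} In \eqref{eq:form-source}, reduce modulo $\epsilon$. Lemma~\ref{lem:nilpotent-flux} gives $H^{[0]}=dB^{[0]}$, and the Green--Schwarz term is $\epsilon\mathbb D$-valued. The reduced functional is therefore
\[
\tfrac14\int_Y e^{-2\Phi^{[0]}}\bigl(dB^{[0]}\wedge\psi^{[0]}-\tfrac12 d\varphi^{[0]}\wedge\varphi^{[0]}\bigr).
\]
Using the mixed pairing $\tau_{\mathbb D}$, the component of $dW_{\rm ext}$ dual to the $\epsilon$-part $B^{[1]}$ is the $B$-variation of this reduced functional. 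That variation is $d(e^{-2\Phi^{[0]}}\psi^{[0]})$, up to sign and a factor $\tfrac14$. The $\epsilon$-dependent Hull-connection and Chern--Simons terms cannot contribute to it, since they are already $\epsilon$-valued and pair into $B^{[1]}$ only through $\epsilon^2=0$.

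\textbf{Step 2: use the remaining leading-order equations.} Criticality then gives $C^{[0]}=d(e^{-2\Phi^{[0]}}\psi^{[0]})=0$. This alone does not force $H^{[0]}=0$. I would next use the variations in $\varphi^{[1]}$ and $\Phi^{[1]}$, again dual to the reduced functional, which is the torsion-free-type superpotential with flux $dB^{[0]}$. Here I would invoke the explicit form \eqref{eq:physical-P-explicit} of $\mathcal P_Z$ from Theorem~\ref{thm:bps-euler-equation-equivalence}, evaluated without assuming admissibility. The aim is to extract from these rows the BPS-type relation $T^{[0]}=0$, which reads $H^{[0]}=-*\,d\varphi^{[0]}+2*(d\Phi^{[0]}\wedge\varphi^{[0]})$. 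Combined with $C^{[0]}=0$, this should show that $e^{-2\Phi^{[0]}}*H^{[0]}$ is closed.

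A cleaner route, if the coefficients cooperate, is to write $d(e^{-2\Phi}*H)$ as $d$ of $e^{-2\Phi}(\text{exact }d\varphi\text{ terms})$, using $*{*}=1$ in dimension seven. One then checks directly that $d\bigl(e^{-2\Phi^{[0]}}*H^{[0]}\bigr)=-d\bigl(e^{-2\Phi^{[0]}}d\varphi^{[0]}\bigr)+\dots$ vanishes, using the identity $d(e^{-2\Phi}d\varphi)=-2e^{-2\Phi}d\Phi\wedge d\varphi$ together with the $\Phi$-equation.

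\textbf{Step 3: conclude.} By the fixed gerbe sector paragraph, $H^{[0]}=d_Y\beta$ is exact, with $\beta$ a global two-form. Applying Lemma~\ref{lem:weighted-exact-coclosed} with weight $w=e^{-2\Phi^{[0]}}$ and metric $g_{\varphi^{[0]}}$ gives $H^{[0]}=0$. Over a general Artin $R$ I would expand in a real basis of $R$ and argue by induction on $\mathfrak m_R$-adic order. At each order the relevant residual is exact and weighted coclosed with respect to the background weight and metric, so the vector-valued version of Lemma~\ref{lem:weighted-exact-coclosed} applies; lower-order corrections vanish by the induction hypothesis.

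The main obstacle is Step 2: producing weighted coclosedness of $H^{[0]}$ from the Euler--Lagrange rows without presupposing $H^{[0]}=0$. The comparison operator of Theorem~\ref{thm:bps-euler-equation-equivalence} was only established on $\mathscr A_{\rm adm}$, so I expect to have to redo part of its $\varphi$-row computation with $H^{[0]}$ kept, tracking which $H^{[0]}$ terms survive. I would first try to show they enter only as $d(e^{-2\Phi}*H)$ plus multiples of equations already imposed.
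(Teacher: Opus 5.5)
Your architecture is the paper's: reduce $W_{\rm ext}$ modulo $\epsilon$, show that $b=dB^{[0]}$ is weighted coclosed, use exactness from the fixed gerbe sector together with Lemma~\ref{lem:weighted-exact-coclosed}, and finish by $\mathfrak m_R$-adic induction. Your target relation $T^{[0]}=0$ is also the right one. Write $\phi=\varphi^{[0]}$, $p=d\Phi^{[0]}$ and $w=e^{-2\Phi^{[0]}}$; then $T^{[0]}=0$ says $*b=-d\phi+2p\wedge\phi=-D_2\phi$, which is exactly what the paper derives. The gap is that Step~2, which carries the whole content of the theorem, is never carried out, and your fallback points the wrong way. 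The flux does not enter the $\varphi$-row through $d(e^{-2\Phi}*H)$. It enters algebraically, as $*J_\phi b$, and that is what lets you solve for $b$. Your ``cleaner route'' is circular: the identity for $d(e^{-2\Phi}d\varphi)$ says nothing about $*H^{[0]}$ until $*H^{[0]}=-D_2\phi$ is already known. Once it is known, closedness of $w*b$ follows from $D_2^2=0$ alone, since $p$ is exact; neither the $\Phi$-equation nor $C^{[0]}=0$ is needed at that point.

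The missing computation is short. The three reduced Euler--Lagrange equations are $d(w\psi)=0$, $b\wedge\psi-\tfrac12 d\phi\wedge\phi=0$ and $*J_\phi b+p\wedge\phi-d\phi=0$.
\begin{itemize}
\item In the torsion decomposition, the first equation gives $\tau_2=0$ and $\tau_1=p/2$.
\item The third then reads $J_\phi b=\tau_0\phi+\tfrac12*(p\wedge\phi)+\tau_3$. Inverting $J_\phi$ gives $b=\tfrac34\tau_0\phi+\tfrac12*(p\wedge\phi)-\tau_3$.
\item Substituting into the dilaton equation gives $\tfrac{21}{4}\tau_0=\tfrac72\tau_0$. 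Hence $\tau_0=0$ and $*b=-D_2\phi$.
\end{itemize}

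Your $\mathcal P_Z$ idea can also be made to work. Admissibility enters \eqref{eq:physical-P-explicit} only through the $\epsilon$-weighted curvature replacement. At leading order, the geometric rows $(-C,\,w*\mathcal G_Z E,\,w\mathcal V_Z E)$ therefore agree with the reduced Euler--Lagrange rows for arbitrary fields. For the $\varphi$-row you must check $\mathcal G_Z(\mathcal E_{\rm BPS})=J_\phi b-*d\phi+*(p\wedge\phi)$ without using $H^{[0]}=0$, which is true. This algebraic map is pointwise invertible, so leading-order criticality is equivalent to $C^{[0]}=\Sigma^{[0]}=T^{[0]}=0$.

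Either way, the derivation is purely algebraic in the $G_2$ tensors. It therefore holds verbatim for $R$-valued fields, which is what your Step~3 induction needs before you reduce modulo $\mathfrak m^{k+1}$.
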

\begin{proof}
Write
\[
\phi=\varphi^{[0]},
\qquad
\psi=*_{\phi}\phi,
\qquad
b=dB^{[0]},
\qquad
p=d\Phi^{[0]},
\qquad
w=e^{-2\Phi^{[0]}}.
\]
Reduction of \(W_{\rm ext}\) modulo \(\epsilon\) gives
\[
W_0
=
\frac14\int_Y
w\left(
b\wedge\psi-\frac12d\phi\wedge\phi
\right).
\]
Let
\[
J_\phi=\frac43\pi_1+\pi_7-\pi_{27},
\]
so that the variation of \(\psi\) is
\(\delta\psi=*_{\phi}J_\phi\delta\phi\).
Varying \(B\), \(\Phi\), and \(\phi\), respectively, and integrating
by parts gives
\begin{align}
d(w\psi)&=0, \label{eq:automatic-euler-B}\\
b\wedge\psi-\frac12d\phi\wedge\phi&=0,
\label{eq:automatic-euler-dilaton}\\
*_{\phi}J_\phi b+p\wedge\phi-d\phi&=0.
\label{eq:automatic-euler-phi}
\end{align}

The first equation gives
\[
d\psi=2p\wedge\psi.
\]
In the torsion decomposition
\[
d\phi=\tau_0\psi+3\tau_1\wedge\phi+*_{\phi}\tau_3,
\qquad
d\psi=4\tau_1\wedge\psi+*_{\phi}\tau_2,
\]
we therefore have
\[
\tau_1=\frac12p,
\qquad
\tau_2=0.
\]
Applying \( *_{\phi}\) to
\eqref{eq:automatic-euler-phi} gives
\[
J_\phi b
=
*_{\phi}d\phi-*_{\phi}(p\wedge\phi)
=
\tau_0\phi
+\frac12*_{\phi}(p\wedge\phi)
+\tau_3.
\]
Since \(J_\phi\) acts by
\(\frac43,1,-1\) on
\(\Lambda^3_1,\Lambda^3_7,\Lambda^3_{27}\), respectively,
\[
b
=
\frac34\tau_0\phi
+\frac12*_{\phi}(p\wedge\phi)
-\tau_3.
\]
Equation~\eqref{eq:automatic-euler-dilaton} now gives
\[
\frac{21}{4}\tau_0\,\vol_\phi
=
\frac72\tau_0\,\vol_\phi,
\]
and hence \(\tau_0=0\). Consequently
\[
*_{\phi}b
=
-d\phi+2p\wedge\phi.
\]
With
\[
D_2=d-2p\wedge
\]
this becomes
\[
*_{\phi}b=-D_2\phi.
\]
Since \(p=d\Phi^{[0]}\), we have \(D_2^2=0\), and therefore
\begin{equation}\label{eq:automatic-weighted-closed}
D_2(*_{\phi}b)=0,
\qquad
d\bigl(w*_{\phi}b\bigr)=0.
\end{equation}

By the fixed-gerbe calculation in Section~\ref{par:fixed-gerbe-sector},
the leading displacement
\[
\beta=(B-B_0)^{[0]}
\]
is a globally defined two-form and
\[
b=d\beta.
\]
For real coefficients, \(b\) is exact and satisfies
\eqref{eq:automatic-weighted-closed}. The metric determined by \(\phi\)
and the weight \(w\) are positive. Since \(Y\) is closed, Lemma~\ref{lem:weighted-exact-coclosed} gives \(b=0\).

Now let the coefficients lie in a local Artin algebra \(R\) with
maximal ideal \(\mathfrak m\), and choose \(N\) such that
\(\mathfrak m^{N+1}=0\). Reduction modulo $\mathfrak m$ gives the marked real background and
hence $b=0$ modulo $\mathfrak m$. Suppose inductively that
\[
b=0\pmod{\mathfrak m^k}.
\]
Modulo \(\mathfrak m^{k+1}\), the remaining term is
\[
b_k\in\Omega^3(Y)\otimes I_k,
\qquad
I_k=\mathfrak m^k/\mathfrak m^{k+1}.
\]
Because \(b=d\beta\), the form \(b_k\) is exact with coefficients in
\(I_k\). Moreover, corrections to the metric, Hodge star, and weight have
coefficients in \(\mathfrak m\) and hence annihilate \(b_k\). Thus
\eqref{eq:automatic-weighted-closed} reduces to
\[
d\bigl(
e^{-2\Phi_0}
*_{\varphi_0}b_k
\bigr)=0.
\]
The space \(I_k\) is finite-dimensional over \(\mathbb R\).
The coefficientwise assertion of Lemma~\ref{lem:weighted-exact-coclosed}
applies to this exact weighted-coclosed form and gives \(b_k=0\).
Hence
\[
b=0\pmod{\mathfrak m^{k+1}}.
\]
Induction gives \(b=0\) over \(R\), so every critical field is
admissible. The equality
\[
\operatorname{Crit}(W_{\rm ext})(R)=\mathcal Z_{\rm BPS}(R)
\]
then follows from Corollary~\ref{thm:bps-critical-locus}.
\end{proof}

\section{Linear deformation theory and the compatibility resolution}
\label{sec:linearization}
\label{sec:derived-admissibility}

The lower physical deformation operator is obtained by linearizing the
Hull-compatible heterotic $G_2$ geometry of Section~\ref{sec:geometry}
at the standard embedding. In a physical splitting, the tangent to
the admissible metric combines the metric variation $h_g=\delta g$
and gerbe variation $\beta=\delta B$ into
\[
 M=\frac12(h_g+\beta).
\]
Linearized Hull compatibility determines the tangent-connection
variation from these fields, while the Courant Lie-$2$ parameter
dictionary gives the scalar reducibility and physical symmetry maps
used below. After the canonical $G_2$ projection, the mixed Courant
operator reproduces the projected coefficient complex of
\cite{DLS,DLSHeterotic,MSS}, including its curvature insertion and
curvature-times-derivative term. The complete physical BPS residual
map is the linearization of the BPS section
\eqref{eq:bps-residuals}. The corresponding operator comparisons are
recorded in Appendix~\ref{app:intrinsic-linear-bps}.

These statements extend through the identity rows.  The independent
three-row heterotic \(G_2\) BPS complex contains a strict differential
subcomplex obtained by prolonging Hull compatibility from fields to
equations and identities.  Proposition~\ref{prop:derived-hull-graph}
identifies this subcomplex with the BPS--Noether presentation below.

The leading-flux residual is
\begin{equation}\label{eq:admissibility-residual}
\mathcal A(q)=H^{[0]}=d_YB^{[0]}.
\end{equation}
Its exterior identity, together with the BPS Bianchi and torsion
identities, belongs to the derived equation system. We therefore adjoin
the linearization of \(\mathcal A\) to the variational equation module
and resolve the resulting compatibility identities. The alternating
\(G_2\) torsion, dilaton and flux residuals provide convenient equation
coordinates for this resolution.

We first give the unary variational complex and its mixed-order
ellipticity. The physical fields, gauge parameters and reducibility
parameter do not change when the equation and identity bundles are
added.
Full $G_2$ holonomy is used only in
Corollary~\ref{cor:full-holonomy-kernel}.

\paragraph{Canonical representation algebra.}

The $G_2$ instanton equation singles out a quotient of the exterior
algebra. At each point, quotient by the ideal generated by
$\Lambda^2_{14}(\varphi)$. Its remaining representations have dimensions
$1,7,7,1$. The resulting graded algebra bundle has spaces of sections
\begin{equation}\label{eq:representation-algebra}
 \cA_\varphi=\Omega^\bullet/
           \langle\Lambda^2_{14}(\varphi)\rangle
       \cong(\Omega^0,\Omega^1,\Omega^2_7,\Omega^3_1).
\end{equation}
At the standard embedding this quotient also has a coefficient
interpretation from the admissible metric. The metric graph
identifies the orthogonal complement $V_-$ with
\[
 T^*Y\oplus\ad P_T\oplus\ad P_G.
\]
The mixed Courant operator, projected through $\cA_{\varphi_0}$, acts
on these coefficient bundles. Its square vanishes because the
background tangent and gauge curvatures are $G_2$-instantons. This
gives the projected coefficient complex. It does not include the spinor,
density and gerbe identity rows of a complete BPS resolution.
Those rows occur in the independent three-row BPS complex used in
Proposition~\ref{prop:derived-hull-graph}.  The coefficient complex described here is its middle row.
Multiplication is wedge product followed by the quotient. At the
torsion-free background the ideal is preserved by exterior differentiation,
and the induced scalar differential is
$(\dd,\pi_7\dd,\pi_1\dd)$.
If $D$ is a connection on a coefficient bundle and
\[
 F_D\wedge\psi_0=0,
\]
then the projected covariant differential on
$\cA_{\varphi_0}$ also squares to zero.  This gives the canonical
$G_2$ instanton complex with coefficients in that bundle
\cite{Carrion,FernandezUgarte,DLS}.
The quotient, its density-valued pairing and its pointwise identification for varying positive forms are described in Appendix~\ref{app:representation}.

These representations also describe the unary variational complex.

\subsection{The variational unary complex}

Let \(C^\bullet_{\rm var}=(L_{\rm var},\ell_1)\) be the linearized
cochain complex of the spinorial Hamiltonian theory at the
stationary standard embedding. Its terms in degrees $-1,0,1$ are the
reducibility, gauge and field spaces of Section~\ref{sec:equivalence}.
The terms in degrees $2,3,4$ are their shifted density duals. Write
\(H^p_{\rm var}=H^p(C^\bullet_{\rm var})\).
We use the cochain grading and mixed variational duality of
Section~\ref{sec:deformation-algebra}.
The field chart is the linearization of the geometric data of Section~\ref{sec:geometry}.
In a chosen splitting, the tangent space to the admissible metric combines
the symmetric metric variation and the skew \(B\)-field variation into
\(M\). The variables \(z\), \(l\), and \(\alpha\) are respectively the
spinor-line, density, and gauge-connection variations.
Write
\[
 x=(z,M,\alpha,l)
\]
for the cochain-degree-one field chart constructed in
Appendix~\ref{app:linear-fields}. Its field variables are
\[
\begin{aligned}
 M&=M^{[0]}+\eps M^{[1]}
     \in\Omega^1(T^*Y)\otimes\mathbb D,\\
 \alpha&\in\Omega^1(\mathfrak{so}(TY))\otimes\mathbb D_0,\\
 z&\in\Omega^2_7\otimes\mathbb D,\\
 l&\in\Omega^0(Y)\otimes\mathbb D.
\end{aligned}
\]
The variable $l$ is the logarithmic density variation. The operator
$\KHull$ is the derivative of Hull compatibility
\eqref{eq:intrinsic-hull} in the symmetric metric frame.
If $m=M^{[0]}$ and $\beta^{[0]}=m-m^T$, then
\begin{equation}\label{eq:linear-hull-graph}
 (\KHull m)_{iab}
 =\nabla_bm_{ia}-\nabla_am_{ib}
                  +\frac12\nabla_i\beta^{[0]}_{ab}.
\end{equation}
Thus eliminating the tangent connection is a differential graph
restriction determined by the physical fields.

Let $C_{G_2}:\Lambda^2_7\longrightarrow TY$ be the
$G_2$-equivariant isomorphism normalized by
$C_{G_2}(\iota_v\varphi_0)=3v$.
The operator $\mathsf r$ is the infinitesimal
$\mathrm{GL}(TY)$-action on forms.
The positive-frame spinor coordinate $z$ also depends on the two-form
variation. The map to positive-form variations and the relative
connection variation $a$ is
\begin{equation}\label{eq:linear-field-map}
\begin{gathered}
 h_g=M+M^T,\qquad
 \beta=M-M^T,\qquad
 v=C_{G_2}\!\left(z+\frac12\pi_7\beta\right),\\
 u=\delta\varphi
   =\mathsf r(h_g/2)\varphi_0+\iota_v\psi_0,\\
 \dot\Phi=\frac14\tr h_g-\frac12l,\qquad
 a=\alpha-\KHull M^{[0]}.
\end{gathered}
\end{equation}

Define the derivative of the nonlinear map $\varphi\mapsto *_\varphi\varphi$ by
\[
 \mathcal J_{\varphi_0}
 :=D_{\varphi_0}
 \bigl(\varphi\longmapsto *_\varphi\varphi\bigr).
\]
The standard $G_2$ variation identity, established in
Appendix~\ref{app:linear-fields}, gives
\[
 \mathcal J_{\varphi_0}
 =*_0\left(\frac43\pi_1+\pi_7-\pi_{27}\right).
\]
With $w:=e^{-2\Phi_0}$, define the weighted four-form variation
\[
 \dot\psi_w
 :=w^{-1}\delta\!\left(e^{-2\Phi}*_\varphi\varphi\right)_0.
\]
Thus the notation used in the quadratic functional is
\begin{equation}\label{eq:linear-four-form}
 w=e^{-2\Phi_0},\qquad
 \dot\psi_w=\mathcal J_{\varphi_0}u-2\dot\Phi\,\psi_0,\qquad
 k=\KHull M^{[0]}.
\end{equation}
The quadratic part of $W_{\rm ext}$ at the background is
\begin{equation}\label{eq:quadratic-native-source}
 \begin{aligned}
 W_{\mathrm{ext},2}(x)
 ={}&\frac w4\int_Y
 [\dd\beta\wedge \dot\psi_w+\dot\Phi\,\dd u\wedge\varphi_0
                                    -\tfrac12\dd u\wedge u]\\
 &-\frac{\eps w}{16}\int_Y
 \{2P(\alpha-k,R_0)\wedge \dot\psi_w\\
 &\hspace{37mm}
       +[P(\alpha,D\alpha)-P(k,Dk)]\wedge\psi_0\}.
 \end{aligned}
\end{equation}
Here $D=D_{\Theta_0}$ is the background covariant exterior derivative.
All inputs in the term proportional to $\eps$ are reduced modulo $\eps$.
The standard embedding is stationary for $W_{\rm ext}$ by
\eqref{eq:reduced-stationary-points}.
Consequently the quadratic functional
\eqref{eq:quadratic-native-source} is independent of second derivatives
of the chosen nonlinear field chart: every such contribution is
multiplied by the vanishing first variation.

Define the reducibility, gauge, and Hessian operators
\[
 \mathsf R:C^{-1}_{\rm var}\to C^0_{\rm var},\qquad
 G:C^0_{\rm var}\to C^1_{\rm var},\qquad
 \cH:C^1_{\rm var}\to C^2_{\rm var}
\]
as follows. The reducibility and gauge operators are
\[
 \mathsf R=D(f_{\rm sp})_{\mathfrak b_0}\big|_{C^{-1}_{\rm var}},\qquad
 G=-\,D(f_{\rm sp})_{\mathfrak b_0}\big|_{C^0_{\rm var}}.
\]
In the parameter coordinates
\(r_\pm=(\xi^\flat\mp\Lambda)/\sqrt2\) of
\eqref{eq:linear-parameter-map},
\[
 (f_{\rm sp}x)_{\rm lin}=-G(r_+,r_-,s).
\]
For the Hessian bilinear form set
\[
 B_{\cH}(x_1,x_2):=D^2W_{\rm ext}|_{\mathfrak b_0}(x_1,x_2),\qquad
 W_{{\rm ext},2}(x)=\frac12B_{\cH}(x,x).
\]
The operator $\cH$ is determined by the cyclic variational pairing of
Section~\ref{sec:deformation-algebra} through
\[
 \langle\cH x_1,x_2\rangle_{\rm cyc}=B_{\cH}(x_1,x_2).
\]
We write $\sharp$ for the adjoint with respect to the cyclic
variational pairing and $\dagger$ for the formal
integration-by-parts adjoint used in the component calculations of the
appendices.

\begin{proposition}[Unary variational complex]
\label{prop:unary-variational}
The unary differential is
\begin{equation}\label{eq:unary-differential}
 d_C^{-1}=\mathsf R,\quad d_C^0=G,\quad d_C^1=-\cH,\quad
 d_C^2=-G^\sharp,\quad d_C^3=\mathsf R^\sharp,\quad d_C^4=0.
\end{equation}
It satisfies
\[
 (d_C)^2=0
\]
and is cyclic. The linearization of the Hamiltonian
$Q$-isomorphism of Proposition~\ref{prop:native-form-equivalence}
is a local $\mathbb D$-linear chain isomorphism
\[
 \widehat\Psi_{\rm lin}:
 (C^\bullet_{\rm var},d_C)
 \xrightarrow{\sim}(C^\bullet_\varphi,d_\varphi).
\]
\end{proposition}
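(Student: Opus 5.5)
The plan is to derive everything from the Taylor expansion of the homological vector field $Q_{\rm sp}$ of Theorem~\ref{thm:main-a} at the stationary point $\mathfrak b_0$ of \eqref{eq:reduced-stationary-points}. Since $\mathcal S_{\rm sp}=W_{\rm ext}+\iota_{f_{\rm sp}}\vartheta_{\rm sp}$ and $\iota_{Q}\omega=-\delta\mathcal S$, the linear part of $Q_{\rm sp}$ is the Hamiltonian vector field of the quadratic part $\mathcal S_2=W_{{\rm ext},2}+\iota_{(f_{\rm sp})_{\rm lin}}\vartheta_{\rm sp}$. I will read off its components row by row. On base coordinates the linear part is $D(f_{\rm sp})_{\mathfrak b_0}$, which by definition is $\mathsf R$ on $C^{-1}$ and $-G$ on $C^0$. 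The sign convention $p=1-g$ and the suspension \eqref{eq:grading-suspension} produce the stated signs $d_C^{-1}=\mathsf R$ and $d_C^0=G$. On the covector dual to the fields, the component at vanishing covectors is $-\delta W_{\rm ext}$ by \eqref{eq:euler-sign-convention}. Linearizing, and using that the first variation vanishes at $\mathfrak b_0$ (so chart second derivatives drop out), gives $-\cH$, with $\cH$ defined through $B_{\cH}$ and the cyclic pairing. On the higher covectors, the cotangent lift \eqref{eq:cotangent-hamiltonian} gives the signed variational adjoints of the base linear maps. With the Darboux signs \eqref{eq:canonical-darboux-signs} and the pairing $\langle x,y\rangle_B=(-1)^{|x|}\omega(\mathbf sx,\mathbf sy)$, these are $-G^\sharp$ and $\mathsf R^\sharp$. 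The component $d_C^4$ vanishes because nothing sits in degree~$5$.

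For $d_C^2=0$, I will expand $Q_{\rm sp}^2=0$ in homogeneous degree; the linear term gives $\ell_1^2=0$ directly. As a cross-check, the components give $G\mathsf R=0$, which is $f_{\rm sp}^2=0$ at linear order. They give $\cH G=0$, which is the linearization of $f_{\rm sp}W_{\rm ext}=0$ from Lemma~\ref{lem:native-invariance} at a critical point, since the second derivative of the invariance identity at a stationary point is $B_{\cH}(G\,\cdot,\cdot)=0$. The remaining two, $G^\sharp\cH=0$ and $\mathsf R^\sharp G^\sharp=0$, are the adjoints of these two, using that $\cH^\sharp=\cH$ by symmetry of $B_{\cH}$.

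Cyclicity follows from Proposition~\ref{prop:cyclic-conventions} restricted to $\ell_1$. Concretely, $\langle d_Cx,y\rangle=\pm\langle x,d_Cy\rangle$ amounts to the statement that $\cH$ is self-adjoint and that $d_C^2,d_C^3$ are by construction the $\sharp$-adjoints of $d_C^0,d_C^{-1}$.

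For the chain isomorphism, Proposition~\ref{prop:native-form-equivalence} gives $\widehat\Psi_*Q_{\rm sp}=Q_\varphi$ with $\widehat\Psi(\mathfrak b_{0,\rm sp})=\mathfrak b_{0,\varphi}$. Its linear Taylor coefficient therefore intertwines the linear parts, which is exactly the chain-map property. Invertibility comes from the linearization of the local formal inverse, and $\mathbb D$-linearity and locality from the mixed-coefficient category.

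The main obstacle is the sign bookkeeping. Matching the Darboux and suspension conventions, especially for the $\mathbb D_0$ gauge block paired through $\jmath_\epsilon$ with the trace $\tau_{\mathbb D}$, must produce exactly the signs $-\cH$ and $-G^\sharp$ rather than their opposites. I would fix these by checking one free block and one $\mathbb D_0$ block against Appendix~\ref{app:mixed}.
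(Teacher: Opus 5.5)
Your proposal is correct, but it runs two of the steps in the opposite direction from the paper.

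\textbf{Square-zero.} You get $(d_C)^2=0$ as the linear part of $Q_{\rm sp}^2=0$ at the zero $\mathfrak b_0$, and treat $G\mathsf R=0$, $\cH G=0$ and their adjoints as a cross-check. The paper makes these identities the proof. It checks $G\mathsf R=0$ by substituting $\mathsf Rc=(-\dd c/\sqrt2,\dd c/\sqrt2,0)$ into \eqref{eq:linear-gauge-complete}, which reduces to $\dd^2c=0$ and symmetry of the covariant Hessian of $c$. It gets $\cH G=0$ by differentiating $DW_{\rm ext}(b)[V_\gamma(b)]=0$ at the stationary point.

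\textbf{Chain isomorphism.} You linearize $\widehat\Psi_*Q_{\rm sp}=Q_\varphi$, and the term $D^2\widehat\Psi(Q_{\rm sp}(\mathfrak b_0),\cdot)$ drops; that is valid. The paper instead builds $\widehat\Psi_{\rm lin}$ directly as the variational cotangent lift of the linearized base-and-parameter map. It shows $\widehat\Psi_{\rm lin}^*\vartheta_\varphi-\vartheta_{\rm sp}=d_Y\eta_{\rm lin}$, uses the appendix equality of quadratic functionals and linearized equivariance to get $\widehat\Psi_{\rm lin}^*\cS_{\varphi,2}=\cS_{{\rm sp},2}$, and concludes by nondegeneracy.

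\textbf{What each route buys.} Your route is shorter and makes the proposition a formal corollary of Theorem~\ref{thm:main-a} and Proposition~\ref{prop:native-form-equivalence}. The cost is that it is only as strong as those nonlinear results and gives no independent check. The paper's route verifies the explicit linear operators directly. It is also insensitive to the sign bookkeeping you flag, since flipping the sign of individual components preserves $d^{p+1}d^p=0$. And it produces the explicit linear maps used later in the symbol and ellipticity arguments.

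\textbf{The component formula.} Your derivation of \eqref{eq:unary-differential} from the Taylor expansion is not spelled out in the paper's proof. The signs you defer are fixed by the conventions of Appendix~\ref{app:linear}. There, \eqref{eq:unary-suspension} yields $d_C^{-1}=\mathsf R$ and $d_C^0=G$ from $\mathsf R=D(f_{\rm sp})|_{C^{-1}}$ and $G=-D(f_{\rm sp})|_{C^0}$, as you claim. The lower-to-dual signs $(+,-,+)$ give the upper density arrows $+G^\dagger=-G^\sharp$ and $-\mathsf R^\dagger=\mathsf R^\sharp$.
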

\begin{proof}
Linearizing the reducibility identity for the gerbe symmetry gives
\[
 G\mathsf R=0.
\]
Indeed, the formulas in Appendix~\ref{app:linear} give
\[
 \mathsf Rc=(-\dd c/\sqrt2,\dd c/\sqrt2,0),\qquad
 \xi=0,\quad\Lambda=\dd c,\quad\lambda=0.
\]
Substitution in \eqref{eq:linear-gauge-complete} reduces the composition
to $\dd^2c=0$ and symmetry of the covariant Hessian of $c$.

Let $\gamma\in C^0_{\rm var}$ and let $V_\gamma$ denote the
corresponding infinitesimal symmetry vector field. Invariance gives
\[
 DW_{\rm ext}(b)[V_\gamma(b)]=0.
\]
Differentiating at $\mathfrak b_0$ in an arbitrary field direction $x$ yields
\[
 D^2W_{\rm ext}(\mathfrak b_0)
 \bigl(x,V_\gamma(\mathfrak b_0)\bigr)
 +
 DW_{\rm ext}(\mathfrak b_0)
 \bigl[D_xV_\gamma\bigr]
 =
 0.
\]
Since
\[
 DW_{\rm ext}(\mathfrak b_0)=0,
 \qquad
 V_\gamma(\mathfrak b_0)=-G\gamma,
\]
we obtain
\[
 B_{\cH}(x,G\gamma)=0
\]
for every $x$ and every $\gamma\in C^0_{\rm var}$. Symmetry of
$B_{\cH}$ and nondegeneracy of the cyclic pairing therefore give
\[
 \cH G=0.
\]
Hessian symmetry gives $\cH^\sharp=\cH$, so
\[
 G^\sharp\cH=(\cH G)^\sharp=0.
\]
Similarly,
\[
 \mathsf R^\sharp G^\sharp=(G\mathsf R)^\sharp=0.
\]
Together these identities imply
\[
 (d_C)^2=0.
\]
Cyclicity follows from the constant shifted cotangent pairing and
\[
 \cH^\sharp=\cH.
\]

Let $\Psi_{\rm lin}$ denote the linearization of the base and parameter
map in Proposition~\ref{prop:native-form-equivalence}, and let
$\widehat\Psi_{\rm lin}$ be its variational cotangent lift, with the
same suspension convention. The maps and their inverses are
given in Appendix~\ref{app:linear-fields} and
\eqref{eq:linear-equation-map}--\eqref{eq:linear-identity-map}.
The canonical potentials agree modulo a horizontal derivative:
\[
 \widehat\Psi_{\rm lin}^*\vartheta_\varphi-\vartheta_{\rm sp}
 =d_Y\eta_{\rm lin},
\]
where $\eta_{\rm lin}$ is a local form of horizontal degree six and
variational degree one.

Equality of the quadratic functionals under the linear field map is
verified in Appendix~\ref{app:linear}. Together with the linearized
equivariance identity, it gives
\[
 \widehat\Psi_{\rm lin}^*\cS_{\varphi,2}
 =
 \cS_{{\rm sp},2}.
\]
Nondegeneracy therefore gives
\[
 \widehat\Psi_{\rm lin}\,d_C
 =
 d_\varphi\,\widehat\Psi_{\rm lin}.
\]
\end{proof}

\begin{proposition}[Linear BPS--Noether comparison]
\label{prop:linear-bps-noether-comparison}
At the torsion-free standard embedding with constant background
dilaton, let
\[
 E:T_{\mathfrak b_0}\mathscr V\longrightarrow\mathcal R_{\rm BPS}
\]
be the linearization of the physical BPS section, without imposing
$\dd B^{[0]}=0$. There is a finite-order local differential operator
\[
 \widetilde{\mathcal P}_0:
 \mathcal R_{\rm BPS}\longrightarrow C_{\rm var}^2
\]
with a finite-order local differential inverse such that
\[
 \cH=\widetilde{\mathcal P}_0E,
\]
where $\cH$ is the Hessian operator in the unary variational complex.

Set $N=G^\dagger\widetilde{\mathcal P}_0$ and write $\vee$ for the
mixed density dual. Then
\[
\begin{aligned}
 C_{\rm BPS,N}:\quad&
 C_{\rm var}^{-1}\xrightarrow{\mathsf R}
 C_{\rm var}^{0}\xrightarrow{G}
 C_{\rm var}^{1}\xrightarrow{E}\mathcal R_{\rm BPS}\\
 &\xrightarrow{N}(C_{\rm var}^{0})^\vee
 \xrightarrow{-\mathsf R^\dagger}(C_{\rm var}^{-1})^\vee
\end{aligned}
\]
is a complex. The maps
\[
 (1,1,1,-\widetilde{\mathcal P}_0,-1,-1)
\]
define a local differential chain isomorphism
$C_{\rm BPS,N}\cong C_{\rm var}$.
\end{proposition}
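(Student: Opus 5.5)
The plan is to obtain $\widetilde{\mathcal P}_0$ by linearizing the nonlinear factorization of Theorem~\ref{thm:bps-euler-equation-equivalence} at $\mathfrak b_0$, and then to read off the chain isomorphism from the identities $\cH G=0$ and $G\mathsf R=0$ of Proposition~\ref{prop:unary-variational}.

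\textbf{Step 1: the factorization $\cH=\widetilde{\mathcal P}_0E$.} Write the nonlinear identity as $\mathcal E_{\rm EL}(Z)=\mathcal P_Z\mathcal E_{\rm BPS}(Z)$. Since $\mathcal E_{\rm BPS}(\mathfrak b_0)=0$, differentiating at $\mathfrak b_0$ gives
\[
 D\mathcal E_{\rm EL}=\mathcal P_{\mathfrak b_0}E .
\]
The Hessian is $\cH=D\mathcal E_{\rm EL}$ up to the sign convention \eqref{eq:euler-sign-convention} and the identification of $T^\vee\mathscr V$ with $C^2_{\rm var}$ through the cyclic pairing. I will absorb this fixed zero-order sign and identification into
\[
 \widetilde{\mathcal P}_0:=\pm\,\iota\circ\mathcal P_{\mathfrak b_0}.
\]
Its inverse is then the corresponding linearization of $\mathcal Q_{\mathfrak b_0}$.

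The main obstacle is that Theorem~\ref{thm:bps-euler-equation-equivalence} is stated only on admissible fields, whereas the present statement removes the constraint $\dd B^{[0]}=0$. I therefore need the factorization in arbitrary linear directions, including those with $\dd\beta^{[0]}\neq0$. At the torsion-free background with constant $\Phi_0$, every quadratic term of \eqref{eq:quadratic-native-source} that contains $\dd\beta$ enters through $H$. Concretely, varying $B$ in $W_{\rm ext,2}$ produces $\dd\dot\psi_w$ paired against $\dd\beta$. After integration by parts this expresses the Hessian rows through the linearized residuals $(\dot C,\dot\Sigma,\dot T,\dot I)$ with \emph{constant-coefficient} operators $\mathcal S_0,\mathcal D_0$. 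The $\epsilon$-part $\mathcal U_0$ is again nilpotent by Lemma~\ref{lem:nilpotent-mixed-perturbation}.

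I would verify this directly from \eqref{eq:quadratic-native-source}, using the linear field map \eqref{eq:linear-field-map}. The point to check is that the admissibility hypothesis entered the nonlinear proof only through background quantities that vanish at $\mathfrak b_0$, namely $H^{[0]}$ multiplying first variations. Once that is confirmed, the same formula $\mathcal Q=4(1-\mathcal U_0)\mathcal D_0^{-1}\mathcal S_0$ gives the finite-order two-sided inverse.

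\textbf{Step 2: $C_{\rm BPS,N}$ is a complex.} We have $EG=\widetilde{\mathcal P}_0^{-1}\cH G=0$ by Proposition~\ref{prop:unary-variational}. Next,
\[
 NE=G^\dagger\widetilde{\mathcal P}_0E=G^\dagger\cH=\pm G^\sharp\cH=0,
\]
where $\sharp$ and $\dagger$ differ only by the fixed sign of the shifted pairing. Finally $\mathsf R^\dagger N=\pm(G\mathsf R)^\dagger\widetilde{\mathcal P}_0=0$.

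\textbf{Step 3: the chain isomorphism.} Check commutativity of the squares with the vertical maps $(1,1,1,-\widetilde{\mathcal P}_0,-1,-1)$ and the differentials \eqref{eq:unary-differential}. The square through $E$ reads
\[
 -\widetilde{\mathcal P}_0E=-\cH=d_C^1 .
\]
The next square requires $-N=d_C^2(-\widetilde{\mathcal P}_0)$, that is, $G^\dagger=G^\sharp$ on the image. This is where the sign relating $\dagger$ and $\sharp$ in degree two must be fixed consistently, and I expect to choose the identification $\iota$ in Step~1 precisely so that it holds. The last square reads
\[
 \mathsf R^\dagger=\mathsf R^\sharp
\]
under the same convention. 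Each vertical map is invertible with a local inverse, the middle one by Step~1, so the family of maps is a local differential chain isomorphism.
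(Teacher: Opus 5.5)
\textbf{Gap: Step~1 fails on directions with $\dd\beta^{[0]}\neq0$.} The check you propose, that admissibility enters only through quantities vanishing at $\mathfrak b_0$, would fail. It enters through the curvature replacement $R_\theta\wedge\psi\rightsquigarrow\mathscr R(\mathcal E_{\rm BPS})\wedge\psi$ in the $\eps$-weighted Hull term. That term is paired against the test variation $\kappa_V=D_V\theta$, not against a first variation of $W_{\rm ext}$. Off the admissible locus the replacement has defect $\Gamma_q(\dd B^{[0]})$, which is \emph{linear} in $\dd B^{[0]}$. The correct identity is therefore $\mathcal E_{\rm var}(q)=-\mathcal P_q\mathcal E_{\rm BPS}(q)+\mathcal J_q\,\dd B^{[0]}$. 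A term linear in $H^{[0]}$ vanishes at the background, but its derivative along $\dd\beta^{[0]}\neq0$ does not. Linearizing gives
\[
 \cH x=\mathcal P_0E(x)-\mathcal J_0(\dd\beta^{[0]}).
\]
Equivalently, $\dd\beta^{[0]}$ reaches the Hessian not only through $E_T$ but also through the induced connection $k=\KHull M^{[0]}$, which contains $C_0(\dd\beta^{[0]})$, and the two contributions do not match. The defect need not vanish. At the background, $\Gamma b=\tfrac12(\dd b)^{\rm pair}\wedge\psi_0+W\,\dd\langle b,\varphi_0\rangle$. For $b=\dd\beta^{[0]}$ with $\beta^{[0]}=\iota_v\varphi_0$ the singlet is $\langle b,\varphi_0\rangle=3\diver v$, so nothing forces it to vanish. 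Hence $\pm\iota\circ\mathcal P_{\mathfrak b_0}$ does not satisfy $\cH=\widetilde{\mathcal P}_0E$. No choice of sign or of $\iota$ repairs this, because the defect is a priori a function of the field $x$, not of $E(x)$.

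\textbf{Missing idea: rewrite the defect through the BPS arguments.} For exact $b$ only the scalar term of $\Gamma b$ survives. The linearized residuals give
\[
 f(E):=\langle E_T^{[0]},\varphi_0\rangle-*E_\Sigma^{[0]}=\langle\dd\beta^{[0]},\varphi_0\rangle,
\]
because $\langle *\dd u,\varphi_0\rangle=*(\dd u\wedge\varphi_0)$ and the dilaton term pairs to zero against $\varphi_0$. Thus $\mathcal J_0(\dd\beta^{[0]})=\mathcal LE(x)$ with $(\mathcal LE)[V]=-\tfrac{\eps w}{8}\int_Y P(k_V,W\,\dd f(E))$. The correct operator is $\widetilde{\mathcal P}_0=\mathcal P_0-\mathcal L$.

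Its inverse is not simply the linearization of $\mathcal Q_{\mathfrak b_0}$. You must check that $\mathcal L$ is $\eps$-valued, reads only leading geometric inputs, and has zero gauge output. Then Lemma~\ref{lem:nilpotent-mixed-perturbation} still applies after the algebraic inverse, giving $\widetilde{\mathcal P}_0^{-1}=(1+\mathcal P_0^{-1}\mathcal L)\mathcal P_0^{-1}$.

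\textbf{Signs in Step~3.} With this $\widetilde{\mathcal P}_0$, Steps~2 and~3 go through as you outline. The signs are fixed by the conventions, not chosen. In the density-covector chart $d_C^2=-G^\sharp=G^\dagger$ and $d_C^3=\mathsf R^\sharp=-\mathsf R^\dagger$, which is exactly what the squares with $(1,1,1,-\widetilde{\mathcal P}_0,-1,-1)$ require. The conditions $G^\dagger=G^\sharp$ and $\mathsf R^\dagger=\mathsf R^\sharp$ you wrote have the wrong sign.
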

\begin{proof}
Appendix~\ref{app:intrinsic-linear-bps} gives the component calculation.
We record the mechanism here. Linearizing the physical first-variation
identity on unrestricted tangent fields gives the Hessian together with
the off-admissible term $-\mathcal J_0(\dd\beta^{[0]})$, as in
\eqref{eq:linear-hessian-defect}. The curvature identity
\eqref{eq:linear-scalar-flux-defect} expresses this term through the
scalar combination $f(E)$ of the BPS equation arguments. Subtracting the
corresponding local operator $\mathcal L$ from the background
equation-density map $\mathcal P_0$ gives
\[
 \widetilde{\mathcal P}_0=\mathcal P_0-\mathcal L,
 \qquad
 \cH=\widetilde{\mathcal P}_0E.
\]
The algebraic diagonal of $4\widetilde{\mathcal P}_0$ is pointwise
invertible. The remaining correction takes values in the ideal-valued
geometric component, has vanishing gauge component, and depends only on
the leading geometric components of the equation argument.
Lemma~\ref{lem:nilpotent-mixed-perturbation} therefore applies. After
composition with the algebraic inverse, the correction squares to zero,
and \eqref{eq:linear-bps-local-inverse} gives a two-sided finite-order
local differential inverse of $\widetilde{\mathcal P}_0$.

The lower identities give $G\mathsf R=0$ and $EG=0$. With
$N=G^\dagger\widetilde{\mathcal P}_0$, the identity
$\cH=\widetilde{\mathcal P}_0E$ and gauge invariance of the quadratic
functional give
\[
 NE=G^\dagger\cH=0,
 \qquad
 \mathsf R^\dagger N=(G\mathsf R)^\dagger\widetilde{\mathcal P}_0=0.
\]
Thus $C_{\rm BPS,N}$ is a complex. In the density-covector convention of
Appendix~\ref{app:linear}, the upper variational arrows are $G^\dagger$
and $-\mathsf R^\dagger$. The displayed maps therefore intertwine every
differential, and the inverse of $\widetilde{\mathcal P}_0$ gives the
stated local differential chain isomorphism.
\end{proof}

\begin{proposition}[Derived Hull graph]
\label{prop:derived-hull-graph}
Let \(\mathcal E^\bullet_{\mathrm{ind}}\) be the independent three-row
heterotic \(G_2\) BPS complex at the torsion-free standard embedding,
restricted to the marked skew-adjoint tangent and gauge bundles.  It contains a strict local
differential subcomplex
\[
\mathcal E^\bullet_{\mathrm{Hull}}
\hookrightarrow
\mathcal E^\bullet_{\mathrm{ind}}
\]
determined by Hull compatibility and its differential consequences.

In cochain degree one the tangent-connection component satisfies
\[
k=K_{\mathrm{Hull}}M^{[0]}.
\]
The equation and identity constraints are obtained by substituting this
field graph into the independent tangent-instanton equation and its
Bianchi identity.

There is a local differential chain isomorphism
\[
J:
C_{\mathrm{BPS,N}}
\xrightarrow{\;\cong\;}
\mathcal E^\bullet_{\mathrm{Hull}}
\]
with a finite-order local differential inverse.  Consequently
\[
\mathcal E^\bullet_{\mathrm{Hull}}
\cong
C_{\mathrm{BPS,N}}
\cong
C_{\mathrm{var}}.
\]
\end{proposition}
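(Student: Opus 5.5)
We construct \(\mathcal E^\bullet_{\mathrm{Hull}}\) row by row, starting from the field row, and then obtain \(J\) by matching it with the BPS--Noether presentation of Proposition~\ref{prop:linear-bps-noether-comparison}.

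\textbf{Degree one.} The independent complex \(\mathcal E^\bullet_{\mathrm{ind}}\) carries an independent tangent-connection variation \(k\) alongside \((z,M,\alpha,l)\). Define the degree-one part of \(\mathcal E^\bullet_{\mathrm{Hull}}\) as the graph
\[
 k=\KHull M^{[0]},
\]
with \(\KHull\) as in \eqref{eq:linear-hull-graph}.

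\textbf{Degrees zero and \(-1\).} The gauge and reducibility rows stay unrestricted, except that the tangent gauge parameter is fixed by the relative ghost formula \eqref{eq:native-ghost-shift}. The graph is invariant under the gauge differential because the Courant Lie-2 action preserves Hull compatibility \eqref{eq:intrinsic-hull}. Linearizing this gives
\[
 \KHull\bigl(G\gamma\bigr)_M=(G\gamma)_k .
\]

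\textbf{Degrees two to four.} In the equation row, impose the constraint obtained by applying the differential of \(\mathcal E^\bullet_{\mathrm{ind}}\) to the graph. The tangent-instanton residual \(\pi_7 D k\) becomes a differential function of the other equation components, namely the linearized torsion and flux residuals through \(H\). Prolong this in the same way to the Bianchi identity row and the relations row.

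\textbf{Closure.} Strictness then amounts to checking that each row constraint is carried into the next by the differential of \(\mathcal E^\bullet_{\mathrm{ind}}\). I would verify this with Lemma~\ref{lem:invariant-graph}, applied to the linear invariant graph solved in the coordinates \((z,M,\alpha,l)\) and the untangent equation and identity coordinates. Tangency follows from \(D_{\Theta_0}R_0=0\), \(R_0\wedge\psi_0=0\) and \(d^2=0\).

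\textbf{Construction of \(J\).} In degrees \(\le 1\), \(J\) is the graph map
\[
 x\mapsto (x,\KHull M^{[0]}).
\]
In degree two, the graph coordinates of \(\mathcal E^\bullet_{\mathrm{Hull}}\) are exactly the free BPS residuals \((C,\Sigma,T,I)\). I therefore take \(J^2\) to be the identity on \(\mathcal R_{\rm BPS}\), possibly composed with a pointwise invertible zero-order change of \(G_2\)-projected coordinates. In degrees three and four, \(J\) is the corresponding identification of the Noether rows with \((C^0_{\rm var})^\vee\) and \((C^{-1}_{\rm var})^\vee\).

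\textbf{Chain map property.} The chain identities reduce to
\[
 J^2E=d_{\mathcal E}J^1,\qquad J^3N=d_{\mathcal E}J^2 .
\]
The first is a component comparison of the linearized BPS section with the middle-row differential on the graph. Here the curvature-times-derivative term of \cite{MSS,DLSHeterotic} arises from \(P(\alpha-k,R_0)\) with \(k=\KHull M^{[0]}\). The second uses
\[
 N=G^\dagger\widetilde{\mathcal P}_0 .
\]

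\textbf{Inverse.} The inverse is given by the projection that forgets \(k\) and its prolonged equation and identity components. It has finite order, and it is a two-sided inverse because those components are determined by the graph.

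\textbf{Final isomorphism.} Composing \(J\) with Proposition~\ref{prop:linear-bps-noether-comparison} gives
\[
 \mathcal E^\bullet_{\mathrm{Hull}}\cong C_{\mathrm{BPS},N}\cong C_{\rm var}.
\]

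\textbf{Main obstacle.} I expect the hard step to be the identity-row compatibility, i.e.\ showing that the prolonged Bianchi constraint matches \(N\) exactly, including the \(\epsilon\)-weighted gauge feedback. My first approach is to split \(N\) into its \(\mathbb D_0\) part and its \(\epsilon\mathbb D\) part. On the leading part the identity reduces to the torsion-free \(G_2\) identity \(\pi_7 d\pi_7 d=0\) on \(\cA_{\varphi_0}\). The \(\epsilon\) part is linear in the reduced inputs, so it can be matched term by term using \(R_0\wedge\psi_0=0\). The nilpotency argument of Lemma~\ref{lem:nilpotent-mixed-perturbation} then handles any residual correction terms.
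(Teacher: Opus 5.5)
Your outline has the same skeleton as the paper's proof: the field graph $k=\KHull M^{[0]}$, its prolongation to the equation and Bianchi rows, $J$ defined row by row, the inverse obtained by forgetting graph components, and composition with Proposition~\ref{prop:linear-bps-noether-comparison}. The gap is that the two identities carrying the proof are asserted rather than proved, and the route you propose for the harder one would fail.

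\textbf{Degree two.} You need that, on the field graph, the independent tangent-instanton equation $I_T=Dk\wedge\psi_0+R_0\wedge(\mathcal J_{\varphi_0}u)^{[0]}$ factors through the BPS residuals as a local operator. The paper gets this from the curvature reconstruction:
$I_T|_{k=\KHull M^{[0]}}=\widehat r(E)=\mathscr R_0(E^{[0]})\wedge\psi_0+W\,d\bigl(\langle T^{[0]},\varphi_0\rangle-*\Sigma^{[0]}\bigr)$; see \eqref{eq:hull-tangent-equation}. The second term is the off-admissible scalar flux correction \eqref{eq:linear-scalar-flux-defect}. It is needed because this complex does not impose $d\beta^{[0]}=0$, so the factorization of Lemma~\ref{lem:geometric-residual-curvature}, proved only when $H^{[0]}=0$, does not reproduce $I_T$ by itself. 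This identity defines the degree-two graph \eqref{eq:hull-degree-two-graph} and proves $d_{\rm ind}J_1=J_2E$. It also makes $J_2$ first order: its tangent entry, and after the trace reflection $a=2k-\alpha$ its gauge entry, contain $\widehat r(E)$. So $J_2$ is not the identity up to a zero-order change. Nor can Lemma~\ref{lem:invariant-graph} supply closure, since it presupposes tangency of the differential to the graph, which is exactly what these identities establish.

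\textbf{Degree three.} The required statement is the tangent Bianchi factorization \eqref{eq:hull-tangent-bianchi},
$\tfrac18*\{R_0\wedge C^{[0]}-wD\widehat r(E)\}=\tfrac14 d\bigl(N_X(E)^{[0]}+N_\Lambda(E)^{[0]}\bigr)$
on arbitrary equation arguments. It gives the graph \eqref{eq:hull-degree-three-graph} and the identity $d_{\rm ind}J_2=J_3N$. It lives entirely in the leading part and is a curvature-commutator identity. One writes $n(E)-*dc=-2(\nabla^it_{ai})e^a+d(\cdots)$ and $(*D\widehat r)_{ab}=\nabla^i(\nabla_at_{bi}-\nabla_bt_{ai})$, using $DW(df)=0$. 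Commuting derivatives produces $R_{ikab}t_{ik}$. Ricci-flatness, pair symmetry, the algebraic Bianchi identity, $R_0\wedge\psi_0=0$ and $*(R_0\wedge\varphi_0)=-R_0$ then match the $R_0\wedge C^{[0]}$ term.

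The identity $\pi_7d\pi_7d=0$ on $\cA_{\varphi_0}$ is a curvature-free statement about the projected scalar de Rham complex. It cannot produce that curvature term, so your reduction of the leading part fails. Lemma~\ref{lem:nilpotent-mixed-perturbation} only inverts operators $1+U$ with $U^2=0$, so it cannot absorb leftover terms in a chain identity. Finally, $J_3$ is not a bare identification of Noether rows. Its tangent and $\epsilon$-geometric entries involve $r(n)=\tfrac14d(n_X^{[0]}+n_\Lambda^{[0]})$ and $\varkappa^\dagger$, so it is second order, cf.\ \eqref{eq:hull-upper-maps}.
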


\begin{proof}
The independent complex and its conventions are recalled in
Appendix~\ref{app:independent-hull-graph}.  The degree-one constraint is
the linearized Hull condition \eqref{eq:linear-hull-graph}.  Substitution into the independent tangent-instanton
equation gives the degree-two graph, including its scalar-gerbe
contribution. The independent tangent
Bianchi identity gives the degree-three graph.

Appendix~\ref{app:hull-upper-graph} constructs maps
\[
J_p,\qquad -1\le p\le4,
\]
and verifies
\[
d_{\mathrm{ind}}J_p
=
J_{p+1}d_{\mathrm{BPS,N}}
\]
in every degree.  The inverse maps are finite-order local differential
operators.  They identify the image of \(J\) with
\(\mathcal E^\bullet_{\mathrm{Hull}}\).
Proposition~\ref{prop:linear-bps-noether-comparison} gives the second
chain isomorphism.
\end{proof}

Propositions~\ref{prop:linear-bps-noether-comparison} and
\ref{prop:derived-hull-graph} separate the intrinsic linear BPS problem
from the additional leading-flux constraint.  The independent
generalized-\(G_2\) complex first restricts to the Hull graph, and the
variational complex is a local differential presentation of that graph.
The compatibility resolution below adjoins the leading-flux equation
\(H^{[0]}=0\) and resolves its differential identities.

\subsection{Mixed-order ellipticity of the variational complex}
\label{subsec:variational-elliptic}

The Hull connection is induced by the metric and flux, so its
variation differentiates the physical fields. The coupling filtration
places the resulting terms in different coefficient components.
These two facts determine the weights below. The unary variational
differential is of mixed differential order.
Its geometric diagonal is first order, while elimination of the
induced connection and passage to variational adjoints produce
off-diagonal terms of order up to three.
Let $\mathscr E_{\rm var}$ be the graded mixed coefficient bundle
underlying $C^\bullet_{\rm var}$. Its coefficient flag is
\[
 0\subset\eps\mathscr E_{\rm var}
 \subset\ker(\eps:\mathscr E_{\rm var}\to\mathscr E_{\rm var})
 \subset\mathscr E_{\rm var}.
\]
Put
\[
 \mathsf G_0:=\mathscr E_{\rm var}/\ker\eps,\qquad
 \mathsf A:=\ker\eps/\eps\mathscr E_{\rm var},\qquad
 \mathsf G_1:=\eps\mathscr E_{\rm var}.
\]
The associated graded bundle is
\[
 \operatorname{gr}\mathscr E_{\rm var}
 =
 \mathsf G_0
 \oplus
 \mathsf A
 \oplus
 \mathsf G_1.
\]
Their mixed-order weights in cochain degree $p$ are
\[
 w_p^{\rm mix}(\mathsf G_0)=p-1,\qquad
 w_p^{\rm mix}(\mathsf A)=p,\qquad
 w_p^{\rm mix}(\mathsf G_1)=p+1.
\]
For $S,T\in\{\mathsf G_0,\mathsf A,\mathsf G_1\}$ define
\[
 r_p(T,S):=w_{p+1}^{\rm mix}(T)-w_p^{\rm mix}(S).
\]
In a local splitting of the coefficient flag, write the blocks of the
filtered differential as
\[
 d^p_{T,S}:
 C^p(S)\longrightarrow C^{p+1}(T),
 \qquad
 S,T\in\{\mathsf G_0,\mathsf A,\mathsf G_1\}.
\]
Define their mixed principal symbols by
\[
 \sigma^{\rm mix}_{p;T,S}(\xi)
 :=\sigma_{r_p(T,S)}\bigl(d^p_{T,S}\bigr)(\xi),\qquad
 \sigma_p^{\rm mix}(\xi)
 :=\bigl(\sigma^{\rm mix}_{p;T,S}(\xi)\bigr)_{T,S}.
\]
If the differential order of $d^p_{T,S}$ is strictly below
$r_p(T,S)$, this symbol block is zero. A negative required order forces
the differential block to vanish. With sources as rows and targets as
columns, the order matrix is
\begin{equation}\label{eq:main-unary-order-bounds}
\begin{array}{c|ccc}
 &\mathsf G_0&\mathsf A&\mathsf G_1\\
\hline
 \mathsf G_0&1&2&3\\
 \mathsf A&0&1&2\\
 \mathsf G_1&0&0&1
\end{array}.
\end{equation}
The nonzero entries are the required mixed principal orders $r_p(T,S)$,
independent of $p$. The zero entries denote blocks that vanish by
$\mathbb D$-linearity. In particular the displayed zero for
$\mathsf G_1\to\mathsf G_0$ denotes a vanishing block with required
order $-1$.

\begin{proposition}[Mixed-order Hodge theory]
\label{prop:mixed-order-hodge}
Let
\[
(C^\bullet,d)
\]
be a finite complex of smooth sections of finite-rank real bundles over a
closed manifold \(Y\). Write \(d_{p;ba}\) for the block
from component \(a\) in degree \(p\) to component \(b\) in degree
\(p+1\). Suppose that integers
\(s_{p,a}\) are assigned to the component bundles in every cochain degree
with
\(\operatorname{ord}(d_{p;ba})\le s_{p+1,b}-s_{p,a}\).
Suppose the corresponding weighted principal-symbol sequence is exact at
every nonzero real covector.

Choose positive bundle metrics and positive invertible elliptic operators
\(\Lambda_a\) of order one
and set
\begin{equation}\label{eq:resolved-order-reduction}
T_p=\operatorname{diag}(\Lambda_a^{s_{p,a}}),\qquad
\widetilde d_p=T_{p+1}^{-1}d_pT_p.
\end{equation}
Then \(\widetilde d_p\) has order zero on a common Sobolev scale and
defines an elliptic complex.  Its cohomology is finite-dimensional and
represented by smooth sections.

Writing \(H^\bullet=H^\bullet(C,d)\), order reduction gives a
smooth strong deformation retract
\[
(H^\bullet,0)
\underset{p}{\stackrel{i}{\rightleftarrows}}
(C^\bullet,d),
\qquad
dh+hd=1-ip,
\]
with
\[
pi=1,\qquad ph=hi=h^2=0.
\]
\end{proposition}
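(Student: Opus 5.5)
The plan is to reduce to standard elliptic theory by the diagonal rescaling \eqref{eq:resolved-order-reduction}. Take \(\Lambda_a=(1+\Delta_a)^{1/2}\) for a Laplace-type operator on each component bundle. These are positive, invertible, elliptic pseudodifferential operators of order one, and every real power \(\Lambda_a^{s}\) is a classical operator of order \(s\). The conjugated block
\[
\widetilde d_{p;ba}=\Lambda_b^{-s_{p+1,b}}\,d_{p;ba}\,\Lambda_a^{s_{p,a}}
\]
then has order \(\operatorname{ord}(d_{p;ba})-s_{p+1,b}+s_{p,a}\le0\). Its order-zero principal symbol is \(|\xi|^{-s_{p+1,b}}\sigma_{s_{p+1,b}-s_{p,a}}(d_{p;ba})(\xi)|\xi|^{s_{p,a}}\), and it vanishes when the order inequality is strict. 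Hence the order-zero symbol sequence of \(\widetilde d\) is obtained from the weighted symbol sequence by conjugation with invertible scalar matrices. Exactness at every \(\xi\neq0\) is therefore inherited from the hypothesis.

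Next I check that \(\widetilde d\) is a complex and identify its cohomology. Since \(T_p\) is invertible, \(\widetilde d_{p+1}\widetilde d_p=T_{p+2}^{-1}d_{p+1}d_pT_p=0\). Also \(T\) is a cochain isomorphism \((C,\widetilde d)\to(C,d)\) on the spaces of smooth sections, because \(\Lambda^{s}\) preserves \(C^\infty\) and is invertible there. So \(H^\bullet(C,d)\cong H^\bullet(C,\widetilde d)\).

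For the order-zero complex I extend each \(C^p\) to the \(L^2\) completions \(L^2_p\). I form the Laplacian \(\widetilde\Delta=\widetilde d\widetilde d^{*}+\widetilde d^{*}\widetilde d\), where the adjoints are taken with respect to the \(L^2\) metrics. This is an order-zero pseudodifferential operator. Exactness of the symbol complex makes its principal symbol invertible: on a symbol-exact complex, \(\sigma\sigma^*+\sigma^*\sigma\) is positive definite. So \(\widetilde\Delta\) is Fredholm with a parametrix modulo smoothing operators, its kernel is finite-dimensional, and since \(\widetilde\Delta u=0\) gives \(u=\)(smoothing)\((u)\), that kernel consists of smooth sections. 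The main obstacle, and the step I expect to be hardest, is that order-zero operators do not improve regularity. The regularity has to come from the parametrix identity \(1=\mathcal Q\widetilde\Delta+\mathcal S\) with \(\mathcal S\) smoothing, together with closedness of the range of \(\widetilde d\), which follows from the Fredholm property of \(\widetilde\Delta\). This gives the Hodge decomposition \(L^2=\mathcal H\oplus\overline{\im\widetilde d}\oplus\overline{\im\widetilde d^*}\) with closed ranges. I then verify that the decomposition restricts to smooth sections, using that \(\mathcal Q\) and \(\mathcal S\) preserve \(C^\infty\), so that the Green operator \(\widetilde G\) preserves smoothness.

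Finally, let \(\widetilde\imath\) be the inclusion of the harmonic space \(\mathcal H\), let \(\widetilde p\) be the orthogonal projection onto \(\mathcal H\), and let \(\widetilde h=\widetilde d^*\widetilde G\). Then \(\widetilde d\widetilde h+\widetilde h\widetilde d=1-\widetilde\imath\widetilde p\), and the identities \(\widetilde p\widetilde\imath=1\), \(\widetilde p\widetilde h=\widetilde h\widetilde\imath=\widetilde h^2=0\) follow from \(\widetilde G\) commuting with \(\widetilde d,\widetilde d^*\) and vanishing on \(\mathcal H\). Transport this contraction back by conjugation: \(i=T\widetilde\imath\), \(p=\widetilde pT^{-1}\), \(h_p=T_{p-1}\widetilde h_pT_p^{-1}\). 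All the relations are preserved, and the maps preserve smoothness. This gives the asserted smooth strong deformation retract, with \(H^\bullet\) identified with \(T\mathcal H\).
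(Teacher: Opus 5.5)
Your proposal is correct and follows essentially the same route as the paper. Like the paper, you conjugate by $T_p$ to obtain an order-zero complex whose symbol sequence is the weighted one up to invertible scalar factors, show that $\widetilde d_{p-1}\widetilde d_{p-1}^*+\widetilde d_p^*\widetilde d_p$ is elliptic of order zero, use the parametrix identity to get smooth harmonics and a smoothness-preserving Green operator, take $\widetilde h=\widetilde d^*\widetilde G$, and transport back by $T$. The paper spells out two routine points that you leave implicit: the identity $\widetilde d_{p+1}\widetilde d_p=0$ extends from smooth sections to $L^2$ by density, and the parametrix theorem is applied to the complexification of these real operators.
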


\begin{proof}
Write \(D_p=\widetilde d_p\), with \(D_p=0\) outside the complex.
The order bounds make every block have order at most zero.
Its principal symbol is the weighted symbol conjugated by the
invertible symbols of \(T_p\). Thus the order-zero symbol complex is
exact, including its endpoints.
Let $E^p_\mathbb R$ be the underlying real bundle with
$C^p=\Gamma(Y,E^p_\mathbb R)$. Each $D_p$ is an order-zero
pseudodifferential operator on
$\mathscr X^p=L^2(Y,E^p_\mathbb R)$.  Before order reduction the
component spaces are \(H^{-s_{p,a}}\), or \(H^{k-s_{p,a}}\) at
Sobolev level \(k\).  The maps \(T_p^{\pm1}\) preserve smooth
sections.  The identity \(D_{p+1}D_p=0\) first holds there and then
extends to the bounded Hilbert maps.

Define
\begin{equation}\label{eq:resolved-laplacian}
 \Delta_p=D_{p-1}D_{p-1}^*+D_p^*D_p.
\end{equation}
At a nonzero cotangent vector, a vector in the nullspace of the symbol
of \(\Delta_p\) is in \(\ker\sigma(D_p)=\im\sigma(D_{p-1})\)
and orthogonal to that image.  It is therefore zero.  Thus \(\Delta_p\) is elliptic, positive and self-adjoint of
order zero.  The parametrix theorem for elliptic pseudodifferential operators on compact manifolds applies at order zero
\cite[Theorems~8.6--8.7 and~8.11]{Grubb}.  The theorem applies to the complexification and hence to the original
real operator. 

Consequently \(\mathcal H^p=\ker\Delta_p\) is finite-dimensional
and smooth.  Let \(P_p\) be its orthogonal projection, which has a
smooth finite-rank kernel.  The range of \(\Delta_p\) is closed and
equals \((\mathcal H^p)^\perp\).  Inverting on that complement and
extending by zero gives a bounded real Green operator satisfying
\begin{equation}\label{eq:resolved-green}
 \Delta_pG_p=G_p\Delta_p=1-P_p,\qquad G_pP_p=P_pG_p=0.
\end{equation}
If \(Q_p\Delta_p=1-S_p\) is a parametrix, then
\[
 G_pf=Q_p(1-P_p)f+S_pG_pf.
\]
For smooth \(f\), both right-hand terms are smooth.  The same identity,
the Sobolev estimates for \(Q_p,S_p\), and the bounded \(L^2\)
inverse give continuity on every nonnegative Sobolev space, hence on
\(C^\infty\).

Expanding \eqref{eq:resolved-laplacian} with \(D^2=0\) gives
\(\Delta_{p+1}D_p=D_p\Delta_p\) and
\(\Delta_{p-1}D_{p-1}^*=D_{p-1}^*\Delta_p\).
The differential and its adjoint kill the appropriate harmonic spaces
and have image orthogonal to the next harmonic space.  Uniqueness of
the inverse on the complements yields
\begin{equation}\label{eq:resolved-green-commutation}
 G_{p+1}D_p=D_pG_p,\qquad
 G_{p-1}D_{p-1}^*=D_{p-1}^*G_p.
\end{equation}
With \(\widetilde i_p:\mathcal H^p\hookrightarrow\mathscr X^p\),
and \(P_p\) regarded as taking values in \(\mathcal H^p\), set
\begin{equation}\label{eq:resolved-full-sdr}
 \widetilde h_p=D_{p-1}^*G_p,\qquad
 i_p=T_p\widetilde i_p,\quad p_p=P_pT_p^{-1},\quad
 \mathsf h_p=T_{p-1}\widetilde h_pT_p^{-1}.
\end{equation}
Then \(di=0\), \(pd=0\) and \(pi=1\).  Equations
\eqref{eq:resolved-green}--\eqref{eq:resolved-green-commutation} give
\[
 D_{p-1}\widetilde h_p+\widetilde h_{p+1}D_p
 =(D_{p-1}D_{p-1}^*+D_p^*D_p)G_p=1-P_p,
\]
so transport by \(T_p\) gives \(d\mathsf h+\mathsf hd=1-ip\).
Also \(P\widetilde h=0\), since adjoint images are orthogonal to
harmonics, and \(\widetilde hP=0\), since \(GP=0\).  Finally
\[
 \widetilde h_{p-1}\widetilde h_p
 =D_{p-2}^*D_{p-1}^*G_p^2=0.
\]
Cancellation of adjacent \(T^{\pm1}\)'s proves
\begin{equation}\label{eq:resolved-sdr-identities}
 pi=1,\quad d\mathsf h+\mathsf hd=1-ip,\quad
 p\mathsf h=\mathsf hi=\mathsf h^2=0.
\end{equation}

For a smooth cocycle \(v\), the identity \(v-ipv=d\mathsf hv\)
gives a harmonic representative. Applying \(p\) to an exact harmonic
representative shows that it vanishes. Thus
\(\mathcal H^p\cong H^p(C,d)\), and the displayed maps give the
claimed strong deformation retract, with \(h=\mathsf h\).
\end{proof}

For mixed coefficient bundles, these auxiliary positive metrics and
the resulting Hodge contraction need not preserve the
\(\mathbb D\)-module structure or the coefficient filtration.

\begin{lemma}
Curvature coefficients and their covariant derivatives do not
contribute to the mixed principal symbol $\sigma^{\rm mix}(d_C)$.
\end{lemma}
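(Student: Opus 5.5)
We reduce the claim to a counting argument on differential orders, using the weight assignment defining $\sigma^{\rm mix}$. A block $d^p_{T,S}$ contributes to the mixed symbol only through its homogeneous part of differential order exactly $r_p(T,S)$, read off from the order matrix \eqref{eq:main-unary-order-bounds}. The plan is to show that every term of $d_C$ carrying a background curvature coefficient ($R_0$, its covariant derivatives, or the Riemann tensor entering through commutators of $\nabla$) has differential order strictly below the required order of the block in which it sits. Such terms are then lower order and vanish in $\sigma_{r_p(T,S)}$.

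\textbf{Step 1: list the curvature-bearing terms.} Working from the explicit formulas for $\mathsf R$, $G$, $\cH$ and their adjoints, we enumerate where curvature enters. There are three sources:
\begin{itemize}
\item the Green--Schwarz term $2P(\alpha-k,R_0)\wedge\dot\psi_w$ in \eqref{eq:quadratic-native-source};
\item the gauge action $-\iota_\xi F_A$ and $b_\lambda=\tfrac12P(\lambda,F_A)-\dots$ linearized at $A_0=\Theta_0$;
\item commutators $[\nabla,\nabla]$ produced when the Hessian is symmetrized or adjoints are integrated by parts, together with $\nabla R_0$ from differentiating curvature.
\end{itemize}
Each such term is a product of a curvature tensor (or its derivative) with a differential operator acting on the field.

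\textbf{Step 2: compare orders with the required orders.} For each curvature term we compute the number of derivatives acting on the field and compare with $r_p(T,S)$.
\begin{itemize}
\item In $P(\alpha-k,R_0)\wedge\dot\psi_w$, the inputs are reduced modulo $\eps$. The term couples $\mathsf G_0$ and $\mathsf A$ sources to $\mathsf G_1$ targets. Since $k=\KHull M^{[0]}$ is first order and $\dot\psi_w$ is zeroth order, the induced operator has order at most $1$ on $\mathsf A$ (required order $2$) and at most $1$ or $2$ on $\mathsf G_0$ (required order $3$). Both fall below the requirement.
\item Derivatives of $R_0$ only transfer derivatives from the field onto the background, so they lower the field order further.
\item Commutator terms replace two derivatives by a curvature factor, so they have order two less than the leading term they come from.
\end{itemize}
In all cases the curvature term lies strictly below $r_p(T,S)$ and drops out of $\sigma^{\rm mix}$.

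\textbf{Step 3: verify the leading terms are curvature-free.} The top-order part of each block must be curvature-free. It consists of the third-order $\mathsf G_0\to\mathsf G_1$ piece from $P(k,Dk)$ with $k=\KHull m$, together with the flat $\nabla$-symbols of the diagonal first-order blocks. These are polynomial in $\xi$ with coefficients from $g_0,\varphi_0,\psi_0$ and $\Phi_0$ only.

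\textbf{Main obstacle.} The hardest part is the bookkeeping for the off-diagonal $\mathsf G_0\to\mathsf G_1$ and $\mathsf A\to\mathsf G_1$ blocks after passing to the variational adjoint. There one must confirm that no curvature term reaches the maximal orders $3$ and $2$. We would check this directly by integrating the $\eps$-terms of \eqref{eq:quadratic-native-source} by parts and tracking derivative counts, using the background identities \eqref{eq:background-identities} to remove $D_{\Theta_0}R_0$ and $R_0\wedge\psi_0$ terms.
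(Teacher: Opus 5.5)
Your proposal is correct and is essentially the paper's own argument. The paper likewise rests on three observations: commuting two covariant derivatives trades two input derivatives for a curvature factor, an adjoint falling on a curvature coefficient lowers the number of derivatives on the input, and explicit insertions such as $R_0\dot\psi_w$ have order zero (at most one after the induced-connection adjoint), so they lie strictly below $r_p(T,S)$. Your enumeration is slightly incomplete: it omits, for example, the $\mathsf G_0\to\mathsf A$ gauge-covector block $-\tfrac w8*(R_0\wedge\dot\psi_w)$ in $E_\alpha$, and in Step 2 it does not treat the curvature terms $\iota_\xi R_0$, $P(\lambda,R_0)$, $J_R$, $\kappa^\dagger T_R$ of $G$ and $G^\dagger$. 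Each of these has order at most one in a block of required order two or three, however, so your same count covers them.
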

\begin{proof}
Commuting two covariant derivatives replaces two derivatives acting
on the input by curvature. When a variational adjoint differentiates
a coefficient, the number of derivatives acting on the input
decreases. The component formulas in Appendix~\ref{app:linear-symbol}
therefore place every term involving $R_0$, $\nabla R_0$, or other
fixed curvature coefficients strictly below the relevant $r_p(T,S)$.
Consequently
\[
 \sigma^{\rm mix}(d_C)
\]
depends only on the pointwise torsion-free $G_2$ tensors and the
nonzero constant $w=e^{-2\Phi_0}$.
\end{proof}

\begin{proposition}[Mixed-order ellipticity]
\label{prop:unary-ellipticity}
At a smooth torsion-free standard embedding with constant background
dilaton, the unary spinorial and positive-form variational complexes
have exact mixed principal-symbol sequences at every nonzero real
covector. The corresponding adjoint weighted-symbol sequences are
exact as well.
\end{proposition}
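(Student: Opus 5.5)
By Proposition~\ref{prop:unary-variational}, the positive-form complex is obtained from the spinorial one by the local chain isomorphism $\widehat\Psi_{\rm lin}$, so it suffices to treat the spinorial complex $(C^\bullet_{\rm var},d_C)$. The plan is to reduce exactness of the mixed principal-symbol sequence to exactness of a few standard first-order $G_2$ symbol sequences. We filter the symbol complex by the coefficient flag $\mathsf G_0,\mathsf A,\mathsf G_1$, use the order matrix \eqref{eq:main-unary-order-bounds} to identify which blocks survive, and finish with a spectral-sequence (or five-lemma) argument along the flag. By the preceding lemma, curvature terms drop out of $\sigma^{\rm mix}(d_C)$. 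The symbol is therefore built from $\varphi_0$, $\psi_0$, $g_0$ and the constant $w$ alone, and is pointwise $G_2$-equivariant.

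\emph{First step.} The blocks with $r_p(T,S)=0$ below the diagonal vanish by $\mathbb D$-linearity. Hence $\sigma^{\rm mix}_p(\xi)$ is block upper triangular with respect to the ordered decomposition $(\mathsf G_0,\mathsf A,\mathsf G_1)$, with the sources ordered as rows. The subspaces $\mathsf G_1$ and $\mathsf A\oplus\mathsf G_1$ are therefore subcomplexes of the symbol complex, and we obtain a three-step filtration whose associated graded consists of the diagonal symbol complexes $\sigma_1(d_{S,S})(\xi)$ for $S\in\{\mathsf G_0,\mathsf A,\mathsf G_1\}$. Exactness of each graded piece implies exactness of the total sequence by the long exact sequence of the filtration. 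No off-diagonal order-two or order-three terms need to be analysed, provided the diagonal pieces are exact.

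\emph{Second step.} We identify the diagonal pieces.
\begin{itemize}
\item On $\mathsf A$, which is the $\mathbb D_0$-gauge sector (the connection $\alpha$, ghost $s$ and their $\epsilon\mathbb D$-valued duals), the diagonal is the symbol of the canonical $G_2$-instanton complex on $\cA_{\varphi_0}$ with $\mathfrak{so}(E)$ coefficients, together with its cyclic dual. Its symbol sequence $\Lambda^0\to\Lambda^1\to\Lambda^2_7\to\Lambda^3_1$, wedged with $\xi$ and projected, is exact for $\xi\neq0$. This is the classical computation used in \cite{Carrion,DLS}.
\item On $\mathsf G_0$ and $\mathsf G_1$, which are the reductions mod $\epsilon$ and the $\epsilon$-parts of the geometric fields $(z,M,l)$ with their ghosts $(\xi,\Lambda,c)$, the diagonal is the symbol of the $\epsilon$-free Hessian complex of $W_0$ from \eqref{eq:quadratic-native-source}. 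Its first-order part is $\beta\mapsto\dd\beta$, $u\mapsto\dd u$ and $\dot\Phi\mapsto\dd u\wedge\varphi_0$, with $\dd c$ and diffeomorphisms below. Using $J_\phi$ and the decomposition into $\Lambda^3_{1,7,27}$, as in the proof of Theorem~\ref{thm:automatic-admissibility}, we check exactness degree by degree at a fixed $\xi$. We exploit $G_2$-equivariance by choosing $\xi=e_1$ and using the stabilizer $SU(3)$ to split the spaces into small isotypic blocks.
\end{itemize}

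\emph{Third step.} Exactness of the adjoint weighted-symbol sequences then follows formally. By cyclicity, $d_C^{3-p}=\pm(d_C^p)^\sharp$ under the nondegenerate pairing. The weights satisfy $w^{\rm mix}$ duality: $\mathsf G_0$ pairs with $\mathsf G_1$ and $\mathsf A$ pairs with itself, with degree $p$ pairing against $3-p$. Thus the adjoint symbol sequence is the original one, reindexed and transposed. Exactness of a finite-dimensional sequence is preserved under transposition.

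The main obstacle is the middle-degree exactness of the geometric diagonal, $C^1\to C^2\to C^3$. There the Hessian symbol must have kernel exactly equal to the gauge image $G$, which comes from diffeomorphisms and $B$-shifts. We must also check that the field–equation counts match: the $\Lambda^3_{27}$ components of $\dd\beta$ and $\dd u$ and the $\tfrac43$ weight on $\Lambda^3_1$ could, in principle, produce accidental cancellation. We would first attempt this by a dimension count plus injectivity on the complement of the gauge directions, computed in the $SU(3)$-adapted frame. If that fails, we would instead transport the problem to the independent BPS presentation of Propositions~\ref{prop:linear-bps-noether-comparison} and~\ref{prop:derived-hull-graph}, where the symbol is the known elliptic Dirac-type symbol of \cite{DLSHeterotic,MSS}, and pull back exactness through the local chain isomorphisms. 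Those isomorphisms, with their finite-order inverses, must be checked to be compatible with the mixed weights.
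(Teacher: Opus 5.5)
Your route is the paper's. You filter the mixed symbol complex by the flag $\eps\mathscr E_{\rm var}\subset\ker\eps\subset\mathscr E_{\rm var}$, which $\mathbb D$-linearity makes into subcomplexes. You then prove exactness on $\mathsf G_0,\mathsf A,\mathsf G_1$, conclude by the two long exact sequences, and obtain the adjoint statement by signed transposition, since the weights are dual under the pairing. The gauge quotient is $21$ copies of the canonical complex $\Lambda^0\to\Lambda^1\to\Lambda^2_7\to\Lambda^3_1$, realized in the paper as $\Lambda^0\to\Lambda^1\to\Lambda^6\to\Lambda^7$. Its upper two terms already are the cyclic duals of the lower two, so there is no separate dual piece.

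The step you leave unjustified is the transfer between the spinorial and positive-form charts, which you need for the positive-form half of the statement. That $\widehat\Psi_{\rm lin}$ is a local chain isomorphism is not enough. It transports weighted-symbol exactness only if it and its inverse have weighted order zero, so that their mixed symbols are inverse chain maps. This has to be checked here because the comparison contains first-order terms: $a=\alpha-\KHull M^{[0]}$, $\lambda=s-\kappa(\xi)^{[0]}$, and the cotangent corrections $\eps\KHull^\dagger\lambda_\alpha$ and $\eps\kappa^\dagger\varrho_s$. It therefore does not preserve the unweighted principal symbol. The paper verifies that each such term goes $\mathsf G_0\to\mathsf A$ or $\mathsf A\to\mathsf G_1$ within a fixed cochain degree, where the weight rises by exactly one, and that the diagonal blocks are algebraic. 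You raise this check only for your fallback route, but the main reduction needs it too.

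For what you call the main obstacle, you need neither a direct Hessian computation nor the independent BPS complex. We have $\cH=\widetilde{\mathcal P}_0E$, and the algebraic part $\mathcal D_{\rm alg}$ of $4\widetilde{\mathcal P}_0$ is pointwise invertible, with the remaining correction landing in $\mathsf G_1$. Hence the diagonal Hessian symbol on each geometric quotient has the same kernel as the first-order linearized BPS symbol. That symbol is the system $\xi\wedge(\mathcal J_{\varphi_0}u-2\dot\Phi\psi_0)=0$, $\xi\wedge u\wedge\varphi_0=0$, $*(\xi\wedge u)-2\dot\Phi*(\xi\wedge\varphi_0)+\xi\wedge\beta=0$.

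In the $SU(3)$ splitting at a unit covector, the singlet and primitive $(1,1)$ components decouple, which forces $\dot\Phi=0$. This also disposes of your worry about the $\tfrac43$ weight. What remains is exactly the $13$-dimensional gauge image, so the middle rank is $44$. Together with the outer ranks $1,13$ and their duals, the dimensions $(1,14,57,57,14,1)$ are exhausted, and each geometric quotient is exact.
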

\begin{proof}
First, for each nonzero real covector $\xi$,
Appendix~\ref{app:linear-symbol} proves exactness of the two geometric
symbol complexes
\[
 0\longrightarrow\mathsf G_i^{-1}
 \xrightarrow{\sigma^{\rm mix}_{-1}(\xi)}\mathsf G_i^0
 \xrightarrow{\sigma^{\rm mix}_{0}(\xi)}\cdots
 \xrightarrow{\sigma^{\rm mix}_{3}(\xi)}\mathsf G_i^4
 \longrightarrow0,\qquad i=0,1,
\]
and of the gauge symbol complex
\[
 0\longrightarrow\mathsf A^0
 \xrightarrow{\sigma^{\rm mix}_{0}(\xi)}\mathsf A^1
 \xrightarrow{\sigma^{\rm mix}_{1}(\xi)}\mathsf A^2
 \xrightarrow{\sigma^{\rm mix}_{2}(\xi)}\mathsf A^3
 \longrightarrow0.
\]
Here each arrow denotes the induced symbol on the indicated subquotient.

Second, the coefficient flag induces short exact sequences of symbol
complexes
\[
\begin{gathered}
 0\longrightarrow\mathsf G_1^\bullet\longrightarrow(\ker\eps)^\bullet
   \longrightarrow\mathsf A^\bullet\longrightarrow0,\\
 0\longrightarrow(\ker\eps)^\bullet\longrightarrow\mathscr E_{\rm var}^\bullet
   \longrightarrow\mathsf G_0^\bullet\longrightarrow0.
\end{gathered}
\]
The first long exact cohomology sequence gives exactness on
$\ker\eps$. The second gives exactness of
\[
 (C^\bullet_{\rm var},\sigma^{\rm mix}(\xi)).
\]

Third, the linearized Hamiltonian $Q$-isomorphism
$\widehat\Psi_{\rm lin}$ and its inverse have mixed order zero,
as checked in Appendix~\ref{app:linear-symbol}. Hence their mixed principal symbols
are inverse chain maps. Weighted-symbol exactness is therefore
equivalent in the spinorial and positive-form presentations.
The adjoint principal symbol is the signed transpose at $-\xi$
with respect to the mixed variational pairing, so its sequence is
exact as well.
\end{proof}

\begin{corollary}\label{cor:unary-fredholm}
Assume $Y$ is closed and keep the background anomaly class, gerbe class, and chosen trivialization fixed. Then the mixed Sobolev realizations of the unary
variational complex have closed range and finite-dimensional smooth
cohomology.
\end{corollary}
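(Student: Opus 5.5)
The plan is to apply Proposition~\ref{prop:mixed-order-hodge} directly to $(C^\bullet_{\rm var},d_C)$. The complex is finite, in cochain degrees $-1$ through $4$, and by Proposition~\ref{prop:unary-variational} it is a complex of finite-order local differential operators. Its components are smooth sections of the finite-rank real bundles underlying the mixed coefficient bundle $\mathscr E_{\rm var}$.

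\emph{Choice of weights.} Split the coefficient flag locally, and in fact globally by choosing a real complement, since $\mathscr E_{\rm var}$ is built from $E_{\rm fr}\otimes\mathbb D\oplus E_0\otimes\mathbb D_0$ and the flag splits canonically as real bundles. Write $\mathscr E_{\rm var}\cong\mathsf G_0\oplus\mathsf A\oplus\mathsf G_1$. On each summand in cochain degree $p$, set $s_{p,a}=w_p^{\rm mix}(a)$.

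\emph{Order bounds.} Table \eqref{eq:main-unary-order-bounds} says that every block $d^p_{T,S}$ has order at most $r_p(T,S)=s_{p+1,T}-s_{p,S}$. The blocks of negative required order vanish by $\mathbb D$-linearity. This is precisely the hypothesis $\operatorname{ord}(d_{p;ba})\le s_{p+1,b}-s_{p,a}$.

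\emph{Symbol exactness.} The weighted principal symbol sequence in Proposition~\ref{prop:mixed-order-hodge} is, by definition, the mixed principal symbol $\sigma^{\rm mix}(\xi)$. Proposition~\ref{prop:unary-ellipticity} gives its exactness at every nonzero real covector, including the endpoints.

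Proposition~\ref{prop:mixed-order-hodge} then gives several conclusions. The order-reduced operators $\widetilde d_p=T_{p+1}^{-1}d_pT_p$ form an elliptic complex of order zero on $L^2$. More generally they form one on each Sobolev level $H^k$, which corresponds to the mixed Sobolev realization with component spaces $H^{k-s_{p,a}}$. Their Laplacians are elliptic, self-adjoint and of order zero. The harmonic spaces are finite-dimensional and smooth. Finally, there is a strong deformation retract onto $H^\bullet_{\rm var}$ by smooth operators.

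\emph{Closed range.} I will deduce closed range from the Green operator identity $\Delta_pG_p=1-P_p$. Because $\widetilde d_p=\widetilde d_p(1-P_p)$ and $\widetilde d_p G_p$ is bounded, the range of $\widetilde d_p$ equals $\widetilde d_p\widetilde d_p^*G_{p+1}$ applied to the orthogonal complement of the harmonic space. Hodge decomposition makes this the closed subspace $(\ker\widetilde d_{p}^*)^\perp\cap\ker\widetilde d_{p+1}$ of cocycles orthogonal to harmonics. Transport by the isomorphisms $T_p$ then gives closed range of $d_p$ between the mixed Sobolev spaces. For the same reason, Sobolev and smooth cohomology agree with $\mathcal H^p$.

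\emph{Role of the fixed data.} The hypothesis fixing the anomaly class, gerbe class and trivialization is used to make the background and the complex well defined globally. Under it, $\beta=(B-B_0)^{[0]}$ is a global two-form and $C^\bullet_{\rm var}$ is built from global sections, as in \S\ref{par:fixed-gerbe-sector}. This hypothesis enters only to set up the complex.

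\emph{Main obstacle.} The substantive step is that the splitting of the $\mathbb D$-filtration is compatible with the weights. Writing $d_C$ in block form with respect to a non-canonical real splitting could create lower-triangular blocks, for instance $\mathsf G_1\to\mathsf G_0$. These blocks must vanish identically, not merely have low order, because their required order $-1$ is negative. I would check this using $\mathbb D$-linearity: $d_C$ maps $\epsilon\mathscr E$ to $\epsilon\mathscr E$ and $\ker\epsilon$ to $\ker\epsilon$, so in any splitting the blocks from $\mathsf G_1$ into $\mathsf A$ or $\mathsf G_0$ vanish, as do the blocks from $\mathsf A$ into $\mathsf G_0$. Every remaining block has nonnegative required order and is bounded by \eqref{eq:main-unary-order-bounds}. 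The choice of splitting changes only the off-diagonal entries, and only by terms compatible with the same order bounds, so the symbol sequence is independent of the splitting.
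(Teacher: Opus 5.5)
Your proposal is correct and follows the paper's own argument, which consists of applying Proposition~\ref{prop:mixed-order-hodge} on closed $Y$ with the weights of Section~\ref{subsec:variational-elliptic}, taking symbol exactness from Proposition~\ref{prop:unary-ellipticity}. Your additional checks make explicit several steps the paper leaves implicit: the vanishing of the filtration-lowering blocks by $\mathbb D$-linearity, the independence of the weighted symbol from the choice of splitting, and the derivation of closed range of $\widetilde d_p$ from the Green-operator identities before transporting by $T_p$.
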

\begin{proof}
Proposition~\ref{prop:unary-ellipticity} gives exactness of the weighted
symbol sequence for the weights of Section~\ref{subsec:variational-elliptic}.
Since \(Y\) is closed,
Proposition~\ref{prop:mixed-order-hodge} gives the assertion.
\end{proof}

Under the assumptions of Corollary~\ref{cor:unary-fredholm},
\(H^1(C^\bullet_{\rm var})\) and \(H^2(C^\bullet_{\rm var})\)
are the finite-dimensional infinitesimal deformation space and variational obstruction space, respectively.

The mixed-order weights encode differential order and are independent
of the coefficient filtration
\[
 0\subset\eps\mathscr E_{\rm var}\subset\ker\eps\subset\mathscr E_{\rm var}.
\]
The linear spinorial/positive-form comparison contains first-order
triangular terms and therefore need not preserve the unweighted
principal symbol. It has mixed order zero and preserves the mixed
principal-symbol sequence.

\subsection{Geometric compatibility tensors}
\label{subsec:compat-geometric}

The leading-flux residual \(b=d_YB^{[0]}\) has the exterior identity
\(db=0\). The BPS residuals also satisfy Bianchi and \(G_2\)-torsion
identities, and their dilaton component is an exact one-form. We adjoin
the admissibility equation to the variational equation module:
\begin{equation}\label{eq:combined-admissibility-operator}
 D_0:C_{\rm var}^1\longrightarrow
 C_{\rm var}^2\oplus\Omega^3(Y),\qquad
 D_0V=(d_C^1V,d_Y\beta^{[0]}).
\end{equation}
We regard the second equation argument as an arbitrary three-form
\(b\in\Omega^3(Y)\). For an element in the image of \(D_0\) it is the
exact form \(d_Y\beta^{[0]}\). Further differential identities couple
\(b\) to the variational equation arguments.

To motivate the equation coordinates, consider a nonlinear field and put
$\phi=\varphi^{[0]}$ and $\psi_\phi=*_{\phi}\phi$. The corresponding
forms are
\[
 h=d\phi-2d\Phi^{[0]}\wedge\phi,\qquad
 c'=d\psi_\phi-\frac83d\Phi^{[0]}\wedge\psi_\phi,
\]
\[
 \nu=d\Phi^{[0]},\qquad b=H^{[0]}=dB^{[0]},\qquad f=0.
\]
After rescaling $\widehat\phi=e^{-2\Phi^{[0]}}\phi$, the first two
forms are $e^{2\Phi^{[0]}}d\widehat\phi$ and
$e^{8\Phi^{[0]}/3}d(*_{\widehat\phi}\widehat\phi)$. Their algebraic
relation expresses that they arise from one intrinsic-torsion tensor.
The remaining forms record the dilaton differential, leading flux and
scalar compatibility equation. We now construct their linearized
equation coordinates at the background. Section~\ref{subsec:common-nonlinear}
gives the nonlinear reconstruction.

The coefficient filtration gives the quotient complex
\[
 N^\bullet=\ker(\epsilon:C^\bullet_{\rm var}\to C^\bullet_{\rm var}),
 \qquad
 G^\bullet=C^\bullet_{\rm var}/N^\bullet,
 \qquad
 \pi:C^\bullet_{\rm var}\to G^\bullet.
\]
Thus \(N\) contains the geometric coefficients in
\(\epsilon\mathbb D\) together with the gauge coefficients, while \(G\)
contains the leading geometric coefficients. Choose real bundle
splittings
\[
 s_p:G^p\longrightarrow C_{\rm var}^p,
 \qquad
 \pi_p s_p=\id,
\]
and set
\[
 \operatorname{pr}_N=1-s_p\pi_p.
\]
Different choices of \(s_p\) give the chain-isomorphic complexes
described below.

Put
\[
 \varphi=\varphi_0,\qquad
 \psi=*\varphi,\qquad
 J=\frac43\pi_1+\pi_7-\pi_{27},\qquad
 s=\dot\Phi.
\]
Define the parallel bundle maps
\begin{equation}\label{eq:compatibility-torsion-target}
 A=\tfrac14(v\mapsto v\wedge\varphi)^\dagger,
 \qquad
 B=\tfrac13(v\mapsto v\wedge\psi)^\dagger,
\end{equation}
and
\[
 K_{\rm tor}
 =
 \left\{
 (h,c')\in\Lambda^4\oplus\Lambda^5:
 4A(h)-3B(c')=0
 \right\}.
\]
Here \(\dagger\) denotes the metric adjoint of the indicated algebraic
map. Thus
\[
 A(v\wedge\varphi)=v,\qquad
 B(v\wedge\psi)=v,
\]
and \(K_{\rm tor}\) has rank \(49\).

\paragraph{Algebraic equation coordinates.}

Let \(e\in C_{\rm var}^2\). Using \(g_\varphi\) and \(\vol_\varphi\) to
identify the density-valued duals with forms of the same degree, write
\[
 (e_u,e_\beta,e_s)
 \in
 \Omega^3(Y)\oplus\Omega^2(Y)\oplus\Omega^0(Y)
\]
for the components of the leading geometric covector \(e^{[0]}\),
characterized by
\[
 e^{[0]}[u,\beta,s]
 =
 -\frac w4\int_Y
 \bigl(
   \langle u,e_u\rangle
   +\langle\beta,e_\beta\rangle
   +s e_s
 \bigr)\vol_\varphi.
\]
This is the normalization of
\eqref{eq:compat-density-normalization}.

For
\[
 (e,b)\in C_{\rm var}^2\oplus\Omega^3(Y),
\]
define
\begin{equation}\label{eq:compat-reconstruct-old}
\begin{aligned}
 c&=-*e_\beta,
 &
 U&=e_u-\tfrac32\iota_{B(c)^\sharp}\psi,
 &
 a_s&=\tfrac17\langle U,\varphi\rangle,\\
 t_s&=-\tfrac12(e_s+9a_s),
 &
 \sigma&=-2e_s-21a_s,
 &
 t&=t_s\varphi+\pi_7U-\pi_{27}U,
\end{aligned}
\end{equation}
and
\begin{equation}\label{eq:compat-reconstruction}
\begin{aligned}
 h&=*(t-b),\\
 \nu&=\tfrac32B(c)-2A(h),\\
 c'&=c-\tfrac23\nu\wedge\psi,\\
 f&=\sigma-*(h\wedge\varphi).
\end{aligned}
\end{equation}
Then
\[
 h\in\Omega^4(Y),\qquad
 c'\in\Omega^5(Y),\qquad
 \nu\in\Omega^1(Y),\qquad
 f\in\Omega^0(Y),
\]
and the definitions give
\[
 4A(h)-3B(c')=0.
\]
Hence
\begin{equation}\label{eq:compatibility-equation-body}
 ((h,c'),\nu,b,f)
 \in
 K_{\rm tor}\oplus\Lambda^1\oplus\Lambda^3\oplus\Lambda^0.
\end{equation}

Conversely,
\[
 t=b+*h,\qquad
 c=c'+\tfrac23\nu\wedge\psi,\qquad
 \sigma=f+*(h\wedge\varphi),
\]
and
\begin{equation}\label{eq:compat-reconstruction-inverse}
(e_u,e_\beta,e_s)
=
\bigl(
 Jt-\tfrac13\sigma\varphi
     +\tfrac32\iota_{B(c)^\sharp}\psi,\,
 -*c,\,
 3\sigma-2\langle t,\varphi\rangle
\bigr).
\end{equation}
On
\(\Lambda^3_1\oplus\Lambda^0\), the map
\((a_s,e_s)\mapsto(t_s,\sigma)\) is represented by
\[
 \begin{pmatrix}
 -\frac92&-\frac12\\
 -21&-2
 \end{pmatrix},
\]
whose determinant is \(-\frac32\). On
\(\Lambda^3_7\) and \(\Lambda^3_{27}\) it acts by \(1\) and \(-1\),
respectively. Equations
\eqref{eq:compat-reconstruct-old}--\eqref{eq:compat-reconstruction}
therefore define a bundle isomorphism
\[
 G^2\oplus\Lambda^3
 \cong
 K_{\rm tor}\oplus\Lambda^1\oplus\Lambda^3\oplus\Lambda^0.
\]
Both maps are \(G_2\)-equivariant and parallel for the background
Levi--Civita connection. Under the torsion isomorphism of
Appendix~\ref{app:first-variation}, the pair \((h,c')\) corresponds to
an element of \(T^*Y\otimes TY\).

Nothing in this pointwise inversion uses $d\varphi=0$ or the fact that
$\varphi$ is the background form. Thus
\eqref{eq:compat-reconstruct-old}--\eqref{eq:compat-reconstruction-inverse}
hold for every nearby positive three-form after replacing
$(\varphi,\psi,*,\pi_r,A,B,J)$ by their field-dependent counterparts.
We use this field-dependent reconstruction in Section~\ref{sec:resolved-physical}.

Now let
\[
 V=(u,\beta,s,\ldots)\in C_{\rm var}^1,
 \qquad
 \chi=u^{[0]}-2s^{[0]}\varphi.
\]
Applying the bundle isomorphism above to the leading geometric part
of \(D_0V\) gives
\begin{equation}\label{eq:compatibility-body-on-fields}
 ((h,c'),\nu,b,f)
 =
 ((d\chi,d*J\chi),ds^{[0]},d\beta^{[0]},0).
\end{equation}
Thus every element of \(\operatorname{im}D_0\) satisfies
\[
 dh=0,\qquad
 dc'=0,\qquad
 d\nu=0,\qquad
 db=0,\qquad
 f=0.
\]
These are the first geometric compatibility identities for \(D_0\):
they are differential expressions in the equation arguments that
vanish identically on \(\operatorname{im}D_0\).

Define
\begin{equation}\label{eq:compat-first-operator}
\begin{aligned}
 D_1:
 C_{\rm var}^2\oplus\Omega^3(Y)
 &\longrightarrow
 \Omega^5(Y)\oplus\Omega^6(Y)\oplus\Omega^2(Y)
 \oplus\Omega^4(Y)\oplus\Omega^0(Y)\oplus N^3,\\
 D_1(e,b)
 &=
 (dh,dc',d\nu,db,f;\operatorname{pr}_N d_C^2e).
\end{aligned}
\end{equation}
Its target has rank
\[
 28+21+35+1+35=120.
\]

By \eqref{eq:compatibility-body-on-fields}, the first five components of
\(D_1D_0V\) vanish.
If \(e=d_C^1V\), then
\[
 \operatorname{pr}_N d_C^2e
 =
 \operatorname{pr}_N d_C^2d_C^1V
 =
 0.
\]
Therefore
\[
 D_1D_0=0.
\]

\subsection{The compatibility complex}
\label{subsec:compat-complete}

Let $A_{\rm comp}$ have the following spaces of smooth sections,
where $\Omega^k=\Omega^k(Y)$:
\[
\begin{array}{c|l}
p&A_{\rm comp}^p\\ \hline
-1&\Omega^0\\
0&\Gamma(TY)\oplus\Omega^1\\
1&\Omega^3\oplus\Omega^0\oplus\Omega^2\\
2&\Gamma(K_{\rm tor})\oplus\Omega^1\oplus\Omega^3\oplus\Omega^0\\
3&\Omega^5\oplus\Omega^6\oplus\Omega^2\oplus\Omega^4\oplus\Omega^0\\
4&\Omega^6\oplus\Omega^7\oplus\Omega^3\oplus\Omega^5\\
5&\Omega^7\oplus\Omega^4\oplus\Omega^6\\
6&\Omega^5\oplus\Omega^7\\
7&\Omega^6\\
8&\Omega^7.
\end{array}
\]
The differentials are specified in the proof below. Together with $N$,
these spaces give $C_{\rm compat}^p=N^p\oplus A_{\rm comp}^p$ in the
chosen real splitting. Its lower degrees identify with those of
$C_{\rm var}$. The cochain degrees have the following roles:
\begin{center}
\begin{tabular}{cl}
\toprule
Degree & Geometric role\\
\midrule
$-1$ & Scalar gerbe reducibility\\
$0$ & Physical gauge parameters\\
$1$ & Physical fields\\
$2$ & Variational equations and the admissibility equation\\
$3$ & First compatibility identities\\
$4$ & Higher identities and \(N\)-sector identities\\
$5$--$8$ & Higher exterior identities\\
\bottomrule
\end{tabular}
\end{center}

\begin{proposition}[Filtered compatibility operators]
\label{prop:filtered-compatibility}
Let
\[
E_0\xrightarrow{D_0}E_1\xrightarrow{D_1}\cdots
\xrightarrow{D_{r-1}}E_r
\]
be a finite sequence of local differential operators between filtered
finite-rank bundles.  Assume \(D_{i+1}D_i=0\).

Suppose that, for every consecutive pair \(D_i,D_{i+1}\), the polynomial rows of the weighted
principal symbol of \(D_{i+1}\) generate all polynomial row syzygies of
the weighted principal symbol of \(D_i\), with smooth local
multiplier coefficients and multiplier weights bounded below.

Then every local differential operator \(L\) satisfying
\[
LD_i=0
\]
factors locally as
\[
L=MD_{i+1}
\]
for a local differential operator \(M\).

If in addition the corresponding homogeneous weighted-symbol complexes
are exact in specified degrees, then the induced complex on formal
Taylor jets is exact in those degrees.
\end{proposition}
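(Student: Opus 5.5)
The factorization claim is a graded, symbol-level division argument, and I would prove it by induction on the differential order of \(L\), then deduce the jet statement from the same filtration.

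\textbf{Factorization.} Work locally on a trivializing chart, so that \(D_i\), \(D_{i+1}\) and \(L\) are matrices of polynomial differential operators with smooth coefficients. Assign weights to the components of \(E_i\) and \(E_{i+1}\) as in Section~\ref{subsec:variational-elliptic}. Extend the weights to the target of \(L\) so that \(L\) has a well-defined weighted principal symbol \(\sigma(L)\). This symbol is a matrix of homogeneous polynomials in \(\xi\) with smooth coefficients in \(x\).

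Taking the top weighted-symbol part of \(LD_i=0\) gives \(\sigma(L)\sigma(D_i)=0\). The product of weighted principal symbols is the principal symbol of the composition whenever the weights are compatible, so no lower-order cross terms enter at top weight. Hence each row of \(\sigma(L)\) is a polynomial row syzygy of \(\sigma(D_i)\).

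By hypothesis, each such row can be written as \(\sigma(L)=m(x,\xi)\,\sigma(D_{i+1})\). The multiplier \(m\) is polynomial in \(\xi\), has smooth coefficients, and has weights bounded below. Quantize \(m\) to a differential operator \(M_0\) with \(\sigma(M_0)=m\), and set \(L'=L-M_0D_{i+1}\). Then \(L'D_i=LD_i-M_0D_{i+1}D_i=0\), and \(L'\) has strictly smaller weighted order than \(L\).

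Iterate this step. The lower bound on multiplier weights, together with the finite order of \(L\), shows that the weighted order strictly decreases and cannot go below a fixed bound. Therefore the iteration terminates after finitely many steps, either with \(L'=0\) or with \(L'\) of order below every weight, in which case it vanishes identically. Summing the \(M_0\)'s gives \(M\) with \(L=MD_{i+1}\).

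\textbf{Main obstacle.} The delicate point is the \(x\)-dependence. Syzygies must be generated with smooth coefficients uniformly in \(x\), not merely pointwise in \(x\). This is the reason the hypothesis requires smooth local multiplier coefficients, and I would use it exactly as stated. To pass from local factorizations to a local operator on a neighbourhood, use a partition of unity. If the statement were needed globally, note that \(M\) is only required to exist locally, so the partition of unity only needs to glue the operators \(M\); the factorization identity \(L=MD_{i+1}\) is linear in \(M\), so it survives gluing.

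\textbf{Jet exactness.} Filter formal Taylor jets at a point by weighted degree. The associated graded of the jet complex is the homogeneous weighted-symbol complex with polynomial coefficients: \(\xi\) is replaced by derivatives, and the coefficients are frozen at the point.

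If the associated graded is exact in the specified degrees, then a standard spectral-sequence argument gives exactness of the filtered complex. Equivalently, one can solve degree by degree: given a jet cocycle, kill its top graded piece using the symbol exactness, subtract, and induct downward. The filtration is complete and bounded below in each fixed total jet order, so the spectral sequence converges. Hence the induced complex on formal Taylor jets is exact in those degrees.
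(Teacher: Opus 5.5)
Your proposal is correct and is essentially the paper's proof. It uses the same top weighted-symbol division via the syzygy hypothesis and the same subtraction of a quantized multiplier, with termination because a nonzero operator cannot have weighted order below the minimal bundle weight. It then uses the same decreasing weighted Taylor filtration, whose associated graded is the frozen homogeneous symbol complex, solved term by term and summed by completeness. Two remarks: before quantizing, replace the multiplier supplied by the hypothesis by its homogeneous part of the matching weighted degree, as the paper does explicitly; and your partition-of-unity gluing goes beyond the local statement but is harmless.
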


\begin{proof}
Choose local splittings of the bundle filtrations and use the shifts
defining the weighted principal symbols. Give the target bundle of $L$
weight zero. A differential block from weight
$u$ to weight $v$ has weighted order at most $m$ when its ordinary
differential order is at most $m+v-u$. A negative bound forces that block
to vanish. Taking this condition on every block defines the increasing
weighted-order filtration on finite-order local differential operators.
In these shifted filtrations each $D_j$ has weighted order zero, and
weighted principal symbols multiply under composition.

Let $L$ have highest nonzero weighted order $m$ and satisfy $LD_i=0$.
Taking weighted order $m$ in this identity gives
\[
 \sigma_m(L)\sigma(D_i)=0.
\]
Thus each row of $\sigma_m(L)$ is a polynomial row syzygy of
$\sigma(D_i)$. By hypothesis it is a polynomial combination of the rows
of $\sigma(D_{i+1})$, with smooth local coefficients. Taking the
homogeneous terms of the required weights gives a multiplier symbol $M_m$
such that
\[
 \sigma_m(L)=M_m\sigma(D_{i+1}).
\]
Choose a local differential operator $\widehat M_m$ with weighted
principal symbol $M_m$, by replacing each polynomial monomial by the
corresponding coordinate derivative with its coefficient on the left.
Then
\[
 L-\widehat M_mD_{i+1}
\]
still annihilates $D_i$, because $D_{i+1}D_i=0$, and has weighted order
strictly less than $m$. Repeat this construction on the remainder.
The orders are integers. The assumed lower bound on multiplier weights
and the finitely many bundle weights give a lower bound for the possible
nonzero stages. Below that bound a nonzero highest symbol would require
a multiplier of forbidden weight, contrary to the syzygy hypothesis.
The process therefore stops after finitely many steps. Summing the
operators $\widehat M_m$ gives a finite-order local differential operator
$M$ with $L=MD_{i+1}$.

For formal Taylor jets at a point, choose coordinates centred there.
Give a Taylor monomial of ordinary degree $k$ in a bundle component of
weight $u$ the weighted degree $k+u$. Filter formal sections decreasingly
by requiring every monomial to have weighted degree at least $q$.
The differential-order bounds above imply that each $D_j$ preserves this
filtration. On its associated graded, the differential is the
constant-coefficient operator specified by the weighted principal symbol
at the point. These are the corresponding homogeneous weighted-symbol
complexes in the hypothesis.

Let a closed formal section in one of the specified degrees have lowest
nonzero weighted Taylor term of degree $q$. Its leading term is closed
in the homogeneous complex. Exactness there gives a homogeneous
primitive of that term. Lift this primitive to a polynomial section.
Subtracting its differential leaves a closed formal section whose
weighted vanishing order is strictly greater than $q$. Repeating the
construction gives successive primitives of strictly increasing weighted
degree. Since the bundle weights are finite, only finitely many of these
terms affect any fixed Taylor coefficient. The formal Taylor filtration
is complete, so their sum defines a formal section. The differential is
continuous for this filtration, and its value on that sum is the original
closed section. This gives the required formal primitive.
\end{proof}

\begin{theorem}[Linear compatibility complex]
\label{thm:compatibility-complex}
The operator \eqref{eq:combined-admissibility-operator} extends to a
real local compatibility complex $C_{\rm compat}$ concentrated in
degrees $-1,\ldots,8$. Its scalar reducibility, gauge parameters and
physical fields are unchanged, and its degree-two term is
$C_{\rm var}^2\oplus\Omega^3(Y)$. Its degree-one cocycles are precisely
\[
 d_C^1V=0,\qquad d_Y\beta^{[0]}=0.
\]
For each consecutive pair $D_i,D_{i+1}$ starting with the combined
equation operator $D_0$, every local linear differential operator $L$
with $LD_i=0$
has the form $L=M D_{i+1}$ locally, for a differential operator $M$.
The equation and identity tail is exact on formal Taylor jets.
\end{theorem}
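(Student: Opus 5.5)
The plan is to build the tail of the complex explicitly on top of $D_0$ and $D_1$ of \eqref{eq:combined-admissibility-operator} and \eqref{eq:compat-first-operator}, and then verify the factorization property with Proposition~\ref{prop:filtered-compatibility}. In degrees $-1,0,1$ we keep $\mathsf R$ and $G$ from \eqref{eq:unary-differential}, so the lower terms are unchanged. In degree one the differential is $D_0=(d_C^1,d_Y\beta^{[0]})$. Since $D_0G=(d_C^1G,d_Y\beta^{[0]}(G\gamma))$, the composite vanishes: the second component is zero because the $\beta$-component of the gauge action at leading order is $d\Lambda$ (plus Lie derivative of the closed background $B_0^{[0]}$), which is closed. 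The description of degree-one cocycles is then immediate.

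In degree two we pass to the coordinates $((h,c'),\nu,b,f)\oplus N^2$ given by the bundle isomorphism \eqref{eq:compat-reconstruct-old}--\eqref{eq:compat-reconstruction-inverse}, and use $D_1$. For degrees $\ge 3$, the $A_{\rm comp}$ part is the table in the statement. It is the tensor product of de Rham tails: $h\in\Omega^4$ starts $\Omega^5\to\Omega^6\to\Omega^7$; $c'\in\Omega^5$ (modulo the torsion constraint) gives $\Omega^6\to\Omega^7$; $\nu,b$ give de Rham tails from $\Omega^2$ and $\Omega^4$; and $f$ is an algebraic (identity) equation, with target $\Omega^0$ in degree three. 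The $N$-sector continues with $\operatorname{pr}_N d_C^2,d_C^3$ followed by the Noether-identity rows. I would also have to check that $D_2$ contains the cross terms coupling the constraint $4A(h)-3B(c')=0$ to $dh,dc'$. These come from differentiating the torsion constraint, which yields a relation among $A(dh)$ and $B(dc')$ valued in $\Omega^2$ or $\Omega^1$; this accounts for the $\Omega^3$ and $\Omega^5$ entries in degree four. Each $D_{i+1}D_i=0$ follows from $d^2=0$, from $d_C^2=0$, and from parallelism of $A,B,J$ at the torsion-free background.

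For the factorization claim, I would apply Proposition~\ref{prop:filtered-compatibility} pair by pair. I would use the weights of Section~\ref{subsec:variational-elliptic} on the $N$-sector and weight $p$ on the $A_{\rm comp}$ forms. Then every block has weighted order zero, and the weighted principal symbol splits into three pieces. The first is the Koszul complex of $\xi\wedge$ on the de Rham tails, which is exact at nonzero $\xi$; its row syzygies are generated by the next $\xi\wedge$ because the polynomial Koszul complex of $\xi_1,\dots,\xi_7$ is a free resolution. The second is the $N$-sector symbol, which is exact by Proposition~\ref{prop:unary-ellipticity} applied to the subquotients $\mathsf A,\mathsf G_1$. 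The third is the triangular coupling through $\operatorname{pr}_N d_C^2$ and the torsion constraint. Exactness of the total symbol then follows from the filtration long exact sequences, exactly as in the proof of Proposition~\ref{prop:unary-ellipticity}. The same homogeneous exactness gives the formal Taylor-jet exactness of the equation and identity tail, by the second half of Proposition~\ref{prop:filtered-compatibility}.

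The main obstacle is the syzygy hypothesis at the junction $D_0\to D_1$ in degree two. There the geometric rows of $d_C^1$ have been traded for $(d\chi,d*J\chi,ds,d\beta)$ via \eqref{eq:compatibility-body-on-fields}, and one must show that every polynomial row syzygy of $\sigma(D_0)$ is generated by the $120$ rows of $\sigma(D_1)$, with no missing identity. I would attempt a rank count: the symbol of $D_0$ from the field bundle, modulo the gauge image, should have image of rank equal to $\operatorname{rank}(G^2\oplus\Lambda^3)$ minus the rank of the image of $\sigma(D_1)$. Then I would use $G_2$-representation theory at a fixed $\xi$ (stabilizer $SU(3)$) to verify this equality, and upgrade pointwise exactness to polynomial generation via graded Nakayama/Hilbert-series comparison with the Koszul resolution.
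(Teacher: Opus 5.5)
Your skeleton is the paper's: explicit exterior tails on $((h,c'),\nu,b,f)$, an inherited $N$-sector, then Proposition~\ref{prop:filtered-compatibility}. The gap is the step that carries the content of the theorem, namely polynomial generation of row syzygies at every stage. The mechanism you propose cannot supply it as stated.

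Exactness of the weighted symbol at nonzero \emph{real} covectors only gives equality of generic ranks. This holds whether it comes from your $SU(3)$ rank count or from Proposition~\ref{prop:unary-ellipticity} for the $N$-sector. A submodule of $\ker_S\sigma(D_i)^T$ with the same generic rank can still be proper, the quotient being torsion supported at the origin or at complex covectors. The symbols do degenerate off the real locus: real exactness in degree two rests on $|\zeta|^2b'=0$. Graded Nakayama presupposes known generators of the syzygy module, and a Hilbert-series comparison presupposes its Hilbert series, so neither closes the gap. For the same reason, formal-jet exactness does not follow from ``the same homogeneous exactness''. The second half of the proposition needs exact homogeneous Taylor complexes, which by the divided-power pairing means exact polynomial row modules.

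The missing idea is the saturation argument of Appendix~\ref{app:compat-polynomial-generation} over $S=\mathbb C[\zeta_1,\ldots,\zeta_7]$.
\begin{itemize}
\item The identity $*((\iota_z\varphi)\wedge(\iota_z\varphi))=2z^\flat\wedge\iota_z\varphi$ shows that $\iota_z\varphi$ is indecomposable for every nonzero \emph{complex} $z$.
\item This gives constant rank away from the origin: $13$ for $G_\zeta$ and $21$ for $U_\zeta$.
\item Explicit finite free resolutions and the depth lemma then show that $\operatorname{im}_SG_\zeta$ and $\operatorname{im}_SU_\zeta^T$ are saturated.
\item Only then does the real generic rank count give $\ker_SH_\zeta^T=\operatorname{im}_SG_\zeta$ and $\ker_ST_\zeta^T=\operatorname{im}_SU_\zeta^T$.
\item The full mixed symbol, with its cubic Hull block, is then reduced to these blocks triangularly using $G_\zeta^\dagger N_\zeta=0$.
\end{itemize}

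Relatedly, the torsion block is not a Koszul complex. $U_\zeta$ is $\zeta\wedge$ restricted to the rank-$49$ subbundle $K_{\rm tor}$. So a row $(a,b)$ on $\Lambda^5\oplus\Lambda^6$ annihilates it exactly when $(\iota_\zeta a,\iota_\zeta b)=(v\wedge\varphi,-v\wedge\psi)$ for some one-form $v$. You must first show $v=0$, again by indecomposability over $\mathbb C$, before Koszul exactness applies. The junction statement $\ker_ST_\zeta^T=\operatorname{im}_SU_\zeta^T$ is not a Koszul fact at all. There are also no constraint cross terms in $D_2$: the $\Omega^3$ and $\Omega^5$ entries of $A^4_{\rm comp}$ are the continued $\nu$- and $b$-tails $V_3,B_5$, which you also list separately.

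Your weights need two corrections.
\begin{itemize}
\item Use $p-1$ on $A_{\rm comp}$, consistent with $\mathsf G_0$ in degrees $\le1$, so that every block has weighted order zero.
\item Lower the weight of $F$ by one. Otherwise the order-zero arrow $f\mapsto F$ has zero weighted symbol, so the symbol sequence is not exact at $f$. The syzygy $e_f$ of $\sigma(D_0)$, whose $f$-row vanishes, is then not generated. The lowered weight also accommodates the $dF$ term of $j_3$.
\end{itemize}

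Finally, the coupling to the $N$-sector is governed by the chain map $j$. The identity $D_2D_1=0$ on $N^4$ uses $\theta_3=0$ in the absolute chart. In general it needs the coupling term $\theta_3j_3$ and the factorization $G_{\rm geo}^\dagger j_2=j_3(dh,dc',d\nu,db,f)$. The same factorization is what shows that the leading parts of the variational Noether syzygies of $\sigma(D_0)$ are generated by the new geometric rows.
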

We call \(C_{\rm compat}\) the compatibility resolution of the linearized physical problem.
The factorization property is the standard notion of a universal
compatibility operator \cite{KhavkineCompatibility}. We prove it for the
mixed operator, including the terms coming from the induced Hull
connection.

\begin{proof}
On the spaces $A_{\rm comp}^p$ defined above, set
\begin{equation}\label{eq:compat-geometric-differentials}
\begin{aligned}
a^{-1}c&=(0,dc),&a^0(\xi,\Lambda)&=(d\iota_\xi\varphi,0,d\Lambda),\\
a^1(\chi,s,\beta)&=((d\chi,d*J\chi),ds,d\beta,0),\\
a^2((h,c'),\nu,b,f)&=(dh,dc',d\nu,db,f),\\
a^3(H,C,V,B,F)&=(dH,dC,dV,dB),\\
a^4(H,C,V,B)&=(dH,dV,dB),\\
a^5(H,V,B)&=(dV,dB),&a^6(V,B)&=dV,\\
a^7V&=dV,&a^8&=0.
\end{aligned}
\end{equation}
The intrinsic-torsion identity
$4A(d\chi)-3B(d*J\chi)=0$ proves that $a^1$ has the stated
target. At the lower end,
$d*J(d\iota_\xi\varphi)=d\mathcal L_\xi\psi=0$.
The remaining compositions vanish by $d^2=0$, $d\varphi=d\psi=0$,
and the algebraic pair $f\mapsto F$. That pair, with contraction
$F\mapsto f$, remains explicitly present in the equation bundle and
the rank count. The missing component at each upper stage is a
top-degree form whose next exterior derivative is zero.

Define $j:A_{\rm comp}\to G$ to be the identity in degrees $-1,0$,
\(j_1(\chi,s,\beta)=(\chi+2s\varphi,\beta,s)\) in degree one,
and the inverse reconstruction \eqref{eq:compat-reconstruction-inverse} in degree two. Its degree-three and
-degree-four components are
\[
\begin{aligned}
(j_3)_\xi&=-4A(B_4)+3B(H_5)+3B(V_2\wedge\varphi)+dF,\\
(j_3)_\Lambda&=-*(C_6+\tfrac23V_2\wedge\psi),\\
j_4(H_6,C_7,V_3,B_5)&=-*(C_7+\tfrac23V_3\wedge\psi).
\end{aligned}
\]
Set $j_p=0$ for $p>4$. The identities
\eqref{eq:compat-noether-factorization} and
\eqref{eq:compat-j-four} show that $j$ is a chain map.
In the real splitting $C_{\rm var}=N\oplus G$, write
\[
d_C(n,g)=(d_Nn+\theta_pg,d_Gg),\qquad
\theta_p=\operatorname{pr}_Nd_C\operatorname{incl}_G,
\]
then the full resolved differential and comparison are
\begin{equation}\label{eq:compat-full-extension}
\boxed{\quad
C_{\rm compat}^p=N^p\oplus A_{\rm comp}^p,\qquad
D^p(n,y)=(d_Nn+\theta_pj_py,a^py),\qquad
\mathcal J^p(n,y)=(n,j_py).
\quad}
\end{equation}
The operators $d_N$ and $\theta_p$ are the corresponding components of
$d_C$ in this splitting.
Here $D^p$ is the differential leaving cochain degree $p$, while
$D_i$ counts stages from the combined equation operator. Thus
$D^1=D_0=(d_C^1,d\beta^{[0]})$ and $D^2=D_1$, the operator in
\eqref{eq:compat-first-operator}.

The differential identity follows from
\[
d_N\theta_p+\theta_{p+1}d_G=0,\qquad d_Gj_p=j_{p+1}a^p.
\]
Substituting these identities into
\eqref{eq:compat-full-extension} proves $D^{p+1}D^p=0$.

\paragraph{Graph description and changes of splitting.}
Without a chosen real complement the same section complex is
\begin{equation}\label{eq:compat-intrinsic-graph}
\mathscr C_{\rm intr}^p=
\{(x,y):\pi_px=j_py\},\qquad D(x,y)=(d_Cx,a y).
\end{equation}
For example its first identity space consists of
$(z,y_3)\in C_{\rm var}^3\oplus A_{\rm comp}^3$ with
$\pi_3z=j_3y_3$, and the first operator is
\[
(e,b)\longmapsto
\bigl(d_C^2e,a^2\mathcal R(\pi_2e,b)\bigr),
\]
where $\mathcal R$ is \eqref{eq:compat-reconstruction} together
with \eqref{eq:compat-reconstruct-old}. The graph relation follows
from \eqref{eq:compat-noether-factorization}. Since $j_3$ contains
$dF$, this is a differential graph. The chosen real section $s_p:G^p\to C^p_{\rm var}$ gives local
coordinates $(x,y)\leftrightarrow(x-s_pj_py,y)$.
Let $\iota_N:N^p\hookrightarrow C^p_{\rm var}$ be the inclusion.
If $s'_p=s_p+\iota_Nb_p$ for a bundle map $b_p:G^p\to N^p$, then
\[
\theta'_p=\theta_p+d_Nb_p-b_{p+1}d_G,
\qquad U_p(n,y)=(n-b_pj_py,y),
\]
and the inverse has the opposite sign. Direct substitution gives
$D'{}^pU_p=U_{p+1}D^p$ and $\mathcal J_{s'}U=\mathcal J_s$.
These are local differential chain isomorphisms that act identically in cochain degrees $-1,0,1$ and preserve the combined equation operator.

The full polynomial symbol and its reconstructed geometric block are
\eqref{eq:compat-source-8} and \eqref{eq:compat-source-19}. The first
generating identities are \eqref{eq:compat-source-20}, together with the
\(N\)-sector geometric and gauge identities. Their matrices and weights
are recorded in Appendix~\ref{app:compat-universality}.

\paragraph{Polynomial generation of symbol syzygies.}
Appendix~\ref{app:compat-polynomial-generation} proves that the rows of \(\sigma(D_1)\) generate all
polynomial row syzygies of \(\sigma(D_0)\). It also proves the
corresponding generation statements for the torsion, scalar-gradient,
two-form, \(N\)-sector geometric, and gauge blocks.

\paragraph{Extension to the mixed coefficient complex.}
The original complex has the exact sequence of real complexes
\begin{equation}\label{eq:compat-source-41}
0\to N^\bullet\to C_{\rm var}^\bullet\overset{\pi}{\longrightarrow}G^\bullet\to0.
\end{equation}

The mixed differential and comparison are \eqref{eq:compat-full-extension}. All \(N\)-sector curvature terms are given in \eqref{eq:linear-noether}. The splitting is real, with the original quotient and ideal modules preserved. In the absolute parameter chart \(\theta_3=0\). The final inherited arrow is the adjoint of scalar gerbe reducibility, \(-\mathsf R^\dagger\), namely \((\operatorname{div}\eta_--\operatorname{div}\eta_+)/\sqrt2\), and the inherited variational part ends in degree four. The identities
\[
d_N\theta_p+\theta_{p+1}d_G=0,\qquad d_Gj_p=j_{p+1}a^p
\]
give $D^{p+1}D^p=0$ for the mixed extension, with the background coefficients allowed to vary.

The torsion-block syzygies are exactly \eqref{eq:compat-source-24}. The scalar-gradient and
two-form blocks have their full finite Koszul sequences. The algebraic
pair has no remaining identity. The \(N\)-sector geometric and gauge
blocks have precisely the scalar reducibility and gauge-divergence
syzygies of \(C_{\rm var}\), proved in Appendix~\ref{app:compat-polynomial-generation}.

From the first compatibility map onward, the full weighted leading symbols split into these blocks. The curvature contributions of \(\theta_2j_2\) are below the weights, and \(\theta_3=0\). The exterior, torsion, algebraic, and variational symbol sequences therefore give all polynomial syzygies of \(\sigma(D_i)\), generated by \(\sigma(D_{i+1})\), at each stage.

The complex terminates because the torsion tail ends at degree five, the two-form tail at degree six, the scalar-gradient tail at degree eight, and \(N\) at degree four. These terminal degrees exhaust the polynomial syzygies established above.

At each fixed homogeneous degree, the polynomial generators define
a surjective map of constant-rank bundles from the parallel \(G_2\)
algebra. A local right inverse supplies smooth multiplier coefficients.
The polynomial calculations therefore verify the hypotheses of
Proposition~\ref{prop:filtered-compatibility} at every stage. Each
\(D_{i+1}\) is a local universal compatibility operator for \(D_i\).
The homogeneous Taylor symbol complexes are exact in the equation and
identity degrees, so the same proposition gives formal-jet exactness
there. In particular, every local differential operator $L$ with $LD_0=0$ factors as
\begin{equation}\label{eq:compat-source-39}
 \boxed{L=MD_1.}
\end{equation}

\end{proof}

The terminal degrees have a geometric origin.  The coupled leading-flux, torsion and
dilaton compatibility system is resolved by the weighted exterior
differentials acting on \(h,c',\nu,b\), together with the algebraic
\(f\)-pair.  After the first compatibility map, its higher
identities are therefore exterior tails.  In dimension seven these tails
terminate when their horizontal form degree reaches seven, while the
\(N\)-sector already terminates in degree four.  In particular,
cochain degree eight is the last exterior identity in the \(\nu\)-sector.

\subsection{Ellipticity of the compatibility resolution}
\label{subsec:compat-elliptic}

\begin{theorem}[Resolved mixed-order ellipticity]
\label{thm:resolved-ellipticity}
At a torsion-free standard embedding with constant background dilaton,
the weighted principal-symbol sequence of $C_{\rm compat}$ is exact
at every nonzero real covector. On closed $Y$, order
reduction gives a Fredholm complex, and all smooth real cohomology
groups
\[
 H^p_{\rm res}:=H^p(C_{\rm compat})
\]
are finite-dimensional.
\end{theorem}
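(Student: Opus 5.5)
The plan is to reduce the statement to Proposition~\ref{prop:mixed-order-hodge}. Its hypotheses are a finite complex on closed $Y$, integer weights $s_{p,a}$ bounding the block orders, and exactness of the weighted principal-symbol sequence at every nonzero real covector. Finite-dimensionality, smoothness of cohomology and the Fredholm property after order reduction then follow directly. So the work is to choose weights on $C^p_{\rm compat}=N^p\oplus A^p_{\rm comp}$ and prove symbol exactness. On the $N$-summand I use the mixed weights of Section~\ref{subsec:variational-elliptic}: $p$ on $\mathsf A$ and $p+1$ on $\mathsf G_1$. On $A^p_{\rm comp}$ I use the $\mathsf G_0$ weight $p-1$, so every exterior differential $a^p$ in \eqref{eq:compat-geometric-differentials} has required order one. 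There is one exception: the algebraic pair $f\mapsto F$ has order zero, so I give $F$ the weight of $f$ rather than $p$. This is harmless, since the proposition allows arbitrary component weights. I then check the block order bounds on the coupling term $\theta_pj_p$. Here $j_3$ contains $dF$ and $\theta_p$ has order at most three into $\mathsf G_1$ (by \eqref{eq:main-unary-order-bounds}), so the weight differences dominate. By the lemma on curvature coefficients, $\theta_2j_2$ contributes only below the weights, and $\theta_3=0$ in the absolute parameter chart.

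For exactness, I filter the symbol complex by the triangular structure $D^p(n,y)=(d_Nn+\theta_pj_py,a^py)$. This gives a short exact sequence of symbol complexes $0\to\sigma(N^\bullet)\to\sigma(C_{\rm compat})\to\sigma(A_{\rm comp})\to0$, and by the long exact sequence it suffices to prove exactness of each end. For $N^\bullet$, the symbol complex is the subcomplex $\ker\epsilon$ of the variational symbol complex, which is exact by the first two steps of the proof of Proposition~\ref{prop:unary-ellipticity} (exactness of $\mathsf G_1$ and $\mathsf A$, then of $\ker\epsilon$). For $A_{\rm comp}$, the weighted symbol of $a^\bullet$ splits, at the level of leading symbols, into independent blocks, each handled separately:
\begin{itemize}
\item the pair $f\leftrightarrow F$, an isomorphism and hence acyclic;
\item the scalar-gradient tail $c\mapsto dc$ feeding $\nu$, $d\nu,\dots$, which is the de Rham symbol complex $\xi\wedge$ and so exact at $\xi\ne0$;
\item the two-form tail $\Lambda\mapsto d\Lambda$, $\beta\mapsto d\beta$, $b\mapsto db,\dots$, likewise a shifted Koszul complex of $\xi\wedge$;
\item the torsion block $(\xi,\chi)\mapsto(d\iota_\xi\varphi,(d\chi,d*J\chi))\mapsto(dh,dc')\mapsto\cdots$ ending in degree five.
\end{itemize}
The torsion block is the genuine $G_2$ part. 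It should be exactly the symbol complex of the diffeomorphism action on $G_2$ structures composed with the intrinsic torsion map. I would prove its exactness by a $G_2$-equivariant dimension count at $\xi=e_1$: compute ranks of $\xi\wedge$ restricted to $K_{\rm tor}$ and to the $\mathcal L_\xi\varphi$ image, and show the alternating sum of ranks vanishes together with injectivity at the left end. Alternatively, I can reuse the polynomial-syzygy computation of Appendix~\ref{app:compat-polynomial-generation}, since generation of syzygies at a point gives exactness of the homogeneous symbol complex in the equation and identity degrees. Exactness in degrees $-1,0,1$ is unchanged from $C_{\rm var}$ via the identification of low degrees with the variational complex together with the comparison $\mathcal J$.

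The main obstacle is exactness of the torsion block in degrees one and two at $\xi\neq0$. This is the step that matches the rank-$49$ bundle $K_{\rm tor}$ against $d\chi,d*J\chi$ modulo the image of $\iota_\xi\varphi$-type gauge terms. It is also the step where an error in the constraint $4A(h)-3B(c')=0$ would break exactness. I would first verify it by the representation-theoretic rank count: $\Lambda^3=1+7+27$, the gauge image is $7$, so the symbol kernel image in $K_{\rm tor}$ has rank $28$, and the identities in degree three must cut $49-28=21$. Once that holds, the dual adjoint sequence is exact by the transpose argument of Proposition~\ref{prop:unary-ellipticity}, and Proposition~\ref{prop:mixed-order-hodge} concludes the proof.
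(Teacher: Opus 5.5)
Your plan is the paper's proof: the same component weights (with $F$ given weight $1$), the same block-triangular short exact sequence $0\to\sigma(N)\to\sigma(C_{\rm compat})\to\sigma(A_{\rm comp})\to0$, with the $N$-sector exact by the variational coefficient flag and $A_{\rm comp}$ split into the torsion block, two exterior tails and the algebraic pair, followed by Proposition~\ref{prop:mixed-order-hodge}. Two small slips need fixing. First, $c\mapsto dc$ starts the $\Lambda\to\beta\to b\to B$ tail (ending in degree six) and $s\mapsto ds$ starts the $\nu\to V$ tail (ending in degree eight); as you wrote them, the $\Lambda$-tail begins at $\Lambda^1$, where it is not exact, and $s$ is left unassigned. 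Second, for the torsion block a vanishing Euler characteristic plus injectivity at the left end does not imply exactness; what is needed is the degreewise rank list $(7,28,21,7,1)$, which is what your last paragraph (and the paper's appendix) actually checks.
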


\begin{proof}
The weights and block symbols are displayed in
Appendix~\ref{app:compat-symbol-proof}. The exterior sequences,
torsion block, and algebraic pair have the exact symbols established
in the compatibility proof above.
With respect to the extension
\[
0\longrightarrow N^\bullet
\longrightarrow C_{\rm compat}^\bullet
\longrightarrow A_{\rm comp}^\bullet
\longrightarrow0,
\]
the differential is block triangular. Its off-diagonal component is
the coupling induced by the variation of the Hull connection.
Consequently the weighted symbols form a short exact sequence of
complexes,
\[
0\longrightarrow \sigma_\xi(N)
\longrightarrow \sigma_\xi(C_{\rm compat})
\longrightarrow \sigma_\xi(A_{\rm comp})
\longrightarrow0.
\]
Both outer complexes are exact. Subtract a lift of a primitive of the projected closed symbol vector. The remainder lies in \(N\), where exactness of the variational coefficient flag gives a primitive. Hence the middle symbol complex is exact in every degree.

Therefore \(C_{\rm compat}\) is \emph{mixed-order elliptic}.

The component weights satisfy the order bounds of
Proposition~\ref{prop:mixed-order-hodge}, and their symbol sequence is
exact. That proposition gives Fredholmness and finite-dimensional
smooth cohomology on closed \(Y\).

Hence
\begin{equation}\label{eq:compat-source-46}
\boxed{\dim_{\mathbb R}H^p(C_{\rm compat})<\infty
\quad\text{for every }p.}
\end{equation}

\end{proof}
\subsection{Cohomological comparison}
\label{subsec:compat-cohomological}
The comparison $\mathcal J:C_{\rm compat}\to C_{\rm var}$ of
\eqref{eq:compat-full-extension} is the identity on the original lower
complex and the projection $(e,b)\mapsto e$ in degree two. In higher degrees its components recombine the added identities
into the Noether and scalar-gerbe identities of $C_{\rm var}$. Let
$K_{\mathcal J}=\ker\mathcal J$.
This kernel measures the derived information of the resolved physical
problem that is forgotten on passage to the variational
critical theory. Its higher terms include the relations carried by
the leading-flux and torsion equations, even where the degree-zero
Artin solution loci agree.
Subscripts on de Rham cohomology below refer to the $G_2$ types of
harmonic representatives for the fixed background metric.

\begin{proposition}[Tangent and equation cohomology]
\label{prop:compatibility-cohomology}
On the compact background,
\begin{equation}\label{eq:compat-tangent-identification}
 H^1_{\rm res}\simeq H^1_{\rm var}.
\end{equation}
The comparison in degree two fits into the exact sequence
\begin{equation}\label{eq:compat-cohomology}
 \boxed{\quad
 0\longrightarrow K_{\rm comp}
 \longrightarrow H^2_{\rm res}
 \longrightarrow H^2_{\rm var}\longrightarrow0,
 \qquad K_{\rm comp}:=H^3_{\rm dR}(Y)_{7\oplus27}.
 \quad}
\end{equation}

\end{proposition}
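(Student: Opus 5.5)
The comparison \(\mathcal J:C_{\rm compat}\to C_{\rm var}\) of \eqref{eq:compat-full-extension} is surjective in every degree. It is the identity in degrees \(\le1\), the projection \((e,b)\mapsto e\) in degree two, and \((n,y)\mapsto(n,j_py)\) in degrees three and four, where \(j_3\) and \(j_4\) are surjective onto \(G^3\) and \(G^4\) by their explicit formulas. Hence there is a short exact sequence of complexes
\[
0\longrightarrow K_{\mathcal J}\longrightarrow C_{\rm compat}\longrightarrow C_{\rm var}\longrightarrow0,
\]
and I would read both statements off its long exact sequence. The kernel is \(K^p_{\mathcal J}=0\) for \(p\le1\), \(K^2_{\mathcal J}=\Omega^3(Y)\) (the \(b\)-slot), \(K^p_{\mathcal J}=\ker j_p\subset A^p_{\rm comp}\) for \(p=3,4\), and \(K^p_{\mathcal J}=A^p_{\rm comp}\) for \(p\ge5\). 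In particular \(H^1(K_{\mathcal J})=0\), so \(H^1_{\rm res}\to H^1_{\rm var}\) is injective. Surjectivity in degree one is the expected difficulty.

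\textbf{Degree one.} A variational cocycle \(V\) satisfies \(d_C^1V=0\). I need to show that \(\beta^{[0]}\) can be changed, modulo \(\operatorname{im}d_C^0\), to satisfy \(d\beta^{[0]}=0\). I would argue with the reconstruction \eqref{eq:compatibility-body-on-fields}. The cocycle condition says the reconstructed element \(((d\chi,d*J\chi),ds,d\beta^{[0]},0)\) comes from \(e=0\). By \eqref{eq:compat-reconstruction} with \(e=0\), we have \(t=0\), so \(h=-*b\), and \(f=0\) forces \(\sigma=0\), hence \(h\wedge\varphi=0\). Thus \(d\chi=-*d\beta^{[0]}\), \(ds=\tfrac32B(c)-2A(h)\), and so on. In particular \(d\beta^{[0]}\) is exact and \(*d\beta^{[0]}=-d\chi\) is exact, so \(d\beta^{[0]}\) is also coclosed (\(d*d\beta^{[0]}=0\)). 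Lemma~\ref{lem:weighted-exact-coclosed} with constant weight then gives \(d\beta^{[0]}=0\). This is the linearized version of the proof of Theorem~\ref{thm:automatic-admissibility}, and I would present it that way: linearize \eqref{eq:automatic-euler-B}--\eqref{eq:automatic-euler-phi} and reproduce \(*b=-D_2\phi\) at first order. Hence every variational \(1\)-cocycle is already a resolved cocycle, and \(H^1_{\rm res}\simeq H^1_{\rm var}\).

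\textbf{Degree two.} Injectivity of \(H^1_{\rm res}\to H^1_{\rm var}\) together with the surjectivity just proved makes the connecting map \(H^1_{\rm var}\to H^2(K_{\mathcal J})\) zero. So the sequence begins \(0\to H^2(K_{\mathcal J})\to H^2_{\rm res}\to H^2_{\rm var}\). Two things remain. First, I must identify \(H^2(K_{\mathcal J})\). A class is represented by \(b\in\Omega^3\) with \(D^2(0,b)=0\), modulo \(D^1\) of nothing, since \(K^1=0\). Reading off \eqref{eq:compat-reconstruction} with \(e=0\), the conditions are \(db=0\), \(d*b=0\) (from \(dh=0\)), and \(f=-*(h\wedge\varphi)=\pm\langle b,\varphi\rangle=0\). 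The remaining \(c'\) and \(\nu\) components are algebraic in \(\pi_7b\), so they must also be checked. Together with the \(\operatorname{pr}_Nd_C^2\) part, which vanishes for \(e=0\), this gives harmonic three-forms with vanishing \(\Lambda^3_1\) component. On a torsion-free \(G_2\) manifold the type projections of harmonic forms are harmonic, so \(H^2(K_{\mathcal J})\cong H^3_{\rm dR}(Y)_{7\oplus27}\). Second, I must show surjectivity \(H^2_{\rm res}\to H^2_{\rm var}\), which is the same as vanishing of the connecting map \(H^2_{\rm var}\to H^3(K_{\mathcal J})\). Given a variational cocycle \(e\), I would look for \(b\) with \(D^2(e,b)=0\). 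Since \(a^2\mathcal R(\pi_2e,b)\) is affine in \(b\) with linear part \(b\mapsto(d*b,\dots,db,\pm\langle b,\varphi\rangle)\), I would solve for \(b\) by Hodge theory.

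\textbf{Main obstacle.} The hardest step is this solvability in degree two: the obstruction lies in \(H^3(K_{\mathcal J})\), and I must show it vanishes. My first attempt would be to choose \(b\) so that \(t=0\), using the freedom \(e\mapsto e+d_C^1V\) to put \(e\) in harmonic gauge via the retract of Proposition~\ref{prop:mixed-order-hodge} and Corollary~\ref{cor:unary-fredholm}. I would then check that the obstruction \(j_3\)-component vanishes because \(d_C^2e=0\) already forces the Noether combination \eqref{eq:compat-noether-factorization} to hold. If that fails, I would compute \(H^3(K_{\mathcal J})\) directly as exterior cohomology and show that the image of the connecting map pairs to zero against it by Stokes.
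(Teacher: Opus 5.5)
Your reduction to the long exact sequence of $0\to K_{\mathcal J}\to C_{\rm compat}\to C_{\rm var}\to 0$ agrees with the paper. So does your degree-one argument: $e=0$ forces $t=0$, so $d\beta^{[0]}=-*d\chi$ is exact and coclosed, hence zero. Your identification of $H^2(K_{\mathcal J})$ also agrees with the paper.

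The $\nu,c'$ conditions you defer do close. $A(*b)$ is a fixed multiple of the image of the harmonic form $\pi_7b$ under the parallel intertwiner $\Lambda^3_7\to T^*Y$, which commutes with the Hodge Laplacian. So $A(*b)$ is a harmonic, hence closed, one-form (the paper even gets parallel via Bochner), and $dc'=-\tfrac43\,dA(*b)\wedge\psi$.

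The genuine gap is the right end of the degree-two sequence, $\delta_2=0$. You correctly locate the obstruction in $H^3(K_{\mathcal J})$, but you do not show it vanishes.

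Your first strategy cannot work as stated. The torsion $t$ is determined by $e$ alone, and replacing $e$ by $e+d_C^1V$ does not change $\delta_2[e]$. Moreover, $d_C^2e=0$ together with \eqref{eq:compat-noether-factorization} only shows that $D^2(e,0)$ lies in $K^3_{\mathcal J}$. That makes it a kernel \emph{cocycle}, not a kernel boundary.

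Your Stokes fallback only shows that a representative with exact components is $L^2$-orthogonal to harmonic forms. To conclude, you need that classes in $H^3(K_{\mathcal J})$ are detected by componentwise harmonic projection. This is not formal: $K^3_{\mathcal J}$ is cut out by the constraint $\mathcal L(H,V,B)=0$, and the differential out of $K^2_{\mathcal J}$ is the non-standard map $b\mapsto(-d*b,\,2dA(*b),\,db)$.

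The missing argument goes as follows.

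First, lift $e$ with $b=0$. Then $\delta_2[e]$ is represented by $(dh_0,dc_0',d\nu_0,0,f_0)$.

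Second, since $f_0$ is not exact, split off the contractible algebraic pair $f\mapsto F$ by $\widetilde H=H+\tfrac17dF\wedge\psi$ and $\widetilde B=B-\tfrac17dF\wedge\varphi$. In the reduced complex $K'$ the class becomes $(dh_0+\tfrac17df_0\wedge\psi,\ d\nu_0,\ -\tfrac17df_0\wedge\varphi)$, and every component is exact.

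Third, prove that a closed triple $(H,V,B)\in\ker\mathcal L$ with zero harmonic parts is a boundary:
\begin{itemize}
\item Put $b=d^\dagger G_\Delta B-*d^\dagger G_\Delta H$, so $db=B$ and $d*b=-H$. Set $W=V-2dA(*b)$ and $f=\langle b,\phi\rangle$.
\item The identity $\mathcal L(-d*b,2dA(*b),db)=-df$ together with $\mathcal L(H,V,B)=0$ gives $dW=0$ and $3B(W\wedge\phi)=df$.
\item The left side equals $2*(W\wedge\psi)$, hence is coclosed. So $\Delta f=0$, and $f=0$ because $b$ has no harmonic part.
\item Thus $W\in\Omega^2_{14}$. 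The relation $*W=-W\wedge\phi$ makes $W$ closed and coclosed with zero harmonic part, so $W=0$.
\item Hence the triple equals the differential of $b\in\Omega^3_{7\oplus27}$.
\end{itemize}
Applied to the exact triple from the second step, this gives $\delta_2=0$, hence surjectivity of $H^2_{\rm res}\to H^2_{\rm var}$.
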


\begin{proof}
We now assume that $Y$ is closed.  The combined
equation map gives
\[
 Z^1(C_{\rm compat})
 =\{V:d_C^1V=0,\ d\beta^{[0]}=0\},
\]
with the original degree-zero gauge action. In fact every degree-one
cocycle of $C_{\rm var}$ satisfies the linearized admissibility equation.  If $d_C^1V=0$, its leading
reconstructed torsion $t$ vanishes, and the leading equation formula
gives
\[
 d\beta^{[0]}=-*d\chi.
\]
The left-hand side is exact and closed, while the right-hand side
is coclosed.  It is therefore harmonic and exact, hence zero on compact
$Y$.  The lower gauge map is unchanged, so
\begin{equation}\label{eq:compat-global-tangent}
 H^1_{\rm res}=H^1(C_{\rm compat})
 \simeq H^1(C_{\rm var})=H^1_{\rm var}.
\end{equation}

The comparison $\mathcal J:C_{\rm compat}\to C_{\rm var}$ in
\eqref{eq:compat-full-extension} is surjective in every degree on smooth
sections.  The following right inverses suffice. In degree two choose the reconstruction with $b=0$.
For a degree-three leading parameter density $(\xi,\Lambda)$ choose
\[
 B_4=-\tfrac14\xi^\flat\wedge\varphi,\qquad
 C_6=-*\Lambda,\qquad H_5=V_2=F_0=0.
\]
For a degree-four scalar density $q$ choose
$C_7=-q\,\vol$ and the other entries zero.  The identity on the $N$-component supplies the corresponding inherited
density components, and the lower degrees already agree.

The short exact sequence of smooth section complexes
\[
 0\longrightarrow K_{\mathcal J}\longrightarrow C_{\rm compat}
 \xrightarrow{\mathcal J}C_{\rm var}\longrightarrow0
\]
therefore yields
\begin{equation}\label{eq:compat-cohomology-long-exact}
 0\longrightarrow H^2(K_{\mathcal J})
 \longrightarrow H^2_{\rm res}
 \longrightarrow H^2_{\rm var}
 \xrightarrow{\delta_2}H^3(K_{\mathcal J}).
\end{equation}
There is no preceding nonzero connecting map, by
\eqref{eq:compat-global-tangent}.

\paragraph{The harmonic kernel.}
A degree-two element of $K_{\mathcal J}$ is a three-form $b$, whose
reconstructed coordinates are
\begin{equation}\label{eq:compat-kernel-coordinates}
 h=-*b,\qquad \nu=2A(*b),\qquad
 c'=-\tfrac43A(*b)\wedge\psi,\qquad f=\langle b,\varphi\rangle.
\end{equation}
Its closedness conditions are exactly
\begin{equation}\label{eq:compat-kernel-cocycle}
 db=0,\qquad d*b=0,\qquad dA(*b)=0,\qquad
 \langle b,\varphi\rangle=0.
\end{equation}
The first two equations make $b$ harmonic.
For a torsion-free \(G_2\)-structure, the Hodge Laplacian preserves the
parallel irreducible decomposition of forms:
\[
 [\Delta,\pi_r]=0,
 \qquad r=1,7,27.
\]
The rough Laplacian commutes with the parallel projectors. The
Weitzenb\"ock curvature term is a linear combination of products of
$\mathfrak g_2$ actions, because the torsion-free curvature lies in
$\operatorname{Sym}^2(\mathfrak g_2)$. The $G_2$-equivariant projectors
commute with each of these actions and hence with the curvature term. In particular,
\[
 \Delta b_7=0,
 \qquad b_7:=\pi_7 b.
\]

Let
\[
 \mathcal I:\Lambda^3_7\xrightarrow{\sim}T^*Y
\]
be the parallel \(G_2\)-intertwiner, and put
\[
 \alpha=\mathcal I(b_7).
\]
Then
\[
 \Delta\alpha
 =
 \mathcal I(\Delta b_7)
 =
 0.
\]
Since \(Y\) is compact and \(g\) is Ricci-flat, the Bochner formula gives
\[
 0
 =
 \langle \Delta\alpha,\alpha\rangle_{L^2}
 =
 \|\nabla\alpha\|_{L^2}^2
 +\int_Y\operatorname{Ric}(\alpha^\sharp,\alpha^\sharp)\,\vol
 =
 \|\nabla\alpha\|_{L^2}^2.
\]
Thus
\[
 \nabla\alpha=0.
\]
Since \(A(*b)\) is a fixed multiple of \(\alpha\),
\[
 A(*b)=c\,\alpha
 \qquad\Longrightarrow\qquad
 dA(*b)=0.
\]
For reduced holonomy, the parallel one-form \(\alpha\) need not vanish.

The harmonic \(\Lambda^3_1\)-component consists of constant multiples
of \(\varphi\) on each connected component of \(Y\). The last condition
in \eqref{eq:compat-kernel-cocycle} forces this component to vanish.
Since \(K_{\mathcal J}^1=0\), there are no degree-two boundaries.
Consequently
\begin{equation}\label{eq:compat-harmonic-kernel}
 H^2(K_{\mathcal J})
 \simeq
 H^3_{\rm dR}(Y)_{7\oplus27}
 =:K_{\rm comp}.
\end{equation}

The scalar variational equation detects the \(\Lambda^3_1\)-part of the torsion.
When the variational equation component vanishes, \(f=0\) excludes the singlet component of \(b\).

For a closed variational equation $e$, lift it with $b=0$.
The change $\widetilde H=H+\frac17dF\wedge\psi$,
$\widetilde B=B-\frac17dF\wedge\varphi$ splits off the scalar pair
$f\mapsto F$. In these coordinates its connecting class is represented by
\[
 \bigl(dh_0+\tfrac17df_0\wedge\psi,\ d\nu_0,
                  -\tfrac17df_0\wedge\varphi\bigr),
\]
where $(h_0,\nu_0,f_0)$ are its reconstructed coordinates.
Each component is exact. The Green-operator calculation in
Appendix~\ref{app:kernel-full-proof} shows that a closed tuple of this
form is a boundary in $K_{\mathcal J}$. Thus $\delta_2=0$.
Combining \eqref{eq:compat-cohomology-long-exact} with
\eqref{eq:compat-harmonic-kernel} gives
\begin{equation}\label{eq:compat-finite-obstruction-sequence}
 0\longrightarrow K_{\rm comp}\longrightarrow H^2_{\rm res}
 \longrightarrow H^2_{\rm var}\longrightarrow0,
\end{equation}
and in particular
\begin{equation}\label{eq:compat-obstruction-dimension}
 \dim H^2_{\rm res}
 =b_3(Y)-b_0(Y)+\dim H^2_{\rm var}.
\end{equation}
\end{proof}

\paragraph{Scalar contraction and the relative complex.}

The comparison \(\mathcal J\) is the identity on the
\(N\)-sector, so \(K_{\mathcal J}=\ker\mathcal J\) has no
\(N\)-component. Write \(\phi\) for the fixed torsion-free background
form and \(\psi=*\phi\). In degree three take
\((H,V,B,F)\) as independent coordinates, with
\[
 C=-\frac23V\wedge\psi,
 \qquad
 3B(H)+3B(V\wedge\phi)-4A(B)+dF=0.
\]

For \(b\in K_{\mathcal J}^2\), set
\[
 f=\langle b,\phi\rangle,
 \qquad
 b_\perp=b-\frac17f\phi,
\]
so that \(\langle b_\perp,\phi\rangle=0\). Define
\begin{equation}\label{eq:kernel-scalar-contraction}
 \widetilde H
 =
 H+\frac17dF\wedge\psi,
 \qquad
 \widetilde B
 =
 B-\frac17dF\wedge\phi.
\end{equation}
Then
\[
 b=b_\perp+\frac17f\phi
\]
and the change of coordinates
\[
 (b,H,V,B,F)
 \longleftrightarrow
 (b_\perp,\widetilde H,V,\widetilde B;\,f,F)
\]
gives a decomposition
\[
 K_{\mathcal J}
 \cong
 K'
 \oplus
 \left(
   \Omega^0
   \xrightarrow{\;1\;}
   \Omega^0
 \right),
\]
where the last summand lies in cochain degrees two and three.

On \(K'\), rename \(\widetilde H,\widetilde B\) as \(H,B\) and set
\[
 \mathcal L(H,V,B)
 =
 3B(H)+3B(V\wedge\phi)-4A(B).
\]
Its spaces and differentials are
\begin{equation}\label{eq:kernel-reduced-complex}
\begin{array}{c|l|l}
 p&K'^p&d^p\\ \hline
 2&\Omega^3_{7\oplus27}&b\mapsto(-d*b,2dA(*b),db)\\
 3&\ker(\mathcal L:\Omega^5\oplus\Omega^2\oplus\Omega^4\to\Omega^1)
    &(H,V,B)\mapsto(dH,dV,dB)\\
 4&\Omega^6\oplus\Omega^3\oplus\Omega^5&(H,V,B)\mapsto(dH,dV,dB)\\
 5&\Omega^7\oplus\Omega^4\oplus\Omega^6&(H,V,B)\mapsto(dV,dB)\\
 6&\Omega^5\oplus\Omega^7&(V,B)\mapsto dV\\
 7&\Omega^6&d\\
 8&\Omega^7&0.
\end{array}
\end{equation}
These are cochain signs. The corresponding coordinate signs follow the suspension
convention.

\begin{theorem}[Cohomology of the relative complex]
\label{thm:complete-relative-cohomology}
On the closed torsion-free background, let
$\mathcal H^k$ denote harmonic real $k$-forms of its fixed metric.
Then
\begin{equation}\label{eq:complete-relative-cohomology}
\boxed{\begin{array}{c|l}
 p&H^p(K_{\mathcal J})\\ \hline
 p\le1&0\\
 2&\mathcal H^3_{7\oplus27}\\
 3&\ker\{\mathcal L:\mathcal H^5\oplus\mathcal H^2\oplus\mathcal H^4
                         \to\mathcal H^1\}\\
 4&\mathcal H^6\oplus\mathcal H^3\oplus\mathcal H^5\\
 5&\mathcal H^7\oplus\mathcal H^4\oplus\mathcal H^6\\
 6&\mathcal H^5\oplus\mathcal H^7\\
 7&\mathcal H^6\\
 8&\mathcal H^7\\
 p>8&0.
\end{array}}
\end{equation}
In the fixed harmonic $G_2$ decomposition,
\begin{equation}\label{eq:relative-degree-three}
 H^3(K_{\mathcal J})\simeq
 \mathcal H^5\oplus\mathcal H^2\oplus\mathcal H^4_{1\oplus27}.
\end{equation}
All connecting maps of the comparison vanish:
\begin{equation}\label{eq:connecting-maps-zero}
 \delta_1=\delta_2=\delta_3=\delta_4=0.
\end{equation}
The following canonical short exact sequences are:
\begin{equation}\label{eq:complete-relative-sequences}
 0\longrightarrow H^p(K_{\mathcal J})\longrightarrow H^p_{\rm res}
 \xrightarrow{H^p\mathcal J}H^p_{\rm var}\longrightarrow0,
 \qquad p=2,3,4,
\end{equation}
and $H^p_{\rm res}\simeq H^p(K_{\mathcal J})$ for $5\le p\le8$.
The comparison is an isomorphism in degrees at most one.
\end{theorem}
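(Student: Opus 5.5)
The plan is to compute $H^\bullet(K_{\mathcal J})$ directly from the scalar contraction. The splitting $K_{\mathcal J}\cong K'\oplus(\Omega^0\xrightarrow{1}\Omega^0)$ discards an acyclic summand, so $H^\bullet(K_{\mathcal J})\cong H^\bullet(K')$. In degrees $p\le1$ the kernel vanishes, because $\mathcal J$ is the identity there. Degree two is already Proposition~\ref{prop:compatibility-cohomology}: the cocycle conditions \eqref{eq:compat-kernel-cocycle} make $b$ harmonic. The Weitzenb\"ock/Bochner argument there shows $dA(*b)=0$ automatically. There are no boundaries, so $H^2(K')=\mathcal H^3_{7\oplus27}$.

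For degrees $p\ge4$, the table \eqref{eq:kernel-reduced-complex} shows that $K'$ is a direct sum of shifted truncated de Rham complexes in the three variables $H,V,B$. In each degree the incoming differential is $d$ on every surviving component, and each tail ends at a top form. I would identify cohomology with harmonic forms by Hodge theory in each slot. In degree four, a class $(H_6,V_3,B_5)$ is closed and modulo $d$ of degree-three elements. Here $K'^3$ is constrained by $\mathcal L=0$, so I must check that every exact triple $(dH_5,dV_2,dB_4)$ is hit by some triple with $\mathcal L=0$. I would adjust by closed forms: the image $\mathcal L(\ker d)$ contains all coexact and exact one-forms. Since $\mathcal L$ is parallel algebraic and commutes with $\Delta$, one can correct the primitive by harmonic-orthogonal closed pieces. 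This gives $\mathcal H^6\oplus\mathcal H^3\oplus\mathcal H^5$. Degrees five through eight are then the standard Hodge computation.

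Degree three is the main obstacle. The cocycles are triples in $\ker\mathcal L$ that are closed. Boundaries are $(-d*b,2dA(*b),db)$ with $b\in\Omega^3_{7\oplus27}$. I would Hodge-decompose each closed component as harmonic plus exact. Because $\mathcal L$ is parallel and $G_2$-equivariant, it commutes with the Hodge decomposition, so the harmonic parts lie in $\ker\{\mathcal L:\mathcal H^5\oplus\mathcal H^2\oplus\mathcal H^4\to\mathcal H^1\}$. It then remains to show that an exact triple in $\ker\mathcal L$ is a $d^2$-boundary. This amounts to solving $(dx,dy,dz)=(-d*b,2dA(*b),db)$ for a suitable $b$ with $\langle b,\phi\rangle=0$ up to closed corrections. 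I would attack this with the Green operator as in Appendix~\ref{app:kernel-full-proof}: take $b=$ the $(7\oplus27)$-part of $d^*G$ applied to the primitives, and use $\mathcal L=0$ to fix the $\Lambda^1$-type and singlet mismatch. The $G_2$ refinement \eqref{eq:relative-degree-three} then follows from the type analysis of harmonic forms under torsion-free holonomy. On $\mathcal H^2$ and $\mathcal H^5$, the $7$-components are parallel and map by $\mathcal L$ bijectively against $\mathcal H^4_7$, so solving $\mathcal L=0$ eliminates exactly $\mathcal H^4_7$ as a free summand.

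Finally, for the connecting maps: $\delta_1=0$ and $\delta_2=0$ come from Proposition~\ref{prop:compatibility-cohomology}. For $\delta_3$ and $\delta_4$, I would lift a closed variational class using the explicit right inverses from that proof ($B_4=-\tfrac14\xi^\flat\wedge\varphi$, $C_6=-*\Lambda$, $C_7=-q\vol$). Applying $D$ to such a lift gives exact forms in $K'$, which are boundaries by the degree-three and degree-four analysis. Since $C_{\rm var}$ vanishes above degree four, the long exact sequence then yields the three short exact sequences \eqref{eq:complete-relative-sequences} and the isomorphisms $H^p_{\rm res}\simeq H^p(K_{\mathcal J})$ for $5\le p\le8$.
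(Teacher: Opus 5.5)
Your overall plan matches the paper's: discard the acyclic scalar pair, compute degree two from harmonic forms, use constrained Green-operator primitives in degrees three and four, handle the exterior tails above, and use explicit lifts for the connecting maps. However, the degree-three step does not work as you describe it, and both the theorem and $\delta_2=0$ rest on it.

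\textbf{Degree three.} There are two problems with your sketch.
\begin{enumerate}
\item Projecting $b$ onto its $(7\oplus27)$-part destroys $db=B$ and $d*b=-H$, unless the singlet $f=\langle b,\phi\rangle$ already vanishes.
\item The constraint $\mathcal L=0$ is only a one-form equation. With $W=V-2dA(*b)$ it gives just $3B(W\wedge\phi)=df$, a single relation between the $7$-part of $W$ and $f$. It says nothing about the $\Lambda^2_{14}$-part of $W$.
\end{enumerate}
The missing mechanism runs as follows.
\begin{enumerate}
\item Keep the unprojected primitive $b=d^\dagger G_\Delta B-*d^\dagger G_\Delta H$, so that $db=B$ and $d*b=-H$ hold exactly.
\item The identity $\mathcal L(-d*b,2dA(*b),db)=-d\langle b,\phi\rangle$, together with $\mathcal L(H,V,B)=0$ and $dV=0$, gives $dW=0$ and $3B(W\wedge\phi)=df$.
\item Since $3B(W\wedge\phi)=2*(W\wedge\psi)$ and $dW=0$, the left side is coclosed, so $\Delta f=0$. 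Because $f$ has zero harmonic part, $f=0$. This is what places $b$ in $\Omega^3_{7\oplus27}$ automatically.
\item Then $W\in\Omega^2_{14}$ is closed, hence coclosed because $*W=-W\wedge\phi$. It has zero harmonic projection, so $W=0$.
\end{enumerate}
Citing Proposition~\ref{prop:compatibility-cohomology} for $\delta_2=0$ does not avoid this. The representative of $\delta_2$ is an exact triple in $K'^3$, and showing it is zero in cohomology is exactly this degree-three computation.

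\textbf{Degree four.} Your claim that $\mathcal L(\ker d)$ contains every exact and coexact one-form is true. But it does not follow from $\mathcal L$ being parallel and commuting with $\Delta$, because $\mathcal L$ does not commute with $d$. Exact one-forms are easy: $df=\mathcal L\bigl(\tfrac13 d(f\psi),0,0\bigr)$. Coexact ones need an elliptic argument, which the paper supplies with $\mathcal R=\mathcal L(d\oplus d\oplus d)$:
\begin{enumerate}
\item The first component of $\mathcal R^\dagger$ has injective symbol, so $\mathcal R\mathcal R^\dagger$ is elliptic.
\item If $\mathcal R^\dagger\alpha=0$, then $d\iota_{\alpha^\sharp}\phi=0$. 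So $\alpha^\sharp$ preserves $\phi$, is Killing, and is parallel by Bochner on the compact Ricci-flat background.
\item Hence $\mathcal R$ is onto the $L^2$-complement of $\mathcal H^1$. The element $\mathcal L t_0$ lies in that complement, because $\mathcal L$ commutes with harmonic projection.
\end{enumerate}
Equivalently, you could invert $\alpha\mapsto\mathcal L(0,d\alpha,0)=2*(d\alpha\wedge\psi)$ on coexact one-forms, which again requires ellipticity.

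With these two arguments supplied, the rest of your plan goes through as in the paper. This covers the refinement \eqref{eq:relative-degree-three} by solving for the $7$-part of $B$, degrees five through eight, and $\delta_3,\delta_4$ via the explicit lifts.
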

\begin{proof}
Degree two is the harmonic calculation above. Let $G_\Delta$ be the
de Rham Green operator, zero on harmonic forms. In degree three,
subtract the harmonic projection of a closed tuple and set
$b=d^\dagger G_\Delta B-*d^\dagger G_\Delta H$.
For $W=V-2dA(*b)$ and $f=\langle b,\phi\rangle$, the compatibility
identity gives $dW=0$ and $3B(W\wedge\phi)=df$.
The left side is coclosed, so $\Delta f=0$. Since $f$ has zero harmonic
projection, $f=0$. It follows that $W$ has type $14$. The identity
$*W=-W\wedge\phi$ then makes $W$ coclosed as well as closed.
Its harmonic projection is zero, so $W=0$. In degree four, let $t_0$ denote the tuple of componentwise
Green-operator primitives. It is corrected to lie in $\ker\mathcal L$
by solving $\mathcal R u=\mathcal L t_0$, where
$\mathcal R=\mathcal L(d\oplus d\oplus d)$.
The operator $\mathcal R\mathcal R^\dagger$ is elliptic and its
kernel consists of parallel one-forms, orthogonal to $\mathcal L t_0$.
In degrees five through eight, the differentials are the exterior
derivatives displayed in \eqref{eq:kernel-reduced-complex}.
Appendix~\ref{app:kernel-full-proof} gives these Hodge theoretic arguments in full,
including the constrained degree-four primitive.

Harmonic one-forms are parallel, so $\alpha\wedge\phi$ is harmonic
and $A(\alpha\wedge\phi)=\alpha$. Solving the harmonic constraint
for the $7$-component of $B$ gives \eqref{eq:relative-degree-three}.
The representatives of $\delta_2$ and $\delta_3$ have exact components
after scalar contraction, so the degree-three and degree-four results
make their classes zero. The lift defining $\delta_4$ has zero next
differential, and $\delta_1=0$ is \eqref{eq:compat-global-tangent}.
Representatives are given in Appendix~\ref{app:connecting-vanishing}.
The long exact sequence then gives the assertions.
\end{proof}

\begin{corollary}[Relative dimensions]\label{cor:relative-dimensions}
Writing $b_k=\dim H^k_{\rm dR}(Y;\R)$, the dimensions of
$H^p(K_{\mathcal J})$ for $p=2,\ldots,8$ are respectively
\begin{equation}\label{eq:relative-dimensions}
 b_3-b_0,\quad 2b_2+b_3-b_1,\quad b_1+b_3+b_2,\quad
 b_0+b_3+b_1,\quad b_2+b_0,\quad b_1,\quad b_0.
\end{equation}
\end{corollary}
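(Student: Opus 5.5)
The plan is to read the dimensions off the table \eqref{eq:complete-relative-cohomology} of Theorem~\ref{thm:complete-relative-cohomology}. Three facts do the work: the Hodge isomorphism $\mathcal H^k\cong H^k_{\rm dR}(Y;\R)$ on the closed manifold $Y$, Poincaré duality $b_k=b_{7-k}$ (the torsion-free $G_2$ structure gives $Y$ an orientation), and the type refinement \eqref{eq:relative-degree-three} in degree three.

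First, the degrees $4$ through $8$ need no $G_2$ input. Each entry there is a plain sum of harmonic spaces. Converting with Poincaré duality gives
\begin{itemize}
\item[] degree $4$: $b_6+b_3+b_5=b_1+b_3+b_2$;
\item[] degree $5$: $b_7+b_4+b_6=b_0+b_3+b_1$;
\item[] degree $6$: $b_5+b_7=b_2+b_0$;
\item[] degree $7$: $b_6=b_1$;
\item[] degree $8$: $b_7=b_0$.
\end{itemize}

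The low degrees need the harmonic $G_2$ type decomposition. As in the proof of Proposition~\ref{prop:compatibility-cohomology}, the Hodge Laplacian commutes with the parallel projectors $\pi_r$. Hence $\mathcal H^k=\bigoplus_r\mathcal H^k_r$, and any parallel $G_2$-equivariant bundle isomorphism between type summands commutes with $\Delta$, so it carries harmonic forms to harmonic forms.

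In degree two, $\Lambda^3_1=\R\varphi$. A harmonic section $f\varphi$ has $f$ harmonic, because $\varphi$ is parallel and the map $f\mapsto f\varphi$ intertwines the Laplacians. So $f$ is locally constant, and $\dim\mathcal H^3_1=b_0$. This gives $\dim\mathcal H^3_{7\oplus27}=b_3-b_0$.

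In degree three, use the parallel isomorphism $\Lambda^1\to\Lambda^4_7$, $\alpha\mapsto\alpha\wedge\varphi$. It identifies $\mathcal H^4_7$ with $\mathcal H^1$; the theorem's proof already notes that harmonic one-forms are parallel and that $\alpha\wedge\phi$ is harmonic. Therefore $\dim\mathcal H^4_{1\oplus27}=b_4-b_1=b_3-b_1$. Combined with \eqref{eq:relative-degree-three}, the degree-three dimension is $b_5+b_2+b_3-b_1=2b_2+b_3-b_1$.

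The only point needing care is the degree-three kernel description. Its dimension should be taken from the harmonic decomposition \eqref{eq:relative-degree-three}, not from a rank count of $\mathcal L$ on $\mathcal H^5\oplus\mathcal H^2\oplus\mathcal H^4\to\mathcal H^1$. That identification is part of the theorem, which we are allowed to assume. So the main obstacle reduces to checking that the intertwiner $\alpha\mapsto\alpha\wedge\varphi$ commutes with $\Delta$. This holds by the same Weitzenböck argument used for $[\Delta,\pi_r]=0$.
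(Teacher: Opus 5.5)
Your proposal is correct and follows the paper's argument. The paper likewise reads the dimensions off the table of Theorem~\ref{thm:complete-relative-cohomology} using Poincar\'e duality and the harmonic singlet count $\dim\mathcal H^3_1=b_0$, and obtains degree three from $\dim\mathcal H^4_7=b_1$; the rank count you set aside gives the same $b_5+b_2+b_4-b_1$, because the paper shows the harmonic map $\mathcal L\colon\mathcal H^5\oplus\mathcal H^2\oplus\mathcal H^4\to\mathcal H^1$ is onto.
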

\begin{corollary}[Full-holonomy specialization]\label{cor:full-holonomy-kernel}
If in addition $Y$ is connected and $\Hol(g)=G_2$, then
\begin{equation}\label{eq:full-holonomy-kernel}
 H^3_{\rm dR}(Y;\R)=\R[\phi]\oplus H^3_{27}(Y),\qquad
 K_{\rm comp}=H^3_{27}(Y),\qquad
 \dim K_{\rm comp}=b_3(Y)-1.
\end{equation}
In particular the degree-two sequence is
$0\to H^3_{27}(Y)\to H^2_{\rm res}\to H^2_{\rm var}\to0$.
\end{corollary}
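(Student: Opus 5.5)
The plan is to read off the statement from the harmonic description in Theorem~\ref{thm:complete-relative-cohomology}, then specialise the degree-two group using full holonomy. For the first claim, note that $\Hol(g)=G_2$ forces every parallel form to be $G_2$-invariant. The invariant forms in the decomposition of $\Lambda^\bullet$ are only $1,\varphi,\psi,\vol$, since $\Lambda^1$, $\Lambda^2$ and $\Lambda^3_{7\oplus27}$ have no trivial summand.

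\emph{Step one: $H^3_{\rm dR}(Y;\R)=\R[\phi]\oplus H^3_{27}(Y)$.} I will use Hodge theory and identify $H^3_{\rm dR}$ with $\mathcal H^3$. As recalled in the proof of Proposition~\ref{prop:compatibility-cohomology}, $\Delta$ commutes with the parallel projectors $\pi_1,\pi_7,\pi_{27}$. Hence
\[
 \mathcal H^3=\mathcal H^3_1\oplus\mathcal H^3_7\oplus\mathcal H^3_{27}.
\]
For the singlet part, the argument in the same proof shows that a harmonic element of $\Lambda^3_1$ is a locally constant multiple of $\phi$. Since $Y$ is connected, $\mathcal H^3_1=\R\phi$, and this is nonzero because $\phi$ is closed and coclosed. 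For the $7$-part, the Bochner argument given there shows that the intertwined one-form $\alpha=\mathcal I(b_7)$ of any $b_7\in\mathcal H^3_7$ is parallel. A nonzero parallel one-form would reduce the holonomy to a subgroup of $G_2$ fixing a vector, contradicting $\Hol(g)=G_2$, which acts irreducibly on $\R^7$. So $\mathcal H^3_7=0$.

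\emph{Step two: the remaining claims.} By definition $K_{\rm comp}=H^3_{\rm dR}(Y)_{7\oplus27}$. Step one then gives $K_{\rm comp}=\mathcal H^3_{27}=H^3_{27}(Y)$ and $\dim K_{\rm comp}=b_3-1$. This agrees with \eqref{eq:compat-obstruction-dimension}, since $b_0=1$ for connected $Y$. Substituting into the short exact sequence \eqref{eq:compat-finite-obstruction-sequence}, equivalently \eqref{eq:complete-relative-sequences} at $p=2$, gives the displayed degree-two sequence.

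The only step that is not routine is the vanishing of $\mathcal H^3_7$. This rests on the earlier Bochner computation together with the holonomy principle. I will also record, as a consistency check, that $b_1=0$ under full holonomy.
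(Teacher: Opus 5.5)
Your proposal is correct and follows the paper's argument: you split the harmonic three-forms by $G_2$ type, identify the singlet with $\R[\phi]$ by connectedness, and kill $\mathcal H^3_7$ by combining the parallel intertwiner $T^*Y\cong\Lambda^3_7$ (which commutes with the Hodge Laplacians), the Bochner identity, and the absence of a fixed vector under full holonomy. The paper first records $H^1(Y;\R)=0$ and then transports it through the intertwiner, whereas you transport first and apply Bochner afterwards; this is only a reordering, and the remaining claims follow from \eqref{eq:compat-harmonic-kernel} exactly as you describe.
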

\begin{proof}
The integrated Bochner identity on the compact Ricci-flat background
makes harmonic one-forms parallel. Full $G_2$ holonomy has no fixed
vector. Hence $H^1(Y;\R)=0$. The parallel equivariant isomorphism
$\alpha\mapsto\iota_{\alpha^\sharp}\psi$ from $T^*Y$ to
$\Lambda^3_7$ intertwines the Hodge Laplacians, including their
$\mathfrak g_2$ curvature terms. Thus $H^3_7(Y)=0$.
The harmonic singlet is $\R[\phi]$ by connectedness. The harmonic
decomposition and \eqref{eq:compat-harmonic-kernel} prove the result.
\end{proof}

\section{The nonlinear resolved physical theory}
\label{sec:resolved-physical}
We construct the derived physical deformation problem from the BPS
section, the leading-flux equation and their identities. We first
extract the common nonlinear compatibility coordinates, then give
the physical curvature reconstruction and homological vector field.
The variational resolution and comparison follow in
Section~\ref{sec:resolved-variational}.

\subsection{Common nonlinear equation and identity coordinates}
\label{subsec:common-nonlinear}
The two resolutions use the same leading-flux and torsion identities.
To express them, we need local equation coordinates comparing an Euler
covector and an independent leading-flux argument with the geometric
residuals. The following reconstruction is a local coordinate map on
equation bundles. Evaluated on the actual section, it gives weighted
exterior derivatives of the $G_2$ forms and the dilaton differential.

Put $\phi=\varphi^{[0]}$, $\psi=*_{\phi}\phi$,
$p=\dd\Phi^{[0]}$, and $w_0=e^{-2\Phi^{[0]}}$. The fixed background
values are denoted by $\phi_0,\psi_0,\Phi_0$. Let \(e\) be a variational density covector. Write
\[
 \pi_2(q)e=(e_u,e_\beta,e_s)
\]
for its normalized leading geometric components, characterized by
\[
 e^{[0]}[v,\beta,\sigma]
 =
 -\frac14\int_Y w_0
 \bigl(
   \langle v,e_u\rangle_\phi
   +\langle\beta,e_\beta\rangle_\phi
   +\sigma e_s
 \bigr)\vol_\phi.
\]
Let \(b\in\Omega^3(Y)\) be an independent argument for the
admissibility equation. Let \(A_\phi\) and \(B_\phi\) denote the
field-dependent versions of the maps in
\eqref{eq:compatibility-torsion-target}:
\[
 A_\phi=\tfrac14(v\mapsto v\wedge\phi)^\dagger,
 \qquad
 B_\phi=\tfrac13(v\mapsto v\wedge\psi)^\dagger.
\]
Thus
\[
 A_{\phi_0}=A,\qquad B_{\phi_0}=B.
\]

Apply the field-dependent reconstruction
\eqref{eq:compat-reconstruct-old}--\eqref{eq:compat-reconstruction-inverse},
with $(\varphi,\psi,*,\pi_r,A,B,J)$ evaluated at $\phi$, to the
normalized leading covector $\pi_2(q)e$ and the independent three-form $b$.
Denote the reconstructed equation coordinates by
\[
 y=(h,c',\nu,b,f).
\]
They satisfy
\[
 4A_\phi h=3B_\phi c'.
\]
Conversely, \eqref{eq:compat-reconstruction-inverse} gives the leading
covector $j_2(q)y$.
The complete equation space is the graph
\[
 \mathscr E^{\rm comp}_{0,q}
 =
 \{(e,y):\pi_2(q)e=j_2(q)y\}.
\]
The covector \(e\) still contains the geometric
\(\epsilon\mathbb D\)-valued components and the gauge covector, while
\(b\) is an arbitrary three-form and need not be exact.

Evaluating the reconstruction at
\[
 (e,b)=\bigl(\mathcal E_{\rm var}(q),\dd B^{[0]}\bigr)
\]
gives the components $(\mathscr H,\mathscr C,\mathscr N,\mathcal A,\mathscr F)$:
\begin{equation}\label{eq:resolved-actual-main}
\begin{gathered}
 \mathscr H=\dd\phi-2\dd\Phi^{[0]}\wedge\phi,
 \qquad
 \mathscr C=\dd\psi-\tfrac83\dd\Phi^{[0]}\wedge\psi,\\
 \mathscr N=\dd\Phi^{[0]},
 \qquad
 \mathcal A=\dd B^{[0]},
 \qquad
 \mathscr F=0.
\end{gathered}
\end{equation}
For
\[
 \widehat\phi=e^{-2(\Phi^{[0]}-\Phi_0)}\phi,
 \qquad
 \widehat\psi=e^{-\frac83(\Phi^{[0]}-\Phi_0)}\psi,
\]
the first two equations are equivalently
\[
 \mathscr H
 =
 e^{2(\Phi^{[0]}-\Phi_0)}\dd\widehat\phi,
 \qquad
 \mathscr C
 =
 e^{\frac83(\Phi^{[0]}-\Phi_0)}\dd\widehat\psi.
\]
Thus \(\mathscr H\) and \(\mathscr C\) are weighted exterior derivatives
of the conformally rescaled \(G_2\)-forms. The other components are the dilaton differential, the leading flux and
the zero scalar compatibility residual.

\subsubsection{Weighted torsion identities}
\label{subsec:nonlinear-weighted}
For $a\in\mathbb R$, set \(D_a=\dd-a p\wedge\). On the equation
section \eqref{eq:resolved-actual-main},
\begin{equation}\label{eq:resolved-identities-main}
 D_2\mathscr H=0,\qquad D_{8/3}\mathscr C=0,
 \qquad \dd\mathscr N=0,\qquad\dd\mathcal A=0,\qquad\mathscr F=0.
\end{equation}
For an even gauge parameter $u$, let $G_qu$ denote its infinitesimal action
at $q$, with sign fixed by $f_{\rm sp}q=-\vartheta G_qu$ after replacing
$u$ by the odd ghost $\vartheta u$. The components are given in
\eqref{eq:rv-N0-14}. Gauge invariance gives
$G_q^\dagger\mathcal E_{\rm var}=0$. Together with the displayed
identities, this determines the first compatibility equations. Their weighted
exterior derivatives define the higher identity components on arbitrary
equation and identity arguments. Since \(p=d\Phi^{[0]}\), \(D_a^2=0\) identically.

On arbitrary reconstructed equation arguments set
\begin{equation}\label{eq:rv-N1-1}
\mathscr D_q y=(D_2h,D_{8/3}c',d\nu,db,f)
             =:(H_5,C_6,V_2,B_4,F).
\end{equation}
Write $\bar y=(\mathscr H,\mathscr C,\mathscr N,\mathcal A,0)$ for
the actual section in \eqref{eq:resolved-actual-main}. The identities above
give $\mathscr D_q\bar y=0$.

The independent equation coordinates must also carry the relations among
these identities. Let $r$ denote the reconstructed equation coordinates of
ghost number $-1$, and let $\eta$ and $\rho$ denote their first and second
identity coordinates, of ghost numbers $-2$ and $-3$. Their bundles are the
ones in the linear compatibility complex. Let $\pi_3$ and $\pi_4$ be the
leading parameter-density and scalar-density projections, normalized by
$-4/w_0$ as for $\pi_2$.

For $\pi_2(q)e=j_2(q)y$, the weighted Noether identity decomposes into
a term linear in $\mathscr D_qy$ and an alternating bilinear term in
the equation arguments:
\[
 \pi_3(q)G_q^\dagger e
 =j_3(q)\mathscr D_qy+\mathcal T_q(\bar y,y).
\]
The operator
$j_3$ is the weighted version of the linear parameter-density map, and
$\mathcal T_q$ is the remaining alternating bilinear map. Their components
are constructed in Appendix~\ref{subsec:nonlinear-graph}. The quadratic
graph term is $\Theta_3(q,r)=\mathcal T_q(r,r)/2$.
After the triangular coordinate changes of Appendix~\ref{subsec:nonlinear-upper},
we use degree-four coordinates $\rho_C\in\Omega^7$ and $\rho_V\in\Omega^3$
for which
\[
 \Xi_7=\rho_C+\frac23\rho_V\wedge\psi,\qquad
 \pi_4(q)\tau=-*_\varphi\Xi_7.
\]
The scalar-density map is $j_4(q)\rho=-*(\rho_C+2\rho_V\wedge\psi/3)$.
Appendix~\ref{subsec:nonlinear-upper} gives the transformed higher
differential and proves that it squares to zero.

\subsection{Physical residuals and curvature reconstruction on the ambient field space}
\label{subsec:physical-curvature}
We construct the resolved theory directly from the supersymmetry
residuals \(C,\Sigma,T\), the gauge-instanton residual \(I\), the
admissibility equation, and their differential identities. Since
\(B^{[0]}\) is unrestricted on the ambient field space, the curvature
identity used on \(H^{[0]}=0\) must first be extended to arbitrary
\(dB^{[0]}\).

Fix a field $q\in\mathscr V$. In this subsection, put
$\phi=\varphi^{[0]}$, $g=g_\phi$, $\psi=*_{\phi}\phi$, and
$w_0=e^{-2\Phi^{[0]}}$. The connection and curvature arguments are
therefore $\mathbb D_0$-valued. Set
\[
\bar b=dB^{[0]},\qquad K=C_g(\bar b),\qquad
\theta=\Theta_{\rm LC}(g)^{[0]}+K.
\]
Let $\mathscr R_{0,q}$ be the reduction of the curvature operator
\eqref{eq:homogeneous-curvature} at $q$. It maps the leading BPS
equation arguments to an $\mathfrak{so}(E)$-valued two-form. To extend the curvature identity to arbitrary leading-flux arguments, use the following operator.
For \(b\in\Omega^3(Y)\), define
\begin{equation}\label{eq:rp-P-1}
\boxed{
\Gamma_q b=
\left(D_{\Theta_{\rm LC}}C_g(b)
+\tfrac12[K,C_g(b)]
-\mathscr R_{0,q}(0,0,b,0)\right)\wedge\psi .
}
\end{equation}

The operator \(\Gamma_q\) is first order in \(b\). Define the six-form-valued map on the BPS equation arguments and the independent three-form \(b\) by
\begin{equation}\label{eq:rp-P-2}
\mathcal R_\theta^{\rm res}(E,b)
=\mathscr R_{0,q}(E^{[0]})\wedge\psi+\Gamma_qb .
\end{equation}
For the actual residuals and \(b=\bar b\),
\begin{equation}\label{eq:rp-P-3}
\boxed{
\mathcal R_\theta^{\rm res}(\mathcal E_{\rm BPS},\bar b)
=R_\theta\wedge\psi .
}
\end{equation}
Indeed, subtracting \(\bar b=H^{[0]}\) from the \(T\)-residual leaves \((d\phi,d\psi)\) as the inputs of the intrinsic-torsion reconstruction in Appendix~\ref{app:physical-first-variation}. The reconstructed covariant derivative is therefore \(\nabla^{\rm LC}\phi\), and the curvature calculation in that appendix gives
\[
\mathscr R_{0,q}\bigl(\mathcal E_{\rm BPS}^{[0]}-(0,0,\bar b,0)\bigr)
\wedge\psi=R_{\rm LC}\wedge\psi.
\]
Since
\(R_\theta-R_{\rm LC}=D_{\rm LC}K+K\wedge K\), this proves
\eqref{eq:rp-P-3} for arbitrary fields $q$, including the
$K\wedge K$ contribution.

For a field variation $V$, define the variational covector
\begin{equation}\label{eq:rp-P-4}
(\mathcal J_qb)[V]=-\tfrac18\int_Y
\jmath_\epsilon\,w_0P(\kappa_V,\Gamma_qb),\qquad
\kappa_V=D_V\theta .
\end{equation}
It takes values in \(\epsilon\mathbb D\). After integrating the derivative in \(\kappa_V\) by parts, it has differential order at most two in \(b\). At the standard embedding, where \(K=0\), its linearization is the Hull/admissibility term of Appendix~\ref{app:compatibility}.

\subsection{BPS residuals and their identities}
\label{subsec:physical-pairing}
We now use the full $\mathbb D$-valued geometric fields and write
$w=e^{-2\Phi}$, $w_0=w^{[0]}$, $a=A-\theta$, and
\[
J_\varphi=\frac43\pi_1+\pi_7-\pi_{27},\qquad
B_\varphi=\frac13(v\mapsto v\wedge\psi_\varphi)^\dagger.
\]
Here the adjoint is pointwise for $g_\varphi$.
Let $E=(E_C,E_\Sigma,E_T,E_I)$ be a section of the BPS equation
module and let $b\in\Omega^3(Y)$ be an independent admissibility
equation argument. Define
\[
\begin{split}
M_q(E)&=J_\varphi E_T-\tfrac13(*_\varphi E_\Sigma)\varphi
+\tfrac32\iota_{B_\varphi(w^{-1}E_C)^\sharp}\psi_\varphi,\\
V_q(E)&=3E_\Sigma-2E_T\wedge\psi_\varphi,\\
\mathscr A_q(E)&=\tfrac14a\wedge E_C^{[0]}-\tfrac12w_0E_I,\\
\mathscr H_q^{\rm res}(E,b)&=\tfrac14a\wedge E_C^{[0]}
+\tfrac12w_0\mathcal R_\theta^{\rm res}(E,b).
\end{split}
\]
Use absolute connection variations:
$V=(v,k,\beta,\sigma)=(\delta\varphi,\delta A,\delta B,\delta\Phi)$.
Pair them with the equation arguments by
\begin{equation}\label{eq:rp-P-5}
\boxed{\begin{aligned}
\mathfrak l_q^{\rm res}(E,b;V)=\int_Y\{&
-\beta\wedge E_C+w\,v\wedge *_\varphi M_q(E)+w\sigma V_q(E)\\
&+\jmath_\epsilon[
P(k,\mathscr A_q(E))+P(\kappa_V,\mathscr H_q^{\rm res}(E,b))]\}.
\end{aligned}}
\end{equation}
 When \(b=\bar b\), equation \eqref{eq:rp-P-3} reduces it to the first-variation pairing derived in Appendix~\ref{app:physical-first-variation}.

The Bianchi identities enter through the following expressions:
\begin{equation}\label{eq:rp-P-6}
\begin{split}
J_A(E)&=D_A(w_0E_I)-F_A\wedge E_C^{[0]},\\
K_\theta^{\rm res}(E,b)&=D_\theta(w_0\mathcal R_\theta^{\rm res}(E,b))
-R_\theta\wedge E_C^{[0]},\\
Z^{\rm res}(E,b)&=\tfrac12\{K_\theta^{\rm res}(E,b)-J_A(E)
-a\wedge dE_C^{[0]}\}\\
&=D_A\mathscr A_q(E)+D_\theta\mathscr H_q^{\rm res}(E,b).
\end{split}
\end{equation}
The last equality follows by expanding derivatives and using
\(D_Aa+D_\theta a=2(F_A-R_\theta)\). On $E=\mathcal E_{\rm BPS}(q)$ and $b=dB^{[0]}$,
\begin{equation}\label{eq:rp-P-7}
dC=0,\qquad J_A=0,\qquad K_\theta^{\rm res}=0.
\end{equation}
The first two equations are the exterior and gauge Bianchi identities. The third follows from \eqref{eq:rp-P-3}, \(D_\theta R_\theta=0\), and \(C^{[0]}=d(w_0\psi)\). Hence \(Z^{\rm res}=0\) for the actual residuals.

The action on equation coordinates also contains the correction terms
\eqref{eq:rp-P-8}--\eqref{eq:rp-P-10}. They are obtained by
differentiating the common-frame change of variables and taking its
variational adjoint. Each term vanishes on the actual residuals by
\eqref{eq:rp-P-7}. The component formulas are presented in
Appendix~\ref{app:resolved-physical}.
Let
\begin{equation}\label{eq:rp-P-11}
u[V]=\tfrac14\mathfrak l_q^{\rm res}(E,b;V),\qquad
\alpha_q[V]=\tfrac14\mathfrak l_q^{\rm res}
(\mathcal E_{\rm BPS}(q),\bar b;V).
\end{equation}
Define
\[
 (\mathcal P_qE)[V]
 =
 \frac14\,\mathfrak l_q^{\rm res}(E,0;V).
\]
Thus \(\mathcal P_q\) maps a BPS equation argument \(E\) to a
variational density covector. We now construct a finite-order
differential inverse
\[
 \mathcal Q_q=\mathcal P_q^{-1}.
\]
Since
\[
 u=\mathcal P_qE-\mathcal J_qb,
\]
it follows that
\[
 (E,b)\longmapsto(u,b)
 =
 \bigl(\mathcal P_qE-\mathcal J_qb,b\bigr)
\]
is locally invertible, with inverse
\[
 (u,b)\longmapsto
 \bigl(\mathcal Q_q(u+\mathcal J_qb),b\bigr).
\]

Write \(L=4\mathcal P_q\). Its absolute-chart density components are
\begin{equation}\label{eq:rp-P-16}
L_B=-E_C+\jmath_\epsilon\mathsf K_B^\top\mathscr H_q,\quad
L_\varphi=w*M_q(E)+\jmath_\epsilon\mathsf K_\varphi^\top\mathscr H_q,\quad
L_\Phi=wV_q(E),\quad
L_A=\tfrac14a\wedge E_C^{[0]}-\tfrac12w_0E_I ,
\end{equation}
where \(\mathscr H_q=\mathscr H_q^{\rm res}(E,0)\).
Here $\mathsf K_\varphi$ is the derivative of
$\theta=\Theta_{\rm LC}(g_\varphi)^{[0]}+C_g(dB^{[0]})$ with respect to
$\varphi$ at fixed $B$, expressed in the fixed transported frame. In
particular it includes
$D_gC_g(\bar b)[D_\varphi g(v^{[0]})]$. The derivative with respect to $B$
is $\mathsf K_B=C_gd$.

Let \(\mathcal S\) add \(a\wedge L_B^{[0]}/4\) to the gauge output, keeping the free outputs fixed. Let \(\mathcal D\) be the block algebraic map
\[
(E_C,E_\Sigma,E_T,E_I)\longmapsto
(-E_C,w*M_q(E),wV_q(E),-\tfrac12w_0E_I).
\]
To invert $\mathcal D$, first recover $E_C=-L_B$ and set
\[
U=w^{-1}*L_\varphi-\frac32
  \iota_{B_\varphi(w^{-1}E_C)^\sharp}\psi_\varphi,
\qquad a_s=\frac17\langle U,\varphi\rangle,
\qquad r_s=*(w^{-1}L_\Phi).
\]
Then
\[
t_s=-\tfrac12(r_s+9a_s),\qquad
\sigma_s=-2r_s-21a_s,\qquad
E_T=t_s\varphi+\pi_7U-\pi_{27}U,\quad
E_\Sigma=\sigma_s\operatorname{vol}_\varphi.
\]
The gauge inverse is \(-2w_0^{-1}\). Let \(\mathcal M\) be the two \(\epsilon\mathbb D\)-valued variational-adjoint corrections in the first two components of \eqref{eq:rp-P-16}, and put \(\mathcal U=\mathcal D^{-1}\mathcal M\). Then
\begin{equation}\label{eq:rp-P-17}
\mathcal SL=\mathcal D(1+\mathcal U),\qquad
\mathcal U^2=0,\qquad
\boxed{\mathcal Q_q=4(1-\mathcal U)\mathcal D^{-1}\mathcal S
=\mathcal P_q^{-1}.}
\end{equation}
The geometric components of \(\mathcal U\) take values in the
first-order ideal $\epsilon\mathbb D$, and its gauge component is zero. The corrections in \(\mathcal M\) depend only on the
reduction of the geometric argument and on its gauge component.
Lemma~\ref{lem:nilpotent-mixed-perturbation} therefore gives
\(\mathcal U^2=0\), and \eqref{eq:rp-P-17} is a two-sided inverse
of finite differential order.

By \eqref{eq:rp-P-4}–\eqref{eq:rp-P-5}, the physical pairing is
\begin{equation}\label{eq:rp-P-18}
u=\mathcal P_qE-\mathcal J_qb,\qquad
E=\mathcal Q_q(u+\mathcal J_qb).
\end{equation}

To use \(\alpha\) as the degree-two translation term in the homological vector field, it remains to check that it is covariant and vanishes on gauge directions. Covariance follows from \eqref{eq:rp-P-7}--\eqref{eq:rp-P-10}. For the gauge directions:

\begin{itemize}
\item For a gerbe gauge variation, \(\mathfrak l^{\rm res}(E,b;G_\Lambda)=-\int\Lambda\wedge dE_C\).
\item For a relative gauge variation with parameter $\zeta$, expansion of \eqref{eq:rp-P-5} cancels its \(P(a,D_A\zeta)\) terms and gives \(\frac12\int\jmath_\epsilon P(\zeta,J_A(E))\).
\item For a diffeomorphism with vector field $X$, substitute the physical residual definitions, \eqref{eq:rp-P-3}, and the physical connection action. The \(D_Ak_g,D_\theta k_g\) terms integrate to \(-\int\jmath_\epsilon P(k_g,Z^{\rm res})\). The remaining tensorial terms are
\end{itemize}
\begin{equation}\label{eq:rp-P-12}
\begin{split}
\int_Y\{&
w\,\mathcal L_XH\wedge\psi_\varphi
+w\,H\wedge\mathcal L_X\psi_\varphi\\
&-2w\,X(\Phi)(H\wedge\psi_\varphi-\tfrac12d\varphi\wedge\varphi)\\
&-\tfrac12w(d\mathcal L_X\varphi\wedge\varphi+
d\varphi\wedge\mathcal L_X\varphi)\}\\
&=\int_Yd\,\iota_X\{w(H\wedge\psi_\varphi-\tfrac12d\varphi\wedge\varphi)\}.
\end{split}
\end{equation}
The first line follows from the transgression formula
\(\delta H=d\{\beta+\jmath_\epsilon P(a,\kappa+k)/4\}
-\jmath_\epsilon\{P(k,F_A)-P(\kappa,R_\theta)\}/2\)
in absolute variations, then \(\delta\psi_\varphi=*J_\varphi\delta\varphi\). Integration by parts gives precisely the \(M,V,\mathscr A,\mathscr H^{\rm res}\) entries in \eqref{eq:rp-P-5}. On a tensorial diffeomorphism \(\delta H=\mathcal L_XH\). Cartan's formula turns the remaining terms into the total derivative displayed in \eqref{eq:rp-P-12}. It vanishes as a local functional and, on closed \(Y\), after integration.

Equations \eqref{eq:rp-P-7} and \eqref{eq:rp-P-12} therefore give \(\alpha(G_u)=0\) for every gauge parameter \(u\). Changing between absolute and relative parameter coordinates adds such gauge directions to the field derivative, so the same conclusion holds in either coordinate system.

\subsection{The physical resolved \(Q\)-theory}
\label{subsec:physical-q}
\begin{theorem}[Resolved physical deformation theory]
\label{thm:resolved-physical}
On the compact torsion-free standard-embedding background with constant
background dilaton, the physical
BPS residuals \((C,\Sigma,T,I)\), the admissibility residual
\(\dd B^{[0]}\), the physical tangent--equation density pairing, and
the physical Bianchi, Noether, and weighted torsion identities define
a local formal \(Q\)-theory on sections of finitely many finite-rank real bundles
\[
 (\mathcal M_{\rm BPS}^{\rm res},Q_{\rm BPS}^{\rm res}),
 \qquad (Q_{\rm BPS}^{\rm res})^2=0.
\]
On ghost-number-zero fields its equations are
\[
 C=\Sigma=T=I=0,\qquad\dd B^{[0]}=0.
\]
The degree-one fields and the gauge hierarchy are those of the physical
problem, with the Hull connection induced by the fields. The higher
cochain bundles are those of \(C_{\rm compat}\) through degree \(8\).
\end{theorem}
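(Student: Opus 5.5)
The construction follows the affine cotangent lift of Lemma~\ref{lem:affine-cotangent-q}, combined with the invariant graph restriction of Lemma~\ref{lem:invariant-graph}. The base is the physical graded field space $\mathscr B_{\rm sp}$ with its symmetry differential $f_{\rm sp}$; Lemma~\ref{lem:native-invariance} gives $f_{\rm sp}^2=0$. We form the shifted cotangent lift $Q_{\rm cot}$ and enlarge the degree-two coordinates from $C^2_{\rm var}$ to the complete equation graph $\mathscr E^{\rm comp}_{0,q}=\{(e,y):\pi_2(q)e=j_2(q)y\}$ of Section~\ref{subsec:common-nonlinear}. Via the invertible change $(E,b)\mapsto(u,b)=(\mathcal P_qE-\mathcal J_qb,b)$ of \eqref{eq:rp-P-18}, with inverse built from $\mathcal Q_q$ in \eqref{eq:rp-P-17}, these coordinates are identified with the physical BPS equation arguments $E=(C,\Sigma,T,I)$ together with the independent three-form $b$. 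The degree-two component of $Q$ is the translation by the section: the covector $\alpha_q$ of \eqref{eq:rp-P-11} in the $u$-slot and $\bar b=dB^{[0]}$ in the $b$-slot. Equivalently, in physical coordinates it is $(\mathcal E_{\rm BPS}(q),dB^{[0]})$. This choice yields the stated ghost-number-zero equations $C=\Sigma=T=I=0$, $dB^{[0]}=0$. Lemma~\ref{lem:affine-cotangent-q} then reduces nilpotency in the lower rows to covariance of $\alpha$ together with $\alpha(G_u)=0$. Both were checked at the end of Section~\ref{subsec:physical-pairing}, using \eqref{eq:rp-P-7} and the total-derivative identity \eqref{eq:rp-P-12}.

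The upper rows, in degrees $3$ through $8$, are added next. In degree three the differential on $(e,y)$ is the pair $(G_q^\dagger e,\mathscr D_qy)$ of \eqref{eq:rv-N1-1}, with the graph constraint $\pi_3(q)G_q^\dagger e=j_3(q)\mathscr D_qy+\mathcal T_q(\bar y,y)$. The quadratic graph term $\Theta_3(q,r)=\mathcal T_q(r,r)/2$ is inserted so that the graph is preserved when $\bar y$ is replaced by the shifted coordinate $r$. In degrees $\ge4$ we use the weighted exterior derivatives $D_2,D_{8/3},d$ and the triangular coordinates $(\rho_C,\rho_V)$ of Appendix~\ref{subsec:nonlinear-upper}. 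The nilpotency $D_a^2=0$, which holds because $p=d\Phi^{[0]}$ is exact, handles the pure exterior tails. The identities \eqref{eq:resolved-identities-main} and \eqref{eq:rp-P-7} show that each new component vanishes when evaluated on the lower components, and this is exactly the tangency condition of Lemma~\ref{lem:invariant-graph} for the differential graph. That lemma, applied over the common lower fields, then gives $(Q^{\rm res}_{\rm BPS})^2=0$.

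Locality is verified next. Each component is a finite-order differential expression: $\mathcal J_q$ has order at most two, $\mathcal Q_q$ is a finite-order operator, and the weighted exterior tails are first order. The bundle ranks agree with those of $C_{\rm compat}$ through degree $8$, because linearizing at the background recovers \eqref{eq:compat-full-extension}: $K=0$ and $p=0$ there, so $D_a$ reduces to $d$ and $j_p$ reduces to the linear maps. The background is a zero of $Q$ by \eqref{eq:reduced-stationary-points}, together with $dB_0^{[0]}=0$.

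The main obstacle is the degree-three and degree-four closure, where the nonlinear Noether identity mixes with the weighted torsion identities. One must show that the bilinear term $\mathcal T_q$ and the correction terms \eqref{eq:rp-P-8}--\eqref{eq:rp-P-10} close up to the required order in $r,\eta,\rho$ without generating new identities. The first approach would be to filter by polynomial degree in the antifield coordinates: the linear part closes by Theorem~\ref{thm:compatibility-complex}, and the universal compatibility factorization there can absorb each higher obstruction by a correction in the next row, in the style of homological perturbation. Finiteness of the tail, which ends in degree $8$, guarantees that this process terminates.
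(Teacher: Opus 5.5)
Your architecture matches the paper's. You use the affine cotangent lift with the degree-two translation $\alpha$ (Lemma~\ref{lem:affine-cotangent-q}, via covariance and $\alpha(G_u)=0$), then Lemma~\ref{lem:invariant-graph} for the product of the physical cotangent differential and the weighted identity differential, then the coordinate change \eqref{eq:rp-P-18}. You also state the degree-three zero-ghost mechanism correctly: the constraint $\pi_3G_q^\dagger e=j_3\mathscr D_qy+\mathcal T_q(\bar y,y)$ together with $\Theta_3=\mathcal T_q(r,r)/2$.

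The gap is in the step that carries the content, namely tangency of the full graph $\pi_2u=-j_2r$, $\pi_3m=-j_3\eta+\Theta_3(r)$, $\pi_4t=*\Xi_7$. Your second paragraph derives it from \eqref{eq:resolved-identities-main} and \eqref{eq:rp-P-7}. Those identities only say that the compatibility expressions vanish on the actual section $\bar y$. Tangency must hold for independent $r,\eta,\rho$ and for nonzero ghosts. Compare $Z_q(y)$ in \eqref{eq:rv-N1-3}, which vanishes on $\bar y$ but not on independent coordinates.

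Your fourth paragraph then concedes that closure is open and proposes a homological-perturbation repair. The tool you cite does not supply what that repair needs. Theorem~\ref{thm:compatibility-complex} factors \emph{linear} operators $L$ with $LD_i=0$ at the background, and gives formal-jet exactness of the linear tail. Absorbing nonlinear, field- and ghost-dependent obstructions needs acyclicity of a nonlinear Koszul--Tate-type differential on local functions of the upper coordinates. That is a separate theorem you would have to prove. The bounded tail limits the number of antifield degrees, but it does not make each stage solvable.

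The paper closes this step with explicit finite corrections instead.
\begin{itemize}
\item In degree four, the cubic polynomial $P_7$ of \eqref{eq:rv-E-5} is needed to get $\delta\Xi_7=D_2U_6$ in \eqref{eq:rv-E-6]}. This makes the scalar graph \eqref{eq:rv-E-10} go to $-\mathsf R^\dagger z$.
\item The ghost-dependent part is not automatic. The field-dependent $j_3$ contains $-\iota_XB^{[0]}\wedge U_6$, and a gerbe transformation $\delta B^{[0]}=d\Lambda$ produces the Cartan defect \eqref{eq:rv-E-15}, which the lower differential does not cancel.
\item The paper absorbs that defect by the triangular change $\eta_H=\eta_H^\circ+\mathsf h_{\Lambda^{[0]}}(\Xi_7)$ of \eqref{eq:rv-E-13}. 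This inserts $\mathsf h_{dc^{[0]}}(\Xi_7)-\mathsf h_{\Lambda^{[0]}}(D_2U_6)$ into $Q\eta_H$ and $-D_2\mathsf h_{\Lambda^{[0]}}(\Xi_7)$ into $Q\rho_H$; see \eqref{eq:rv-E-14} and \eqref{eq:rv-E-16}.
\end{itemize}
Your proposal does not identify this gerbe-ghost homotopy from degree four to degree three. The corrections \eqref{eq:rp-P-8}--\eqref{eq:rp-P-10} that you invoke concern covariance of the degree-two equation coordinates, not this coupling. Without these identities, or a proved nonlinear acyclicity statement, Lemma~\ref{lem:invariant-graph} cannot be applied, so $(Q_{\rm BPS}^{\rm res})^2=0$ is not established.
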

\begin{proof}
Let $f=f_{\rm sp}$ be the lower symmetry differential in the
positive-form chart. Let \(u,m,t\) be the density-valued coordinates of
ghost numbers \(-1,-2,-3\), respectively. Its variational cotangent
lift is generated by
\[
 \mathcal J_f=u[fq]-m[f\gamma]+t[fc].
\]
For a fixed gauge parameter $u$, set $L_u=D_qG_u$.
Let $b(u,v)$ be the polarization of the quadratic term in the gauge-ghost
BRST rule. Write $\mathfrak b'_{\gamma,q}(V)$ and
$\mathfrak c'_{\gamma,q}(V)$ for the field derivatives of its quadratic
term and of the cubic scalar-ghost term, respectively.
Write $\mathfrak c'_\gamma(u)$ for the gauge-parameter derivative of the
latter. These definitions use the lower symmetry differential of
Section~\ref{sec:equivalence}. Their component formulas are given in Appendix~\ref{app:resolved-variational}.

Adding the degree-two translation \(\alpha\) of
\eqref{eq:rp-P-11} gives
\begin{equation}\label{eq:rp-P-13}
\begin{split}
Qu[V]
 &=\alpha[V]
   -u[L_\gamma V]
   -m[\mathfrak b'_{\gamma,q}(V)]
   +t[\mathfrak c'_{\gamma,q}(V)],\\
Qm[v_0]
 &=-u[G_{v_0}]
   -m[b(\gamma,v_0)]
   -t[X_{v_0}(c)]
   +t[\mathfrak c'_\gamma(v_0)],\\
Qt[a_0]
 &=-m[(0,0,da_0)]-t[\xi(a_0)] .
\end{split}
\end{equation}
Write
\[
 Q_{\rm cot}
\]
for \eqref{eq:rp-P-13} with \(\alpha=0\), and let \(T_\alpha\) denote
the translation
\[
 T_\alpha u=\alpha,\qquad
 T_\alpha m=T_\alpha t=0.
\]
Then
\[
 Q=Q_{\rm cot}+T_\alpha.
\]
The lower differential is homological. The translation \(T_\alpha\)
depends only on physical fields and acts only on their density variables.
The covariance and gauge-annihilation identities proved above are
therefore the hypotheses of Lemma~\ref{lem:affine-cotangent-q}. Hence
\[
 (Q_{\rm cot}+T_\alpha)^2=0
\]
on the physical fields, ghosts and density variables.

Now adjoin the weighted-torsion and admissibility hierarchy
\eqref{eq:rv-E-16}, \eqref{eq:rv-E-24}. Its values on physical fields
are obtained from the pairing \eqref{eq:rp-P-5} and the reconstruction
\eqref{eq:rv-N0-3}--\eqref{eq:rv-N0-6}. Relate the two sets of upper
coordinates by
\begin{equation}\label{eq:rp-P-14}
 \pi_2u=-j_2r,\qquad
 \pi_3m=-j_3\eta+\Theta_3(r),\qquad
 \pi_4t=+*\Xi_7.
\end{equation}
The first equation is the degree-two reconstruction. The second is the
Cartan identity \eqref{eq:rv-N1-8} applied to the leading part of
\eqref{eq:rp-P-5}, and the third is the scalar-gerbe identity
\eqref{eq:rv-E-6}. Their transformations under the physical lower
differential are precisely
\eqref{eq:rv-E-14}--\eqref{eq:rv-E-15}. Thus the subspace defined by
\eqref{eq:rp-P-14} is invariant under the product of the physical
cotangent differential and the geometric identity differential.
Both differentials are homological and agree on the lower fields.
Lemma~\ref{lem:invariant-graph} gives the resolved physical homological
vector field on this invariant graph.

Choose real density lifts $L_2,L_3,L_4$ of the graph values
$j_2r$, $j_3\eta-\Theta_3(r)$, and $-*_\varphi\Xi_7$, respectively,
in the chosen complements to $N^i=\ker\pi_i$, so that $P_NL_i=0$.
The remaining density components lie in $N^i$, for $i=2,3,4$.
For an unconstrained presentation of the same graph, set
\[
 \widetilde n_2=u+L_2,\qquad
 \widetilde n_3=m+L_3,\qquad
 \widetilde n_4=t+L_4.
\]
In these coordinates the upper differential is the one shown in
Appendix~\ref{app:resolved-variational}, with the degree-two through
degree-four density components given by the \(N\)-projections of
\eqref{eq:rp-P-13}. The geometric identity coordinates and the physical
fields and ghosts are unchanged.

Finally, the inverse coordinate change \eqref{eq:rp-P-18} allows the
BPS equation argument \(E\) and the independent three-form \(b\) to be
used as equation coordinates. Differentiating
\eqref{eq:rp-P-11} gives their components of the homological vector
field. At vanishing physical ghosts,
\begin{equation}\label{eq:rp-P-15}
 QE=\mathcal E_{\rm BPS}(q),\qquad
 Qb=dB^{[0]}.
\end{equation}
Hence the ghost-number-zero equations are
\[
 C=\Sigma=T=I=0,\qquad dB^{[0]}=0.
\]
The field derivatives of this coordinate change are given in
\eqref{eq:rp-P-19}.
\end{proof}

Let $L_{\rm BPS}^{\rm res}$ be the Taylor $L_\infty$ algebra of the
physical homological vector field at the background. Its deformation
functor is $\Def_{\rm BPS}^{\rm res}=\Def_{L_{\rm BPS}^{\rm res}}$
in Definition~\ref{def:formal-deformation-functor}. 

\section{The variational resolution and comparison}
\label{sec:resolved-variational}
The physical theory of Section~\ref{sec:resolved-physical} is defined on the ambient fields. We now extend the critical theory of
$W_{\mathrm{Courant},G_2}=W_{\rm ext}$ through the same compatibility
bundles and identify it with that physical theory. The common
reconstruction $(e,b)\mapsto(\mathscr H,\mathscr C,\mathscr N,b,\mathscr F)$
is the field-dependent version of
\eqref{eq:compat-reconstruct-old}--\eqref{eq:compat-reconstruction}.
Its inverse is \eqref{eq:compat-reconstruction-inverse}. It compares the Euler covector and independent
leading flux with the geometric equations. The graph calculations
in Appendix~\ref{app:resolved-variational} ensure that their identities
also agree on independent higher coordinates.

The construction below is the nonlinear analogue of the graph description
\eqref{eq:compat-intrinsic-graph}. Let $\mathcal M_{\mathrm{lead,var}}$
denote the leading geometric equation-and-identity quotient of
$\mathcal M_{\rm var}$, and let $\mathcal M_{\mathrm{lead,comp}}$ denote
the corresponding compatibility hierarchy over the same lower fields.
In the local graph coordinates of Appendix~\ref{app:resolved-variational},
\[
 \mathcal M^{\rm res}_{\rm var}
 \cong
 \mathcal M_{\rm var}
 \times_{\mathcal M_{\mathrm{lead,var}}}
 \mathcal M_{\mathrm{lead,comp}}.
\]
This is the ordinary pullback of local formal graded spaces defined by
the reconstructed equation and identity maps.
Appendix~\ref{app:resolved-variational} proves that the defining differential
graph is invariant under the product $Q$-structure. Thus the $N$-sector
is inherited from $\mathcal M_{\rm var}$, while the leading geometric
quotient is replaced by the compatibility hierarchy.

\subsection{The resolved variational \(Q\)-theory}
\label{subsec:nonlinear-q}
\begin{theorem}[Resolved variational deformation theory]
\label{thm:resolved-variational}
On the compact torsion-free standard-embedding background with constant
background dilaton, there is a local formal $Q$-theory
\[
 (\mathcal M_{\rm var}^{\rm res},Q_{\rm var}^{\rm res}),
 \qquad (Q_{\rm var}^{\rm res})^2=0,
\]
whose graded field space consists of sections of finitely many
finite-rank real bundles. Cochain degrees $-1,0,1$ are the original
scalar reducibility parameter, gauge parameters and physical fields. On
ghost-number-zero fields, the equations are
$\mathcal E_{\rm var}=0$ and $\dd B^{[0]}=0$. The bundles in higher
cochain degrees are those of $C_{\rm compat}$, and the linearization of
$Q_{\rm var}^{\rm res}$ at the background is $C_{\rm compat}$. The
complex terminates in degree $8$, no degree-one field is added, and each
Taylor coefficient is a differential operator of finite order in every
input.
\end{theorem}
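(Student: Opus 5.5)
The plan is to build $(\mathcal M_{\rm var}^{\rm res},Q_{\rm var}^{\rm res})$ as the nonlinear version of the graph description \eqref{eq:compat-intrinsic-graph}, namely the pullback
\[
 \mathcal M_{\rm var}\times_{\mathcal M_{\rm lead,var}}\mathcal M_{\rm lead,comp}.
\]
The $Q$-structure is obtained by restricting a product $Q$-structure to an invariant differential graph, via Lemma~\ref{lem:invariant-graph}.

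The first factor is the Hamiltonian theory $(\mathcal M_{\rm var},Q_{\rm var})$ of Definition~\ref{def:variational-deformation-functor}. By Theorem~\ref{thm:main-a} it is homological, and its degree-two components at zero ghosts are $\mathcal E_{\rm var}$.

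The second factor is the geometric hierarchy with coordinates $(r,\eta,\rho)$ on the bundles of $A_{\rm comp}^{\ge2}$. Its degree-two component is the reconstructed section $\bar y=(\mathscr H,\mathscr C,\mathscr N,\dd B^{[0]},0)$ of \eqref{eq:resolved-actual-main}. Its higher components are the weighted operators $\mathscr D_q$ of \eqref{eq:rv-N1-1}, followed by their weighted exterior tails. This factor squares to zero because $D_a^2=0$ for $p=\dd\Phi^{[0]}$, together with the identities \eqref{eq:resolved-identities-main}. The lower fields and ghosts act on it by the same $f_{\rm sp}$ as on $\mathcal M_{\rm var}$, so the two differentials agree on common lower fields.

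Next I impose the graph relations on the density coordinates $u,m,t$ of $\mathcal M_{\rm var}$:
\[
 \pi_2(q)u=-j_2(q)r,\qquad \pi_3(q)m=-j_3(q)\eta+\Theta_3(q,r),\qquad \pi_4(q)t=*_\varphi\Xi_7 .
\]
These are exactly the relations \eqref{eq:rp-P-14} used on the physical side, now read with the Euler covector in place of the physical pairing. For Lemma~\ref{lem:invariant-graph} I must check tangency: applying $Q_{\rm var}\oplus Q_{\rm geom}$ to each relation gives zero on the graph. In degree two this reduces to the field-dependent reconstruction $\pi_2(q)\mathcal E_{\rm var}(q)\leftrightarrow\bar y$ plus the ghost action. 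In degree three it reduces to the decomposition $\pi_3G_q^\dagger e=j_3\mathscr D_qy+\mathcal T_q(\bar y,y)$, which is the Noether identity. In degree four it reduces to the scalar-gerbe identity. The terms in $\Theta_3$ and in the derivatives of $j_2(q)$ and $j_3(q)$ with respect to $q$ must cancel against the field dependence of the projections.

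To present the graph on unconstrained coordinates, I choose real lifts $L_i$ of the graph values into complements of $N^i$, as in the proof of Theorem~\ref{thm:resolved-physical}. I then set $\widetilde n_i=u+L_i$ and so on, which leaves the $N$-sector inherited from $\mathcal M_{\rm var}$. Reading off components gives the remaining claims:
\begin{itemize}
\item The ghost-number-zero equations are the $N$-part of $\mathcal E_{\rm var}$ together with $\bar y=0$. By the pointwise invertibility of the reconstruction, these are $\mathcal E_{\rm var}=0$ and $\dd B^{[0]}=0$.
\item No degree-one fields are added, and the hierarchy terminates in degree $8$, as in $C_{\rm compat}$.
\item Each Taylor coefficient has finite differential order, because every ingredient does: $Q_{\rm var}$, the algebraic reconstruction, $\mathscr D_q$, $j_3$ (which contains one derivative via $dF$), and $\mathcal T_q$.
\item At $\mathfrak b_0$, where $p=0$ and $\phi=\phi_0$, the linearization reproduces $D^p=(d_N+\theta_pj_p,a^p)$ of \eqref{eq:compat-full-extension}, hence equals $C_{\rm compat}$.
\end{itemize}

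The main obstacle is the degree-three tangency. There one must show that the nonlinear Noether identity $G_q^\dagger\mathcal E_{\rm var}=0$ splits exactly as $j_3\mathscr D_q\bar y$ plus the alternating bilinear term, with no leftover term off the section, for arbitrary independent $r$. I would first verify this identity on the actual section. I would then polarize it, using that both sides are at most quadratic in the equation arguments once $q$ is fixed. The alternating symmetry of $\mathcal T_q$ is what makes $\Theta_3=\mathcal T_q(r,r)/2$ compatible with the Jacobi-type consistency in degree four.
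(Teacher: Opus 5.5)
Your overall architecture is the paper's: form the pullback of $\mathcal M_{\rm var}$ with the leading compatibility hierarchy, restrict a product $Q$ to an invariant graph by Lemma~\ref{lem:invariant-graph}, lift the graph values $L_i$ into complements of $N^i$, and then read off termination, linearization and the zero-ghost equations. However, the step where you assert that the field-derivative terms of $j_2(q)$, $j_3(q)$ and $\Theta_3(q,\cdot)$ ``must cancel'' is false for the product you propose.

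You take the geometric factor to be the weighted hierarchy $\mathscr D_q$ and its exterior tails, with the lower ghosts acting on the upper coordinates only by tensorial transport. In the absolute parameter chart, the graph $\pi_3z=j_3\eta-\Theta_3(r)$ contains the term $-\int_Y w_0\,\iota_XB^{[0]}\wedge U_6$. Here $U_6=\eta_C+\tfrac23\eta_V\wedge\psi-P_6$, and the term comes from the $B^{[0]}$-dependence of \eqref{eq:rv-N1-6} and \eqref{eq:rv-N1-7}. The gerbe ghost hits this term through $QB^{[0]}\ni-\dd\Lambda^{[0]}$. The variational $z$-row, by contrast, sees $\Lambda$ only through $\mathcal L_X\Lambda$ in $b_\Lambda$ of \eqref{eq:rv-E-18}. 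Writing $\Lambda^{[0]}=\vartheta\Lambda_0$, the mismatch is
\[
\vartheta\int_Y w_0\,\dd(\iota_X\Lambda_0)\wedge U_6=-\vartheta\int_Y w_0\,(\iota_X\Lambda_0)\,D_2U_6 .
\]
This does not vanish for independent upper coordinates, because $D_2U_6=\delta\Xi_7$. Similarly, the term $-\tau[X_u(c)]$ in $\mathscr Q_z$ has no counterpart in $Q(j_3\eta)$. So your product vector field is not tangent to the graph, and Lemma~\ref{lem:invariant-graph} cannot be applied.

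The missing idea is a ghost-dependent homotopy in the geometric factor. The paper first makes the triangular change $\rho_C=\rho_C^\circ+P_7$, so that $\delta\Xi_7=D_2U_6$; your degree-four ``scalar-gerbe identity'' already needs this. It then makes the change $\eta_H=\eta_H^\circ+\mathsf h_{\Lambda^{[0]}}(\Xi_7)$ of \eqref{eq:rv-E-13}. This puts $\mathsf h_{\dd c^{[0]}}(\Xi_7)-\mathsf h_{\Lambda^{[0]}}(D_2U_6)$ into $Q\eta_H$ and $-D_2\mathsf h_{\Lambda^{[0]}}(\Xi_7)$ into $Q\rho_H$; see \eqref{eq:rv-E-14} and \eqref{eq:rv-E-16}. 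Via \eqref{eq:rv-E-12}, these terms cancel the defect above exactly and supply $-\tau[X(c)]$ through \eqref{eq:rv-E-10}. Square-zero survives because both changes are invertible, but the geometric differential is no longer just the weighted exterior tails with Lie derivatives.

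Two smaller points. First, proving $\pi_3G_q^\dagger e=j_3\mathscr D_qy+\mathcal T_q(\bar y,y)$ by checking it on the actual section and then polarizing gives nothing. For fixed $q$ both sides are linear in $y$. At $y=\bar y(q)$ both vanish trivially, by the Noether identity, $\mathscr D_q\bar y=0$ and $\mathcal T_q(\bar y,\bar y)=0$. The identity must instead be derived directly by Cartan's formula, as in Appendix~\ref{app:rv-cartan-density}. Second, your graph relations carry the signs of the physical densities $u=-e$, $m=-z$, $t=-\tau$. For the variational densities, with $Qr=\bar y$, they must read $\pi_2e=j_2r$, $\pi_3z=j_3\eta-\Theta_3(r)$ and $\pi_4\tau=-*\Xi_7$; otherwise tangency already fails in degree two.
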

\begin{proof}
\par\smallskip\noindent\textbf{Variational equation and Noether components.}\quad
Write $\gamma=(\xi,s,\Lambda)$ for the gauge ghosts and $c$ for the
scalar reducibility ghost. For a gauge parameter $u$, let
$L_u=D_qG_u$ and let $X_u$ be its vector-field component.
The bilinear map $b(u,v)$ is the parameter bracket in
\eqref{eq:rv-E-18}. The maps
$\mathfrak b'_{\gamma,q}$ and $\mathfrak c'_{\gamma,q}$ are the field
derivatives of the quadratic gauge-ghost and cubic scalar-ghost terms.
The map $\mathfrak c'_\gamma$ differentiates the cubic term with respect
to a gauge ghost. Their formulas are
\eqref{eq:rv-E-25}--\eqref{eq:rv-E-27}. In the following formulas
$L_i$ are the lifts \eqref{eq:rv-E-20}, and $P_N$ is the
projection onto the \(N\)-components. The cotangent Hamiltonian $H_f=\iota_f\vartheta$ records their
covector-first adjoints, where $f=f_{\rm sp}$ in the positive-form chart.
On the variational density bundle define
\begin{equation}\label{eq:rv-E-28}
\boxed{\begin{aligned}
\mathscr Q_e[V]&=\mathcal E_{\rm var}[V]
-e[L_\gamma V]-z[\mathfrak b_{\gamma,q}'(V)]
+\tau[\mathfrak c_{\gamma,q}'(V)],\\
\mathscr Q_z[u]&=-e[G_u]-z[b(\gamma,u)]
-\tau[X_u(c)]+\tau[\mathfrak c_\gamma'(u)],\\
\mathscr Q_\tau[a_0]&=-z[(0,0,da_0)]-\tau[\xi(a_0)].
\end{aligned}}
\end{equation}
The covector is written first, as in
Section~\ref{sec:deformation-algebra}. Thus, for example, \(-e[L_{\vartheta u}V]=+\vartheta(L_u^\dagger e)[V]\). Equation \eqref{eq:rv-E-28} defines these density-valued outputs by
integration by parts against $V,u,a_0$. Equations \eqref{eq:rv-N0-15}--\eqref{eq:rv-N0-17}, \eqref{eq:rv-E-18}, \eqref{eq:rv-E-26}, \eqref{eq:rv-E-27} display the coefficients of those finite-order adjoints. 

The \(N\)-components of the differential are
\begin{equation}\label{eq:rv-E-29}
\boxed{
\begin{aligned}
 Qn_2&=P_N\mathscr Q_e,\\
 Qn_3&=P_N\mathscr Q_z,\\
 Qn_4&=P_N\mathscr Q_\tau,
\end{aligned}}
\end{equation}
where \(e=n_2+L_2,\qquad z=n_3+L_3,\qquad \tau=n_4+L_4\).

By Theorem~\ref{thm:main-a}, equivalently Lemma~\ref{lem:cotangent-master},
the variational cotangent differential \eqref{eq:rv-E-28} together with
the lower symmetry differential is homological.

\par\smallskip\noindent\textbf{Geometric identity complex.}\quad
Independently, the weighted exterior differential has zero square by
\eqref{eq:rv-E-23} and naturality under the lower gauge hierarchy.
The two triangular coordinate changes
\eqref{eq:rv-E-8}, \eqref{eq:rv-E-13} give the geometric components
\eqref{eq:rv-E-16}, \eqref{eq:rv-E-24}. Being invertible, they
preserve this square-zero property.

\par\smallskip\noindent\textbf{Invariant graph.}\quad
It remains to prove that the graph $e=n_2+L_2$, $z=n_3+L_3$, $\tau=n_4+L_4$ is invariant
under the two differentials just constructed. Modulo $\epsilon$,
\eqref{eq:rv-E-30} contains only the diffeomorphism, gerbe and scalar
terms. The action on $e$ is the tensor-density action already built into
the equation reconstruction. The action on $z$ consists of the tensorial
term, the gerbe mixing and the scalar term, which are matched by
\eqref{eq:rv-E-14}--\eqref{eq:rv-E-15}. The action on $\tau$ is
$-\mathsf R^\dagger z-\mathcal L_\xi\tau$ and is matched by
\eqref{eq:rv-E-6} and \eqref{eq:rv-E-10}. Finally,
\eqref{eq:rv-E-2} gives the quadratic correction to the $z$ graph. Hence
the graph relations are invariant.

The product differential over the common lower fields and ghosts is homological.
The invariant graph is solved locally by \eqref{eq:rv-E-20}, with free
coordinates $n_2,n_3,n_4,r,\eta,\rho,\ldots,V_7$.
Lemma~\ref{lem:invariant-graph} gives the resolved variational homological
vector field on this graph, with components
\eqref{eq:rv-E-16}, \eqref{eq:rv-E-24}, \eqref{eq:rv-E-25} and
\eqref{eq:rv-E-29}.

\par\smallskip\noindent\textbf{Termination and linearization.}\quad
The geometric identity sequences terminate when their horizontal form
degree reaches seven:
\[
\begin{aligned}
&H_5 \xrightarrow{D_2} H_6 \xrightarrow{D_2} H_7 \longrightarrow 0,\\
&C_6 \xrightarrow{D_{8/3}} C_7 \longrightarrow 0,\\
&B_4 \xrightarrow{\dd} B_5 \xrightarrow{\dd} B_6
     \xrightarrow{\dd} B_7 \longrightarrow 0,\\
&V_2 \xrightarrow{\dd} V_3 \xrightarrow{\dd} V_4
     \xrightarrow{\dd} V_5 \xrightarrow{\dd} V_6
     \xrightarrow{\dd} V_7 \longrightarrow 0.
\end{aligned}
\]
Thus these sequences end in cochain degrees \(5,4,6,\) and \(8\),
respectively. The scalar coordinates \(f,\eta_F\) form the two-term
summand
\[
 \Omega^0(Y)\xrightarrow{\;\pm1\;}\Omega^0(Y),
\]
and the \(N\)-sector ends with \(\tau\) in degree four.

Appendix~\ref{app:rv-upper-actions} lists all nonzero ghost-dependent upper
actions and homotopies. No additional graded bundle is required above degree eight.

At the marked background, the quadratic and cubic graph corrections
and the gerbe coordinate change have vanishing linearization. Since the
background dilaton is constant, \(p=0\), and hence
\[
 D_a=\dd
\]
for every \(a\). The linearizations of the graph maps are
\(j_2,j_3,j_4\). Moreover, projecting \eqref{eq:rv-E-28} to the
\(N\)-component gives the mixed unary differential of
Appendix~\ref{app:compatibility}, including the curvature and
induced-Hull terms. With the suspension signs fixed there, the
linearization is therefore exactly \(C_{\rm compat}\).

Finally, the invertible reconstruction identifies the zero-ghost
equations with
\[
 \mathcal E_{\rm var}=0,
 \qquad
 \dd B^{[0]}=0.
\]
All coordinate changes and their inverses are local differential
operators, and every Taylor coefficient has finite differential order.
\end{proof}
With the real simplicial convention of
Definition~\ref{def:formal-deformation-functor}, define
\[
 \Def_{\rm var}^{\rm res}(R)
 =\operatorname{MC}_\bullet
 (L_{\rm var}^{\rm res}\otimes_{\R}\mathfrak m_R).
\]
Here \(L_{\rm var}^{\rm res}\) denotes the Taylor \(L_\infty\) algebra
of the displayed homological vector field. The graded bundle has finitely many finite-rank components, but the
Taylor arity need not be bounded.

\subsection{Physical/variational \(Q\)-isomorphism}
\label{subsec:physical-comparison}
We now compare the two resolutions at an arbitrary field \(q\in\mathscr V\). The BPS equation
arguments are mapped to variational density covectors by
\eqref{eq:rp-P-5}, and the admissibility argument is preserved as the
second component. The higher geometric identity coordinates are the
same in the two constructions. These maps define the local formal
\(Q\)-isomorphism below.

The first-variation formula and \eqref{eq:rp-P-3}, with the convention \(\mathcal E_{\rm var}=-dW_{\rm ext}\), give
\begin{equation}\label{eq:rp-P-20}
\boxed{
\mathcal E_{\rm var}(q)
=-\mathcal P_q\mathcal E_{\rm BPS}(q)+\mathcal J_q\bar b .
}
\end{equation}
The sign of \(\mathcal J\) is fixed by the gravitational term
\(-\frac18\int\jmath_\epsilon w_0P(\kappa_V,
R_\theta\wedge\psi-\mathscr R_{0,q}(\mathcal E_{\rm BPS}^{[0]})\wedge\psi)\).
The term \(\mathcal J_q\bar b\) is precisely the correction that remains when \(dB^{[0]}\neq0\).

The resolved equation comparison and its inverse on arbitrary arguments are
\begin{equation}\label{eq:rp-P-21}
\boxed{
\mathcal P_q^{\rm res}(E,b)=(-\mathcal P_qE+\mathcal J_qb,\ b),\qquad
(\mathcal P_q^{\rm res})^{-1}(e,b)=(-\mathcal Q_q(e-\mathcal J_qb),\ b).
}
\end{equation}
Both compositions are the identity by \eqref{eq:rp-P-17}.

\begin{theorem}[Resolved physical/variational equivalence]
\label{thm:resolved-q-equivalence}
There is a local formal graded isomorphism
\[
 U_{\rm res}:
 (\mathcal M_{\rm BPS}^{\rm res},Q_{\rm BPS}^{\rm res})
 \longrightarrow
 (\mathcal M_{\rm var}^{\rm res},Q_{\rm var}^{\rm res})
\]
such that
\[
 \boxed{(U_{\rm res})_*Q_{\rm BPS}^{\rm res}
 =Q_{\rm var}^{\rm res}.}
\]
It is the identity on the lower physical fields and gauge hierarchy.
Its degree-two part is \(\mathcal P_q^{\rm res}\). On the higher
cochain coordinates it is the graph identification described below.
\end{theorem}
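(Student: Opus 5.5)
The map \(U_{\rm res}\) is the identity on every coordinate the two theories share and \(\mathcal P_q^{\rm res}\) on the equation row. The intertwining will then follow from uniqueness of the homological vector field on an invariant graph.

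First I fix the map and its inverse. Both resolved theories have the same lower fields \(q\), ghosts \((\gamma,c)\) and reducibility data. In the resolved physical theory of Theorem~\ref{thm:resolved-physical}, the degree-two coordinates can be taken as \((E,b)\) via the inverse change \eqref{eq:rp-P-18}. In the resolved variational theory of Theorem~\ref{thm:resolved-variational} they are \((e,b)\), with \(e=n_2+L_2\). The identity coordinates \(\eta,\rho,H_5,\dots,V_7\) and the \(N\)-components \(n_3,n_4\) lie in the same bundles in both constructions. So I define \(U_{\rm res}\) to be the identity on lower fields, ghosts and all higher geometric identity coordinates, and \(\mathcal P_q^{\rm res}\) of \eqref{eq:rp-P-21} in degree two. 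By \eqref{eq:rp-P-17}, \(\mathcal P_q^{\rm res}\) and its displayed inverse are two-sided inverses. All Taylor coefficients are finite-order differential operators because \(\mathcal P_q,\mathcal Q_q,\mathcal J_q\) are. One must also check that \(\mathcal P_q^{\rm res}\) respects the graph relations \(\pi_2u=-j_2r\) versus \(\pi_2 e=j_2(q)y\). This holds because the leading part of \(-\mathcal P_qE+\mathcal J_qb\) is the density that the common reconstruction of Section~\ref{subsec:common-nonlinear} sends to \(y\). The physical coordinate \(u=\mathcal P_qE-\mathcal J_qb\) and the variational \(e\) therefore differ only by the sign already built into \eqref{eq:rp-P-14}. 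This yields a real local formal isomorphism.

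Next I compare the homological vector fields. By \eqref{eq:rp-P-20}, \(\alpha_q=\tfrac14\mathfrak l_q^{\rm res}(\mathcal E_{\rm BPS}(q),\bar b)\) corresponds under \(U_{\rm res}\) to \(\mathcal E_{\rm var}(q)\), up to the sign convention \(e\leftrightarrow -u\). The physical translation term \(T_\alpha\) is thus carried to the Euler term of \eqref{eq:rv-E-28}. The remaining parts of \eqref{eq:rp-P-13} and \eqref{eq:rv-E-28} are the same cotangent lift of \(f_{\rm sp}\), written in the coordinates \(u\) and \(-e\). Hence on the lower fields and on the unconstrained density coordinates the pushforward \((U_{\rm res})_*Q_{\rm BPS}^{\rm res}\) has the same components as \(Q_{\rm var}^{\rm res}\). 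Concretely, the \(N\)-projections \eqref{eq:rv-E-29} agree with the \(N\)-projections of \eqref{eq:rp-P-13}. The geometric identity differentials \eqref{eq:rv-E-16}, \eqref{eq:rv-E-24} are used verbatim in both theories. On higher coordinates the map is the identity, so these components agree as well.

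The last step is to conclude that the full vector fields coincide. Both theories are the restrictions, via Lemma~\ref{lem:invariant-graph}, of product differentials to invariant graphs over the same lower fields, and \(U_{\rm res}\) identifies the two graphs. A vector field tangent to the graph is determined by its components on the free graph coordinates \(n_2,n_3,n_4,r,\eta,\rho,\dots\). Matching those components therefore gives \((U_{\rm res})_*Q_{\rm BPS}^{\rm res}=Q_{\rm var}^{\rm res}\).

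The main obstacle is the degree-two equation row. There \(U_{\rm res}\) depends on the field \(q\), so the pushforward picks up a term \(D_q\mathcal P^{\rm res}_q[Q q]\) from differentiating the coordinate change along the gauge directions of \(Qq\) (the field derivatives \eqref{eq:rp-P-19}). I would show that this extra term exactly reproduces the differences between \(L_\gamma\)-type terms in the two cotangent lifts. To do so I would rely on the covariance of \(\alpha\) and on the correction terms \eqref{eq:rp-P-8}--\eqref{eq:rp-P-10}, which are defined precisely as the variational adjoints of the common-frame change.
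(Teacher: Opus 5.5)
Your route is the paper's: identity on the lower fields, ghosts and geometric identity coordinates, \(\mathcal P_q^{\rm res}\) in degree two, \(\alpha=-\mathcal E_{\rm var}\) from \eqref{eq:rp-P-20}, the same cotangent lift and weighted exterior rows in both theories, and matching graphs. There are, however, two problems.

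\textbf{The map is not specified correctly in degrees three and four.} You say \(n_3,n_4\) ``lie in the same bundles'', but the only sign you put in is on \(e\leftrightarrow -u\). The map must be \(z=-m\) and \(\tau=-t\), equivalently \(n_i=-\widetilde n_i\) for \(i=2,3,4\). Identity on \(n_3,n_4\) does not work. This sign is forced. Once \(r,\eta,\rho\) are mapped by the identity, the graphs \eqref{eq:rp-P-14} and \eqref{eq:rv-E-2}, \eqref{eq:rv-E-10} differ by exactly this sign. Moreover, the \(Qm\) and \(Qt\) rows of \eqref{eq:rp-P-13} match \(\mathscr Q_z\) and \(\mathscr Q_\tau\) of \eqref{eq:rv-E-28} only after the sign change. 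As stated, your claim that the \(N\)-projections \eqref{eq:rv-E-29} agree with those of \eqref{eq:rp-P-13} is off by this sign. Even the degree-two row needs it, since \(-m[\mathfrak b'_{\gamma,q}]\) and \(t[\mathfrak c'_{\gamma,q}]\) occur there.

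\textbf{The ``main obstacle'' is misdiagnosed.} The \(L_\gamma\), \(\mathfrak b'\) and \(\mathfrak c'\) terms of \eqref{eq:rp-P-13} and \eqref{eq:rv-E-28} are literally the same formulas. Your second paragraph already says so. There is therefore no difference for a field-derivative term to reproduce. The covariance of \(\alpha\) and the corrections \eqref{eq:rp-P-8}--\eqref{eq:rp-P-10} were used to make \(Q^{\rm res}_{\rm BPS}\) homological via Lemma~\ref{lem:affine-cotangent-q}, and they play no role in this proof.

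In density coordinates, \(e=-u\) has constant coefficients, so no derivative term arises at all. In the \((E,b)\) chart, the extra term cancels by the chain rule, because \eqref{eq:rp-P-19} is itself the derivative of \(u=\mathcal P_qE-\mathcal J_qb\):
\[
 Qe=-\mathcal P_q\,QE+\mathcal J_q\,Qb-(D_q\mathcal P_q[fq])E+(D_q\mathcal J_q[fq])b
 =\mathcal E_{\rm var}(q)-\mathscr A_f(u,m,t).
\]
Here one inserts \eqref{eq:rp-P-19} and \(Qb=\bar b-\mathcal L_{\xi^{[0]}}b\). The \(D_q\mathcal P_q\), \(D_q\mathcal J_q\) and \(\mathcal J_q\mathcal L_{\xi^{[0]}}b\) terms then cancel pairwise, and \eqref{eq:rp-P-20} gives the first term. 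After substituting \(u=-e\), \(m=-z\), \(t=-\tau\), the right side is exactly \(\mathscr Q_e\). So the degree-two check is one line once the degree-three and degree-four signs are in place.
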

\begin{proof}
On the density-valued coordinates define
\begin{equation}\label{eq:rp-P-22}
 e=-u,\qquad z=-m,\qquad \tau=-t,
\end{equation}
and let \(U_{\rm res}\) be the identity on
\[
 q,\gamma,c,r,\eta,\rho,H_7,V_4,B_6,\ldots,V_7.
\]
Equivalently, in the free graph coordinates,
\[
 n_i=-\widetilde n_i,\qquad i=2,3,4.
\]
The degree-two map in the physical equation coordinates \((E,b)\) is
\eqref{eq:rp-P-21}. 

We verify that \(U_{\rm res}\) intertwines the differentials. First,
substituting
\[
 e=-u,\qquad z=-m,\qquad \tau=-t
\]
into \eqref{eq:rp-P-13}, and using the equation-coordinate comparison
\eqref{eq:rp-P-20} together with its field derivative
\eqref{eq:rp-P-19}, gives the variational density differential
\eqref{eq:rv-E-28}. 

Next, the physical graph relations \eqref{eq:rp-P-14} become
\[
\begin{aligned}
 \pi_2e
   &=-\pi_2u
     =j_2r,\\
 \pi_3z
   &=-\pi_3m
     =j_3\eta-\Theta_3(r),\\
 \pi_4\tau
   &=-\pi_4t
     =-*\Xi_7.
\end{aligned}
\]
These are precisely the variational graphs
\eqref{eq:rv-E-2} and \eqref{eq:rv-E-10}. Their invariance under the
lower differential is given by
\eqref{eq:rv-E-14}--\eqref{eq:rv-E-15}.

Finally, \(U_{\rm res}\) is the identity on the geometric identity
coordinates, and both theories use the same weighted exterior
differential given in \eqref{eq:rv-E-16} and \eqref{eq:rv-E-24}.
It is also the identity on the physical fields, ghosts, and scalar
reducibility coordinate. Hence \(U_{\rm res}\) intertwines every
component of the two homological vector fields:
\begin{equation}\label{eq:rp-P-23}
 \boxed{
 (U_{\rm res})_*Q_{\rm BPS}^{\rm res}
 =
 Q_{\rm var}^{\rm res}.
 }
\end{equation}
\end{proof}
\subsection{Derived deformation functors}
\label{subsec:physical-functors}
Let $L_{\rm BPS}^{\rm res}$ be the Taylor $L_\infty$-algebra of
$Q_{\rm BPS}^{\rm res}$ at the marked background, and define
$\Def_{\rm BPS}^{\rm res}=\Def_{L_{\rm BPS}^{\rm res}}$ as in
Definition~\ref{def:formal-deformation-functor}.

\begin{corollary}\label{cor:resolved-functors}
For every nilpotent augmented real dg-Artin algebra \(R\), there is a
natural isomorphism of simplicial sets
\[
 \boxed{\Def_{\rm BPS}^{\rm res}(R)
 \cong\Def_{\rm var}^{\rm res}(R).}
\]

\end{corollary}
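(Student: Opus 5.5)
The plan is to deduce the corollary from Theorem~\ref{thm:resolved-q-equivalence} by passing from the local formal \(Q\)-isomorphism \(U_{\rm res}\) to an isomorphism of Taylor \(L_\infty\)-algebras. First, both resolved theories vanish at the marked background \(\mathfrak b_0\): the lower fields are the standard embedding and all ghosts, density covectors and identity coordinates vanish. \(U_{\rm res}\) is the identity on the lower physical fields and maps zero upper coordinates to zero. Its degree-two part \(\mathcal P^{\rm res}_q\) is linear in \((E,b)\), and the graph identifications are linear in the upper coordinates up to \(\Theta_3(r)\) and similar terms vanishing at zero. Hence \(U_{\rm res}(\mathfrak b_0)=\mathfrak b_0\).

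Next I expand \(U_{\rm res}\) at \(\mathfrak b_0\) in Taylor coefficients \(U_n\). These are graded-symmetric multilinear differential operators of finite order by Definition~\ref{def:real-local-formal}, and after the suspension \eqref{eq:grading-suspension} they form a collection of maps of degree zero on the suspended spaces. The intertwining identity \((U_{\rm res})_*Q_{\rm BPS}^{\rm res}=Q_{\rm var}^{\rm res}\), expanded order by order in the field displacement, is exactly the sequence of \(L_\infty\)-morphism equations between \(L_{\rm BPS}^{\rm res}\) and \(L_{\rm var}^{\rm res}\). This uses the same dictionary between Taylor coefficients of \(Q\) and the brackets \(\ell_n\) that was used to define the Taylor algebras, with the sign conventions of Appendix~\ref{app:cyclic-conventions}. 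Because the inverse of \(U_{\rm res}\) is again real local formal, the same argument gives an inverse \(L_\infty\)-morphism. In particular the linear part \(U_1\) is invertible, being the identity in low degrees and \(\mathcal P^{\rm res}_{q_0}\) in degree two, with inverse given by \eqref{eq:rp-P-21}.

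For a nilpotent augmented dg-Artin algebra \(R\) and each simplex level \(n\), I tensor with \(\mathfrak m_R\otimes\Omega^\bullet_{\rm poly}(\Delta^n)\). Nilpotency of \(\mathfrak m_R\) truncates the formal series \(\sum_k\frac1{k!}U_k(x,\dots,x)\) to a finite sum. This gives a map of Maurer--Cartan sets that is compatible with the internal differentials of \(R\) and of polynomial forms. The compatibility holds because the \(U_k\) act only on the field factor and the tensor-product signs are the standard ones of \cite[Section~4]{Getzler}. The map commutes with face and degeneracy maps, since these act only on the polynomial-form factor, and it is natural in \(R\). Composing with the map induced by \(U_{\rm res}^{-1}\) gives the identity on both sides, because composition of formal maps corresponds to composition of \(L_\infty\)-morphisms. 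The result is a natural isomorphism of simplicial sets, not merely a weak equivalence.

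The main obstacle is the second step. One must check that a local formal \(Q\)-isomorphism whose coefficients are real but not \(\mathbb D\)-multilinear still induces a strict \(L_\infty\)-isomorphism on the real Taylor algebras, with the suspension signs matching on both sides. I would handle this by working entirely in the real category of Definition~\ref{def:real-local-formal}: both Taylor algebras and both deformation functors are defined over \(\mathbb R\) after expansion into real coefficient coordinates, so no \(\mathbb D\)-structure needs to be preserved. Convergence is never at issue, since only finitely many terms survive after tensoring with \(\mathfrak m_R\).
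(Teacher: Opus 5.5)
Your proposal is correct and follows essentially the same route as the paper: push Maurer--Cartan elements through the Taylor series of \(U_{\rm res}\), which truncates because \(\mathfrak m_R\) is nilpotent, use the intertwining identity \eqref{eq:rp-P-23} to carry the Maurer--Cartan equation to itself, and use the inverse coordinate change to obtain a strict inverse compatible with faces, degeneracies and dg-Artin maps. Your explicit repackaging of \(U_{\rm res}\) as a strict \(L_\infty\)-isomorphism of the real Taylor algebras, and your check that it fixes the marked background, spell out what the paper compresses into its appeal to the chain rule for the formal coordinate transformation.
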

\begin{proof}
Take the Taylor operations of the two resolutions with the suspension signs of Section~\ref{sec:deformation-algebra} and regard them as real \(L_\infty\) algebras. For every nilpotent augmented real dg-Artin algebra \(R\), define
\begin{equation}\label{eq:rp-P-24}
\operatorname{Def}_{\rm var}^{\rm res}(R)
=\operatorname{MC}_\bullet(L_{\rm var}^{\rm res}\otimes_\mathbb R\mathfrak m_R),
\qquad
\operatorname{Def}_{\rm BPS}^{\rm res}(R)
=\operatorname{MC}_\bullet(L_{\rm BPS}^{\rm res}\otimes_\mathbb R\mathfrak m_R).
\end{equation}
The coefficients of \eqref{eq:rp-P-22} and its inverse are real local formal Taylor maps on smooth sections. At each arity they are finite-order differential operators in every input. On \(\mathfrak m_R^N=0\), only arities less than \(N\) can contribute, including after tensoring with polynomial simplex forms. Extend all operations with the cochain/external Koszul sign. The total unary differential includes \(d_R\) and \(d_{\Delta}\). These commute with the horizontal maps and obey the chain rule for the formal coordinate transformation. Equation \eqref{eq:rp-P-23} therefore carries the Maurer–Cartan equation to the Maurer–Cartan equation.

The inverse \eqref{eq:rp-P-21} and the inverse graph signs give the inverse at every simplex. Both commute with dg-Artin maps, faces and degeneracies. Consequently the two deformation functors are naturally isomorphic as simplicial sets:
\begin{equation}\label{eq:rp-P-25}
\boxed{\operatorname{Def}_{\rm BPS}^{\rm res}\cong\operatorname{Def}_{\rm var}^{\rm res}.}
\end{equation}

\end{proof}

\section{Finite-dimensional formal moduli}
\label{sec:physical-minimal}
\label{sec:finite-moduli}

The local formal \(Q\)-isomorphism of
Section~\ref{subsec:physical-comparison} identifies the resolved physical and
resolved variational deformation problems. The resulting sequence is
\[
\Def_{\rm BPS}^{\rm res}\simeq\Def_{\rm var}^{\rm res}
\longrightarrow\Def_{\rm var}.
\]
We now pass these theories to finite-dimensional minimal models. The resolved model represents the
resolved physical deformation problem of structures on the fixed
string Courant algebroid. The variational theory on
\(\mathscr V\) is cyclic and gives the effective potential. The cohomology comparison is an isomorphism in degree one and has
kernel $H^3_{\rm dR}(Y)_{7\oplus27}$ in degree two.

Throughout this section, \(Y\) is smooth, closed,
and the background is the torsion-free standard embedding. The
background gerbe/anomaly class and its chosen trivialization are held
fixed.  The analytic contractions
are taken over \(\mathbb R\). The mixed \(\mathbb D/\mathbb D_0\)
coefficient directions remain part of the complexes before transfer.

\subsection{Elliptic contractions}
\label{subsec:common-elliptic-contractions}

Apply Proposition~\ref{prop:mixed-order-hodge} to
\(C_{\rm var}\) and \(C_{\rm compat}\). Their weighted symbol
sequences are exact by Proposition~\ref{prop:unary-ellipticity} and
Theorem~\ref{thm:resolved-ellipticity}, respectively, and \(Y\) is
closed. Use the component weights of
Section~\ref{subsec:variational-elliptic} and
Appendix~\ref{app:compat-symbol-proof}.
The contraction \eqref{eq:resolved-full-sdr} for \(C_{\rm compat}\)
includes the endpoint degrees and the algebraic pair \(f,F\).
Write \((i_{\rm cpt},p_{\rm cpt},h_{\rm cpt})\) for this contraction
and \((i_0,p_0,\mathsf h_0)\) for the one on \(C_{\rm var}\).
Both satisfy \eqref{eq:resolved-sdr-identities}. They are real smooth
maps and need not preserve the coefficient filtration.

For a finite-dimensional dg ideal \(\mathfrak m_R\), tensor these
maps with its identity.  The total differential is
\[
 d_{\rm tot}(v\otimes a)=dv\otimes a+(-1)^{|v|}v\otimes d_Ra.
\]
The two external terms in
\(d_{\rm tot}(\mathsf h\otimes1)+(\mathsf h\otimes1)d_{\rm tot}\)
have coefficients \((-1)^{|v|-1}\) and \((-1)^{|v|}\), so cancel.
The target carries the signed external differential, all side
identities persist, and finite-dimensional coefficients require no
tensor completion.

\subsection{The resolved physical minimal model}
\label{subsec:resolved-minimal-model}

Set $H_{\rm res}^\bullet=H^\bullet(C_{\rm compat},d_{\rm compat})$.
Write $i_{\rm cpt},p_{\rm cpt},h_{\rm cpt}$ for the preceding
contraction of $C_{\rm compat}$. The linearization of $U_{\rm res}$
is the cochain isomorphism
\[
U_1:(L_{\rm BPS}^{\rm res},\ell_1^{\rm BPS})
   \longrightarrow(C_{\rm compat},d_{\rm compat}).
\]
Transport the contraction through $U_1$:
\[
i_{\rm res}=U_1^{-1}i_{\rm cpt},\qquad
p_{\rm res}=p_{\rm cpt}U_1,\qquad
\mathsf h_{\rm res}=U_1^{-1}h_{\rm cpt}U_1.
\]
Thus the physical unary complex has the contraction
\begin{equation}\label{eq:resolved-sdr-main}
 (H_{\rm res},0)
 \underset{p_{\rm res}}{\overset{i_{\rm res}}{\rightleftarrows}}
 (L_{\rm BPS}^{\rm res},\ell_1^{\rm BPS}),\qquad \mathsf h_{\rm res},
 \qquad \ell_1^{\rm BPS}\mathsf h_{\rm res}
       +\mathsf h_{\rm res}\ell_1^{\rm BPS}=1-i_{\rm res}p_{\rm res}.
\end{equation}

\begin{theorem}[Finite-dimensional resolved physical model]
\label{thm:resolved-minimal-model}
The Taylor \(L_\infty\)-algebra of the resolved physical
\(Q\)-theory admits a finite-dimensional minimal real
\(L_\infty\) model \((H_{\rm res},\mu^{\rm res})\), with
\(\mu_1^{\rm res}=0\), and an \(L_\infty\) quasi-isomorphism
\(I_\infty^{\rm res}:H_{\rm res}\to L_{\rm BPS}^{\rm res}\).
For every augmented nilpotent real dg-Artin algebra \(R\) of
Section~\ref{sec:deformation-algebra}, it induces a natural weak equivalence
\begin{equation}\label{eq:resolved-minimal-equivalence}
 \operatorname{MC}_\bullet
 \bigl((H_{\rm res},\mu^{\rm res})\otimes\mathfrak m_R\bigr)
 \xrightarrow{\simeq}\Def_{\rm BPS}^{\rm res}(R).
\end{equation}
A different contraction changes the transferred brackets and coordinates but
not the \(L_\infty\)-isomorphism class of the minimal model or the
represented simplicial deformation functor.
\end{theorem}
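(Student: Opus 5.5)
The proof is homotopy transfer along the contraction \eqref{eq:resolved-sdr-main}, followed by the standard invariance of the simplicial Maurer--Cartan functor under $L_\infty$ quasi-isomorphisms.

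\emph{Step 1: finiteness of the target.} By Theorem~\ref{thm:resolved-ellipticity}, $H_{\rm res}$ is finite-dimensional in each degree. It vanishes outside degrees $-1,\dots,8$ because $C_{\rm compat}$ is concentrated there. Hence $H_{\rm res}$ is a finite-dimensional graded real vector space.

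\emph{Step 2: homotopy transfer.} The Taylor $L_\infty$-algebra $L_{\rm BPS}^{\rm res}$ has operations $\ell_n$, each a finite-order multidifferential operator by Theorem~\ref{thm:resolved-physical}. The contraction \eqref{eq:resolved-sdr-main} is a strong deformation retract with side conditions $p\mathsf h=\mathsf h i=\mathsf h^2=0$.

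1. Apply the homotopy transfer theorem (tree formulas) to $(i_{\rm res},p_{\rm res},\mathsf h_{\rm res})$. This gives brackets $\mu_n^{\rm res}=\sum_T\pm\, p_{\rm res}\circ(\text{tree of }\ell_k\text{ and }\mathsf h_{\rm res})\circ i_{\rm res}^{\otimes n}$ with $\mu_1^{\rm res}=0$.
2. It also gives an $L_\infty$ morphism $I^{\rm res}_\infty$ with linear part $i_{\rm res}$. This morphism is a quasi-isomorphism because $i_{\rm res}$ induces the identity on cohomology.
3. Each tree is a finite composition of continuous maps on smooth sections: the Green-operator homotopy is continuous on $C^\infty$ by Proposition~\ref{prop:mixed-order-hodge}, and $U_1^{\pm1}$ are local differential operators. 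Every term is therefore a well-defined real multilinear map on $H_{\rm res}$.
4. For each fixed $n$ the sum over trees is finite, so no convergence issue arises.

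This is the step where I would be most careful. The contraction is real and does not preserve the $\mathbb D$-filtration. That is harmless here, since the transfer is performed in the real category of Definition~\ref{def:real-local-formal}. I would check that nothing in the tree formula requires $\mathbb D$-linearity.

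\emph{Step 3: Maurer--Cartan equivalence.} Fix an augmented nilpotent dg-Artin algebra $R$.

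1. Tensor everything with $\mathfrak m_R$, using the sign convention of Section~\ref{subsec:common-elliptic-contractions}. By nilpotency, only finitely many arities contribute in the MC equation and in the components of $I_\infty^{\rm res}$, and the same holds after tensoring with $\Omega^\bullet_{\rm poly}(\Delta^n)$.
2. Therefore $I^{\rm res}_\infty$ induces a simplicial map $\operatorname{MC}_\bullet(H_{\rm res}\otimes\mathfrak m_R)\to\operatorname{MC}_\bullet(L_{\rm BPS}^{\rm res}\otimes\mathfrak m_R)$, natural in $R$.
3. The map is a weak equivalence by the Goldman--Millson/Getzler invariance theorem for filtered quasi-isomorphisms of nilpotent $L_\infty$-algebras. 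The filtration is by powers of $\mathfrak m_R$. On the associated graded, $I^{\rm res}_\infty$ is the linear quasi-isomorphism $i_{\rm res}\otimes 1$, tensored over $\mathbb R$ with the finite-dimensional graded pieces $\mathfrak m_R^k/\mathfrak m_R^{k+1}$.
4. One subtlety is that $d_R$ may mix these pieces. I would handle this by filtering instead by the finite complete filtration $F^k=L\otimes\mathfrak m_R^k$, which is compatible with $d_R$ since $d_R$ preserves powers of the ideal.
5. The inductive argument over small extensions, using the obstruction theory recalled in Section~\ref{subsec:formal-deformation-functors} and Getzler's fibration property, then gives the weak equivalence \eqref{eq:resolved-minimal-equivalence}.

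\emph{Step 4: independence of choices.} Let $(i',p',\mathsf h')$ be a second contraction. The two transferred structures are both minimal models of the same $L_\infty$-algebra. Composing $I_\infty$ with an $L_\infty$ quasi-inverse of $I_\infty'$ (minimal models admit one) gives an $L_\infty$ quasi-isomorphism between minimal algebras. Such a map is an $L_\infty$-isomorphism, since its linear part is an isomorphism. The represented functors then agree up to natural weak equivalence by Step 3, which makes the functor independent of the contraction.
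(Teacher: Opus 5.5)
Your proposal is correct. Steps 1, 2 and 4 follow the paper's proof: transfer along the transported contraction $(i_{\rm res},p_{\rm res},\mathsf h_{\rm res})$, finiteness of the tree sum at each arity, and independence by composing one inclusion with the other contraction's reverse transfer morphism. That reverse morphism is the concrete quasi-inverse you need in Step 4, so no abstract existence statement is required.

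The real difference is Step 3. The paper does not use a Goldman--Millson type invariance theorem. It tensors the contraction with $\mathfrak m_R\otimes\Omega^\bullet_{\rm poly}(\Delta^n)$ and applies Bandiera's formal Kuranishi product theorem \cite[Theorem~1.13]{Bandiera}. This gives an explicit simplicial isomorphism, natural in $n$ and $R$,
\[
\operatorname{MC}(L^{\rm res}_{\rm BPS}\otimes\mathfrak m_R\otimes\Omega_n)\cong\operatorname{MC}(H_{\rm res}\otimes\mathfrak m_R\otimes\Omega_n)\times(-\mathsf h)\bigl((L^{\rm res}_{\rm BPS}\otimes\mathfrak m_R\otimes\Omega_n)^1\bigr),
\]
under which the transferred inclusion is the zero section. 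The weak equivalence then reduces to contractibility of a simplicial vector space built from the $\Omega^k_\bullet$.

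You instead treat $I^{\rm res}_\infty\otimes 1$ as a filtered quasi-isomorphism for the $\mathfrak m_R$-adic filtration and invoke the filtered Goldman--Millson theorem (Getzler, Dolgushev--Rogers).
\begin{itemize}
\item \emph{What your route buys.} It is more general: it needs only that the linear part of $I^{\rm res}_\infty$ is a homotopy equivalence on each layer, not the explicit transfer data.
\item \emph{What it costs.} It delivers only a weak equivalence, and it rests on a theorem whose inductive proof you only sketch. The small-extension obstruction theory controls $\pi_0$. For the higher homotopy groups you need the fibre comparison over every base point, using that twisting does not change the differential on $\operatorname{gr}$. Citing the theorem is acceptable.
\item \emph{What the paper's route buys.} It is more explicit and gives a product decomposition, which is stronger than a weak equivalence.
\end{itemize}

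Finally, your concern that $d_R$ mixes the layers is not an issue. The filtration you switch to is the same one you started with. $d_R$ preserves each $\mathfrak m_R^k$, and $i_{\rm res}\otimes 1$ remains a homotopy equivalence on every layer by the sign cancellation of Section~\ref{subsec:common-elliptic-contractions}.
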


\begin{proof}
Apply homotopy transfer to the Taylor \(L_\infty\)-algebra
\(L_{\rm BPS}^{\rm res}\) with the smooth contraction just constructed.
In suspended coordinates, the recursion
\eqref{eq:resolved-transfer-recursion} takes the sum of all source
Taylor operations contributing to a fixed arity, projects it by
\(p_{\rm res}\), and uses \(-s\mathsf h_{\rm res}s^{-1}\) to construct
the higher inclusion.  At arity \(n\), every tree has at most \(n-1\)
vertices and each valence is at most \(n\).  Thus the coefficient is a
finite sum of maps preserving smooth sections even though the source has
unbounded arity.  The transferred unary differential vanishes.

For dg-Artin coefficients, the powers of the augmentation ideal give a
finite filtration.  Tensoring the contraction with those
coefficients and polynomial simplex forms preserves the identities by
the total-sign cancellation above.  The contraction gives a simplicial Maurer--Cartan comparison with a
contractible complementary factor, as proved in
Appendix~\ref{app:resolved-mc} using
\cite[Theorem~1.13]{Bandiera}. This gives the natural weak equivalence.
For two contractions, compose the first inclusion with the second
reverse transfer.  Its linear term is the identity on cohomology, so
recursion in symmetric length gives an $L_\infty$ inverse.
Corollary~\ref{cor:resolved-functors} identifies the represented physical
functor with the resolved variational functor as well.
\end{proof}

\subsection{The cyclic variational minimal model and \texorpdfstring{\(W_{\rm eff}\)}{W eff}}
\label{subsec:cyclic-kuranishi}

Here \(L_{\rm var}\) is the Taylor \(L_\infty\)-algebra of the
Hamiltonian theory of Theorem~\ref{thm:main-a}, and
\((C_{\rm var},d_C)\) is its unary complex.  Set
\(H_{\rm var}^\bullet=H^\bullet(C_{\rm var},d_C)\).  Use the contraction
maps from Section~\ref{subsec:common-elliptic-contractions} as
\(i_0,p_0,\mathsf h_0\).  All maps in the following refinement act on
this variational complex.

Let $B_C(x,y)=(-1)^{|x|}\omega_{\rm var}(sx,sy)$ be the cochain pairing
and set $B=\tau_{\mathbb D}B_C$, using the real coefficient trace of
Section~\ref{subsec:mixed-variational-duals}. By the calculation there,
$B$ is a nondegenerate real pairing of degree $-3$. Unary cyclicity makes
it descend to cohomology. The cyclic adjoint of
\(1-i_0p_0=d_C\mathsf h_0+\mathsf h_0d_C\) proves that this descended
pairing has zero radical.  Appendix~\ref{subsec:cyclic-transfer} gives
the argument and the following compatible refinement.

Take $i_c=i_0$ and define the induced cohomology pairing by
$B_H(a,b)=B(i_ca,i_cb)$. Define $p_c$ by
$B_H(p_cx,a)=B(x,i_ca)$, and put
\[
 A_c=1-i_cp_c,\qquad b=A_c\mathsf h_0A_c,\qquad
 t=\tfrac12(b+b^\sharp),\qquad \mathsf h_c=t d_Ct.
\]
Here \(\sharp\) denotes the graded cyclic adjoint.  These maps give
\[
 (H_{\rm var},0)
 \underset{p_c}{\overset{i_c}{\rightleftarrows}}
 (C_{\rm var},d_C),\qquad \mathsf h_c,
\]
with the identities \eqref{eq:resolved-sdr-identities} and \(\mathsf h_c^\sharp=\mathsf h_c\).
Fix this cyclic contraction for the rest of the section.

\paragraph{Taylor coefficients of the Hamiltonian.}

Choose formal coordinates on $\mathscr B_{\rm sp}$ centered at the
stationary point $\mathfrak b_0$, and denote their displacement coordinates
collectively by $q$. The shifted cotangent coordinates are denoted
by $\pi$. The tuple $q$ includes the centered field and ghost
coordinates.
The Hamiltonian is
\[
 \cS_{\rm sp}=W_{\rm ext}+\iota_{f_{\rm sp}}\vartheta_{\rm sp}.
\]
Define the homogeneous Taylor components by
\[
 W_n(q):=[t^n]\,W_{\rm ext}(tq),\qquad
 f_n(q):=[t^n]\,f_{\rm sp}(tq).
\]
Here $f_n$ denotes the homogeneous polynomial-degree-$n$ coordinate components
of the base vector field. Set
\begin{equation}\label{eq:reduced-coefficient-extraction}
 S_{n+1}:=W_{n+1}+\iota_{f_n}\vartheta_{\rm sp},\qquad
 Q_n:=X_{S_{n+1}}.
\end{equation}
The cotangent potential is linear in the shifted cotangent coordinates,
so $\iota_{f_n}\vartheta_{\rm sp}$ has polynomial degree $n+1$.
The formal parameter $t$ records Taylor degree in the displacement
coordinates.

\begin{theorem}[Finite-dimensional cyclic Kuranishi model]
\label{thm:finite-cyclic-kuranishi}
Assume the hypotheses of Corollary~\ref{cor:unary-fredholm}.  The finite-dimensional graded
vector space
\[
 H_{\rm var}^\bullet=H^\bullet(C^\bullet_{\rm var},d_C)
\]
admits a minimal real \(L_\infty\) structure
\[
 \mu_n^{\rm var}:\Lambda^nH_{\rm var}^\bullet\longrightarrow H_{\rm var}^\bullet,
 \qquad n\ge2,
 \qquad
 \mu_1^{\rm var}=0,
\]
and an \(L_\infty\) quasi-isomorphism
\[
 I_\infty:
 (H_{\rm var}^\bullet,\{\mu_n^{\rm var}\})
 \longrightarrow
 L_{\rm var}.
\]
For every nilpotent augmented real dg-Artin algebra \(R\), with
augmentation ideal \(\mathfrak m_R\), this induces a natural weak
equivalence
\[
 \operatorname{MC}_\bullet
 \bigl(H_{\rm var}^\bullet\otimes_{\mathbb R}\mathfrak m_R\bigr)
 \simeq
 \Def_{\rm var}(R).
\]

The real trace of the cyclic variational pairing induces perfect
pairings
\[
 H_{\rm var}^p\times H_{\rm var}^{3-p}\longrightarrow\mathbb R.
\]
The minimal model may be chosen cyclic for this pairing.  In
particular,
\[
 H_{\rm var}^2\simeq(H_{\rm var}^1)^*,
 \qquad
 H_{\rm var}^3\simeq(H_{\rm var}^0)^*,
 \qquad
 H_{\rm var}^4\simeq(H_{\rm var}^{-1})^*,
\]
and
\[
 \dim H_{\rm var}^1=\dim H_{\rm var}^2.
\]
\end{theorem}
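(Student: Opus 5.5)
The proof will reuse the scheme of Theorem~\ref{thm:resolved-minimal-model}, with the cyclic contraction $(i_c,p_c,\mathsf h_c)$ constructed just above the statement as the only new ingredient. Corollary~\ref{cor:unary-fredholm} and Proposition~\ref{prop:mixed-order-hodge} give that $H^\bullet_{\rm var}$ is finite-dimensional and represented by smooth sections. They also give that $i_c,p_c,\mathsf h_c$ are smooth real maps satisfying \eqref{eq:resolved-sdr-identities} together with $\mathsf h_c^\sharp=\mathsf h_c$.

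\textbf{Minimal model.} The Taylor operations of $Q_{\rm sp}$ at $\mathfrak b_0$ are extracted from the components $S_{n+1}$ of \eqref{eq:reduced-coefficient-extraction}. They form $L_{\rm var}$, which is uncurved by \eqref{eq:reduced-stationary-points}. I will apply the homotopy transfer tree formula to $L_{\rm var}$. The transferred operation $\mu^{\rm var}_n$ is $p_c$ applied to a finite sum over trees with at most $n-1$ vertices. Each vertex carries some $\ell_k$ with $k\le n$, and each internal edge carries $-s\mathsf h_c s^{-1}$. So, although the source has unbounded arity, each $\mu_n^{\rm var}$ is well defined on smooth sections.

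\textbf{Weak equivalence.} The transfer produces $\mu_1^{\rm var}=0$ and an $L_\infty$ quasi-isomorphism $I_\infty$ whose linear part is $i_c$. I will tensor the contraction with $\mathfrak m_R$ and with polynomial simplex forms, using the sign cancellation of Section~\ref{subsec:common-elliptic-contractions}. Filtering by powers of $\mathfrak m_R$, only finitely many arities contribute. The Bandiera-type argument of Appendix~\ref{app:resolved-mc} then gives the natural weak equivalence $\operatorname{MC}_\bullet(H_{\rm var}\otimes\mathfrak m_R)\simeq\Def_{\rm var}(R)$. Naturality in $R$ holds because every map is $R$-linear in the coefficients.

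\textbf{Pairing and cyclicity.} The pairing $B=\tau_{\mathbb D}B_C$ is nondegenerate of degree $-3$. It descends to a perfect pairing $B_H$ on cohomology: descent holds because $d_C$ is cyclic by Proposition~\ref{prop:unary-variational}, and the radical is zero by the argument cited from Appendix~\ref{subsec:cyclic-transfer}. This gives $H^p_{\rm var}\simeq(H^{3-p}_{\rm var})^*$, and in particular the three displayed dualities and $\dim H^1_{\rm var}=\dim H^2_{\rm var}$. For cyclicity of the model, the operations $\ell_n$ are cyclic for $B$ by Proposition~\ref{prop:cyclic-conventions}, since they come from the Hamiltonian $S_{\rm var}$. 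Because $p_c$ is the $B$-adjoint of $i_c$ with respect to $B_H$, and $\mathsf h_c$ is self-adjoint, each tree contribution paired against $i_c$ of an extra input becomes a rooted-tree expression symmetric under relabelling the root. The transferred brackets are therefore cyclic. Equivalently, $\mu^{\rm var}$ is the $L_\infty$ structure whose potential is $W_{\rm eff}(x)=\sum_n \frac{1}{(n+1)!}B_H(\mu_n(x,\dots,x),x)$, computed as the tree sum of the $S_{n+1}$.

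\textbf{Main obstacle.} The hardest step is the compatibility of the cyclic pairing with the mixed coefficients. The real trace $\tau_{\mathbb D}$ pairs the free block with itself crosswise, and pairs $\mathbb D_0$ with $\epsilon\mathbb D$. Moreover, $\mathsf h_c$ need not preserve the filtration. I therefore need to check carefully that the side conditions $p_c\mathsf h_c=\mathsf h_c i_c=\mathsf h_c^2=0$ survive the symmetrization $t=\tfrac12(b+b^\sharp)$, $\mathsf h_c=td_Ct$. I would verify this first, using $A_c^\sharp=A_c$ and $d_C^\sharp=\pm d_C$.
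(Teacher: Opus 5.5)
Your proposal is correct and follows essentially the same route as the paper: homotopy transfer along the cyclic refinement $(i_c,p_c,\mathsf h_c)$ with finitely many trees at each arity, the Bandiera-type simplicial Maurer--Cartan comparison of Appendix~\ref{app:resolved-mc}, descent and zero radical of $B=\tau_{\mathbb D}B_C$ via the cyclic adjoint of $1-P_0=d_C\mathsf h_0+\mathsf h_0d_C$, and cyclicity from the Hamiltonian origin of the $\ell_n$. One correction on attribution: the properties $\mathsf h_c^\sharp=\mathsf h_c$ and the side conditions you flag as the main obstacle come from the cyclic refinement of Appendix~\ref{subsec:cyclic-transfer}, not from Proposition~\ref{prop:mixed-order-hodge}; there they follow from $d_Ct+td_C=A_c$, $A_ct=tA_c=t$ and $t^\sharp=t$, which in turn follow from $A_c^2=A_c$, $A_cP_0=0$ and $d_C^\sharp=-d_C$.
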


\begin{proof}
The preceding contraction and its cyclic refinement act directly on
\(C_{\rm var}\).  Transfer the variational operations
\(\ell_j^{\rm var}\), \(j\ge2\), using the recursion of
Appendix~\ref{app:resolved-transfer}.  They are the Taylor operations
of the homogeneous Hamiltonians
\eqref{eq:reduced-coefficient-extraction}. Their cyclicity follows
arity by arity from the constant cotangent form and the polarized
Hamiltonian tensor calculation of Appendix~\ref{app:mixed}.
  The finite-tree transfer and the nilpotent simplicial Maurer--Cartan
comparison used for the resolved model give the stated weak
equivalence.  Cyclic transfer along
\((i_c,p_c,\mathsf h_c)\) gives cyclic brackets for the induced perfect
pairing, as detailed in Appendix~\ref{subsec:cyclic-transfer}.
Perfectness in complementary degrees gives the displayed dualities.
\end{proof}

Let \(R\) be an Artin algebra concentrated in cochain degree zero. A
degree-one argument of the transferred variational model is
\[
 x\in H_{\rm var}^1\otimes\mathfrak m_R.
\]
It is Maurer--Cartan precisely when
\[
 \kappa_{\rm var}(x)=0,
 \qquad
 \kappa_{\rm var}(x)
 :=
 \sum_{n\ge2}
 \frac1{n!}
 \mu_n^{\rm var}(x,\ldots,x)
 \in H_{\rm var}^2\otimes\mathfrak m_R.
\]
We regard
\[
 \kappa_{\rm var}:H_{\rm var}^1\longrightarrow H_{\rm var}^2
\]
as a formal Kuranishi map.  On every nilpotent Artin algebra the sum is
finite.

 The groups \(H_{\rm var}^0\) and
\(H_{\rm var}^{-1}\), when nonzero, describe infinitesimal automorphisms and
higher automorphisms, respectively.

For the cyclic minimal model of
Theorem~\ref{thm:finite-cyclic-kuranishi}, define
\[
 W_{\rm eff}(x)
 :=
 -
 \sum_{n\ge2}
 \frac1{(n+1)!}
 \left\langle
 x,\mu_n^{\rm var}(x,\ldots,x)
 \right\rangle_{H_{\rm var}},
 \qquad
 x\in H_{\rm var}^1,
\]
where \(\langle-,-\rangle_{H_{\rm var}}\) denotes the induced perfect real pairing
on cohomology. This is a formal power series of degree at least three.
It need not truncate at finite order.

\begin{corollary}[Effective potential]
\label{cor:kuranishi-effective-potential}
For \(x,y\in H_{\rm var}^1\),
\[
 dW_{\rm eff}(x)[y]
 =
 -
 \left\langle
 y,\kappa_{\rm var}(x)
 \right\rangle_{H_{\rm var}}.
\]
Under the perfect duality
\[
 H_{\rm var}^2\simeq(H_{\rm var}^1)^*,
\]
the Kuranishi map corresponds to the negative differential
$-dW_{\rm eff}$.  In particular,
\[
 dW_{\rm eff}(x)=0
 \quad\Longleftrightarrow\quad
 \kappa_{\rm var}(x)=0.
\]
Thus \(\operatorname{Crit}(W_{\rm eff})\) is exactly the
Maurer--Cartan solution locus of the finite-dimensional variational
minimal model.
\end{corollary}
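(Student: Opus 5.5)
The proof is a direct polarization computation using cyclicity of the minimal model from Theorem~\ref{thm:finite-cyclic-kuranishi}. Fix $x,y\in H^1_{\rm var}$ and differentiate each homogeneous term
\[
W_{{\rm eff},n}(x)=-\tfrac1{(n+1)!}\langle x,\mu_n^{\rm var}(x,\ldots,x)\rangle_{H_{\rm var}}
\]
along $y$. Since $\mu_n^{\rm var}$ is graded antisymmetric and every input $x$ has cochain degree one, $\mu_n^{\rm var}(x,\ldots,x)$ is symmetric in the desuspended (degree-zero) coordinates. Differentiating the $n$ interior slots therefore gives $n\,\langle x,\mu_n^{\rm var}(y,x,\ldots,x)\rangle$. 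Differentiating the outer slot gives $\langle y,\mu_n^{\rm var}(x,\ldots,x)\rangle$.

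Next I apply cyclicity of $\mu_n^{\rm var}$ for the induced pairing of degree $-3$, in the convention \eqref{eq:cyclic-rotation}. This gives
\[
\langle x,\mu_n^{\rm var}(y,x,\ldots,x)\rangle=\pm\langle y,\mu_n^{\rm var}(x,\ldots,x)\rangle .
\]
The main point to check is that, for all inputs in degree one, the Koszul sign of this rotation is $+1$. It is the same sign that makes the pairing on $H^1\times H^2$ symmetric in the relevant sense. I would verify it against the suspension conventions of Appendix~\ref{app:cyclic-conventions}, using Proposition~\ref{prop:cyclic-conventions}. With that sign settled, the $n+1$ contributions are equal, so
\[
dW_{{\rm eff},n}(x)[y]=-\tfrac{n+1}{(n+1)!}\langle y,\mu_n^{\rm var}(x,\ldots,x)\rangle=-\tfrac1{n!}\langle y,\mu_n^{\rm var}(x,\ldots,x)\rangle .
\]
Summing over $n\ge2$ gives $dW_{\rm eff}(x)[y]=-\langle y,\kappa_{\rm var}(x)\rangle$.

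The remaining statements follow formally. By Theorem~\ref{thm:finite-cyclic-kuranishi}, the pairing $H^1_{\rm var}\times H^2_{\rm var}\to\R$ is perfect. Hence the map $c\mapsto-\langle\,\cdot\,,c\rangle$ is an isomorphism $H^2_{\rm var}\cong(H^1_{\rm var})^*$, and it sends $\kappa_{\rm var}(x)$ to $dW_{\rm eff}(x)$. Therefore $dW_{\rm eff}(x)=0$ if and only if $\kappa_{\rm var}(x)=0$.

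The argument holds verbatim after tensoring with $\mathfrak m_R$ for a degree-zero Artin algebra $R$. In that case the pairing is extended $R$-linearly and remains perfect as a pairing of free $R$-modules, and all sums are finite by nilpotency. This identifies $\Crit(W_{\rm eff})$ with the Maurer--Cartan locus of the minimal model.

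The only genuine obstacle is the sign bookkeeping in the cyclic rotation. If the conventions produced a relative sign $-1$, the outer and inner terms would cancel rather than add. The overall minus sign in the definition of $W_{\rm eff}$ is calibrated to the convention in which they add.
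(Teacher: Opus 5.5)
Your proposal is correct and follows the paper's proof essentially step for step. You polarize each homogeneous term, use graded antisymmetry on degree-one inputs to identify the $n$ interior insertions, use cyclicity to move the variation into the pairing slot, and conclude by perfectness of $H^1_{\rm var}\times H^2_{\rm var}\to\R$.

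The sign you left to verify is immediate and comes out as you expected. For degree-one inputs the exponent in \eqref{eq:cyclic-rotation} is $n+|x_1|(|x_2|+\cdots+|x_{n+1}|)=2n$. The pairing symmetry on $H^1\times H^2$ contributes $(-1)^{1\cdot 2}=1$. Together these give exactly the paper's ``no additional sign.''
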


\begin{proof}
Differentiate a homogeneous summand of \(W_{\rm eff}\).  Since every
entry has cochain degree one, graded skew-symmetry makes the \(n\)
possible insertions of the variation \(y\) into
\(\mu_n^{\rm var}(x,\ldots,x)\) equal.  Cyclicity moves each such insertion to
the pairing slot with no additional sign.  The \(n+1\) resulting terms
therefore agree, giving
\[
 dW_{\rm eff}(x)[y]
 =
 -
 \sum_{n\ge2}
 \frac1{n!}
 \left\langle
 y,\mu_n^{\rm var}(x,\ldots,x)
 \right\rangle_{H_{\rm var}}
 =
 -
 \langle y,\kappa_{\rm var}(x)\rangle_{H_{\rm var}}.
\]
Perfectness of the pairing between \(H_{\rm var}^1\) and \(H_{\rm var}^2\) gives the final
equivalence.
\end{proof}

\subsection{The transferred comparison}
\label{subsec:physical-cyclic-comparison}

The invariant graph of Section~\ref{sec:resolved-variational}
reconstructs the density-valued coordinates \(e,z,\tau\) from the resolved
coordinates. Forgetting the additional equation and identity coordinates
therefore defines
\(\mathscr F:\mathcal M_{\rm var}^{\rm res}\to\mathcal M_{\rm var}\).
When $Q_{\rm var}^{\rm res}$ acts on the reconstructed expressions for
$e,z,\tau$, it gives the corresponding components of $Q_{\rm var}$. Hence $\mathscr F$ is a $Q$-morphism. Appendix~\ref{app:resolved-comparison}
records the coordinate formulas.  For the physical source use the composite
\(\mathscr F_{\rm phys}=\mathscr F\circ U_{\rm res}\).
Let $G_\infty^{\rm var}:L_{\rm var}\to H_{\rm var}$ be the reverse
$L_\infty$ transfer map for the cyclic contraction. Write
$\mathscr F_{{\rm phys},\infty}$ for the Taylor morphism of
$\mathscr F_{\rm phys}$. Their composite is
\begin{equation}\label{eq:physical-comparison-infinity}
 \Phi_\infty
 =G_\infty^{\rm var}\circ\mathscr F_{{\rm phys},\infty}\circ I_\infty^{\rm res}:
 (H_{\rm res},\mu^{\rm res})\longrightarrow
 (H_{\rm var},\mu^{\rm var}).
\end{equation}
The cohomology comparison of Section~\ref{sec:derived-admissibility}
gives the more precise structure
\begin{equation}\label{eq:physical-obstruction-sequence}
 \begin{gathered}
 H^1_{\rm res}\xrightarrow[\cong]{a}H^1_{\rm var},\qquad
 K_{\rm comp}=H^3_{\rm dR}(Y)_{7\oplus27},\\
 0\longrightarrow K_{\rm comp}\longrightarrow H^2_{\rm res}
 \xrightarrow{p_2}H^2_{\rm var}\longrightarrow0.
 \end{gathered}
\end{equation}
The subspace $K_{\rm comp}$ is the degree-two kernel of the cohomology
comparison. Actual lifting obstructions are values of the nonlinear
Kuranishi map.

The linear term of \(\Phi_\infty\) equals this cohomological comparison:
\[
 \Phi_1^1=a,\qquad \ker\Phi_1^2=K_{\rm comp},\qquad
 \im\Phi_1^2=H^2_{\rm var}.
\]
In particular,
\[
 \Phi_{\rm MC}(x)=\sum_{n\ge1}\frac1{n!}\Phi_n(x,\ldots,x)
\]
is a formal change of degree-one coordinates with invertible linear term.  Denote its
formal inverse by \(\Psi_{\rm MC}\). The higher Taylor coefficients of \(\Phi_{\rm MC}\)
come from the homotopy-transfer maps.

For a degree-one argument $x\in H^1_{\rm res}$, define the resolved
Kuranishi map
\[
 \kappa_{\rm res}(x)
 =\sum_{n\ge2}\frac1{n!}\mu_n^{\rm res}(x,\ldots,x)
 \in H^2_{\rm res}.
\]

\begin{theorem}[Curvature naturality]
\label{thm:curvature-naturality}
For arbitrary formal degree-one arguments,
\begin{equation}\label{eq:physical-curvature-naturality}
 \begin{gathered}
 \boxed{\kappa_{\rm var}(\Phi_{\rm MC}(x))
 =T_x\Phi\bigl(\kappa_{\rm res}(x)\bigr),}\\
 T_x\Phi(w)=\sum_{n\ge0}\frac1{n!}\Phi_{n+1}(x,\ldots,x,w).
 \end{gathered}
\end{equation}
Here \(w\) has degree two, and \(T_x\Phi\) denotes the Taylor action with one equation input.
\end{theorem}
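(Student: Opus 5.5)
The identity is the standard naturality of Maurer--Cartan curvature under an \(L_\infty\)-morphism, applied to \(\Phi_\infty\) of \eqref{eq:physical-comparison-infinity}. Both sides are formal power series in \(x\), so I will compare them arity by arity. For a fixed arity it suffices to work over the polynomial Artin algebra \(R=\mathbb R[t_1,\dots,t_m]/(t)^{N+1}\), in cochain degree zero, with \(x\) a generic element of \(H^1_{\rm res}\otimes\mathfrak m_R\); identities among multilinear symmetric maps are then read off by polarization. All sums are finite on \(\mathfrak m_R\), so no convergence issue arises.

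\emph{Step one: the morphism identity.} \(\Phi_\infty\) is a composite of \(L_\infty\)-morphisms. These are \(I_\infty^{\rm res}\) from Theorem~\ref{thm:resolved-minimal-model}, the Taylor morphism of the \(Q\)-map \(\mathscr F_{\rm phys}=\mathscr F\circ U_{\rm res}\), and \(G_\infty^{\rm var}\). Hence \(\Phi_\infty\) is itself an \(L_\infty\)-morphism. I will write its defining identity in the curved-morphism form, restricted to an input \(x^{\otimes n}\) of degree-one elements. On the target side this gives \(\sum_k\frac1{k!}\mu^{\rm var}_k(\Phi(x)^{(k)})\) summed over set partitions. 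On the source side it gives \(\sum\Phi_{j+1}(x,\dots,x,\mu^{\rm res}_i(x,\dots,x))\). Summing over \(n\) with weights \(1/n!\), the combinatorics of unshuffles collapses the target side to \(\sum_k\frac1{k!}\mu_k^{\rm var}(\Phi_{\rm MC}(x),\dots,\Phi_{\rm MC}(x))\). This uses that each \(\Phi_j(x,\dots,x)\) is even in the suspended grading, so the symmetric-power exponential formula applies without signs. Because \(\mu_1^{\rm var}=\mu_1^{\rm res}=0\), the left side is exactly \(\kappa_{\rm var}(\Phi_{\rm MC}(x))\). The right side becomes \(\sum_{j}\frac1{j!}\Phi_{j+1}(x^{(j)},\kappa_{\rm res}(x))=T_x\Phi(\kappa_{\rm res}(x))\).

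\emph{Step two: signs and suspension.} I will carry out the computation in the suspended (coalgebra) picture. There \(\Phi_\infty\) is a morphism of cofree cocommutative coalgebras \(\mathrm{Sym}^c(\mathbf s H_{\rm res})\to\mathrm{Sym}^c(\mathbf s H_{\rm var})\) intertwining the codifferentials. Evaluating on the group-like element \(e^{\mathbf s x}\) gives \(F(e^{\mathbf s x})=e^{\mathbf s\Phi_{\rm MC}(x)}\). Projecting \(D_{\rm var}F=FD_{\rm res}\) to cogenerators yields the boxed identity, up to the overall sign fixed by the convention \eqref{eq:grading-suspension}. This single-line coalgebra argument is the cleanest route, and I will present it that way.

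\emph{Main obstacle.} The hard point is not the algebra but verifying that the Taylor morphism \(\mathscr F_{{\rm phys},\infty}\) is a genuine \(L_\infty\)-morphism between the specified Taylor algebras. Two things must be checked: that \(\mathscr F\) is a formal \(Q\)-map vanishing at the marked background (so no curvature term \(\Phi_0\) appears), and that the composition with the transfer maps is well defined in arity despite unbounded source arity. For the first I will use the \(Q\)-map statement for \(\mathscr F\) together with Theorem~\ref{thm:resolved-q-equivalence}, and the fact that all maps fix \(\mathfrak b_0\). For the second, as in the proof of Theorem~\ref{thm:resolved-minimal-model}, each arity-\(n\) coefficient of the composite is a finite tree sum of smooth-section-preserving maps. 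Once these are in place, the identity holds arity by arity and hence for arbitrary formal degree-one arguments.
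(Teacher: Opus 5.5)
Your proposal is correct and follows essentially the paper's route: the paper also applies $\mathbf m_{\rm var}\boldsymbol\Phi=\boldsymbol\Phi\mathbf m_{\rm res}$ to the group-like element $e^{sx}$, using $\mathbf m_{\rm res}e^{sx}=e^{sx}s\kappa_{\rm res}(x)$ and $\boldsymbol\Phi e^{sx}=e^{s\Phi_{\rm MC}(x)}$, and compares cogenerator components. The one point you left open is the overall sign, and the paper settles it: with $w$ in the last slot the desuspension exponent is $e_{n+1}=\binom{n+1}2+\sum_{r\le n}(n+1-r)=n(n+1)$, which is even, so no sign appears.
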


\begin{proof}
\label{app:resolved-curvature}
Let $s$ denote suspension, let $\mathbf m_{\rm res}$ and
$\mathbf m_{\rm var}$ be the suspended coderivations, and let
$\boldsymbol\Phi$ be the coalgebra morphism determined by $\Phi_\infty$.
Put $v=sx$, of degree zero, in the completed unital symmetric coalgebra.
Direct expansion gives
\[
 \mathbf m_{\rm res}e^v=e^v s\kappa_{\rm res}(x),\qquad
 \boldsymbol\Phi e^v=e^{s\Phi_{\rm MC}(x)},
\]
and, for a degree-two input \(w\),
\[
 \boldsymbol\Phi(e^v sw)=e^{s\Phi_{\rm MC}(x)}
 \sum_{n\ge0}\frac1{n!}\boldsymbol\Phi_{n+1}(v^n,sw).
\]
Apply \(\mathbf m_{\rm var}\boldsymbol\Phi
=\boldsymbol\Phi\mathbf m_{\rm res}\) and compare the cogenerator
factor.  On ordered inputs \(x,\ldots,x,w\), with \(|x|=1\) and
\(|w|=2\), the desuspension exponent is
\(e_{n+1}=n(n+1)\), hence even.  There is no additional sign, and
one obtains exactly \eqref{eq:physical-curvature-naturality} for
arbitrary formal degree-one arguments.

\end{proof}

Curvature naturality shows that every resolved physical Maurer--Cartan
solution maps to \(\Crit(W_{\rm eff})\), since
\[
 dW_{\rm eff}(y)[z]=-
 \langle z,\kappa_{\rm var}(y)\rangle_{H_{\rm var}},\qquad
 dW_{\rm eff}=0\ \Longleftrightarrow\ \kappa_{\rm var}=0.
\]
The converse does not follow from curvature naturality alone because
the degree-two map has a kernel. The next argument uses that, for a
solution represented by physical fields, the admissibility residual is
an exact three-form.

\subsection{The physical Kuranishi map}
\label{subsec:physical-kuranishi}

For an Artin algebra concentrated in cochain degree zero, the resolved
physical Maurer--Cartan equation is
\begin{equation}\label{eq:physical-kuranishi}
 \kappa_{\rm res}(x)=0,
 \qquad x\in H^1_{\rm res}\otimes\mathfrak m_R.
\end{equation}
The series has no constant or linear term, and its value lies in
\(H^2_{\rm res}\otimes\mathfrak m_R\).  Every evaluation is finite
because \(\mathfrak m_R\) is nilpotent.  Thus \(H^1_{\rm res}\) is the infinitesimal deformation space and
\(H^2_{\rm res}\) is the cohomological obstruction space. Both are
finite-dimensional. The simplicial deformation functor also preserves
gauge equivalences and higher automorphisms.

\begin{theorem}[Physical lifting obstructions]
\label{thm:physical-obstructions}
Let \(0\to I\to R'\to R\to0\) be a small extension of Artin algebras concentrated in cochain degree zero,
with \(\mathfrak m_{R'}I=0\), and let \(x\) satisfy
\eqref{eq:physical-kuranishi}.  For any linear lift \(\widetilde x\),
\[
 \operatorname{ob}_{\rm res}(x,R')
 =\kappa_{\rm res}(\widetilde x)\in H^2_{\rm res}\otimes I
\]
is independent of the lift and vanishes exactly when \(x\) lifts to
a resolved physical Maurer--Cartan solution.  Its primary term is
\(\frac12\mu_2^{\rm res}(x,x)\).
For a dg small extension, the obstruction instead is a class in
\(H^2(H_{\rm res}\otimes I,1\otimes d_I)\), with total degrees and
the tensor-product signs.
\end{theorem}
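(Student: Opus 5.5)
This is the standard obstruction argument for a minimal \(L_\infty\)-algebra, applied to \((H_{\rm res},\mu^{\rm res})\) of Theorem~\ref{thm:resolved-minimal-model}. It reproduces, in the minimal model, the small-extension analysis of Section~\ref{subsec:formal-deformation-functors}. Since \(\mu_1^{\rm res}=0\), the unary differential on \(H_{\rm res}\otimes\mathfrak m_{R'}\) for degree-zero \(R'\) is zero. The Maurer--Cartan curvature of a lift \(\widetilde x\in H^1_{\rm res}\otimes\mathfrak m_{R'}\) is therefore \(\kappa_{\rm res}(\widetilde x)=\sum_{n\ge2}\frac1{n!}\mu_n^{\rm res}(\widetilde x,\ldots,\widetilde x)\). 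The sum is finite by nilpotency.

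\textbf{Lying in \(H^2_{\rm res}\otimes I\).} Since \(x\) is Maurer--Cartan over \(R\), \(\kappa_{\rm res}(\widetilde x)\) reduces to zero modulo \(I\). Hence it lies in \(H^2_{\rm res}\otimes I\). No cocycle condition needs checking, because the differential vanishes; equivalently, the \(L_\infty\) Bianchi identity of \cite[Lemma~4.5]{Getzler} is trivially satisfied.

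\textbf{Independence of the lift.} Let \(\widetilde x'=\widetilde x+a\) with \(a\in H^1_{\rm res}\otimes I\). Expand \(\mu_n^{\rm res}(\widetilde x+a,\ldots)\) multilinearly. Every term containing \(a\) together with at least one further factor lies in \(\mathfrak m_{R'}I=0\) or in \(I^2\subset\mathfrak m_{R'}I=0\). The term linear in \(a\) with all other slots filled has \(n\ge2\), so it has another factor from \(\mathfrak m_{R'}\) and also vanishes. This step uses \(\mu_1^{\rm res}=0\); in a non-minimal model the linear term would be \(\ell_1a\), and only the class would be independent. Thus \(\kappa_{\rm res}(\widetilde x')=\kappa_{\rm res}(\widetilde x)\) exactly. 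In particular, the obstruction vanishes if and only if some, equivalently every, lift is Maurer--Cartan, so \(x\) lifts exactly when \(\operatorname{ob}_{\rm res}=0\).

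\textbf{Primary term.} Only the arity-two bracket survives when \(\mathfrak m_R^2\) contributes, and \(\mathfrak m^3\)-terms are of higher order. So the leading term is \(\frac12\mu_2^{\rm res}(x,x)\), read through the lift.

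\textbf{Lifts in \(\Def_{\rm BPS}^{\rm res}\).} To state that lifts are resolved physical Maurer--Cartan solutions, use the natural weak equivalence \eqref{eq:resolved-minimal-equivalence}. It identifies \(\pi_0\) of the Maurer--Cartan sets functorially in \(R\). A lift in the minimal model therefore gives a lift in \(\Def_{\rm BPS}^{\rm res}(R')\) over the image of \(x\). Conversely, apply the \(L_\infty\) quasi-isomorphism in the reverse direction, using the reverse transfer constructed in the proof of Theorem~\ref{thm:resolved-minimal-model}.

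\textbf{Main obstacle: the dg case.} Let \(I\) carry a differential \(d_I\). The unary differential on \(H_{\rm res}\otimes I\) is \(1\otimes d_I\) with Koszul signs, and the curvature now includes \(d_{R'}\widetilde x\). First show that the curvature restricted to \(I\) is \((1\otimes d_I)\)-closed. This comes from the Bianchi identity, in which all mixed terms are killed by \(\mathfrak m_{R'}I=0\). Then show that changing the lift by \(a\) alters the curvature by \((1\otimes d_I)a\), so the class in \(H^2(H_{\rm res}\otimes I,1\otimes d_I)\) in total degree is well defined. Finally, check the signs \((-1)^{|v|}\) of the total differential from Section~\ref{subsec:common-elliptic-contractions}. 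If some sign fails to match, the cocycle property would break, so this sign bookkeeping is the step I would verify most carefully.
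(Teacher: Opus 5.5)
Your proposal is correct and follows the paper's proof: apply the small-extension obstruction construction of Section~\ref{subsec:formal-deformation-functors} to the minimal model. There \(\mu_1^{\rm res}=0\) and \(\mathfrak m_{R'}I=0\) make the curvature vector \(\kappa_{\rm res}(\widetilde x)\in H^2_{\rm res}\otimes I\) itself, not merely its class, independent of the lift, so its vanishing is equivalent to the existence of a Maurer--Cartan lift, and the dg case uses the total differential \(1\otimes d_I\). Your extra paragraph transferring lifts to \(\Def_{\rm BPS}^{\rm res}\) is not needed, since the paper reads ``resolved physical Maurer--Cartan solution'' as a solution of \eqref{eq:physical-kuranishi}; if you keep it, do the converse via the natural Kuranishi product decomposition \eqref{eq:resolved-kuranishi-product} rather than the reverse transfer, which would require checking that the reverse map composed with \(I_\infty^{\rm res}\) is the identity.
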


\begin{proof}
\label{app:resolved-obstructions}
This is the small-extension obstruction construction of
Section~\ref{subsec:formal-deformation-functors} applied to the minimal
$L_\infty$-algebra $(H_{\rm res},\mu^{\rm res})$.
Since $\mu_1^{\rm res}=0$, the obstruction is represented by the curvature
vector itself in $H^2_{\rm res}\otimes I$. Changing a lift does not change
this vector because $\mathfrak m_{R'}I=0$. Its vanishing is therefore
equivalent to the existence of a Maurer--Cartan lift. For a dg small
extension, the same argument uses the total differential $1\otimes d_I$
on $H_{\rm res}\otimes I$.
\end{proof}

\subsection{Physical solutions over degree-zero Artin algebras}

Theorem~\ref{thm:automatic-admissibility} gives the corresponding
field-level statement: over an Artin algebra concentrated in cochain
degree zero, every critical field of \(W_{\rm ext}\) satisfies the
admissibility equation under the compactness and fixed-background
hypotheses. We now prove the analogous statement for the transferred
minimal models in the coordinates
\(\Phi_{\rm MC}\) and \(\Psi_{\rm MC}\).

\label{subsec:ordinary-physical-locus}

Let
\begin{equation}\label{eq:resolved-critical-ring}
 S=\R[[(H^1_{\rm var})^*]]/(\kappa_{\rm var}),\qquad
 \mathfrak C_{\rm eff}=\operatorname{Spf}S=\Crit(W_{\rm eff}),\qquad
 \widehat\kappa_{\rm res}(y)=\kappa_{\rm res}(\Psi_{\rm MC}(y)).
\end{equation}
The defining ideal is generated by the finitely many components of
\(\kappa_{\rm var}\) and is closed in this complete local ring.  This formal
critical locus may be nonreduced.  Over \(S\) put
\[
 T(y)=T_{\Psi_{\rm MC}(y)}\Phi:H^2_{\rm res}\otimes S\longrightarrow
 H^2_{\rm var}\otimes S.
\]
Curvature naturality gives \(T(y)\widehat\kappa_{\rm res}(y)=0\).
Thus
\begin{equation}\label{eq:resolved-kernel-section}
 \chi_{\rm rel}(y)=\widehat\kappa_{\rm res}(y)\in\mathcal K_y,
 \qquad \mathcal K_y=\ker T(y).
\end{equation}
At an Artin point, \(\mathcal K_y\) is the kernel of the evaluated
matrix over the coefficient algebra. At the origin it is
\(K_{\rm comp}\).  Appendix~\ref{app:relative-coordinate-details}
records the coordinate expansions and invariance under changes of
minimal model. The pointed dg homotopy fibre is treated separately
in Theorem~\ref{thm:pointed-relative-fibre}.

\begin{lemma}[Exact three-form classes in the relative kernel]\label{lem:resolved-exact-form}
Let \((e,b)\) be a smooth closed degree-two combined cochain, with
\(b\in d\Omega^2(Y)\).  If \([e]=0\) in \(H^2_{\rm var}\),
then \([(e,b)]=0\) in \(H^2_{\rm res}\).  The same statement holds after tensoring with any finite-dimensional
real vector space.
\end{lemma}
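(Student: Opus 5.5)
We use the short exact sequence $0\to K_{\mathcal J}\to C_{\rm compat}\xrightarrow{\mathcal J}C_{\rm var}\to0$ from the proof of Proposition~\ref{prop:compatibility-cohomology}. Since $[e]=0$ in $H^2_{\rm var}$, choose $V\in C^1_{\rm var}$ with $d_C^1V=e$. Because $\mathcal J$ is the identity in degree one, $V$ is also a degree-one cochain of $C_{\rm compat}$, and
\[
 D_0V-(e,b)=\bigl(0,\,d\beta^{[0]}_V-b\bigr),
\]
where $\beta^{[0]}_V$ is the leading gerbe component of $V$. So $(e,b)$ is cohomologous in $C_{\rm compat}$ to $(0,b')$ with $b'=b-d\beta^{[0]}_V$. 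By hypothesis $b$ is exact, hence $b'$ is an exact three-form. Being the difference of two cocycles, $(0,b')$ is closed and lies in $K^2_{\mathcal J}$.

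It remains to show that a closed element $b'\in K^2_{\mathcal J}$ with $b'$ exact is a coboundary in $C_{\rm compat}$. By \eqref{eq:compat-kernel-cocycle}, closedness gives $db'=0$, $d*b'=0$, $dA(*b')=0$ and $\langle b',\phi\rangle=0$. The first two equations make $b'$ harmonic. An exact harmonic form on closed $Y$ vanishes, by the $L^2$ pairing argument of Lemma~\ref{lem:weighted-exact-coclosed} with $w=1$. Hence $b'=0$ and $[(e,b)]=[(0,0)]=0$ in $H^2_{\rm res}$.

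Equivalently, we may use \eqref{eq:compat-harmonic-kernel}. There $H^2(K_{\mathcal J})\cong\mathcal H^3_{7\oplus27}$, realised by harmonic representatives, and $H^2(K_{\mathcal J})\to H^2_{\rm res}$ is injective. The class of $(0,b')$ is its harmonic projection, which vanishes because $b'$ is exact.

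For coefficients in a finite-dimensional real vector space $I$, we fix a basis and argue componentwise. Every map used here is real-linear: the lift $V$ can be chosen coefficientwise, and the harmonic argument applies to each component. Alternatively, tensor the short exact sequence with $I$. This is exact since $I$ is finite-dimensional, and cohomology commutes with $\otimes I$.

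The main obstacle is justifying that closedness of $(0,b')$ in $C_{\rm compat}$ really forces $d*b'=0$. This needs the reconstructed coordinates \eqref{eq:compat-kernel-coordinates}, namely $h=-*b'$ with $a^2$ containing $dh$. If one prefers not to rely on that, one can instead check directly that $D^2(0,b')=0$ contains the component $dh=-d*b'$, as displayed in \eqref{eq:compat-kernel-cocycle}.
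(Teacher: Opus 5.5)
Your proposal is correct and follows the paper's proof essentially step by step. You subtract the combined boundary $D_0V$ of a primitive $V$ with $d_C^1V=e$, note that the remainder $(0,b-d\beta(V)^{[0]})$ is a closed kernel cochain whose three-form is closed, coclosed by \eqref{eq:compat-kernel-cocycle}, and exact, conclude that it vanishes by the Stokes argument for exact coclosed forms, and handle finite-dimensional coefficients componentwise; the only cosmetic difference is that the paper invokes Lemma~\ref{lem:weighted-exact-coclosed} with the constant positive weight $w=e^{-2\Phi_0}$, which amounts to your $w=1$ harmonic argument.
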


\begin{proof}
\label{app:resolved-exact-form}
Choose smooth \(V\) with \(d_C^1V=e\), using the assumed
cohomological exactness of \(e\).  Subtract its combined boundary:
\begin{equation}\label{eq:resolved-exact-boundary}
 (e,b)-D_0V=(0,b-d\beta(V)^{[0]}).
\end{equation}
This is a closed kernel cochain.  Equation~\eqref{eq:compat-kernel-cocycle} makes its three-form closed and coclosed.
It is also exact. The background weight \(w=e^{-2\Phi_0}\) is
constant and positive, so the three-form is weighted coclosed.
Lemma~\ref{lem:weighted-exact-coclosed} gives its vanishing, also with
finite-dimensional real coefficients. Thus
\eqref{eq:resolved-exact-boundary} is zero and the original cochain
is a boundary.
\end{proof}

\begin{theorem}[Vanishing of the relative obstruction over degree-zero Artin algebras]
\label{thm:relative-vanishing}
Assume \(Y\) is closed and keep the background gerbe/anomaly class and chosen trivialization fixed. Then
\begin{equation}\label{eq:relative-vanishing-main}
 \boxed{\chi_{\rm rel}\equiv0
 \quad\text{on }\Crit(W_{\rm eff}).}
\end{equation}
This is an identity on the formal critical scheme, including its
nilpotent directions.
\end{theorem}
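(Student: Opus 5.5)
We prove \eqref{eq:relative-vanishing-main} by induction over Artin small extensions. Since \(S=\R[[(H^1_{\rm var})^*]]/(\kappa_{\rm var})\) is a complete local ring, it suffices to show that for every local Artin algebra \(R\) concentrated in degree zero and every \(R\)-point \(y\in\mathfrak C_{\rm eff}(R)\subset H^1_{\rm var}\otimes\mathfrak m_R\), the value \(\chi_{\rm rel}(y)=\kappa_{\rm res}(\Psi_{\rm MC}(y))\) vanishes in \(H^2_{\rm res}\otimes\mathfrak m_R\). Taking \(R=S/\mathfrak m_S^N\) for all \(N\) and using closedness of the defining ideal then gives the identity on the formal scheme, including its nilpotent directions. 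Set \(x=\Psi_{\rm MC}(y)\). Curvature naturality (Theorem~\ref{thm:curvature-naturality}) gives \(T_x\Phi(\kappa_{\rm res}(x))=\kappa_{\rm var}(y)=0\). The remaining task is to show that the component of \(\kappa_{\rm res}(x)\) in \(\ker T_x\Phi\) also vanishes.

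Filter by powers of \(\mathfrak m_R\) and induct on \(k\), assuming \(\kappa_{\rm res}(x)\equiv0\) modulo \(\mathfrak m_R^k\). The base case \(k=2\) holds because \(\kappa_{\rm res}\) has no linear term. The inductive hypothesis says that the reduction of \(x\) is Maurer--Cartan over \(R_k=R/\mathfrak m^k\). Its leading remainder \(\kappa_k\in H^2_{\rm res}\otimes I_k\), with \(I_k=\mathfrak m^k/\mathfrak m^{k+1}\), is the obstruction \(\operatorname{ob}_{\rm res}\) of Theorem~\ref{thm:physical-obstructions} for the small extension \(R/\mathfrak m^{k+1}\to R_k\). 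Modulo \(\mathfrak m^{k+1}\), the map \(T_x\Phi\) acts on \(H^2_{\rm res}\otimes I_k\) through its linear term \(\Phi^2_1=p_2\), because the higher terms carry extra factors from \(\mathfrak m\). Hence \(\kappa_k\in K_{\rm comp}\otimes I_k\), and it remains to show that this class is zero.

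To do this, realize the obstruction at cochain level. Transport the Maurer--Cartan element over \(R_k\) through \(I_\infty^{\rm res}\) and \(U_{\rm res}\) to an actual formal field \(q\in\mathscr V(R_k)\). Lift it to \(R/\mathfrak m^{k+1}\) by lifting the field coordinates \((\varphi,B,\Phi,A)\), and with them the ghosts and higher coordinates. The transferred obstruction is \(p_{\rm res}\) of the curvature of this lift, up to \(\ell_1\)-exact terms coming from the transfer homotopies. By \eqref{eq:rp-P-15}, the \(b\)-component of this curvature is the admissibility residual \(d_YB^{[0]}\) of the lift. By the fixed gerbe sector, this equals \(d_Y\beta\) with \(\beta\) a global two-form, so its \(I_k\)-component is an exact three-form. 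Its variational component, mapped to \(H^2_{\rm var}\otimes I_k\), is \(p_2(\kappa_k)=0\). The cochain is therefore a closed combined cochain \((e,b)\) with \(b\in d\Omega^2\otimes I_k\) and \([e]=0\). Lemma~\ref{lem:resolved-exact-form}, applied with the finite-dimensional coefficient space \(I_k\), gives \([(e,b)]=0\) in \(H^2_{\rm res}\otimes I_k\), so \(\kappa_k=0\) and the induction closes.

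The main obstacle is making this cochain-level identification precise. One must show that the \(b\)-part of the transferred curvature of an arbitrary lift remains exact after the corrections \(h_{\rm cpt}\) and \(p_{\rm cpt}\) are applied, since the contraction does not preserve the coefficient filtration. The first approach to try is to choose the lift of \(x\) so that \(I^{\rm res}_\infty\) of it is a genuine field lift, where the \(b\)-component of \(\ell\)-curvature is literally \(d_YB^{[0]}\). Then use that \(p_{\rm res}\) factors through cohomology classes, which are independent of the lift. Exactness of \(b\) is a property preserved when adding \(D_0\)-boundaries, as \eqref{eq:resolved-exact-boundary} shows, and these are the only ambiguities that arise. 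A final check is that the statement does not depend on the choice of minimal model; this follows from the invariance recorded in Appendix~\ref{app:relative-coordinate-details}.
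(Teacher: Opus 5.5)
Your proposal is correct and follows the paper's proof. The paper likewise runs a small-extension induction on powers of the maximal ideal and uses curvature naturality to kill the image of the obstruction in $H^2_{\rm var}$. It then represents the obstruction by the combined cochain $(\mathcal E_{\rm var}(q'),dB(q')^{[0]})$ of a field lift, gets exactness of the $b$-component from the fixed gerbe sector, applies Lemma~\ref{lem:resolved-exact-form}, and passes to $S/\mathfrak m_S^{n+1}$ for the formal scheme.

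The \emph{main obstacle} you flag is resolved in the paper exactly as you suggest. One takes $q'$ to be the image of a lift $\widetilde x$ under $I_\infty^{\rm res}$. The exponential identity then gives $\operatorname{curv}(I^{\rm res}_{\infty,*}\widetilde x)=i_{\rm res}o$ on the nose, so no correction by $\mathsf h_{\rm cpt}$ or $p_{\rm cpt}$ has to be controlled.
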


\begin{proof}
\label{app:resolved-relative}
\par\smallskip\noindent\textbf{Step 1. Obstruction represented by physical fields.}\quad
Let \(R\) be an Artin algebra concentrated in cochain degree zero and let \(x\) be a resolved minimal Maurer--Cartan solution over \(R\). Let \(R'\to R\) be a small extension with kernel \(I\), and choose a lift \(\widetilde x\). Put
\(o=\kappa_{\rm res}(\widetilde x)\in H^2_{\rm res}\otimes I\).
Reconstruct fields using \(I_\infty^{\rm res}\), then use the
physical/variational resolved isomorphism.  The resolution introduces no additional degree-one fields, so this gives a
physical field \(q'\) lifting the reconstructed field \(q\).

The same exponential proof as in
Theorem~\ref{thm:curvature-naturality}, applied to the inclusion, gives
\begin{equation}\label{eq:resolved-lift-curvature}
 \operatorname{curv}_{L_{\rm BPS}^{\rm res}}
       (I_{\infty,*}^{\rm res}(\widetilde x))
 =T_{\widetilde x}I_\infty^{\rm res}(o)=i_{\rm res}o.
\end{equation}
The last equality is exact: every higher term contains an \(I\)-factor
and another factor in \(\mathfrak m_{R'}\).  The nonlinear physical
coordinate comparison reduces to its unary map for the same reason.
Return to the combined equation coordinates \((e,b)\).  Their
field-dependent inverse is evaluated at the background on these
\(I\)-valued equations, because its higher field terms multiply
\(I\).  Consequently \(o\) is represented by the closed combined
cochain
\begin{equation}\label{eq:resolved-lift-combined}
 (e,b)=(\mathcal E_{\rm var}(q'),dB(q')^{[0]})
 \in(C^2_{\rm var}\oplus\Omega^3(Y))\otimes I,
 \qquad H(\mathscr F_{\rm phys})^2o=[e].
\end{equation}

The second component is exact with \(I\)-coefficients.  Put
$\beta(q)=(B(q)-B_0)^{[0]}$, the global two-form displacement of
Section~\ref{par:fixed-gerbe-sector}. Choose a real linear section
$R\to R'$ on coefficient spaces and lift the closed form $\beta(q)$.
Its lift remains closed. The difference from
$\beta(q')=(B(q')-B_0)^{[0]}$ is an $I$-valued global two-form whose
derivative is $b$.

\par\smallskip\noindent\textbf{Step 2. Projection to the variational obstruction.}\quad
Curvature naturality identifies the variational lifting obstruction
with $H(\mathscr F_{\rm phys})^2o=[e]$ in
\eqref{eq:resolved-lift-combined}.

\par\smallskip\noindent\textbf{Step 3. Exact three-form class.}\quad
If \(H(\mathscr F_{\rm phys})^2o=0\), Lemma~\ref{lem:resolved-exact-form}
therefore gives \(o=0\).  Naturality gives the projection of the
obstruction, so for every small extension concentrated in cochain degree zero
\begin{equation}\label{eq:resolved-relative-lifting}
\begin{gathered}
 \operatorname{ob}_{\rm var}(\Phi_{\rm MC}x,R')
 =H(\mathscr F_{\rm phys})^2\operatorname{ob}_{\rm res}(x,R'),\\
 \operatorname{ob}_{\rm var}(\Phi_{\rm MC}x,R')=0
 \quad\Longrightarrow\quad\operatorname{ob}_{\rm res}(x,R')=0.
\end{gathered}
\end{equation}
Thus a relative class in \(K_{\rm comp}\otimes I\) vanishes whenever it
arises from a lifting problem represented by physical fields and has
zero variational obstruction. The argument uses the nonlinear resolved
identities and the exact three-form residual.

\par\smallskip\noindent\textbf{Step 4. Small-extension induction.}\quad
Let \(y\) be a critical point over an Artin algebra concentrated in cochain degree zero and put
\(x=\Psi_{\rm MC}(y)\).  We prove \(\kappa_{\rm res}(x)=0\) through the
powers of the augmentation ideal.  Modulo its square this holds by
minimality.  Suppose it holds at one quotient and pass to the next.
Its kernel \(I\) is annihilated by the maximal ideal, and the chosen
\(x\) lifts a resolved Maurer--Cartan solution from the previous quotient.  Since
\(\kappa_{\rm var}(y)=0\), curvature naturality gives
\(H(\mathscr F_{\rm phys})^2\kappa_{\rm res}(x)=0\) at this small extension.
Equation~\eqref{eq:resolved-relative-lifting} forces
\(\kappa_{\rm res}(x)=0\) there.  Induction proves the assertion for every Artin algebra concentrated in
cochain degree zero.

\par\smallskip\noindent\textbf{Step 5. Universal formal critical scheme.}\quad
Apply the result to the universal critical points over the
finite-dimensional rings \(S/\mathfrak m_S^{n+1}\).  Completeness and
separatedness yield the coefficientwise identity
\begin{equation}\label{eq:resolved-relative-formal-vanishing}
 \widehat\kappa_{\rm res}(y)=0\text{ in }H^2_{\rm res}\otimes S,
 \qquad \chi_{\rm rel}\equiv0\text{ on }\mathfrak C_{\rm eff}.
\end{equation}
\end{proof}

\begin{theorem}[Effective potential for physical BPS deformations]
\label{thm:ordinary-bps-effective-potential}
Assume \(Y\) is closed, with the background
gerbe/anomaly class and chosen trivialization fixed. For every Artin
algebra \(R\) concentrated in cochain degree zero, the formal change of
degree-one coordinates \(\Phi_{\rm MC}\) induces an isomorphism of
Maurer--Cartan solution sets before gauge quotient:
\begin{equation}\label{eq:ordinary-physical-effective-potential}
 \boxed{\operatorname{MC}
 ((H_{\rm res},\mu^{\rm res})\otimes\mathfrak m_R)
 \xrightarrow[\cong]{\,\Phi_{\rm MC}\,}\Crit(W_{\rm eff})(R).}
\end{equation}
\end{theorem}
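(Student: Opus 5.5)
The isomorphism will come from the formal coordinate change \(\Phi_{\rm MC}\) and its inverse \(\Psi_{\rm MC}\), proving separately that each maps the relevant solution set into the other. Fix an Artin algebra \(R\) concentrated in cochain degree zero, with maximal ideal \(\mathfrak m_R\). Every sum below is finite by nilpotency. Since the linear term \(\Phi_1^1=a\) is an isomorphism, \(\Phi_{\rm MC}\) is a bijection of \(H^1_{\rm res}\otimes\mathfrak m_R\) onto \(H^1_{\rm var}\otimes\mathfrak m_R\), with inverse \(\Psi_{\rm MC}\). This inversion is the usual triangular recursion in powers of \(\mathfrak m_R\). It is natural in \(R\), so it suffices to match the two solution subsets.

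\emph{Forward inclusion.} Let \(x\in\operatorname{MC}((H_{\rm res},\mu^{\rm res})\otimes\mathfrak m_R)\), so \(\kappa_{\rm res}(x)=0\). Theorem~\ref{thm:curvature-naturality} gives \(\kappa_{\rm var}(\Phi_{\rm MC}(x))=T_x\Phi(0)=0\). Corollary~\ref{cor:kuranishi-effective-potential} then gives \(dW_{\rm eff}(\Phi_{\rm MC}(x))=0\), that is, \(\Phi_{\rm MC}(x)\in\Crit(W_{\rm eff})(R)\).

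\emph{Reverse inclusion.} Let \(y\in\Crit(W_{\rm eff})(R)\), so \(\kappa_{\rm var}(y)=0\) by Corollary~\ref{cor:kuranishi-effective-potential}. Put \(x=\Psi_{\rm MC}(y)\); it remains to show \(\kappa_{\rm res}(x)=0\). By definition \(\kappa_{\rm res}(x)=\widehat\kappa_{\rm res}(y)=\chi_{\rm rel}(y)\). The point \(y\) is an \(R\)-point of the formal critical scheme \(\mathfrak C_{\rm eff}=\operatorname{Spf}S\), so it corresponds to a local homomorphism \(S\to R\). This homomorphism factors through some quotient \(S/\mathfrak m_S^{n+1}\), and \(\chi_{\rm rel}(y)\) is the image under it of the universal section. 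Theorem~\ref{thm:relative-vanishing} says this universal section vanishes in \(H^2_{\rm res}\otimes S\), so \(\kappa_{\rm res}(x)=0\).

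Alternatively, I would rerun Step~4 of that proof directly over \(R\). One filters by \(\mathfrak m_R^k\) and uses small extensions. At each stage Theorem~\ref{thm:curvature-naturality} kills the variational image of the resolved curvature. Lemma~\ref{lem:resolved-exact-form}, applied through \eqref{eq:resolved-relative-lifting}, then kills the remaining component in \(K_{\rm comp}\otimes I\). This works because the leading-flux residual is the exact form \(dB^{[0]}\) in the fixed gerbe sector.

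\emph{Main obstacle.} The delicate point is already inside Theorem~\ref{thm:relative-vanishing}. There, the obstruction class of a lift built from the minimal model must actually be represented by physical fields whose admissibility residual is exact. This is what allows the weighted exact-and-coclosed vanishing argument to act on the kernel \(K_{\rm comp}=H^3_{\rm dR}(Y)_{7\oplus27}\) of \(\Phi_1^2\). Given that theorem, the only remaining care is to check that composing the universal identity with \(S\to R\) is legitimate, i.e.\ that \(\widehat\kappa_{\rm res}\) commutes with base change along continuous local homomorphisms. This holds because it is a formal power series with coefficients in the finite-dimensional space \(H^2_{\rm res}\).

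Finally, both inclusions are compatible with Artin morphisms, since \(\Phi_{\rm MC}\) is given by universal Taylor coefficients. The two maps are therefore mutually inverse on solution sets, which gives \eqref{eq:ordinary-physical-effective-potential}. No gauge quotient is taken on either side.
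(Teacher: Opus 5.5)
Your proposal is correct and is essentially the paper's proof. The forward inclusion comes from curvature naturality (Theorem~\ref{thm:curvature-naturality}) together with the perfect pairing of Corollary~\ref{cor:kuranishi-effective-potential}, and the converse sets \(x=\Psi_{\rm MC}(y)\) and uses \(\kappa_{\rm res}(x)=\chi_{\rm rel}(y)=0\) from Theorem~\ref{thm:relative-vanishing}; your remark on base change along \(S\to R\) (or rerunning Step~4 directly over \(R\)) only makes explicit the passage from the universal critical scheme to \(R\)-points, which the paper leaves implicit.
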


\begin{proof}
The forward implication follows from
Theorem~\ref{thm:curvature-naturality} and the perfect variational
pairing.  Conversely, \(y\in\Crit(W_{\rm eff})\) has the unique
degree-one preimage \(x=\Psi_{\rm MC}(y)\). Its curvature is
\(\chi_{\rm rel}(y)=0\) by Theorem~\ref{thm:relative-vanishing}.
\end{proof}

\subsection{The relative derived deformation theory}
\label{subsec:residual-derived-sector}

The relative theory compares deformation theories of the same heterotic
$G_2$ structure on the fixed string Courant algebroid. The complex
$K_{\mathcal J}$ records the equation-and-identity data forgotten when
the resolved theory is mapped to the variational critical theory. Its cohomology
\[
 H^\bullet_{\rm rel}:=H^\bullet(K_{\mathcal J})
\]
is computed in Theorem~\ref{thm:complete-relative-cohomology} and extends
through degree eight. In degree two,
\[
 H^2_{\rm rel}=K_{\rm comp}=H^3_{\rm dR}(Y)_{7\oplus27}.
\]
Theorem~\ref{thm:ordinary-bps-effective-potential} shows that, under its
compactness and fixed-sector hypotheses, this information does not change
the solution set over Artin algebras concentrated in cochain degree zero.
The dg-Artin tests and the pointed homotopy fibre below detect it nevertheless.

\subsubsection{Detection by a square-zero dg-Artin algebra}
A square-zero dg-Artin algebra with a generator in degree \(-1\) already
detects the degree-two relative cohomology.

\begin{proposition}[Detection of the degree-two relative class]
\label{prop:resolved-dg-sector}
Let \(R_-=\R\oplus\R e\), with \(|e|=-1\), \(e^2=0\),
and zero differential.  Then
\begin{equation}\label{eq:resolved-dg-sector}
 \pi_0\Def_{\rm BPS}^{\rm res}(R_-)=H^2_{\rm res}e,
 \qquad
 \ker\bigl(\pi_0\Def_{\mathscr F_{\rm phys}}(R_-)\bigr)=K_{\rm comp}e.
\end{equation}
\end{proposition}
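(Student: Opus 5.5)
The plan is to reduce everything to the minimal models. By Theorem~\ref{thm:resolved-minimal-model} there is a natural weak equivalence $\operatorname{MC}_\bullet((H_{\rm res},\mu^{\rm res})\otimes\mathfrak m_{R_-})\simeq\Def_{\rm BPS}^{\rm res}(R_-)$, so it suffices to compute $\pi_0$ of the left side. Here $\mathfrak m_{R_-}=\R e$ with $e^2=0$ and $d_{R_-}=0$. In any bracket $\mu_n^{\rm res}(x_1\otimes e,\dots,x_n\otimes e)$ with $n\ge2$ the coefficient $e^n$ vanishes. Hence the Maurer--Cartan equation on $(H_{\rm res}\otimes\mathfrak m_{R_-})\otimes\Omega^\bullet_{\rm poly}(\Delta^n)$ is linear, and the simplicial set is the Dold--Kan realization of the complex $(H_{\rm res}\otimes\R e,0)$, as in \cite[Proposition~5.1]{Getzler}. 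The degree-one part of $H_{\rm res}\otimes\R e$ is $H^2_{\rm res}e$, since $|e|=-1$. Because $\mu_1^{\rm res}=0$ and the coefficient differential is zero, there are no nontrivial $1$-simplices identifying distinct vertices. Thus $\pi_0=H^2_{\rm res}e$, with the signs fixed by the tensor convention, which do not affect the set.

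For the second equality I apply the same reduction on the variational side: Theorem~\ref{thm:finite-cyclic-kuranishi} gives $\pi_0\Def_{\rm var}(R_-)=H^2_{\rm var}e$. The map $\Def_{\mathscr F_{\rm phys}}$ is modelled on the minimal models by $\Phi_\infty$ of \eqref{eq:physical-comparison-infinity}. To check this I compose $I_\infty^{\rm res}$, $\mathscr F_{{\rm phys},\infty}$ and $G_\infty^{\rm var}$, each of which induces the corresponding map of simplicial Maurer--Cartan sets up to the natural weak equivalences. On square-zero coefficients only the linear Taylor term survives, so $\pi_0\Def_{\mathscr F_{\rm phys}}(R_-)$ is the map $\Phi_1^2\otimes e:H^2_{\rm res}e\to H^2_{\rm var}e$. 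By \eqref{eq:physical-obstruction-sequence} its kernel is $K_{\rm comp}e$. Here kernel means the preimage of the basepoint $0$, and since the map is linear this is a linear kernel.

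The main obstacle is justifying that the model map on $\pi_0$ is exactly $\Phi_1^2\otimes e$ and not merely conjugate to it through the chosen weak equivalences. I would settle this with a naturality square: the weak equivalences from homotopy transfer commute with the induced maps of Maurer--Cartan sets, using $G_\infty^{\rm var}\circ I_\infty^{\rm var}\simeq\mathrm{id}$. On $\pi_0$ over square-zero algebras, all these maps are the linear cohomology maps, and the identification $\Phi_1^2=H^2(\mathscr F_{\rm phys})$ from Section~\ref{subsec:physical-cyclic-comparison} closes the argument.
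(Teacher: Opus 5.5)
Your proposal is correct and is essentially the paper's argument: pass to the minimal model, use $e^2=0$ to kill every bracket of arity at least two so that the simplicial Maurer--Cartan set is the linear one with $\pi_0=H^2_{\rm res}e$, and read the kernel off the linear term $H(\mathscr F_{\rm phys})^2=\Phi_1^2$, whose kernel is $K_{\rm comp}$. The naturality issue in your last paragraph is harmless: the transfer maps induce basepoint-preserving bijections on $\pi_0$ over $R_-$, so kernels correspond under them. You could also compute directly on $L_{\rm BPS}^{\rm res}$, where only the unary term of $\mathscr F_{\rm phys}$ survives.
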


\begin{proof}
\label{app:resolved-dg}
For \(R_-=\R\oplus\R e\), with \(|e|=-1\), \(e^2=0\)
and zero differential, all nonlinear operations on the augmentation
ideal vanish.  In the minimal model the total-degree-one elements are
\(H^2_{\rm res}e\). The linear simplicial Maurer--Cartan calculation
identifies connected components with this total cohomology.  The
comparison has linear term \(H(\mathscr F_{\rm phys})^2\), so
\[
 \pi_0\Def_{\rm BPS}^{\rm res}(R_-)=H^2_{\rm res}e,
 \qquad\ker\pi_0\Def_{\mathscr F_{\rm phys}}(R_-)=K_{\rm comp}e.
\]
These Maurer--Cartan elements are not residuals \(dB^{[0]}\) of
fields over an Artin algebra concentrated in degree zero. The exact-form
argument therefore does not remove them. When $K_{\rm comp}\ne0$, this algebra distinguishes the two derived
functors. 

\end{proof}

 The relative fibre also contains the higher
groups in \eqref{eq:complete-relative-cohomology}. Under the optional
full-holonomy hypotheses of Corollary~\ref{cor:full-holonomy-kernel},
this particular kernel is $H^3_{27}(Y)e$, of dimension $b_3(Y)-1$.

\subsubsection{Pointed homotopy fibre}

\begin{theorem}[Pointed relative derived fibre]\label{thm:pointed-relative-fibre}
Fix the marked standard embedding. For every nilpotent
augmented real dg-Artin algebra $R$, the comparison
$\mathscr F_{\rm phys}=\mathscr F\circ U_{\rm res}$ has
\begin{equation}\label{eq:pointed-relative-fibre}
 \operatorname{hofib}_0(\Def_{\mathscr F_{\rm phys}})(R)
 \simeq \operatorname{MC}_\bullet
 \bigl((H^\bullet(K_{\mathcal J}),0)\otimes\mathfrak m_R\bigr).
\end{equation}
In particular this pointed fibre is homotopy abelian, with the
finite-dimensional relative minimal model
\begin{equation}\label{eq:relative-abelian-model}
 H^\bullet_{\rm rel}=H^\bullet(K_{\mathcal J}),\qquad
 \mu_n^{\rm rel}=0\quad(n\ge1).
\end{equation}
The same fibre is obtained from the resolved variational comparison
and from its transferred morphism $\Phi_\infty$.
\end{theorem}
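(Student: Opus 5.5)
The plan is to compute the homotopy fibre at the level of $L_\infty$-algebras, where it is represented by the kernel of a strict surjection, and then show that this kernel is homotopy abelian. First, since $U_{\rm res}$ is a $Q$-isomorphism, $\Def_{\mathscr F_{\rm phys}}$ and $\Def_{\mathscr F}$ have isomorphic homotopy fibres. It therefore suffices to treat $\mathscr F:\mathcal M_{\rm var}^{\rm res}\to\mathcal M_{\rm var}$. Its Taylor morphism $\mathscr F_\infty:L_{\rm var}^{\rm res}\to L_{\rm var}$ is strict in the graph coordinates of Section~\ref{sec:resolved-variational}: it forgets the geometric identity coordinates and reconstructs $e,z,\tau$ from the graph. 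Its linear part is $\mathcal J$, which is surjective in every degree by the right inverses constructed in the proof of Proposition~\ref{prop:compatibility-cohomology}.

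For a surjective $L_\infty$-morphism, the simplicial Maurer--Cartan functor sends it to a Kan fibration (Getzler). Hence the homotopy fibre over the base point $0$ is the strict fibre $\operatorname{MC}_\bullet(\ker\mathscr F_\infty\otimes\mathfrak m_R)$. Here $\ker\mathscr F_\infty$ is an $L_\infty$-subalgebra with underlying complex $K_{\mathcal J}$ and operations restricted from $L_{\rm var}^{\rm res}$. If the morphism is not strict in some chart, I would first rectify it: factor through a path-object replacement, or choose coordinates where the graph identification makes the forgetful map a linear projection, using that the graph is solved by \eqref{eq:rv-E-20} with free coordinates $n_i$ and geometric identity coordinates.

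Next I identify the brackets on $K_{\mathcal J}$. Elements of $K_{\mathcal J}$ have no $N$-component and vanishing leading graph components. They are therefore pure geometric identity coordinates $(b,H,V,B,F,\ldots)$ at vanishing lower fields and ghosts. At the background, the nonlinear terms in $Q_{\rm var}^{\rm res}$ on these coordinates come from two sources: the field dependence of the weighted differentials $D_a$ and of the reconstruction, and the quadratic graph term $\Theta_3$. Every such term contains a lower field, a ghost, or $r$. Restricted to inputs in the kernel, all higher brackets $\ell_n$ for $n\ge2$ of $K_{\mathcal J}$ therefore vanish. One checks this on \eqref{eq:rv-E-16}, \eqref{eq:rv-E-24} and the graph relations. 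This step is the main obstacle. One must ensure that outputs of brackets of kernel elements also lie in the kernel, which is automatic for a strict morphism, and that no bilinear term of $\mathcal T_q$ is supported purely on identity coordinates. If $\mathcal T_q(\bar y,y)$ restricted to the kernel were nonzero, I would try to remove it by the triangular coordinate change of Appendix~\ref{subsec:nonlinear-upper}, using that it is alternating in equation arguments of degree two.

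Granting abelianness, $K_{\mathcal J}$ is an abelian dg Lie algebra. Its Hodge contraction, from Proposition~\ref{prop:mixed-order-hodge} applied to the reduced complex \eqref{eq:kernel-reduced-complex} with Green operators as in Theorem~\ref{thm:complete-relative-cohomology}, transfers it to $(H^\bullet(K_{\mathcal J}),0)$ with all $\mu_n^{\rm rel}=0$. Homotopy transfer of an abelian structure stays abelian. Bandiera's theorem, as in Theorem~\ref{thm:resolved-minimal-model}, then gives the weak equivalence \eqref{eq:pointed-relative-fibre}. For the last claim, the same fibre is obtained from the resolved variational comparison trivially, since $U_{\rm res}$ is an isomorphism. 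For the transferred morphism $\Phi_\infty$, the claim follows because $I_\infty^{\rm res}$ and $G_\infty^{\rm var}$ are quasi-isomorphisms, and homotopy fibres are invariant under weak equivalences of source and target compatible with the map.
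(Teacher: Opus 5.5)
\textbf{There is a genuine gap at the step you flag yourself: the claim that all brackets $\ell_n$, $n\ge2$, vanish on $K_{\mathcal J}$.}

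Degree-two kernel elements are not ``pure identity coordinates''. They are $r$-coordinates with $j_2r=0$, so by \eqref{eq:compat-kernel-coordinates} they satisfy $r_h=-*b$, $r_\nu=2A(*b)$, $r_{c'}=-\tfrac43A(*b)\wedge\psi$ and $r_f=\langle b,\varphi\rangle$. Hence ``every nonlinear term contains a lower field, a ghost, or $r$'' yields no vanishing on kernel inputs.

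Concretely, take the quadratic graph term $\Theta_3(r)=\tfrac12\mathcal T_q(r,r)$. Its gerbe component is $-*P_6(r)$. On such $r$, $P_6(r)$ is a nonzero multiple of $A(*b)\wedge A(*b)\wedge\psi$, and this need not vanish because the coefficients of $r$ are odd. The same holds for $P_7$.

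The term $\mathcal T_q(\bar y,y)$ that you worry about is irrelevant, since $\bar y=0$ at the background. Antisymmetry does not kill $\mathcal T_q(r,r)$, and the triangular changes \eqref{eq:rv-E-8}, \eqref{eq:rv-E-13} do not remove it either.

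Your two steps also use incompatible charts. $\mathscr F$ is not strict in the graph coordinates: \eqref{eq:resolved-forgetful-graph} contains $-\tfrac12\mathfrak l_3\mathcal T_q(r,r)$. Suppose you rectify $\mathscr F$ to a linear projection by a change of source coordinates. Then $\Theta_3$ and $P_7$ are transported into the source $Q$, and the kernel sub-$L_\infty$-algebra acquires nonzero chain-level brackets with exact-form values. Proving that these brackets are homotopically trivial is exactly the content of the theorem.

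\textbf{The missing idea is to leave the nonlinearity in the map instead of trying to kill it in the brackets.}
\begin{itemize}
\item Fix the physical fields at the background and set all ghosts to zero. In the coordinates $\eta_H^\circ,\rho_C^\circ$, the truncated upper systems $A_{\ge2}$ and $G_{\ge2}$ are then linear and abelian.
\item The comparison $j_\infty=(j_2r,\ j_3\eta-\Theta_3(r),\ j_4\rho^\circ-*P_7,\dots)$ carries all the nonlinearity. Its strict fibre is \eqref{eq:pointed-strict-graphs}.
\item Retract both sides linearly onto cohomology. This gives a formal map $\bar j:H(A_{\ge2})\to H(G_{\ge2})$ between graded spaces with \emph{zero} $Q$. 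Every formal coordinate change there is therefore automatically a $Q$-isomorphism.
\item The map $T(u)=(\operatorname{pr}_{H(K_{\mathcal J})}u,\bar j(u))$ straightens $\bar j$ into a projection whose fibre is $(H^\bullet(K_{\mathcal J}),0)$.
\end{itemize}

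This straightening needs $H(j)$ to be surjective, and that is the vanishing $\delta_2=\delta_3=\delta_4=0$ of \eqref{eq:connecting-maps-zero}. Your proposal never uses this ingredient. Without it, the fibre of a nonlinear map between abelian objects need not be abelian: compare $x\mapsto x^2$ from $\mathbb R$ to $\mathbb R$, both placed in degree one with zero differential.

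The rest of your argument agrees with the paper and is fine: the reduction via $U_{\rm res}$, the Getzler fibration step, and the final invariance under $I_\infty^{\rm res}$ and $G_\infty^{\rm var}$.
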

\begin{proof}
At unary level the homotopy fibre convention is
\begin{equation}\label{eq:relative-cone-convention}
 \begin{aligned}
 \operatorname{Cone}(\mathcal J)[-1]^p
    &=C_{\rm compat}^p\oplus C_{\rm var}^{p-1},\\
 d(v,w)&=(d_{\rm compat}v,\mathcal Jv-d_{\rm var}w).
 \end{aligned}
\end{equation}
The inclusion $k\mapsto(k,0)$ is a quasi-isomorphism from
$K_{\mathcal J}$, because its quotient is the contractible cone of
the identity on $C_{\rm var}$.
The identity maps in the lower degrees, the right inverses in the higher
degrees, and the triangular nonlinear graph make the comparison a
formal graded submersion.
The finite nilpotent filtration gives the simplicial
Maurer--Cartan fibration property, so its strict fibre models the
homotopy fibre.

Use $U_{\rm res}$ and fix the physical fields at the standard
embedding, lower ghosts at zero, and variational densities $e,z,\tau$ at
zero. The graphs of Section~\ref{sec:resolved-variational} become
\begin{equation}\label{eq:pointed-strict-graphs}
 j_2r=0,\qquad j_3\eta-\Theta_3(r)=0,\qquad
 j_4\rho^\circ-*P_7(r,\eta)=0.
\end{equation}
Let $A_{\ge2}$ and $G_{\ge2}$ denote the brutal truncations in cochain
degrees at least two of the added compatibility complex and the
corresponding upper variational complex. In the coordinates
$\eta_H^\circ,\rho_C^\circ$ used before the triangular changes, both
truncations have linear, abelian $Q$-structures. Their nonlinearity is
entirely in the formal comparison
$j_\infty:A_{\ge2}\to G_{\ge2}$ specified by these graphs.

The following reduction is global rather than local in the field-theoretic
sense used above. Compactness and ellipticity provide smooth
Hodge--Green splittings of the upper complexes, which we use only to
compute the formal deformation functor and its pointed homotopy fibre.
They do not define a local differential abelianization of the resolved
$Q$-theory.

Retract these two complexes linearly onto cohomology. The induced
formal map $\bar j:H(A_{\ge2})\to H(G_{\ge2})$ is between graded
spaces with zero $Q$ and has split-surjective derivative $H(j)$.
The vanishing of the connecting maps proves surjectivity
also for these truncations. Choose
\[
 H(A_{\ge2})=H^\bullet(K_{\mathcal J})\oplus S
\]
so that \(H(j)|_S\) is an isomorphism onto \(H(G_{\ge2})\).
The map \(\bar j\) fixes the origin and both homological vector fields
vanish. The recursive construction in Appendix~\ref{app:pointed-fibre-proof}, applied with the smooth splittings given there, changes it into the projection
\[
 H^\bullet(K_{\mathcal J})\oplus H(G_{\ge2})
 \longrightarrow H(G_{\ge2}).
\]
Its fibre is the abelian graded space \(H^\bullet(K_{\mathcal J})\).
Appendix~\ref{app:pointed-fibre-proof} gives the smooth splittings,
nilpotent lifting argument and recursion. Homotopy-fibre invariance
under the comparison zigzags gives the transferred assertion.
\end{proof}

The theorem concerns the fibre over the marked standard embedding. The corresponding relative family over moving fields, including its mixed base--fibre operations, remains to be determined.

\section{Discussion}
\label{sec:discussion}

The resolved physical and variational theories constructed here are locally
formally isomorphic. Their comparison with the variational critical theory,
however, reveals a distinction that is invisible at the level of ordinary formal
solutions. Over Artin algebras concentrated in cochain degree zero, the
exactness of the leading-flux residual removes the degree-two relative
obstruction. After the coordinate change \(\Phi_{\rm MC}\), the physical BPS
solutions are therefore precisely the critical points of \(W_{\rm eff}\). For
dg-Artin coefficients, the relative cohomology
survives, and Theorem~\ref{thm:pointed-relative-fibre} identifies the pointed
homotopy fibre with the abelian minimal model of \(H^\bullet(K_{\mathcal J})\).

Thus \(W_{\rm eff}\) captures the ordinary formal deformation space, while
the compatibility-resolved theory contains additional dg-derived information.
This information is measured by the relative cohomology and, at the marked
standard embedding, by the homotopy-abelian pointed relative fibre.

This also clarifies the role of the additional upper degrees in the resolution.
They contain the successive identities of the coupled leading-flux, torsion
and dilaton compatibility system, rather than additional degree-one physical
fields. At unary order,
Proposition~\ref{prop:derived-hull-graph} gives an intrinsic description of the
Hull-compatible physical complex as a differential graph inside the independent
BPS complex. The compatibility resolution is a further extension of this
physical complex, obtained by adjoining the leading-flux equation and resolving
the identities that follow from it.

A number of questions remain beyond the local formal setting considered here.
One is to construct a nonlinear analogue of the derived Hull-graph comparison
and to understand the relative theory as the physical fields move, including its
mixed base--fibre operations. Global and analytic versions of the deformation
problem are likewise open. Such a global derived description would also provide
a natural setting in which to ask whether heterotic \(G_2\) moduli admit virtual
or enumerative invariants: the present comparison identifies additional
compatibility data whose role in a virtual or enumerative theory remains
to be determined. 
\section*{Acknowledgements}
\addcontentsline{toc}{section}{Acknowledgements}
Selected tensor, sign, and coefficient identities were checked by exact
symbolic computation. OpenAI ChatGPT/Codex assisted with checking
derivations, developing verification scripts, and drafting
the manuscript. Responsibility for the arguments and mathematical claims
rests with the author.

\clearpage
\appendix
\section{Clifford and \texorpdfstring{$G_2$}{G2} algebra}\label{app:clifford}

We record the Clifford signs and contractions used in the spinorial
functional. The final identities compare the Dirac and flux bilinears with
the differential-form superpotential.

\subsection{Reconstruction and conventions}
These are the conventions of \eqref{eq:spinor-bilinears} and
\eqref{eq:spin-action}. The signs below fix the comparison with the
form superpotential of \cite{G2Superpotential}.
We use the real irreducible Clifford module in dimension seven with
$\gamma_a\gamma_b+\gamma_b\gamma_a=-2\delta_{ab}$ and symmetric
invariant pairing $\Bsp$. The orientation and irreducible Clifford
extension are fixed so that the positive form determined by a unit
spinor has components
\begin{equation}\label{eq:spinor-bilinears-app}
 \varphi_{abc}=\Bsp(\chi,\gamma_{abc}\chi),\qquad
 \psi=*\varphi,\qquad
 \Bsp(\chi,\gamma_{abcd}\chi)=-\psi_{abcd}.
\end{equation}
Here $e_1,\ldots,e_7$ is an oriented orthonormal frame and
$\gamma_a=e_a\cdot$. For a skew endomorphism $Z$ we use
$Z_{ab}=g(e_a,Ze_b)$. If $E_{ab}e_c=\delta_{bc}e_a$ and
$K_{ab}=E_{ab}-E_{ba}$ for $a<b$, then
\[
 P(K_{ab},K_{ab})=-\tr_7(K_{ab}^2)=2.
\]
Repeated Clifford indices below are summed over all ordered values.
Antisymmetrized Clifford products have unit antisymmetrization weight.
In particular a three-form acts by
$H\mathbin{\cdot}=H_{abc}\gamma^{abc}/6$. Clifford generators are
skew-adjoint, whereas the Clifford action of a three-form is symmetric. Thus
\begin{equation}\label{eq:spinor-one-bilinear}
 \Bsp(\zeta,v\mathbin{\cdot}\zeta)=0
\end{equation}
for every commuting real spinor $\zeta$ and vector $v$.

\subsection{The Dirac and flux identities}
\begin{lemma}\label{lem:spinor-source-identities}
For a unit spinor and the induced positive form,
\begin{equation}\label{eq:spinor-source-identities}
 \dd\varphi\wedge\varphi
       =-8\Bsp(\chi,D_g\chi)\vol_g,\qquad
 H\wedge\psi=\Bsp(\chi,H\mathbin{\cdot}\chi)\vol_g.
\end{equation}
These identities hold without a torsion restriction.
\end{lemma}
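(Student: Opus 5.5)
Both identities are pointwise, so I will work at a fixed point in an oriented orthonormal frame $e_1,\ldots,e_7$ for $g=g_\varphi$ and use only the conventions \eqref{eq:spinor-bilinears-app}. The second identity is the easier one and I will do it first. Write $H\cdot\chi=\tfrac16H_{abc}\gamma^{abc}\chi$ and pair with $\chi$. By \eqref{eq:spinor-bilinears-app} this gives $\Bsp(\chi,H\cdot\chi)=\tfrac16H_{abc}\varphi^{abc}=\langle H,\varphi\rangle$. On the other side, $H\wedge\psi=H\wedge*\varphi=\langle H,\varphi\rangle\vol_g$ by the definition of the Hodge star. Only the $\Lambda^3_1$ part of $H$ contributes, and no torsion assumption enters.

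For the Dirac identity I will compute $\Bsp(\chi,D_g\chi)=\Bsp(\chi,\gamma^a\nabla_a\chi)$. Since $\chi$ is a unit spinor with stabilizer $G_2$, I write $\nabla_a\chi=T_{ab}\,e^b\cdot\chi$ for a uniquely determined tensor $T\in T^*Y\otimes T^*Y$. This uses the identification $\chi^\perp=\{v\cdot\chi\}$ together with the fact that $\nabla\chi\perp\chi$ by unit norm. Then $\Bsp(\chi,\gamma^a\gamma^b\chi)T_{ab}=-\delta^{ab}T_{ab}$, because the $\gamma^{ab}$ part gives a two-form bilinear, which vanishes by symmetry/skew considerations: two-form Clifford action is skew-adjoint. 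Hence $\Bsp(\chi,D_g\chi)=-\tr T$.

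Next I differentiate \eqref{eq:spinor-bilinears-app} to obtain $\nabla_a\varphi_{bcd}=2\Bsp(\chi,\gamma_{bcd}\nabla_a\chi)$, using symmetry of three-form action. Substituting $\nabla_a\chi=T_{ae}\gamma^e\chi$ and reducing $\gamma_{bcd}\gamma^e$ via the Clifford product expansion, the three-form times one-form product gives a four-form piece and a contraction two-form piece. The two-form bilinear vanishes, so only $\psi$ survives. This gives $\nabla_a\varphi=c\,T_{ae}\,\iota_{e^e}\psi$ for a sign/normalization constant $c$ (expected $c=\pm2$). Antisymmetrizing yields $d\varphi=c\,T_{ae}\,e^a\wedge\iota_{e^e}\psi$. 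Wedging with $\varphi$ and using the $G_2$ identity $\varphi\wedge e^a\wedge\iota_{e^e}\psi=\lambda\,\delta^{ae}\vol$ (with $\lambda=\pm4$ from the normalization $\varphi\wedge\psi=7\vol$ and equivariance) isolates the trace: $d\varphi\wedge\varphi=c\lambda\,\tr T\,\vol=-c\lambda\,\Bsp(\chi,D_g\chi)\vol$.

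The main obstacle is fixing the product $c\lambda=8$ with correct sign. This requires careful evaluation of $\Bsp(\chi,\gamma_{bcde}\chi)$ against the convention $-\psi$, and of the scalar $\varphi\wedge e^a\wedge\iota_a\psi$. I would fix this by checking on the standard model with explicit $\varphi_0=e^{123}+\dots$ and an explicit trace-type $T=\delta$, or equivalently by testing on $\tau_0$-torsion, where $d\varphi=\tau_0\psi$ and $\nabla\chi$ is pure trace. In that case both sides reduce to multiples of $\tau_0$, so one scalar comparison determines the constant.
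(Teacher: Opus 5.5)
Your argument is the paper's proof. It uses the same decomposition $\nabla_a\chi=T_{ab}\gamma^b\chi$, the same evaluation $\Bsp(\chi,D_g\chi)=-\tr T$, the same differentiation of the three-form bilinear, and the same one-line treatment of the flux identity.

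What you leave unfinished is the constant. Since fixing that normalization is the whole point of the lemma, it has to be computed rather than anticipated. It follows directly from the stated conventions and needs no model check.

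\begin{itemize}
\item For $m\notin\{j,k,l\}$ one has $\gamma_{jkl}\gamma_m=\gamma_{jklm}$, and the remaining terms are two-form bilinears, which vanish. The convention $\Bsp(\chi,\gamma_{jklm}\chi)=-\psi_{jklm}$ of \eqref{eq:spinor-bilinears-app} then gives $\nabla_i\varphi_{jkl}=-2T_{im}\psi_{jklm}=2T_{im}\psi_{mjkl}$. So $c=2$ with no sign ambiguity; this is the paper's $\nabla_i\varphi_{jkl}=2T_i{}^a\psi_{ajkl}$.
\item Because $*\psi=\varphi$ in dimension seven, $e^a\wedge\iota_{e_b}\psi\wedge\varphi=\langle\iota_{e_b}\psi,\iota_{e_a}\psi\rangle\vol=4\delta_{ab}\vol$. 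So $\lambda=4$, which the paper writes as $\psi_{ajkl}\psi_i{}^{jkl}=24\delta_{ai}$.
\end{itemize}

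Hence $\dd\varphi\wedge\varphi=8\tr T\,\vol=-8\Bsp(\chi,D_g\chi)\vol$.

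Your fallback check on pure $\tau_0$-torsion would not have settled the sign. For $T=t\,\id$ one finds $\tau_0=4ct$, so relating $\tau_0$ to $\tr T$ already presupposes $c$. That test only recovers $\lambda$.
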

\begin{proof}
The second equality is the component pairing of $H$ with
\eqref{eq:spinor-bilinears-app}. For the first, differentiate the
spinor bilinear in a normal orthonormal frame. Since $\chi$ has unit
length, $\nabla_a\chi=T_a{}^b\gamma_b\chi$ for a unique tensor $T$. The seven spinors $\gamma_b\chi$ span the orthogonal complement of
$\chi$. Clifford anticommutation and the bilinear formula give
\[
 \Bsp(\chi,D\chi)=-\tr T,\qquad
 \langle\dd\varphi,\psi\rangle=8\tr T.
\]
More explicitly, the four-form bilinear gives
$\nabla_i\varphi_{jkl}=2T_i{}^a\psi_{ajkl}$. Therefore
\[
 \langle\dd\varphi,\psi\rangle
   =\frac8{24}T_i{}^a\psi_{ajkl}\psi^{ijkl}
   =8\tr T,
\]
using $\psi_{ajkl}\psi_i{}^{jkl}=24\delta_{ai}$. Since
$\dd\varphi\wedge\varphi=\langle\dd\varphi,\psi\rangle\vol_g$,
the first identity follows.
\end{proof}

For $\zeta_\ph=e^{-\Phi}\chi$, the derivative of $\Phi$ drops out
by \eqref{eq:spinor-one-bilinear}. Equations
\eqref{eq:spinor-source-identities} therefore give
\eqref{eq:form-source} with its overall factor $1/4$.

\subsection{Contorsion}
The spin representation of a metric connection is
$\nabla^\Theta=\dd+\sg(\Theta)$ with
$\sg(Z)=-Z_{ab}\gamma^a\gamma^b/4$.
Indeed,
\[
 [\gamma_a\gamma_b,\gamma_c]
 =2\delta_{ac}\gamma_b-2\delta_{bc}\gamma_a,
 \qquad
 [\sg(Z),\gamma_c]=Z_{ac}\gamma_a=(Ze_c)\cdot.
\]
Each $\sg(Z)$ is skew-adjoint with respect to $\Bsp$.
For a $g_0$-orthonormal frame $e_a^0$, set $e_a^g=I_ge_a^0$.
In this transported frame, the local form of
\eqref{eq:contorsion} is
\[
 (C_gH)_{iab}
 =g\bigl(e_a^g,C_g(H)_{\partial_i}e_b^g\bigr)
 =\frac12H(\partial_i,e_a^g,e_b^g).
\]
In particular,
$g(C_g(H)_X Y,Z)=-H(X,Y,Z)/2$.
If
$\Theta-\Theta_{\LC}=C$ and $C_{iab}=H_{iab}/2$, then
\[
 \gamma^i\sg(C_i)
      =-\tfrac18H_{iab}\gamma^i\gamma^a\gamma^b
      =-\tfrac34H\mathbin{\cdot}.
\]
The contractions from repeated Clifford indices vanish because $H$ is
alternating.

\subsection{Canonical representation algebra}
\label{app:representation}

We use the quotient algebra of \eqref{eq:representation-algebra}.
Its pairing and its identification for nearby positive forms are
needed to express the variational covectors in $G_2$ representations.

Let $I_\varphi$ be the ideal generated by $\Lambda^2_{14}(\varphi)$.
The surviving representations of $\Lambda^\bullet/I_\varphi$ are
$\R,\Lambda^1,\Lambda^2_7,\Lambda^3_1$. The quotient is zero in
exterior degrees greater than three.
The projections on its last two terms are
\begin{equation}\label{eq:representation-projectors}
 \pi_7b=\frac13\bigl(b+*_g(\varphi\wedge b)\bigr),\qquad
 \pi_1u=\frac17\langle u,\varphi\rangle_g\varphi.
\end{equation}
Multiplication means wedge product followed by passage to the quotient.

At the background, exterior degrees summing to three pair into the
line of seven-forms by
$(a,b)\mapsto w\,a\wedge b\wedge\psi_0$. This is the density-valued
Frobenius pairing of the quotient algebra. On sections we integrate it:

\begin{equation}\label{eq:frobenius-trace}
 \operatorname{Tr}_{\cA}(ab)=\int_Y w\,a\wedge b\wedge\psi_0,
 \qquad w=e^{-2\Phi_0}.
\end{equation}
It is well-defined on the quotient. Its component normalizations are
\begin{equation}\label{eq:trace-units}
 e^i\wedge\iota_{e_j}\varphi_0\wedge\psi_0
      =3\delta^i_j\vol_0,\qquad
 \varphi_0\wedge\psi_0=7\vol_0.
\end{equation}
The coefficient trace uses $g_0$ on $T^*Y$ and $\eps P$ on the gauge
reduction modulo $\eps$. In particular, the trace identifies complementary
representations by explicit nondegenerate pairings.

At the background, $b\in\Omega^2_{14}$ satisfies
$b\wedge\psi_0=0$. Since $\dd\psi_0=0$, its derivative has zero
singlet projection. Thus $\dd b\in I_{\varphi_0}$, and the derivation
property of $\dd$ gives $\dd I_{\varphi_0}\subset I_{\varphi_0}$.
Consequently
\[
 \check\dd=(\dd,\pi_7\dd,\pi_1\dd),\qquad\check\dd^{\,2}=0.
\]
For a coefficient connection $D$ with curvature of exterior
type $\Lambda^2_{14}$, $D^2$ acts by an element of this ideal.
Its projected differential also squares to zero. These arguments
use the exterior type of curvature and preserve its coefficient
endomorphism action.

\subsection{The graded module}
The Frobenius trace \eqref{eq:frobenius-trace} identifies the
density covectors of the variational theory with the upper canonical
representations. The momentum of an amplitude of cochain degree $p$ has cochain
degree $3-p$. Write $\mathcal A_0=\mathcal A_{\varphi_0}$. The resulting graded module is
\begin{equation}\label{eq:native-full-carrier}
 C^p=\cA_0^{p+1}\mathbb{D}\oplus
 \cA_0^p(T^*Y\otimes \mathbb{D}\oplus\mathfrak{so}(E)\otimes \mathbb{D}_0)
 \oplus\cA_0^{p-1}\mathbb{D}.
\end{equation}
Each summand is a section space of a finite-rank bundle with the
displayed free or quotient coefficients. The gauge duals use the
identification $\Hom_{\mathbb D}(\mathbb D_0,\mathbb D)=\eps\mathbb D$
of Definition~\ref{def:mixed-variational-dual}.

\subsection{Exterior-degree realization}
The two realizations of the graded bundle are
\[
\begin{array}{c|cccc}
\text{cochain degree}&0&1&2&3\\ \hline
\text{quotient representative}&\Lambda^0&\Lambda^1&\Lambda^2_7&\Lambda^3_1\\
\text{exterior representative}&\Lambda^0&\Lambda^1&\Lambda^6&\Lambda^7 .
\end{array}
\]
For comparison with the physical exterior degrees $0,1,6,7$, set
\[
 J_0=J_1=\id,\qquad J_2b=b\wedge\psi_0,\qquad
 J_3u=u\wedge\psi_0.
\]
Their nontrivial inverses are
\[
 J_2^{-1}\xi=\frac13\iota_{(*\xi)^\sharp}\varphi_0,\qquad
 J_3^{-1}\nu=\frac17(*\nu)\varphi_0.
\]
The maps act only on exterior indices. They intertwine the
background covariant exterior differentials because $\nabla\psi_0=0$.

\subsection{The canonical generalized Dirac coefficient}
\label{app:generalized-dirac-coefficient}
The canonical generalized Dirac formula of
\cite[Lemma 2.10]{CoupledG2} uses the tangent connection of torsion
$H/3$. In our exterior-to-Clifford convention its spin connection is
\[
\nabla^{g,S}_X+\tfrac1{12}c_g(\iota_XH).
\]
For an orthonormal frame, the Clifford exterior identity gives
\[
\sum_i c_g(e^i)c_g(\iota_{e_i}H)=3c_g(H).
\]
Indeed the contraction part has repeated contraction with the same
vector and vanishes, while the exterior part is the degree operator
on a three-form. Clifford contraction of the spin connection is
therefore $D_g+c_g(H)/4$.

In the half-evaluation convention of \eqref{eq:courant-pairing},
the difference between metric and density divergences is
\[
\operatorname{div}_g-\operatorname{div}_{\varrho}
=\langle4j(d\Phi),\cdot\rangle.
\]
Its positive projection has anchor $2(d\Phi)^\sharp$, so the one-form
$\vartheta$ entering the canonical Dirac formula is $2d\Phi$.
This proves \eqref{eq:canonical-generalized-dirac} with the displayed
sign and coefficient. The one-form self-pairing already vanishes by
\eqref{eq:spinor-one-bilinear}. Proposition~\ref{prop:courant-functional}
then identifies the density integrand with \eqref{eq:native-source},
and Proposition~\ref{prop:spinorial-source} gives the form expression.

\section{Variational first variation and curvature reconstruction}
\label{app:physical-first-variation}

This appendix constructs the curvature operator and derives the
first-variation pairing and its inverse used in
Theorem~\ref{thm:bps-euler-equation-equivalence}. The algebraic conventions
are those of Appendix~\ref{app:representation}.

\subsection{The curvature operator in the physical first variation}
\label{app:curvature-factorization}

All tensors in this subsection use the varying positive form. Put
$w=e^{-2\Phi}$. In an
orthonormal frame put
\[
 \mathsf A(K)_i=\tfrac14\langle K,e_i^\flat\wedge\varphi\rangle_g,
 \qquad
 \mathsf B(N)_i=\tfrac13\langle N,e_i^\flat\wedge\psi\rangle_g,
\]
and define the intrinsic-torsion map
\[
 \mathcal D_\varphi(t)=
 \left(\sum_i e^i\wedge t_i{}^a\iota_{e_a}\psi,
       -\sum_i e^i\wedge t_i{}^a e_a^\flat\wedge\varphi\right).
\]
Its domain is $T^*Y\otimes TY$, and its target is
$\Lambda^4T^*Y\oplus\Lambda^5T^*Y$. For an equation argument
$E=(E_C,E_\Sigma,E_T,E_I)\in\mathcal R_{\rm BPS}$, set
\begin{equation}\label{eq:curvature-equation-reconstruction}
 \begin{aligned}
 v_E&=\tfrac32\mathsf B(w^{-1}E_C)-2\mathsf A(*E_T),\\
 A_E&=*E_T+2v_E\wedge\varphi,&
 B_E&=w^{-1}E_C+2v_E\wedge\psi,\\
 t_E&=\mathcal D_\varphi^{-1}(A_E,B_E),&
 p_{E,i}&=t_{E,i}{}^a\iota_{e_a}\psi.
 \end{aligned}
\end{equation}
The inverse is justified in the lemma below. It means the inverse on
the image, equivalently
$(\mathcal D_\varphi^\dagger\mathcal D_\varphi)^{-1}
 \mathcal D_\varphi^\dagger$.
Define $\mathcal C_\varphi:\Lambda^3T^*Y\to\Lambda^2_7T^*Y$ by
\[
 (\mathcal C_\varphi u)_{ab}
   =\tfrac1{12}g^{ci}\langle u,\iota_{e_i}\psi\rangle_g
                    \varphi_{cab}.
\]
For a tensor with two covariant skew pairs, our convention is
$(\operatorname{swap}U)_{ijab}=U_{abij}$. The curvature operator is
\begin{equation}\label{eq:homogeneous-curvature}
 \mathscr R_\varphi(E)
   =\operatorname{swap}\mathcal C_\varphi
                      (\dd_{\nabla^{\LC}}p_E).
\end{equation}
Here $\mathcal C_\varphi$ acts on the three-form coefficient, leaving
the exterior two-form pair fixed. After the swap the endomorphism pair
is expressed in the fixed metric frame. This is a linear first-order
operator on equation arguments, with local field-dependent coefficients.

\begin{lemma}[Curvature factorization through the geometric residuals]
\label{lem:geometric-residual-curvature}
The map $\mathcal D_\varphi$ is injective and
\begin{equation}\label{eq:intrinsic-torsion-image}
 \operatorname{im}\mathcal D_\varphi
   =\{(A,B):4\mathsf A(A)-3\mathsf B(B)=0\}.
\end{equation}
In particular \eqref{eq:curvature-equation-reconstruction} is defined
for arbitrary equation arguments. On every admissible physical
configuration, without imposing the vanishing of its BPS residuals,
\[
 p_{\mathcal E_{\rm BPS}}=\nabla^{\LC}\varphi\pmod\eps,
\]
and
\begin{equation}\label{eq:homogeneous-curvature-residual}
 \eps\mathscr R_\varphi(\mathcal E_{\rm BPS})\wedge\psi
    =\eps R_{\LC}\wedge\psi=\eps R_H\wedge\psi.
\end{equation}
All $\eps$-weighted expressions use their reductions in $\mathbb{D}_0$ and
the inclusion $\jmath_\eps$ of \eqref{eq:epsilon-ideal-inclusion}.
\end{lemma}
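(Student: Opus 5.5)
The argument is pointwise $G_2$ representation theory plus one computation at the level of the intrinsic torsion. First I will prove injectivity and \eqref{eq:intrinsic-torsion-image}, then show that the reconstruction recovers $\nabla^{\LC}\varphi$ on the actual residuals, and finally apply the Ricci identity.

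\emph{Injectivity and image.} Decompose $T^*Y\otimes TY\cong\Lambda^0\oplus\Lambda^1\oplus\Lambda^2_{14}\oplus\Sym^2_0$ (dimensions $1,7,14,27$; the $\Lambda^2_7$ piece would be the infinitesimal $\mathfrak{so}(7)/\mathfrak g_2$ direction, and here $t$ is regarded as a $\Lambda^1\otimes\Lambda^2_7\cong\Lambda^1\otimes\Lambda^1$ tensor via $t_i{}^a\iota_{e_a}\psi$). By Schur, $\mathcal D_\varphi$ acts on each isotypic piece by scalars into $\Lambda^4=\Lambda^4_1\oplus\Lambda^4_7\oplus\Lambda^4_{27}$ and $\Lambda^5=\Lambda^5_7\oplus\Lambda^5_{14}$. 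This is the standard statement that $(d\varphi,d\psi)$ determines the intrinsic torsion: the singlet goes to $\Lambda^4_1$, the $27$ to $\Lambda^4_{27}$ and the $14$ to $\Lambda^5_{14}$. The $7$-component, which occurs once in $t$, goes to both $\Lambda^4_7$ and $\Lambda^5_7$. I will compute these scalars on a single basis element per type in the standard frame, for instance with $t=\delta$, $t=v\otimes$, a $\mathfrak g_2$ element, and $e_1\odot e_1-\tfrac17 g$. Nonvanishing of each scalar gives injectivity, since $49=\dim$ domain. The image is then the complement of one copy of $7$ in $\Lambda^4_7\oplus\Lambda^5_7$, and it is cut out by the linear relation $4\mathsf A(A)=3\mathsf B(B)$ once the ratio of the two $7$-scalars is checked. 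This is the same relation as in $K_{\rm tor}$ and as the identity $4A(d\chi)=3B(d*J\chi)$ used in Theorem~\ref{thm:compatibility-complex}. A dimension count (the rank of $K_{\rm tor}$ is $49$) confirms it. The constructed pair $(A_E,B_E)$ satisfies the relation for every $E$, because $v_E$ is chosen exactly so that $4\mathsf A(A_E)-3\mathsf B(B_E)=4\mathsf A(*E_T)+8v_E-\tfrac92\cdot\tfrac23\mathsf B(w^{-1}E_C)\cdot\ldots=0$, using $\mathsf A(v\wedge\varphi)=v$ and $\mathsf B(v\wedge\psi)=v$. Hence the inverse on the image is defined for arbitrary $E$.

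\emph{Actual residuals.} On an admissible field, reduce modulo $\eps$, so that $H^{[0]}=0$. Then $E_T=*d\varphi-2*(d\Phi\wedge\varphi)$ and $w^{-1}E_C=d\psi-2d\Phi\wedge\psi$. Substituting gives $v_E=d\Phi$ by the same $\mathsf A,\mathsf B$ normalizations, and therefore $(A_E,B_E)=(d\varphi,d\psi)$ up to the sign convention of $*^2$. Writing $\nabla\varphi_i=t_i{}^a\iota_{e_a}\psi$, which holds for a unique $t$, antisymmetrization gives exactly $\mathcal D_\varphi(t)=(d\varphi,d\psi)$. Injectivity then gives $t_E=t$ and $p_E=\nabla^{\LC}\varphi$.

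\emph{Curvature.} Since $p_{E,i}=\nabla_i\varphi$, we have $(d_\nabla p)_{ij}=R_{ij}\cdot\varphi$, the derivation action of the curvature on $\varphi$. The map $\mathcal C_\varphi$ sends $Z\cdot\varphi$ to $\pi_7 Z$ with the stated $1/12$ normalization, which I will check on one basis element using $\psi_{ajkl}\psi_i{}^{jkl}=24\delta$. After the swap and the pair symmetry $R_{ijab}=R_{abij}$, $\mathscr R_\varphi(\mathcal E_{\rm BPS})$ equals $R_{\LC}$ minus its $\Lambda^2_{14}$ part in the exterior slot. That part is killed by $\wedge\psi$, which gives $\mathscr R\wedge\psi=R_{\LC}\wedge\psi$. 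Finally, $R_H-R_{\LC}=D K+K\wedge K$ with $K=C_g(H^{[0]})=0$ on admissible fields, and only reductions enter, so $\eps R_{\LC}\wedge\psi=\eps R_H\wedge\psi$.

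The main obstacle is the normalization bookkeeping: the scalar constants of $\mathcal D_\varphi$ on the $7$-component, and the $\mathcal C_\varphi$ normalization. I would fix these first by explicit computation on the standard $\varphi_0$.
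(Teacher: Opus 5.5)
Your proposal is correct. For the actual residuals and the curvature step it follows the paper's proof essentially step for step: reduce $(A_E,B_E)$ to $(\dd\varphi,\dd\psi)$ on admissible fields, identify $t_E$ with the tensor of $\nabla^{\LC}\varphi$ by injectivity, then use $\dd_{\nabla}\nabla\varphi=R\cdot\varphi$, $\mathcal C_\varphi\rho_\varphi(Z)=\pi_7Z$, pair symmetry, and $C_g(H)^{[0]}=0$.

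You take a genuinely different route for injectivity and the image. You organize it by Schur's lemma. With $T^*Y\otimes TY\cong1\oplus7\oplus14\oplus27$ and $\Lambda^4\oplus\Lambda^5\cong(1\oplus7\oplus27)\oplus(7\oplus14)$, injectivity reduces to four nonzero scalars, and the image description reduces to the single ratio $4\lambda_1=3\lambda_2$ on the $7$-part.

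The paper works with explicit contractions instead:
\begin{itemize}
\item It proves image containment for arbitrary $t$ via $\mathsf A(\mathcal D_\varphi(t)_1)=-\tfrac12\eta$ and $\mathsf B(\mathcal D_\varphi(t)_2)=-\tfrac23\eta$, where $\eta_k=t_i{}^a\varphi_{kia}$.
\item It proves injectivity in two stages. The skew part is killed using $\mathcal D_\varphi(t)_2=-2b\wedge\varphi$ and the invertibility of $b\mapsto*(b\wedge\varphi)$. The symmetric part is killed using $*\mathcal D_\varphi(h)_1=(\tr h)\varphi-h\cdot\varphi$ together with a trace.
\item It gets equality of image and kernel from the count $49=56-7$.
\end{itemize}

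Your approach is more conceptual: it exhibits $\mathcal D_\varphi$ as the torsion-class decomposition and shows why exactly one copy of $7$ is cut out. It does, however, need the target decompositions and Schur for real-type irreducibles. Also, its one substantive computation, the $3\!:\!4$ ratio, is exactly the paper's $\eta$ computation restricted to the $7$-part. The paper's version avoids decomposing $\Lambda^4$ and $\Lambda^5$ and yields the relation for all $t$ at once.

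Some minor slips to fix:
\begin{itemize}
\item Your check of $4\mathsf A(A_E)-3\mathsf B(B_E)=0$ is garbled. With $a=\mathsf A(*E_T)$ and $b=\mathsf B(w^{-1}E_C)$ it should read $4a+8v_E-3b-6v_E=4a-3b+2v_E=0$.
\item Deriving $v_E=\dd\Phi$ on the actual residuals also needs $4\mathsf A(\dd\varphi)=3\mathsf B(\dd\psi)$, not only $\mathsf A(v\wedge\varphi)=v$ and $\mathsf B(v\wedge\psi)=v$. This follows from $\mathsf A(\dd\varphi)=3\tau_1$ and $\mathsf B(\dd\psi)=4\tau_1$, or from $(\dd\varphi,\dd\psi)$ lying in the image.
\item In dimension seven $**=1$, so there is no sign caveat.
\item The second component of $\mathcal D_\varphi(t)=(\dd\varphi,\dd\psi)$ uses $*\iota_{e_a}\psi=-e_a^\flat\wedge\varphi$.
\item $t_i{}^a\iota_{e_a}\psi$ is $\Lambda^1\otimes\Lambda^3_7$-valued, not $\Lambda^1\otimes\Lambda^2_7$-valued.
\end{itemize}
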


\begin{proof}
First,
$\mathsf A(v\wedge\varphi)=v$ and
$\mathsf B(v\wedge\psi)=v$. These follow from
$\langle e^i\wedge\varphi,e^j\wedge\varphi\rangle=4\delta_{ij}$
and
$\langle e^i\wedge\psi,e^j\wedge\psi\rangle=3\delta_{ij}$.
Writing $\eta_k=t_i{}^a\varphi_{kia}$, direct contraction gives
\[
 \mathsf A(\mathcal D_\varphi(t)_1)=-\tfrac12\eta,
 \qquad
 \mathsf B(\mathcal D_\varphi(t)_2)=-\tfrac23\eta.
\]
For example, the two contractions used here are
\[
 \begin{aligned}
 \langle e^i\wedge\iota_{e_a}\psi,e^k\wedge\varphi\rangle
       &=-2\varphi_{kia},\\
 \langle e^i\wedge e^a\wedge\varphi,e^k\wedge\psi\rangle
       &=2\varphi_{kia}.
 \end{aligned}
\]
Thus the image is contained in the right side of
\eqref{eq:intrinsic-torsion-image}.

To prove injectivity, let $b$ be the skew part of $t$, regarded as a
two-form. Then
$\mathcal D_\varphi(t)_2=-2b\wedge\varphi$.
The operator $b\mapsto*(b\wedge\varphi)=2\pi_7b-\pi_{14}b$
is invertible, so a tensor in the kernel is symmetric, say $t=h$.
Write $h\cdot\varphi=\sum h_i{}^a e^i\wedge\iota_{e_a}\varphi$.
For symmetric $h$ the Hodge identity and its contraction are
\[
 *\mathcal D_\varphi(h)_1=(\tr h)\varphi-h\cdot\varphi,
 \qquad
 (h\cdot\varphi)_{iab}\varphi_j{}^{ab}
       =4h_{ij}+2(\tr h)g_{ij}.
\]
Consequently $\mathcal D_\varphi(h)_1=0$ implies
$h=(\tr h)g$, and taking the trace gives $h=0$.
The domain has dimension $49$ and the target dimension $56$.
The compatibility map has rank seven, since its value on
$(v\wedge\varphi,0)$ is $4v$. This proves
\eqref{eq:intrinsic-torsion-image}.
The argument is pointwise and extends to the formal $\mathbb{D}$-valued chart:
only matrices with invertible reduction are inverted.

Put $a=\mathsf A(*E_T)$ and $b=\mathsf B(w^{-1}E_C)$.
The definitions give
\[
 4\mathsf A(A_E)-3\mathsf B(B_E)
      =4a-3b+2v_E=0.
\]
This proves that $t_E$ is defined for arbitrary equation arguments.

For the actual residuals \eqref{eq:bps-residuals},
\eqref{eq:torsion-classes} gives
$\mathsf A(\dd\varphi)=3\tau_1$ and
$\mathsf B(\dd\psi)=4\tau_1$. Hence
\[
 \begin{aligned}
 v_{\mathcal E_{\rm BPS}}&=\dd\Phi-2\mathsf A(*H),\\
 A_{\mathcal E_{\rm BPS}}
    &=\dd\varphi+*H-4\mathsf A(*H)\wedge\varphi,\\
 B_{\mathcal E_{\rm BPS}}
    &=\dd\psi-4\mathsf A(*H)\wedge\psi.
 \end{aligned}
\]
On admissible configurations $\red H=0$, so these last two entries
reduce to $(\dd\varphi,\dd\psi)$. A metric covariant derivative of
the compatible positive form has the unique expression
$\nabla_i^{\LC}\varphi=t_i{}^a\iota_{e_a}\psi$. Moreover
$*\iota_{e_a}\psi=-e_a^\flat\wedge\varphi$, so its alternations
are precisely $\mathcal D_\varphi(t)=(\dd\varphi,\dd\psi)$.
Injectivity proves the asserted identity for $p$ modulo $\eps$.
With the spinor tensor $T$ of Appendix~\ref{app:clifford}, this
$t$ is $2T$, so no additional normalization enters $p$.

Applying the covariant exterior derivative to the reconstructed tensor gives
\[
d_{\nabla^{\rm LC}}\nabla^{\rm LC}\varphi
=
R^{\rm LC}\cdot\varphi.
\]
Since \(\mathfrak g_2\) is the stabilizer of \(\varphi\), the curvature
action detects precisely the \(\Lambda^2_7\) component in the
endomorphism slot.  Riemann pair symmetry identifies this with the
corresponding exterior-slot projection.

Fix the curvature convention
$[\nabla_i,\nabla_j]v^a=R_{ij}{}^a{}_bv^b$ and
$R_{ijab}=g_{ac}R_{ij}{}^c{}_b$. On a three-form it is
\begin{equation}\label{eq:curvature-action-phi}
 ([\nabla_i,\nabla_j]\varphi)_{abc}
 =-R_{ij}{}^d{}_a\varphi_{dbc}
  -R_{ij}{}^d{}_b\varphi_{adc}
  -R_{ij}{}^d{}_c\varphi_{abd}.
\end{equation}
Denote this covariant action by $\rho_\varphi(R_{ij})$.
Direct contraction of the standard positive form gives
\[
 \rho_\varphi(\iota_v\varphi)=3\iota_v\psi,
 \qquad
 \rho_\varphi(Z_{14})=0,
 \qquad
 \langle\iota_v\psi,\iota_u\psi\rangle=4g(v,u).
\]
Therefore the coefficient $1/12$ in $\mathcal C_\varphi$ gives
$\mathcal C_\varphi\rho_\varphi(Z)=\pi_7Z$.
Applying this contraction to the curvature action and multiplying by
$\eps$ gives
\[
 \eps\mathscr R_\varphi(\mathcal E_{\rm BPS})
  =\eps\operatorname{swap}(\pi_7^{\rm end}R_{\LC})
  =\eps\pi_7^{\rm ext}R_{\LC}.
\]
The second equality uses the Levi--Civita pair symmetry
$R_{ijab}=R_{abij}$: in components
$(\pi_7^{\rm end}R)_{abij}
 =(\pi_7)_{ij}{}^{cd}R_{abcd}
 =(\pi_7)_{ij}{}^{cd}R_{cdab}$.
The $\Lambda^2_{14}$ exterior component wedges to zero with $\psi$,
which proves the first equality in
\eqref{eq:homogeneous-curvature-residual}. Finally
$\Theta_H-\Theta_{\LC}=C_g(H)$ is in $\eps \mathbb{D}$ on admissible fields.
The curvature-difference formula
\[
 R_H-R_{\LC}
   =D_{\Theta_{\LC}}C_g(H)+C_g(H)\wedge C_g(H)
\]
therefore lies in $\eps \mathbb{D}$ and vanishes after a further multiplication
by $\eps$. This proves the remaining equality.
\end{proof}

\subsection{First variation at an admissible base point}
\label{app:first-variation}

We first record the first variation of the unrestricted functional at
an admissible base point. Let
\[
 Z\in\mathscr A_{\rm adm}
\]
and let
\[
 V=(v,k,\beta,\sigma)\in T_Z\mathscr V
\]
be arbitrary. In particular, no condition is imposed on
$\dd\beta^{[0]}$. Put
\[
 w=e^{-2\Phi},
 \qquad
 \kappa_V=D_V\Theta_H,
 \qquad
 \mathsf J=\frac43\pi_1+\pi_7-\pi_{27}.
\]
For an equation argument
\[
 E=(E_C,E_\Sigma,E_T,E_I)
\]
define $\mathcal G_ZE$ and $\mathcal V_ZE$ as below.
The operator
$\mathcal P_Z$ has domain
\[
 \Omega^5\otimes \mathbb{D}\oplus\Omega^7\otimes \mathbb{D}
 \oplus\Omega^3\otimes \mathbb{D}\oplus\Omega^6(\mathfrak{so}(E))\otimes \mathbb{D}_0
\]
and values in local covectors on $\mathscr V$. Put
\[
 \begin{aligned}
 \mathcal G_ZE&=\mathsf J E_T-\tfrac13(*E_\Sigma)\varphi
       +\tfrac32\iota_{\mathsf B(w^{-1}E_C)^\sharp}\psi,\\
 \mathcal V_ZE&=3E_\Sigma-2E_T\wedge\psi,
 &\mathsf B(N)_i&=\tfrac13\langle N,e_i^\flat\wedge\psi\rangle_g.
 \end{aligned}
\]
Explicitly,
for $E=(E_C,E_\Sigma,E_T,E_I)$,
\begin{equation}\label{eq:physical-P-explicit}
\begin{split}
 4(\mathcal P_ZE)[V]=\int_Y\bigl[&
 -\bigl\{\beta+\tfrac14\jmath_\eps P(a,2\red{\kappa_V}+k)\bigr\}\wedge E_C
 +wv\wedge*\mathcal G_ZE+w\sigma\mathcal V_ZE\\
 &-\tfrac12\jmath_\eps\!\left(w^{[0]}\bigl\{P(k,E_I)
           +P(\red{\kappa_V},E_I-\red{\mathscr R_\varphi(E)\wedge\psi})\bigr\}\right)\bigr],
\end{split}
\end{equation}
The curvature operator $\mathscr R_\varphi$ is
\eqref{eq:homogeneous-curvature}.

To derive \eqref{eq:physical-P-explicit}, write
\[
 \theta=\Theta_H^{[0]},
 \qquad
 a=A-\theta,
 \qquad
 \kappa=\kappa_V^{[0]}.
\]
Differentiating the relative transgression gives
\[
 \delta\cT_3(\theta,a)
 =
 2P(k,F_A)
 +
 2P(\kappa,F_A-R_\theta)
 -
 \dd P(a,2\kappa+k).
\]
Hence
\[
 \delta H
 =
 \dd\mathcal B_V
 -
 \frac12\jmath_\eps
 \left\{
   P(k,F_A)
   +
   P(\kappa,F_A-R_\theta)
 \right\},
\]
where
\[
 \mathcal B_V
 =
 \beta
 +
 \frac14\jmath_\eps P(a,2\kappa+k).
\]

Varying the form expression for $W_{\rm ext}$ and integrating by parts
gives the geometric terms in
\eqref{eq:physical-P-explicit}.  The curvature replacement
\[
 \jmath_\eps
 \bigl(
   (R_H\wedge\psi)^{[0]}
 \bigr)
 =
 \jmath_\eps
 \bigl(
   (\mathscr R_\varphi(\mathcal E_{\rm BPS})\wedge\psi)^{[0]}
 \bigr)
\]
uses only the admissibility of the base point through
$H^{[0]}=0$. Therefore
\eqref{eq:physical-P-explicit} holds on all of $T_Z\mathscr V$.

\subsection{Local equivalence of the BPS and Euler--Lagrange equation systems}
\label{subsec:bps-euler-local-equivalence}

The equation-density map in this appendix represents the Euler--Lagrange covector $\mathcal E_{\rm EL}=dW_{\rm ext}$. The Hamiltonian coordinate
is $\mathcal E_{\rm var}=-\mathcal E_{\rm EL}$, as fixed in
\eqref{eq:euler-sign-convention}. 

Fix an admissible formal base point
\[
 Z=(\varphi,B,\Phi,A),
 \qquad
 d_YB^{[0]}=0,
\]
and preserve unrestricted variations.  Write
\[
 \mathfrak g=\mathfrak{so}(E),
 \qquad
 w=e^{-2\Phi},
 \qquad
 w_0=w^{[0]},
 \qquad
 \theta=\Theta_H^{[0]},
 \qquad
 a=A-\theta .
\]
The BPS equation module is
\[
 \mathcal R_{\rm BPS}
 =
 \mathcal R_g\oplus\mathcal R_a,
\]
with
\[
 \mathcal R_g
 =
 (\Omega^5\oplus\Omega^7\oplus\Omega^3)\otimes\mathbb D,
 \qquad
 \mathcal R_a
 =
 \Omega^6(\mathfrak g)\otimes\mathbb D_0 .
\]

In the relative connection chart, write a variation as
\[
 V=(v,k,\beta,\sigma),
\]
where \(k=\delta a\).  Wedge multiplication and the invariant form
identify an unrestricted density covector with
\[
 f=(f_B,f_\varphi,f_\Phi,f_A),
\]
meaning
\[
 f[V]
 =
 \int_Y
 \left(
 \beta\wedge f_B
 +v\wedge f_\varphi
 +\sigma f_\Phi
 +\jmath_\epsilon P(k,f_A)
 \right).
\]
Here \(f_A\) is the quotient-valued amplitude representing the
ideal-valued gauge dual.  The notation uses the canonical isomorphism
\(\jmath_\epsilon:\mathbb D_0\to\epsilon\mathbb D\).

At an admissible base point the reduced induced-connection variation is
\[
 \kappa_V^{[0]}
 =
 \mathsf K_\varphi(v^{[0]})
 +
 \mathsf K_B(\beta^{[0]}),
\]
where
\[
 \mathsf K_\varphi(u)
 =
 D_u\Theta_{\rm LC}(g_\varphi)^{[0]},
 \qquad
 \mathsf K_B(b)
 =
 C_{g^{[0]}}(d_Yb).
\]
The first operator includes the fixed-frame transport used throughout
the manuscript.  Define their density transposes by
\[
 \int_Y P(\mathsf K_B(b),N)
 =
 \int_Y b\wedge\mathsf K_B^\top N,
\]
and
\[
 \int_Y P(\mathsf K_\varphi(u),N)
 =
 \int_Y u\wedge\mathsf K_\varphi^\top N.
\]
These are finite local differential operators obtained by integration
by parts.

Let
\[
 E=(C,\Sigma,T,I)
\]
now denote an arbitrary section of \(\mathcal R_{\rm BPS}\), not
necessarily the actual BPS residual.  Put
\[
 E_g=(C,\Sigma,T)
\]
and define
\[
 \mathcal A_ZE_g
 :=
 \bigl(
 -C,\,
 w*\mathcal G_Z(E),\,
 w\mathcal V_Z(E)
 \bigr).
\]
Let
\[
 \mathscr R_{0,Z}(E_g^{[0]})
 :=
 \bigl(\mathscr R_\varphi(E)\bigr)^{[0]}.
\]
Here $\mathscr R_\varphi$ is the operator \eqref{eq:homogeneous-curvature}.

Define
\[
 \mathcal N_Z(E)
 :=
 \frac12a\wedge C^{[0]}
 -\frac12w_0I
 +\frac12w_0
 \mathscr R_{0,Z}(E_g^{[0]})
 \wedge\psi^{[0]}.
\]
Writing
\[
 \mathcal L_Z:=4\mathcal P_Z,
\]
finite integration by parts in the unrestricted first-variation
formula~\eqref{eq:physical-P-explicit} gives
\begin{equation}
\label{eq:bps-euler-normal-form}
\begin{aligned}
(\mathcal L_ZE)_B
 &=
 -C
 +\jmath_\epsilon\mathsf K_B^\top\mathcal N_Z(E),
\\
(\mathcal L_ZE)_\varphi
 &=
 w*\mathcal G_Z(E)
 +\jmath_\epsilon\mathsf K_\varphi^\top\mathcal N_Z(E),
\\
(\mathcal L_ZE)_\Phi
 &=
 w\mathcal V_Z(E),
\\
(\mathcal L_ZE)_A
 &=
 \frac14a\wedge C^{[0]}
 -\frac12w_0I.
\end{aligned}
\end{equation}

The free geometric map \(\mathcal A_Z\) is pointwise invertible over
\(\mathbb D\).  For a proposed output
\[
 (b,u,q),
\]
set
\[
 C=-b,
 \qquad
 G=w^{-1}*u,
 \qquad
 V=w^{-1}q,
\]
and
\[
 U
 =
 G
 -
 \frac32
 \iota_{\mathsf B(w^{-1}C)^\sharp}\psi .
\]
Define
\[
 a_s
 =
 \frac17\langle U,\varphi\rangle_g,
 \qquad
 r_s=*V.
\]
Then
\[
 t_s
 =
 -\frac12(r_s+9a_s),
 \qquad
 s_s
 =
 -2r_s-21a_s,
\]
and
\[
 T
 =
 t_s\varphi+\pi_7U-\pi_{27}U,
 \qquad
 \Sigma
 =
 s_s\operatorname{vol}_g.
\]
These formulas invert $\mathcal A_Z$. In the singlet components they
solve
\[
3a_s=4t_s-s_s,\qquad r_s=3s_s-14t_s.
\]
The $7$- and $27$-components are inverted by their eigenvalues
$1$ and $-1$.

Introduce the zero-order target transformation
\[
 \mathcal S_Z(f_B,f_\varphi,f_\Phi,f_A)
 :=
 \left(
 f_B,\,
 f_\varphi,\,
 f_\Phi,\,
 f_A+\frac14a\wedge f_B^{[0]}
 \right).
\]
Its inverse subtracts the final summand.

Define
\[
 \mathcal D_Z(E_g,I)
 :=
 \left(
 \mathcal A_ZE_g,\,
 -\frac12w_0I
 \right).
\]
The inverse of \(\mathcal D_Z\) is the algebraic inverse above in the
geometric slots and multiplication by \(-2w_0^{-1}\) in the instanton
slot.

Finally set
\[
 \mathcal M_ZE
 :=
 \left(
 \jmath_\epsilon\mathsf K_B^\top\mathcal N_Z(E),\,
 \jmath_\epsilon\mathsf K_\varphi^\top\mathcal N_Z(E),\,
 0,\,
 0
 \right),
\]
and
\[
 \mathcal U_Z
 :=
 \mathcal D_Z^{-1}\mathcal M_Z.
\]
Equation~\eqref{eq:bps-euler-normal-form} gives the factorization
\begin{equation}
\label{eq:bps-euler-triangular-factorization}
 \boxed{
 \mathcal S_Z(4\mathcal P_Z)
 =
 \mathcal D_Z+\mathcal M_Z
 =
 \mathcal D_Z(1+\mathcal U_Z).
 }
\end{equation}

The geometric output of $\mathcal U_Z$ lies in $\epsilon\mathbb D$
and its gauge output is zero. The operator $\mathcal N_Z$ depends
only on the leading geometric components and the gauge component of
its argument. Reduction modulo $\epsilon$ commutes with derivatives
on $Y$. Lemma~\ref{lem:nilpotent-mixed-perturbation} therefore applies
to \(\mathcal U_Z\) and gives \(\mathcal U_Z^2=0\).
Define
\begin{equation}
\label{eq:bps-euler-inverse}
 \boxed{
 \mathcal Q_Z
 :=
 4(1-\mathcal U_Z)\mathcal D_Z^{-1}\mathcal S_Z.
 }
\end{equation}
The inverse assertion of Lemma~\ref{lem:nilpotent-mixed-perturbation},
applied to \eqref{eq:bps-euler-triangular-factorization}, gives
\(\mathcal Q_Z\mathcal P_Z=\mathcal P_Z\mathcal Q_Z=1\).

The construction above uses the relative connection variation
\(k=\delta a\).  Let \(\mathcal T_Z\) be the tangent map from the
absolute physical connection chart to the relative chart:
\[
 k_{\rm rel}
 =
 k_{\rm abs}
 -
 \mathsf K_\varphi(v^{[0]})
 -
 \mathsf K_B(\beta^{[0]}).
\]
Its density transpose is
\[
 \mathcal T_Z^\top(f)
 =
 \left(
 f_B-\jmath_\epsilon\mathsf K_B^\top f_A,\,
 f_\varphi-\jmath_\epsilon\mathsf K_\varphi^\top f_A,\,
 f_\Phi,\,
 f_A
 \right),
\]
and its inverse replaces the two minus signs by plus signs.
Consequently
\[
 \mathcal P_Z^{\rm abs}
 =
 \mathcal T_Z^\top\mathcal P_Z,
 \qquad
 \mathcal Q_Z^{\rm abs}
 =
 \mathcal Q_Z(\mathcal T_Z^\top)^{-1}
\]
satisfy the same two inverse identities in absolute physical
coordinates.

Every map in the construction is algebraic or a finite-order local differential operator.

\section{Mixed coefficients, grading, signs and totalization}\label{app:mixed}

Definitions~\ref{def:mixed-hamiltonian-category}--\ref{def:mixed-symplectic} and
Proposition~\ref{prop:mixed-hamiltonian-domain} define the mixed-coefficient category within the real local category of
Definition~\ref{def:real-local-formal}.
Here we give its duality, bracket signs and finite variational adjoints.
The symbols $Q$ and $f$ in component and Hamiltonian formulas are
evolutionary derivations on the field coefficients. Their signed horizontal
extensions are distinguished by \eqref{eq:horizontal-vertical-interface}.

Let $\mathbb D=\R[\eps]/(\eps^2)$ and $\mathbb D_0=\mathbb D/(\eps)$.
A $\mathbb D$-linear map $\mathbb D_0\to\mathbb D$ is determined by
the image of $1$, which is annihilated by $\eps$. The annihilator is
exactly $\eps\mathbb D$, proving the quotient-dual identity of
Definition~\ref{def:mixed-variational-dual}. For a finite-rank real bundle
$V$ this yields
\[
 \Hom_{\mathbb D}(V\otimes\mathbb D_0,\mathbb D)
       =V^*\otimes\eps\mathbb D.
\]
The ideal identification is $\jmath_\eps$ from
\eqref{eq:epsilon-ideal-inclusion}. It does not choose representatives
of $\mathbb D_0$ in $\mathbb D$. The invariant form $P$ identifies
the gauge coefficient bundle with its real dual.

\subsection{Canonical potential and variational lift}
For each base coordinate $q$ of either graded field chart, let $\pi_q$ be its shifted
variational covector. Their horizontal form degrees are complementary and their
ghost numbers sum to $-1$. In these coordinates the canonical potential is
\begin{equation}\label{eq:mixed-potential-small}
 \vartheta=\int_Y\left(
       \sum_{q\ \mathrm{free}}\langle\pi_q,\delta q\rangle
       +\eps\sum_{q\ \mathbb{D}_0\text{-valued}}P(\bar\pi_q,\delta\red q)\right),
 \qquad\omega=\delta\vartheta.
\end{equation}
The actual quotient-sector covector is
$\pi_q=\jmath_\eps(\bar\pi_q)$, where $\bar\pi_q$ is a $\mathbb D_0$-valued density covector.
The bar denotes the ideal-duality identification, not reduction of
the embedded covector in $\mathbb D$, which is zero.
The quotient-sector component expressions use these barred coordinates. Invariant cotangent-lift expressions use the actual covectors.
The gauge pairing is $\eps P(\bar\pi_q,\delta\red q)$ and is perfect
as a $\mathbb D$-linear pairing with the density-valued dual.
The potential includes the ghosts and their covectors.
The pairing is covector first, so contraction with the ghost-degree-one
base derivation gives
\[
 H_f=\iota_f\vartheta
       =\sum_q(-1)^{|\pi_q|}\langle\pi_q,fq\rangle.
\]
Indeed, the vertical bidegrees in \eqref{eq:vertical-bigrading} give
\[
 \iota_X(\pi_q\,\delta q)=(-1)^{|X||\pi_q|}\pi_q\,Xq.
\]
For one Darboux pair, $|\pi_q|=-1-|q|$, and therefore
\[
 \iota_X(\delta\pi_q\,\delta q)
     =(X\pi_q)\,\delta q-(Xq)\,\delta\pi_q.
\]
The last sign follows by first contracting and then moving $Xq$ past
$\delta\pi_q$. The exponent $|\pi_q||q|$ is even. Write the variation
with coefficients before the vertical one-forms:
\begin{equation}\label{eq:canonical-euler-rows}
 \delta F\equiv\int_Y\sum_q
       \bigl(\langle F_q,\delta q\rangle
             +\langle F_{\pi_q},\delta\pi_q\rangle\bigr),\qquad
 X_Fq=F_{\pi_q},\qquad X_F\pi_q=-F_q.
\end{equation}
Here the equivalence is modulo horizontal divergences. For a free Darboux pair, taking $F=q$
and $F=\pi_q$ proves \eqref{eq:canonical-darboux-signs}. In $H_f$, the
coefficient of $\delta\pi_q$ is $fq$: moving $fq$ through
$\delta\pi_q$ cancels $(-1)^{|\pi_q|}$. Thus $X_{H_f}q=fq$.

The identities $\{H_f,H_f\}/2=H_{f^2}$ and $\{H_f,W\}=fW$
are proved in Lemma~\ref{lem:cotangent-master}.

\subsection{Hamiltonian domain}\label{app:mixed-hamiltonian-domain}
Use $\mathcal O_{\rm loc}(M)$ and $\mathcal H_{\rm loc}(M)$ from
Definition~\ref{def:mixed-local-functions}. The fibrewise dual
becomes a variational covector after tensoring with the density line. Formal adjoints enter when derivatives are integrated by parts.

A mixed local functional can be written
\begin{equation}\label{eq:mixed-functional-domain}
 F=F^{(0)}(q,\pi)+\jmath_\eps\bigl(
 F^{(1)}(\red q,\red\pi,\red a,\bar\pi_a)\bigr),
\end{equation}
with the analogous graded ghost variables included. Here $F^{(0)}$
is the dual-number extension of a formal function of the free
variables, as in Definition~\ref{def:mixed-local-functions}. Its
first coefficient contains the derivative of its leading coefficient.
For its gauge derivative, the unique coefficient in the real bundle
dual is determined by
$\delta_aF=\eps\langle F_{\red a}^{(1)},\delta\red a\rangle$.
Solving the Hamiltonian equation uses that coefficient as a $\mathbb{D}_0$
element. This identification through the isomorphism
$\mathbb{D}_0\simeq\eps \mathbb{D}$ is not multiplication by $\eps^{-1}$.

Our homogeneous Hamiltonian bracket is defined invariantly by
\begin{equation}\label{eq:odd-bracket-convention}
 \iota_{X_F}\omega=-\delta F,\qquad
 \{F,G\}=X_FG=-\iota_{X_F}\iota_{X_G}\omega.
\end{equation}
We use left derivations, the shifted contraction convention in
\eqref{eq:mixed-potential-small}, and variational integration by parts
before identifying coefficients. The bracket has ghost degree one and obeys
shifted graded antisymmetry and $[X_F,X_G]=X_{\{F,G\}}$.
In particular the even master Hamiltonian acts by $QG=\{\cS,G\}$.

For a real function $f_0$ of the free coordinates, write
$f_0^{\mathbb D}=f_0(x^{[0]})+\eps Df_0(x^{[0]})[x^{[1]}]$
for its canonical dual-number extension.
For $F=f_0^{\mathbb D}+\jmath_\eps(f_1)$ and
$G=g_0^{\mathbb D}+\jmath_\eps(g_1)$, the coordinate bracket is
\[
 \begin{aligned}
 \{F,G\}
 ={}&\{f_0,g_0\}_{\rm fr}^{\mathbb D}\\
 &+\jmath_\eps\bigl(
       \{f_0,g_1\}_{\rm fr}+\{f_1,g_0\}_{\rm fr}
                              +\{f_1,g_1\}_b\bigr).
 \end{aligned}
\]
The free bracket holds quotient coordinates fixed, and the bracket
$\{-,-\}_b$ holds free coordinates fixed. The mutual free bracket
of the two ideal corrections vanishes by $\eps^2=0$.
This gives the normal form \eqref{eq:mixed-local-function}.
For local functionals of the form \eqref{eq:mixed-functional-domain},
finite integration by parts replaces the coordinate derivatives by
Euler--Lagrange derivatives in each block. Let
$\omega_b=\delta\int P(\bar\pi_a,\delta\red a)$ denote the
real shifted cotangent form, with the free variables held fixed.
Define $X^b_{F^{(1)}}$ by the same contraction convention. The contribution is
\begin{equation}\label{eq:mixed-compact-bracket}
 \eps X^b_{F^{(1)}}G^{(1)}
       =-\eps\,\iota_{X^b_{F^{(1)}}}
                        \iota_{X^b_{G^{(1)}}}\omega_b.
\end{equation}
This defines every homogeneous sign through \eqref{eq:odd-bracket-convention}
without treating bare gauge coordinates as Hamiltonians
over $\mathbb{D}$. It includes all variational integrations by parts and proves
closure of the Hamiltonian domain. The graded Jacobi identity follows
either by direct cotangent calculation in the free and $\mathbb{D}_0$ blocks or
by commutators of their Hamiltonian derivations.

\begin{example}[A quotient coordinate and its ideal covector]
\label{ex:quotient-coordinate}
Let $q,r$ be $\mathbb D_0$-valued coordinates with ghost numbers
$|q|=0$ and $|r|=-1$, and let the momentum be
$\rho=\jmath_\eps(r)$. The real Darboux sign is $\{q,r\}_b=-1$.
For $F=\jmath_\eps(q)$ and $G=\rho$,
\[
 X_Fq=0,\quad X_Fr=-1,\quad X_Gq=1,\quad X_Gr=0,
 \qquad \{F,G\}=-\jmath_\eps(1).
\]
The equation $\jmath_\eps(X_Fr)=-\jmath_\eps(1)$ has the unique solution
$X_Fr=-1$ in $\mathbb D_0$. 
\end{example}

\subsection{Filtration and adjoints}
A $\mathbb{D}$-linear local map preserves the ideal submodules and
induces a map modulo $\eps$. Every mixed-coefficient transformation
used here is formed from tensor maps,
finite derivatives, formal pointwise unit inverses and expressions
with an explicit $\eps$ factor. Hence its Taylor coefficients preserve
the coefficient filtration
\[
 F^0N=N,\qquad F^1N=\eps N,\qquad F^2N=0.
\]
The gauge pairing has ideal values,
while the free pairing has unrestricted $\mathbb{D}$ values. They cannot be
assigned one common homogeneous filtration weight.

For a differential field map $q'=T(q)$ the variational cotangent lift is
defined by
\begin{equation}\label{eq:actual-cotangent-lift}
 \pi=(DT)^\dagger\pi',\qquad
 \int\langle\pi',\delta T(q)\rangle
       =\int\langle\pi,\delta q\rangle
          \pmod{\dd_{\hor}}.
\end{equation}
This definition includes terms coming from differentiated fields,
background-dependent frames and mixed gauge variables. When $T$ has a
local formal differential inverse, so does its cotangent lift.

\subsection{Suspension and cyclic Taylor operations}\label{app:cyclic-conventions}
Let $L$ be the cochain module at a zero of a homological vector field
$Q$ on a mixed shifted cotangent space with constant symplectic form
$\omega$. A coordinate of ghost number $g$ has cochain degree $1-g$.
The suspension $\mathbf s:L\to L[1]$ has
$|\mathbf s x|=|x|-1$. Define the cochain pairing by
\begin{equation}\label{eq:cyclic-pairing-sign}
 \langle x,y\rangle_B=(-1)^{|x|}\omega(\mathbf s x,\mathbf s y).
\end{equation}
It has degree $-3$ and satisfies
$\langle y,x\rangle_B=(-1)^{|x||y|}\langle x,y\rangle_B$.

For homogeneous $x_1,\ldots,x_n$, put $a_r=|x_r|-1$ and
\[
 A_n=\sum_r a_r,\qquad b_n=\sum_{r<t}a_ra_t,\qquad
 e_n=\binom n2+\sum_r(n-r)|x_r|.
\]
Let $\widehat Q_n=(D^nQ)_0$ be the raw graded directional Taylor tensor,
without a factorial. The cochain Taylor operation is
\begin{equation}\label{eq:cyclic-polarization}
 \ell_n(x_1,\ldots,x_n)
   =(-1)^{e_n+A_n+b_n}\mathbf s^{-1}
           \widehat Q_n(\mathbf s x_1,\ldots,\mathbf s x_n),\qquad n\geq1.
\end{equation}
The directional derivatives are left derivatives in the displayed
order. Equivalently, their coalgebra pairing is
$\langle F,v_1\cdots v_n\rangle
 =(-1)^{b_n}(D_{v_n}\cdots D_{v_1}F)(0)$.
The symmetric corestrictions are
$q_n(\mathbf s x_1,\ldots,\mathbf s x_n)=(-1)^{e_n}
\mathbf s\ell_n(x_1,\ldots,x_n)$, and the coordinate vector field
uses the factors $1/n!$. The operation $\ell_n$ has cochain degree
$2-n$ and is graded skew-symmetric. In particular,
\[
 \widehat Q_1(\mathbf s x)=(-1)^{|x|+1}\mathbf s\ell_1x.
\]

Set
$T_n(x_1,\ldots,x_{n+1})
 =\langle\ell_n(x_1,\ldots,x_n),x_{n+1}\rangle_B$.
The cyclicity identity is
\begin{equation}\label{eq:cyclic-rotation}
 T_n(x_1,\ldots,x_{n+1})
  =(-1)^{n+|x_1|(|x_2|+\cdots+|x_{n+1}|)}
       T_n(x_2,\ldots,x_{n+1},x_1).
\end{equation}
These equalities are equalities of integrated local densities. The
unary case gives
\[
 \langle\ell_1x,y\rangle_B
       +(-1)^{|x|}\langle x,\ell_1y\rangle_B=0.
\]

\begin{proposition}[Cyclic Taylor operations]\label{prop:cyclic-conventions}
If $Q$ is Hamiltonian for $\omega$, the operations
\eqref{eq:cyclic-polarization} are cyclic and satisfy the
$L_\infty$ identities.
\end{proposition}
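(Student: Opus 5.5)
The plan is to derive everything from the single structural fact that the Taylor coefficients of a Hamiltonian vector field are obtained by differentiating a local functional $\mathcal S$ with $\{\mathcal S,\mathcal S\}=0$. The $L_\infty$ identities and cyclicity are then two separate consequences, which I handle in turn.

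\textbf{The $L_\infty$ identities.} These do not use the symplectic structure. Since $Q(0)=0$ and $Q^2=\tfrac12[Q,Q]=0$, I expand $Q^2$ in homogeneous Taylor degree at the origin. In suspended coordinates the coefficients $q_n$ then satisfy the coderivation identities $\sum_{i+j=n+1} q_i\circ q_j=0$ on the symmetric coalgebra. I translate these through \eqref{eq:cyclic-polarization} using the décalage signs $e_n$. The extra factor $(-1)^{A_n+b_n}$ only records the conversion from raw left directional derivatives to the coalgebra pairing, so it does not affect which identities hold. This step is standard, and I will cite the sign bookkeeping rather than redo it.

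\textbf{Cyclicity.} Here the Hamiltonian property enters. Write $\widehat Q_n=(D^n Q)_0$ and $\mathcal S_{n+1}$ for the homogeneous Taylor component of $\mathcal S$, as in \eqref{eq:reduced-coefficient-extraction}. Because $\omega$ is constant, $\iota_Q\omega=-\delta\mathcal S$ differentiates termwise. This gives
\[
 \omega\bigl(\widehat Q_n(v_1,\ldots,v_n),v_{n+1}\bigr)=\pm(D^{n+1}\mathcal S)_0(v_1,\ldots,v_{n+1}).
\]
The right-hand side is graded symmetric in all $n+1$ suspended arguments, since mixed partials of a local functional commute up to the Koszul sign; modulo total derivatives, this is where integration by parts is used. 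So the tensor $\omega(\widehat Q_n(\cdot),\cdot)$ is graded symmetric on $L[1]^{\otimes(n+1)}$. Substituting $v_r=\mathbf s x_r$ and converting with \eqref{eq:cyclic-pairing-sign} and \eqref{eq:cyclic-polarization}, I obtain $T_n$ up to an explicit sign. The transposition that moves $x_1$ to the last slot, followed by the change of $e_n,A_n,b_n$ under that rotation, must then reproduce exactly $(-1)^{n+|x_1|(|x_2|+\cdots+|x_{n+1}|)}$.

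\textbf{Where the work is.} I expect the sign verification for cyclicity to be the main obstacle. The approach is to compute the parity change of $e_n+A_n+b_n+|x_1|$, the last term coming from the $(-1)^{|x|}$ in $\langle\,,\rangle_B$, under the cyclic rotation. I would check it first for $n=1$ against the stated unary identity, then treat general $n$ as $n$ adjacent transpositions, each contributing the Koszul sign of shifted degrees. The mixed $\mathbb D/\mathbb D_0$ blocks need no separate treatment: by Proposition~\ref{prop:mixed-hamiltonian-domain}, the gauge block pairing $\eps P$ is again a constant symplectic form on the $\jmath_\eps$-identified coordinates, so the same argument applies blockwise.
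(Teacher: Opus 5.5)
Your proposal follows essentially the same route as the paper. The $L_\infty$ identities are the homogeneous Taylor components of $Q^2=0$, read through the coalgebra duality that produces the factors $(-1)^{A_n+b_n}$ and $(-1)^{e_n}$. One correction: these signs are exactly what makes the $\ell_n$ satisfy the standard identities, so it is not right to say they do not affect which identities hold. Cyclicity follows because constancy of $\omega$ turns $\iota_{Q_n}\omega=-\delta S_{n+1}$ into graded symmetry of the $(n+1)$-linear Hamiltonian tensor, which is then desuspended and rotated, leaving the sign bookkeeping you identify.
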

\begin{proof}
Let $q$ be the suspended coderivation dual to $Q$. The negative
graded duality convention is
$\langle QF,c\rangle=-(-1)^{|F|}\langle F,qc\rangle$.
Applying it to one output coordinate gives $(-1)^{A_n+b_n}$. Desuspension gives $(-1)^{e_n}$. Thus
\eqref{eq:cyclic-polarization} is the cochain form of the Taylor
tensor of $Q$. Write $Q_n$ for its homogeneous component of arity
$n$. Since $\omega$ is constant, the equation
$\iota_{Q_n}\omega=-\delta S_{n+1}$ makes the associated
$(n+1)$-linear Hamiltonian tensor graded symmetric. Desuspending and
moving the first entry to the end gives
\eqref{eq:cyclic-rotation}. The homogeneous Taylor coefficients of
$Q^2=0$ give the $L_\infty$ identities.
\end{proof}

\section{Gauge symmetry and Green--Schwarz descent}\label{app:gauge}

This appendix proves Lemma~\ref{lem:native-invariance} in spinorial field coordinates and describes finite relative gerbe representatives.

\subsection{The string Courant action in physical coordinates}
\label{app:courant-action}
Use the smooth extension and pairing of
\eqref{eq:courant-extension}--\eqref{eq:courant-pairing}.
For $C=(C_T,A)$, $F=F_C$ and $\nabla=D_C$, its Dorfman bracket is
\begin{equation}\label{eq:string-dorfman}
\begin{split}
(X+u+\eta)\circ(Y+v+\zeta)
={}&[X,Y]+\nabla_Xv-\nabla_Yu-[u,v]-F(X,Y)\\
&+\mathcal L_X\zeta-\iota_Yd\eta+\iota_Y\iota_XH\\
&+2\mathfrak c_z(\nabla u,v)
 +2\mathfrak c_z(\iota_XF,v)-2\mathfrak c_z(\iota_YF,u).
\end{split}
\end{equation}
This is \cite[Example 2.2]{CoupledG2}, with the ordered tangent and
gauge factors of the present paper. The minus vertical bracket is
the Atiyah convention for equivariant vertical vector fields.
For a connection difference $v_C$ and a two-form $b_2$, the map from
new splitting coordinates to old ones is
\begin{equation}\label{eq:string-splitting-change}
X+u+\eta\longmapsto X+(u-v_C(X))+\eta+\iota_Xb_2
 +2\mathfrak c_z(v_C,u)-\mathfrak c_z(v_C(X),v_C).
\end{equation}
Here the new connection is $C+v_C$ and the new flux is
$H+\mathcal T_{\mathfrak c_z}(C,v_C)+db_2$, where
\[
\mathcal T_{\mathfrak c_z}(C,v_C)
=2\mathfrak c_z(v_C,F_C)+\mathfrak c_z(v_C,D_Cv_C)
 +\tfrac13\mathfrak c_z(v_C,[v_C,v_C]).
\]
The linear and quadratic pairing terms cancel directly in
\eqref{eq:string-splitting-change}. Substituting into
\eqref{eq:string-dorfman} gives the displayed flux change.
Its derivative is
$\mathfrak c_z(F_{C+v_C},F_{C+v_C})-\mathfrak c_z(F_C,F_C)$,
which checks the sign and the direction of the map.

Write
\[
[e_1,e_2]_C=e_1\circ e_2-jd\langle e_1,e_2\rangle,\qquad
T_C(e_1,e_2,e_3)=\tfrac13\sum_{\rm cyc}
 \langle[e_1,e_2]_C,e_3\rangle.
\]
The skew Courant Jacobiator is $jdT_C$. It gives the canonical
two-term Lie-2 action \cite{CourantLie2}. In the left BRST convention
used here, an odd section ghost $\gamma$ and an even scalar ghost
$\beta_C$ obey
\begin{equation}\label{eq:string-ce}
Q\gamma=jd\beta_C-\tfrac12[\gamma,\gamma]_C,\qquad
Q\beta_C=-\tfrac12a_{\mathbb E}(\gamma)(\beta_C)
 -\tfrac16T_C(\gamma,\gamma,\gamma).
\end{equation}
The bracket and $T_C$ are extended with ghost Koszul signs.
The unary map is $jd$, and
$\langle jdf,e\rangle=a_{\mathbb E}(e)f/2$. This fixes the half-anchor
factor and keeps the locally constant scalar reducibilities.

We now give the dictionary after the physical metric frame has been
chosen. Work on a contractible principal-bundle chart and use a flat
reference presentation $\mathbb E_{0,0}$ of the fixed extension.
The parameter $\gamma$ below is written in this reference presentation,
while $C,H$ describe the varying adapted splitting. Let
\[
k_T(\xi)=j_g(\xi)-\iota_\xi\Theta_0,\qquad
t=k_T(\xi)^{[0]},\qquad \theta=\Theta_H^{[0]},\qquad a=A-\theta.
\]
The function $j_g$ is the frame cocycle derivative defined in
Section~\ref{sec:equivalence}. Its component formula is
\eqref{eq:native-frame-lift}. Thus $k_T$ and $t$ are determined by the
physical diffeomorphism ghost and metric. They are not independent
ghosts. With $\lambda$ as in \eqref{eq:native-ghost-shift}, set
\begin{equation}\label{eq:string-reduced-dictionary}
\begin{aligned}
u_T&=k_T(\xi),& u_G&=t+\lambda=s-\iota_\xi A,\\
B_a&=B-\tfrac\epsilon4P(\theta,a),&
\eta_C&=-\Lambda-\tfrac\epsilon2P(A,\lambda),\\
\beta_C&=-c+\tfrac\epsilon4P(t,\lambda)
              -\tfrac12\iota_\xi\eta_C,&
\gamma&=\xi+u_T+u_G+\eta_C.
\end{aligned}
\end{equation}
Every argument of an explicit $\epsilon$ is reduced first. This is
the first-order specialization of the ordinary real action, followed by the
physical metric-frame slice. The inverse recovers
$\lambda=u_G-t$, $s=u_G+\iota_\xi A$,
$\Lambda=-\eta_C-\epsilon P(A,\lambda)/2$ and
$c=-\beta_C+\epsilon P(t,\lambda)/4-\iota_\xi\eta_C/2$.

To verify the descent dictionary, first use the absolute representatives
\[
\Lambda_a=\Lambda-\tfrac\epsilon2P(t,a),\qquad
c_a=c-\tfrac\epsilon4P(t,\lambda),\qquad
\mathfrak c=\tfrac\epsilon4(P_T-P_G).
\]
Then
$\eta_C=-\Lambda_a+2\mathfrak c(C,u)$ and
$\beta_C=-c_a-\iota_\xi\eta_C/2$, with $C=(\Theta_H,A)$.
The relative transgression identity \eqref{eq:transgression-identity}
gives $H=dB_a+\operatorname{CS}_{\mathfrak c}(C)$, where
$\operatorname{CS}_{\mathfrak c}(C)=\mathfrak c(C,dC)+
\mathfrak c(C,[C,C])/3$.
In a flat local reference splitting of the extension, differentiating
these coordinate changes gives
\begin{equation}\label{eq:string-intermediate-action}
\begin{aligned}
f\xi&=-\tfrac12[\xi,\xi],& fu&=u^2-\xi(u),\\
f\eta_C&=db_C-\mathcal L_\xi\eta_C+\mathfrak c(u,du),&
fb_C&=-\xi(b_C)+\tfrac16\mathfrak c(u,[u,u]),\\
fC&=-\mathcal L_\xi C-D_Cu,&
fB_a&=-\mathcal L_\xi B_a+d\eta_C+\mathfrak c(C,du),
\end{aligned}
\end{equation}
Here $u=(u_T,u_G)$, $b_C=-c_a$ and $u^2=[u,u]/2$. The actual connections need
not be flat. In this calculation the connection changes with the
fields. In particular,
\[
f\mathfrak c(C,u)=-\mathcal L_\xi\mathfrak c(C,u)
 +\mathfrak c(u,du)-\mathfrak c(C,u^2).
\]
This identity and invariance of $P$ give the two descendant shifts.
The final scalar change uses
$f(\iota_\xi\eta_C)=\iota_{f\xi}\eta_C-\iota_\xi f\eta_C$.
For a component check put
$K=\xi^i(\partial_i\xi^j)(\eta_C)_j$ and
$M=\xi^i\xi^j\partial_i(\eta_C)_j$. The flat-splitting bracket gives
\[
T_C(\gamma,\gamma,\gamma)
=\tfrac32(K-M)-3\iota_\xi\mathfrak c(u,du)
 -\mathfrak c(u,[u,u]).
\]
Consequently
\[
f\beta_C=-\tfrac12\xi(\beta_C)+\tfrac14(M-K)
 +\tfrac12\iota_\xi\mathfrak c(u,du)
 +\tfrac16\mathfrak c(u,[u,u]),
\]
which is exactly the scalar equation in \eqref{eq:string-ce}.
The other components follow directly from
\eqref{eq:string-intermediate-action}.

To identify the field action itself, write the isotropic lift in the
flat reference presentation as
\[
s_{C,B_a}(X)=X-C(X)-\mathfrak c(C(X),C)+\iota_XB_a.
\]
The negative inner Courant action on this bundle map is
\begin{equation}\label{eq:string-inner-field-action}
(fs)(X)=-\gamma\circ s(X)+s([\xi,X]).
\end{equation}
Its adjoint component gives $fC=-\mathcal L_\xi C-D_Cu$.
For the one-form component, separate vector-field transport and set
$\xi=0$. Evaluating the right side at $Z$ gives
$d\eta_C(X,Z)+2\mathfrak c(d_Zu,C(X))$.
Differentiating the lift on the left gives
\[
\mathfrak c(D_Xu,C(Z))+\mathfrak c(C(X),D_Zu)+(fB_a)(X,Z).
\]
Invariance cancels the connection-commutator terms. Equating the
two expressions yields $fB_a=d\eta_C+\mathfrak c(C,du)$.
Restoring transport proves the final row of
\eqref{eq:string-intermediate-action}. Exact sections $jd\beta_C$
act trivially, as required by the scalar reducibility.

Reversing the triangular changes gives
\eqref{eq:native-base-action}--\eqref{eq:native-ghost-action} below.
The metric-frame cocycle ensures that the imposed relation
$u_T=k_T(\xi)$ is preserved. The following calculation proves closure in the physical coordinates.

\subsection{Closure in spinorial field coordinates}\label{app:native-closure}

In the symmetric metric frame, write $\eta=I_g^*\chi$ and put
\begin{equation}\label{eq:native-frame-lift}
 j_g(\xi)=I_g^{-1}NI_g+I_g^{-1}(DI)_g(N^Tg+gN),
 \qquad N^a{}_b=\nabla_b^0\xi^a.
\end{equation}
The expression is skew. It is the derivative of the frame
transport
\begin{equation}\label{eq:native-frame-cocycle}
 J(g,F)_x=I_g(Fx)^{-1}\dd F_x I_{F^*g}(x),\qquad
 J(g,F\circ G)=G^*J(g,F)J(F^*g,G).
\end{equation}
Its formal spin lift acts by
$\eta\mapsto\Spin(J)^{-1}F^*\eta$. Thus it acts on the metric and
spinor together.

The absolute gauge ghost $s$ and relative ghost $\lambda$ are
related by \eqref{eq:native-ghost-shift}.
In a local connection trivialization put
\begin{equation}\label{eq:native-raw-ghost-dictionary}
 k_g(\xi)=\red{(j_g(\xi)-\iota_\xi\Theta_0)},\qquad
 c_A=\lambda+k_g(\xi),\qquad s=c_A+\iota_\xi A.
\end{equation}
Here $(\iota_\xi F_A)_i=\xi^j(F_A)_{ji}$ and
$\cL_\xi A=\iota_\xi F_A+D_A(\iota_\xi A)$.
The common-frame pullback therefore acts on $A$ by
$\cL_\xi A+D_A(k_g(\xi)+\lambda)=\iota_\xi F_A+D_As$.
This proves \eqref{eq:native-ghost-shift}.
The vector and metric inputs in this gauge expression are also
reduced modulo $\eps$. For
$b_\lambda=P(\lambda,F_A)/2-P(a,D_A\lambda)/4$, the BRST transformation laws are
\begin{equation}\label{eq:native-base-action}
 \begin{aligned}
 f_{\rm sp} g&=-\cL_\xi g,&
 f_{\rm sp}\eta&=-\nabla_\xi^0\eta+\sg(j_g(\xi))\eta,\\
 f_{\rm sp}\varrho&=-\cL_\xi\varrho,&
 f_{\rm sp} A&=-\iota_\xi F_A-D_As,\\
 f_{\rm sp} B&=-\cL_\xi B-\dd\Lambda-\eps b_\lambda.
 \end{aligned}
\end{equation}
The connection transformation is $\mathbb{D}_0$-valued. The remaining generators satisfy
\begin{equation}\label{eq:native-ghost-action}
 \begin{aligned}
 f_{\rm sp}\xi&=-\tfrac12[\xi,\xi],&
 f_{\rm sp} s&=\tfrac12[s,s]-\tfrac12\xi^i\xi^j(F_A)_{ij},\\
 f_{\rm sp}\Lambda&=\dd c-\cL_\xi\Lambda
                       -\tfrac\eps4P(\lambda,D_A\lambda),&
 f_{\rm sp} c&=-\xi(c)+\tfrac\eps{24}P(\lambda,[\lambda,\lambda]).
 \end{aligned}
\end{equation}
Products have their ghost signs in addition to their horizontal
exterior signs. The differential acts from the left and commutes
with $\dd$.

We now prove Lemma~\ref{lem:native-invariance}. The frame
cocycle and the covariant gauge ghost introduce field-dependent changes
of parameters. Their derivatives are essential both to closure and to
the cotangent lift in Appendix~\ref{app:momenta}.

\begin{proof}
The cocycle \eqref{eq:native-frame-cocycle} gives the composition law
for the metric-spinor action. Its derivative squares to zero.
For the gauge connection, set $c_A=s-\iota_\xi A$ in a local
trivialization. The automorphism differential is
\[
 f_{\rm sp} A=-\cL_\xi A-D_Ac_A,\qquad
 f_{\rm sp} c_A=c_A^2-\xi^i\partial_i c_A.
\]
Vector-field Jacobi, gauge Jacobi and the derivation property of pullback give $f_{\rm sp}^2=0$ on these generators. In particular the left graded chain
rule is
\begin{equation}\label{eq:native-covariant-ghost-chain}
 f_{\rm sp} s=f_{\rm sp} c_A+\iota_{f_{\rm sp}\xi}A-\iota_\xi(f_{\rm sp} A)
 =s^2-\tfrac12\xi^i\xi^j(F_A)_{ij}.
\end{equation}
The identity $f_{\rm sp} k_g=k_g^2-\xi^i\partial_i k_g$ follows by
differentiating the frame cocycle, including its metric input.
The same frame cocycle gives
\begin{equation}\label{eq:native-relative-ghost}
 f_{\rm sp}\lambda=\tfrac12[\lambda,\lambda]
                 -\nabla_\xi^0\lambda+[\lambda,j_g(\xi)].
\end{equation}
Indeed it is the difference between the equations for $c_A$ and
$k_g$, using $c_A=\lambda+k_g$ and the graded odd bracket.
Equivalently, for $\upsilon_\xi=j_g(\xi)+\iota_\xi(A-\Theta_0)$,
\[
 f_{\rm sp}\upsilon_\xi=(D_gj)_g(f_{\rm sp} g;\xi)+j_g(f_{\rm sp}\xi)
    +\iota_{f_{\rm sp}\xi}(A-\Theta_0)-\iota_\xi(f_{\rm sp} A),
 \qquad f_{\rm sp}\lambda=f_{\rm sp} s-\red{f_{\rm sp}\upsilon_\xi}.
\]
The first term differentiates the even coefficient of $j_g$ from
the left. 

Relative Chern--Simons descent gives
\[
 \delta_\lambda\cT_3
 =\dd\{2P(\lambda,F_A)-P(a,D_A\lambda)\}.
\]
It proves flux invariance under compensated gauge transformations
and, with diffeomorphism naturality, $f_{\rm sp} H=-\cL_\xi H$.
For a direct check of the two higher descent equations, remove the
diffeomorphism and common-frame action and write $q$ for the remaining
gauge differential. It satisfies
\[
 qA=-D_A\lambda,\quad q\red{\Theta^u}=0,\quad
 q\lambda=\lambda^2,\quad q(D_A\lambda)=0,
 \quad qF_A=-[F_A,\lambda].
\]
Invariance of $P$, with separate horizontal and ghost signs, gives
\begin{equation}\label{eq:native-direct-descent}
 \begin{aligned}
 qb_\lambda
  &=-\tfrac12P(\lambda^2,F_A)+\tfrac14P(D_A\lambda,D_A\lambda)
    =\tfrac14\dd P(\lambda,D_A\lambda),\\
 qP(\lambda,D_A\lambda)
  &=P(\lambda^2,D_A\lambda)
    =\tfrac16\dd P(\lambda,[\lambda,\lambda]),\\
 qP(\lambda,[\lambda,\lambda])&=0.
 \end{aligned}
\end{equation}
For example, the first identity uses
$\dd P(\lambda,D_A\lambda)=P(D_A\lambda,D_A\lambda)
-2P(\lambda^2,F_A)$. The second uses invariance to collect the
three differentiated copies of $\lambda$. The last identity follows
from graded Jacobi before applying $\dd$.
These equations cancel the terms in $f_{\rm sp}^2B$ and
$f_{\rm sp}^2\Lambda$ with exactly the coefficients in
\eqref{eq:native-ghost-action}. Naturality restores the
diffeomorphism and common-frame terms, whose squares and mixed
commutators were established above.
The finite relative descent and its one-form representative are
recorded in Section~\ref{app:finite-gerbe}.

Scalar coherence holds before applying $\dd$. Put
$T=P(\lambda,[\lambda,\lambda])$. Invariance of $P$ and Jacobi
annihilate the gauge and inner-frame parts of
\eqref{eq:native-relative-ghost}, so $f_{\rm sp} T=-\xi(T)$.
Consequently
\[
 f_{\rm sp}^2c
 =\tfrac12[\xi,\xi](c)-\xi(\xi(c))
                 +\tfrac\eps{24}\{\xi(T)-\xi(T)\}=0.
 \]
Thus $f_{\rm sp}^2=0$ on every base generator, including the constant scalar automorphisms.

Finally the Dirac density transforms by pullback under the joint
metric-spinor action. Gauge and gerbe transformations fix $g,\zeta_\ph,H$.
Thus $f_{\rm sp} W_{\rm ext}$ is the integral of a Lie derivative and vanishes
modulo a horizontal divergence.
\end{proof}

\subsection{Finite relative gerbe representatives}\label{app:finite-gerbe}

For an ordinary cochain-degree-zero gauge parameter $\ell_u$, put
$u_t=\exp(t\ell_u)$,
$v_t=u_t^{-1}\partial_tu_t$ and
$\vartheta_t=u_t^{-1}\dd_Yu_t$. The Maurer--Cartan equation on
$Y\times[0,1]$ gives
\[
 W_2(u)=-\int_0^1\tr_7(v_t\wedge\vartheta_t\wedge\vartheta_t)\dd t,
 \qquad
 \dd_YW_2(u)=-\tfrac13\tr_7((u^{-1}\dd u)\wedge(u^{-1}\dd u)\wedge(u^{-1}\dd u)).
\]
Thus $\Omega_2(C,u)=-P(\dd u\,u^{-1},C)-W_2(u)$ satisfies
$\dd\Omega_2(C,u)=\CS_P(C^u)-\CS_P(C)$.
Put $\theta=\red{\Theta_H}=\red{\Theta^u}$. For independent gauge and tangent-frame changes $u,v$, relative
descent reads
\begin{equation}\label{eq:finite-relative-gerbe}
 \begin{aligned}
 A'&=A^u,\qquad\theta'=\theta^v,\qquad a'=A'-\theta',\\
 B'&=B+\dd\Lambda_{\rm fin}
 +\frac14\jmath_\eps\{\Omega_2(A,u)-\Omega_2(\theta,v)
                         +P(\theta',a')-P(\theta,a)\}.
 \end{aligned}
\end{equation}
The first line holds in $\mathbb{D}_0$. The braces also have values in $\mathbb{D}_0$.
The geometric Hull connection transforms by $\Theta_H'=\Theta_H^v$
and is unchanged under a pure gauge change. Differentiating the
second line and using \eqref{eq:transgression-identity} preserves $H$.
On a common frame change the
$W_2$ terms cancel and the remaining two-form terms cancel.

For a pure gauge infinitesimal transformation the braces in
\eqref{eq:finite-relative-gerbe} differentiate to
$\omega_{\rm fin}=-P(\dd\lambda,A)+P(\theta,D_A\lambda)$.
The covariant representative used in
\eqref{eq:native-base-action} is
$\omega_{\rm cov}=2P(\lambda,F_A)-P(a,D_A\lambda)$.
The graded Leibniz rule gives
$\omega_{\rm cov}=\omega_{\rm fin}+2\dd P(\lambda,A)$.
Consequently
\begin{equation}\label{eq:gerbe-parameter-change}
\Lambda_{\rm fin}=\Lambda_{\rm cov}
                            +\frac12\jmath_\eps P(\lambda,A).
\end{equation}
The infinitesimal parameter in the last two displays becomes the odd
ghost $\lambda$ on taking the Chevalley--Eilenberg differential.
The spinorial and positive-form presentations both use the covariant
representative $\Lambda=\Lambda_{\rm cov}$. Consequently their
Jacobian \eqref{eq:native-parameter-jacobian} fixes $\Lambda$ and $c$.

\section{Weighted-spinor coordinates and cotangent lift}\label{app:weighted-spinor-coordinates}

The weighted-spinor chart gives local coordinates for the fields. Its cotangent lift preserves the mixed variational pairing. All adjoints below are formal variational adjoints on the compact oriented manifold.

\subsection{The weighted-spinor field chart}
\label{app:momenta}

The square roots, logarithms and spin lifts use the branches fixed by
the background. The charts use a background orthonormal atlas and its
transition functions.

An explicit local field chart is
\begin{equation}\label{eq:native-field-carrier}
 x=(z,M,\alpha,l)\in
 \Omega^2_7\otimes \mathbb{D}\oplus\Omega^1(T^*Y)\otimes \mathbb{D}
 \oplus\Omega^1(\mathfrak{so}(E))\otimes \mathbb{D}_0
 \oplus\Omega^0\otimes \mathbb{D}.
\end{equation}
Here $M$ combines the metric and relative two-form coordinates,
$z$ describes the normalized positive-frame spinor,
$A=\Theta_0+\alpha$, and $\varrho=\varrho_0e^l$.
The positive generalized frame is the normalized frame
of the graph $\{v+g(v,-)+\iota_vB\}\subset TY\oplus T^*Y$. Its
polar rotation depends on $B$, so $z$ records the spinor in that frame
rather than the ordinary fixed-metric spinor variation.

Work in a local background orthonormal frame, with exterior indices
first in $M_{ia}$. The two-form $B$ denotes displacement from the fixed
background representative in the chosen relative gerbe chart.
For $x=(z,M,\alpha,l)$ set
\begin{equation}\label{eq:exact-native-chart}
 \begin{gathered}
 h_g=M+M^T,\quad\beta=M-M^T,\quad L=\exp(h_g/2),\\
 g=g_0(L\cdot,L\cdot),\quad B=\beta,\quad
 \varrho=\varrho_0e^l,\quad A=\Theta_0+\alpha,\\
 \Phi=\Phi_0+\frac14\tr h_g-\frac12l.
 \end{gathered}
\end{equation}
The generalized metric determined by $(g,B)$ has positive subbundle
$V_+=\{v+g(v,-)+\iota_vB:v\in TY\}\subset TY\oplus T^*Y$.
Its normalized generalized positive frame is
\[
 E_a^+(g,B)=
 \frac{e_a^g+g(e_a^g,\cdot)+\iota_{e_a^g}B}{\sqrt2}.
\]
Relative to the background positive and negative frames its blocks are
\[
 K_+=\tfrac12(L^{-1}+L-\beta L^{-1}),\qquad
 K_-=\tfrac12(L^{-1}-L+\beta L^{-1}).
\]
The sign follows from
$(\iota_v\beta)_i=-\beta_{ij}v_j$. Orthogonality gives
$K_+^TK_+-K_-^TK_-=1$.

Take the left polar decomposition
\begin{equation}\label{eq:positive-polar}
 P_+=(K_+K_+^T)^{1/2},\qquad O=P_+^{-1}K_+,\qquad
 \widehat O=\exp(\sg(\log O)).
\end{equation}
These formal matrix functions are defined near the background, and
$O\in\SO(7)$.
Write $\gamma(p)=c_0(p^\flat)$ for background Clifford multiplication.
For $C(z)_a=z_{ij}\varphi_{0,aij}/2$ and $p=C(z)/2$, define
\begin{equation}\label{eq:spinor-rational-chart}
 \chi_+(z)=\frac{\chi_0+\gamma(p)\chi_0}{\sqrt{1+|p|_0^2}},
 \qquad \chi_g=I_g\widehat O^{-1}\chi_+(z).
\end{equation}
Here $\chi_+$ is the normalized spinor in the positive generalized
frame. Its transport to the metric spinor $\chi_g$ uses the polar
rotation $O$ of that frame. 

\begin{lemma}\label{lem:native-chart-inverse}
The map \eqref{eq:exact-native-chart}--\eqref{eq:spinor-rational-chart},
together with $a=A-\red{\Theta^u(g,B)}$, has a two-sided
local formal inverse.
\end{lemma}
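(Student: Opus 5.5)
The plan is to invert the chart in stages, in the order the fields were built. At each stage the background value is known and the map is a formal power series whose linear part is invertible. Throughout, we work in the formal neighbourhood of the background and use the $\mathbb D$-extension rule of Section~\ref{subsec:eft}.

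First recover the bosonic data.
\begin{itemize}
\item Given $(g,B,\varrho,A)$, set $\beta=B$ (displacement in the fixed relative gerbe chart) and $\alpha=A-\Theta_0$.
\item Recover $L$ as the symmetric square root $G_g^{1/2}$. This is well defined because $g=g_0(L\cdot,L\cdot)$ forces $L^TL=G_g$ with $L$ symmetric, and the positive branch is fixed at the identity.
\item Set $h_g=2\log L$ using the formal logarithm near $1$. Then $M=\tfrac12(h_g+\beta)$.
\item From $\varrho=\varrho_0e^l$ read off $l=\log(\varrho/\varrho_0)$. The dilaton formula is then consistent automatically.
\end{itemize}
Each of these steps is a pointwise formal function with identity linear part. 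Hence the forward and inverse maps are zero-order local formal maps, and they compose to the identity in both directions.

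Next recover the spinor coordinate $z$. From $(g,B)$ we have $K_+$, and hence $P_+$, $O$ and $\widehat O$, all as formal functions of $(M)$ with $O=1$ at the background. Set $\chi_+=\widehat O\,I_g^{-1}\chi_g$. The main step is inverting $z\mapsto\chi_+(z)$. The map $p\mapsto(\chi_0+\gamma(p)\chi_0)/\sqrt{1+|p|^2}$ is the standard chart of the unit sphere of $\Delta$ near $\chi_0$, because the $\gamma(p)\chi_0$ span $\chi_0^\perp$, so $p$ is recovered as
\[
\gamma(p)\chi_0=\frac{\chi_+-\Bsp(\chi_0,\chi_+)\chi_0}{\Bsp(\chi_0,\chi_+)}.
\]
Since the line $[\chi]$ is the datum, fix the sign with $\Bsp(\chi_0,\chi_+)>0$ near the background. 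Then invert $p=C(z)/2$ using the $G_2$-equivariant isomorphism $\Lambda^2_7\to TY$, i.e. $C_{G_2}$ with normalization $C(\iota_v\varphi_0)=3v$.

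Finally handle the connection. The relative variable $a=A-\red{\Theta^u(g,B)}$ is obtained from $\alpha$ by subtracting a function of $(M)$ alone, specifically of $M^{[0]}$. This is a triangular change of coordinates, $\alpha=a+\red{\Theta^u}-\Theta_0$, so it is invertible with inverse of the same form. The only differential operators that enter are those in $\Theta^u=\Theta_{\LC}(g)+C_g(\dd B)$, which are finite order. So the inverse has finite-order Taylor coefficients, and it preserves the $\mathbb D/\mathbb D_0$ filtration because the gauge slot depends only on reductions.

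The main obstacle I expect is bookkeeping rather than an analytic issue. One has to check that the polar rotation and spin lift $\widehat O$ depend only on $M$, which is already recovered before $z$, so the system really is triangular. One also has to check that the fixed-frame transport $I_g$ and the branch choices are consistent across the background orthonormal atlas. Both follow because every step is natural under the $\SO(7)$ transition functions.
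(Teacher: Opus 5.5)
Your proposal is correct and is essentially the paper's proof. Both use the same triangular inversion: recover $h_g=\log G_g$, $M$ and $l$ pointwise; then the polar rotation $\widehat O$ from $M$; then $p$ by the vector/scalar ratio against $\chi_0$, and $z=\tfrac23\iota_p\varphi_0$; and finally $\alpha=a+\red{\Theta^u(g,B)}-\Theta_0$, the only step that differentiates $g,B$. The one difference is the input: the paper starts from the positive-form variables, recovering $g=g_\varphi$ from \eqref{eq:stable-metric}, the spinor line from $\widehat\varphi=\Lambda^3L^{-1}\varphi$ through the rank-one projector $\tfrac18(1+\widehat\varphi\cdot)$, and $l$ from $\Phi$, whereas you read $\chi_g$ and $\varrho$ off directly, which suffices for the lemma as stated.
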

\begin{proof}
Recover $g=g_\varphi$ from \eqref{eq:stable-metric} and set
\[
 h_g=\log(g_0^{-1}g),\quad M=(h_g+B)/2,\quad
 l=\tfrac12\tr h_g-2(\Phi-\Phi_0),\quad
 \widehat\varphi=\Lambda^3L^{-1}\varphi.
\]
Reconstruct $O,\widehat O$ by \eqref{eq:positive-polar}.
In the fixed Clifford frame the operator
\[
 \Pi_{\widehat\varphi}=\frac18(1+\widehat\varphi\cdot)
\]
is the orthogonal rank-one projector onto the unit spinor line.
This identity follows in the reference spinor and hence everywhere
on its orbit by $\Spin(7)$ equivariance. Therefore
\[
 \widehat\chi=
 \frac{\Pi_{\widehat\varphi}\chi_0}
 {\sqrt{\Bsp(\chi_0,\Pi_{\widehat\varphi}\chi_0)}},\qquad
 q=\widehat O\widehat\chi
\]
recovers the selected representative. Its scalar component is a
unit. Thus
\begin{equation}\label{eq:inverse-spinor-chart}
 p_i=\frac{\Bsp(\gamma_i\chi_0,q)}{\Bsp(\chi_0,q)},\qquad
 z=\frac23\iota_p\varphi_0,\qquad
 \alpha=a+\red{\Theta^u(g,B)}-\Theta_0.
\end{equation}
The vector/scalar ratio is the inverse of
\eqref{eq:spinor-rational-chart}. The metric, two-form and density
are recovered directly. The connection substitution is triangular.
All inverse factors are pointwise units. Only its last step
differentiates $g,B$, once.
\end{proof}

\subsection{The full Jacobian and its cotangent lift}\label{app:cotangent-lift}

In local covector coordinates on $\mathcal M_{\rm sp}$, the canonical
potential is
\begin{equation}\label{eq:native-potential}
 \vartheta_{\rm sp}=\sum_{\rm free}\pi_q\,\delta q+
              \eps\sum_{\rm gauge}P(\bar\pi_a,\delta\red a).
\end{equation}
The gauge covectors use the ideal-valued dual of
Definition~\ref{def:mixed-variational-dual}.

Write $c_{\rm g}$ for the tuple of degree-one gauge ghosts. For a
local differential base map
$\Psi(x,c_{\rm g},c)=(u(x),\mathsf P_xc_{\rm g},c)$, its derivative is
\begin{equation}\label{eq:full-triangular-jacobian}
 D\Psi=\begin{pmatrix}
 \mathsf A&0&0\\ \mathsf C&\mathsf P&0\\0&0&1
 \end{pmatrix},\qquad
 \mathsf C(\delta x)=(D_x\mathsf P[\delta x])c_{\rm g}.
\end{equation}
The block $\mathsf A=Du$ differentiates the geometric fields.
The block $\mathsf P$ changes the gauge and one-form gerbe parameters,
while the final identity fixes the scalar gerbe parameter. The block
$\mathsf C$ differentiates the dependence of those parameters on fields.
Let $\pi$ denote source covectors and $p$ target covectors.
The lifted map is
\begin{equation}\label{eq:full-cotangent-lift}
 p_{\rm g}=(\mathsf P^{-1})^\dagger\pi_{\rm g},\qquad
 p_x=(\mathsf A^{-1})^\dagger
                      (\pi_x-\mathsf C^\dagger p_{\rm g}),\qquad
 p_c=\pi_c.
\end{equation}
Its inverse is
\begin{equation}\label{eq:equivalence-momenta}
 \pi_{\rm g}=\mathsf P^\dagger p_{\rm g},\qquad
 \pi_x=\mathsf A^\dagger p_x+\mathsf C^\dagger p_{\rm g},\qquad
 \pi_c=p_c.
\end{equation}

These formulas are understood in the mixed pairings. For a
free-to-$\mathbb{D}_0$ derivative, its adjoint maps an ideal-valued
covector into an $\eps$-weighted free covector. Graded ordering and
horizontal integration by parts are included in $\dagger$.
For example,
\[
 \langle p_x,\mathsf A\delta x\rangle+
 \langle p_{\rm g},\mathsf C\delta x+\mathsf P\delta c_{\rm g}\rangle
 =\langle\pi_x,\delta x\rangle+
                    \langle\pi_{\rm g},\delta c_{\rm g}\rangle
\]
as local variational one-forms. This proves the potential identity
and its inverse directly.

For the parameter block of \eqref{eq:native-ghost-shift}, use the
source order $(\xi,s,\Lambda)$ and target order $(\xi,\lambda,\Lambda)$. Put
\[
 \mathcal L_xv=\red{(j_g(v)+\iota_v(A-\Theta_0))},\qquad
 \mathcal J_\xi \dot g=\red{(D_gj)_g(\dot g;\xi)},\qquad
 \mathcal I_\xi b=\iota_{\red\xi}b.
\]
The letter $\mathcal L_x$ here denotes a parameter operator,
not the Lie derivative $\cL_\xi$. In geometric field coordinates,
\begin{equation}\label{eq:native-parameter-jacobian}
 \mathsf P=\begin{pmatrix}1&0&0\\-\mathcal L_x&1&0\\0&0&1\end{pmatrix},
 \quad
 \mathsf P^{-1}=\begin{pmatrix}1&0&0\\\mathcal L_x&1&0\\0&0&1\end{pmatrix},
 \quad
 \mathsf C(\delta x)=
 \begin{pmatrix}0\\-\mathcal J_\xi\delta g-\mathcal I_\xi\delta A\\0\end{pmatrix}.
\end{equation}
Thus the ordinary variation is
\[
 \begin{aligned}
 \delta\lambda={}&\delta s-\red{(D_gj)_g(\delta g;\xi)}
       -\red{j_g(\delta\xi)}\\
       &-\iota_{\red{\delta\xi}}(A-\Theta_0)
       -\iota_{\red\xi}\delta A.
 \end{aligned}
\]
The terms with $\delta\xi,\delta s$ belong to $\mathsf P$. The terms with field variations belong to $\mathsf C$. For the
odd differential the last contraction has the opposite Leibniz sign,
as displayed in \eqref{eq:native-covariant-ghost-chain}.

For $I=I_g$, $N=\nabla^0\xi$, $T=N^Tg+gN$, and
$I_{\dot g}=(DI)_g \dot g$, direct differentiation gives
\begin{equation}\label{eq:metric-frame-derivative}
 (D_gj)_g(\dot g;\xi)
 =-I^{-1}I_{\dot g}j_g(\xi)+I^{-1}NI_{\dot g}
  +I^{-1}(D^2I)_g(\dot g,T)+I^{-1}(DI)_g(N^T\dot g+\dot g N).
\end{equation}
All derivatives of $I$ are derivatives of its pointwise formal
matrix series. Hence this field derivative is algebraic in $\dot g$. The parameter derivative $\mathcal L_x$ has first order in $\xi$.

To display the resulting momenta, first change only the ghosts and
write $\widetilde\pi$ for the field covectors after that change.
The superscript $[0]$ denotes reduction modulo $\eps$.
Bars on gauge momenta instead denote the normalized $\mathbb{D}_0$ coordinate
of their ideal-valued dual, as in \eqref{eq:mixed-potential-small}.
The full list is
\begin{equation}\label{eq:native-explicit-momenta}
 \begin{aligned}
 \bar p_\lambda&=\bar\pi_s,&
 p_\xi&=\pi_\xi+\eps\mathcal L_x^\dagger\bar\pi_s,&
 p_\Lambda&=\pi_\Lambda,&p_c&=\pi_c,\\
 \widetilde\pi_g&=\pi_g+\eps\mathcal J_\xi^\dagger\bar\pi_s,&
 \widetilde{\bar\pi}_A&=\bar\pi_A+
                 \mathcal I_\xi^\dagger\bar\pi_s,\\
 \widetilde\pi_\eta&=\pi_\eta,&
 \widetilde\pi_\varrho&=\pi_\varrho,&
 \widetilde\pi_B&=\pi_B.
 \end{aligned}
\end{equation}
In this display the normalized adjoints use $P$ after reduction modulo $\eps$. The explicit $\eps$ restores the pairing in a free component.
For example $(\mathcal I_\xi^\dagger\bar\pi_s)^i
=\red{\xi^i}\bar\pi_s$. If
$j_g(v)=K_g{}^i{}_a\nabla_i^0v^a$, the other parameter adjoint is
\[
 (\mathcal L_x^\dagger\bar\pi_s)_a
 =-\nabla_i^0P(\bar\pi_s,K_g{}^i{}_a)
       +P(\bar\pi_s,(A-\Theta_0)_a).
\]
The derivative includes the coefficient $K_g$ and the density
covector. The metric correction is characterized explicitly by
$\langle\mathcal J_\xi^\dagger\bar\pi_s,\dot g\rangle
=P(\bar\pi_s,(D_gj)_g(\dot g;\xi))$, with
\eqref{eq:metric-frame-derivative} substituted. The subsequent ordinary
field change has momenta
$p_x=(\mathsf A^{-1})^\dagger\widetilde\pi_x$.
This is the cotangent lift \eqref{eq:full-cotangent-lift}.

\section{Linear symbol and compatibility calculations}
\label{app:linear}

We give the field and parameter maps and their variational adjoints,
then compute the weighted symbols and polynomial compatibility
identities. The final subsections compute the relative cohomology
and the connecting homomorphisms.

\subsection{Field, parameter and cotangent comparisons}

\subsubsection{Fields, parameters and inverse}
\label{app:linear-fields}

All bundles in the following table are understood through their spaces
of smooth sections. Density dual means the dual bundle tensored with
$\mathrm{Dens}(Y)$. The coefficient dual of $\mathbb{D}_0$ is the ideal $\eps \mathbb{D}$.
\begin{center}
\begin{tabular}{clll}
\toprule
Cochain degree & Variables & Bundle and coefficients & Dual cochain degree\\
\midrule
$-1$ & $c$ & $\Lambda^0\otimes \mathbb{D}$ & $4$\\
$0$ & $r_+,r_-$ & $(T^*Y\oplus T^*Y)\otimes \mathbb{D}$ & $3$\\
$0$ & $s$ & $\mathfrak{so}(TY)\otimes \mathbb{D}_0$ & $3$\\
$1$ & $z$ & $\Lambda^2_7\otimes \mathbb{D}$ & $2$\\
$1$ & $M$ & $(T^*Y\otimes T^*Y)\otimes \mathbb{D}$ & $2$\\
$1$ & $l$ & $\Lambda^0\otimes \mathbb{D}$ & $2$\\
$1$ & $\alpha$ & $(T^*Y\otimes\mathfrak{so}(TY))\otimes \mathbb{D}_0$ & $2$\\
\bottomrule
\end{tabular}
\end{center}
The cochain-degree-two, -three and -four variables are the indicated
density duals, expressed in the canonical representations below.
These maps give the component form of
Proposition~\ref{prop:unary-variational}. The symbol and analytic
arguments used for Proposition~\ref{prop:unary-ellipticity} and
Corollary~\ref{cor:unary-fredholm} follow below.

Put $m:=M^{[0]}$. The metric and two-form inputs below use their explicit reductions $h_g^{[0]}$ and $\beta^{[0]}$.
The tangent-connection derivative modulo $\eps$ is
\eqref{eq:linear-hull-graph}. The metric contribution and full variation are
\begin{equation}\label{eq:linear-determined-connection}
 \begin{aligned}
 (\gamma(h_g))_{iab}
 &=\tfrac12(\nabla_b(h_g)_{ia}-\nabla_a(h_g)_{ib}),\\
 \delta H&=\dd\beta-\tfrac{\eps}{2}P(a,R_0),\qquad
 \delta\Theta_H=\gamma(h_g)+C_0(\delta H).
 \end{aligned}
\end{equation}
Only the reduction modulo $\eps$ of the last expression is used in a term already
multiplied by $\eps$.
The domain of $\KHull$ here is $\Omega^1(T^*Y)\otimes \mathbb{D}_0$ and its
target is $\Omega^1(\mathfrak{so}(TY))\otimes \mathbb{D}_0$.
On the cochain-degree-one input $m=(h_g^{[0]}+\beta^{[0]})/2$,
\[
 \KHull m=\gamma(h_g^{[0]})+C_0(\dd\beta^{[0]}).
\]
Indeed $(\dd\beta)_{iab}=\nabla_i\beta_{ab}
-\nabla_a\beta_{ib}+\nabla_b\beta_{ia}$ and
$C_0(\dd\beta)_{iab}=(\dd\beta)_{iab}/2$ in the declared index order.
The $\mathbb D$-valued variation $\delta\Theta_H$ in
\eqref{eq:linear-determined-connection} also contains the
$\eps$-weighted anomaly contribution through $\delta H$.

We use the vector-valued map $C_{G_2}:\Lambda^2_7\to TY$ with
\[
 g_0(C_{G_2}(z),e_a)=\tfrac12z_{ij}\varphi_{0,aij}.
\]
The infinitesimal form action $\mathsf r$ inserts an endomorphism in
each covariant form slot.

To verify the Hodge-star variation used in \eqref{eq:linear-four-form},
differentiate the $\mathrm{GL}(TY)$-equivariance of
$\varphi\mapsto *_\varphi\varphi$:
\[
 \mathcal J_{\varphi_0}\bigl(\mathsf r(A)\varphi_0\bigr)
 =\mathsf r(A)\psi_0.
\]
For symmetric $A$, the variation of the Hodge star gives
\[
 \mathsf r(A)\psi_0
 =*_0\bigl((\tr A)\varphi_0-\mathsf r(A)\varphi_0\bigr).
\]
Scalar and trace-free symmetric $A$ give the factors $4/3$ and $-1$
on $\Lambda^3_1$ and $\Lambda^3_{27}$, respectively. For skew $A$,
orthogonal equivariance gives
$\mathsf r(A)\psi_0=*_0\mathsf r(A)\varphi_0$, giving the factor $1$
on $\Lambda^3_7$. Hence
\[
 \mathcal J_{\varphi_0}=*_0\left(\frac43\pi_1+\pi_7-\pi_{27}\right).
\]

The variable $z$ records the positive generalized-frame rotation.
Differentiating \eqref{eq:positive-polar} gives
$\delta O=-\beta/2$, so
\[
 \delta\widehat\chi
 =\tfrac12\gamma\bigl(C_{G_2}(z+\pi_7\beta/2)\bigr)\chi_0.
\]
The derivative of the spinor bilinear is
$D\varphi_{\chi_0}[\gamma(v)\chi_0]=2\iota_v\psi_0$.
This proves the spinor and two-form terms in
\eqref{eq:linear-field-map}. The logarithmic density variation
then gives its last scalar entry.

For a three-form $u$, define a symmetric two-tensor $H(u)$ and a
vector field $V(u)$ by
\[
 H(u)_{ij}=\frac14(u_{imn}\varphi_{0,jmn}
                       +u_{jmn}\varphi_{0,imn})
                  -\frac1{18}u_{mnp}\varphi_{0,mnp}g_{0,ij},
 \qquad g_0(V(u),e_a)=\frac1{24}u_{ijk}\psi_{0,aijk}.
\]
Then $u=\mathsf r(H(u)/2)\varphi_0+\iota_{V(u)}\psi_0$.
The inverse of \eqref{eq:linear-field-map} is
\begin{equation}\label{eq:linear-inverse}
 \begin{gathered}
 M=(H(u)+\beta)/2,\qquad
 z=\tfrac13\iota_{V(u)}\varphi_0-\tfrac12\pi_7\beta,\\
 l=\tfrac12\tr H(u)-2\dot\Phi,\qquad
 \alpha=a+\KHull\red M.
 \end{gathered}
\end{equation}
The connection part is triangular and has a local first-order inverse.

Write the two free spinorial one-form parameters as $r_+,r_-$.
Set
\begin{equation}\label{eq:linear-parameter-map}
 \xi^\flat=(r_++r_-)/\sqrt2,\quad
 \Lambda=(r_--r_+)/\sqrt2,\quad
 \kappa(\xi)_{ab}=(\nabla_b\xi_a-\nabla_a\xi_b)/2,\quad
 \lambda=s-\red{\kappa(\xi)}.
\end{equation}
The inverse gives
$r_\pm=(\xi^\flat\mp\Lambda)/\sqrt2$ and
$s=\lambda+\red{\kappa(\xi)}$.
With the sign convention of Proposition~\ref{prop:unary-variational},
the linearized gauge action is
\begin{equation}\label{eq:linear-gauge-complete}
 \begin{aligned}
 G_z&=\pi_7\dd r_+/\sqrt2
                       -\tfrac{\eps}{4}\pi_7P(\lambda,R_0),\\
 (G_M)_{ia}&=(\nabla_i(r_-)_a+\nabla_a(r_+)_i)/\sqrt2
                       +\tfrac{\eps}{4}P(\lambda,R_{0,ia}),\\
 G_\alpha&=\iota_\xi R_0+Ds,\qquad
 G_l=(\diver r_++\diver r_-)/\sqrt2 .
 \end{aligned}
\end{equation}
The scalar map is $\mathsf Rc=(-\dd c/\sqrt2,\dd c/\sqrt2,0)$.

The connection identity needed for gauge comparison is
\begin{equation}\label{eq:linear-frame-identity}
 \gamma(\cL_\xi g_0)_{iab}
       =\nabla_i\kappa(\xi)_{ab}+\xi^qR_{0,qiab}.
\end{equation}
To verify it, substitute
$(h_g)_{ia}=\nabla_i\xi_a+\nabla_a\xi_i$ into $\gamma(h_g)$.
Subtracting $\nabla_i\kappa_{ab}$ leaves
\[
 \tfrac12\{[\nabla_b,\nabla_i]\xi_a
          -[\nabla_a,\nabla_i]\xi_b
          +[\nabla_b,\nabla_a]\xi_i\}.
\]
The Riemann symmetries and algebraic Bianchi identity reduce
this to the curvature term in \eqref{eq:linear-frame-identity}.
Consequently the two absolute curvature terms cancel in
$\delta a$, leaving $D\lambda$ and
$\delta\beta=\dd\Lambda+\eps P(\lambda,R_0)/2$.

\subsubsection{Quadratic functionals}
At the background the $\dd\beta$ and $\dd u$ terms in the first
variation integrate to zero, and the gauge term vanishes by
$R_0\wedge\psi_0=0$. Thus second derivatives of the nonlinear field
chart multiply a vanishing first variation.
For the comparison of quadratic functionals, substitute $\alpha=a+k$
in \eqref{eq:quadratic-native-source} and use
\[
 P(\alpha,D\alpha)-P(k,Dk)
 =P(a,Da)+2P(a,Dk)+\dd P(a,k).
\]
Since $\dd\psi_0=0$, the last term integrates to zero against $\psi_0$.
This gives equality of the quadratic functionals under
\eqref{eq:linear-field-map}. The gauge maps agree by
\eqref{eq:linear-frame-identity}. The variational cotangent maps below
therefore identify the complete quadratic Hamiltonians.

\subsubsection{Density covectors}

For cochain degree two write the representations as
\[
 (c_0\varphi_0,\ \iota_{n_a}\varphi_0,\
                     \iota_{b_a}\varphi_0,\ t).
\]
Their density covectors on $(z,M,\alpha,l)$ are
\begin{equation}\label{eq:linear-native-duals}
 \lambda_z=w\iota_t\varphi_0,\quad
 \lambda_M=3wn,\quad \lambda_\alpha=3wb,\quad
 \lambda_l=7wc_0.
\end{equation}
The evaluation is
$\langle\lambda_z,\delta z\rangle_2+\lambda_M:\delta M+
\eps P(\lambda_\alpha,\delta\alpha)+\lambda_l\delta l$,
integrated against $\vol_0$.
For cochain degree three
$(n_a\varphi_0,b\varphi_0,\iota_v\varphi_0)$, the covectors are
$\varrho_+=3wv,\varrho_-=7wn,\varrho_s=7wb$.
The terminal scalar covector is $7wc_4$.
These numerical factors come from \eqref{eq:trace-units}.

\subsubsection{Hessian covectors}

Let $\mathsf T=4\pi_1/3+\pi_7-\pi_{27}$ and
$\dot\psi_w=*\mathsf T u-2\dot\Phi\,\psi_0$.
First vary the separate linear inputs
$(u,\beta,\dot\Phi,\alpha,k)$ in
\eqref{eq:quadratic-native-source}. In ordinary form inner products
the covectors are
\begin{equation}\label{eq:hessian-euler-inputs}
 \begin{aligned}
 E_u={}&\tfrac w4[
       \mathsf T(\dd\beta)-*\dd u+*(\dd\dot\Phi\wedge\varphi_0)]
                   -\tfrac{\eps w}{8}\mathsf T P(a,R_0),\\
 E_\beta={}&-\tfrac w4*\dd \dot\psi_w,\\
 E_\Phi={}&\tfrac w4*(\dd u\wedge\varphi_0
                              -2\dd\beta\wedge\psi_0)
                +\tfrac{\eps w}{4}*(P(a,R_0)\wedge\psi_0),\\
 E_\alpha={}&-\tfrac w8*(R_0\wedge \dot\psi_w+D\alpha\wedge\psi_0),\\
 E_k={}&+\tfrac w8*(R_0\wedge \dot\psi_w+Dk\wedge\psi_0).
 \end{aligned}
\end{equation}
For example, the $\beta$ variation of the first term of
\eqref{eq:quadratic-native-source} is
\[
 \frac w4\int_Y\dd(\delta\beta)\wedge \dot\psi_w
 =-\frac w4\int_Y\delta\beta\wedge\dd \dot\psi_w.
\]
Thus its density covector is $E_\beta=-w*\dd \dot\psi_w/4$.
This variation treats $u,\dot\Phi,\alpha,k$ as separate inputs. Their
dependence on $M,z,l$ is included by the subsequent chain rule.

The last two entries of \eqref{eq:hessian-euler-inputs} are normalized gauge covectors: their evaluation
has an external factor of $\eps P$. All their inputs, and all
inputs in an explicit $\eps$ term, are reduced modulo $\eps$.

Define
\[
 I(q)_{ia}=\tfrac14(q_{imn}\varphi_{0,amn}
                         +q_{amn}\varphi_{0,imn}),\qquad
 (\KHull^\dagger b)_{ia}=-2\nabla_b b_{iab}-\nabla_jb_{jia}.
\]
For a cochain-degree-two output, $L_z,L_M,L_\alpha,L_l$ denote its density covectors, normalized as in \eqref{eq:linear-native-duals}.
The spinorial Hessian covectors are
\begin{equation}\label{eq:hessian-native-covectors}
 \begin{aligned}
 L_z(\cH x)&=4\iota_{V(E_u)}\varphi_0,\\
 L_M(\cH x)&=I(E_u)+2\iota_{V(E_u)}\varphi_0
                +E_\beta+\tfrac12E_\Phi g_0+\eps\KHull^\dagger E_k,\\
 L_\alpha(\cH x)&=E_\alpha,\qquad
 L_l(\cH x)=-\tfrac12E_\Phi .
 \end{aligned}
\end{equation}
Here two-forms in the second line are read as skew $(i,a)$ tensors.
The four terms before the tangent adjoint are the derivatives
of the spinor, metric, two-form and dilaton dictionary.
The last is the derivative of the Hull connection.
Integration by parts differentiates the curvature coefficients
as well as the test field.

\subsubsection{Noether identities}

For a tuple of field covectors $L=(L_z,L_M,L_\alpha,L_l)$ put
\[
 T_R=\sum_{ia}\red{\bigl((L_M)_{ia}-\tfrac12(L_z)_{ia}\bigr)}R_{0,ia},
 \qquad (J_R)_q=\sum_iP((L_\alpha)_i,R_{0,qi}).
\]
Direct integration by parts in \eqref{eq:linear-gauge-complete} gives
\begin{equation}\label{eq:linear-noether}
 \begin{aligned}
 (G^\dagger L)_{+,i}
  ={}&\tfrac1{\sqrt2}
 [(\dd^\dagger L_z)_i-\nabla_a(L_M)_{ia}-\nabla_iL_l]
 +\tfrac\eps{\sqrt2}[J_{R,i}-\tfrac14(\kappa^\dagger T_R)_i],\\
 (G^\dagger L)_{-,i}
  ={}&\tfrac1{\sqrt2}[-\nabla_a(L_M)_{ai}-\nabla_iL_l]
 +\tfrac\eps{\sqrt2}[J_{R,i}-\tfrac14(\kappa^\dagger T_R)_i],\\
 (G^\dagger L)_s
  ={}&D^\dagger L_\alpha+\tfrac14T_R .
 \end{aligned}
\end{equation}
The formal adjoints are
$(\dd^\dagger L_z)_i=-\nabla^j(L_z)_{ji}$ and
$D^\dagger L_\alpha=-D^i(L_\alpha)_i$.
The last cochain arrow has scalar component
\[
 c_4(d_C^3q)=
 \frac{\diver\varrho_--\diver\varrho_+}{7w\sqrt2}.
\]

\subsubsection{Cotangent maps for equations and identities}

Put $L=\lambda_M+\eps\KHull^\dagger\lambda_\alpha$,
$w_z=wt$, and
$H^\dagger(S)=\mathsf r(S)\varphi_0-\tr(S)\varphi_0/3$.
Then the cochain-degree-two map to form covectors is
\begin{equation}\label{eq:linear-equation-map}
 \begin{aligned}
 p_u&=H^\dagger(\tfrac12\Sym L+\tfrac12\lambda_lg_0)
                                      +\tfrac14\iota_{w_z}\psi_0,\\
 p_\beta&=\operatorname{skew}L-\tfrac12\lambda_z,\qquad
 p_\Phi=-2\lambda_l,\qquad p_a=\lambda_\alpha.
 \end{aligned}
\end{equation}
Here $\Sym L=(L+L^T)/2$ and
$\operatorname{skew}L=(L-L^T)/2$.
The cochain-degree-three and -four maps are
\begin{equation}\label{eq:linear-identity-map}
 \begin{gathered}
 p_\lambda=\varrho_s,\qquad
 p_\xi^\flat=(\varrho_++\varrho_-)/\sqrt2+\eps\kappa^\dagger\varrho_s,\qquad
 p_\Lambda=(\varrho_--\varrho_+)/\sqrt2,\\
 (\kappa^\dagger b)_a=-\nabla_b b_{ab},\qquad p_c=7wc_4.
 \end{gathered}
\end{equation}
Their inverses are the transposed field and parameter Jacobians.
More explicitly, if $T$ is the algebraic part of
\eqref{eq:linear-field-map}, then
\[
 (\lambda_z,\lambda_M,\lambda_l)
 =T^\dagger(p_u,p_\beta,p_\Phi)
                       -(0,\eps\KHull^\dagger p_a,0),\qquad
 \lambda_\alpha=p_a,
\]
and
\[
 \varrho_s=p_\lambda,\qquad
 \varrho_\pm=(p_\xi^\flat-\eps\kappa^\dagger p_\lambda
                                      \mp p_\Lambda)/\sqrt2.
\]
Use \eqref{eq:linear-native-duals} backwards to recover forms.
Only the units $3w,7w$ are divided out.

\subsubsection{Cochain and suspended BRST coordinate conventions}

Let $\mathbf s$ denote suspension lowering cochain degree by one. With
the covector-first potential define
$B_C(x,y)=(-1)^{|x|}\omega_C(\mathbf s x,\mathbf s y)$.
For a lower amplitude of cochain degree $p=-1,0,1$ and its complementary
dual of cochain degree $3-p$,
\[
 B_C(x_p,y_{3-p})=(-1)^{p+1}y_{3-p}[x_p].
\]
The lower-to-dual signs are $(+,-,+)$ in cochain degrees $(-1,0,1)$.
Together with Hessian symmetry they give
\[
 B_C(d_Cx,y)+(-1)^{|x|}B_C(x,d_Cy)=0.
\]

Thus the cyclic adjoint $\sharp$ and the formal adjoint $\dagger$
have different signs. In density covectors
the upper arrows of \eqref{eq:unary-differential} are
$+G^\dagger,-\mathsf R^\dagger$.

The suspended BRST coordinate differential has unary component
\begin{equation}\label{eq:unary-suspension}
 \widehat Q_1(\mathbf s x)=(-1)^{p+1}\mathbf s\,d_Cx,
 \qquad x\in C^p_{\rm var}.
\end{equation}
Thus its components are $(-1)^{p+1}d_C^p$ in cochain degree $p$.
Let $F^p$ denote the cochain-degree-$p$ component of the linearized
Hamiltonian comparison $\widehat\Psi_{\rm lin}$ of
Proposition~\ref{prop:unary-variational}. Its chain-map identity is
\[
 F^{p+1}d_C^p=d_\varphi^pF^p.
\]
Suspending this equation with the same suspension convention on both
presentations gives the corresponding identity for their unary
Hamiltonian vector fields.

\subsection{Intrinsic linear BPS and Noether comparison}
\label{app:intrinsic-linear-bps}

We work at the torsion-free standard embedding with constant
$\Phi_0$. Write $R=R_0$, $D=D_{\Theta_0}$ and $w=e^{-2\Phi_0}$.
The invariant trace is $P=-\tr_7$. Courant operations are first
defined over the reals. We then specialize to $\eps^2=0$ and quotient
the first-order gauge ideal. Geometric variables are $\mathbb D$-valued
and gauge variables are $\mathbb D_0$-valued.

\subsubsection{Configuration and symmetry derivatives}

In the background splitting $\sigma_\pm X=X\pm g_0X$,
differentiating the admissible-metric graph gives
\[
 \pi_-\delta v_+(X)
 =-\sigma_-\!\left[\tfrac12
       (h_g(X,\cdot)+\iota_X\beta)^\sharp\right]
       -(\delta\Theta_H(X),\delta A(X)).
\]
This identifies $M=(h_g+\beta)/2$ with its negative tangent component.
The full induced $\delta\Theta_H$ occurs here. Its reduction is $k$.
The spinor and density derivatives, including the positive-frame
rotation $\delta O=-\beta/2$, give \eqref{eq:linear-field-map}.
Their inverse is \eqref{eq:linear-inverse}.

Differentiate the Koszul formula in the symmetric orthonormal frame.
The frame derivative cancels $\nabla_i(h_g)_{ab}/2$ in the coordinate
connection variation. The remaining skew matrix is
$\gamma(h_g)$ in \eqref{eq:linear-determined-connection}.
Since the background flux vanishes, differentiating
\eqref{eq:intrinsic-hull} gives
\[
 \delta\Theta_H=\gamma(h_g)+C_0\dd\beta
          -\tfrac{\eps}{2}C_0P(\alpha-k,R),\qquad
 k=\gamma(h_g^{[0]})+C_0\dd\beta^{[0]}.
\]
Here $C_0(b)_{iab}=b_{iab}/2$ in the declared index order.
Substitution of $M^{[0]}=(h_g^{[0]}+\beta^{[0]})/2$ gives
\eqref{eq:linear-hull-graph}. Thus this connection is determined by
a local first-order differential graph.

For the lower parameter dictionary, let $j_g(\xi)$ denote the
compensating tangent-frame rotation. The covariant absolute adjoint
parameters satisfy
\[
 u_T=j_g(\xi)-\iota_\xi\Theta_0,\qquad
 u_G=u_T^{[0]}+\lambda=s-\iota_\xi A.
\]
Consequently
$\lambda=s-j_g(\xi)^{[0]}-\iota_\xi(A-\Theta_0)$.
In a local absolute gerbe representative the Courant one-form and
scalar parameters are
\[
 \eta_C=-\Lambda-\tfrac{\eps}{2}P(A,\lambda),\qquad
 c_C=-c+\tfrac{\eps}{4}P(u_T^{[0]},\lambda)
                                      -\tfrac12\iota_\xi\eta_C.
\]
At the background $j_g(\xi)=\kappa(\xi)$ of
\eqref{eq:linear-parameter-map}. The quadratic scalar shifts have zero
unary part. The one-form shift gives the anomaly term in
\[
\begin{gathered}
 Gu=\cL_\xi\varphi_0,\quad Gh_g=\cL_\xi g_0,\quad G\dot\Phi=0,\\
 G\beta=\dd\Lambda+\tfrac{\eps}{2}P(\lambda,R),\quad
 G\alpha=\iota_\xi R+Ds,\quad Gl=\diver\xi .
\end{gathered}
\]
Using \eqref{eq:linear-frame-identity} gives $Ga=D\lambda$.
These equations become \eqref{eq:linear-gauge-complete} in the
$r_\pm$ coordinates. The scalar reducibility is
$\mathsf Rc=(-\dd c/\sqrt2,\dd c/\sqrt2,0)$.
This proves the lower differential comparison, including its signs.

\subsubsection{Projected coefficient operator}

The metric graph identifies $V_-$ with
$T^*Y\oplus\ad P_T\oplus\ad P_G$ after lowering the tangent index.
The mixed Courant operator is
\[
\begin{split}
 D^-_{\sigma_+X}(\sigma_-Y+r)
 ={}&\sigma_-\!\left(\nabla^-_XY
             -g^{-1}\mathfrak c(\iota_XF_C,r)\right)\\
 &+D_{C,X}r-F_C(X,Y).
\end{split}
\]
Alternate and project through $\cA_{\varphi_0}$, writing $\Pi$ for
the degree-appropriate projection and $R_m=\iota_{e_m}R$.
With the graph sign above, the coefficient operator is
\begin{equation}\label{eq:linear-courant-core}
 \check{\mathcal D}^{P}
 \begin{pmatrix}M\\k\\\alpha\end{pmatrix}
 =
 \Pi\begin{pmatrix}
 \dd_\nabla M_m+\tfrac{\eps}{4}P(R_m\wedge k)
                         -\tfrac{\eps}{4}P(R_m\wedge\alpha)\\
 Dk+\sum_mR_m\wedge M_m^{[0]}\\
 D\alpha+\sum_mR_m\wedge M_m^{[0]}
 \end{pmatrix}.
\end{equation}
The adjoint entries in this coefficient calculation are leading
connection variations. This does not replace the full geometric
connection by a $\mathbb D_0$-valued field.

For exterior degree $p$, put
$(K_pM)_{ab}=\nabla_bM_a-\nabla_aM_b$ and
$\mathcal F_RM=\sum_mR_m\wedge M_m$.
The covariant commutator gives, after projection,
\[
 K_{p+1}\check\dd_\nabla-\check D K_p
       =\check{\mathcal F}_R,\qquad p=0,1,2.
\]
Before projection its additional terms insert the curvature in the
exterior slots. They vanish in degree zero and lie in the ideal
generated by $\Lambda^2_{14}$ in degrees one and two.
The parallel $G_2$ projectors, $DR=0$ and the tangent and gauge
instanton equations therefore give
$(\check{\mathcal D}^{P})^2=0$.
They also make $(M,\alpha)\mapsto(M,K_pM^{[0]},\alpha)$ a
differential graph in this coefficient complex.
Its geometric entry is
\[
 \Pi\left(\dd_\nabla M_m
       +\tfrac{\eps}{4}P(R_m\wedge K_pM^{[0]})
       -\tfrac{\eps}{4}P(R_m\wedge\alpha)\right).
\]
This displays the curvature-times-derivative term.

On one-cochains the symmetric metric frame requires the correction
\[
 t=\beta^{[0]}/2,\qquad
 k'=k-Dt=K_1M^{[0]},\qquad
 M'_m=M_m-\tfrac{\eps}{4}P(R_m,t),\qquad \alpha'=\alpha .
\]
It subtracts $\check{\mathcal D}^{P}(0,t,0)$.
The extension operator of de la Ossa--Larfors--Svanes \cite{DLSHeterotic}
and the physical coefficient operator of McOrist--Sticka--Svanes (MSS) \cite{MSS}
use the same projected blocks. To compare the latter trace convention
$\tau=\tr_7=-P$, one must also use
\[
 S(M,k,\alpha)=(M,k,2k-\alpha),\qquad S^2=1,\qquad
 \check{\mathcal D}^{\tau}S=S\check{\mathcal D}^{P}.
\]
On the raw differential graph this is
$\alpha_{\rm MSS}=2K_pM^{[0]}-\alpha$.
For physical one-cochains its relative-coordinate form is
$a_{\rm MSS}=-a-D(\beta^{[0]}/2)$.
This identifies the projected coefficient complex, not the complete BPS
identity complex. The positive spinor variation, density and gerbe
rows belong to the larger physical deformation problem and are
treated separately above.

\subsubsection{BPS residuals and the scalar flux correction}

The derivative of \eqref{eq:bps-residuals}, with
$\mathcal J_{\varphi_0}=*\mathsf T$ and
$\mathsf T=4\pi_1/3+\pi_7-\pi_{27}$, is
\begin{equation}\label{eq:linear-intrinsic-bps}
\begin{aligned}
 E_C&=w\dd(\mathcal J_{\varphi_0}u-2\dot\Phi\psi_0),\\
 E_\Sigma&=\dd u\wedge\varphi_0,\\
 E_T&=\dd\beta-\tfrac{\eps}{2}P(\alpha-k,R)
                       +*\dd u-2*(\dd\dot\Phi\wedge\varphi_0),\\
 E_I&=D\alpha\wedge\psi_0+R\wedge
                                (\mathcal J_{\varphi_0}u)^{[0]} .
\end{aligned}
\end{equation}
All inputs multiplied by $\eps$ are reduced modulo $\eps$.
The gauge row uses leading geometry. The density enters the first
and third rows. In particular, the full gauge row also contains
$R\wedge*(\iota_{C_{G_2}z^{[0]}}\psi_0)$, beyond the term from the projected coefficient complex $(D\alpha+\mathcal F_RM^{[0]})\wedge\psi_0$.
Covariance at the zero background gives $EG=0$.

Let $\mathcal P_0$, $\mathcal J_0$ and $\mathscr R_0$ be the
background operators of \eqref{eq:rp-P-1}--\eqref{eq:rp-P-18}.
Differentiating that first-variation identity on unrestricted
tangent fields gives
\begin{equation}\label{eq:linear-hessian-defect}
 \cH x=\mathcal P_0E(x)-\mathcal J_0(\dd\beta^{[0]}).
\end{equation}
The second term need not vanish. For an arbitrary three-form $b$,
the flux defect is
\[
 \Gamma b=(DC_0b-\mathscr R_0(0,0,b,0))\wedge\psi_0.
\]
Define $W:\Omega^1\to\Omega^6(\ad P_T)$ by
$(*W(v))_{iab}=(g_{0,ia}v_b-g_{0,ib}v_a)/4$.
Then
\begin{equation}\label{eq:linear-scalar-flux-defect}
 \Gamma b=\tfrac12(\dd b)^{\rm pair}\wedge\psi_0
                       +W\dd\langle b,\varphi_0\rangle .
\end{equation}
Here $(q^{\rm pair})_{ijab}=q_{ijab}$.
To verify this identity, the torsion reconstruction for a pure
$E_T=b$ input is
\[
 t(b)_{ia}=-\tfrac14b_{ijk}\varphi_{0,ajk}
                      +\tfrac14\langle b,\varphi_0\rangle g_{0,ia}.
\]
Substitute it in $\mathscr R_0$. The nonscalar terms use
$(DC_0b)_{ijab}+(DC_0b)_{abij}=(\dd b)_{ijab}/2$.
The remaining scalar contraction is $W\dd\langle b,\varphi_0\rangle$.
No derivatives are commuted in this calculation.

For $b=\dd\beta^{[0]}$ its exterior derivative vanishes, and
\eqref{eq:linear-intrinsic-bps} gives
\[
 f(E):=\langle E_T^{[0]},\varphi_0\rangle-*E_\Sigma^{[0]}
       =\langle\dd\beta^{[0]},\varphi_0\rangle .
\]
Define a local equation-to-density operator by
\begin{equation}\label{eq:linear-corrected-pairing}
 (\mathcal LE)[V]=-\tfrac{\eps w}{8}
       \int_Y P(k_V,W\dd f(E)),\qquad
 \widetilde{\mathcal P}_0=\mathcal P_0-\mathcal L ,
\end{equation}
where $k_V=\KHull V_M^{[0]}$.
These integrals mean local density pairings modulo integration by
parts. Equations \eqref{eq:linear-hessian-defect} and
\eqref{eq:linear-scalar-flux-defect} prove
$\cH=\widetilde{\mathcal P}_0E$.
The term $W\dd f(E)$ explains why the uncorrected $\mathcal P_0$
does not give the Hessian on unrestricted tangent fields.

For an explicit expression set
\[
\begin{gathered}
 B_0(q)=\tfrac13(v\mapsto v\wedge\psi_0)^\dagger q,\\
 m(E)=\mathsf T E_T-\tfrac13(*E_\Sigma)\varphi_0
               +\tfrac32\iota_{B_0(w^{-1}E_C)^\sharp}\psi_0,\qquad
 v(E)=3E_\Sigma-2E_T\wedge\psi_0,\\
 \widehat r(E)=\mathscr R_0(E^{[0]})\wedge\psi_0+W\dd f(E).
\end{gathered}
\]
Then the complete corrected pairing is
\[
\begin{split}
 4(\widetilde{\mathcal P}_0E)[V]=\int_Y\{&
 -\beta_V\wedge E_C+w u_V\wedge*m(E)+w\dot\Phi_Vv(E)\\
 &+\tfrac{\eps w}{2}
       [P(k_V,\widehat r(E))-P(\alpha_V,E_I)]\}.
\end{split}
\]
In particular
$\widehat r(E(x))=Dk\wedge\psi_0+
R\wedge(\mathcal J_{\varphi_0}u)^{[0]}$.

\subsubsection{Local inverse and chain identities}

The algebraic part of $4\widetilde{\mathcal P}_0$ in exterior density
coordinates is
\[
 \mathcal D_{\rm alg}E=(-E_C,w*m(E),wv(E),-wE_I/2).
\]
Its inverse is the background specialization of the algebraic inverse
preceding \eqref{eq:rp-P-17}. Explicitly, for
$L=(L_B,L_u,L_\Phi,L_A)$ put
\[
\begin{gathered}
 E_C=-L_B,\qquad
 U=w^{-1}*L_u-\tfrac32\iota_{B_0(w^{-1}E_C)^\sharp}\psi_0,\\
 a_s=\langle U,\varphi_0\rangle/7,\quad r_s=*(w^{-1}L_\Phi),\quad
 t_s=-(r_s+9a_s)/2,\quad \sigma_s=-2r_s-21a_s,\\
 E_T=t_s\varphi_0+\pi_7U-\pi_{27}U,\qquad
 E_\Sigma=\sigma_s\vol_0,\qquad E_I=-2L_A/w .
\end{gathered}
\]
Write $4\widetilde{\mathcal P}_0=\mathcal D_{\rm alg}+\mathcal N_\eps$.
The second term has geometric ideal-valued output, depends only on
leading geometric inputs and has zero gauge output. The algebraic inverse
preserves these properties, so Lemma~\ref{lem:nilpotent-mixed-perturbation}
gives \((\mathcal D_{\rm alg}^{-1}\mathcal N_\eps)^2=0\) and
\begin{equation}\label{eq:linear-bps-local-inverse}
 \widetilde{\mathcal P}_0^{-1}
 =4(1-\mathcal D_{\rm alg}^{-1}\mathcal N_\eps)
                                  \mathcal D_{\rm alg}^{-1}
 =(1+\mathcal P_0^{-1}\mathcal L)\mathcal P_0^{-1}.
\end{equation}
This is a two-sided finite-order local differential inverse.
Only the units $w,2,3,7$ are inverted. 
Gauge covectors are normalized by their coefficients in
$\eps\mathbb D$, as in \eqref{eq:linear-native-duals}.

The exterior and gauge Bianchi identities are
\[
 \dd E_C=0,\qquad D(wE_I)-R\wedge E_C^{[0]}=0.
\]
The induced tangent residual similarly satisfies
$D(w\widehat r(E))-R\wedge E_C^{[0]}=0$ on actual field variations.
For the specified Noether presentation put
$N=G^\dagger\widetilde{\mathcal P}_0$, with $G^\dagger$ given by
\eqref{eq:linear-noether}. 

We now prove Proposition~\ref{prop:linear-bps-noether-comparison}.
The lower identities and Proposition~\ref{prop:unary-variational} give
\[
 G\mathsf R=0,\qquad EG=0,\qquad
 NE=G^\dagger\cH=0,\qquad
 \mathsf R^\dagger N=(G\mathsf R)^\dagger
                              \widetilde{\mathcal P}_0=0.
\]
In density covectors the upper variational arrows are
$G^\dagger,-\mathsf R^\dagger$, as computed above.
For $\Phi=(1,1,1,-\widetilde{\mathcal P}_0,-1,-1)$ the remaining
chain identities are
\[
 \Phi^2E=-\cH,\qquad
 G^\dagger\Phi^2=-N,\qquad
 (-\mathsf R^\dagger)\Phi^3
                     =\Phi^4(-\mathsf R^\dagger).
\]
Together with \eqref{eq:linear-bps-local-inverse}, these prove the chain isomorphism of Proposition~\ref{prop:linear-bps-noether-comparison}.
We next impose Hull compatibility on the independent three-row complex.

\subsubsection{The independent BPS complex and the Hull equation graph}
\label{app:independent-hull-graph}

We use the three-row complex of \cite{QuantumG2}, shifted so that
fields have cochain degree one. Write \(\varphi=\varphi_0\),
\(\psi=*\varphi\), \(w=e^{-2\Phi_0}\) and \(\Psi=w\psi\).
The background has \(\nabla\varphi=0\), \(A_0=\Theta_0=\nabla\),
\(H_0=0\), \(d\Phi_0=0\) and \(F_0=R\). The tangent and gauge
algebras \(\mathfrak g_T,\mathfrak g_G\) are the full marked copies
of \(\mathfrak{so}(E)\). Covariant differentiation, curvature
insertion and the three-row maps preserve these skew-adjoint bundles.
Set \(\mathcal Q=TY\oplus\mathfrak g_G\oplus\mathfrak g_T\).
The graded terms are
\[
\begin{array}{c|ccc}
 p&\text{density row}&\text{coefficient row}&\text{gerbe row}\\\hline
 -1&0&0&\Omega^0\\
 0&0&\Omega^0(\mathcal Q)&\Omega^1\\
 1&\Omega^4_1&\Omega^1(\mathcal Q)&\Omega^2_7\\
 2&\Omega^5_7&\Omega^6(\mathcal Q)&\Omega^3_1\\
 3&\Omega^6&\Omega^7(\mathcal Q)&0\\
 4&\Omega^7&0&0
\end{array}
\]
Geometric rows have coefficients in \(\mathbb D\), and both adjoint
rows have coefficients in \(\mathbb D_0\). Write
\(\jmath_\epsilon:\mathbb D_0\to\mathbb D\),
\(a\mapsto\epsilon a\), for the canonical ideal inclusion.
Every adjoint output uses the reduction modulo $\epsilon$ of its geometric argument.

In the convention \(P=-\operatorname{tr}_7\), the independent
differential is
\begin{equation}\label{eq:hull-independent-differential}
 d_{\mathrm{ind}}=
 \begin{pmatrix}
 \check d&T&0\\
 0&\check{\mathcal D}&S\\
 0&0&-\check d
 \end{pmatrix},\qquad
 \mathcal D
 \begin{pmatrix}y\\a\\k\end{pmatrix}
 =\begin{pmatrix}
 d_\nabla y_m+\dfrac{\jmath_\epsilon}{4}
       \{P(R_m\wedge a)-P(R_m\wedge k)\}\\
 Da+R^m\wedge y_m^{[0]}\\
 Dk+R^m\wedge y_m^{[0]}
 \end{pmatrix}.
\end{equation}
Here \(R_m=\iota_{e_m}R\), and the arguments of
\(\jmath_\epsilon\) are evaluated over \(\mathbb D_0\).
The middle arrows are \(\mathcal D\),
\(\Psi\wedge\mathcal D\) and \(\mathcal D\).
The bottom arrows are \(-d,-\pi_7d,-\pi_1d\), and the top
arrows are the projected exterior derivatives on the displayed graded terms.
The source uses the matrix trace \(\tau=-P\). Its field equation
and weighted pairing fix the coefficient \(1/4\) in
\eqref{eq:hull-independent-differential}, with the curvature one-form
written first. The trace reflection
\[
 (M,k,\alpha)\longmapsto(M,k,2k-\alpha)
\]
intertwines the coefficient operators. No compatibility of this
reflection with their cyclic pairings is used.

The maps between rows in \cite{QuantumG2} are
\[
\begin{aligned}
 (S^{-1}\mu)_a&=-\iota_{e_a}\pi_{14}d\mu,\\
 (S^0q)_a&=-\iota_{e_a}(\pi_7+\pi_{27})dq\wedge\Psi,\\
 (S^1\chi)_a&=-\iota_{e_a}d\chi\wedge\Psi,
\end{aligned}
\qquad
\begin{aligned}
 T^{-1}v&=\pi_1d(v^a\iota_{e_a}\Psi),\\
 T^0y&=-\pi_7^5d\,\pi_{7+27}^4
                 \sum_a\iota_{e_a}(y^a\wedge\Psi),\\
 T^1\eta&=d\,\pi_{14}^5\sum_a\iota_{e_a}\eta^a.
\end{aligned}
\]
Only the geometric coefficient component of \(S\) is nonzero,
and \(\pi_{14}^5=*\pi_{14}^2*\). The superscripts on \(S,T\)
are the source degree conventions before the shift of the total
complex. The mixed square identities proved in \cite{QuantumG2},
together with the instanton and Bianchi identities, give
\(d_{\mathrm{ind}}^2=0\). The specialization to the mixed coefficient
modules preserves these identities.

For the field dictionary, write
\(u=\delta\varphi\), \(\beta=\delta B\), \(p=\delta\Phi\),
\(\alpha=\delta A\), \(h=\delta g\) and
\(M=(h+\beta)/2\). In an orthonormal frame put
\[
 v_a=\tfrac14\langle u,\iota_{e_a}\psi\rangle,\qquad
 m=-\tfrac13\iota_v\varphi,\qquad
 M_{\mathrm{geom}}^a=\tfrac12h_{ia}e^i+\iota_{e_a}m.
\]
Then \(u=\sum_aM_{\mathrm{geom}}^a\wedge\iota_{e_a}\varphi\).
The independent field coordinates are
\begin{equation}\label{eq:hull-independent-fields}
\begin{aligned}
 \rho&=-2p+\tfrac27\operatorname{tr}h,\qquad
 y^a=M_{\mathrm{geom}}^a-\tfrac12\iota_{e_a}\pi_{14}\beta,\\
 q&=-\tfrac12\pi_7\beta-m,\qquad
 a=2k-\alpha,\qquad k=\kappa.
\end{aligned}
\end{equation}
The density-row field is \(\rho\Psi\), and the source dilaton
coordinate is \(-\rho/2\). Its gerbe variation is
\(\mathcal B=-\beta\). With the trace reflection, this gives
\(\delta H_{\mathrm{source}}=-\delta H\).
The algebraic inverse is
\begin{equation}\label{eq:hull-independent-fields-inverse}
\begin{aligned}
 u&=\sum_a y^a\wedge\iota_{e_a}\varphi,\qquad
 \beta=-2q-\sum_a e^a\wedge y_a,\\
 p&=\tfrac27\sum_a y^a(e_a)-\tfrac12\rho,\qquad
 \kappa=k,\qquad\alpha=2k-a.
\end{aligned}
\end{equation}
In form-index-first matrix notation, \(M=y-q\).
Substituting \(k=\KHull M^{[0]}\), with \(\KHull\) given by
\eqref{eq:linear-hull-graph}, defines \(J_1\).
The full induced connection also has the first-order anomaly term.
Only its reduction modulo $\epsilon$ enters the independent adjoint row and the explicitly
weighted trace terms.

For physical parameters define
\(\varkappa(\xi)_{ab}=(\nabla_b\xi_a-\nabla_a\xi_b)/2\).
The parameter and reducibility maps are
\begin{equation}\label{eq:hull-independent-parameters}
\begin{aligned}
 J_0(\xi,\Lambda,\lambda)
 &=\left(\xi,\ \varkappa(\xi^{[0]})-\lambda,
        \ \varkappa(\xi^{[0]}),\ \tfrac12(\Lambda-\xi^\flat)\right),\\
 J_{-1}c&=-c/2.
\end{aligned}
\end{equation}
The degree-zero entries are \((\sigma,a_0,k_0,\mu)\).
On the graph \(k_0=\varkappa(\sigma^{[0]})\), the inverse is
\[
 \xi=\sigma,\qquad\Lambda=2\mu+\sigma^\flat,\qquad
 \lambda=k_0-a_0.
\]
The scalar inverse multiplies by \(-2\).
These maps intertwine the lower differentials. For the geometric
components this follows by differentiating
\eqref{eq:hull-independent-fields}. The adjoint components use
\(\KHull G_M=D\varkappa(\xi)+\iota_\xi R\) and
\(R\in\Omega^2_{14}\). The reducibility square uses \(d^2=0\).

The symmetric-frame change is compatible with the full independent
complex. Put \(t=\beta^{[0]}/2\). Its field expression is
\[
 k'=k-Dt,\qquad\alpha'=\alpha,\qquad
 \beta'=\beta+\tfrac{\jmath_\epsilon}{2}P(t,R).
\]
It gives \(k'=K_1M^{[0]}\), where
\((K_pm)_{ab}=\nabla_bm_a-\nabla_am_b\).
In independent coordinates this is the exact degree-one change with
parameter \(h(Z)=(0,-2t,-t,0)\). With
\(d_0=d_{\mathrm{ind}}|_{\mathcal E^0_{\mathrm{ind}}}\), set
\[
 B_0=1+hd_0,\qquad B_1=1+d_0h,\qquad
 B_p=1\quad(p\ne0,1).
\]
A pure adjoint transformation changes \(\beta\) only at first order,
while \(h\) uses its reduction modulo $\epsilon$. Thus \(hd_0h=0\).
These maps form a chain automorphism whose inverse has the minus
sign in degrees zero and one.

Before Hull restriction, the independent equation coordinates are
\[
\begin{aligned}
 C&=w\,d(\mathcal J_{\varphi_0}u-2p\psi),\qquad
 \Sigma=du\wedge\varphi,\\
 T&=d\beta-\tfrac{\jmath_\epsilon}{2}P(\alpha-\kappa,R)
                   +*du-2*(dp\wedge\varphi),\\
 I_G&=D\alpha\wedge\psi+R\wedge(\mathcal J_{\varphi_0}u)^{[0]},\\
 I_T&=D\kappa\wedge\psi+R\wedge(\mathcal J_{\varphi_0}u)^{[0]}.
\end{aligned}
\]
For an arbitrary physical equation argument \(E=(C,\Sigma,T,I)\),
write
\[
\begin{gathered}
 c=C/w,\qquad\sigma=*\Sigma,\qquad
 A(z)=\tfrac14(v\mapsto v\wedge\varphi)^\dagger z,\qquad
 B(z)=\tfrac13(v\mapsto v\wedge\psi)^\dagger z,\\
 \nu=\tfrac32B(c)-2A(*T).
\end{gathered}
\]
The intrinsic-torsion inverse defines \(t\) by
\[
\begin{aligned}
 A_t&=\sum_{i,a}t_{ia}e^i\wedge\iota_{e_a}\psi
                  =*T+2\nu\wedge\varphi,\\
 B_t&=-\sum_{i,a}t_{ia}e^i\wedge e^a\wedge\varphi
                  =c+2\nu\wedge\psi.
\end{aligned}
\]
The tangent residual on the field graph is
\begin{equation}\label{eq:hull-tangent-equation}
\begin{aligned}
 \widehat r(E)&=\mathscr R_0(E^{[0]})\wedge\psi
       +W\,d\{\langle T^{[0]},\varphi\rangle-\sigma^{[0]}\},\\
 \mathscr R_0(E)_{ab}
 &=\tfrac13\sum_c(\nabla_at_{bc}-\nabla_bt_{ac})\iota_{e_c}\varphi,\\
 (*W(v))_{iab}&=\tfrac14(g_{0,ia}v_b-g_{0,ib}v_a),\qquad
 I_T\big|_{\kappa=\KHull M^{[0]}}=\widehat r(E).
\end{aligned}
\end{equation}
This is the scalar Hull identity
\eqref{eq:linear-scalar-flux-defect} applied to the independent
tangent-instanton equation.

Put
\[
 s=\frac{2\langle T,\varphi\rangle-3\sigma}{28},\qquad
 \tau_E=\frac{\langle T,\varphi\rangle+2\sigma}{28}.
\]
The full equation-coordinate map is
\begin{equation}\label{eq:hull-equation-coordinates}
\begin{aligned}
 \zeta&=\pi_7^5C,\qquad
 \eta_a=w*(-t_{ai}e^i+\tau_Ee^a),\qquad\chi=s\varphi,\\
 \eta_G&=w(2I_T-I_G),\qquad\eta_T=wI_T.
\end{aligned}
\end{equation}
For the geometric blocks this follows by applying the independent
field equations to \eqref{eq:hull-independent-fields}.
The curvature terms use
\[
 A_2\left(0,0,-\tfrac12P(a,R)\right)_{\mathrm{geom},m}
 =-\tfrac w4P(R_m\wedge a)\wedge\psi,\qquad
 R^m\wedge M_{\mathrm{geom},m}\wedge\psi
 =R\wedge\mathcal J_{\varphi_0}u.
\]
Here \(A_2\) denotes the map in
\eqref{eq:hull-equation-coordinates}, before graph restriction.
The first identity is an ordinary real identity before multiplication
by \(\jmath_\epsilon\). The \(\mathfrak g_2\) component of the
two-form acts trivially on \(\psi\), which gives both adjoint rows.

Writing \(L_{ai}=(*(\eta_a/w))_i\), the algebraic inverse is
\begin{equation}\label{eq:hull-equation-inverse}
\begin{aligned}
 \sigma&=2\operatorname{tr}L,\qquad
 t=-L+\left(\tfrac12s+\tfrac14\operatorname{tr}L\right)\id,\\
 \nu&=\tfrac12B(B_t-\zeta/w),\qquad
 C=w(B_t-2\nu\wedge\psi),\\
 T&=*(A_t-2\nu\wedge\varphi),\qquad
 \Sigma=\sigma\operatorname{vol}_0,\\
 I_T&=\eta_T/w,\qquad I_G=(2\eta_T-\eta_G)/w.
\end{aligned}
\end{equation}
Substituting \(I_T=\widehat r(E)\) and \(I_G=I\) into
\eqref{eq:hull-equation-coordinates} defines \(J_2\) and gives
\(d_{\mathrm{ind}}J_1=J_2E\).

Let \(j_2(a)=a\wedge\psi\) on \(\Omega^2_7\), and set
\(K_2^\wedge=j_2K_2j_2^{-1}\). Since
\(t=-L+\tau_E\id\), the torsion reconstruction gives
\[
 \mathscr R_0(E)\wedge\psi
 =K_2^\wedge(\eta/w)-4W(d\tau_E),\qquad
 \langle T,\varphi\rangle-\sigma-4\tau_E=12s.
\]
Combining these equalities with \eqref{eq:hull-tangent-equation}
derives the equation graph
\begin{equation}\label{eq:hull-degree-two-graph}
 \eta_T=K_2^\wedge\eta+12w\,W(ds),\qquad\chi=s\varphi.
\end{equation}

\subsubsection{Upper identities and the Hull-graph isomorphism}
\label{app:hull-upper-graph}

For an arbitrary equation argument define
\[
 n(E)=3B(d*T)+3B(d\nu\wedge\varphi)
                   +d(\sigma-\langle T,\varphi\rangle).
\]
The physical Noether operator of
Proposition~\ref{prop:linear-bps-noether-comparison} has components
\begin{equation}\label{eq:hull-physical-noether}
\begin{aligned}
 N_\Lambda&=-\tfrac14*dC,\\
 N_\lambda&=\tfrac18*(wDI-R\wedge C^{[0]}),\\
 N_X&=\tfrac w4n(E)+\tfrac{\jmath_\epsilon w}{8}
 \{\varkappa^\dagger D^\dagger*(\widehat r-I)
                         +J_R(*(\widehat r-I))\}.
\end{aligned}
\end{equation}
Here \(J_R(q)_i=\sum_jP(q_j,R_{ij})\), and
\(\varkappa^\dagger\) is the density adjoint using
\(P=-\operatorname{tr}_7\). In components,
\((\varkappa^\dagger z)_a=\nabla^bz_{ba}\).
The independent tangent Bianchi row satisfies
\begin{equation}\label{eq:hull-tangent-bianchi}
 N_T(E)=\tfrac18*\{R\wedge C^{[0]}-wD\widehat r(E)\}
       =\tfrac14d\bigl(N_X(E)^{[0]}+N_\Lambda(E)^{[0]}\bigr).
\end{equation}
The metric identifies the two-form on the right with a tangent
skew endomorphism.

To prove the factorization on the curved background, reduce the equation argument modulo $\epsilon$. Parallel \(G_2\) contractions give
\[
 n(E)-*dc=-2(\nabla^it_{ai})e^a+
             d\left(\sigma-\tfrac12\langle T,\varphi\rangle\right).
\]
The Hessian of a scalar is symmetric, so \(DW(df)=0\).
The curvature reconstruction in \eqref{eq:hull-tangent-equation}
therefore gives
\[
 (*D\widehat r)_{ab}
 =\nabla^i(\nabla_at_{bi}-\nabla_bt_{ai}).
\]
Use the convention
\([\nabla_i,\nabla_j]v^a=R_{ij}{}^a{}_bv^b\).
Commuting derivatives on both covariant slots yields
\[
 [\nabla^i,\nabla_a]t_{bi}
       -[\nabla^i,\nabla_b]t_{ai}=R_{ikab}t_{ik}.
\]
The Ricci terms vanish. Pair symmetry and the algebraic Bianchi
identity give the remaining terms. Since
\[
 R\wedge\psi=0,\qquad *(R\wedge\varphi)=-R,\qquad
 c=B_t-2\nu\wedge\psi,
\]
the curvature contraction is \(*(R\wedge c)_{ab}\).
Differentiating the expression for \(n(E)-*dc\) proves
\eqref{eq:hull-tangent-bianchi}. This calculation holds on arbitrary
equation arguments and does not assume \(\nabla R=0\).

Identify density duals with forms using the background volume.
For a degree-three physical element \(n=(n_X,n_\Lambda,n_\lambda)\),
put
\[
\begin{aligned}
 r(n)&=\tfrac14d(n_X^{[0]}+n_\Lambda^{[0]}),\\
 b(n)&=2\{n_X+n_\Lambda-
                   \jmath_\epsilon\varkappa^\dagger(n_\lambda+r(n))\}.
\end{aligned}
\]
The remaining maps are
\begin{equation}\label{eq:hull-upper-maps}
\begin{aligned}
 J_3n=\bigl(&-4*n_\Lambda,\quad
 b(n)^\sharp\otimes\operatorname{vol}_0,\\
 &-8(n_\lambda+2r(n))\operatorname{vol}_0,\quad
 -8r(n)\operatorname{vol}_0\bigr),\\
 J_4n={}&-4n\operatorname{vol}_0.
\end{aligned}
\end{equation}
The entries of \(J_3\) are the density, geometric coefficient,
gauge coefficient and tangent coefficient rows. The tangent entry
satisfies
\begin{equation}\label{eq:hull-degree-three-graph}
 \eta_T^{(7)}=K_3^\wedge\eta^{(7)}
       =-db\otimes\operatorname{vol}_0,\qquad
 \eta^{(7)}=b^\sharp\otimes\operatorname{vol}_0.
\end{equation}
Here \(K_3^\wedge=j_3K_3j_3^{-1}\), with
\(j_3(a)=a\wedge\Psi\) on \(\Omega^3_1\).
The tangent output reads \(b^{[0]}\).

Applying the independently defined differential to
\eqref{eq:hull-equation-coordinates} gives
\[
\begin{aligned}
 d\zeta+T^1\eta&=-4*N_\Lambda,\\
 R^m\wedge\eta_m^{[0]}&=-R\wedge C^{[0]},\\
 D\eta_T+R^m\wedge\eta_m^{[0]}&=-8N_T\operatorname{vol}_0,\\
 D\eta_G+R^m\wedge\eta_m^{[0]}
                       &=-8(N_\lambda+2N_T)\operatorname{vol}_0.
\end{aligned}
\]
Its geometric coefficient component, including \(S^1\chi\), is
\[
 \left\{
 2\left(\tfrac w4n(E)+N_\Lambda\right)
 +\tfrac{\jmath_\epsilon w}{4}J_R(*(\widehat r-I))
 \right\}^{\sharp}\otimes\operatorname{vol}_0.
\]
The identity
\[
 D^\dagger*(\widehat r-I)=\tfrac8w(N_\lambda+N_T)
\]
turns this component into the geometric entry of \(J_3N\).
Equation~\eqref{eq:hull-tangent-bianchi} gives its tangent entry.
Together with the lower squares and the final exterior derivative,
this proves all chain identities:
\begin{equation}\label{eq:hull-all-chain-identities}
\begin{aligned}
 d_{\mathrm{ind}}J_{-1}&=J_0\mathsf R,&
 d_{\mathrm{ind}}J_0&=J_1G,\\
 d_{\mathrm{ind}}J_1&=J_2E,&
 d_{\mathrm{ind}}J_2&=J_3N,\\
 d_{\mathrm{ind}}J_3&=J_4(-\mathsf R^\dagger),&
 d_{\mathrm{ind}}J_4&=0.
\end{aligned}
\end{equation}
In the last nonzero square the top row is explicitly
\(d(-4*n_\Lambda)=J_4(-d^\dagger n_\Lambda)\).

The subcomplex \(\mathcal E^\bullet_{\mathrm{Hull}}\) is defined
by \(k_0=\varkappa(\sigma^{[0]})\) in degree zero,
\eqref{eq:linear-hull-graph} in degree one,
\eqref{eq:hull-degree-two-graph} in degree two and
\eqref{eq:hull-degree-three-graph} in degree three.
Degrees minus one and four have no added constraint.
Equations~\eqref{eq:hull-all-chain-identities} prove closure.

The inverses in degrees minus one and zero are given after
\eqref{eq:hull-independent-parameters}. In degree one use
\eqref{eq:hull-independent-fields-inverse}, followed by the inverse
physical field chart \eqref{eq:linear-inverse}. In degree two use
\eqref{eq:hull-equation-inverse}, with
\(I=(2\eta_T-\eta_G)/w\). For degree-three entries
\((a_6,b_7,g_7,t_7)\), the inverse is
\begin{equation}\label{eq:hull-upper-inverse}
\begin{aligned}
 n_\Lambda&=-*a_6/4,\qquad r=-*t_7/8,\\
 n_\lambda&=(2*t_7-*g_7)/8,\\
 n_X&=(*b_7)^\flat/2-n_\Lambda
                   +\jmath_\epsilon\varkappa^\dagger(n_\lambda+r).
\end{aligned}
\end{equation}
The degree-three graph is precisely
\(r=d(n_X^{[0]}+n_\Lambda^{[0]})/4\).
The inverse in degree four is \(-*/4\).
These are two-sided inverses on the graph. The forward differential
orders in degrees \(-1,\ldots,4\) are at most \(0,1,1,1,2,0\),
and the inverse orders are at most one. Thus
\[
 C_{\mathrm{BPS,N}}
 \cong_{\mathrm{local\ differential}}
 \mathcal E^\bullet_{\mathrm{Hull}}
 \hookrightarrow
 \mathcal E^\bullet_{\mathrm{ind}}.
\]
The first arrow is a chain isomorphism with finite-order local
differential inverse. The second is a strict subcomplex inclusion.
The comparison does not impose \(dB^{[0]}=0\).

\subsection{Mixed-order symbols and the compact realization}
\label{app:linear-symbol}

We work at a smooth torsion-free standard embedding with constant
background dilaton and $w\ne0$. The two-form variation is unrestricted
in the variational complex. For the subquotients $\mathsf G_0,\mathsf A,\mathsf G_1$
of the coefficient flag, the mixed-order weights are
\begin{equation}\label{eq:unary-spatial-weights}
 w_p^{\rm mix}(\mathsf G_0)=p-1,\qquad
 w_p^{\rm mix}(\mathsf A)=p,\qquad
 w_p^{\rm mix}(\mathsf G_1)=p+1.
\end{equation}

The graded mixed coefficient bundle $\mathscr E_{\rm var}$ has invariant subquotients
\[
 \mathsf G_0=\mathscr E_{\rm var}/\ker\eps,\qquad
 \mathsf A=\ker\eps/\eps\mathscr E_{\rm var},\qquad \mathsf G_1=\eps\mathscr E_{\rm var}.
\]
Choose local representatives only to write component operators. This
does not split the differential. For a component from weight $s_a$
to weight $s_b$, preserve its term of order $s_b-s_a$.
A negative order bound requires a zero block.
The operator bounds are
\[
\begin{array}{c|ccc}
 \text{source}\backslash\text{target}&\mathsf G_0&\mathsf A&\mathsf G_1\\\hline
 \mathsf G_0&1&2&3\\
 \mathsf A&0&1&2\\
 \mathsf G_1&0&0&1
\end{array}
\]
where the zeros denote identically vanishing blocks forced by $\mathbb D$-linearity.
These bounds follow from \eqref{eq:quadratic-native-source}
and its gauge and adjoint components. The induced-connection variation
has order one. Its differentiated functional followed by its formal
adjoint can have order three. The linearization of the equation-to-density comparison
$\mathcal P_Z$ from Theorem~\ref{thm:bps-euler-equation-equivalence} has,
on the free geometric coefficient block, the form $A_0+\eps B_2$, where
$A_0$ is pointwise invertible and $B_2$ has order at most two. Its exact
inverse is $A_0^{-1}-\eps A_0^{-1}B_2A_0^{-1}$. Explicit curvature insertions, covariant
commutators and derivatives of curvature have lower order in
these bounds. For example, $R_0\dot\psi_w$ has order zero, and applying the
induced-connection adjoint to it has order at most one, below the
order-three geometric bound. The leading coefficients depend only on
the pointwise $G_2$ tensors and the nonzero constant $w$.

The maps \eqref{eq:linear-field-map} and
\eqref{eq:linear-parameter-map} have algebraic diagonal blocks. Their first-order corrections go from $\mathsf G_0$ to $\mathsf A$ in the same
cochain degree. The cotangent corrections in
\eqref{eq:linear-equation-map}--\eqref{eq:linear-identity-map}
go from $\mathsf A$ to $\mathsf G_1$ and have order one. Their explicit inverses
have the same bounds. Hence these maps and inverses have weighted
order zero. Taking the weighted leading symbols of both inverse
identities proves that these symbols are inverses. 

\paragraph{The three symbol quotients.}
The flag $0\subset\eps\mathscr E_{\rm var}\subset\ker\eps\subset\mathscr E_{\rm var}$ is preserved
by the mixed symbol. Each geometric quotient has field variables
$(u,\beta,\dot\Phi)$ and dimension $35+21+1=57$. After the invertible
algebraic part of the equation-to-density comparison above is removed, its
middle symbol equations are
\begin{equation}\label{eq:geometric-symbol-equations}
 \begin{gathered}
 \xi\wedge(\mathcal J_{\varphi_0}u-2\dot\Phi\,\psi_0)=0,\qquad
 \xi\wedge u\wedge\varphi_0=0,\\
 *(\xi\wedge u)-2\dot\Phi*(\xi\wedge\varphi_0)
                                      +\xi\wedge\beta=0.
 \end{gathered}
\end{equation}
Take $|\xi|=1$ and put $W=\xi^\perp$. Its induced $SU(3)$
structure satisfies
$\varphi_0=\xi\wedge\omega+\rho_3$ and
$\psi_0|_W=\omega^2/2$. Write
$u=\xi\wedge\alpha+\nu$ and
$\beta=\xi\wedge\eta+\zeta$, with transverse forms.
The last equation gives $\nu=2\dot\Phi\rho_3$ and $\zeta=0$.
Decompose
$\alpha=a\omega+\iota_v\rho_3+\alpha_8$, where $\alpha_8$
is primitive of type $(1,1)$. The first equation becomes
\[
 \omega\wedge\alpha_8+\tfrac13\dot\Phi\,\omega^2=0.
\]
The two $SU(3)$ summands are independent and multiplication
by $\omega$ is injective on the primitive summand. Thus
$\dot\Phi=\alpha_8=0$. The scalar equation then vanishes,
and the kernel is exactly
\[
 u=\xi\wedge\iota_X\varphi_0,\qquad
 \beta=\xi\wedge\eta,
\]
with $\dot\Phi=0$, the $7+6=13$ dimensional gauge image.
The middle rank is $44$. Scalar reducibility has rank one, and the
outer ranks are $1,13$ and their density-adjoint partners.

For each of the $21$ gauge generators the symbol complex is
\[
 \Lambda^0\xrightarrow{\xi\wedge}\Lambda^1
 \xrightarrow{\,\xi\wedge(-)\wedge\psi_0\,}\Lambda^6
 \xrightarrow{\xi\wedge}\Lambda^7.
\]
The middle map restricts to the isomorphism
$\Lambda^1W^*\to\xi\wedge\Lambda^5W^*$ given by
$\omega^2/2$. Its rank is six. The outer ranks are one.
Consequently the dimensions and outgoing ranks are
\begin{center}
\begin{tabular}{lcc}
\toprule
Symbol quotient & Dimensions in cochain degrees $-1,\ldots,4$ & Outgoing ranks\\
\midrule
$\mathsf G_1$ & $(1,14,57,57,14,1)$ & $(1,13,44,13,1,0)$\\
$\mathsf A$ & $(0,21,147,147,21,0)$ & $(0,21,126,21,0,0)$\\
$\mathsf G_0$ & $(1,14,57,57,14,1)$ & $(1,13,44,13,1,0)$\\
\midrule
Full complex & $(2,49,261,261,49,2)$ & $(2,47,214,47,2,0)$\\
\bottomrule
\end{tabular}
\end{center}
At each nonzero covector, the flag gives short exact sequences
of weighted-symbol complexes
\[
 0\longrightarrow\mathsf G_1\longrightarrow\ker\eps
      \longrightarrow\mathsf A\longrightarrow0,
 \qquad
 0\longrightarrow\ker\eps\longrightarrow\mathscr E_{\rm var}
      \longrightarrow\mathsf G_0\longrightarrow0.
\]
The long exact cohomology sequence of the first gives exactness on
$\ker\eps$. The sequence of the second then gives exactness on $\mathscr E_{\rm var}$. $G_2$ transitivity on unit covectors and homogeneous
rescaling with the stated weights cover every $\xi\ne0$.
The curvature order count above makes this a statement at every
allowed curved background.

\paragraph{Adjoints and Fredholm operators.}
Apply the full density-valued $\Hom_{\mathbb{D}}(-,\mathbb{D})$ and then
the real coefficient functional
\begin{equation}\label{eq:analytic-coefficient-trace}
 \tau_{\mathbb D}:\mathbb D\longrightarrow\R,\qquad
 \tau_{\mathbb D}(a+\eps b)=b.
\end{equation}
This preserves every real
dual coefficient: a free pair evaluates as $a_0b_1+a_1b_0$, and a
quotient/ideal pair as ordinary real evaluation. In particular it preserves the
gauge-coefficient ideal $\eps \mathbb{D}$. The resulting symbol is the signed
transpose at $-\xi$, hence is exact. This geometric duality is
distinct from the positive auxiliary metrics used for analysis.

For compact boundaryless $Y$, choose positive invertible elliptic
operators $\Lambda_a$ of order one on the coefficient bundles and put
\begin{equation}\label{eq:unary-order-reduction}
 T_p=\diag(\Lambda_a^{s_a}),\qquad
 \widetilde d_p=T_{p+1}^{-1}d_C^pT_p.
\end{equation}
These weights satisfy the component order bounds, and the symbol
sequence is exact. Proposition~\ref{prop:mixed-order-hodge} therefore
gives closed ranges, smooth finite-dimensional cohomology and the
Hodge contraction on the original spaces \(H^{k-s_a}\).
\subsection{Compatibility coordinates and the full complex}
\label{app:compatibility}\label{app:compat-construction}

We record the linear coordinate and symbol calculations supporting
Section~\ref{sec:derived-admissibility}. All bundles and sections are
real unless a coefficient module is displayed. Write
$\varphi=\varphi_0$, $\psi=*\varphi$,
$w=e^{-2\Phi_0}>0$, and $J=\frac43\pi_1+\pi_7-\pi_{27}$.
The background is torsion-free with constant dilaton.

\paragraph{Field and density coordinates.}
Use the algebraic positive-form dictionary of
\eqref{eq:linear-field-map} and \eqref{eq:linear-inverse}, preserving
the absolute connection amplitude $\alpha$. Explicitly,
\begin{equation}\label{eq:compat-field-dictionary}
\begin{aligned}
h_g&=M+M^T,&\beta&=M-M^T,
&v&=C_{G_2}(z+\tfrac12\pi_7\beta),\\
u&=\mathsf r(h_g/2)\varphi+\iota_v\psi,
&s&=\tfrac14\tr h_g-\tfrac12l.
\end{aligned}
\end{equation}
The inverse is
\begin{equation}\label{eq:compat-field-inverse}
M=\tfrac12(H(u)+\beta),\qquad
z=\tfrac13\iota_{V(u)}\varphi-\tfrac12\pi_7\beta,
\qquad l=\tfrac12\tr H(u)-2s,
\end{equation}
where
\[
H(u)_{ij}=\tfrac14(u_{imn}\varphi_{jmn}+u_{jmn}\varphi_{imn})
 -\tfrac1{18}u_{mnp}\varphi_{mnp}g_{ij},\qquad
V(u)_a=\tfrac1{24}u_{ijk}\psi_{aijk}.
\]
The relative connection is instead the first-order triangular change
\begin{equation}\label{eq:compat-relative-connection}
a_{\rm rel}=\alpha-\mathsf K(u^{[0]},\beta^{[0]}),\qquad
\mathsf K(u,\beta)=\gamma(H(u))+C_0(d\beta).
\end{equation}
For parameters use
$\xi^\flat=(r_++r_-)/\sqrt2$,
$\Lambda=(r_--r_+)/\sqrt2$, and preserve the absolute gauge parameter.

In degrees two, three and four normalize the geometric density
quotient by the same unit $-4/w$. Thus a leading equation density is
represented by $(e_u,e_\beta,e_s)$ through
\begin{equation}\label{eq:compat-density-normalization}
e^{[0]}[(u,\beta,s)]
 =-\tfrac w4\int_Y
 (\langle u,e_u\rangle+\langle\beta,e_\beta\rangle+s e_s)\vol.
\end{equation}
The ideal-valued gauge density has zero geometric reduction. This
normalization respects $d_C^1=-\mathcal H$. The upper density arrows
remain $+G^\dagger,-\mathsf R^\dagger$.

See Section~\ref{subsec:compat-geometric} for the equation reconstruction and first compatibility operator.

\paragraph{Comparison with the unrestricted geometric rows.}
Write $A_{\rm comp}$ for the leading geometric complex. Its comparison
$j:A_{\rm comp}\to G$, where $G$ is the unrestricted leading geometric
quotient, is the identity in degrees $-1,0$ and is
$j_1(\chi,s,\beta)=(\chi+2s\varphi,\beta,s)$ in degree one.
In degree two it is
\begin{equation}\label{eq:compat-j-two}
\begin{aligned}
(j_2)_u&=Jb-*h+\iota_{\nu^\sharp}\psi-\tfrac13f\varphi,\\
(j_2)_\beta&=-*(c'+\tfrac23\nu\wedge\psi),\\
(j_2)_s&=3f+*(h\wedge\varphi)-2*(b\wedge\psi).
\end{aligned}
\end{equation}
For degree-three arguments $(H_5,C_6,V_2,B_4,F)$, set
\begin{equation}\label{eq:compat-j-three}
\begin{aligned}
(j_3)_\xi&=-4A(B_4)+3B(H_5)+3B(V_2\wedge\varphi)+dF,\\
(j_3)_\Lambda&=-*(C_6+\tfrac23V_2\wedge\psi).
\end{aligned}
\end{equation}
The full differential factorization is
\begin{equation}\label{eq:compat-noether-factorization}
G_{\rm geo}^\dagger j_2=j_3(dh,dc',d\nu,db,f).
\end{equation}
For example,
$(\iota_{(-)}\varphi)^\dagger d^\dagger Jb=-4A(db)$ and
$(\iota_{(-)}\varphi)^\dagger d^\dagger(-*h)=3B(dh)$. The remaining two terms in $(j_2)_u$ contribute
$3B(d\nu\wedge\varphi)+df$. The gerbe row is
$-*(dc'+\frac23d\nu\wedge\psi)$. These identities involve parallel
coefficients and no commutation of covariant derivatives, and hence
are valid on the curved background. Finally,
\begin{equation}\label{eq:compat-j-four}
j_4(H_6,C_7,V_3,B_5)=-*(C_7+\tfrac23V_3\wedge\psi),
\qquad -\mathsf R_{\rm geo}^\dagger j_3=j_4a^3.
\end{equation}
All later $j_p$ are zero.

\paragraph{The complete leading geometric hierarchy.}
In the following table $\Lambda^k$ denotes $\Lambda^kT^*Y$.
The final two columns also record the full complex after adjoining
the \(N\)-sector.
\begin{center}\small
\begin{tabular}{c p{7.7cm} r r}
\toprule
$p$ & $A_{\rm comp}^p$ and coordinates & $\operatorname{rk}N^p$
 & $\operatorname{rk}C_{\rm compat}^p$\\
\midrule
$-1$ & $\Lambda^0$, scalar reducibility & 1 & 2\\
$0$ & $TY\oplus\Lambda^1$, $(\xi,\Lambda)$ & 35 & 49\\
$1$ & $\Lambda^3\oplus\Lambda^0\oplus\Lambda^2$, $(\chi,s,\beta)$ & 204 & 261\\
$2$ & $K_{\rm tor}\oplus\Lambda^1\oplus\Lambda^3\oplus\Lambda^0$,
 $((h,c'),\nu,b,f)$ & 204 & 296\\
$3$ & $(\Lambda^5\oplus\Lambda^6)\oplus\Lambda^2\oplus\Lambda^4\oplus\Lambda^0$,
 $(H,C,V,B,F)$ & 35 & 120\\
$4$ & $(\Lambda^6\oplus\Lambda^7)\oplus\Lambda^3\oplus\Lambda^5$,
 $(H,C,V,B)$ & 1 & 65\\
$5$ & $\Lambda^7\oplus\Lambda^4\oplus\Lambda^6$, $(H,V,B)$ & 0 & 43\\
$6$ & $\Lambda^5\oplus\Lambda^7$, $(V,B)$ & 0 & 22\\
$7$ & $\Lambda^6$, $V$ & 0 & 7\\
$8$ & $\Lambda^7$, $V$ & 0 & 1\\
\bottomrule
\end{tabular}
\end{center}
See Section~\ref{subsec:compat-complete} for the geometric hierarchy.

\paragraph{The \(N\)-sector and all curvature terms.}
The original coefficient filtration gives a short exact sequence
\[
0\longrightarrow N\longrightarrow C_{\rm var}
 \xrightarrow{\pi}G\longrightarrow0.
\]
Its \(N\)-sector bundles are as follows. Density identifications use the
fixed background metric and volume.
\begin{center}\small
\begin{tabular}{c p{10.3cm} r}
\toprule
$p$ & $N^p$ & Rank\\
\midrule
$-1$ & $\epsilon\Lambda^0$ & 1\\
$0$ & $\epsilon(TY\oplus\Lambda^1)\oplus
\mathfrak{so}(TY)\otimes\mathbb D_0$ & 35\\
$1$ & $\epsilon(\Lambda^3\oplus\Lambda^2\oplus\Lambda^0)
\oplus\Lambda^1(\mathfrak{so}(TY))\otimes\mathbb D_0$ & 204\\
$2$ & Geometric ideal density duals and
$\Lambda^6(\mathfrak{so}(TY))\otimes\epsilon\mathbb D$ & 204\\
$3$ & Geometric ideal parameter densities and
$\Lambda^7(\mathfrak{so}(TY))\otimes\epsilon\mathbb D$ & 35\\
$4$ & $\Lambda^7\otimes\epsilon\mathbb D$ & 1\\
\bottomrule
\end{tabular}
\end{center}
See Section~\ref{subsec:compat-complete} for the full mixed extension.

The inherited \(N\)-sector operators are the corresponding components of
\eqref{eq:linear-noether}, projected to \(N\). The additional compatibility
terms are the ones shown below.
The \(N\)-component of the output consists of the first coefficients of the two
free rows and the entire ideal-valued gauge row, with the specified
density units. Here $d^\dagger L_z=-\nabla^j(L_z)_{ji}$,
$D^\dagger L_\alpha=-D^i(L_\alpha)_i$, and
$(\kappa^\dagger b)_a=-\nabla_b b_{ab}$. The last derivative acts
on curvature as well as on $L$.  The following \(N\)-sector scalar-gerbe row is
$-\mathsf R^\dagger$, namely
$(\diver\eta_--\diver\eta_+)/\sqrt2$ in the spinorial parameter density coordinates. In absolute coordinates $\theta_3=0$.
There are no later curvature corrections: $N$ ends in degree four.

See Section~\ref{subsec:compat-complete} for the square-zero and intrinsic differential-graph proof.

\paragraph{The off-admissible Hull defect.}
In unnormalized physical density coordinates define
\begin{equation}\label{eq:compat-hull-defect}
\begin{aligned}
\Gamma(b)&=\bigl(D_{\Theta_0}C_0(b)-\mathscr R_0(0,0,b,0)\bigr)\wedge\psi,\\
\mathcal J_0b&=-\tfrac w8\jmath_\epsilon\mathsf K^\dagger(*\Gamma(b)),\\
d_C^1V&=-\mathcal P_0^{\rm abs}
 D\mathcal E_{\rm BPS}|_0V+\mathcal J_0(d\beta^{[0]}).
\end{aligned}
\end{equation}
The curvature reconstruction $\mathscr R_0$ is the operator of
Appendix~\ref{app:curvature-factorization}, on its ordinary reduced
arguments, and $\mathsf K$ is
\eqref{eq:compat-relative-connection}. The defect has zero gauge
and dilaton slots. To verify the formula, split a variation into
one with $\beta=0$ and a pure $B$-variation. The former obeys the
derivative of the admissible equation factorization. For the latter
put $b=d\beta^{[0]}$ and $k=C_0(b)$. Then $a_{\rm rel}=-k$ and
\[
\dot C=0,\quad\dot\Sigma=0,\quad
\dot T=d\beta+\tfrac12\jmath_\epsilon P(k,R_0),\quad\dot I=0.
\]
The geometric and gauge covectors agree. The remaining two Hull
covectors are $\frac w8\jmath_\epsilon\mathsf K^\dagger*$
applied respectively to
$D_{\Theta_0}C_0(b)\wedge\psi$ and
$\mathscr R_0(0,0,b,0)\wedge\psi$. Their difference with the unary
minus sign is exactly \eqref{eq:compat-hull-defect}. A pure
first-coefficient $B$-variation has $b=0$ and no defect. Thus the
identity holds for all variations. The operators $\Gamma$ and
$\mathcal J_0$ have orders at most one and two in $b$,
respectively.

\subsection{Polynomial symbols and local universality}\label{app:compat-universality}

\paragraph{The complete combined symbol.}
We use the real symbol convention: replace $\nabla$ and $d$ by
$\zeta$ and $\zeta\wedge$, omitting the Fourier factor $i$.
Formal adjoints consequently use $-\zeta$.
In normalized geometric density covectors, set
\begin{equation}\label{eq:compat-source-5}
H_\zeta(u,\beta,s)=
\begin{pmatrix}
J(\zeta\wedge\beta)-*(\zeta\wedge u)+s*(\zeta\wedge\varphi)\\
-*\{\zeta\wedge(*Ju-2s\psi)\}\\
*(\zeta\wedge u\wedge\varphi-2\zeta\wedge\beta\wedge\psi)
\end{pmatrix}.
\end{equation}
This is the leading geometric Hessian symbol from \eqref{eq:hessian-euler-inputs}, with the factor \(w/4\) removed.

The induced Hull symbol is
\begin{equation}\label{eq:compat-source-6}
(K_\zeta(u,\beta))_{iab}
=\tfrac12\{\zeta_bH(u)_{ia}-\zeta_aH(u)_{ib}
+(\zeta\wedge\beta)_{iab}\}.
\end{equation}
Let
\begin{equation}\label{eq:compat-source-7}
M_\zeta\alpha=*(\zeta\wedge\alpha\wedge\psi),\qquad
N_\zeta=\tfrac12K_\zeta^\dagger M_\zeta K_\zeta.
\end{equation}
Here the adjoint includes \(P=-\operatorname{tr}_7\) and the sign of the derivative. In an ordinary skew-matrix basis with \(P(E_{ab},E_{ab})=2\), it is \(-2K_\zeta^T\). Thus \(N_\zeta\) is a polynomial of degree three.

Order the real field layers as \((f_0,\alpha,f_1)\), where \(f_i=(u^{[i]},\beta^{[i]},s^{[i]})\). Order normalized output layers as \((e_0,e_{\rm g},e_1,b)\). The full weighted symbol is
\begin{equation}\label{eq:compat-source-8}
\boxed{
\sigma(D_0)=
\begin{pmatrix}
H_\zeta&0&0\\
0&-\tfrac12M_\zeta&0\\
N_\zeta&0&H_\zeta\\
a_\zeta&0&0
\end{pmatrix},
\qquad a_\zeta f=\zeta\wedge\beta .
}
\end{equation}
There are 21 copies of the gauge block. The gauge output is the normalized amplitude of its ideal-valued covector, not a second free coefficient.

All explicit curvature terms in Appendix~\ref{app:linear} have lower order than the corresponding weighted bounds. They are discarded only in \eqref{eq:compat-source-8}. They are preserved in the differential construction above.

The geometric gauge symbol is
\begin{equation}\label{eq:compat-source-9}
G_\zeta(\xi,\Lambda)=(\zeta\wedge\iota_\xi\varphi,\ \zeta\wedge\Lambda,\ 0).
\end{equation}
Its rank is 13 for real \(\zeta\ne0\). Further,
\begin{equation}\label{eq:compat-source-10}
M_\zeta K_\zeta G_\zeta=0,\qquad G_\zeta^\dagger N_\zeta=0.
\end{equation}
Indeed, the metric gauge variation gives \(K_\zeta G_\zeta\xi=\zeta\,\kappa_\zeta(\xi)\), and the gerbe variation gives \(K_\zeta G_\zeta\Lambda=0\). The first assertion follows by wedging twice with \(\zeta\). The second
follows by the formal transpose. Thus the cubic Hull block is compatible
with the geometric gauge symbol.

Using the algebraic reconstruction \eqref{eq:compat-reconstruction}, put \(\chi=u-2s\varphi\). Its leading symbol is
\begin{equation}\label{eq:compat-source-15}
\boxed{
(h,c',\nu,b,f)
=(\zeta\wedge\chi,\ \zeta\wedge *J\chi,\ s\zeta,\
\zeta\wedge\beta,\ 0).
}
\end{equation}

\paragraph{Finite polynomial generators.}

In the coordinates \eqref{eq:compat-reconstruction}, the leading geometric part of \(D_0\) is the direct sum
\begin{equation}\label{eq:compat-source-19}
T_\zeta:\Lambda^3\to K_{\rm tor},\quad
T_\zeta\chi=(\zeta\wedge\chi,\zeta\wedge *J\chi);
\qquad
\zeta\wedge:\Lambda^0\to\Lambda^1;
\qquad
\zeta\wedge:\Lambda^2\to\Lambda^3;
\end{equation}
together with a zero row \(f=0\).

A complete generating row family is
\begin{equation}\label{eq:compat-source-20}
\boxed{
U_\zeta(h,c')=(\zeta\wedge h,\zeta\wedge c'),\quad
\zeta\wedge\nu,\quad \zeta\wedge b,\quad f.
}
\end{equation}
There are \(28+21+35+1=85\) rows. The first 84 have order one. The last has order zero.

The full mixed symbol additionally has the 14 geometric identity rows on \(e_1\) and the 21 gauge-divergence rows on \(e_{\rm g}\). It therefore has a generating presentation with 120 rows. The unrestricted leading geometric Noether rows are generated by \eqref{eq:compat-source-20}. Their explicit factorization is given in \eqref{eq:compat-noether-factorization} above.

\paragraph{Generation in every polynomial degree.}\label{app:compat-polynomial-generation}

Pointwise exactness alone would not prove this assertion. Work over
\[
S=\mathbb C[\zeta_1,\ldots,\zeta_7]
\]
and then descend the real matrices to \(\mathbb R\). Complexification is used only for this commutative-algebra proof, not to assert ellipticity at complex covectors.

Two facts supply the saturation argument.

First, the nondecomposability identity is
\begin{equation}\label{eq:compat-source-21}
 *\bigl((\iota_z\varphi)\wedge(\iota_z\varphi)\bigr)
 =
 2z^\flat\wedge\iota_z\varphi
 =
 2\mathsf r(z^\flat\otimes z^\flat)\varphi
\end{equation}
It suffices to check the identity at a real unit vector and use real \(G_2\) equivariance and polarization. Hence it is also a polynomial identity over \(\mathbb C\). The map
\(\mathsf r:\operatorname{Sym}^2\to\Lambda^3_1\oplus\Lambda^3_{27}\)
is injective, as follows from the inverse \(H\) in \eqref{eq:compat-field-inverse}. Thus \(\iota_z\varphi\) is not decomposable for any nonzero complex \(z\).

Second, use the ordinary polynomial Koszul exactness of contraction by \(\zeta^\sharp\), as well as exterior multiplication by \(\zeta\).

For the torsion block consider
\begin{equation}\label{eq:compat-source-22}
K_{\rm tor}\xrightarrow{U_\zeta}
\Lambda^5\oplus\Lambda^6
\xrightarrow{V_\zeta}
\Lambda^6\oplus\Lambda^7
\xrightarrow{W_\zeta}\Lambda^7,
\end{equation}
where \(V_\zeta(a,b)=(\zeta\wedge a,\zeta\wedge b)\) and
\(W_\zeta(a,b)=\zeta\wedge a\).

We first prove polynomial exactness of its transposed sequence before the \(K_{\rm tor}^*\) term. If \(U_\zeta^T(a,b)=0\), there is a unique polynomial one-form \(v\) such that, after absorbing a common transpose sign into $v$,
\begin{equation}\label{eq:compat-source-23}
(\iota_\zeta a,\iota_\zeta b)
=(v\wedge\varphi,-v\wedge\psi).
\end{equation}
This uses \(K_{\rm tor}^\perp=\operatorname{im}(4A-3B)^T\).  Applying \(\iota_\zeta\) once more gives
\(\iota_\zeta(v\wedge\varphi)=0\).

For any nonzero complex \(\zeta\), this forces \(v=0\). To see this, wedge
\[
\langle\zeta,v\rangle\varphi-v\wedge\iota_\zeta\varphi=0
\]
with \(v\). The injectivity of \(v\mapsto v\wedge\varphi\) gives
\(\langle\zeta,v\rangle=0\), unless \(v=0\). The remaining equation would make \(\iota_\zeta\varphi\) divisible by \(v\), hence decomposable, contradicting \eqref{eq:compat-source-21}. Polynomial Koszul exactness now gives
\begin{equation}\label{eq:compat-source-24}
\ker U_\zeta^T=\operatorname{im}V_\zeta^T,\qquad
\ker V_\zeta^T=\operatorname{im}W_\zeta^T,\qquad
\ker W_\zeta^T=0.
\end{equation}
The same argument on fibres shows that \(U_\zeta\) has rank 21 at every nonzero complex covector.

Let \(M=\operatorname{im}U_\zeta^T\subset S^{49}\). Equations \eqref{eq:compat-source-24} give a free resolution
\begin{equation}\label{eq:compat-source-25}
0\to S^1\to S^8\to S^{28}\to S^{49}\to S^{49}/M\to0,
\end{equation}
as ungraded $S$-modules. Away from the origin the quotient is locally free, because \(U_\zeta\) has constant rank 21 there. Any torsion in it is therefore supported at the origin. At that maximal ideal, \eqref{eq:compat-source-25} and the depth lemma give depth at least \(7-3=4\). It has no finite-length submodule, so that torsion is zero. Thus $S^{49}/M$ is torsion-free, or equivalently $M$ is saturated
in $S^{49}$.

For real \(\zeta\ne0\), the \(SU(3)\) calculation in Appendix~\ref{app:linear-symbol}, or \eqref{eq:compat-source-15} and its rank 44, gives \(\operatorname{rank}T_\zeta=28\). Therefore \(\ker T_\zeta^T\) has generic rank 21. Since \(U_\zeta T_\zeta=0\), its generic span equals that of \(M\). Saturation proves
\begin{equation}\label{eq:compat-source-26}
\boxed{\ker_S T_\zeta^T=\operatorname{im}_S U_\zeta^T.}
\end{equation}
This proves generation in every polynomial degree.

The gradient and two-form blocks in \eqref{eq:compat-source-19} have their usual Koszul generating identities, and the zero \(f\)-row has the identity generator \(1\). This proves \eqref{eq:compat-source-20}.

The original geometric symbol has
\begin{equation}\label{eq:compat-source-27}
\ker_S H_\zeta^T=\operatorname{im}_S G_\zeta.
\end{equation}
Indeed, if \(\zeta\wedge\iota_\xi\varphi=0\) for a nonzero complex
covector \(\zeta\), then \(\iota_\xi\varphi\) is divisible by \(\zeta\).
The nondecomposability consequence of \eqref{eq:compat-source-21} forces
\(\xi=0\). The remaining relation \(\zeta\wedge\Lambda=0\) is the
first Koszul relation, so \(\Lambda=\zeta c\). Thus
\[
0\longrightarrow S\xrightarrow{c\mapsto(0,\zeta c)}S^{14}
\xrightarrow{G_\zeta}S^{57}
\longrightarrow\operatorname{coker}G_\zeta\longrightarrow0
\]
is a free resolution. The rank of \(G_\zeta\) is 13 away from the origin,
so its cokernel is locally free there. The resolution gives depth at
least \(7-2=5\) at the origin. Hence its torsion, which would have finite
length, vanishes, and \(\operatorname{im}G_\zeta\) is saturated in \(S^{57}\).
The real symbol calculation of Appendix~\ref{app:linear-symbol} gives generic
equality of \(\ker H_\zeta^T\) and \(\operatorname{im}G_\zeta\).
Saturation then gives \eqref{eq:compat-source-27} over \(S\).

For each gauge generator, \(\ker_S M_\zeta^T=S\zeta\). The generic kernel has this rank-one span, and the column \(\zeta\) is saturated by the Koszul complex.

Finally, a row syzygy of \eqref{eq:compat-source-8} first annihilates the \(f_1\) block, so its \(e_1\) coefficient factors through \(G_\zeta^\dagger\). Equation \eqref{eq:compat-source-10} removes its contribution on \(f_0\). Its gauge coefficient then factors through divergence. The remaining row is exactly a syzygy of \eqref{eq:compat-source-19}, already generated by \eqref{eq:compat-source-20}. This proves completeness for the full mixed symbol.

\paragraph{The first exact symbol compatibility.}

The kernel of \eqref{eq:compat-source-8} is the original 47-dimensional gauge image: its leading geometric equations imply \(\zeta\wedge\beta^{[0]}=0\), by Appendix~\ref{app:linear-symbol}. Thus \(\operatorname{rank}\sigma(D_0)=214\).

The first compatibility symbol has rank
\[
48+13+21=82=296-214.
\]
For a direct proof, an equation tuple satisfying the unrestricted variational identity rows has \(e=\sigma(d_C^1)V\). Subtract the compatible tuple for \(V\). The \(dh\) and \(db\) rows of \eqref{eq:compat-first-operator}, together with \(h=-*b\) from \eqref{eq:compat-reconstruction}, force the remaining three-form to satisfy
\[
\zeta\wedge b'=0,\qquad\iota_{\zeta^\sharp}b'=0.
\]
The wedge--contraction identity gives \(|\zeta|^2b'=0\). Hence \(b'=0\). This uses positivity to prove exactness.

Therefore
\begin{equation}\label{eq:compat-source-28}
\ker\sigma(D_1)=\operatorname{im}\sigma(D_0)
\quad(\zeta\ne0\text{ real}).
\end{equation}

\paragraph{Homogeneous Taylor pairing.}\label{app:compat-formal-jets}
Pair a homogeneous symbol monomial with the corresponding divided-power
Taylor coefficient. At each fixed weighted degree this identifies
differentiation on Taylor coefficients with the transpose of symbol
multiplication. The exact polynomial row modules above therefore give
exact finite-dimensional homogeneous Taylor complexes. Lower-order
operator terms and positive-order Taylor terms of their coefficients
raise the weighted filtration. Proposition~\ref{prop:filtered-compatibility}
therefore gives formal-jet exactness in the equation and identity degrees.
The smooth polynomial multipliers established in
Section~\ref{subsec:compat-complete} also give differential-operator
universality by that proposition.

\subsection{Weighted-symbol exactness}\label{app:compat-symbol-proof}
The \(N\)-sector weights are those of Appendix~\ref{app:linear-symbol}:
\[
s_p(\mathsf A)=p,\qquad s_p(\mathsf G_1)=p+1.
\]
The new leading compatibility weights are:

\begin{center}
\begin{tabular}{p{0.20\textwidth}p{0.72\textwidth}}
\toprule
Degree & Weight \\
\midrule
\(-1,0,1,2\) & \(p-1\) on every component \\
\(3\) & 2 on \(H_5,C_6,V_2,B_4\). 1 on the algebraic \(F\)-component \\
\(4,\ldots,8\) & \(p-1\) on every component \\
\bottomrule
\end{tabular}
\end{center}

All new exterior arrows have order one, and the \(f\mapsto F\) arrow has order zero. The comparison maps have weighted order zero: only \(j_3\)'s \(dF\) term has positive differential order, and its source and target weights differ by one.

The full first equation map has the original mixed bounds, including order three for the Hull block. The first compatibility map has actual order at most one in the absolute density chart. All later arrows have order at most one. The original lower scalar/gauge maps are unchanged.

For nonzero real \(\zeta\), the torsion symbol sequence has dimensions and ranks
\[
7\longrightarrow35\longrightarrow49\longrightarrow28
\longrightarrow8\longrightarrow1,
\]
\begin{equation}\label{eq:compat-source-44}
(7,28,21,7,1).
\end{equation}
The first rank and middle kernel follow from the $\mathrm{SU}(3)$ calculation
in Appendix~\ref{app:linear-symbol} and \eqref{eq:compat-source-15}. The remaining ranks follow from \eqref{eq:compat-source-23} and Koszul exactness. Together with the two exterior sequences and the algebraic pair, this gives:

\begin{center}
\begin{tabular}{rrr}
\toprule
Cochain degree & Full real dimension & Outgoing rank \\
\midrule
\(-1\) & 2 & 2 \\
\(0\) & 49 & 47 \\
\(1\) & 261 & 214 \\
\(2\) & 296 & 82 \\
\(3\) & 120 & 38 \\
\(4\) & 65 & 27 \\
\(5\) & 43 & 16 \\
\(6\) & 22 & 6 \\
\(7\) & 7 & 1 \\
\(8\) & 1 & 0 \\
\bottomrule
\end{tabular}
\end{center}

See Section~\ref{subsec:compat-elliptic} for the full weighted-symbol extension argument.

\subsection{Global comparison and compatibility cohomology}
\label{app:compat-cohomology}

The tangent identification and surjectivity of the comparison are
proved in Section~\ref{subsec:compat-cohomological}. We compute its
kernel and connecting maps here.

Set $K_{\mathcal J}^\bullet=\ker\mathcal J$.  This is a local
differential complex, zero through degree one, with ranks
\[
 (35,71,63,43,22,7,1)\qquad\text{in degrees }2,\ldots,8.
\]
Its degree-three coordinates may be taken to be
$H_5,V_2,(B_4)_{\ker A},F_0$, with
\[
 C_6=-\tfrac23V_2\wedge\psi,\qquad
 A(B_4)=\tfrac34B(H_5)+\tfrac34B(V_2\wedge\varphi)+\tfrac14dF_0.
\]
In degree four take $H_6,V_3,B_5$, with
$C_7=-\tfrac23V_3\wedge\psi$.  The differentials are the
restrictions of the full compatibility differentials.

\subsubsection{Scalar contraction and full kernel cohomology}
\label{app:kernel-full-proof}
We give the complete proof of Theorem~\ref{thm:complete-relative-cohomology}.
In this proof $\phi$ is the fixed torsion-free background form,
$\psi=*\phi$, and
\[
 A=\tfrac14(\alpha\mapsto\alpha\wedge\phi)^\dagger,\qquad
 B=\tfrac13(\alpha\mapsto\alpha\wedge\psi)^\dagger.
\]
The \(N\)-sector maps identically, so all kernel entries
are real. In degree two they are determined by
\eqref{eq:compat-kernel-coordinates}. In degree three they satisfy
\begin{equation}\label{eq:kernel-degree-three-constraint}
 C_6=-\tfrac23V_2\wedge\psi,\qquad
 3B(H_5)+3B(V_2\wedge\phi)-4A(B_4)+dF=0.
\end{equation}
The degree-four independent tuple is $(H_6,V_3,B_5)$, with
$C_7=-\frac23V_3\wedge\psi$. Later tuples are
$(H_7,V_4,B_6),(V_5,B_7),V_6,V_7$.

For the change \eqref{eq:kernel-scalar-contraction}, leave $V,F$
unchanged. The image of $b=f\phi/7$ in independent degree-three
entries is
\[
 (-df\wedge\psi/7,\ 0,\ df\wedge\phi/7,\ f).
\]
It becomes $(0,0,0,f)$ after the change. Thus the scalar pair is
exactly $\Omega^0\xrightarrow{1}\Omega^0$ in degrees two and three,
with contraction $F\mapsto f$. The inverse changes have opposite signs. Dropping this pair gives precisely the
complex \eqref{eq:kernel-reduced-complex}, including its exterior
tails and all displayed cochain differentials.

The needed identities in these normalizations are
\begin{equation}\label{eq:kernel-hodge-identities}
 \mathcal L(-d*b,2dA(*b),db)=-d\langle b,\phi\rangle,
 \qquad 3B(V\wedge\phi)=2*(V\wedge\psi).
\end{equation}
The first is the compatibility identity $j_3a^2=0$, with its scalar
entry preserved. Both sides of the second vanish on $\Lambda^2_{14}$. On $\Lambda^2_7$ it follows by inserting $V=\iota_v\phi$.
The parallel algebraic $G_2$ maps commute with the Hodge Laplacian:
the rough Laplacian commutes with parallel maps, and the
Weitzenb\"ock curvature term intertwines because torsion-free
curvature lies in $\operatorname{Sym}^2\mathfrak g_2$.
Consequently harmonic projection and the de Rham Green operator
$G_\Delta$ may be used componentwise.

\paragraph{Degree two.}
Closedness is $db=d*b=dA(*b)=0$ and $\langle b,\phi\rangle=0$.
The first two conditions make $b$ harmonic. Its $7$-part corresponds
to a harmonic, hence parallel, one-form, so $dA(*b)=0$ follows.
The scalar condition removes the harmonic singlet and there are no
degree-one boundaries. This proves
$H^2(K_{\mathcal J})=\mathcal H^3_{7\oplus27}$.

\paragraph{Degree three.}
A closed tuple satisfies $dH=dV=dB=0$ and
$\mathcal L(H,V,B)=0$. Subtract its componentwise harmonic projection
and set
\begin{equation}\label{eq:kernel-degree-three-primitive}
 b=d^\dagger G_\Delta B-*d^\dagger G_\Delta H.
\end{equation}
 Hodge identities and
$(d^\dagger)^2=0$ give $db=B$ and $d*b=-H$. Put
$W=V-2dA(*b)$ and $f=\langle b,\phi\rangle$. Then
\eqref{eq:kernel-hodge-identities} gives
\[
 dW=0,\qquad 3B(W\wedge\phi)=df.
\]
The left side is coclosed because it is $2*(W\wedge\psi)$ and
$dW=d\psi=0$. Hence $\Delta f=0$. Since $b$ and its singlet
projection have zero harmonic part, $f=0$. It follows that
$W\in\Omega^2_{14}$. The identity $*W=-W\wedge\phi$ makes this
closed form coclosed. Its harmonic part is zero, so $W=0$.
Thus the nonharmonic cocycle is the boundary of
$b\in\Omega^3_{7\oplus27}$. Boundaries have zero componentwise
harmonic projection, proving exactly the degree-three entry of
\eqref{eq:complete-relative-cohomology}.

The harmonic map $\mathcal L$ is onto: a harmonic one-form $\alpha$
is parallel, and the harmonic form $\alpha\wedge\phi$ satisfies
$A(\alpha\wedge\phi)=\alpha$. Solving uniquely for the
$7$-component of $B$ identifies its kernel with
$\mathcal H^5\oplus\mathcal H^2\oplus\mathcal H^4_{1\oplus27}$.

\paragraph{Degree four.}
Subtract harmonic projection from an arbitrary closed triple
$(H_6,V_3,B_5)$ and choose
\begin{equation}\label{eq:kernel-degree-four-primitives}
 t_0=(d^\dagger G_\Delta H_6,\ d^\dagger G_\Delta V_3,
                                      \ d^\dagger G_\Delta B_5).
\end{equation}
These primitives have zero harmonic part, but need not satisfy the
degree-three constraint. Define
\begin{equation}\label{eq:kernel-R-operator}
 \mathcal R=\mathcal L(d\oplus d\oplus d):
 \Omega^4\oplus\Omega^1\oplus\Omega^3\longrightarrow\Omega^1.
\end{equation}
Its adjoint is
\begin{equation}\label{eq:kernel-R-adjoint}
 \mathcal R^\dagger\alpha=
 \bigl(d^\dagger(\alpha\wedge\psi),\
       2d^\dagger(\iota_{\alpha^\sharp}\phi),\
       -d^\dagger(\alpha\wedge\phi)\bigr).
\end{equation}
The first component already has injective principal symbol. After
Hodge star it is the symbol of $d(\iota_{\alpha^\sharp}\phi)$,
hence of $\mathcal L_{\alpha^\sharp}\phi$. If it vanishes, the
associated metric symbol $2\zeta_{(i}\alpha_{j)}$ vanishes. At
$\zeta\ne0$ this forces $\alpha=0$. Therefore
$\mathcal R\mathcal R^\dagger$ is elliptic.

Its kernel is exactly the parallel one-forms. Indeed,
$\mathcal R^\dagger\alpha=0$ implies
$d\iota_{\alpha^\sharp}\phi=0$. The vector field preserves $\phi$
and therefore the metric. It is Killing. The integrated
Killing/Bochner identity on compact Ricci-flat $Y$ makes it parallel.
Conversely a parallel one-form annihilates all components of
\eqref{eq:kernel-R-adjoint}.

Now $\alpha_0=\mathcal L t_0$ is orthogonal to this kernel:
$\mathcal L^\dagger$ maps parallel one-forms to parallel forms,
whereas $t_0$ has zero harmonic projection. Elliptic solvability gives
a smooth solution
\[
 \mathcal R u=\alpha_0,\qquad
 u=\mathcal R^\dagger G_{\mathcal R\mathcal R^\dagger}\alpha_0
 \quad\text{as one choice}.
\]
Then $t=t_0-du$ satisfies $\mathcal L t=0$ and
$dt=(H_6,V_3,B_5)$. Every nonharmonic degree-four cocycle is therefore the differential
of a triple in $\ker\mathcal L$. Harmonic triples
cannot be boundaries, proving the degree-four entry.

\paragraph{Exterior tails and dimensions.}
All later terms and differentials in
\eqref{eq:kernel-reduced-complex} are unrestricted exterior tails,
which give the degree-five through degree-eight entries.
Seven-dimensional Poincar\'e duality and the harmonic singlet
dimension $b_0$ give \eqref{eq:relative-dimensions}.
Green operators enter only the global cohomology computation.

\subsubsection{Vanishing of the connecting maps}
\label{app:connecting-vanishing}
The comparison is cochain-surjective and identical on the lower
problem. Its long exact sequence contains the maps
$\delta_p:H^p_{\rm var}\to H^{p+1}(K_{\mathcal J})$ for $1\le p\le4$.
For $\delta_1$, equation~\eqref{eq:compat-global-tangent} shows that
every unrestricted degree-one cocycle has a closed lift. Thus $\delta_1=0$.

\paragraph{The degree-two connecting map and global completion.}
Let $e$ be an unrestricted degree-two cocycle.  Reconstruct $c,\sigma,t$
from its normalized leading geometric density by
\eqref{eq:compat-reconstruct-old} and lift it with
admissibility input zero.  Its reconstructed coordinates are
\[
 h_0=*t,\qquad \nu_0=\tfrac32B(c)-2A(*t),\qquad
 c'_0=c-\tfrac23\nu_0\wedge\psi,\qquad
 f_0=\sigma-\langle t,\varphi\rangle.
\]
Applying the first compatibility differential gives the explicit
connecting class
\begin{equation}\label{eq:compat-delta-two}
 \boxed{\delta_2[e]=[(dh_0,dc'_0,d\nu_0,0,f_0)]
 \in H^3(K_{\mathcal J}).}
\end{equation}
The \(N\)-entry is zero since $d_C^2e=0$.  The compatibility
chain identity with \eqref{eq:compat-j-three} places the displayed tuple
in $K_{\mathcal J}^3$.
Changing the lift by a three-form $b$ changes this tuple by the
differential of \eqref{eq:compat-kernel-coordinates}. Hence its
cohomology class is independent of the lift.

After the scalar contraction \eqref{eq:kernel-scalar-contraction},
the independent triple representing \eqref{eq:compat-delta-two} is
\begin{equation}\label{eq:delta-two-exact-triple}
 (dh_0+\tfrac17df_0\wedge\psi,\ d\nu_0,
                                  -\tfrac17df_0\wedge\phi).
\end{equation}
Each component is exact. The degree-three proof above makes its
cohomology class zero, so $\delta_2=0$ on every unrestricted equation class. Equivalently,
every such class has a representative with an admissibility three-form $b$
satisfying
\begin{equation}\label{eq:compat-global-completion}
\begin{gathered}
 db=0,\qquad d*(t-b)=0,\qquad
 d\bigl(\nu_0+2A(*b)\bigr)=0,\\
 d\bigl(c'_0-\tfrac43A(*b)\wedge\psi\bigr)=0,\qquad
 f_0+\langle b,\varphi\rangle=0.
\end{gathered}
\end{equation}
\paragraph{The remaining connecting maps.}
Lift a variational degree-three cocycle with leading densities $(\xi,\Lambda)$
by $B_4=-\xi^\flat\wedge\phi/4$, $C_6=-*\Lambda$, and zero remaining
geometric entries. The independent degree-four kernel differential is
\begin{equation}\label{eq:delta-three-exact-triple}
 (0,0,-\tfrac14d(\xi^\flat\wedge\phi)).
\end{equation}
The variational cocycle equation ensures the dependent $C_7$ condition.
The displayed triple has zero harmonic class, so the degree-four
calculation gives $\delta_3=0$.
For a variational degree-four scalar density $q$, choose the lift
$C_7=-q\,\vol$ and zero remaining geometric entries. The next arrow
drops $C_7$, so its kernel differential is zero and $\delta_4=0$.
\(N\)-sector entries introduce no additional kernel component.

The long exact sequence therefore gives the canonical short exact
sequences \eqref{eq:complete-relative-sequences} in degrees two,
three and four. Since $C_{\rm var}^p=0$ for $p>4$, it also gives
$H^p_{\rm res}\simeq H^p(K_{\mathcal J})$ for $5\le p\le8$.
The lower comparison is an isomorphism through degree one.

\paragraph{Meaning of the extra cohomology.}
The classes in $K_{\rm comp}$ are represented by harmonic three-forms
that can have nonzero periods even though their variational equation
projection is zero. In the fixed gerbe sector the
actual residual $dB^{[0]}$ is exact, so a nonzero harmonic
representative cannot itself be such a residual.  The conformal-torsion
reconstruction explains the simultaneous appearance of $b$ and its
Hodge-dual torsion component.

\section{Nonlinear compatibility coordinates and upper homotopies}\label{app:resolved-variational}

This appendix supplies the common equation and identity calculations
used by the physical construction of Section~\ref{sec:resolved-physical}
and the variational comparison of Section~\ref{sec:resolved-variational}. We use the physical chart, a fixed background gerbe sector, and real coefficient coordinates on the mixed bundles. The fields vary in the formal neighbourhood of the chosen positive
form. Variational adjoints are taken modulo horizontal divergences,
and their integrated identities use compactness and absence of boundary.

\subsection{Equation coordinates, inverse, and actual section}
Write \(\phi=\varphi^{[0]}\), \(\psi=*_\phi\phi\), \(w_0=e^{-2\Phi^{[0]}}\), \(p=d\Phi^{[0]}\). Here \(J_\phi=\frac43\pi_1+\pi_7-\pi_{27}\), whereas
\begin{equation}\label{eq:rv-N0-1}
A_\phi=\tfrac14(v\mapsto v\wedge\phi)^\dagger,\qquad
B_\phi=\tfrac13(v\mapsto v\wedge\psi)^\dagger.
\end{equation}
These are pointwise metric adjoints, with \(A_\phi(v\wedge\phi)=B_\phi(v\wedge\psi)=v\).

Let \(e\) be an arbitrary physical variational density covector. Its leading geometric quotient \(\pi_2(q)e=(u_e,\beta_e,s_e)\) is defined by
\begin{equation}\label{eq:rv-N0-2}
e^{[0]}[v,\beta,\sigma]
=-\frac14\int_Y w_0
\{\langle v,u_e\rangle_\phi+\langle\beta,\beta_e\rangle_\phi+
\sigma s_e\}\operatorname{vol}_\phi .
\end{equation}
The gauge density is ideal valued and does not contribute to this quotient. The normalization factor is \(-4/w_0\), with \(\mathcal E_{\rm var}=-dW_{\rm ext}\) as in \eqref{eq:euler-sign-convention}.

For arbitrary \(b\in\Omega^3(Y)\), define
\[
\begin{aligned}
c&=-*_\phi\beta_e,&
U&=u_e-\tfrac32\iota_{B_\phi(c)^\sharp}\psi,&
a_s&=\tfrac17\langle U,\phi\rangle_\phi,\\
t_s&=-\tfrac12(s_e+9a_s),&
\sigma_e&=-2s_e-21a_s,&
t&=t_s\phi+\pi_7U-\pi_{27}U,
\end{aligned}
\]
\begin{equation}\label{eq:rv-N0-3}
\boxed{
h=*_\phi(t-b),\quad
\nu=\tfrac32B_\phi(c)-2A_\phi(h),\quad
c'=c-\tfrac23\nu\wedge\psi,\quad
f=\sigma_e-*_\phi(h\wedge\phi).
}
\end{equation}
Here \((h,c',\nu,b,f)\) denote arbitrary equation coordinates. Their actual section values will be denoted with bars. In particular \(b\) is not required to be exact as an independent equation coordinate.

The identity \(4A_\phi(h)-3B_\phi(c')=0\) holds for every argument. Conversely, for any section
\[
y=((h,c'),\nu,b,f)\in
\Gamma(K_{\rm tor}(\phi))\oplus\Omega^1(Y)\oplus\Omega^3(Y)\oplus\Omega^0(Y),
\]
the inverse is
\[
t=b+*_\phi h,\qquad c=c'+\tfrac23\nu\wedge\psi,\qquad
\sigma_e=f+*_\phi(h\wedge\phi),
\]
\begin{equation}\label{eq:rv-N0-4}
\begin{aligned}
u_e&=J_\phi t-\tfrac13\sigma_e\phi+
             \tfrac32\iota_{B_\phi(c)^\sharp}\psi,\\
\beta_e&=-*_\phi c,\qquad
s_e=3\sigma_e-2\langle t,\phi\rangle_\phi .
\end{aligned}
\end{equation}
The inverse identities are the field-dependent versions of
\eqref{eq:compat-reconstruct-old}--\eqref{eq:compat-reconstruction-inverse}.

Equivalently the leading comparison, denoted \(j_2(q)\), is
\begin{equation}\label{eq:rv-N0-5}
\boxed{
\begin{aligned}
(j_2y)_u&=J_\phi b-*_\phi h+\iota_{\nu^\sharp}\psi-\tfrac13f\phi,\\
(j_2y)_\beta&=-*_\phi(c'+\tfrac23\nu\wedge\psi),\\
(j_2y)_s&=3f+*_\phi(h\wedge\phi)-2*_\phi(b\wedge\psi).
\end{aligned}}
\end{equation}
The equivalence uses the intrinsic-torsion algebra on \(K_{\rm tor}(\phi)\) from Section~\ref{subsec:compat-geometric}.

The equation bundle is the pointwise graph
\begin{equation}\label{eq:rv-N0-6}
\mathscr E^{\rm comp}_{0,q}
=\{(e,y):\pi_2(q)e=j_2(q)y\}.
\end{equation}
Denote the reconstruction \eqref{eq:rv-N0-3} by $\mathcal R_q$.
The equation-coordinate map and its inverse are
\begin{equation}\label{eq:rv-N0-7}
\mathcal R_q(e,b)=(e,\mathcal R_{\rm geo,q}(\pi_2(q)e,b)),
\qquad
\mathcal R_q^{-1}(e,y)=(e,b_y).
\end{equation}
They are pointwise algebraic and preserve the entire density \(e\), including its ideal geometric and gauge components. The graph has real rank \(296=204+92\), the rank of the degree-two compatibility bundle.

To obtain coordinates on this bundle, let $s_2(q)$ be the real
linear section of the leading density projection \eqref{eq:rv-N0-2}
defining the chosen complement to $N^2$, so that $P_Ns_2(q)=0$, and put \(J_2^{\rm dens}(q)=s_2(q)j_2(q)\). Then
\begin{equation}\label{eq:rv-N0-8}
(e,y)\longleftrightarrow(n=e-J_2^{\rm dens}(q)y,\ y)
\in N^2\oplus A^2_{\rm comp}(q).
\end{equation}
These are coordinates on \eqref{eq:rv-N0-6}. A different lift changes \(n\) by the explicit triangular term \(-(s'_2-s_2)j_2y\). The intrinsic graph and the full preserved density do not change.

\subsubsection{The actual nonlinear equation section}

The equation section before and after reconstruction is
\begin{equation}\label{eq:rv-N0-9}
\mathfrak E_0(q)=(\mathcal E_{\rm var}(q),dB^{[0]}),\qquad
\mathfrak E^{\rm comp}_0(q)=\mathcal R_q\mathfrak E_0(q).
\end{equation}
The full unrestricted variational density in this formula is as follows. In the absolute physical field chart let \(V=(v,\beta,\sigma,k)\), where \(k=\delta A\), and
\[
\theta=\Theta_H^{[0]}=\Theta_{\rm LC}(g)^{[0]}+C_g(dB)^{[0]},
\quad a=A-\theta,\quad \kappa=D_V\theta .
\]
Let \(\mathcal G_q,\mathcal V_q\) be the algebraic maps of Appendix~\ref{app:physical-first-variation}, evaluated on the actual BPS residuals. Varying the relative transgression and integrating by parts gives, for
arbitrary fields in $\mathscr V$,
\begin{equation}\label{eq:rv-N0-10}
\begin{split}
4\mathcal E_{\rm var}(q)[V]=\int_Y\bigl[
&\{\beta+\tfrac14\jmath_\epsilon P(a,\kappa+k)\}\wedge C
-w\,v\wedge *_\varphi\mathcal G_q\mathcal E_{\rm BPS}
-w\,\sigma\,\mathcal V_q\mathcal E_{\rm BPS}\\
&+\tfrac12\jmath_\epsilon\{
w_0P(k,I)-w_0P(\kappa,R_\theta\wedge\psi)\}\bigr].
\end{split}
\end{equation}
All arguments inside an explicit ideal inclusion are reduced. In particular the curvature here is the actual \(R_\theta\), not its admissible reconstruction \(\mathscr R_q(E)\). The relative variation in Appendix~\ref{app:physical-first-variation} is \(k-\kappa\), which gives precisely the signs and \(\kappa+k\) in \eqref{eq:rv-N0-10}. This supplies a finite local formula for every \(N\)-sector component, by taking the indicated finite formal adjoints.

Reduction of \eqref{eq:rv-N0-10}, or direct variation of \(W^{[0]}\), gives
\[
u_e=J_\phi dB^{[0]}-*_\phi d\phi+*_\phi(p\wedge\phi),\quad
\beta_e=-*_\phi(d\psi-2p\wedge\psi),\quad
s_e=*_\phi(d\phi\wedge\phi)-2*_\phi(dB^{[0]}\wedge\psi).
\]
Consequently, at $e=\mathcal E_{\rm var}(q)$, the actual section of
\eqref{eq:rv-N0-6} is
\begin{equation}\label{eq:rv-N0-11}
\boxed{
\bar h=d\phi-2p\wedge\phi,\quad
\bar c'=d\psi-\tfrac83p\wedge\psi,\quad
\bar\nu=p,\quad \bar b=dB^{[0]},\quad \bar f=0.
}
\end{equation}
The \(N^2\)-coordinate is exactly
\(\mathcal E_{\rm var}(q)-J_2^{\rm dens}(q)\bar y\).

Since \eqref{eq:rv-N0-7} has a two-sided inverse for every formal nearby field,
\begin{equation}\label{eq:rv-N0-12}
\mathfrak E^{\rm comp}_0(q)=0
\ \Longleftrightarrow\
\mathcal E_{\rm var}(q)=0,\quad dB^{[0]}=0.
\end{equation}
By Corollary~\ref{thm:bps-critical-locus}, its zero set is
$\mathcal Z_{\rm BPS}$. The derived comparison is Theorem~\ref{thm:resolved-q-equivalence}. Appendix~\ref{app:resolved-physical} records the component identities used in its proof.

At the background the equation section is zero. Therefore differentiating its field-dependent coordinate map contributes no derivative-of-\(\mathcal R_q\) term:
\begin{equation}\label{eq:rv-N0-13}
D\mathfrak E^{\rm comp}_0|_0=
\mathcal R_0\,(d_C^1,a),
\end{equation}
where \(a(V)=d_Y\beta^{[0]}\) is the linearized admissibility map appearing in
\eqref{eq:combined-admissibility-operator}. In components, with \(\chi=v^{[0]}-2\sigma^{[0]}\phi_0\), it is
\[
((d\chi,d*J\chi),d\sigma^{[0]},d\beta^{[0]},0;
\ \text{the unchanged compatibility }N^2\text{ component}).
\]
The normalization is \(-4/w_0\), and \(d_C^1=-\mathcal H\), as in Appendix~\ref{app:compat-construction}. Derivatives of the optional lift in \eqref{eq:rv-N0-8} likewise multiply the zero background equation section.

\subsection{Gauge covariance and the full mixed derivative}
We derive the action on equation covectors by differentiating the
physical field action and preserving the density pairing. Use an ordinary even parameter \(u=(\xi,s_g,\Lambda)\) in the absolute parameter chart, and set
\[
\lambda=s_g-k_g(\xi)-\iota_{\xi^{[0]}}A,\qquad
b_\lambda=\tfrac12P(\lambda,F_A)-\tfrac14P(a,D_A\lambda).
\]
The exact physical action is
\begin{equation}\label{eq:rv-N0-14}
G_uq=(\mathcal L_\xi\varphi,\
\mathcal L_\xi B+d\Lambda+\jmath_\epsilon b_\lambda,\
\xi(\Phi),\ \iota_{\xi^{[0]}}F_A+D_As_g).
\end{equation}
For fixed \(u\), its field derivative \(L_u=D_qG_u\) acts on \(V=(v,\beta,\sigma,k)\) by
\begin{equation}\label{eq:rv-N0-15}
\begin{aligned}
(L_uV)_\varphi&=\mathcal L_\xi v,&
(L_uV)_\Phi&=\xi(\sigma),\\
(L_uV)_A&=\iota_{\xi^{[0]}}D_Ak+[k,s_g],\\
(L_uV)_B&=\mathcal L_\xi\beta+\jmath_\epsilon\bigl\{
\tfrac12P(\dot\lambda,F_A)+\tfrac12P(\lambda,D_Ak)
-\tfrac14P(k-\kappa,D_A\lambda)\\
&\hspace{43mm}
-\tfrac14P(a,D_A\dot\lambda+[k,\lambda])\bigr\},\\
\dot\lambda&=-D_gk_g(\xi)[m]-\iota_{\xi^{[0]}}k,\qquad
m=D_\phi g[v^{[0]}],\quad \kappa=D_V\theta .
\end{aligned}
\end{equation}

The metric-frame derivative in \eqref{eq:rv-N0-15} can be written
as follows. Put \(I=(g_0^{-1}g)^{-1/2}\), \(N=\nabla^0\xi\), \(M=N^Tg+gN\), \(\dot I=(DI)_g[m]\). The frame formula of Appendix~\ref{app:gauge} gives
\begin{equation}\label{eq:rv-N0-16}
D_gk_g(\xi)[m]
=-I^{-1}\dot I\,j_g(\xi)+I^{-1}N\dot I
+I^{-1}\{(D^2I)_g[m,M]+(DI)_g[N^Tm+mN]\}.
\end{equation}
These derivatives are taken in the formal matrix function
$I=(g_0^{-1}g)^{-1/2}$. Similarly, before common-frame transport, the induced derivative is
\begin{equation}\label{eq:rv-N0-17}
\delta\Theta^u=\delta\Gamma_{\rm LC}+D_gC_g(dB^{[0]})[m]+C_g(d\beta^{[0]}),
\quad
(\delta\Gamma_{\rm LC})^k{}_{ij}
=\tfrac12g^{k\ell}
(\nabla_i m_{j\ell}+\nabla_jm_{i\ell}-\nabla_\ell m_{ij}).
\end{equation}
The derivative of \(C_g\) differentiates its single inverse-metric contraction in the \(C_g(H)_{iab}=H_{iab}/2\) convention. In the common frame \eqref{eq:rv-N0-17} is conjugated by \(I\) and augmented by \(D_\theta(I^{-1}\dot I)\).  Equations \eqref{eq:rv-N0-15}–\eqref{eq:rv-N0-17} have at most first horizontal order in \(V\). Their finite density adjoints are local.

The ordinary action on arbitrary equation densities is
\begin{equation}\label{eq:rv-N0-18}
\delta_u e=-L_u^\dagger e.
\end{equation}
This is derived from duality of physical tangent variations and equation densities: along a physical flow a tangent variation changes by \(L_uV\), so preservation of \(e[V]\) requires \eqref{eq:rv-N0-18}. On the actual density the same formula follows by differentiating the identity \(dW_{\rm ext}(G_u)=0\):
\begin{equation}\label{eq:rv-N0-19}
D_q\mathcal E_{\rm var}[G_u]+L_u^\dagger\mathcal E_{\rm var}=0.
\end{equation}

The reduction of \eqref{eq:rv-N0-15} on geometric fields is the ordinary Lie derivative. Every other geometric contribution is ideal valued. The physical quotient \eqref{eq:rv-N0-2}, Hodge star, projectors, \(A_\phi,B_\phi\), and the reconstruction commute with pullback. Differentiating that naturality gives the exact graph action
\begin{equation}\label{eq:rv-N0-20}
\boxed{
\delta_u(e,y)=
(-L_u^\dagger e,\,
\mathcal L_{\xi^{[0]}}h,\mathcal L_{\xi^{[0]}}c',
\mathcal L_{\xi^{[0]}}\nu,\mathcal L_{\xi^{[0]}}b,
\mathcal L_{\xi^{[0]}}f).
}
\end{equation}
Here the field changes by \(G_uq\). In particular the moving algebraic constraint \(4A_\phi h=3B_\phi c'\) is preserved. On the actual section the formula for \(b\) follows directly from \(d(G_uB)^{[0]}=\mathcal L_{\xi^{[0]}}dB^{[0]}\). Gauge and gerbe parameters act trivially on the leading reconstructed forms. 

In the coordinates \(e=n+J_2^{\rm dens}(q)y\), the action on \(n\) is
\begin{equation}\label{eq:rv-N0-21}
\delta_u n=
-L_u^\dagger(n+J_2^{\rm dens}y)
-(D_qJ_2^{\rm dens}[G_u])y
-J_2^{\rm dens}\mathcal L_{\xi^{[0]}}y .
\end{equation}
All terms are explicitly determined by \eqref{eq:rv-N0-2}, \eqref{eq:rv-N0-5}, \eqref{eq:rv-N0-14} and \eqref{eq:rv-N0-15}.  The right side belongs to \(N^2\) by differentiating the graph relation. It preserves all mixed coefficient modules.

Finally our convention is \(fq=-\vartheta G_uq\) for a single odd coefficient on the left. Therefore its one-ghost equation-section covariance is
\begin{equation}\label{eq:rv-N0-22}
f e=\vartheta L_u^\dagger e,\qquad
f y=-\vartheta\mathcal L_{\xi^{[0]}}y,
\end{equation}
with the negative of \eqref{eq:rv-N0-21} for \(n\). This is the BRST sign of the equation-module action. The complete upper rows below extend this covariance formula.

\subsection{The nonlinear identity graph}
\label{subsec:nonlinear-graph}
The required nonlinear correction already appears in the gerbe Noether
identity. Put
\[
c=c'+\tfrac23\nu\wedge\psi .
\]
Let $\pi_3(q)$ take a parameter density to its leading geometric
components, with the same normalization as $\pi_2(q)$. For an arbitrary
covector $e$, the gerbe component is
\begin{equation}\label{eq:rv-N1-2}
(\pi_3(q)G_q^\dagger e)_\Lambda
=-*_\phi D_2c
=-*_\phi\bigl(C_6+\tfrac23V_2\wedge\psi+Z_q(y)\bigr),
\end{equation}
where
\begin{equation}\label{eq:rv-N1-3}
\boxed{
Z_q(y)=\tfrac23(p\wedge c'-\nu\wedge\bar c')
                 +\tfrac49p\wedge\nu\wedge\psi,\qquad
\bar c'=d\psi-\tfrac83p\wedge\psi .
}
\end{equation}
To derive this, expand \(D_2(c'+\frac23\nu\wedge\psi)\), use
\(d(\nu\wedge\psi)=d\nu\wedge\psi-\nu\wedge d\psi\), and insert
\(d\psi=\bar c'+\frac83p\wedge\psi\). The coefficient is
$\frac23(\frac83-2)=\frac49$. The adjoint sign follows directly from
\(e_\beta=-*c\):
\(w_0^{-1}d^\dagger(w_0e_\beta)=-*D_2c\).

On actual equation arguments \(c'=\bar c'\), \(\nu=p\), \eqref{eq:rv-N1-3} vanishes. It does not vanish on independent equation coordinates.

Use the weighted pairing \([v,X]_{w}=\int_Yw_0\langle v,X^\flat\rangle\operatorname{vol}_\phi\) for normalized parameter densities. Define the linear part \(j_3(q)\) on \(Y_3=(H_5,C_6,V_2,B_4,F)\) by
\[
(j_3Y_3)_\Lambda=-*(C_6+\tfrac23V_2\wedge\psi),
\]
\begin{equation}\label{eq:rv-N1-6}
\begin{split}
[(j_3Y_3)_\xi,X]_w=\int_Yw_0\bigl\{
&H_5\wedge\iota_X\phi+B_4\wedge\iota_X\psi
+V_2\wedge\iota_X\phi\wedge\phi\\
&+(dF+Fp)(X)\operatorname{vol}_\phi
-\iota_X B^{[0]}\wedge(C_6+\tfrac23V_2\wedge\psi)\bigr\}.
\end{split}
\end{equation}
At the marked standard embedding this reduces to
\(-4A(B_4)+3B(H_5)+3B(V_2\wedge\phi_0)+dF\) and
\(-*(C_6+\frac23V_2\wedge\psi_0)\). The terms $Fp$ and the contribution involving $B^{[0]}$ arise from the
field dependence of the nonlinear identities. They are absent from the
linear background formula.

The remaining term is bilinear in the reconstructed equations. For commuting equation arguments
\(y_i=(h_i,c'_i,\nu_i,b_i,f_i)\), put
\[
Z(y_1,y_2)=\tfrac23(\nu_1\wedge c'_2-\nu_2\wedge c'_1)
                       +\tfrac49\nu_1\wedge\nu_2\wedge\psi,
\quad
T_X(h)=\iota_Xh\wedge\phi-h\wedge\iota_X\phi .
\]
Define the alternating diffeomorphism-density remainder by
\begin{equation}\label{eq:rv-RX-expanded}
\begin{split}
R_X(y_1,y_2)={}&
\iota_Xb_2\wedge c'_1-\iota_Xb_1\wedge c'_2\\
&+\tfrac23\{\iota_Xb_2\wedge\nu_1\wedge\psi
                 -\iota_Xb_1\wedge\nu_2\wedge\psi\}
-\iota_Xh_1\wedge h_2\\
&+\nu_2\wedge T_X(h_1)-\nu_1\wedge T_X(h_2)
+2\nu_1\wedge\nu_2\wedge\iota_X\phi\wedge\phi .
\end{split}
\end{equation}
Define the normalized parameter-density-valued bilinear form \(\mathcal T_q\) by
\begin{equation}\label{eq:rv-N1-7}
(\mathcal T_q(y_1,y_2))_\Lambda=-*Z(y_1,y_2),\qquad
[\mathcal T_q(y_1,y_2)_\xi,X]_w
=\int_Yw_0\{R_X(y_1,y_2)-\iota_XB^{[0]}\wedge Z(y_1,y_2)\}.
\end{equation}
It is alternating in the two commuting equation arguments. For the \(h\)-term this uses
\(\iota_X(h_1\wedge h_2)=0\), since the product has degree eight. All other alternating pairs are displayed explicitly.

The resulting local operator identity is
\begin{equation}\label{eq:rv-N1-8}
\boxed{
\pi_3(q)G_q^\dagger e
=j_3(q)\mathscr D_q y+\mathcal T_q(\bar y,y),
\qquad (e,y)\in\mathscr E^{\rm comp}_{0,q}.
}
\end{equation}
Its gerbe component is \eqref{eq:rv-N1-2}. For the diffeomorphism
component, insert the reconstructed equation covector into the physical
pairing, use $\mathcal L_X\psi=*J\mathcal L_X\phi$, and integrate
the exterior derivatives in Cartan's formula. The terms linear in
the compatibility output give $j_3\mathscr D_qy$. The remaining
alternating terms give $\mathcal T_q(\bar y,y)$.
Appendix~\ref{app:rv-cartan-density} gives the calculation.

\paragraph{Quadratic correction to the first identity graph.}

Let $r=(r_h,r_{c'},r_\nu,r_b,r_f)$ be the ghost-number-$-1$ coordinate
with the same component types as the equation argument $y$, let $\eta$ have
ghost number $-2$ in the first identity bundle, and let $z$ be the
variational covector dual to the gauge ghosts, of ghost number $-2$. Extend the alternating bilinear
form $\mathcal T$ to these graded coordinates using the ghost Koszul signs,
and put
\begin{equation}\label{eq:rv-N1-9}
\Theta_3(q,r)=\tfrac12\mathcal T_q(r,r).
\end{equation}
This need not vanish because the coefficients of $r$ are odd. For this calculation, write $\delta$ for the coefficient differential
with the physical ghosts set to zero. It is distinct from the
variational exterior derivative used in Section~\ref{sec:deformation-algebra}.
Set
\[
\delta q=0,\quad \delta e=\mathcal E_{\rm var},\quad
\delta r=\bar y,\quad \delta\eta=-\mathscr D_qr.
\]
The left derivation rule then gives
\begin{equation}\label{eq:rv-N1-10}
\delta\Theta_3=\mathcal T_q(\bar y,r).
\end{equation}
For example, its gerbe component is
\[
(\Theta_3)_\Lambda=-*P_6(r),\qquad
P_6(r)=\tfrac23r_\nu\wedge r_{c'}
                       +\tfrac29r_\nu\wedge r_\nu\wedge\psi,
\]
\begin{equation}\label{eq:rv-N1-11}
\delta P_6=\tfrac23(p\wedge r_{c'}-r_\nu\wedge\bar c')
                      +\tfrac49p\wedge r_\nu\wedge\psi.
\end{equation}
In particular,
$\delta(r_\nu\wedge r_\nu)=2p\wedge r_\nu$. The factor is nonzero
because $r_\nu$ has odd ghost number.

With the physical ghosts set to zero, the corrected identity graph is
\begin{equation}\label{eq:rv-N1-12}
\boxed{\pi_3(q)z=j_3(q)\eta-\Theta_3(q,r).}
\end{equation}
Taking \(\delta z=-G_q^\dagger e\), equations \eqref{eq:rv-N1-8}–\eqref{eq:rv-N1-10} prove that this graph is preserved by \(\delta\). Its linearization is the degree-three graph in \eqref{eq:compat-intrinsic-graph}. The correction is pointwise quadratic
in the equation coordinates.

Choose a pointwise real section $s_3(q)$ of $\pi_3(q)$ and set
\[
n_3=z-s_3(q)\{j_3(q)\eta-\Theta_3(q,r)\}.
\]
Then the first identity differential is
\begin{equation}\label{eq:rv-N1-13}
\delta n_3=-(1-s_3\pi_3)G_q^\dagger e,\qquad
\delta\eta=-\mathscr D_qr .
\end{equation}
Both components square to zero: the first by the variational Noether
identity and the second by \eqref{eq:rv-N0-11}. The $N^3$ component of the
original variational complex is therefore kept. The remaining higher
identity components are constructed next.

Along a field path $q_0+tV$, let $s$ be the leading dilaton variation,
and let $h_j,c'_j,e_j$ denote homogeneous Taylor coefficients of the
equation section. The quadratic coefficients of the identities are
\(dh_2-2ds\wedge h_1=0\),
\(dc'_2-\frac83ds\wedge c'_1=0\), and
\(G_0^\dagger e_2+\dot G_V^\dagger e_1=0\).
On the equation section $\mathcal T_q(\bar y,\bar y)=0$ by antisymmetry, so the field equations are unchanged.

\subsection{Bilinear defect and Cartan-density expansion}
\label{app:rv-cartan-density}

To verify the diffeomorphism component of \eqref{eq:rv-N1-8}, insert \eqref{eq:rv-N0-5} into the physical pairing with \(G_Xq\). The normalized integrand is
\[
\mathcal L_X\phi\wedge(*Jb-h-\nu\wedge\phi-f\psi/3)
-\mathcal L_XB^{[0]}\wedge c
+p(X)\{3f\operatorname{vol}+h\wedge\phi-2b\wedge\psi\}.
\]
Use \(\mathcal L_X\psi=*J\mathcal L_X\phi\), Cartan's formula, and integrate the exterior derivatives of the two contractions. The \(b,B\) terms give
\[
db\wedge\iota_X\psi
+\iota_Xb\wedge\bar c-\iota_X\bar b\wedge c
-\iota_XB^{[0]}\wedge D_2c,\qquad
\bar c=\bar c'+\tfrac23p\wedge\psi .
\]
The \(h,\nu\) terms give
\[
D_2h\wedge\iota_X\phi+d\nu\wedge\iota_X\phi\wedge\phi
-\iota_X\bar h\wedge h+\nu\wedge T_X(\bar h)
-p\wedge T_X(h)+2p\wedge\nu\wedge\iota_X\phi\wedge\phi .
\]
Finally the scalar contribution is \((df+fp)(X)\operatorname{vol}\), using
\(\mathcal L_X\phi\wedge\psi=3(\operatorname{div}_gX)\operatorname{vol}\).
These three expressions are exactly \eqref{eq:rv-N1-6}–\eqref{eq:rv-N1-8}.

\subsection{Extension through the higher identity degrees}
\label{subsec:nonlinear-upper}
Two triangular coordinate changes extend the graph through the
higher identity degrees without adding graded bundles. Their inverses are
obtained by subtraction. We use the coefficient differential $\delta$
defined above, which commutes with the horizontal exterior derivative.
Put \(\phi=\varphi^{[0]}\), \(\psi=*_\phi\phi\), \(w_0=e^{-2\Phi^{[0]}}\), \(p=d\Phi^{[0]}\), and \(D_a=d-a p\wedge\). Ghost and horizontal signs are separate:
\begin{equation}\label{eq:rv-E-1}
uv=(-1)^{|u|_{\rm gh}|v|_{\rm gh}+\deg_Yu\,\deg_Yv}vu.
\end{equation}
The differential $\delta$ uses ghost parity and commutes with $d_Y$.
In particular $r_\nu^2$ need not vanish. We write physical ghosts to the
left of the upper coordinates.

Write
\[
r=(r_h,r_{c'},r_\nu,r_b,r_f),\quad
\eta=(\eta_H,\eta_C,\eta_V,\eta_B,\eta_F),\quad
\rho=(\rho_H,\rho_C,\rho_V,\rho_B).
\]
Their ghost numbers are $-1,-2,-3$, respectively. Their horizontal
form degrees are
\[
(4,5,1,3,0),\qquad(5,6,2,4,0),\qquad(6,7,3,5).
\]
The equation pair obeys \(4A_\phi r_h=3B_\phi r_{c'}\). The variational
covectors dual to the fields and the scalar ghost are denoted by $e$ and
$\tau$, of ghost numbers $-1$ and $-3$. The covector $z$, dual to the gauge ghosts, has ghost number $-2$. The graph
relations below keep the components
$N^2,N^3,N^4$ of the original variational complex.

Write \(\bar r=(\bar h,\bar c',p,dB^{[0]},0)\) for the actual equation section, with
\(\bar h=D_2\phi\) and \(\bar c'=D_{8/3}\psi\). The first vertical components and graph are
\[
\delta r=\bar r,\qquad
\delta\eta=-(D_2r_h,D_{8/3}r_{c'},dr_\nu,dr_b,r_f),
\]
\begin{equation}\label{eq:rv-E-2}
\pi_2e=j_2r,\qquad
\pi_3z=j_3\eta-\Theta_3(r),\qquad
\Theta_3(r)=\tfrac12\mathcal T_q(r,r).
\end{equation}
Here \(j_2,j_3,\mathcal T\) are given in \eqref{eq:rv-N0-5}, \eqref{eq:rv-N1-6}, and \eqref{eq:rv-N1-7}. In particular the \(B^{[0]}\)-dependent diffeomorphism term in the last graph is \(-\int w_0\iota_XB^{[0]}\wedge U_6\), where
\begin{equation}\label{eq:rv-E-3}
P_6=\tfrac23r_\nu r_{c'}+\tfrac29r_\nu^2\psi,\qquad
U_6=\eta_C+\tfrac23\eta_V\psi-P_6.
\end{equation}

\paragraph{The upper primitive.}

The identities are
\begin{equation}\label{eq:rv-E-4}
\delta U_6=-D_2(r_{c'}+\tfrac23r_\nu\psi),\qquad
\delta(D_2U_6)=0.
\end{equation}

The common normalization on the leading equation, identity and higher-identity densities is \(-4/w_0\), with the physical metric/density identifications. Equivalently a preserved leading density is \(-\tfrac14\) times the weighted pairing of its normalized component. Keeping this common factor on all three rows leaves every graph and adjoint calculation below unchanged.

Before the triangular coordinate change, use the degree-four coordinates \(\rho_C^\circ,\rho_V\), with
\[
\delta\rho_C^\circ=D_{8/3}\eta_C,\qquad
\delta\rho_V=d\eta_V.
\]
Define the following seven-form of ghost number $-3$:
\begin{equation}\label{eq:rv-E-5}
\begin{split}
P_7={}&\tfrac23(r_\nu\eta_C+\eta_Vr_{c'})
+\tfrac49r_\nu\eta_V\psi
-\tfrac29r_\nu^2r_{c'}-\tfrac4{81}r_\nu^3\psi,\\
\Xi_7={}&\rho_C^\circ+\tfrac23\rho_V\psi+P_7.
\end{split}
\end{equation}
The combination \(\Xi_7\) satisfies \(\delta\Xi_7=D_2U_6\). To verify this identity, put \(v=dr_\nu\) and \(k=dr_{c'}\). Then
\[
D_2P_6=\tfrac23vr_{c'}-\tfrac23r_\nu D_{8/3}r_{c'}
+\tfrac49p r_\nu r_{c'}-\tfrac49r_\nu v\psi
+\tfrac29r_\nu^2\bar c'+\tfrac4{27}p r_\nu^2\psi,
\]
\begin{equation}\label{eq:rv-E-7}
\delta P_7=
\tfrac23p\eta_C+\tfrac23\eta_V\bar c'
+\tfrac49p\eta_V\psi-D_2P_6.
\end{equation}
Using \(dp=0\) and \(d\psi=\bar c'+\frac83p\psi\) now gives
\begin{equation}\label{eq:rv-E-6}
\boxed{\delta\Xi_7=D_2U_6.}
\end{equation}
Make the triangular coordinate change
\begin{equation}\label{eq:rv-E-8}
\rho_C=\rho_C^\circ+P_7.
\end{equation}
Its inverse is subtraction of the displayed polynomial. \(P_7\) contains
neither \(\rho_C\) nor \(\rho_H\). After this change,
\begin{equation}\label{eq:rv-E-9}
\Xi_7=\rho_C+\tfrac23\rho_V\psi,\qquad
\delta\rho_C=D_2U_6-\tfrac23d\eta_V\wedge\psi.
\end{equation}
At the marked background, $\Xi_7$ linearizes to
$\rho_C+\frac23\rho_V\wedge\psi_0$, while $\delta\Xi_7$ linearizes to
$d\eta_C+\frac23d\eta_V\wedge\psi_0$.

Let $\pi_4(q)$ denote the leading scalar-density projection, normalized
as above. The scalar-density graph is
\begin{equation}\label{eq:rv-E-10}
\pi_4\tau=-*_\phi\Xi_7.
\end{equation}
Its vertical derivative is \(-*D_2U_6\). This is exactly \(-\mathsf R^\dagger z\), since the gerbe density is \(-*U_6\) and the weighted adjoint of \(d:\Omega^0\to\Omega^1\) gives
\(\mathsf R^\dagger(-*U_6)=*D_2U_6\).

For a one-form \(\alpha\) and a seven-form \(\chi\) define the pointwise map to the existing \(H_5\) component by
\begin{equation}\label{eq:rv-E-11}
\mathsf h_\alpha(\chi)=\tfrac13\alpha\wedge\psi\,(*_\phi\chi).
\end{equation}
The order in this formula is \(\alpha\) before \(\chi\). Its other four degree-three components are zero. The \(G_2\) identity
\[
\tfrac13(\alpha\wedge\psi)\wedge\iota_X\phi
=\alpha(X)\operatorname{vol}_\phi
\]
implies
\begin{equation}\label{eq:rv-E-12}
[j_3\mathsf h_\alpha(\chi)]_\xi[X]
=\int_Y w_0\alpha(X)\chi,\qquad
(j_3\mathsf h_\alpha(\chi))_\Lambda=0.
\end{equation}

The product $\Lambda^{[0]}\rho_C$ has ghost number $-2$, so the following change of the first identity coordinate preserves degree:
\begin{equation}\label{eq:rv-E-13}
\eta_H=\eta_H^\circ+\mathsf h_{\Lambda^{[0]}}(\Xi_7).
\end{equation}
It has the local triangular inverse obtained by subtraction. None of \(U_6,P_7,\Xi_7\) depends on \(\eta_H\). Thus \eqref{eq:rv-E-8} and \eqref{eq:rv-E-13} are polynomial coordinate
changes on the existing graded bundles and have identity linearization at
the marked background with vanishing ghosts.

Before applying the triangular changes \eqref{eq:rv-E-8} and
\eqref{eq:rv-E-13}, in the coordinates $\eta_H^\circ,\rho_C^\circ$ the
geometric upper action is the weighted vertical differential together with
the tensorial diffeomorphism action. The leading lower rules are
\[
Q\Lambda^{[0]}=dc^{[0]}-\mathcal L_{\xi^{[0]}}\Lambda^{[0]},
\qquad Qc^{[0]}=-\xi^{[0]}(c^{[0]}).
\]
The left derivation rule applied to \eqref{eq:rv-E-13} gives
\begin{equation}\label{eq:rv-E-14}
(Q+\mathcal L_{\xi^{[0]}})\mathsf h_{\Lambda^{[0]}}(\Xi_7)
=\mathsf h_{dc^{[0]}}(\Xi_7)
-\mathsf h_{\Lambda^{[0]}}(D_2U_6).
\end{equation}
The minus sign is the parity of the gerbe ghost. 

To compute the graph defect, put \(\Lambda^{[0]}=\vartheta\Lambda_0\), \(|\vartheta|=1\). Varying only \(B^{[0]}\) in \eqref{eq:rv-E-2} gives
\(+\vartheta\int w_0\iota_Xd\Lambda_0\wedge U_6\).
The variational coadjoint term is
\(+\vartheta\int w_0\mathcal L_X\Lambda_0\wedge U_6\).
Their required difference is
\begin{equation}\label{eq:rv-E-15}
\vartheta\int w_0d(\iota_X\Lambda_0)\wedge U_6
=-\vartheta\int w_0(\iota_X\Lambda_0)D_2U_6.
\end{equation}
By \eqref{eq:rv-E-12}, this is exactly the second term of \eqref{eq:rv-E-14} in the \(j_3\) graph. The first term supplies the variational scalar contribution \(-\tau[X(c)]\), using \eqref{eq:rv-E-10}. This proves the required cancellation for arbitrary higher identity
coordinates.

The square of the geometric part now follows from the weighted exterior
differential. In the temporary coordinates $\eta_H^\circ,\rho_C^\circ$, take the
weighted exterior differential. Its degree-four component
is $\delta^\circ\rho=(D_2\eta_H^\circ,D_{8/3}\eta_C,d\eta_V,d\eta_B)$.
The higher components continue the same exterior differentials with
the cochain suspension signs.
The resulting differential is displayed in \eqref{eq:rv-E-22}. Add
$-\mathcal L_{\xi^{[0]}}$ on every geometric upper coordinate and
keep the original lower differential.

Denote this differential by \(Q_A^\circ\). It squares to zero. On the upper coordinates its vertical part consists of the operators
\(D_a=\dd-a\,p\wedge\), exterior differentiation, and the coordinate equation \(\delta^\circ\eta_F=-r_f\). Since
\[
 p=\dd\Phi^{[0]},
 \qquad
 \dd p=0,
 \qquad
 p\wedge p=0,
\]
we have
\begin{equation}\label{eq:rv-E-23}
 D_a^2\omega
 =
 -a\,\dd p\wedge\omega
 +a^2p\wedge p\wedge\omega
 =
 0.
\end{equation}
The remaining vertical compositions vanish by \(\dd^2=0\).

The diffeomorphism terms are compatible with these differentials.
Indeed,
\[
 [\mathcal L_X,\dd]=0,
 \qquad
 Qp=-\mathcal L_{\xi^{[0]}}p,
\]
so the variation of the coefficient \(p\) in
\(D_a=\dd-a\,p\wedge\) cancels the corresponding commutator with the
Lie derivative. The Lie-derivative terms themselves close by
\[
 [\mathcal L_X,\mathcal L_Y]
 =
 \mathcal L_{[X,Y]},
 \qquad
 Q\xi=-\frac12[\xi,\xi].
\]
Gauge and gerbe transformations preserve the leading geometric
equations. The Green--Schwarz corrections are
\(\epsilon\mathbb D\)-valued and therefore vanish after reduction.
Together with the covariance of the equation rows, this proves
\[
 (Q_A^\circ)^2=0.
\]

Let \(\Phi\) be the composition of the triangular coordinate changes
\eqref{eq:rv-E-8} and \eqref{eq:rv-E-13}. Differentiating these changes
gives the transformed differential of \eqref{eq:rv-E-16}, while the
remaining upper components are the exterior-differential rows shown
in \eqref{eq:rv-E-24}. Since \(\Phi\) is invertible,
\[
 Q_A=\Phi_*Q_A^\circ
 \qquad\Longrightarrow\qquad
 Q_A^2=\Phi_*(Q_A^\circ)^2=0.
\]
Thus the geometric differential squares to zero to all formal orders
in the fields and physical ghosts.

\subsection{Geometric rows through degree four and mixed actions}

With \(\mathcal L=\mathcal L_{\xi^{[0]}}\), the geometric components are
\begingroup\setlength{\arraycolsep}{3pt}
\begin{equation}\label{eq:rv-E-16}
\begin{array}{ll}
Qr_h=\bar h-\mathcal Lr_h,&Qr_{c'}=\bar c'-\mathcal Lr_{c'},\\
Qr_\nu=p-\mathcal Lr_\nu,&Qr_b=dB^{[0]}-\mathcal Lr_b,\qquad Qr_f=-\mathcal Lr_f,\\[1mm]
Q\eta_H=-D_2r_h-\mathcal L\eta_H
+\mathsf h_{dc^{[0]}}(\Xi_7)-\mathsf h_{\Lambda^{[0]}}(D_2U_6),\\
Q\eta_C=-D_{8/3}r_{c'}-\mathcal L\eta_C,&
Q\eta_V=-dr_\nu-\mathcal L\eta_V,\\
Q\eta_B=-dr_b-\mathcal L\eta_B,&
Q\eta_F=-r_f-\mathcal L\eta_F,\\[1mm]
Q\rho_H=D_2\eta_H-D_2\mathsf h_{\Lambda^{[0]}}(\Xi_7)-\mathcal L\rho_H,\\
Q\rho_C=D_2U_6-\tfrac23d\eta_V\wedge\psi-\mathcal L\rho_C,&
Q\rho_V=d\eta_V-\mathcal L\rho_V,\qquad
Q\rho_B=d\eta_B-\mathcal L\rho_B .
\end{array}
\end{equation}
\endgroup
Reduction modulo \(\epsilon\) removes the gauge and gerbe corrections from the leading reconstructed forms. The higher coordinates \(\eta_H\) and \(\rho_H\) carry the gerbe homotopy displayed in \eqref{eq:rv-E-16}. Diffeomorphism covariance follows because \(*_\phi,A_\phi,B_\phi,j_3,\mathcal T_q\), contraction, and \(D_a\) commute with simultaneous pullback of their inputs, including the induced variations of the metric, Hodge star, \(\psi\), \(p\), and the \(B^{[0]}\) term of \(j_3\).

For $u=(X,s,\Lambda_u)$, let $L_u=D_qG_u$ be the first-order operator
\eqref{eq:rv-N0-15}--\eqref{eq:rv-N0-17}.

Polarization of the exact absolute ghost rule gives the bilinear bracket coefficient
\begin{equation}\label{eq:rv-E-18}
\begin{split}
b_\xi(u,v)&=-[X_u,X_v],\\
b_s(u,v)&=[s_u,s_v]-X_u^{[0]i}X_v^{[0]j}F_{ij},\\
b_\Lambda(u,v)&=-\mathcal L_{X_u}\Lambda_v+\mathcal L_{X_v}\Lambda_u\\
&\quad-\tfrac14\jmath_\epsilon\{
P(\lambda_u,D_A\lambda_v)-P(\lambda_v,D_A\lambda_u)\}.
\end{split}
\end{equation}
Thus, for \(\gamma=\vartheta u\), the full variational density actions are
\begin{equation}\label{eq:rv-E-19}
(Qe)_{1\gamma}=\vartheta L_u^\dagger e,\qquad
(Qz)_{1\gamma}[v]=-\vartheta z[b(u,v)],\qquad
(Q\tau)_{1\gamma}=-\vartheta\mathcal L_X\tau .
\end{equation}
The scalar term at \(z\) is \((Qz)_c[v]=-\tau[X_v(c)]\). Its value on a constant scalar is zero.

In physical density coordinates, let \(P_N\) denote the projection onto
the coefficient of \(\epsilon\) in the geometric density together with
the ideal-valued gauge covector.
Let $L_2,L_3,L_4$ be the real lifts of the graph values
\begin{equation}\label{eq:rv-E-20}
j_2r,\qquad j_3\eta-\Theta_3(r),\qquad -*\Xi_7
\end{equation}
with the common density normalization above, chosen in the complements
to $N^i=\ker\pi_i$ so that $P_NL_i=0$. Write \(e=n_2+L_2,z=n_3+L_3,\tau=n_4+L_4\). Then the zero- and one-ghost \(N\)-sector rows are
\begin{equation}\label{eq:rv-E-21}
\begin{array}{ll}
(Qn_2)_0=P_N\mathcal E_{\rm var},&
(Qn_2)_{1\gamma}=\vartheta P_N L_u^\dagger(n_2+L_2),\\
(Qn_3)_0=-P_NG_q^\dagger(n_2+L_2),&
(Qn_3)_{1\gamma}[v]=-\vartheta P_N(n_3+L_3)[b(u,v)],\\
(Qn_4)_0=-P_N\mathsf R^\dagger(n_3+L_3),&
(Qn_4)_{1\gamma}=-\vartheta P_N\mathcal L_X(n_4+L_4),\\
(Qn_3)_c[v]=-P_N(n_4+L_4)[X_v(c)] .
\end{array}
\end{equation}
These are finite variational adjoints of the displayed physical maps. They include, for example, the gauge–gerbe bracket in \eqref{eq:rv-E-18}, the induced-Hull derivative in \eqref{eq:rv-N0-15}--\eqref{eq:rv-N0-17}, and the action of \(X^{[1]}\) on the lifted leading densities. The coefficient projection is field independent, whereas the lifted
graph values $L_i$ depend on the fields through \eqref{eq:rv-E-20}. Equivalently one may differentiate the defining relations
\(n_2=e-L_2\), \(n_3=z-L_3\), and \(n_4=\tau-L_4\). Graph covariance gives the same formula.

The leading projection of these density actions is the system
for diffeomorphisms, gerbe transformations and scalars in
\eqref{eq:rv-E-15}. Every gauge contribution to that projection is zero
by its ideal coefficient. Thus \eqref{eq:rv-E-15}, tensorial naturality,
and \eqref{eq:rv-E-10} prove preservation of all three leading graphs
by \eqref{eq:rv-E-16}–\eqref{eq:rv-E-21}, including the full \(N\)-sector.

Setting the degree-four coordinates to zero makes \eqref{eq:rv-E-13} the identity, while setting the physical ghosts to zero recovers \eqref{eq:rv-E-2} and its vertical differential through degree three. Hence the triangular coordinate changes leave the equation section and its first vertical identity unchanged.

\paragraph{The geometric hierarchy.}
Besides the equation and first identity rows in \eqref{eq:rv-E-2}, its rows are
\begin{equation}\label{eq:rv-E-22}
\begin{array}{c|l}
\text{cochain degree of coordinate}&\delta^\circ\text{ row}\\ \hline
4&(D_2\eta_H^\circ,\ D_{8/3}\eta_C,\ d\eta_V,\ d\eta_B)\\
5&(-D_2\rho_H,\ -d\rho_V,\ -d\rho_B)\\
6&(dV_4,\ dB_6)\\
7&-dV_5\\
8&dV_6 .
\end{array}
\end{equation}
Degree five coordinates are \((H_7,V_4,B_6)\), degree six \((V_5,B_7)\), degree seven \(V_6\), and degree eight \(V_7\). The names specify horizontal degrees. Add \(-\mathcal L_{\xi^{[0]}}\) on every geometric upper coordinate, and keep the exact original lower differential.

All remaining rows in the final coordinates are explicitly
\begin{equation}\label{eq:rv-E-24}
\begin{array}{lll}
QH_7=-D_2\rho_H-\mathcal LH_7,&
QV_4=-d\rho_V-\mathcal LV_4,&
QB_6=-d\rho_B-\mathcal LB_6,\\
QV_5=dV_4-\mathcal LV_5,&
QB_7=dB_6-\mathcal LB_7,\\
QV_6=-dV_5-\mathcal LV_6,&
QV_7=dV_6-\mathcal LV_7 .
\end{array}
\end{equation}

\subsection{Complete \texorpdfstring{\(N\)}{N}-sector action}

We now compute the action on the \(N\)-components. The following formulas specify their field derivatives by pairing with arbitrary even test variations, which also defines the required finite-order variational adjoints without choosing local frames.

Write $f$ for the lower BRST differential. In absolute coordinates its rules are
\begin{equation}\label{eq:rv-E-25}
\begin{aligned}
Q\varphi&=-\mathcal L_\xi\varphi,&Q\Phi&=-\xi(\Phi),\\
QB&=-\mathcal L_\xi B-d\Lambda-\jmath_\epsilon b_\lambda,&
QA&=-\iota_{\xi^{[0]}}F_A-D_As,\\
Q\xi&=-\tfrac12[\xi,\xi],&
Qs&=\tfrac12[s,s]-\tfrac12\xi^{[0]i}\xi^{[0]j}F_{ij},\\
Q\Lambda&=dc-\mathcal L_\xi\Lambda-\tfrac14\jmath_\epsilon P(\lambda,D_A\lambda),&
Qc&=-\xi(c)+\tfrac1{24}\jmath_\epsilon P(\lambda,[\lambda,\lambda]),
\end{aligned}
\end{equation}
where
\[
\lambda=s-k_g(\xi)-\iota_{\xi^{[0]}}A,\qquad
b_\lambda=\tfrac12P(\lambda,F_A)-\tfrac14P(a,D_A\lambda).
\]
All \(\lambda\) and connection expressions in ideal terms are \(\mathbb D_0\)-valued.

For \(V=(v,\beta,\sigma,k)\) put
\(\dot\lambda_V=-D_gk_g(\xi)[D_\phi g(v^{[0]})]-\iota_{\xi^{[0]}}k\).
The field derivatives of the quadratic gauge-ghost and cubic
scalar-ghost terms are
\begin{equation}\label{eq:rv-E-26}
\begin{split}
\mathfrak b_{\gamma,q}'(V)_\xi&=0,\\
\mathfrak b_{\gamma,q}'(V)_s
&=-\tfrac12\xi^{[0]i}\xi^{[0]j}(D_Ak)_{ij},\\
\mathfrak b_{\gamma,q}'(V)_\Lambda
&=-\tfrac14\jmath_\epsilon\{
P(\dot\lambda_V,D_A\lambda)
+P(\lambda,D_A\dot\lambda_V+[k,\lambda])\},\\
\mathfrak c_{\gamma,q}'(V)
&=\tfrac18\jmath_\epsilon P(\dot\lambda_V,[\lambda,\lambda]).
\end{split}
\end{equation}
For a gauge parameter $u$, define
\begin{equation}\label{eq:rv-E-27}
\mathfrak c_\gamma'(u)=
\tfrac18\jmath_\epsilon P(\lambda_u,[\lambda,\lambda]).
\end{equation}
The \(1/8\) is the derivative of the displayed alternating cubic \(1/24\) term with the ghost variation placed on the right. All three terms agree with this sign. Let \(b(\gamma,u)\) denote the extension of the already defined
parameter bracket with one ghost argument.

See Section~\ref{subsec:nonlinear-q} for the full and \(N\)-sector density rows.

The density components are obtained by varying
\[
H_f=\iota_f\vartheta
\]
and placing coefficients before variational one-forms. In the
present coordinates this Hamiltonian is
\begin{equation}\label{eq:rv-E-30}
\begin{split}
H_f={}&e_\varphi[\mathcal L_\xi\varphi]
+e_B[\mathcal L_\xi B+d\Lambda+\jmath_\epsilon b_\lambda]
+e_\Phi[\xi(\Phi)]+e_A[\iota_{\xi^{[0]}}F_A+D_As]\\
&-\tfrac12z_\xi[[\xi,\xi]]
+z_s[\tfrac12[s,s]-\tfrac12\xi^{[0]i}\xi^{[0]j}F_{ij}]\\
&+z_\Lambda[dc-\mathcal L_\xi\Lambda-\tfrac14\jmath_\epsilon P(\lambda,D_A\lambda)]\\
&+\tau[\xi(c)-\tfrac1{24}\jmath_\epsilon P(\lambda,[\lambda,\lambda])].
\end{split}
\end{equation}
Specifically \(\mathscr Q_e=\mathcal E_{\rm var}-(H_f)_q\), \(\mathscr Q_z=-(H_f)_\gamma\), and \(\mathscr Q_\tau=-(H_f)_c\). These derivatives give \eqref{eq:rv-E-28}, including the
field-dependent parameter terms.

\subsection{Mixed identities and nilpotence}\label{app:rv-square}

The gauge part of the lower differential \eqref{eq:rv-E-25}
satisfies the three descent identities
\begin{equation}\label{eq:rv-E-31}
qb_\lambda=\tfrac14dP(\lambda,D_A\lambda),\qquad
qP(\lambda,D_A\lambda)=\tfrac16dP(\lambda,[\lambda,\lambda]),\qquad
qP(\lambda,[\lambda,\lambda])=0,
\end{equation}
after removing the diffeomorphism/common-frame part, with \(q\lambda=\lambda^2\). The frame-cocycle calculation supplies the other mixed brackets. These are the lower physical identities, with the same coefficients as \eqref{eq:rv-E-25}–\eqref{eq:rv-E-30}.

Differentiate these identities with respect to fields, gauge
parameters and the scalar reducibility parameter, then take their
covector-first variational adjoints. The differentiated quadratic ghost rule is exactly \(\mathfrak b_{\gamma,q}'\). The differentiated cubic rule is exactly the two \(\mathfrak c'\) expressions in \eqref{eq:rv-E-26}–\eqref{eq:rv-E-27}. Thus the gauge--gauge gerbe term occurs in \(b_\Lambda\), its field derivative in \(\mathscr Q_e\), the induced scalar terms in \(\mathscr Q_z\), and the remaining field-dependent cubic term in \(\tau[\mathfrak c_{\gamma,q}']\).

The remaining identities can be organized by physical ghost weight. Give \(\gamma\) physical weight one and \(c\) weight two. Decompose the displayed vector field as
\[
Q=\delta+s_1+h_2+h_3,
\]
where \(s_1\) includes the full lower BRST derivation, the one-ghost upper rows, and the one-gerbe terms of \eqref{eq:rv-E-16}. The nonzero \(h_2,h_3\) coefficients are exactly those in the action table below. Equating the homogeneous physical-ghost weights in $Q^2=0$ gives
\begin{equation}\label{eq:rv-E-33}
\begin{gathered}
\delta^2=0,\quad[\delta,s_1]=0,\quad
s_1^2+[\delta,h_2]=0,\quad
[s_1,h_2]+[\delta,h_3]=0,\\
h_2^2+[s_1,h_3]=0,\quad
[h_2,h_3]=0,\quad h_3^2=0.
\end{gathered}
\end{equation}
All brackets here are supercommutators of odd derivations. Polarizing \(s_1^2+[\delta,h_2]=0\) in two independent odd parameter coefficients gives the commutator of the corresponding one-ghost actions, including the field variation of their coefficients. \(h_2\) supplies the required homotopy to the physical parameter bracket. The descent identities \eqref{eq:rv-E-31} together with the differentiated formulas above give the remaining identities on the \(N\)-components. On the geometric components they follow from the weighted exterior
differential and the coordinate changes
\eqref{eq:rv-E-8}, \eqref{eq:rv-E-13}. There \(h_3=0\). Hence every homogeneous component of \(Q^2\) vanishes.

\subsubsection{Every nonzero upper action and homotopy}
\label{app:rv-upper-actions}

In the \(N\)-components below, first substitute \(e=n_2+L_2\), \(z=n_3+L_3\), and \(\tau=n_4+L_4\) from \eqref{eq:rv-E-20}, and then apply \(P_N\). This includes their dependence on the leading geometric equation and identity coordinates.

{\small\setlength{\tabcolsep}{3pt}
\begin{longtable}{>{\raggedright\arraybackslash}p{0.12\textwidth}>{\raggedright\arraybackslash}p{0.22\textwidth}>{\raggedright\arraybackslash}p{0.38\textwidth}>{\raggedright\arraybackslash}p{0.2\textwidth}}
\toprule
Ghost content & Source and target & Formula & Origin \\
\midrule\endhead
\(\xi^{[0]}\) & Each geometric upper degree to itself & \(-\mathcal L_{\xi^{[0]}}\) & Tensorial pullback, including the moving weights \\
\(\Lambda^{[0]}\) & Degree 4 to degree 3, coordinate homotopy & \(\eta_H-\eta_H^\circ=\mathsf h_{\Lambda^{[0]}}(\Xi_7)\) & Cartan defect \eqref{eq:rv-E-15} \\
\(\Lambda^{[0]}\) & First identity action & \(-\mathsf h_{\Lambda^{[0]}}(D_2U_6)\) in \(Q\eta_H\) & Vertical derivative of that homotopy \\
\(\Lambda^{[0]}\) & Degree 4 action & \(-D_2\mathsf h_{\Lambda^{[0]}}(\Xi_7)\) in \(Q\rho_H\) & Compatibility with the next differential \\
\(c^{[0]}\) & Degree 4 to degree 3 & \(+\mathsf h_{dc^{[0]}}(\Xi_7)\) in \(Q\eta_H\) & Scalar reducibility coherence \\
One \(\gamma\) & \(N\)-sector degree 2 action & \(-e[L_\gamma V]\) & Physical tangent-density duality, \eqref{eq:rv-N0-15}--\eqref{eq:rv-N0-17} \\
One \(\gamma\) & \(N\)-sector degree 3 action & \(-z[b(\gamma,u)]\) & Complete physical bracket, \eqref{eq:rv-E-18} \\
One \(\xi\) & \(N\)-sector degree 4 action & \(-\tau[\xi(a_0)]\) & Scalar-parameter dual action \\
Two \(\gamma\) & \(N\)-sector degree 3 to degree 2 & \(-z[\mathfrak b_{\gamma,q}'(V)]\) & Field-dependent bracket. Curvature/frame/descent terms \\
\(c\) & \(N\)-sector degree 4 to degree 3 & \(-\tau[X_u(c)]\) & Scalar reducibility \\
Two \(\gamma\) & \(N\)-sector degree 4 to degree 3 & \(+\tau[\mathfrak c_\gamma'(u)]\) & Cubic Green–Schwarz scalar rule \\
Three \(\gamma\) & \(N\)-sector degree 4 to degree 2 & \(+\tau[\mathfrak c_{\gamma,q}'(V)]\) & Field derivative of the same cubic rule \\
\bottomrule
\end{longtable}}

There are no other nonzero ghost-dependent maps. The zero-ghost polynomials \(P_6,P_7,\Theta_3\) are displayed separately because they are nonlinear equation/identity-coordinate terms.

\subsection{Complete coordinate and module table}\label{app:rv-termination}
Let \(\mathfrak k=\mathfrak{so}(E)\). The following is the full table in the graph coordinates. A density dual of a free physical bundle has its usual complementary form or tensor-density bundle. Its \(N\) component is ideal valued. The gauge-dual amplitudes remain \(\mathbb D_0\) amplitudes embedded in \(\epsilon\mathbb D\).

{\small\setlength{\tabcolsep}{3pt}
\begin{longtable}{>{\raggedright\arraybackslash}p{0.08\textwidth}>{\raggedright\arraybackslash}p{0.10\textwidth}>{\raggedright\arraybackslash}p{0.585\textwidth}>{\raggedright\arraybackslash}p{0.16\textwidth}}
\toprule
Cochain degree & Ghost number & Coordinates and modules & Complete \(Q\)-row \\
\midrule\endhead
\(-1\) & \(2\) & \(c\in\Omega^0\otimes\mathbb D\) & \eqref{eq:rv-E-25}, scalar row \\
\(0\) & \(1\) & \(\xi\in TY\otimes\mathbb D,\ s\in\mathfrak k\otimes\mathbb D_0,\ \Lambda\in\Omega^1\otimes\mathbb D\) & \eqref{eq:rv-E-25}, three ghost rows \\
\(1\) & \(0\) & \((\varphi,B,\Phi)\) in \((\Omega^3\oplus\Omega^2\oplus\Omega^0)\otimes\mathbb D\), \(A\) a \(\mathbb D_0\) connection & \eqref{eq:rv-E-25}, four field rows \\
\(2\) & \(-1\) & \(r\in K_{\rm tor}(\phi)\oplus\Omega^1\oplus\Omega^3\oplus\Omega^0\), real. \(n_2\in N^2\) & \eqref{eq:rv-E-16} and first row of \eqref{eq:rv-E-29} \\
\(3\) & \(-2\) & \((\eta_H,\eta_C,\eta_V,\eta_B,\eta_F)\in\Omega^5\oplus\Omega^6\oplus\Omega^2\oplus\Omega^4\oplus\Omega^0\), real. \(n_3\in N^3\) & \eqref{eq:rv-E-16} and second row of \eqref{eq:rv-E-29} \\
\(4\) & \(-3\) & \((\rho_H,\rho_C,\rho_V,\rho_B)\in\Omega^6\oplus\Omega^7\oplus\Omega^3\oplus\Omega^5\), real. \(n_4\in N^4\) & \eqref{eq:rv-E-16} and third row of \eqref{eq:rv-E-29} \\
\(5\) & \(-4\) & \((H_7,V_4,B_6)\in\Omega^7\oplus\Omega^4\oplus\Omega^6\), real & First line of \eqref{eq:rv-E-24} \\
\(6\) & \(-5\) & \((V_5,B_7)\in\Omega^5\oplus\Omega^7\), real & Second line of \eqref{eq:rv-E-24} \\
\(7\) & \(-6\) & \(V_6\in\Omega^6\), real & Third line of \eqref{eq:rv-E-24} \\
\(8\) & \(-7\) & \(V_7\in\Omega^7\), real & Third line of \eqref{eq:rv-E-24} \\
\bottomrule
\end{longtable}}

Explicitly
\begin{equation}\label{eq:rv-E-34}
\begin{split}
N^2&=((\Omega^4\oplus\Omega^5\oplus\Omega^7)\otimes\epsilon\mathbb D)
\oplus(\Omega^6(\mathfrak k)\otimes\epsilon\mathbb D),\\
N^3&=((T^*Y\otimes\Lambda^7T^*Y)\oplus\Omega^6)\otimes\epsilon\mathbb D
\oplus(\Omega^7(\mathfrak k)\otimes\epsilon\mathbb D),\\
N^4&=\Omega^7\otimes\epsilon\mathbb D .
\end{split}
\end{equation}
Their real ranks are \(204,35,1\), respectively. The new upper total ranks are \(296,120,65,43,22,7,1\), exactly the compatibility complex. Writing the gauge summands with \(\epsilon\mathbb D\) in \eqref{eq:rv-E-34} denotes the actual density module. Its independent amplitude is \(\mathbb D_0\), as in the physical coefficient convention.

The weighted \(H,C\) rows are the successive identities of the two dilaton-weighted torsion potentials. The \(V\) and \(B\) rows resolve \(d\Phi^{[0]}\) and \(dB^{[0]}\) and their exterior identities on arbitrary upper arguments. The scalar equation coordinate \(r_f\) has actual section zero and \(Q\eta_F=-r_f-\mathcal L\eta_F\). This is the \(N\)-sector nonlinear algebraic pair. The \(N\)-sector scalar-gerbe hierarchy remains in the \(N\) sector and the graph \eqref{eq:rv-E-10}. These different roles explain the long upper sequence without adding lower gauge parameters.

Pointwise natural tensor operations and the common-frame cocycle of
Appendix~\ref{app:gauge} glue these local field-theoretic presentations. The real linear sections provide coordinates on the graph. Replacing
a section changes each $n_i$ by the difference of the lifted graph
values. This triangular change is invertible, so different sections
give locally $Q$-isomorphic presentations.

\section{Physical resolved component identities and equation comparison}
\label{app:resolved-physical}
The construction and proofs are in Section~\ref{sec:resolved-physical}.
This appendix records the covariance corrections, the differentiated
equation-coordinate map, and its unary specialization.

\subsection{Equation-bundle covariance}
Let $V=(v,k,\beta,\sigma)$ be an absolute physical field variation.
Under a diffeomorphism with vector field $\xi$, the field dependence of the chosen frame
contributes the tangent-connection terms \(D_At_{\xi,V}\) and \(D_\theta t_{\xi,V}\), where
\[
t_{\xi,V}=D_gk_g(\xi)[D_\phi g(v^{[0]})].
\]
All other terms in the covariance defect of \eqref{eq:rp-P-5}, including its \(b\)-argument, are a Lie derivative of a density. Finite integration by parts gives the remaining defect
\begin{equation}\label{eq:rp-P-8}
-\int_Y\jmath_\epsilon P(t_{\xi,V},Z^{\rm res}(E,b)).
\end{equation}
This follows by differentiating the common-frame dictionary and using the tensorial transformation of \eqref{eq:rp-P-2}. A pure gauge transformation with parameter $\zeta$ gives
\begin{equation}\label{eq:rp-P-9}
-\tfrac12\int_Y\jmath_\epsilon P(k,\zeta\,dE_C^{[0]}),
\end{equation}
and a gerbe transformation gives zero. To obtain \eqref{eq:rp-P-9}, expand \(D_Vb_\zeta\) in \eqref{eq:rv-N0-15}--\eqref{eq:rv-N0-17}: the induced-connection terms cancel, and the remaining two-form multiplying \(E_C^{[0]}\) is
\(-dP(\zeta,k)/2\).

The correction to the tensorial action on equations is as follows. In the relative ordinary parameter chart define
$\mathsf J_{\xi,q}(v)=t_{\xi,V}$, and write $\mathsf J_{\xi,q}^{\top}$
for its density transpose. Put
\[
U_\xi=-w_0^{-1}*\mathsf J_{\xi,q}^{\top}Z^{\rm res}(E,b),
\quad a_\xi=\langle U_\xi,\phi\rangle/7.
\]
Its residual components are
\begin{equation}\label{eq:rp-P-10}
\begin{aligned}
\widehat K_C&=0,&
\widehat K_\Sigma&=-21\jmath_\epsilon(a_\xi\operatorname{vol}_\phi),\\
\widehat K_T&=\jmath_\epsilon(-\tfrac92a_\xi\phi+\pi_7U_\xi-\pi_{27}U_\xi),&
\widehat K_I&=w_0^{-1}\zeta\,dE_C^{[0]},\qquad
\widehat K_b=0 .
\end{aligned}
\end{equation}
The singlet coefficients follow by inverting the same \(3a=4t_s-\sigma_s,\ 0=3\sigma_s-14t_s\) system as in the physical pairing. Equations \eqref{eq:rp-P-7}–\eqref{eq:rp-P-9} show directly that this correction vanishes on the actual section. Conversion to the fixed absolute parameter chart uses the unchanged dictionary in \eqref{eq:rv-E-25}, including its field derivative in \eqref{eq:rv-N0-15}--\eqref{eq:rv-N0-17}.

\subsection{Differentiated equation-coordinate map}
\label{app:rp-independent}
\label{app:rp-comparison}
The density construction is proved in Section~\ref{subsec:physical-q}. The equation-coordinate inverse is given in
\eqref{eq:rp-P-18}, and the graded comparison is proved in
Section~\ref{subsec:physical-comparison}.
Define the density $\mathscr A_f$ by
\[
\mathscr A_f[V]=-u[L_\gamma V]-m[\mathfrak b'_{\gamma,q}(V)]
+t[\mathfrak c'_{\gamma,q}(V)].
\]

Differentiating the coordinate relation
$u=\mathcal P_qE-\mathcal J_qb$ in \eqref{eq:rp-P-18} gives
\begin{equation}\label{eq:rp-P-19}
\boxed{\begin{aligned}
QE=\mathcal E_{\rm BPS}+\mathcal Q_q\{&
\mathscr A_f(u,m,t)
-(D_q\mathcal P_q[fq])E
+(D_q\mathcal J_q[fq])b
-\mathcal J_q\mathcal L_{\xi^{[0]}}b\},\\
Qb&=\bar b-\mathcal L_{\xi^{[0]}}b,
\end{aligned}}
\end{equation}
The field derivatives act on every coefficient in
\eqref{eq:rp-P-1}, \eqref{eq:rp-P-4} and \eqref{eq:rp-P-16},
including $K=C_g(dB^{[0]})$. Differentiating
$\mathcal P_q\mathcal Q_q=1$ gives
\[
D\mathcal Q=-\mathcal Q(D\mathcal P)\mathcal Q,
\]
so differentiating the inverse coordinate formula gives the same
result.

\subsection{Unary comparison}
\label{app:rp-functors}
The simplicial comparison is proved in Section~\ref{subsec:physical-functors}.
At the standard embedding all actual equations vanish. Differentiating \eqref{eq:rp-P-21} gives exactly
\((E,b)\mapsto(-\mathcal P_0E+\mathcal J_0b,b)\). Derivatives of its coefficients multiply the zero background equation. In density coordinates the degree-three and degree-four parts
multiply by $-1$, while the geometric identity coordinates are
unchanged, as in \eqref{eq:rp-P-22}. Let $U_1$ denote this unary
physical-to-variational isomorphism. The unary map to the unreconstructed
variational complex is $\mathcal J U_1$.
Thus $U_1$ identifies the physical unary complex with $C_{\rm compat}$,
and $\mathcal J U_1$ is the comparison $\mathcal J$ of
Section~\ref{subsec:compat-cohomological}. Its cohomological properties,
including the vanishing connecting maps, are those of
Theorem~\ref{thm:complete-relative-cohomology}.

\section{Elliptic contractions and homotopy transfer}
\label{app:elliptic-transfer}
\label{app:resolved-minimal}

Section~\ref{sec:finite-moduli} gives the common analytic contraction,
the curvature-naturality proof and the solution comparison over
Artin algebras concentrated in degree zero.  This
appendix records the component weights, the cyclic refinement on the
variational complex, the signed transfer formulas at every arity,
and the detailed comparison coordinates.  All spaces are real and $Y$ is compact without boundary. The background gerbe and anomaly
classes and the chosen trivialization remain fixed.

\subsection{Component weights for the resolved contraction}
\label{app:resolved-hodge}

Use the real bundle presentation
\begin{equation}\label{eq:resolved-hodge-complex}
 C^p_{\rm compat}=N^p\oplus A^p_{\rm comp},\qquad
 d^p(n,y)=(d_Nn+\theta_pj_py,a^py).
\end{equation}
This bundle splitting allows positive metrics to be chosen. The intrinsic graph and its local chain isomorphisms transport
the result to the other charts.  In particular the Hull blocks and the
variational identity-row signs \(+G^\dagger,-\mathsf R^\dagger\) are preserved.
We use the component weights of Section~\ref{subsec:variational-elliptic}
and Appendix~\ref{app:compat-symbol-proof} for the resolved complex and
the unrestricted variational complex.
Here \(K_{\rm tor}=\ker(4A-3B)\subset\Lambda^4\oplus\Lambda^5\).
The leading equation and first compatibility maps are
\[
 (\chi,s,\beta)\longmapsto((d\chi,d*J\chi),ds,d\beta,0),
 \qquad ((h,c'),\nu,b,f)\longmapsto(dh,dc',d\nu,db,f),
\]
followed by the exterior tails.  In particular \(f\mapsto F\) is
an order-zero identity, with zero outgoing map on \(F\).

The exceptional weight of \(F\) accounts for the \(dF\) term in
\(j_3\).  These weights also preserve the order-three Hull block with
the mixed bounds proved in Appendix~\ref{app:linear-symbol}.

With these weights, use \eqref{eq:resolved-order-reduction} and the
Green-operator construction of
Section~\ref{subsec:common-elliptic-contractions}.  It applies to all
the displayed rows, including the exceptional scalar pair and the
\(N\)-sector Hull blocks.  The unrestricted variational complex uses the
weights of \eqref{eq:unary-order-reduction}.  Its contraction
\((i_0,p_0,\mathsf h_0)\) is the starting point below.

\subsection{Cyclic contraction and transferred potential}
\label{subsec:cyclic-transfer}

On the variational complex \((C_{\rm var},d_C)\), use
\(B_C(x,y)=(-1)^{|x|}\omega_{\rm var}(sx,sy)\), with the
lower-to-dual signs recorded in Appendix~\ref{app:linear}.
Compose this \(\mathbb D\)-valued pairing with the real coefficient
functional
\[
 \tau_{\mathbb D}(a+\eps b)=b
\]
and write
\[
 B(x,y)
 :=
 \tau_{\mathbb D}
 \bigl(
   B_C(x,y)
 \bigr).
\]
This is a real pairing of cochain degree \(-3\).  It satisfies
\[
 B(y,x)
 =
 (-1)^{|x||y|}
 B(x,y),
\]
and unary cyclicity gives
\[
 B(d_Cx,y)
 +
 (-1)^{|x|}
 B(x,d_Cy)
 =
 0.
\]

The pairing is nondegenerate on the underlying real cochain bundles.
On a free coefficient block,
\[
 \tau_{\mathbb D}
 \bigl(
   (a_0+\eps a_1)(b_0+\eps b_1)
 \bigr)
 =
 a_0b_1+a_1b_0,
\]
whose coefficient matrix is
\[
 \begin{pmatrix}
 0&1\\
 1&0
 \end{pmatrix}.
\]
On a \(\mathbb D_0\)-valued block paired with its
\(\eps\mathbb D\)-valued dual, \(\tau_{\mathbb D}\) is the ordinary
real evaluation pairing.  

Boundaries pair trivially with cycles, so \(B\) descends to
cohomology.  To prove nondegeneracy there, let
\[
 P_0=i_0p_0
\]
be the finite-rank projection from the variational contraction of
Section~\ref{subsec:common-elliptic-contractions}.  The
pseudodifferential operators defining the contraction admit smooth
graded cyclic adjoints.  For a homogeneous operator \(A\) of cochain
degree \(r\), define
\[
 B(Ax,y)
 =
 (-1)^{r|x|}
 B(x,A^\sharp y).
\]
Then
\[
 d_C^\sharp=-d_C.
\]
Taking the cyclic adjoint of
\[
 1-P_0
 =
 d_C\mathsf h_0+\mathsf h_0d_C
\]
gives
\[
 1-P_0^\sharp
 =
 d_C\mathsf h_0^\sharp
 +
 \mathsf h_0^\sharp d_C.
\]

Let \(z\) be a cycle which pairs trivially with every cycle of
complementary degree.  Since \(P_0y\) is a cycle for every smooth
\(y\),
\[
 B(P_0^\sharp z,y)
 =
 B(z,P_0y)
 =
 0.
\]
Chain-level nondegeneracy implies
\[
 P_0^\sharp z=0.
\]
The adjoint contraction identity then gives
\[
 z=d_C\mathsf h_0^\sharp z.
\]
Thus the descended pairing has zero radical.  Since the cohomology is
finite-dimensional,
\[
 H_{\rm var}^p
 \simeq
 (H_{\rm var}^{3-p})^*.
\]

Choose the smooth representatives \(i_c=i_0\), and let
\[
 B_{H_{\rm var}}(a,b)
 :=
 B(i_ca,i_cb)
\]
be the induced perfect pairing on \(H_{\rm var}^\bullet\).  Define
\[
 p_c:C^\bullet_{\rm var}\longrightarrow H_{\rm var}^\bullet
\]
by
\[
 B_{H_{\rm var}}(p_cx,a)
 =
 B(x,i_ca)
\]
for every \(a\in H_{\rm var}^\bullet\).  Then
\[
 p_ci_c=1,
 \qquad
 p_cd_C=0.
\]
Put
\[
 P_c=i_cp_c,
 \qquad
 A_c=1-P_c.
\]
The projection \(P_c\) is self-adjoint for \(B\).

Let
\[
 b=A_c\mathsf h_0A_c,
 \qquad
 t=\frac12(b+b^\sharp),
 \qquad
 \mathsf h_c=t\,d_C\,t.
\]
Since $d_Ct+td_C=A_c$ and $d_Ct^2=t^2d_C$, the definition
$\mathsf h_c=td_Ct$ gives
\[
 1-P_c
 =
 d_C\mathsf h_c+\mathsf h_cd_C,
\]
and
\[
 p_c\mathsf h_c
 =
 \mathsf h_ci_c
 =
 \mathsf h_c^2
 =
 0.
\]
Moreover
\[
 \mathsf h_c^\sharp=\mathsf h_c.
\]
Equivalently,
\[
 B(P_cx,y)=B(x,P_cy),
\]
and
\[
 B(\mathsf h_cx,y)
 =
 (-1)^{|x|}
 B(x,\mathsf h_cy).
\]
Thus
\[
 (H_{\rm var}^\bullet,0)
 \underset{p_c}{\overset{i_c}{\rightleftarrows}}
 (C^\bullet_{\rm var},d_C),
 \qquad
 \mathsf h_c,
\]
is a cyclic strong deformation retract.

The full Hamiltonian has homogeneous components
\eqref{eq:reduced-coefficient-extraction}.  In the centered
cotangent chart its two-form is constant, so the Hamiltonian
polarization proof of Appendix~\ref{app:mixed} applies to every
arity.  Apply homotopy transfer to all variational Taylor operations
\(\ell_j^{\rm var}\), \(j\ge2\), using the cyclic contraction
\((i_c,p_c,\mathsf h_c)\).  The transferred brackets satisfy
\[
\begin{aligned}
&
B_{H_{\rm var}}
\bigl(
 \mu_n^{\rm var}(x_1,\ldots,x_n),
 x_{n+1}
\bigr)
\\
&\qquad=
(-1)^{
 n+
 |x_1|(|x_2|+\cdots+|x_{n+1}|)
}
B_{H_{\rm var}}
\bigl(
 \mu_n^{\rm var}(x_2,\ldots,x_{n+1}),
 x_1
\bigr).
\end{aligned}
\]
This is the same cyclic convention as equation~\eqref{eq:cyclic-rotation}.

One way to see the transfer directly is to suspend the complex as in
Appendix~\ref{app:mixed}.  The polarized tensor
\[
 \omega
 \bigl(
   q_n(v_1,\ldots,v_n),
   v_{n+1}
 \bigr)
\]
is graded symmetric.  For every nonlinear Taylor component of the transferred inclusion, its pairing with the linear cohomology representatives vanishes, and the pairings among nonlinear components vanish as well. Hence the pullback of the suspended pairing is exactly the induced pairing on cohomology. The transferred coderivation therefore has graded-symmetric polarized Hamiltonian tensors.  Desuspension gives the
displayed cyclic identity \cite{ChuangLazarev2009}.

The normalization of the Hamiltonian tensors is the one in
Appendix~\ref{app:mixed}.  The effective potential and its first variation are given in Corollary~\ref{cor:kuranishi-effective-potential}.
The analytic contraction and its cyclic refinement are real maps
preserving smooth sections. Neither locality, \(\mathbb D\)-linearity nor preservation
of the original coefficient filtration is asserted for them.

\subsection{Signed homotopy transfer and the first brackets}
\label{app:resolved-transfer}\label{subsec:kuranishi-transfer}

Let \(L\) be either \(L_{\rm BPS}^{\rm res}\) or the full
\(L_{\rm var}\), and write \(H=H^\bullet(L)\).  Use the corresponding
contraction \((i,p,\mathsf h)\): \eqref{eq:resolved-sdr-main} in the
physical case and the fixed cyclic refinement above in the variational
case.  In the next recursion \(\ell_n\), \(\mu_n\) and \(I_\infty\)
refer to this chosen source and target.
Use the symmetric-coalgebra convention of Appendix~\ref{app:mixed}:
\begin{equation}\label{eq:resolved-suspension}
 q_n(sx_1\cdots sx_n)=(-1)^{e_n(x)}s\ell_n(x_1,\ldots,x_n),
 \qquad e_n(x)=\binom n2+\sum_{a=1}^n(n-a)|x_a|,
\end{equation}
where \(|sx|=|x|-1\) and \(q_1=sds^{-1}\).
Raw left-directional \(Q\)-tensors must first be converted using the
additional exponents
\(A_n=\sum_a(|x_a|-1)\) and
\(b_n=\sum_{a<b}(|x_a|-1)(|x_b|-1)\) specified there.
  The
conversion applies to every arity of the formal \(Q\) of either source.

Put \(\bar i=sis^{-1}\), \(\bar p=sps^{-1}\) and
\(k=-s\mathsf hs^{-1}\).  Then
\[
 q_1k+kq_1=\bar i\bar p-1,\qquad k^2=k\bar i=\bar pk=0.
\]
On the conilpotent symmetric coalgebra define the inclusion
corestrictions \(f_n\), the transferred coderivation corestrictions
\(m_n\), and the coalgebra map \(\mathbf I\) recursively by
\begin{equation}\label{eq:resolved-transfer-recursion}
 f_1=\bar i,\quad m_1=0,\qquad
 U_n=\sum_{j=2}^nq_j\mathbf I_n^j,\qquad
 f_n=kU_n,\quad m_n=\bar pU_n\quad(n\ge2).
\end{equation}
Explicitly, \(\mathbf I_n^j\) is the sum over ordered positive
partitions \(n_1+\cdots+n_j=n\) and their
\((n_1,\ldots,n_j)\)-unshuffles of
\(f_{n_1}\odot\cdots\odot f_{n_j}\), with suspended Koszul signs,
divided by \(j!\).  Thus only previously constructed \(f_a\) enter.

We will also use the reverse infinity morphism.  Write
\(P=\bar i\bar p\).  The symmetric tensor homotopy on
\(S^n(L[1])\) is
\begin{equation}\label{eq:resolved-tensor-homotopy}
\begin{split}
 k^{(n)}(v_1\cdots v_n)
 =\frac1{n!}\sum_{\sigma\in S_n}\varepsilon(\sigma;v)
 \sum_{j=1}^n(-1)^{\sum_{a<j}|v_{\sigma(a)}|}
 &Pv_{\sigma(1)}\cdots Pv_{\sigma(j-1)}\\[-2pt]
 &\cdot kv_{\sigma(j)}v_{\sigma(j+1)}\cdots v_{\sigma(n)}.
\end{split}
\end{equation}
The inner sign moves a degree-minus-one map past the preceding
factors.  If \(\mathbf q_n^j\) denotes the source coderivation's
length-\(n\)-to-length-\(j\) component, set
\begin{equation}\label{eq:resolved-reverse-recursion}
 g_1=\bar p,\qquad
 g_n=\sum_{j=1}^{n-1}g_j\mathbf q_n^j k^{(n)}.
\end{equation}
In particular \(g_2=\bar p q_2k^{(2)}\) and
\(g_3=(\bar p q_3+g_2\mathbf q_3^2)k^{(3)}\).

Homotopy transfer gives \(\mathbf m^2=0\),
\(\mathbf q\mathbf I=\mathbf I\mathbf m\), and the reverse
infinity morphism \(\mathbf G\)
\cite[Theorem~1.9]{Bandiera}.  The hypotheses are characteristic zero,
the uncurved square-zero source coderivation, the signed contraction,
and a complete compatible filtration.  For the last condition, work
first in \(tL[[t]]\) and \(tH[[t]]\) with coefficientwise
contraction and the $t$-adic filtration.  All multilinear operations are
filtration-additive.  Extracting coefficients gives the stated
operations on the real spaces.

Desuspend \(m_n,f_n\) by \eqref{eq:resolved-suspension} to obtain
\(\mu_n\) and \((I_\infty)_n\), of degrees
\(2-n\) and \(1-n\).  A rooted tree of arity \(n\) has
\(\sum_v(\operatorname{valence}(v)-1)=n-1\), so at most \(n-1\)
vertices, each of valence at most \(n\).  There are finitely many
such trees and labellings at fixed arity even when the source has all
arities.  Their vertices are finite-order smooth Taylor operators and
their edges are homotopies preserving smooth sections.  Every coefficient is
therefore well defined on smooth sections.

In the graded skew convention, with \(\chi(\sigma;x)\) the
permutation sign times the cochain Koszul sign, the first physical brackets are
\begin{equation}\label{eq:resolved-mu-two}
 \mu_2^{\rm res}(x_1,x_2)=p\ell_2^{\rm res}(ix_1,ix_2),
\end{equation}
\begin{equation}\label{eq:resolved-mu-three}
\begin{split}
 \mu_3^{\rm res}(x_1,x_2,x_3)
 ={}&p\ell_3^{\rm res}(ix_1,ix_2,ix_3)\\
 &-\sum_{\sigma\in\operatorname{Sh}(2,1)}\chi(\sigma;x)
 p\ell_2^{\rm res}\bigl(
 \mathsf h\ell_2^{\rm res}(ix_{\sigma(1)},ix_{\sigma(2)}),
 ix_{\sigma(3)}\bigr).
\end{split}
\end{equation}
Indeed, \(e_2=1+|x_1|\), \(e_3\equiv1+|x_2|\pmod2\),
and the binary composite with \(s\mathsf hs^{-1}\) contributes
\((-1)^{1+|x_2|}\).  Desuspension cancels this factor, while
\(k=-s\mathsf hs^{-1}\) contributes the minus sign.  For every
unshuffle,
\[
 \varepsilon(\sigma;sx)=\chi(\sigma;x)
 (-1)^{e_3(x_\sigma)-e_3(x)}.
\]
The three coefficients are
\(-1\), \((-1)^{|x_2||x_3|}\), and
\(-(-1)^{|x_1|(|x_2|+|x_3|)}\).  In degree one this gives
\begin{equation}\label{eq:resolved-cubic-diagonal}
 \mu_3^{\rm res}(x,x,x)
 =p\ell_3^{\rm res}(ix,ix,ix)
 -3p\ell_2^{\rm res}(\mathsf h\ell_2^{\rm res}(ix,ix),ix),
\end{equation}
and \((I_\infty^{\rm res})_2(x_1,x_2)
=-\mathsf h\ell_2^{\rm res}(ix_1,ix_2)\).

The same displays give \(\mu_2^{\rm var}\) and \(\mu_3^{\rm var}\)
after replacing the source brackets and the three contraction maps by
their variational cyclic counterparts.  At higher arity the sum in
\eqref{eq:resolved-transfer-recursion} includes every
\(q_j^{\rm var}\), \(2\le j\le n\).

\subsection{The dg-Artin simplicial equivalence}
\label{app:resolved-mc}

Fix a dg-Artin algebra \(R\) and put \(M=L\otimes\mathfrak m_R\).
The powers of \(\mathfrak m_R\) give a finite complete filtration,
preserved by the total unary differential and additive under all higher
operations.  Put $\Omega_n=\Omega^\bullet_{\rm poly}(\Delta^n)$. Tensor the
contraction with these polynomial simplex forms, using the total Koszul
signs.  The external
differential cancellation in Section~\ref{subsec:common-elliptic-contractions}
applies again, and the transfer
trees commute with this extension.  The transferred algebra is exactly
\(H\otimes\mathfrak m_R\otimes\Omega_n\).

The formal Kuranishi product theorem
\cite[Theorem~1.13]{Bandiera}, with cochain homotopy \(-\mathsf h\),
gives naturally in \(n\) and \(R\)
\begin{equation}\label{eq:resolved-kuranishi-product}
\begin{split}
 \operatorname{MC}(M\otimes\Omega_n)
 \cong{}&\operatorname{MC}(H\otimes\mathfrak m_R\otimes\Omega_n)\\
 &\times(-\mathsf h)\bigl((M\otimes\Omega_n)^1\bigr).
\end{split}
\end{equation}
The inclusion is the zero section of the second factor. The recursion
terminates modulo the nilpotent ideal.  This auxiliary simplicial
vector space is
\[
 \bigoplus_{j\le1}
 \im(-\mathsf h:M^j\to M^{j-1})\otimes\Omega_\bullet^{1-j}.
\]
Only finitely many \(j\)'s occur, since both source complexes
and the external algebra are bounded.  Each \(\Omega_\bullet^k\)
is contractible as a simplicial vector space
\cite[Lemma~3.2]{Getzler}. Its linear contraction tensors with every
real vector space.  The auxiliary factor is thus contractible.  This
proves \eqref{eq:resolved-minimal-equivalence} and the variational
comparison in Theorem~\ref{thm:finite-cyclic-kuranishi}. Thus the equivalence applies to the full dg-Artin coefficient category of Definition~\ref{def:formal-deformation-functor}.

For two choices of contraction, compose the inclusion for the first
with the reverse morphism for the second.  Its linear term is the identity on
abstract cohomology.  Recursion in symmetric length constructs its $L_\infty$ inverse.  This proves the independence statement in
both minimal-model theorems while leaving individual
Taylor tensors dependent on the contraction.
\subsection{The forgetful map and transferred comparison}
\label{app:resolved-comparison}

We use the forgetful map
\(\mathscr F:\mathcal M_{\rm var}^{\rm res}
\to\mathcal M_{\rm var}\), with the variational density coordinates reconstructed from the resolved graph.
In the resolved variational graph chart, recover the full variational densities
from the free \(N\)-sector coordinates by
\begin{equation}\label{eq:resolved-forgetful-graph}
\begin{aligned}
 e&=n_2+\mathfrak l_2(q)j_2(q)r,\\
 z&=n_3+\mathfrak l_3(q)
       \{j_3(q)\eta-\tfrac12\mathcal T_q(r,r)\},\\
 \tau&=n_4+\mathfrak l_4(q)
       \{-*_\phi(\rho_C+\tfrac23\rho_V\wedge\psi)\}.
\end{aligned}
\end{equation}
The $\mathfrak l_i$ are the real linear sections of the leading
density projections of \eqref{eq:rv-E-20} defining the chosen complements,
hence satisfying $P_N\mathfrak l_i=0$, and inverse to the common
leading density normalization \(-4/w_0\).  Intrinsically the same
formula is the graph
\(\pi_2e=j_2r\),
\(\pi_3z=j_3\eta-\Theta_3\),
\(\pi_4\tau=-*\Xi_7\).  Every variational density is preserved.

Here $j_2$, $j_3$, and $\mathcal T_q$ are the maps
\eqref{eq:rv-N0-5}, \eqref{eq:rv-N1-6}, and \eqref{eq:rv-N1-7},
with the ghost Koszul extension specified in Appendix~\ref{app:resolved-variational}.

The map \(\mathscr F\) keeps \(q,\gamma,c,e,z,\tau\), with the last three recovered by \eqref{eq:resolved-forgetful-graph}, and forgets the remaining resolved coordinates.
Differentiating the reconstructed densities in
\eqref{eq:resolved-forgetful-graph} by the resolved $Q$-structure gives
exactly the variational density rows \eqref{eq:rv-E-28}.
Graph invariance implies
\begin{equation}\label{eq:resolved-forgetful-q}
 Q_{\rm var}^{\rm res}\mathscr F^*
 =\mathscr F^*Q_{\rm var}.
\end{equation}

For the physical source define
\begin{equation}\label{eq:resolved-field-theory-arrow}
 \mathscr F_{\rm phys}=\mathscr F\circ U_{\rm res}.
\end{equation}
Here \(U_{\rm res}\) is \eqref{eq:rp-P-22}: on density coordinates
\(e=-u,z=-m,\tau=-t\), with the geometric coordinates and lower
fields and ghosts unchanged. On BPS equation arguments it is
\((E,b)\mapsto(-\mathcal P_qE+\mathcal J_qb,b)\).
Every constituent intertwines \(Q\).  Its unary map is $\mathcal J U_1$, where $U_1$ is the
linearization of $U_{\rm res}$. After identifying the physical unary
complex with $C_{\rm compat}$ through $U_1$, this is the comparison
$\mathcal J$.  Thus
\begin{equation}\label{eq:resolved-arrow-cohomology}
 H(\mathscr F_{\rm phys})^1=a,\qquad
 \ker H(\mathscr F_{\rm phys})^2=K_{\rm comp},\qquad
 \im H(\mathscr F_{\rm phys})^2=H^2_{\rm var}.
\end{equation}

Keep the cyclic contraction \((i_c,p_c,\mathsf h_c)\), pairing and brackets
of Appendix~\ref{subsec:cyclic-transfer} fixed.  There \(p_c\) is
defined by the real cohomological pairing, and
\[
 \mathsf h_c=t d_Ct,\qquad
 t=\tfrac12(b+b^\sharp),\qquad
 b=(1-i_cp_c)\mathsf h_0(1-i_cp_c).
\]
Apply \eqref{eq:resolved-tensor-homotopy}--\eqref{eq:resolved-reverse-recursion}
with these maps to obtain
\(G_\infty^{\rm var}:L_{\rm var}\to(H_{\rm var},\mu^{\rm var})\).
The forward recursion is the one defining the already fixed model.   If \(\mathbf f\) is
the suspended Taylor map of \eqref{eq:resolved-field-theory-arrow},
the full comparison is
\begin{equation}\label{eq:resolved-comparison-coalgebra}
 \boldsymbol\Phi=\mathbf G_{\rm var}\mathbf f\mathbf I_{\rm res},
 \qquad \Phi_\infty
 =G_\infty^{\rm var}\circ\mathscr F_{{\rm phys},\infty}\circ I_\infty^{\rm res}.
\end{equation}
It specifies every Taylor coefficient by finite sums, with linear term
\(p_c\mathscr F_{{\rm phys},1}i=H(\mathscr F_{\rm phys})\).

Explicitly, write \(u_n=(\mathbf I_{\rm res})_n\),
\(f_n=(\mathbf f)_n\), \(g_n=(\mathbf G_{\rm var})_n\), and
\(b_n=(\mathbf f\mathbf I_{\rm res})_n\) in the next display only.
With suspended Koszul unshuffle signs throughout,
\begin{equation}\label{eq:resolved-comparison-first-terms}
\begin{aligned}
 b_1&=f_1u_1,\\
 b_2&=f_1u_2+f_2(u_1,u_1),\\
 b_3&=f_1u_3+\sum_{\operatorname{Sh}(2,1)}\varepsilon f_2(u_2,u_1)
                  +f_3(u_1,u_1,u_1),\\
 \boldsymbol\Phi_1&=g_1b_1,\\
 \boldsymbol\Phi_2&=g_1b_2+g_2(b_1,b_1),\\
 \boldsymbol\Phi_3&=g_1b_3+\sum_{\operatorname{Sh}(2,1)}\varepsilon g_2(b_2,b_1)
                  +g_3(b_1,b_1,b_1).
\end{aligned}
\end{equation}
The arguments are assigned by each unshuffle.  Here
\(u_2=kq_2(\bar i,\bar i)\) and
\(u_3=k(q_3\bar i^3+\sum\varepsilon q_2(u_2,\bar i))\), while
\(g_2=\bar p_cq_2^{\rm var}k_c^{(2)}\) and
\(g_3=(\bar p_cq_3^{\rm var}+g_2(\mathbf q^{\rm var})_3^2)k_c^{(3)}\).
The \(f_n\) include the field derivatives of \(\mathfrak l_ij_i\),
the term \(-\mathfrak l_3\mathcal T\) on two equation inputs, and
the derivatives of \(-\mathfrak l_4*\Xi_7\), together with the
field-dependent physical equation comparison.  Desuspension gives, on degree-one inputs,
\begin{equation}\label{eq:resolved-comparison-quadratic}
\begin{split}
 \Phi_2(x,x)={}&-p_c\mathscr F_{{\rm phys},1}\mathsf h\ell_2^{\rm res}(ix,ix)\\
 &+(G_\infty^{\rm var})_2(\mathscr F_{{\rm phys},1}ix,\mathscr F_{{\rm phys},1}ix).
\end{split}
\end{equation}
In the common physical field chart,
\(\mathscr F_{{\rm phys},n}(v_1,\ldots,v_n)=0\) for \(n\ge2\)
when all inputs have cochain degree one.

The derivative of \(\Phi_{\rm MC}\) at zero is the isomorphism \(a\).
Its formal inverse is obtained recursively by homogeneous degree.
Writing \(x_1=a^{-1}y\), the first terms are
\begin{equation}\label{eq:resolved-inverse-coordinates}
\begin{aligned}
 \Psi_1(y)&=x_1,\\
 \Psi_{[2]}(y)&=-\tfrac12a^{-1}\Phi_2(x_1,x_1),\\
 \Psi_{[3]}(y)&=-a^{-1}\{
       \Phi_2(x_1,\Psi_{[2]}(y))+\tfrac16\Phi_3(x_1,x_1,x_1)\}.
\end{aligned}
\end{equation}
At each subsequent degree the unknown term is multiplied by the
invertible linear map \(a\). The same recursion establishes both
inverse identities.  The notation \([j]\) denotes homogeneous series
terms, rather than unnormalized Taylor tensors.

\subsection{Relative kernel coordinates and changes of minimal model}
\label{app:relative-coordinate-details}

Section~\ref{subsec:ordinary-physical-locus} defines the relative section and proves that it vanishes on the formal critical
scheme $\mathfrak C_{\rm eff}=\Crit(W_{\rm eff})$.
The linear comparison is surjective in degree two by Theorem~\ref{thm:complete-relative-cohomology}. The relative curvature section takes values in
\(
  \mathcal K_y=\ker T(y)
\)
over \(\mathfrak C_{\rm eff}\). The formulas below describe this section. The pointed homotopy fibre is treated in Appendix~\ref{app:pointed-fibre-proof}.

With \(u=a^{-1}y\), its first ambient homogeneous terms are
\begin{equation}\label{eq:resolved-relative-taylor}
\begin{aligned}
 \widehat\kappa_{[2]}(y)&=\tfrac12\mu_2^{\rm res}(u,u),\\
 \widehat\kappa_{[3]}(y)&=
 \tfrac16p\ell_3^{\rm res}(iu,iu,iu)
 -\tfrac12p\ell_2^{\rm res}(\mathsf h\ell_2^{\rm res}(iu,iu),iu)\\
 &\hspace{1em}-\tfrac12\mu_2^{\rm res}
       (u,a^{-1}\Phi_2(u,u)).
\end{aligned}
\end{equation}
The full series $\widehat\kappa_{\rm res}$ restricts to zero on the formal critical scheme.

The relative curvature section concerns the kernel of the cohomological comparison.

For fixed minimal models and \(\Phi_\infty\), the section \eqref{eq:resolved-kernel-section} is the resolved curvature of the unique degree-one preimage \(\Psi_{\rm MC}(y)\) of a variational point \(y\). The dg relative problem is the corresponding homotopy lifting problem for the comparison of simplicial deformation functors.

Changing the contraction of the resolved complex changes its minimal
model by an $L_\infty$-isomorphism
with invertible linear part.  The same principle applies to a different cyclic contraction of the variational complex, although our comparison has kept the original
one fixed.  At the level of arrows, each comparison is related to the
fixed forgetful comparison by the explicit zigzag
\begin{equation}\label{eq:resolved-arrow-invariance}
\begin{gathered}
 (H_{\rm res}\xrightarrow{\mathscr F_{\rm phys} I_{\rm res}}L_{\rm var})
 \longrightarrow(L_{\rm BPS}^{\rm res}\xrightarrow{\mathscr F_{\rm phys}}L_{\rm var}),\\
 (H_{\rm res}\xrightarrow{\mathscr F_{\rm phys} I_{\rm res}}L_{\rm var})
 \longrightarrow(H_{\rm res}\xrightarrow{\Phi_\infty}H_{\rm var}).
\end{gathered}
\end{equation}
The first arrow uses \(I_{\rm res}\) on sources and identity on
targets. The second uses identity on sources and \(G_{\rm var}\)
on targets.  Both squares commute exactly, and their vertical maps
induce natural weak equivalences on dg-Artin Maurer--Cartan spaces.
Their homotopy lifting problems are therefore equivalent.

Under a strict change of minimal coordinates, the relative curvature section is carried by the invertible Taylor map with one equation input, while the base point is carried by the formal degree-one coordinate change. Theorem~\ref{thm:relative-vanishing} says that the resulting relative curvature section vanishes on the corresponding formal critical scheme in each presentation. The zigzags in \eqref{eq:resolved-arrow-invariance} represent the same relative deformation problem and therefore the same vanishing locus.

\subsection{The pointed relative homotopy fibre}
\label{app:pointed-fibre-proof}
We prove Theorem~\ref{thm:pointed-relative-fibre} using the completed
graphs of Appendix~\ref{app:resolved-variational} and the relative
cohomology of Appendix~\ref{app:kernel-full-proof}.

\paragraph{Unary convention and the strict fibre.}
The homotopy-fibre complex is
\eqref{eq:relative-cone-convention}. The inclusion
$K_{\mathcal J}\to\operatorname{Cone}(\mathcal J)[-1]$,
$k\mapsto(k,0)$, has contractible quotient: cochain surjectivity
identifies it with the cone of the identity on $C_{\rm var}$.
Thus its cohomology is $H^\bullet(K_{\mathcal J})$ in the displayed
degrees, without another shift.

First use the physical/variational $Q$-isomorphism $U_{\rm res}$.
The comparison has the identity on lower coordinates and the upper
right inverses described in Section~\ref{subsec:compat-cohomological}.
Together with the triangular graph, these give formal coordinates in
which the comparison is a projection. Transporting $Q$ to these
coordinates makes the comparison a strict surjective $L_\infty$
morphism. The nilpotent Maurer--Cartan fibration argument of
\cite[Proposition~4.7]{Getzler} applies with the external differential
included. Explicitly, filter by powers of the maximal ideal. At each successive
central layer the horn lifting problem is the surjective abelian
cochain problem tensored with the polynomial-simplex horn kernel. Its standard contraction supplies the lift. The filtration is finite.
Hence the strict fibre over the marked object models the homotopy
fibre.

Fix the physical fields at the background, lower ghosts at zero,
and variational densities $e,z,\tau$ at zero. The \(N\)-entries have no independent freedom. In the temporary coordinates of
Appendix~\ref{app:resolved-variational}, the strict fibre is exactly
\eqref{eq:pointed-strict-graphs}. In particular its last sign follows
from the scalar-density graph
\[
 \pi_4\tau=-*\bigl(\rho_C^\circ+\tfrac23\rho_V\psi+P_7\bigr).
\]

\paragraph{Abelian upper complexes.}
Let $A_{\ge2}$ and $G_{\ge2}$ be the brutal truncations of the
leading compatibility and unrestricted quotient complexes in
degrees at least two. Once the fields and lower ghosts are fixed,
the upper differentials in the temporary coordinates
$\eta_H^\circ,\rho_C^\circ$ are linear: the exterior maps, the
scalar pair, and the leading variational Noether and scalar maps. Both complexes
are abelian. Graph preservation gives a formal $Q$-morphism
\begin{equation}\label{eq:pointed-upper-comparison}
 \begin{aligned}
 j_\infty:A_{\ge2}&\longrightarrow G_{\ge2},\\
 (r,\eta,\rho^\circ,\ldots)&\longmapsto
 (j_2r,\ j_3\eta-\Theta_3(r),\ j_4\rho^\circ-*P_7,\ldots).
 \end{aligned}
\end{equation}
All nonlinearity in this comparison of the fixed upper systems lies
in this map. Its fibre is \eqref{eq:pointed-strict-graphs}.

The cohomology map $H(j)$ is surjective in every degree. The explicit
proofs of $\delta_2=\delta_3=\delta_4=0$ in
Appendix~\ref{app:connecting-vanishing} apply before quotienting
degree-two cocycles by degree-one variational boundaries. The truncated
target has no preceding degree. Its cohomology kernel is
$H^\bullet(K_{\mathcal J})$.

\paragraph{Smooth cohomological splittings.}
The full leading complexes are elliptic and have smooth Hodge
splittings. For their brutal truncations preserve the entire closed
subspace in the first degree, including the preceding image, and
use the usual Hodge splittings in later degrees. These are linear
retractions onto cohomology. The first preserved space can be
infinite-dimensional. A continuous smooth right inverse of $H(j)$ is obtained as follows. Lift a cocycle of \(G_{\ge2}\) with the specified degreewise right inverse. Its differential lies in \(K_{\mathcal J}\). Subtract a primitive of that kernel cocycle using the Green-operator construction of Appendix~\ref{app:kernel-full-proof}. The resulting lift is closed. These operations are finite compositions on smooth sections.

Compose $j_\infty$ with the linear inclusion and projection
quasi-isomorphisms of these splittings. The resulting formal map
\[
 \bar j:H(A_{\ge2})\longrightarrow H(G_{\ge2})
\]
is between graded spaces with zero $Q$, and its derivative is the
split surjection $H(j)$. Choose a graded complement
\[
 H(A_{\ge2})=H^\bullet(K_{\mathcal J})\oplus S,\qquad
 H(j)|_S:S\xrightarrow{\sim}H(G_{\ge2}).
\]

\paragraph{The finite model.}
The coordinate change
\begin{equation}\label{eq:pointed-formal-straightening}
 T(u)=\bigl(\operatorname{pr}_{H(K_{\mathcal J})}u,\bar j(u)\bigr)
\end{equation}
has invertible linear term. Start its inverse with that linear
inverse. At each higher arity, the composition equation determines
the next coefficient by applying the same linear inverse to the
finite expression in previously determined lower coefficients.
Every coefficient is a finite composition of continuous multilinear
maps. On a nilpotent coefficient algebra only finitely many arities
contribute. Both $Q$-structures are zero, so $T$ is
automatically a formal $Q$-isomorphism and turns $\bar j$ into the
projection to $H(G_{\ge2})$.

The source inclusion gives a commuting square from
$j_\infty i_A:H(A_{\ge2})\to G_{\ge2}$ to $j_\infty$.
The target projection gives a second commuting square from
$j_\infty i_A$ to $p_Gj_\infty i_A=\bar j$.
Here $i_A$ and $p_G$ are the linear inclusion and projection of the
chosen contractions. On nilpotent coefficients both squares are
objectwise Maurer--Cartan weak equivalences, by the comparison proved
in Appendix~\ref{app:resolved-mc}. Homotopy invariance of the fibres
therefore gives \eqref{eq:pointed-relative-fibre}.
Although the truncated intermediate cohomology can be infinite-dimensional,
the relative space is the finite-dimensional
$H^\bullet(K_{\mathcal J})$ of
Theorem~\ref{thm:complete-relative-cohomology}, with every relative
bracket zero. The same result for $\Phi_\infty$ follows from the
comparison diagrams \eqref{eq:resolved-arrow-invariance}.

The proof fixes the marked physical fields before passing to the abelian upper complexes. The relative family over moving fields remains to be determined.

\clearpage
\section{Notation guide}\label{app:notation}

\begin{longtable}{@{}>{\raggedright\arraybackslash}p{.27\textwidth}>{\raggedright\arraybackslash}p{.67\textwidth}@{}}
\caption{Notation guide. Definitions and local conventions are given at the cited locations.}\label{tab:notation}\\
\toprule
Notation & Meaning and location\\
\midrule\endfirsthead
\toprule Notation & Meaning and location\\\midrule\endhead
\bottomrule\endfoot
\multicolumn{2}{@{}l}{\textit{Coefficients and degrees}}\\*[2pt]
$\mathbb{D}$, $\mathbb{D}_0$, $\eps \mathbb{D}$ & $\R[\eps]/(\eps^2)$, its quotient $\mathbb{D}/(\eps)$, and the ideal-valued $\mathbb{D}$-linear dual of that quotient.\\
$\red q$, $q_0$ & Reduction modulo $\eps$ and fixed background value, respectively.\\
$\jmath_\eps$ & The $\mathbb D$-linear isomorphism $\mathbb D_0\to\eps\mathbb D$ of \eqref{eq:epsilon-ideal-inclusion}. It is not an inverse of $\eps$.\\
$\pi_a=\jmath_\eps(\bar\pi_a)$ & Actual ideal-valued gauge covector and its normalized $\mathbb{D}_0$ coordinate. The bar here is not complex conjugation.\\
$p$ & Cochain degree of an amplitude. In the weighted-torsion formulas $p=d\Phi^{[0]}$ is a one-form, as specified locally. Section~\ref{sec:deformation-algebra}.\\
$g=1-p$ & Ghost number of its suspended coordinate function. The coefficients and density line have ghost number zero.\\
$\deg_{\rm hor}$ & Horizontal differential-form degree on $Y$. Forms-first totalization is fixed in \eqref{eq:horizontal-vertical-interface}.\\
$q_{\rm ev}$, $\widehat q_{\rm ev}$ & Evolutionary coefficient differential commuting with $d_Y$, and its signed forms-first extension. Bare $Q,f$ in component tables use the former convention. Section~\ref{sec:deformation-algebra}.\\
$D_{\rm hor}=d_Y+\widehat q_{\rm ev}$ & Totalized horizontal differential. Distinct from the coefficient $Q$.\\
$\deg_{\rm var}$ & Vertical variational-form degree, independent of ghost number and horizontal degree.\\
$\mathbf s$ & Suspension $L\to L[1]$ with $|\mathbf s x|=|x|-1$ in cochain degree. \eqref{eq:grading-suspension}.\\
$\delta$ & Variational exterior derivative of bidegree $(0,1)$ in ghost
and variational degree. In Section~\ref{sec:resolved-variational} and
Appendix~\ref{app:resolved-variational}, the same symbol denotes the
coefficient differential with physical ghosts set to zero, of bidegree
$(1,0)$. The two operations are distinguished there.\\
$\iota_X$ & Vertical contraction of bidegree $(|X|,-1)$, where $|X|$ is the ghost degree of $X$. \eqref{eq:vertical-bigrading}.\\
$\vartheta$ & Canonical covector-first potential $\int\langle p,\delta q\rangle$, of ghost number $-1$ and vertical exterior degree $1$. In \eqref{eq:canonical-generalized-dirac}, $\vartheta=2d\Phi$ is instead the density-divergence one-form.\\
$\omega=\delta\vartheta$ & Symplectic form of ghost number $-1$ and vertical exterior degree $2$. $\iota_{X_F}\omega=-\delta F$, $\{F,G\}=X_FG$.\\[4pt]
\multicolumn{2}{@{}l}{\textit{Geometry and functionals}}\\*[2pt]
$\mathbb E$, $a_{\mathbb E}$ & Fixed smooth string Courant extension and its anchor. \eqref{eq:courant-extension}--\eqref{eq:courant-pairing}.\\
$V_+$, $V_-$ & Admissible positive rank-seven subbundle and its orthogonal complement. Definition~\ref{def:courant-heterotic-data}.\\
$P_T$, $P_G$, $\mathfrak c_z$ & Fixed tangent and gauge principal factors and their real invariant form. \eqref{eq:courant-extension}.\\
$s$, $\sigma_\pm$ & Adapted isotropic splitting and metric lifts in \eqref{eq:adapted-courant-data}. Elsewhere $s$ can denote a gauge ghost or dilaton variation, as specified locally.\\
$\operatorname{div}_{\varrho}$ & Divergence induced by the independent positive density. \eqref{eq:density-divergence}.\\
$\slashed{D}^+$, $W_{\mathrm{Courant},G_2}$ & Canonical generalized Dirac operator and density-weighted spinor functional. Section~\ref{subsec:courant-functional}.\\

$\varphi$, $\psi$, $g$ & Positive three-form, dual four-form and induced metric. \eqref{eq:stable-metric}.\\
$\Phi$, $\varrho$, $\chi$ & Dilaton, independent density and unit real spinor. \eqref{eq:native-density}.\\
$\Phi^{[0]}$, $\Phi_0$ & Varying leading dilaton and fixed background dilaton.\\
$\zeta_\ph$ & Weighted spinor $e^{-\Phi}\chi$ in the field chart.\\
$B$, $H$, $\Theta_H$ & Relative gerbe splitting coordinate, induced three-form flux and induced Hull connection. \eqref{eq:determined-graph}.\\
$W_{\rm ext}$ & Superpotential on $\mathscr V$. Section~\ref{subsec:courant-functional}. $W_{\mathrm{ext},2}$ is its quadratic Taylor term.\\
$\mathcal E_{\rm EL}$, $\mathcal E_{\rm var}$ & Euler--Lagrange covector $dW_{\rm ext}$ and its negative
$\mathcal E_{\rm var}$, which occurs in the field-covector component of $Q$. \eqref{eq:euler-sign-convention}. The underlying unary bundle is $\mathscr E_{\rm var}$.\\
$C,\Sigma,T,I$ & BPS residual tensors. \eqref{eq:bps-residuals}. Complexes carry the qualified symbols $C_{\rm var}^\bullet$ and $C_{\rm compat}^\bullet$.\\
$\mathcal B_V$ & Compensated two-form variation in the physical first variation. Appendix~\ref{app:first-variation}.\\
$h_g$, $\beta$, $M$ & Logarithmic metric displacement, relative two-form and their combined coordinate: $h_g=M+M^T$, $\beta=M-M^T$. \eqref{eq:exact-native-chart}.\\[4pt]
\multicolumn{2}{@{}l}{\textit{Connections and comparison operators}}\\*[2pt]
$D_\Theta$, $\slashed D_\Theta$ & Covariant exterior derivative and spin Dirac operator of a metric connection, respectively.\\
$A$, $a$, $\alpha$ & Absolute gauge connection, relative gauge coordinate, and absolute connection displacement. All gauge coordinates are $\mathbb{D}_0$-valued.\\
$\KHull$ & Induced connection derivative. \eqref{eq:linear-determined-connection}.\\
$R$ & Augmented real Artin coefficient algebra in deformation-functor formulas. Curvature is indexed, as in $R_\theta$ and $R_0$.\\
$R_0$, $\mathsf R$ & Background curvature and unary gerbe reducibility operator, respectively. Indexed curvature tensors preserve their conventional $R$.\\[4pt]
\multicolumn{2}{@{}l}{\textit{Gauge and gerbe parameters}}\\*[2pt]
$\xi$, $\lambda$ & Diffeomorphism and relative gauge ghosts. Section~\ref{sec:equivalence}, especially \eqref{eq:native-ghost-shift}.\\
$\Lambda$, $c$ & Gerbe one-form ghost and scalar gauge-for-gauge ghost. Section~\ref{sec:equivalence}.\\
$s$ & Absolute covariant gauge ghost in Section~\ref{sec:equivalence}.
In the linear geometric reconstruction, $s=\dot\Phi$ is instead the
dilaton variation. Suspension is $\mathbf s$ and mixed-order weights
are $w_p^{\rm mix}$.\\[4pt]
\multicolumn{2}{@{}l}{\textit{Pairings and traces}}\\*[2pt]
$P=-\tr_7$ & Invariant matrix form used in the variational functional.\\
$\Bsp$ & Symmetric real spinor pairing. \eqref{eq:spinor-bilinears}.\\
$c_g$, $c_0$, $\alpha\cdot\chi$ & Clifford operators for the metric $g$ and the fixed background bundle, respectively. $\alpha\cdot\chi=c_g(\alpha)\chi$ denotes their action on spinors.\\
$\operatorname{Tr}_{\cA}$ & Integrated Frobenius trace of the canonical $G_2$ algebra. Appendix~\ref{app:representation}.\\
$\dagger$ & Formal adjoint obtained by integration by parts with the
specified density pairing. \eqref{eq:actual-cotangent-lift}. On the
pointwise maps $A_\phi,B_\phi$, it denotes the metric adjoint.\\
$\sharp$ & Cyclic adjoint with the suspended pairing signs. In the unary density-covector chart, $G^\sharp=-G^\dagger$ and $\mathsf R^\sharp=-\mathsf R^\dagger$. Appendix~\ref{app:linear}.\\
$\langle-,-\rangle_B$, $B_C$ & Integrated $\mathbb{D}$-valued pairing of a variation with its density covector, with the graded signs of Appendix~\ref{app:cyclic-conventions}. The real coefficient functional $\tau_{\mathbb D}$ is defined in \eqref{eq:analytic-coefficient-trace}.\\
\multicolumn{2}{@{}l}{\textit{Resolved deformation theory and minimal models}}\\*[2pt]
$\mathcal M_{\rm var}$, $L_{\rm var}$ & Variational theory on $\mathscr V$ and its Taylor algebra. Section~\ref{sec:equivalence}. Subscripts $\mathrm{sp}$ and $\varphi$ specify its two field charts.\\
$C_{\rm var}^\bullet$ & Unary complex of $L_{\rm var}$. Section~\ref{sec:linearization}.\\
$L_{\rm var}^{\rm res}$, $L_{\rm BPS}^{\rm res}$ & Resolved variational and physical Taylor algebras, respectively. Sections~\ref{sec:resolved-variational} and~\ref{sec:resolved-physical}.\\
$\mathscr F$, $\mathscr F_{\rm phys}$ & Forgetful $Q$-morphism from the resolved variational theory to the variational theory, and its composite $\mathscr F\circ U_{\rm res}$. Section~\ref{sec:physical-minimal}.\\
$\mathcal A(q)=H^{[0]}=d_YB^{[0]}$ & Leading-flux residual. Section~\ref{sec:derived-admissibility}.\\
$C_{\rm compat}$ & Linear compatibility complex for the combined Euler--Lagrange and admissibility equations. Section~\ref{sec:derived-admissibility} and Appendix~\ref{app:compat-construction}.\\
$h$, $\mathsf h_{\rm res}$, $\mathsf h_c$ & Weighted torsion equation coordinate in Sections~\ref{sec:derived-admissibility}--\ref{sec:resolved-variational}, and resolved/cyclic contracting homotopies in Section~\ref{sec:physical-minimal}.\\
$H_{\rm var}$ & Unrestricted variational cohomology with its cyclic minimal model. Section~\ref{subsec:cyclic-kuranishi}.\\
$H_{\rm res}$ & Resolved physical cohomology with its minimal real $L_\infty$ model. Section~\ref{sec:physical-minimal}.\\
$\mu_n^{\rm var}$ & Transferred cyclic variational brackets. Theorem~\ref{thm:finite-cyclic-kuranishi}.\\
$\mu_n^{\rm res}$ & Transferred resolved physical brackets. Theorem~\ref{thm:resolved-minimal-model}.\\
$\kappa_{\rm var}$ & Variational Kuranishi map. Section~\ref{subsec:cyclic-kuranishi}.\\
$\kappa_{\rm res}$ & Resolved physical Kuranishi map. Theorem~\ref{thm:physical-obstructions}.\\
$\mathfrak C_{\rm eff}$ & Formal critical scheme $\Crit(W_{\rm eff})=\operatorname{Spf}\bigl(\R[[(H_{\rm var}^1)^*]]/(\kappa_{\rm var})\bigr)$. \eqref{eq:resolved-critical-ring}.\\
$W_{\rm eff}$ & Effective potential of the cyclic variational model. Corollary~\ref{cor:kuranishi-effective-potential}.\\
$K_{\mathcal J}$ & Kernel complex $\ker(C_{\rm compat}\xrightarrow{\mathcal J}C_{\rm var})$. Section~\ref{subsec:compat-cohomological}.\\
$\mathcal H^k$ & Harmonic real $k$-forms of the fixed torsion-free background metric in the relative-cohomology calculation.\\
$H^\bullet_{\rm rel}$ & Relative cohomology $H^\bullet(K_{\mathcal J})$. Theorem~\ref{thm:complete-relative-cohomology}. Its abelian model describes the pointed fibre at the marked standard embedding.\\
$K_{\rm comp}$ & Degree-two relative cohomology $H^2(K_{\mathcal J})=H^3_{\rm dR}(Y)_{7\oplus27}$. It is not the full relative cohomology.\\
$\delta_p$ & Connecting maps $H^p_{\rm var}\to H^{p+1}(K_{\mathcal J})$, all zero for $1\le p\le4$ under the compact hypotheses. Appendix~\ref{app:connecting-vanishing}.\\
$\mathcal P_q,\mathcal Q_q$ & Physical equation--density isomorphism and its local differential inverse. Section~\ref{sec:resolved-physical}.\\
$\mathcal J_q$ & Coefficient of the additional three-form argument in the equation--density comparison. Section~\ref{sec:resolved-physical}.\\
$\mathcal P_q^{\rm res}$ & Resolved physical-to-variational equation-coordinate map. Section~\ref{subsec:physical-comparison}.\\
$Q_{\rm var}^{\rm res}$ & Resolved variational homological vector field. Section~\ref{sec:resolved-variational}.\\
$Q_{\rm BPS}^{\rm res}$ & Independently constructed resolved physical homological vector field. Section~\ref{sec:resolved-physical}.\\
$U_{\rm res}$ & Local formal physical-to-variational $Q$-isomorphism. Theorem~\ref{thm:resolved-q-equivalence}.\\
$U_1$ & Its linearization, identifying the physical unary complex with
$C_{\rm compat}$. The contraction of the latter is transported through
$U_1$ before physical homotopy transfer.\\
$D^p$, $D_i$ & Differential leaving cochain degree $p$ in $C_{\rm compat}$,
and compatibility-stage notation $D_i=D^{i+1}$. In particular $D_0=D^1$
is the combined equation operator.\\
$\Phi_\infty$ & Transferred resolved-to-variational $L_\infty$ morphism. Section~\ref{sec:physical-minimal}.\\
$\Phi_{\rm MC}$ & Induced formal map on degree-one minimal coordinates. Section~\ref{sec:physical-minimal}.\\
$\Psi_{\rm MC}$ & Formal inverse of $\Phi_{\rm MC}$. Section~\ref{sec:physical-minimal}.\\
$\chi_{\rm rel}$ & Relative compatibility obstruction section on the variational critical locus. Section~\ref{sec:physical-minimal}.\\
\end{longtable}

\bibliographystyle{amsplain}
\bibliography{references}
\end{document}